\newcounter{RELEASE}
\newcounter{todoListItems}
\newcommand{\rem}[2][]{%
	\stepcounter{todoListItems}
	\todo[size=\scriptsize, #1]{{\tiny \thetodoListItems} #2}
}
\newcommand{\refer}[1]{\prettyref{#1}}
\newcommand{\referto}[1]{\rem{Referenz setzen!}}
	\renewcommand*{\glsgroupheading}[1]{}%
\newcommand{\Disk}{\ensuremath{\mathbb{D}}}
\newcommand{\morQuot}[1]{\ensuremath{\pi_{#1} }}
\newcommand{\HomQuot}[2]{\ensuremath{\morQuot{#2} \circ #1}}
\newcommand{\R}{\ensuremath{\mathbb{R}}}
\newcommand{\C}{\ensuremath{\mathbb{C}}}
\newcommand{\N}{\ensuremath{\mathbb{N}}}
\newcommand{\K}{\ensuremath{\mathbb{K}}}
\newcommand{\eps}{\ensuremath{\varepsilon}}
\DeclareMathOperator{\idSymb}{\mathrm{id}}
\newcommand{\id}[1]{\idSymb_{#1}}
\newcommand{\iso}{\ensuremath \cong}
\newcommand{\sub}{\ensuremath{\subseteq}}
\newcommand{\set}[2]{\ensuremath{\{#1 : #2\} }}
\newcommand{\sset}[1]{\ensuremath{\{#1\}}}
\newcommand{\ndef}{\colonequals}
\newcommand{\defn}{\equalscolon}
\newcommand{\rest}[3][]{\ensuremath{#2|_{#3}\ifthenelse{ \equal{#1}{} }{}{^{#1}}} }
\newcommand{\image}[1]{\ensuremath{\mathrm{im}\, #1 }}
\newcommand{\Strecke}[2]{\ensuremath{[#1,#2]}}
\DeclareMathOperator{\distop}{\mathrm{dist}}
\newcommand{\dist}[2]{\ensuremath \distop(#1,#2)}
\newcommand{\abs}[1]{\ensuremath \lvert #1\rvert}
\newcommand{\norm}[1]{\ensuremath \lVert #1\rVert}
\newcommand{\MetaBall}[4][]
{
	\ensuremath
	{
	\ifthenelse{ \equal{#1}{} }
	{
		 #4_{#3}(#2)
	}
	{
		#4_{#1}(#2, #3)
	}
	}
}
\newcommand{\Ball}[3][]{\MetaBall[#1]{#2}{#3}{B}}
\newcommand{\clBall}[3][]{ \MetaBall[#1]{#2}{#3}{\cl{B}} }
\newcommand{\supp}[1]{\ensuremath{\mathrm{supp}(#1)}}
\newcommand{\neigh}[2][]{\ensuremath{ \mathcal{U}\ifthenelse{ \equal{#1}{} }{}{_{#1}}(#2) }}
\newcommand{\neighO}[2][]{\ensuremath{ \mathcal{U}\ifthenelse{ \equal{#1}{} }{}{_{#1}}^\circ(#2) }}
\newcommand{\cl}[1]{\overline{#1}}
\newcommand{\interior}[1]{#1^{\circ}}
\newcommand{\fami}[3]{\ensuremath{(#1)_{#2 \in #3}}}
\newcommand{\Rint}[5][\int]{\ensuremath {#1}_{#2}^{#3} #4\, d #5}
\newcommand{\Mint}[3][\int]{\Rint[#1]{0}{1}{#2}{#3}}
\newcommand{\Lint}[3]{\ensuremath \int_{#1} #2\, d #3}
\newcommand{\dA}[4][]{
	\ensuremath{
	d\ifthenelse{ \equal{#1}{} }{}{^{(#1)}} 
		#2 \ifthenelse{\equal{#3}{} \and \equal{#4}{} }{}{(#3;#4)}
		}
}
\newcommand{\FAbl}[2][]{
	\ensuremath{
	D\ifthenelse{ \equal{#1}{} }{}{^{(#1)}}#2
	}
}
\newcommand{\parAbl}[2]{\ensuremath{ \partial^{#1} #2 }}
\newcommand{\LLA}[2][]{ \ensuremath{\delta_{\ell}\ifthenelse{\equal{#2}{} }{}{(#2)} } }
\newcommand{\RLA}[2][]{ \ensuremath{\delta_{\rho}\ifthenelse{\equal{#2}{} }{}{(#2)} } }
\newcommand{\AblAct}[1]{\ensuremath{\dot{#1}}}
\newcommand{\LieAlg}[1]{\ensuremath{\mathbf{L}(#1)}}
\newcommand{\Tang}[2][]{
\ensuremath{\mathbf{T}\ifthenelse{ \equal{#1}{} }{}{_{#1} }
	\ifthenelse{ \equal{#2}{} }%
	{}%
	{#2}%
	}
}
\newcommand{\VecFields}[2][]{\ensuremath{\mathfrak{X}^{#1}(#2)}}
\newcommand{\VecFieldsLinv}[1]{\VecFields{#1}_\ell}
\DeclareMathOperator{\GLsymb}{\mathrm{GL}}
\DeclareMathOperator{\End}{\mathrm{End}}
\DeclareMathOperator{\DiffSym}{\mathrm{Diff}}
\DeclareMathOperator{\evTwo}{ev}
\newcommand{\ConDiff}[4][]
{
	\ensuremath{
	\mathcal{C}^{#4}_{#1}
	\ifthenelse{ \equal{#2}{} \and \equal{#3}{} }
	{}
	{(#2, #3)}
	}
}
\newcommand{\FC}[4][]
{
	\ensuremath{
	\mathcal{FC}^{#4}
	\ifthenelse{ \equal{#2}{} \and \equal{#3}{} }
	{}
	{(#2, #3)}
	}
}
\newcommand{\GewFunk}{\cW}
\newcommand{\hn}[3]{\ensuremath\norm{#1}_{#2, #3}}
\newcommand{\CF}[4]{\ensuremath\mathcal{C}^{#4}_{#3}(#1, #2)}
\newcommand{\CFo}[4]{\CF{#1}{#2}{#3}{\partial, #4}}
\newcommand{\CFvan}[4]{ \CF{#1}{#2}{#3}{#4}^{o} }
\newcommand{\CFvanK}[4]{ \CF{#1}{#2}{#3}{#4}^{\bullet} }
\newcommand{\CcF}[3]{\CF{#1}{#2}{\GewFunk}{#3}}
\newcommand{\CinfF}[2]{\CcF{#1}{#2}{\infty}}
\newcommand{\CcFo}[3]{\CFo{#1}{#2}{\GewFunk}{#3}}
\newcommand{\CcFzero}[3]{ \CcF{#1}{#2}{#3}_{0} }
\newcommand{\CcFvan}[3]{ \CFvan{#1}{#2}{\GewFunk}{#3} }
\newcommand{\CcFvanK}[3]{ \CFvanK{#1}{#2}{\GewFunk}{#3} }
\newcommand{\CcFvanKBig}[3]{ \CF{#1}{#2}{\GewFunk}{#3}^{\bullet}_{\text{ex}} }
\newcommand{\BC}[3]{\ensuremath{\mathcal{BC}^{#3}(#1, #2)}}
\newcommand{\BCo}[3]{ \BC{#1}{#2}{\partial, #3} }
\newcommand{\BCzero}[3]{ \BC{#1}{#2}{#3}_{0}}
\newcommand{\DCc}[3]{\CF{#1}{#2}{c}{#3}}
\newcommand{\DCcInf}[2]{\DCc{#1}{#2}{\infty}}
\newcommand{\LipCon}[5]{\ensuremath{\mathcal{LC}_{#1, #2}^{#5}(#3, #4)}}
\newcommand{\FakLC}[3]{\ensuremath{\tilde{#3}}}
\newcommand{\GL}[1]{\ensuremath{\GLsymb(#1)}}
\newcommand{\Diff}[3]{
	\ensuremath{\DiffSym^{#2}_{#3}(#1)}
}
\newcommand{\DiffW}{\Diff{\SX}{}{\GewFunk}}
\newcommand{\DiffGeWz}[1]{\Diff{\SX}{}{#1}_0}
\newcommand{\DiffWz}{\Diff{\SX}{}{\GewFunk}_0}
\newcommand{\DiffWvan}{\Diff{\SX}{}{\GewFunk}^\circ}
\newcommand{\Diffc}{\Diff{\SX}{}{c}}
\newcommand{\EndW}{ \ensuremath{ \Endos{\SX}{\GewFunk} }}
\newcommand{\EndWvan}{\Endos{\SX}{\GewFunk}^{\circ}}
\newcommand{\Endos}[2]{\ensuremath{\End_{#2}(#1)}}
\newcommand{\kF}{\ensuremath{\kappa_{\GewFunk}}}
\newcommand{\karte}[1]{\ensuremath{\kappa_{#1}}}
\newcommand{\compIdRaw}{\tilde{\mathfrak{c}}}
\newcommand{\compIdConWeights}[3]{\tilde{\mathfrak{c}}_{#3}^{#1, #2}}
\newcommand{\compIdDiffKLWeights}[4]{\mathfrak{c}_{#4, #3}^{#1, #2}}
\newcommand{\compIdSmoothLWeights}[3]{\mathfrak{c}_{#3}^{#1, #2}}
\newcommand{\compIdWeights}[2]{\mathfrak{c}_{#2}^{#1}}
\newcommand{\compIdConCW}[2]{\compIdConWeights{#1}{#2}{\GewFunk}}
\newcommand{\compIdDiffKLcW}[3]{\compIdDiffKLWeights{#1}{#2}{#3}{\GewFunk}}
\newcommand{\compIdSmoothLcW}[2]{\compIdSmoothLWeights{#1}{#2}{\GewFunk}}
\newcommand{\compIdcW}[1]{\compIdWeights{#1}{\GewFunk}}
\newcommand{\InvIdWeights}[2]{I^{#1}_{#2}}
\newcommand{\InvIdcW}[1]{\InvIdWeights{#1}{\GewFunk}}
\newcommand{\InvertWinfDomRan}[2]{\Omega^{#1, #2}_\GewFunk}
\newcommand{\InvIdAlg}[1]{\tilde{I}_{#1}}
\newcommand{\InvertDomRanAlg}[2]{\widetilde{\Omega}_{#1, #2}}
\newcommand{\cW}{\ensuremath{\mathcal{W}}}
\newcommand{\cF}{\ensuremath{\mathcal{F}}}
\newcommand{\ExtWeights}[1]{\ensuremath{#1_{\mathrm{max}}}}
\newcommand{\noma}[1]{\norm{#1}_\infty}
\newcommand{\Opnorm}[1]{\norm{#1}_{op}}
\newcommand{\OpInf}[1]{\norm{#1}_{op,\infty}}
\DeclareMathOperator{\Id}{\mathrm{Id}}
\newcommand{\idco}{\ensuremath{\Id}}
\newcommand{\MaMu}{\ensuremath{\cdot}}
\newcommand{\eval}{\ensuremath{\cdot}}
\newcommand{\mult}{\ensuremath{\ast}}
\newcommand{\QuasiInv}{\ensuremath{QI}}
\newcommand{\QInvertiblesOf}[1]{\ensuremath{#1^{q}}}
\newcommand{\Hom}[3]{\ensuremath{#1_{*}} }
\newcommand{\Lin}[3][]{
\ensuremath{
\ifthenelse{\equal{#1}{}}
	{
		\ifthenelse{\equal{#2}{#3}}
			{\mathrm{L}(#2)}
			{\mathrm{L}(#2,#3)}
	}
	{
		\mathrm{L}^{#1}(#2, #3)
	}
}
}
\newcommand{\EmbA}[3][]{\ensuremath{\mathcal E}_{#2,#3}\ifthenelse{\equal{#1}{}}{}{^{#1}}}
\newcommand{\Evol}[3]{\ensuremath{\mathrm{Evol}_{#1}^{#3}\ifthenelse{\equal{#2}{}}{}{(#2)}}}
\newcommand{\evol}[3]{\ensuremath{\mathrm{evol}_{#1}^{#3}\ifthenelse{\equal{#2}{}}{}{(#2)}}}
\newcommand{\lEvol}[2][]{\Evol{#1}{#2}{\ell}}
\newcommand{\rEvol}[2][]{\Evol{#1}{#2}{\rho}}
\newcommand{\levol}[2][]{\evol{#1}{#2}{\ell}}
\newcommand{\revol}[2][]{\evol{#1}{#2}{\rho}}
\newcommand{\ex}[1][]{\exp_{#1}}
\newcommand{\SX}{\ensuremath X}
\newcommand{\SY}{\ensuremath Y}
\newcommand{\SZ}{\ensuremath Z}
\newcommand{\UF}{\ensuremath U}
\newcommand{\VF}{\ensuremath V}
\newcommand{\WF}{\ensuremath W}
\newcommand{\one}{\ensuremath{\mathbf{1}}}
\newcommand{\G}{G}
\newcommand{\normsOn}[1]{\ensuremath{\mathcal{N}(#1)}}
\newtheorem{COUNT}{}[section]
\newtheorem{satz}[COUNT]{Theorem}
\newtheorem{lem}[COUNT]{Lemma}
\newtheorem{cor}[COUNT]{Corollary}
\newtheorem{prop}[COUNT]{Proposition}
\theoremstyle{definition}
\newtheorem{defi}[COUNT]{Definition}
\newtheorem{bem}[COUNT]{Remark}
\newtheorem{beisp}[COUNT]{Example}
\newtheorem*{thmOhneNummer}{Theorem}
\numberwithin{equation}{COUNT}
\newcommand{\BeweisschrittCkCinfty}[9]
{
	From the assertions already established, we derive the commutative diagram
	\[
		\xymatrix{
		{#1} \ar[rr]^-{#2} \ar@{>->}[d] && {#3} \ar@{>->}[d]
		\\
		{#6} \ar[rr]_-{#7} && {#8}
		}
	\]
	for each $#9\in\N$,
	where the vertical arrows represent the inclusion maps.
	With \refer{cor:Topologie_von_CinfF} we easily deduce the continuity of $#2$ from the one of $#7$.
}
\newcommand{\BeweisschrittCkCinftySmooth}[9]
{
	From the assertions already established, we derive the commutative diagram
	\[
		\xymatrix{
		{#1} \ar[rr]^-{#2} \ar@{>->}[d] && {#3} \ar@{>->}[d]
		\\
		{#6} \ar[rr]_-{#7} && {#8}
		}
	\]
	for each $#9\in\N$,
	where the vertical arrows represent the inclusion maps.
	With \refer{cor:Topologie_von_CinfF} we easily deduce the smoothness of $#2$ from the one of $#7$.
}
\begin{document}
\ifthenelse{ \equal{\value{RELEASE}}{0} } {\listoftodos}{}

\title{Weighted diffeomorphism groups of Banach spaces and weighted mapping groups} 
\author{Boris Walter}
\date{}
\publishers{
\small{}
Universität Paderborn\\ Institut für Mathematik\\
Warburger Straße 100\\
33098 Paderborn\\
E-Mail: bwalter@math.upb.de
}

\maketitle

\newcommand{\keywords}[1]{\def\mykeywords{#1}}
\newcommand{\mathclass}[1]{\def\mymathclass{#1}}
\newcommand{\acknow}[1]{\def\myacknow{#1}}
\let\oldabs\abstract
\let\endoldabs\endabstract
\renewenvironment{abstract}{\oldabs}{

\hfill
\hangindent=\parindent

\noindent{} \emph{Acknowledgement}. \myacknow

\hfill

2010 \emph{Mathematics Subject Classification}: \mymathclass

\emph{Keywords}: \mykeywords
\endoldabs}

\acknow{The current work comprises results from the author's
Diplomarbeit and his Ph.D.-project (advised by Helge Glöckner). The research was supported
by the German Research Foundation (DFG), grant GL 357/4-1.}
\mathclass{Primary 58D05; Secondary 22E65, 22E67, 26E15, 26E20, 46E10, 46E40, 46E50, 46T05, 46T10, 46T20, 58D15.}
\keywords{Infinite-dimensional Lie group, diffeomorphism group,
mapping group, gauge group, current group, non-compact manifold,
weighted function space, rapidly decreasing function, Schwartz space,
semidirect product, Banach manifold.}

\begin{abstract}
	In this work\footnotemark{}, we construct and study certain classes of infinite dimensional Lie groups
	that are modelled on weighted function spaces.
	In particular, we construct a Lie group $\DiffW$ of diffeomorphisms,
	for each Banach space $\SX$ and set $\cW$ of weights on $\SX$ containing the constant weights.
	We also construct certain types of \enquote{weighted mapping groups}.
	These are Lie groups modelled on weighted function spaces of the form $\CcF{\UF}{\LieAlg{G}}{k}$,
	where $G$ is a given (finite- or infinite dimensional) Lie group.
	Both the weighted diffeomorphism groups and the weighted mapping groups are shown to be
	regular Lie groups in Milnor's sense.

	We also discuss semidirect products of the former groups.
	Moreover, we study the integrability of Lie algebras of vector fields of the form
	$\CcF{\SX}{\SX}{\infty} \rtimes \LieAlg{\G}$, where $\SX$ is a Banach space
	and $G$ a Lie group acting smoothly on $\SX$.
	\footnotetext{ An earlier version was published in \cite{MR2952176}.
	The published text lacks the examination of topologies on spaces of multipliers on page \pageref{lem:Kriterium_simultane_Stetigkeit_bilineare_Operation_Multiplier-gewAbb}ff.
	Further, this text contains a more general treatment of the inversion map of $\DiffW$ in \refer{susec:DiffW_Inversion}
	(which can be used to investigate functions that are defined on a subset of $\SX$)
	which uses a Lipschitz inverse function theorem (stated in \refer{susec:InverseFuncThm-Lipschitz}).}
\end{abstract}

\tableofcontents

\chapter{Introduction}
Diffeomorphism groups of compact manifolds, as well as
groups $\ConDiff{K}{G}{k}$ of Lie group-valued mappings on compact manifolds
are among the most important and well-studied examples of infinite dimensional Lie groups
(see for example \cite{MR0210147}, \cite{MR830252}, \cite{MR656198}, \cite{MR1421572}, \cite{MR900587} and \cite{MR1471480}).
While the diffeomorphism group $\Diff{K}{}{}$ of a compact manifold is modelled
on the Fr\'echet space $\ConDiff{K}{\Tang{K} }{\infty}$ of smooth vector fields on $K$,
for a non-compact smooth manifold $M$, it is not possible to make $\Diff{M}{}{}$
a Lie group modelled on the space of all smooth vector fields in a satisfying way
(see \cite{MilnorPreprint82}).
We mention that the LF-space $\CF{M}{\Tang{ M}}{c}{\infty}$
of compactly supported smooth vector fields can be used as the modelling space
for a Lie group structure on $\Diff{M}{}{}$.
But the topology on this Lie group is too fine for many purposes;
the group $\glstext{compactly_supported_diffeos}$ of compactly supported diffeomorphisms
\index{compactly supported diffeomorphisms}%
\index{diffeomorphisms!compactly supported|see{compactly supported diffeomorphisms}}%
\index{diffeomorphisms!groups of}%
(which coincide with the identity map outside some compact set)
is an open subgroup (see \cite{MR583436} and \cite{MilnorPreprint82}).
Likewise, it is no problem to turn groups $\CF{M}{G}{c}{k}$
of compactly supported Lie group-valued maps into Lie groups
(cf. \cite{MR830252}, \cite{MR1233384}, \cite{MR1934608}).
However, only in special cases there exists a Lie group structure on $\ConDiff{M}{G}{\infty}$,
equipped with its natural group topology, the smooth compact-open topology (see \cite{MR2399649}).

In view of these limitations, it is natural to look for Lie groups of diffeomorphisms
which are larger then $\Diff{M}{}{c}$ and modelled on larger Lie algebras of vector fields
than $\CF{M}{\Tang{ M}}{c}{\infty}$.
In the same vain, one would like to find mapping groups modelled on larger spaces than
$\CF{M}{\LieAlg{G}}{c}{k}$.

In this work, we construct such groups in the important case where the non-compact manifold $M$
is a vector space (or an open subset thereof, in the case of mapping groups).
For most of the results, the vector space is even allowed to be a Banach space $\SX$.
The groups we consider are modelled on spaces of weighted functions on $\SX$.
For example, we are able to construct a Lie group structure on the group
$\glstext{Diffeomorphismen_S_getragen}$
of diffeomorphisms differing from $\id{\R^n}$
by a rapidly decreasing $\R^n$-valued map. Considered as a topological group,
this group has been used in quantum physics (\cite{Goldin2004}).
For $n = 1$, another construction of the Lie group structure (in the setting of convenient differential calculus) has been given by P.~Michor (\cite[\S 6.4]{MR2263211}),
and applied to the Burgers' equation.
The general case was treated in the author's unpublished diploma thesis \cite{Wal2006}.

To explain our results, let $\SX$ and $\SY$ be Banach spaces, $\UF \sub \SX$ open and nonempty,
$k \in \cl{\N} \ndef \N \cup \{\infty\}$, $\cW$ be a set of functions $f$ on $\UF$ taking values
in the extended real line $\cl{\R} \ndef \R \cup \sset{\infty, -\infty}$ called weights.
As usual, we let $\CcF{\UF}{\SY}{k}$ be the set of all $k$-times continuously Fr\'echet-differentiable functions
$\gamma :\UF \to \SY$ such that $f \cdot \Opnorm{\FAbl[\ell]{\gamma}}$ is bounded
for all integers $\ell \leq k$ and all $f \in \cW$.
Then $\CcF{\UF}{\SY}{k}$ is a locally convex topological vector space in a natural way.
We prove (see \refer{satz:DiffW_Lie-Gruppe} and \refer{satz:Gewichtete_Diffeos_regulaere_Liegruppen})
\begin{thmOhneNummer}
	Let $\SX$ be a Banach space
	and $\GewFunk \sub \cl{\R}^\SX$ with $1_\SX \in \GewFunk$.
	Then $\DiffW \ndef \{ \phi \in \Diff{\SX}{}{} :
		\phi - \id{\SX} ,\,\phi^{-1} - \id{\SX}
		\in \CF{\SX}{\SX}{\cW}{\infty}
	\}$
	is a regular Lie group modelled on $\CcF{\SX}{\SX}{\infty}$.
\end{thmOhneNummer}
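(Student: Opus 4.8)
The plan is to equip $\DiffW$ with a single global chart centred at the identity: the map $\Theta\colon \DiffW \to \Omega$, $\phi \mapsto \phi - \id_{\SX}$, where $\Omega \ndef \set{f \in \CinfF{\SX}{\SX}}{\id_{\SX} + f \in \DiffW}$. By the very definition of $\DiffW$ each element is determined by its difference to the identity, so $\Theta$ is injective onto $\Omega$. Writing $E \ndef \CinfF{\SX}{\SX}$ for the (locally convex) modelling space, the two goals are then to show that $\Omega$ is open in $E$ and that the group operations, read through $\Theta$, are smooth; smoothness in a global chart is smoothness as maps of manifolds, so this yields the Lie group. Before anything else one verifies that $\DiffW$ is an abstract group: stability under inversion is built into the definition, while for $\phi = \id_{\SX}+f$ and $\psi = \id_{\SX}+g$ one computes $\phi\circ\psi - \id_{\SX} = g + f\circ(\id_{\SX}+g)$, so closure under composition is exactly the statement that $E$ is stable under the pushforward $f \mapsto f\circ(\id_{\SX}+g)$, which I would take from the composition lemmas for weighted function spaces proved in the preceding sections.

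The analytic heart of the argument, and the step I expect to be the main obstacle, is to show that $\id_{\SX}+f$ is a $C^\infty$-diffeomorphism of $\SX$ whose inverse again differs from $\id_{\SX}$ by an element of $E$, at least for all $f$ in a zero-neighbourhood. Here the hypothesis $1_\SX \in \GewFunk$ is decisive: it makes $f \mapsto \OpInf{\FAbl{f}}$ one of the defining seminorms of $E$, so that $\set{f \in E}{\OpInf{\FAbl{f}} < 1}$ is an open zero-neighbourhood. On it $\FAbl{(\id_{\SX}+f)}(x) = \id_{\SX} + \FAbl{f}(x)$ is invertible for every $x$, and a contraction argument shows that $\id_{\SX}+f$ is a bi-Lipschitz bijection of $\SX$, hence a global $C^\infty$-diffeomorphism by the inverse function theorem in Banach spaces. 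The delicate point is that the inverse again lies in $\DiffW$: with $h \ndef (\id_{\SX}+f)^{-1} - \id_{\SX}$ one has the fixed-point identity $h = -\,f\circ(\id_{\SX}+h)$, which I would differentiate repeatedly and estimate against every weight $w \in \GewFunk$, bounding $w\cdot\Opnorm{\FAbl[\ell]{h}}$ in terms of the weighted norms of $f$, to conclude $h \in E$. Openness of $\Omega$ about an arbitrary point then follows either from the same estimates or, a posteriori, by translating with the group operations once their smoothness is known.

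Next I would prove smoothness of multiplication and inversion in the chart. Through $\Theta$ these become $(f,g) \mapsto g + f\circ(\id_{\SX}+g)$ on an open subset of $E\times E$ and $f \mapsto (\id_{\SX}+f)^{-1}-\id_{\SX}$ on $\Omega$; the additive terms are continuous linear, and the composition (pushforward) terms are smooth by the weighted composition lemmas established earlier, so both maps are smooth. This makes $\DiffW$ a Lie group modelled on $E = \CcF{\SX}{\SX}{\infty}$, with Lie algebra $E$ carrying (up to sign) the usual bracket of vector fields. Alternatively, smoothness of inversion can be extracted from the fixed-point equation for $h$ via a parameter-dependent inverse function theorem, which also re-proves openness of $\Omega$.

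For regularity in Milnor's sense I would solve the right logarithmic differential equation in the chart. Given $\eta \in C^\infty([0,1],E)$, the equation for the evolution is the flow equation of the time-dependent vector field $\eta$, namely $\partial_t\gamma(t)(x) = \eta(t)\bigl(\gamma(t)(x)\bigr)$ with $\gamma(0) = \id_{\SX}$. As $\SX$ is a Banach space, existence, uniqueness, global solvability on $[0,1]$ and smooth dependence on $x$ and on the parameter $\eta$ follow from the standard theory of ordinary differential equations in Banach spaces. Gr\"onwall-type estimates against the weights in $\GewFunk$ then show that $\gamma(t) - \id_{\SX} \in E$ for every $t$, so $\gamma(t) \in \DiffW$ and $t\mapsto\gamma(t)$ is smooth, and the smooth dependence on $\eta$ yields smoothness of the evolution map $\eta\mapsto\gamma(1)$. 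I expect these weighted estimates — keeping the flow together with all its derivatives inside $E$ uniformly in $t$ — to be, alongside the invertibility step above, the technically heaviest part of the proof.
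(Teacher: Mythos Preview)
Your overall architecture---single global chart $\phi\mapsto\phi-\id_\SX$, smoothness of multiplication via the weighted composition map $(f,g)\mapsto f\circ(\id_\SX+g)$, openness via the neighbourhood $\{\OpInf{\FAbl f}<1\}$, membership of the inverse via the fixed-point identity $h=-f\circ(\id_\SX+h)$ differentiated against every weight---matches the paper's approach closely. The one genuine gap is your argument for \emph{smoothness} of inversion. Saying it follows from ``the composition lemmas'' is not enough: the map $f\mapsto h$ is implicit, not a composition. Your fallback, a parameter-dependent inverse function theorem, is not available here because $E=\CcF{\SX}{\SX}{\infty}$ is in general only a Fr\'echet (or worse) space, and there is no implicit function theorem for $C^\infty_c$-maps between such spaces. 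The paper instead writes down an explicit integral formula for the difference quotient of $I_\GewFunk(f)=(\id_\SX+f)^{-1}-\id_\SX$, passes to the limit to get $\dA{I_\GewFunk}{}{}$ in closed form (built from the already-smooth maps $g_{\SX,\infty}$, $\FAbl{}$, $\eval$), and bootstraps to $C^\infty$. This explicit step, not an abstract IFT, is what makes the argument go through.

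For regularity your route diverges from the paper's. You propose to solve the pointwise flow $\partial_t\gamma(t,x)=\eta(t)(\gamma(t,x))$ in the Banach space $\SX$ and then show by weighted Gr\"onwall estimates that $\gamma(t)-\id_\SX\in E$. The paper instead writes the equation directly in the function spaces: it exploits the projective limit $\CcF{\SX}{\SX}{\infty}=\varprojlim_{\cF,k}\CF{\SX}{\SX}{\cF}{k}$ over finite $\cF\ni 1_\SX$ and $k\in\N$, solves the ODE $\Gamma'(t)=p(t)\circ(\Gamma(t)+\id_\SX)$ in each Banach space $\CF{\SX}{\SX}{\cF}{k}$ (global Lipschitz for $k=0$, then an inductive trapping argument for higher $k$), checks compatibility with the bonding maps, and descends to the limit. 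The payoff of the paper's approach is that smooth dependence on the parameter $p$ (hence smoothness of $\mathrm{evol}$) comes for free from standard Banach ODE theory at each level, whereas in your pointwise approach you would still need to upgrade ``$\gamma(t)-\id_\SX\in E$ for each $t$'' to ``$t\mapsto\gamma(t)-\id_\SX$ is smooth into $E$ and depends smoothly on $\eta$''---a step you do not address and which is not a direct consequence of Gr\"onwall.
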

Replacing $\CcF{\SX}{\SX}{\infty}$ by the subspace of functions $\gamma$
such that $f(x) \cdot \Opnorm{\FAbl[\ell]{\gamma}(x)} \to 0$ as $\norm{x} \to \infty$,
we obtain a subgroup $\DiffWvan$ of $\DiffW$ which also is a Lie group
(see \refer{prop:DiffWvan_UnterLiegruppen_von_DiffW}).

As for mapping groups, we first consider mappings into Banach Lie groups.
In \refer{sec:gewAbbildungsgruppen_Banach-Lie} we show
\begin{thmOhneNummer}
	Let $\SX$ be a normed space, $\UF \subseteq \SX$ an open nonempty subset,
	$\GewFunk \subseteq \cl{\R}^\UF$ with $1_\UF \in \GewFunk$, $k \in \cl{\N}$
	and $\G$ be a Banach Lie group.
	Then there exists a connected Lie group $\CcF{\UF}{\G}{k} \sub \G^\UF$
	modelled on $\CcF{\UF}{\LieAlg{\G} }{k}$, and this Lie group is regular.
\end{thmOhneNummer}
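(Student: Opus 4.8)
The plan is to transport the Lie group structure of $\G$ to the mapping group fibrewise, using a single chart of $\G$ around the identity together with the smoothness of superposition operators on weighted function spaces. Write $E \ndef \CcF{\UF}{\LieAlg{\G}}{k}$; since $\G$ is Banach, $\LieAlg{\G}$ is a Banach space and $E$ is a locally convex space by the results established earlier. Fix a chart $\kappa\colon W \to V$ of $\G$ with $\kappa(e)=0$, where $W \sub \G$ is an open identity neighbourhood and $V \sub \LieAlg{\G}$ is open and convex. One first checks that $H \ndef \CcF{\UF}{\G}{k}$ is a subgroup of $\G^\UF$ under pointwise operations; locally, i.e.\ for maps with values in $W$, this is read off by expressing the multiplication and inversion of $\G$ through $\kappa$ and applying the superposition theorem for the spaces $\CcF{\UF}{\cdot}{k}$.

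The candidate chart around the constant map $\mathbf{e}\colon x \mapsto e$ is
\[
\varphi\colon \set{\gamma \in H}{\gamma(\UF) \sub W} \to \CcF{\UF}{V}{k},
\qquad \varphi(\gamma) \ndef \kappa \circ \gamma,
\]
a bijection onto the open subset $\CcF{\UF}{V}{k}$ of $E$ (openness holds because $V$ is open and $1_\UF \in \GewFunk$ supplies the governing sup-bounds). Charts around arbitrary $g \in H$ are obtained by left translation, $\varphi_g(\gamma) \ndef \varphi(g^{-1}\gamma)$. To invoke the standard criterion producing a Lie group from such a chart around the identity (the machinery already used for $\DiffW$ in \refer{satz:DiffW_Lie-Gruppe}), it remains, after shrinking domains so that the relevant products stay in $W$, to verify that multiplication, inversion, and the conjugation maps are smooth when read through $\varphi$.

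This is the \emph{crux} of the argument. In the chart, multiplication becomes $(\gamma,\eta) \mapsto h \circ (\gamma,\eta)$ with $h \ndef \kappa \circ \mu_\G \circ (\kappa^{-1}\times\kappa^{-1})$ the smooth local multiplication of $\G$, defined on a neighbourhood of $0$ in $\LieAlg{\G}\times\LieAlg{\G}$; inversion and conjugation are analogous. Their smoothness reduces to that of the superposition operator $\zeta \mapsto h\circ\zeta$ in the weighted $C^k$-topology, and establishing this is the main obstacle: one must control $f \cdot \Opnorm{\FAbl[\ell]{(h\circ\zeta)}}$ for all $f \in \GewFunk$ and all $\ell \le k$ simultaneously. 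Via the higher chain rule (Fa\`a di Bruno's formula), $\FAbl[\ell]{(h\circ\zeta)}$ is expressed through the derivatives of $h$ and the weighted derivatives of $\zeta$; the boundedness of $\zeta$ forced by $1_\UF \in \GewFunk$ confines the arguments of $h$ to a fixed bounded set, so all derivatives of $h$ that occur are uniformly bounded. This is precisely where the Banach hypothesis on $\G$ is used. Granting the superposition theorem proved earlier for these spaces, smoothness of the chart-read operations, and hence of $H$ as a Lie group modelled on $E$, follows.

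For regularity I follow the scheme of \refer{satz:Gewichtete_Diffeos_regulaere_Liegruppen} and solve the evolution equation pointwise: given $\xi \in C^\infty([0,1],E)$, continuity of the point evaluations $E \to \LieAlg{\G}$ yields for each $x \in \UF$ a smooth curve $\xi(\cdot)(x)$ in $\LieAlg{\G}$, and the regularity of the Banach Lie group $\G$ provides a unique solution $\eta_x \in C^\infty([0,1],\G)$ with $\eta_x(0)=e$. Setting $\eta(t)(x) \ndef \eta_x(t)$ defines the candidate evolution, and the work is to show $\eta(t) \in H$ for every $t$ and that $\mathrm{Evol}\colon \xi \mapsto \eta(1)$ is smooth into $H$; this rests again on the smooth parameter dependence of the $\G$-evolution combined with the weighted estimates, exactly as in the diffeomorphism case. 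Finally, since $V$ is convex the image $\CcF{\UF}{V}{k}$ is convex, so the chart domain $N \ndef \varphi^{-1}(\CcF{\UF}{V}{k})$ is a path-connected identity neighbourhood; as $\CcF{\UF}{\G}{k}$ is by its definition generated by such near-identity maps, $H = \langle N \rangle$, and a topological group generated by a connected identity neighbourhood is connected. The principal difficulty throughout is the weighted superposition smoothness of the third paragraph; once it and the regularity of $\G$ are in hand, the remainder is the standard translation of local Lie group data.
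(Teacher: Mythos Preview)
Your construction sketch is close to the paper's, but note a slip: $\CcF{\UF}{V}{k}$ is \emph{not} open in $E$ --- a map whose image accumulates at $\partial V$ has no neighbourhood of weighted maps staying inside $V$. The paper works instead with $\CcFo{\UF}{\phi(\WF)}{k}$, the maps whose image has positive distance from the boundary; this set is open precisely because $1_\UF \in \GewFunk$, and it is the correct domain for the superposition results (\refer{lem:BC0_operiert_stetig_auf_CW}, \refer{prop:Operation_analytischer_Abb_auf_CWk}). With that adjustment your construction is essentially the paper's. The paper additionally exploits that Banach Lie groups are automatically real analytic and uses the analytic superposition theorem, but your smooth-superposition route via \refer{prop:BC0_operiert_glatt_auf_CW} would work as well. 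You also gloss over the inversion, which is the fiddly part: one needs a \emph{symmetric} zero-neighbourhood stable under $I_L$-superposition, and the paper manufactures one in \refer{lem:Inversion_auf_gewichteten_Abbildungsraumen} by an iterated shrinking.

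For regularity there is a genuine gap. Your pointwise scheme is \emph{not} the scheme of \refer{satz:Gewichtete_Diffeos_regulaere_Liegruppen}, and the step you yourself flag as ``the work'' --- showing that the pointwise evolution $\eta(t)$ lies in $H$ and that $\xi \mapsto \eta(1)$ is smooth --- is the entire difficulty, for which you give no argument. Smooth parameter dependence of the $\G$-evolution gives no a priori control over $\hn{\kappa\circ\eta(t)}{f}{\ell}$ for $\ell \geq 1$ and general $f \in \GewFunk$. The paper proceeds instead via the projective-limit structure: for each finite $\cF \subseteq \GewFunk$ with $1_\UF \in \cF$ and each finite $\ell \leq k$, the group $\CF{\UF}{\G}{\cF}{\ell}$ is a \emph{Banach} Lie group, hence regular with smooth evolution. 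One computes evolutions $\eta_{\cF,\ell}$ there, checks their compatibility under the inclusion morphisms (\refer{def:Inklusion_Diagramme_und_Lie-Funktoren}), and assembles them into an evolution in $\CcF{\UF}{\G}{k}$ via \refer{prop:CW_projektiver_LB-Raum} and \refer{prop:Differenzierbarkeit_Abb_in_projektiven_Limes}. A uniform bound on $\G$ (\refer{lem:Nette_Reg-DGL_Abschaetzung_BLG}) keeps all $\eta_{\cF,\ell}$ inside a fixed chart domain, so the assembled curve actually lands in $\CcFo{\UF}{\phi(\widetilde{\VF})}{k}$; smoothness of the full evolution map then follows from smoothness at each Banach level.
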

Using the natural action of diffeomorphisms on functions,
we always form the semidirect product $\CcF{\SX}{\G}{\infty} \rtimes \DiffW$
and make it a Lie group.

In the case of finite-dimensional domains, we can even discuss mappings into
arbitrary Lie groups modelled on locally convex spaces.
To this end, given a locally convex space $\SY$ and
an open subset $\UF$ in a finite-dimensional vector space $\SX$
we define a certain space $\CcFvanK{\UF}{\SY}{k}$ of $\ConDiff{}{}{k}$-maps
which decay as we approach the boundary of $\UF$, together with their derivatives
(see \refer{defi:Definition_CFvanK} for details).
We obtain the following result
\begin{thmOhneNummer}
	Let $\SX$ be a finite-dimensional space, $\UF \subseteq \SX$ an open nonempty subset,
	$\GewFunk \subseteq \cl{\R}^\UF$ with $1_\UF \in \GewFunk$, $k \in \cl{\N}$
	and $\G$ be a locally convex Lie group.
	Then there exists a connected Lie group $\CcFvanK{\UF}{\G}{k} \sub \G^\UF$
	modelled on $\CcFvanK{\UF}{\LieAlg{\G} }{k}$.
\end{thmOhneNummer}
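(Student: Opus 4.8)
The plan is to follow the same strategy as in the Banach Lie group case treated in \refer{susec:gewAbbildungsgruppen_Banach-Lie}: equip $\CcFvanK{\UF}{\G}{k}$ with a single chart around the neutral element, verify that the group operations are smooth in this chart, and then invoke the standard local description of Lie groups (as used there) that turns such local data into a global Lie group structure. The only structural difference is that $\G$ now carries merely a locally convex manifold structure, so that the exponential map need no longer be a local diffeomorphism and must be replaced by an arbitrary chart of $\G$. Throughout, $E \ndef \CcFvanK{\UF}{\LieAlg{\G}}{k}$ will be the modelling space; that it is a locally convex space is part of \refer{defi:Definition_CFvanK} and the results accompanying it.

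Fix a chart $\varphi\colon V \to \LieAlg{\G}$ of $\G$ with $e \in V$, $\varphi(e)=0$ and $\varphi(V)$ open and convex, where $e$ denotes the identity of $\G$ (shrink $V$ if necessary). First I would consider $\mathcal{O} \ndef \set{\gamma \in \CcFvanK{\UF}{\G}{k}}{\gamma(\UF) \sub V}$ together with $\Phi\colon \mathcal{O} \to E$, $\gamma \mapsto \varphi \circ \gamma$, and show that $\Phi$ is a bijection onto the set $\set{\eta \in E}{\eta(\UF)\sub\varphi(V)}$ — which is open because $1_\UF\in\GewFunk$ renders the supremum norm continuous on $E$ — with inverse $\eta \mapsto \varphi^{-1}\circ\eta$. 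The point to check is that post-composition with the smooth maps $\varphi$ and $\varphi^{-1}$ preserves both membership in the weighted spaces and the boundary--vanishing condition encoded by the symbol $\bullet$; here the push-forward (superposition) results established earlier for these function spaces enter, and they also yield that $\Phi$ is a homeomorphism onto its image.

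The technical heart is the smoothness of the group operations read through $\Phi$. Writing $\beta(x,y) \ndef \varphi\bigl(\varphi^{-1}(x)\,\varphi^{-1}(y)\bigr)$ and $\kappa(x) \ndef \varphi\bigl(\varphi^{-1}(x)^{-1}\bigr)$ for the multiplication and inversion of $\G$ expressed in the chart $\varphi$, the induced operations on $\mathcal{O}$ become the push-forwards $(\eta,\zeta) \mapsto \beta\circ(\eta,\zeta)$ and $\eta \mapsto \kappa\circ\eta$. I would deduce their smoothness from the general fact, established earlier for these function spaces, that post-composition by a smooth map between open subsets of locally convex spaces induces a smooth map between the corresponding weighted function spaces; applied with a fixed group element as a parameter, the same fact shows that conjugation $\gamma \mapsto \gamma_0\,\gamma\,\gamma_0^{-1}$ by any $\gamma_0 \in \CcFvanK{\UF}{\G}{k}$ is smooth near the constant map $x\mapsto e$, which is the compatibility condition needed to run the construction on the whole group rather than on a mere subgroup. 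I expect this superposition step, together with checking that it respects the $\bullet$-condition, to be the main obstacle; everything depends on this push-forward theorem, and it is precisely here that finite-dimensionality of the domain $\SX$ is indispensable.

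With these smoothness statements in hand, the local description principle provides a unique Lie group structure on $\CcFvanK{\UF}{\G}{k}$ modelled on $E$ for which $\Phi$ is a chart. It remains to prove connectedness, which I would reduce to showing that $\mathcal{O}$ generates the group. The identity neighbourhood $\mathcal{O}$ is connected, since the straight-line homotopy $s \mapsto \varphi^{-1}\circ\bigl(s\,(\varphi\circ\gamma)\bigr)$ joins any $\gamma\in\mathcal{O}$ to the constant map $x\mapsto e$ within $\mathcal{O}$, using convexity of $\varphi(V)$. For an arbitrary $\gamma$, the key observation — special to finite-dimensional $\SX$ — is that the boundary--vanishing condition lets $\gamma$ be extended continuously by the value $e$ to a compactification of $\UF$, so that $\cl{\gamma(\UF)}$ is a compact subset of the identity component $\G_e$; this allows $\gamma$ to be written as a finite product of elements of $\mathcal{O}$ and hence exhibits $\CcFvanK{\UF}{\G}{k}$ as the subgroup generated by the connected set $\mathcal{O}$, which is therefore connected. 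I regard this generation step as the secondary difficulty, after the superposition-operator smoothness.
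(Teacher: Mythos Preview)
Your core strategy---pull the group operations back through a centered chart $\varphi$ and invoke the superposition result (\refer{prop:Superposition_glatter_Abb_auf_weig-van_Abb}) to get smoothness, then apply \refer{lem:Erzeugung_von_Liegruppen_aus_lokalen}---is exactly what the paper does in \refer{lem:Multiplikation_von_gewichteten_Abbildungsraumen-lokalkonvex}, \refer{lem:Inversion_auf_gewichteten_Abbildungsraumen-lokalkonvex} and \refer{lem:Liegruppenstruktur_auf_gewichteten_Abbildungsgruppen;Zusammenhangskomponente-lokalkonvex}. However, you have tangled yourself in two unnecessary difficulties by treating $\CcFvanK{\UF}{\G}{k}$ as an a priori given set on which a Lie group structure must be installed. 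The paper does the opposite: it \emph{defines} $\CcFvanK{\UF}{\G}{k}$ to be the subgroup of $\G^\UF$ generated by $\varphi^{-1}\circ\CcFvanK{\UF}{\varphi(\WF)}{k}$ (see \refer{def:Zsghd_gewichtete_Abb_Gruppe-lokalkonvex}). With that definition, only conditions (a) and (b) of \refer{lem:Erzeugung_von_Liegruppen_aus_lokalen} are needed, so your conjugation step is superfluous; and connectedness is immediate from \refer{lem:Konvexe_Mengen_im_gewichteten_Funktionenraum-lokalkonvex_van}, since the group is generated by a connected identity neighbourhood. Your proposed ``generation'' argument---factoring an arbitrary $\gamma$ as a finite product of maps landing in $V$---is precisely the content of \refer{susec:LArger_group_CWvanK}, where it requires substantial additional machinery (\refer{prop:Zerlegung_von_dicken_Abb-gruppen-Elementen}, \refer{lem:Sternoperation_auf_gew.van.Abb}); it is not needed for the theorem as stated.

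There is also a genuine gap in your conjugation sketch: conjugation by a fixed $\gamma_0$ is \emph{not} a superposition by a single smooth map $\LieAlg{\G}\to\LieAlg{\G}$, because the conjugating element $\gamma_0(u)$ varies with the base point $u\in\UF$. The relevant operator is $\eta\mapsto\bigl(u\mapsto\varphi(\gamma_0(u)\,\varphi^{-1}(\eta(u))\,\gamma_0(u)^{-1})\bigr)$, and \refer{prop:Superposition_glatter_Abb_auf_weig-van_Abb} does not apply to it directly; the paper handles this in \refer{lem:Normalteiler-Operation_dicke_Gruppen_ist_lokal_glatt-lokalkonvex} via the decomposition just mentioned. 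Finally, your openness argument (``the supremum norm is continuous on $E$'') only works when $\LieAlg{\G}$ is normable. For a general locally convex target there is no single supremum norm; the correct reason that $\CcFvanK{\UF}{\varphi(V)}{k}$ is open is \refer{lem:Bild_einer_verschwindenden_gewichteten_Abb_ist_kompakt}: the image $\gamma(\UF)\cup\{0\}$ is compact in the open set $\varphi(V)$, hence admits a basic $0$-neighbourhood $\{p<\varepsilon\}$ with $\gamma(\UF)+\{p<\varepsilon\}\subseteq\varphi(V)$, and then $\{\eta:\hn{\gamma-\eta}{p,1_\UF}{0}<\varepsilon\}$ is the required open neighbourhood.
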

We also discuss certain larger subgroups of $\G^\UF$ admitting Lie group structures
that make $\CcFvanK{\UF}{\G}{k}$ an open normal subgroup (see \refer{susec:LArger_group_CWvanK}).

Finally, we consider Lie groups $G$ acting smoothly on a Banach space $\SX$.
We investigate when the $G$-action leaves the identity component $\DiffWz$
of $\DiffW$ invariant and whether $\DiffWz \rtimes G$ can be made a Lie group in this case.
In particular, we show that $\Diff{\R^n}{}{\mathcal{S}}_0 \rtimes \GL{\R^n}$
is a Lie group for each $n$ (\refer{beisp:GL_operiert_glatt_auf_DiffS}).
By contrast, $\GL{\R^n}$ does not leave $\Diff{\R^n}{}{ \sset{1_{\R^n} } }$ invariant
(\refer{beisp:GL_operiert_NICHT_auf_DiffBC}).

We mention that certain weighted mapping groups on finite-dimensional spaces (consisting of smooth mappings)
have already been discussed in \cite[\S 4.2]{MR654676} assuming additional hypotheses
on the range group (cf. \refer{bem:BCR}).
Besides the added generality, we provide a more complete discussion
of superposition operators on weighted function spaces.

In the case where $\cW = \sset{1_\SX}$, our group $\DiffW$ also has a counterpart
in the studies of Jürgen Eichhorn and collaborators (\cite{MR1856078}, \cite{MR1392288}, \cite{MR2343536}),
who studied certain diffeomorphism groups on non-compact manifolds with bounded geometry.

Semidirect products of diffeomorphism groups and function spaces on compact manifolds
arise in Ideal Magnetohydrodynamics (see \cite[\RN{2}.3.4]{MR2456522}).
Further, the group $\mathcal{S}(\R^n) \rtimes \Diff{\R^n}{}{\mathcal{S}}$
and its continuous unitary representations are encountered in Quantum Physics
(see \cite{Goldin2004}; cf.\ also \cite[\S 34]{MR1393939} and the references therein).
\chapter{Preliminaries and notation}
We give some notation and basic definitions.
More details are provided in the appendix,
as is a list of symbols used in this work.
\section{Notation}
We write $\glstext{extrealnumbers} \ndef \R\cup\{-\infty,\infty\}$,%
$\glstext{extnaturalnumbers} \ndef \N \cup\{\infty\}$%
and $\glstext{naturalnumbersOhneNull} \ndef \N \setminus\{0\}$.
Further we denote norms by $\norm{\cdot}$.
\begin{defi}
	Let $A, B$ be subsets of the normed space $\SX$.
	As usual, the \emph{distance} of $A$ and $B$ is defined as
	\begin{equation*}
		\glstext{dist}
		\ndef\inf\{\norm{a-b}: a \in A, b \in B \} \in [0,\infty].
	\end{equation*}
	Thus $\dist{A}{B} = \infty$ iff $A = \emptyset$ or $B = \emptyset$.
	
	Further, for $x \in \SX$ and $r \in \R$ we define
	\[
		\glstext{openBallother} \ndef
		\{y \in \SX : \norm{y - x} < r\}
	\]
	Occasionally, we just write $\glstext{openBall}$ instead of $\Ball[\SX]{x}{r}$.
	For the closed ball, we write $\glstext{closedBall}$ and the like.%
	
	Further, we define
	\[
		\glstext{Disk} \ndef \clBall[\K]{0}{1},
	\]
	where $\glstext{Koerper} \in \sset{\R,\C}$.
	No confusion will arise from this abuse of notation.
\end{defi}

\section{Differential calculus of maps between locally convex spaces}
\label{sec:Helge_diffbar--vorne}
We give basic definitions for the differential calculus for maps between locally convex spaces
that is known as Kellers $C^k_c$-theory.
More results can be found in \refer{sec:Helge_diffbar}.

\begin{defi}[Directional derivatives]
	Let $\SX$ and $\SY$ be locally convex spaces,
	$\UF \subseteq \SX$ an open nonempty set,
	$u\in\UF$, $x\in\SX$ and $f:\UF\to\SY$ a map.
	The \emph{derivative of $f$ at $u$ in the direction $x$}
	is defined as
	\[
		\lim_{\substack{t\to 0\\ t \in \K^\ast}}\frac{f(u + t x) - f(u)}{t}
			\defn (D_x f)(u) \defn \dA{f}{u}{x},
	\]
	whenever that limit exists.
\end{defi}

\begin{defi}
	Let $\SX$ and $\SY$ be locally convex spaces, $\UF \subseteq \SX$ an open nonempty set,
	and $f:\UF\to\SY$ be a map.
	
	We call $f$ a \emph{$\ConDiff[\K]{}{}{1}$-map} or just \emph{$\ConDiff[\K]{}{}{1}$}
	if $f$ is continuous,
	the derivative $\dA{f}{u}{x}$ exists for all $(u,x)\in \UF\times\SX$
	and the map $\dA{f}{}{}: \UF\times\SX \to\SY$ is continuous.
	
	Inductively, for a $k\in\N$ we call $f$ a \emph{$\ConDiff[\K]{}{}{k}$-map}
	or just \emph{$\ConDiff[\K]{}{}{k}$}
	if $f$ is a $\ConDiff[\K]{}{}{1}$-map and
	$d^{1}f \ndef \dA{f}{}{} :\UF\times\SX\to\SY$
	is a $\ConDiff[\K]{}{}{k - 1}$-map.
	In this case, the \emph{$k$-th iterated differential} of $f$ is defined by
	\[
		d^k f\ndef d^{k-1}(d f)
		: \UF \times \SX^{2^k - 1} \to \SY .
	\]
	If $f$ is a \emph{$\ConDiff[\K]{}{}{k}$-map} for each $k\in\N$,
	we call $f$ a \emph{$\ConDiff[\K]{}{}{\infty}$-map} or just
	\emph{$\ConDiff[\K]{}{}{\infty}$} or \emph{smooth}.
	
	Further, for each $k \in \cl{\N}$ we define
	\[
		\ConDiff[\K]{\UF}{\SY}{k}
		\ndef \{f:\UF \to \SY \;|\; \text{$f$ is $\ConDiff[\K]{}{}{k}$}\}.
	\]
	Often, we shall simply write $\glstext{k_times_diffable_maps}$,
	$\ConDiff{}{}{k}$ and the like.
\end{defi}

It is obvious from the definition of differentiability
that iterated directional derivatives exist and depend continuously
on the directions.
The converse of this assertion also holds.
\begin{prop}\label{prop:hohe_Ableitungen_d}
	Let $f:\UF\to\SY$ be a continuous map and $r\in \cl{\N}$.
	Then $f \in \ConDiff{\UF}{\SY}{r}$ iff
	for all $u\in\UF$, $k\in\N$ with $k\leq r$ and $x_1, \dotsc, x_k \in \SX$
	the iterated directional derivative
	\[
		\dA[k]{f}{u}{x_1,\dotsc,x_k} \ndef (D_{x_k}\dotsm D_{x_1} f)(u)
	\]
	exists and the map
	\[
		\UF\times\SX^k \to \SY
		: (u, x_1,\dotsc,x_k)\mapsto \dA[k]{f}{u}{x_1,\dotsc,x_k}
	\]
	is continuous.
	We call $\glstext{kth_iterated_derivative}$ the \emph{$k$-th derivative of $f$}.
\end{prop}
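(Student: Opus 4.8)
The plan is to prove both implications by induction on $r\in\N$, and to deduce the case $r=\infty$ afterwards from $\ConDiff{\UF}{\SY}{\infty}=\bigcap_{k\in\N}\ConDiff{\UF}{\SY}{k}$ together with the observation that the condition stated for $r=\infty$ is exactly the conjunction over all $k\in\N$ of the conditions for finite $r=k$. The base case $r=0$ is trivial, since both sides then reduce to the standing hypothesis that $f$ is continuous. Throughout one must carefully keep the two notions apart: the full iterated differential $d^{k}f\colon\UF\times\SX^{2^{k}-1}\to\SY$ has $2^{k}-1$ direction slots, whereas the iterated directional derivative $\dA[k]{f}{}{}$ has only $k$ of them.

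For the forward implication at level $r$, suppose $f\in\ConDiff{\UF}{\SY}{r}$. I would show that for each $k\le r$ the derivative $\dA[k]{f}{}{}$ is obtained from $d^{k}f$ by filling some of its $2^{k}-1$ direction slots with $x_{1},\dots,x_{k}$ and the remaining ones with $0$; for instance $\dA[2]{f}{u}{x_{1},x_{2}}$ equals $d^{2}f$ evaluated at base point $u$ with the three direction arguments $x_{1},x_{2},0$. This insertion formula follows by a short induction on $k$ from the recursion $d^{k}f=d^{k-1}(d^{1}f)$. Since $f\in\ConDiff{\UF}{\SY}{k}$ for every $k\le r$, each $d^{k}f$ exists and is continuous, and the insertion map is affine and continuous; hence the $\dA[k]{f}{}{}$ exist and are continuous.

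The substantial direction is the converse, which I would prove by induction on $r$, the inductive hypothesis being the full equivalence at level $r-1$ (for all maps). The case $k=1$ of the assumption gives that $d^{1}f=\dA{f}{}{}$ exists and is continuous, so $f$ is $\ConDiff{}{}{1}$; by definition it then remains to show that $d^{1}f$, regarded as a map $\UF\times\SX\to\SY$, is a $\ConDiff{}{}{r-1}$-map, and since $d^{1}f$ is continuous the inductive hypothesis reduces this to checking that the iterated directional derivatives of $d^{1}f$ up to order $r-1$ exist and are continuous. The key is to rewrite these through the iterated directional derivatives of $f$. Using the standard linearity of $\dA{f}{u}{\cdot}$ from the $\ConDiff{}{}{1}$-calculus, I split, for $(y,z)\in\SX\times\SX$,
\[
	\tfrac{1}{t}\big(\dA{f}{u+ty}{x+tz}-\dA{f}{u}{x}\big)
	=\dA{f}{u+ty}{z}+\tfrac{1}{t}\big(\dA{f}{u+ty}{x}-\dA{f}{u}{x}\big),
\]
and let $t\to 0$: the first summand tends to $\dA{f}{u}{z}$ by continuity of $d^{1}f$, the second to $\dA[2]{f}{u}{x,y}$ by definition. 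Thus the directional derivative of $d^{1}f$ in direction $(y,z)$ is $\dA{f}{u}{z}+\dA[2]{f}{u}{x,y}$, which exists and is continuous.

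Iterating this split is where the work lies. Since each term $\dA[m]{f}{u}{x,\ldots}$ is linear in the slot inherited from $x$ (because $\dA{f}{u}{\cdot}$ is linear and the directional-derivative operators are linear), differentiating it in a direction $(y_{i},z_{i})$ produces exactly two contributions: an order-preserving one in which the $x$-slot is replaced by $z_{i}$ (exact, by linearity), and an order-raising one obtained by appending $y_{i}$ as a new directional argument. Tracking these, every $j$-fold iterated directional derivative of $d^{1}f$ with $j\le r-1$ becomes a finite sum of derivatives $\dA[m]{f}{}{}$ of $f$ of orders $m\le j+1\le r$, each existing and continuous by assumption; the inductive hypothesis then yields that $d^{1}f$ is $\ConDiff{}{}{r-1}$, hence $f\in\ConDiff{}{}{r}$. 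I expect the main obstacle to be precisely this bookkeeping: verifying that every term produced by the repeated splitting is genuinely an admissible iterated directional derivative of $f$ of order at most $r$, and not some uncontrolled mixed expression. The linearity of $\dA{f}{u}{\cdot}$ is the one ingredient beyond the bare definitions, and I would take it from the $\ConDiff{}{}{1}$-calculus recalled in \refer{susec:Helge_diffbar}.
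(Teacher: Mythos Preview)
The paper does not actually prove this proposition: it is stated in the preliminaries section, and the appendix explicitly defers all proofs of the differential-calculus facts to the references \cite{MR830252}, \cite{MR656198}, \cite{MR583436}. So there is no in-paper argument to compare against.

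Your proof is correct and is essentially the standard one found in those references. The forward direction via inserting zeros into the extra slots of $d^{k}f$ is clean; your sample computation $\dA[2]{f}{u}{x_{1},x_{2}}=d^{2}f\big((u,x_{1});(x_{2},0)\big)$ checks out against the formula you derive later. For the converse, reducing to $d^{1}f\in\ConDiff{}{}{r-1}$ and then expressing the $j$-th iterated directional derivative of $d^{1}f$ as a finite sum of terms $\dA[m]{f}{u}{\ldots}$ with $m\le j+1$ is exactly the right mechanism, and your two-term split (order-preserving via linearity in the $x$-slot, order-raising via differentiation in $u$) is correct.

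Two small remarks on the bookkeeping you flag. First, after each step the variable $x$ always occupies the \emph{first} slot of $\dA[m]{f}{u}{x,\ldots}$, so you only ever need linearity of $\dA[m]{f}{u}{\cdot,\ldots}$ in that first argument; this follows directly from linearity of $\dA{f}{u}{\cdot}$ and the fact that $D_{x_{m}}\cdots D_{x_{2}}$ act linearly on functions, so you do not need the full multilinearity from Schwarz's theorem (which the paper only records afterwards). Second, when you pass to the limit termwise, the existence of the order-raising limit is precisely the hypothesis that $\dA[m+1]{f}{}{}$ exists, while the order-preserving term converges by continuity of $\dA[m]{f}{}{}$; so existence and continuity are both covered, and the induction closes.
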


\section{Fr\'echet differentiability}
\label{sec:Frechetbarkeit--vorne}
We give basic definitions for Fr\'echet differentiability for maps between normed spaces.
More results can be found in \refer{sec:Frechetbarkeit}.
\begin{defi}[Fr\'echet differentiability]\label{def:Frechet_Diffbarkeit}
	Let $\SX$ and $\SY$ be normed spaces and
	$\UF$ an open nonempty subset of $\SX$.
	We call a map $\gamma:\UF\to\SY$ \emph{Fr\'echet differentiable}
	or $\FC{}{}{1}$
	if it is a $\ConDiff{}{}{1}$-map and the map
	\[
		\FAbl{\gamma} :\UF\to \Lin{\SX}{\SY}: x\mapsto \dA{\gamma}{x}{\cdot}
	\]
	is continuous.
	Inductively, for $k \in \N^\ast$ we call $\gamma$
	a $\FC{}{}{k + 1}$-map if it is Fr\'echet differentiable
	and $\FAbl{\gamma}$ is a $\FC{}{}{k}$-map.
	We denote the set of all $k$-times Fr\'echet differentiable
	maps from $\UF$ to $\SY$ with $\glstext{k_times_Frechet-diffable_maps}$. Additionally, we define
	the \emph{smooth} maps by
	\[
		\FC{\UF}{\SY}{\infty} \ndef
			\bigcap_{k\in\N^\ast}\FC{\UF}{\SY}{k}
	\]
	and
	$\FC{\UF}{\SY}{0} \ndef \ConDiff{\UF}{\SY}{0}$.
	The map
	\[
		\FAbl{} :
		\FC{\UF}{\SY}{k+1}\to\FC{\UF}{\Lin{\SX}{\SY}}{k}
		:\gamma \mapsto \FAbl{\gamma}
	\]
	is called \emph{derivative operator}.
\end{defi}

\begin{bem}
	Let $\SX$ and $\SY$ be normed spaces,
	$\UF$ an open nonempty subset of $\SX$,
	$k \in \N^\ast$ and $\gamma \in \FC{\UF}{\SY}{k}$.
	Then for each $\ell \in \N^\ast$ with $\ell \leq k$
	there exists a continuous map
	\[
		\glsdisp{kth_iterated_Frechet-derivative}{\FAbl[\ell]{\gamma}} : \UF \to \Lin[\ell]{\SX}{\SY},
	\]%
	where $\Lin[\ell]{\SX}{\SY}$ denotes
	the space of $\ell$-linear maps $\SX^\ell \to \SY$,
	endowed with the operator topology.
	The map $\FAbl[\ell]{\gamma}$ can be described more explicitly.
	If $\gamma \in \FC{\UF}{\SY}{k}$,
	also $\gamma \in \ConDiff{\UF}{\SY}{k}$ holds,
	and for each $x\in\UF$ we have the relation
	\[
		\FAbl[k]{\gamma}(x) = \dA[k]{\gamma}{x}{\cdot}.
	\]
\end{bem}

\chapter{Weighted function spaces}
\label{chp:Modellraeume}
In this chapter we give the definition of some locally convex vector spaces
consisting of weighted functions.
The Lie groups that are constructed in this work will be modelled on these
spaces.
We first discuss maps between normed spaces. In \refer{sec:GewAbb_Bildbereich-lokalkonvex},
we will also look at maps that take values in arbitrary locally convex spaces.
The treatment of the latter spaces requires some rather technical effort.
Since these function spaces are only needed in \refer{sec: Weighted_maps_into_locally_convex_Lie_groups},
the reader may eventually skip this section.
\section{Definition and examples}
\begin{defi}
	Let $\SX$ and $\SY$ be normed spaces
	and $\UF \subseteq \SX$ an open nonempty set.
	For $k \in \N$ and a map $f: \UF \to \cl{\R}$ we define the quasinorm
	\[
		\glstext{gewichtete_f,k-Halbnorm}
		:\FC{\UF}{\SY}{k}\to [0,\infty]
		: \phi \mapsto \sup\{\abs{f(x)}\,\Opnorm{\FAbl[k]{\phi}(x)}:x\in \UF\}.
	\]
	Furthermore, for any nonempty set $\cW \sub \cl{\R}^\UF$
	and $k \in \cl{\N}$ we define the vector space
	\[
		\glstext{Raeume_gewichteter_Abbildungen}
		\ndef
		\{ \gamma\in\FC{\UF}{\SY}{k}:
			(\forall f\in\cW, \ell \in \N, \ell \leq k)\:
			\hn{\gamma}{f}{\ell} < \infty
		\}
	\]
	and notice that the seminorms $\hn{\cdot}{f}{\ell}$ induce a
	locally convex vector space topology on $\CF{\UF}{\SY}{\cW}{k}$.%

	We call the elements of $\cW$ \emph{weights}
	\index{weights}%
	and $\CF{\UF}{\SY}{\cW}{k}$ a \emph{space of weighted maps} or \emph{space of weighted functions}.
	\index{weighted maps!into normed spaces}%
\end{defi}

An important example is the space of bounded functions with bounded derivatives:
\begin{beisp}\label{beisp:BC}
	Let $k \in \cl{\N}$. We define
	\[
		\glstext{Raeume_beschraenkter_Abbildungen} \ndef \CF{\UF}{\SY}{\{1_\UF\}}{k} .
	\]%
	\index{bounded maps}%
\end{beisp}
\begin{bem}
	Let $\UF$ and $\VF$ be nonempty open subsets of a normed space $\SX$
	and $\UF \subseteq \VF$.
	For a set $\cW \subseteq \cl{\R}^{\VF}$, we define
	\[
		\rest{\cW}{\UF} \ndef \{ \rest{f}{\UF} : f \in \cW \}.
	\]
	Further we write with an abuse of notation
	\[
		\CF{\UF}{\SY}{\cW}{k}
		\ndef
		\CF{\UF}{\SY}{ \rest{\cW}{\UF} }{k} .
	\]
\end{bem}

\begin{bem}
As is clear, for any set $T \subseteq 2^\cW$ with $\cW = \bigcup_{\cF \in T}\cF$
we have
\[
	\CF{\UF}{\SY}{\cW}{k}
	= \bigcap_{\substack{\cF \in T\\ \ell \in \N, \ell \leq k}}
	\CF{\UF}{\SY}{\cF}{\ell}.
\]
\end{bem}
We define some subsets of $\CF{\UF}{\SY}{\cW}{k}$:
\begin{defi}
	Let $\SX$ and $\SY$ be normed spaces, $\UF \subseteq \SX$ and
	$\VF\subseteq \SY$ open nonempty sets and $\cW \subseteq \cl{\R}^{\UF}$.
	For $k\in\cl{\N}$ we set
	\[
		\CcF{\UF}{\VF}{k}
		\ndef
		\{ \gamma \in \CcF{\UF}{\SY}{k} : \gamma(\UF) \subseteq \VF \}
	\]
	and
	\[
		\glstext{Raeume_gewichteter_Abbildungen_Abstand_Rand}
		\ndef
		\{
			\gamma \in \CcF{\UF}{\VF}{k} :
			(\exists r > 0)\; \gamma(\UF) + \Ball[\SY]{0}{r} \subseteq \VF
		\}.
	\]
	Obviously
	\[
		\CcFo{\UF}{\VF}{k} \subseteq \CcF{\UF}{\VF}{k},
	\]
	and if $1_{\UF} \in \cW$, then
	$\CcFo{\UF}{\VF}{k}$ is open in $\CcF{\UF}{\SY}{k}$.
	The set $\glstext{beschraenkte_Abbildungen_Abstand_Rand}$
	is defined analogously.
	
	If $\UF \subseteq \SX$ is an open neighborhood of $0$, we set
	\[
		\CcFzero{\UF}{\SY}{k}
		\ndef
		\{
			\gamma \in \CcF{\UF}{\SY}{k} : \gamma(0) = 0
		\}.
	\]
	Analogously, we define $\CcFzero{\UF}{\VF}{k}$, $\CcFo{\UF}{\VF}{k}_{0}$
	and $\glstext{Raeume_beschraenkter_Abb_Null_Nullstelle}$ as the corresponding sets
	of functions vanishing at $0$.
	
	Furthermore, we define the set of \emph{decreasing weighted maps} as
	\index{weighted maps!decreasing}
	\[
		\glstext{Raeume_gwichteter_fallender_Abbildungen} \ndef \{\gamma \in \CcF{\UF}{\SY}{k} : (\forall f\in\cW, \ell \in \N, \ell \leq k, \eps > 0)(\exists r > 0)
		\hn{\rest{\gamma}{\UF\setminus\Ball{0}{r}}}{f}{\ell} < \eps\}.
	\]
	Note that we are primarily interested in the spaces $\CcFvan{\SX}{\SY}{k}$,
	but for technical reasons it is useful to have the spaces $\CcFvan{\UF}{\SY}{k}$
	available for $\UF \subset \SX$.
\end{defi}
We show that $\CcFvan{\UF}{\SY}{k}$ is closed.
\begin{lem}\label{lem:CWvan_closed_CW}
	$\CcFvan{\UF}{\SY}{k}$ is a closed vector subspace of $\CcF{\UF}{\SY}{k}$.
\end{lem}
\begin{proof}
	It is obvious from the definition of $\CcFvan{\UF}{\SY}{k}$ that it is a
	vector subspace. It remains to show that it is closed.
	To this end, let $(\gamma_i)_{i\in I}$ be a net in $\CcFvan{\UF}{\SY}{k}$ that converges
	to $\gamma \in \CcF{\UF}{\SY}{k}$ in the topology of $\CcF{\UF}{\SY}{k}$.
	Let $f \in \GewFunk$, $\ell \in \N$ with $\ell \leq k$ and $\eps > 0$.
	Then there exists an $i_\eps \in I$ such that
	\[
		i \geq i_\eps \implies \hn{\gamma - \gamma_i}{f}{\ell} < \frac{\eps}{2}.
	\]
	Further there exists an $r > 0$ such that
	\[
		\hn{\rest{\gamma_{i_\eps}}{\UF\setminus\Ball{0}{r}}}{f}{\ell} < \frac{\eps}{2} .
	\]
	Hence
	\[
		\hn{\rest{\gamma}{\UF\setminus\Ball{0}{r}}}{f}{\ell}
		\leq
		\hn{\rest{\gamma}{\UF\setminus\Ball{0}{r}} - \rest{\gamma_{i_\eps}}{\UF\setminus\Ball{0}{r}} }{f}{\ell}
		+ \hn{\rest{\gamma_{i_\eps}}{\UF\setminus\Ball{0}{r}}}{f}{\ell}
		< \eps ,
	\]
	and this finishes the proof.
\end{proof}
\paragraph{Examples involving finite-dimensional spaces}
Let $\K \in \{\R,\C\}$ and $n \in \N$.
In the following, let $\UF$ be an open nonempty subset of $\K^n$.
For a map $f:\UF \to \cl{\R}$ and a multiindex $\alpha\in\N^n$
with $\abs{\alpha} \leq k$ we define
\begin{equation*}
	\hn{\cdot}{f}{\alpha}: \ConDiff[\K]{\UF}{\SY}{k} \to [0,\infty]:
	\phi \mapsto \sup\{\abs{f(x)}\,\norm{\partial^\alpha \phi(x)}:x \in \UF\}.
\end{equation*}
We conclude from \refer{id:D^k_und_partielle_Abl} in
\refer{prop:Endlichdimensionale_Diffbarkeit}
that for a set \cW\ of maps $\UF \to \cl{\R}$ and $k \in \cl{\N}$
\begin{equation*}
	\CF{\UF}{\SY}{\cW}{k}
	= \{\phi \in \ConDiff[\K]{\UF}{\SY}{k}:
		(\forall f\in\cW, \alpha\in\N^n_0, \abs{\alpha}\leq k)\:
		\hn{\phi}{f}{\alpha}<\infty\},
\end{equation*}
and the topology defined with the seminorms
$\hn{\cdot}{f}{\alpha}$ coincides with the one defined above
using the seminorms $\hn{\cdot}{f}{\ell}$.
This characterization of $\CF{\UF}{\SY}{\cW}{k}$ allows us to recover well-known spaces as special cases:
\begin{itemize}
	\item
	If $\cW$ is the space $\ConDiff{\UF}{\R^m}{0}$
	of all continuous functions, then
	\[
		\CF{\UF}{\R^m}{\cW}{\infty}
		= \mathcal{D}(\UF, \R^m)
		= \glsdisp{kompakt_getragene_glatte}{\DCcInf{\UF}{\R^m}}
	\]%
	where $\glsdisp{kompakt_getragene_glatteD}{\mathcal{D}(\UF, \R^m)}$ denotes the space of
	compactly supported smooth functions from $\UF$ to $\R^m$;
	it should be noticed that $\CF{\UF}{\R^m}{\ConDiff{\UF}{\R^m}{0}}{\infty}$
	is \emph{not} endowed with the ordinary inductive limit topology
	$\varinjlim_{K} \mathcal{D}_{K}(\UF, \R^m)$,
	but instead the (coarser) topology making it the projective limit
	\[
		\varprojlim_{p\in\N}(\varinjlim_{K} \mathcal{D}_{K}^{p}(\UF, \R^m))
		= \varprojlim_{p\in\N} \mathcal{D}^p(\UF, \R^m),
	\]
	where $\mathcal{D}_{K}^{p}(\UF, \R^m)$ denotes the
	$\ConDiff{}{}{p}$-maps with support in the compact set $K$,
	endowed with the topology of uniform convergence of derivatives up to order $p$;
	and $\mathcal{D}^p(\UF, \R^m)$ the compactly supported $\ConDiff{}{}{p}$-maps
	endowed with the inductive limit topology of the sets $\mathcal{D}_{K}^{p}(\UF, \R^m)$.
	
	\item
	The vector-valued \emph{Schwartz space} $\mathcal{S}(\R^n, \R^n)$. Here
	$\UF=\SY=\R^n$, $k = \infty$ and $\cW$ is the
	set of polynomial functions on $\R^n$.
	
	\item
	The space $\BC{\UF}{\K^m}{k}$ of all bounded $\ConDiff{}{}{k}$-functions
	from $\UF \subseteq \K^n$ to $\K^m$ whose partial derivatives are bounded
	(for $\cW = \{1_\UF\}$);
	see \refer{beisp:BC}.
	
	\item
	If $\cW = \{1_{\SX}, \infty \cdot 1_{\SX\setminus\UF}\}$, then
	the space $\CF{\SX}{\SY}{\cW}{k}$ consists of $\BC{\SX}{\SY}{k}$
	functions that are defined on $\SX$ and vanish on the complement of $\UF$.
\end{itemize}

\section{Topological and uniform structure}
We analyze the topology of the weighted function spaces defined above.
In \refer{prop:topologische_Zerlegung_von_CFk} we shall provide a method
that greatly simplifies the treatment of the spaces; it will be used throughout this work.
We will also describe the spaces $\CF{\UF}{\SY}{\cW}{k}$ as the projective
limits of suitable larger spaces. 
In particular, this will simplify the treatment of the spaces $\CF{\UF}{\SY}{\cW}{\infty}$.
Further we give a sufficient criterion on the set $\cW$ which ensures that $\CF{\UF}{\SY}{\cW}{k}$ is complete.

\subsection{Reduction to lower order}
For $\ell > 1$, it is hard to estimate the seminorms $\hn{\cdot}{f}{\ell}$
because in most cases the higher order derivatives $\FAbl[\ell]{\cdot}$ can not be computed.
We develop a technique that allow us to avoid the computation.

First, we show that $\CcF{\UF}{\SY}{k}$ is endowed with the initial topology of the derivative maps.
\begin{lem}\label{lem:Topologie_auf_CFk_ist_InitialTop_der_Ableitungen}
	Let $\SX$ and $\SY$ be normed spaces,
	$\UF \subseteq \SX$ an open nonempty set, $k \in \cl{\N}$,
	$\GewFunk \subseteq \cl{\R}^\UF$ and $\gamma \in \FC{\UF}{\SY}{k}$.
	Then
	\[
		\gamma \in \CcF{\UF}{\SY}{k}
		\iff
		(\forall \ell\in\N, \ell\leq k)\, \FAbl[\ell]{\gamma} \in
			\CcF{\UF}{\Lin[\ell]{\SX}{\SY}}{0} ,
	\]
	and the map
	\[
		\CcF{\UF}{\SY}{k} \to \prod_{\substack{\ell \in \N\\\ell \leq k}}\CcF{\UF}{\Lin[\ell]{\SX}{\SY}}{0}
		: \gamma \mapsto (\FAbl[\ell]{\gamma})_{\ell \in \N, \ell \leq k}
	\]
	is a topological embedding.
\end{lem}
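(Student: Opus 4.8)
The plan is to reduce the whole statement to the trivial observation that the $0$-th Fr\'echet derivative of $\FAbl[\ell]{\gamma}$ is $\FAbl[\ell]{\gamma}$ itself, so that both claims become a matter of matching up the two families of seminorms.

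First I would record the key identity. Let $\gamma \in \FC{\UF}{\SY}{k}$ and fix $\ell \in \N$ with $\ell \leq k$. By the remark following \refer{def:Frechet_Diffbarkeit}, the iterated Fr\'echet derivative $\FAbl[\ell]{\gamma}\colon \UF \to \Lin[\ell]{\SX}{\SY}$ exists and is continuous, hence lies in $\FC{\UF}{\Lin[\ell]{\SX}{\SY}}{0}$. Since the $0$-th derivative of a map is the map itself, for every weight $f \in \cl{\R}^\UF$ we have
\[
	\hn{\FAbl[\ell]{\gamma}}{f}{0}
	= \sup_{x \in \UF} \abs{f(x)}\,\Opnorm{\FAbl[\ell]{\gamma}(x)}
	= \hn{\gamma}{f}{\ell}.
\]

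From this identity the characterization follows at once. Because $\FAbl[\ell]{\gamma}$ is continuous, membership $\FAbl[\ell]{\gamma} \in \CcF{\UF}{\Lin[\ell]{\SX}{\SY}}{0}$ amounts precisely to the finiteness of $\hn{\FAbl[\ell]{\gamma}}{f}{0} = \hn{\gamma}{f}{\ell}$ for all $f \in \GewFunk$. Quantifying over all $\ell \leq k$, the requirement that every $\FAbl[\ell]{\gamma}$ lie in $\CcF{\UF}{\Lin[\ell]{\SX}{\SY}}{0}$ is word for word the defining condition of $\CcF{\UF}{\SY}{k}$; this establishes the equivalence and simultaneously shows that the stated map $\Phi\colon \gamma \mapsto (\FAbl[\ell]{\gamma})_{\ell\leq k}$ is well defined into the product.

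Finally I would treat the topological embedding. Injectivity is immediate, since the $\ell = 0$ component of $\Phi(\gamma)$ is $\FAbl[0]{\gamma} = \gamma$. For the topology, I would use that the product $\prod_{\ell\leq k}\CcF{\UF}{\Lin[\ell]{\SX}{\SY}}{0}$ carries the initial topology with respect to the seminorms $q \mapsto \hn{\mathrm{pr}_\ell(q)}{f}{0}$ ($f \in \GewFunk$, $\ell \leq k$), while $\CcF{\UF}{\SY}{k}$ carries the initial topology with respect to the seminorms $\hn{\cdot}{f}{\ell}$. Pulling the former family back along $\Phi$ yields exactly $\gamma \mapsto \hn{\FAbl[\ell]{\gamma}}{f}{0} = \hn{\gamma}{f}{\ell}$ by the identity above, that is, the very family defining the topology of $\CcF{\UF}{\SY}{k}$. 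Hence the topology of $\CcF{\UF}{\SY}{k}$ coincides with the initial topology induced by $\Phi$, which is the meaning of $\Phi$ being a topological embedding. I expect no serious obstacle: the only external ingredient is the continuity of the iterated derivatives, and the only point worth attention is to argue the topological claim through the equality of generating seminorm families rather than verifying continuity in both directions separately.
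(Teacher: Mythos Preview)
Your proof is correct and follows exactly the approach the paper intends: the paper's own proof is the single sentence ``Both assertions are clear from the definition of $\CcF{\UF}{\SY}{k}$ and $\CcF{\UF}{\Lin[\ell]{\SX}{\SY}}{0}$,'' and your argument simply unpacks this by making the identity $\hn{\FAbl[\ell]{\gamma}}{f}{0} = \hn{\gamma}{f}{\ell}$ explicit and reading off both claims from it.
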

\begin{proof}
	Both assertions are clear from
	the definition of $\CcF{\UF}{\SY}{k}$ and
	$\CcF{\UF}{\Lin[\ell]{\SX}{\SY}}{0}$.
\end{proof}
The next lemma states a relation between the
higher order derivatives of $\gamma$ and those of $\FAbl{\gamma}$.
\begin{lem}\label{lem:Normbeziehung_zwischen_Ableitungen_und_Ableitungen_der_Ableitung}
	Let $\SX$ and $\SY$ be normed spaces,
	$\UF \subseteq \SX$ an open nonempty set, $k \in \N$ and
	$\gamma\in\FC{\UF}{\SY}{k + 1}$. Then
	\begin{equation}
		\label{id:Normbeziehung_zwischen_Ableitungen_und_Ableitungen_der_Ableitung}
		\Opnorm{\FAbl[\ell]{\FAbl{\gamma}}(x)} = \Opnorm{\FAbl[\ell + 1]{\gamma}(x)}
	\end{equation}
	for each $x\in\UF$ and $\ell < k$.
	In particular, for each map $f \in \cl{\R}^\UF$, $\ell < k$ and subset $\VF \subseteq \UF$
	\begin{equation}
		\label{id:hn(f,k)-Norm_Ableitungen_und_Ableitungen_der_Ableitung}
		\hn{\rest{\gamma}{\VF}}{f}{\ell + 1} = \hn{\rest{(\FAbl{\gamma})}{\VF}}{f}{\ell}.
	\end{equation}
\end{lem}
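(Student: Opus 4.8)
The plan is to prove the pointwise operator-norm identity $\Opnorm{D^{(\ell)}(D\gamma)(x)} = \Opnorm{D^{(\ell+1)}\gamma(x)}$ and then derive the seminorm identity by taking suprema over the relevant subset. The key observation is that both sides are operator norms of multilinear maps built from iterated directional derivatives, so everything reduces to comparing the underlying multilinear forms and checking that the operator-norm conventions on $\Lin[\ell]{\SX}{\Lin{\SX}{\SY}}$ and $\Lin[\ell+1]{\SX}{\SY}$ agree under the canonical isometric identification.

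First I would fix $x \in \UF$ and unwind the definitions. By the Remark following \refer{def:Frechet_Diffbarkeit}, the Fr\'echet derivative $\FAbl[\ell]{(D\gamma)}(x)$ equals the iterated directional derivative $\dA[\ell]{(D\gamma)}{x}{\cdot}$, and similarly $\FAbl[\ell+1]{\gamma}(x) = \dA[\ell+1]{\gamma}{x}{\cdot}$. Writing $D\gamma = \FAbl{\gamma} : \UF \to \Lin{\SX}{\SY}$ and applying the definition of iterated directional derivatives, for directions $x_1, \dotsc, x_\ell \in \SX$ I would show
\[
	\bigl(\dA[\ell]{(D\gamma)}{x}{x_1,\dotsc,x_\ell}\bigr)(x_{\ell+1})
	= \dA[\ell+1]{\gamma}{x}{x_1,\dotsc,x_\ell, x_{\ell+1}}
\]
for all $x_{\ell+1} \in \SX$, which is essentially the statement that directional differentiation of $\FAbl{\gamma}$ in the first $\ell$ directions, followed by evaluation at $x_{\ell+1}$, coincides with differentiating $\gamma$ in all $\ell+1$ directions. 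This holds because evaluation at $x_{\ell+1}$ is a continuous linear map $\Lin{\SX}{\SY} \to \SY$ and hence commutes with taking directional derivatives (each difference quotient commutes with the evaluation, and the limit is preserved by continuity).

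Having matched the multilinear maps, the operator-norm equality follows from the standard isometric isomorphism $\Lin[\ell]{\SX}{\Lin{\SX}{\SY}} \cong \Lin[\ell+1]{\SX}{\SY}$, under which an $\ell$-linear map with values in $\Lin{\SX}{\SY}$ corresponds to the $(\ell+1)$-linear map obtained by uncurrying, and the two operator norms agree because in both cases the norm is the supremum of $\norm{\,\cdot\,}$ over unit vectors in each slot. This establishes \refer{id:Normbeziehung_zwischen_Ableitungen_und_Ableitungen_der_Ableitung}. For the seminorm identity \refer{id:hn(f,k)-Norm_Ableitungen_und_Ableitungen_der_Ableitung}, I would simply multiply both sides of the pointwise identity by $\abs{f(x)}$, take the supremum over $x \in \VF$, and invoke the definition of $\hn{\cdot}{f}{\ell}$; the left side becomes $\hn{\rest{\gamma}{\VF}}{f}{\ell+1}$ and the right side $\hn{\rest{(D\gamma)}{\VF}}{f}{\ell}$ at once.

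I expect the main obstacle to be the careful verification of the commuting identity between $\dA[\ell]{(D\gamma)}{x}{\cdot}$ and $\dA[\ell+1]{\gamma}{x}{\cdot}$, i.e.\ justifying that one may interchange the outer evaluation with the iterated differentiation. The cleanest way to handle this is to argue by induction on $\ell$, using at each step that the evaluation map $\evTwo_{v} : \Lin{\SX}{\SY} \to \SY$, $T \mapsto T(v)$, is continuous and linear, so that $D_{x_i}(\evTwo_{v} \circ \Phi) = \evTwo_{v} \circ D_{x_i}\Phi$ for any differentiable $\Phi$ into $\Lin{\SX}{\SY}$; the base case $\ell = 0$ is just the definition of $\FAbl{\gamma}$. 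Everything else is bookkeeping, and the isometry of the currying identification is a routine fact about spaces of multilinear maps that I would state and use without a detailed proof.
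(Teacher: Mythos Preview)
Your proposal is correct and follows essentially the same approach as the paper. The paper's proof is one line: it invokes \refer{lem:Ableitungen_von_D}, which gives $D^{(\ell+1)}\gamma = \EmbA{\ell}{1}\circ D^{(\ell)}(D\gamma)$, and then uses that $\EmbA{\ell}{1}$ is an isometric isomorphism (\refer{lem:Identifizierung_seltsamer_Raeume}); the point is that \refer{lem:Ableitungen_von_D} follows immediately from the recursive Definition~\ref{def:Frechet_hohe_Abl} of $\FAbl[k]{}$, which already has the uncurrying isometries built in. Your route via the identity $(\dA[\ell]{(D\gamma)}{x}{x_1,\dotsc,x_\ell})(x_{\ell+1}) = \dA[\ell+1]{\gamma}{x}{\dotsc}$ and commuting evaluation with directional differentiation is exactly what underlies that machinery, just unpacked from first principles rather than cited; the isometric currying identification you invoke is precisely $\EmbA{\ell}{1}$. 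One small remark: with your argument the inner evaluation direction naturally lands in the \emph{first} slot of $\dA[\ell+1]{\gamma}{x}{\cdot}$ rather than the last, but Schwarz' theorem makes this irrelevant for the operator norm.
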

\begin{proof}
	In \refer{lem:Ableitungen_von_D} the identity
	\begin{equation*}
		\FAbl[\ell + 1]{\gamma}
		= \EmbA{\ell}{1}\circ (\FAbl[\ell]{\FAbl{\gamma}} )
	\end{equation*}
	is proved, where
	$\EmbA{\ell}{1} : \Lin{\SX}{\Lin[\ell]{\SX}{\SY}} \to \Lin[\ell + 1]{\SX}{\SY}$
	is an isometric isomorphism
	(see \refer{lem:Identifizierung_seltsamer_Raeume}).
	The asserted identities follow immediately.
\end{proof}
We can state the main tool for the treatment of weighted function spaces
$\CcF{\UF}{\SY}{k}$ with $k \geq 1$.
It is useful because it allows induction arguments of the following kind:
Suppose we want to show that $\gamma \in \CcF{\UF}{\SY}{k}$.
First, we have to show that $\gamma \in \CcF{\UF}{\SY}{0}$.
Then, we suppose $\gamma \in \CcF{\UF}{\SY}{\ell}$ and show that $\FAbl{\gamma}$
in $\CcF{\UF}{\Lin{\SX}{\SY}}{\ell}$ by expressing it in terms of $\gamma$.
This finishes the induction argument.
\begin{prop}[Reduction to lower order]
\label{prop:topologische_Zerlegung_von_CFk}
\label{prop:Ableitung_ist_stetig}
	Let $\SX$ and $\SY$ be normed spaces,
	$\UF \subseteq \SX$ an open nonempty set, $\GewFunk \subseteq \cl{\R}^\UF$,
	$k \in \N$ and $\gamma\in\FC{\UF}{\SY}{1}$. Then
	\[
		\gamma \in \CcF{\UF}{\SY}{k+1}
		\iff
		(\FAbl{\gamma}, \gamma)\in
		\CcF{\UF}{\Lin{\SX}{\SY}}{k} \times
			\CcF{\UF}{\SY}{0}.
	\]
	Moreover, the map
	\[
		\CcF{\UF}{\SY}{k+1}\to
		\CcF{\UF}{\Lin{\SX}{\SY}}{k} \times \CcF{\UF}{\SY}{0}
		:\gamma \mapsto (\FAbl{\gamma} , \gamma)
	\]
	is a topological embedding. In particular, the map
	\[
		\FAbl{} : \CcF{\UF}{\SY}{k + 1} \to \CcF{\UF}{\Lin{\SX}{\SY}}{k}
	\]
	is continuous.
\end{prop}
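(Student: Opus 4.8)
The plan is to deduce both the set-theoretic equivalence and the topological statement from the single norm relation of \refer{lem:Normbeziehung_zwischen_Ableitungen_und_Ableitungen_der_Ableitung}, everything else being a matter of matching up seminorm families. The crucial point is that, for $\gamma \in \FC{\UF}{\SY}{k+1}$, \refer{id:hn(f,k)-Norm_Ableitungen_und_Ableitungen_der_Ableitung} (taken with $\VF = \UF$) yields $\hn{\gamma}{f}{\ell+1} = \hn{\FAbl{\gamma}}{f}{\ell}$ for every $f \in \GewFunk$ and every $\ell \in \N$ with $\ell \leq k$. Consequently the defining seminorm family $\{\hn{\cdot}{f}{\ell} : f \in \GewFunk,\ 0 \leq \ell \leq k+1\}$ of $\CcF{\UF}{\SY}{k+1}$ splits into the single level $\ell = 0$, which is precisely the family defining $\CcF{\UF}{\SY}{0}$, together with the levels $1 \leq \ell \leq k+1$, which correspond under $\FAbl{}$ exactly to the levels $0 \leq \ell - 1 \leq k$ of $\FAbl{\gamma}$, i.e.\ to the family defining $\CcF{\UF}{\Lin{\SX}{\SY}}{k}$.

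For the equivalence of membership I would argue as follows. By \refer{def:Frechet_Diffbarkeit}, $\gamma \in \FC{\UF}{\SY}{k+1}$ holds if and only if $\gamma \in \FC{\UF}{\SY}{1}$ and $\FAbl{\gamma} \in \FC{\UF}{\Lin{\SX}{\SY}}{k}$, so the differentiability requirements on the two sides agree automatically. If $\gamma \in \CcF{\UF}{\SY}{k+1}$, then finiteness of $\hn{\gamma}{f}{0}$ gives $\gamma \in \CcF{\UF}{\SY}{0}$, while finiteness of $\hn{\gamma}{f}{\ell}$ for $1 \leq \ell \leq k+1$ together with the relation $\hn{\gamma}{f}{\ell} = \hn{\FAbl{\gamma}}{f}{\ell-1}$ gives $\FAbl{\gamma} \in \CcF{\UF}{\Lin{\SX}{\SY}}{k}$. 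Conversely, membership of $\FAbl{\gamma}$ in $\CcF{\UF}{\Lin{\SX}{\SY}}{k}$ forces $\FAbl{\gamma} \in \FC{\UF}{\Lin{\SX}{\SY}}{k}$, hence $\gamma \in \FC{\UF}{\SY}{k+1}$; the same relation, combined with $\gamma \in \CcF{\UF}{\SY}{0}$, then renders all seminorms $\hn{\gamma}{f}{\ell}$ with $\ell \leq k+1$ finite, so $\gamma \in \CcF{\UF}{\SY}{k+1}$.

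For the topological statement I would note that $\Phi \colon \gamma \mapsto (\FAbl{\gamma}, \gamma)$ is linear and injective, its second component recovering $\gamma$, and I would compare the two topologies directly rather than through continuity of the individual components. The product topology on $\CcF{\UF}{\Lin{\SX}{\SY}}{k} \times \CcF{\UF}{\SY}{0}$ is generated by the seminorms $\eta \mapsto \hn{\eta}{f}{\ell'}$ with $0 \leq \ell' \leq k$ on the first factor and $\delta \mapsto \hn{\delta}{f}{0}$ on the second. Pulling these back along $\Phi$ produces, by the relation $\hn{\FAbl{\gamma}}{f}{\ell'} = \hn{\gamma}{f}{\ell'+1}$, exactly the maps $\gamma \mapsto \hn{\gamma}{f}{\ell'+1}$ and $\gamma \mapsto \hn{\gamma}{f}{0}$, that is, the complete defining family of $\CcF{\UF}{\SY}{k+1}$. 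Hence the initial topology induced by $\Phi$ coincides with the given topology of $\CcF{\UF}{\SY}{k+1}$, which is precisely the assertion that $\Phi$ is a topological embedding. The concluding continuity of $\FAbl{} \colon \CcF{\UF}{\SY}{k+1} \to \CcF{\UF}{\Lin{\SX}{\SY}}{k}$ is then immediate, being the composition of $\Phi$ with the continuous coordinate projection onto the first factor.

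I do not expect a genuine obstacle here: once \refer{lem:Normbeziehung_zwischen_Ableitungen_und_Ableitungen_der_Ableitung} is available the argument is pure bookkeeping of seminorm indices. The one point demanding attention is the top level, where the relation $\hn{\gamma}{f}{k+1} = \hn{\FAbl{\gamma}}{f}{k}$ must be invoked although $\gamma$ is only assumed $(k+1)$-times Fr\'echet differentiable; here one uses that the underlying pointwise identity $\Opnorm{D^{(k)}(D\gamma)(x)} = \Opnorm{D^{(k+1)}\gamma(x)}$ is valid as soon as $D^{(k+1)}\gamma$ exists, so that no additional differentiability is needed.
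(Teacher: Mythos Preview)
Your proof is correct and follows essentially the same approach as the paper: both rest on the norm relation \refer{id:hn(f,k)-Norm_Ableitungen_und_Ableitungen_der_Ableitung} from \refer{lem:Normbeziehung_zwischen_Ableitungen_und_Ableitungen_der_Ableitung} to match the seminorm family of $\CcF{\UF}{\SY}{k+1}$ with that of the product, and both deduce the embedding from the resulting coincidence of initial topologies. Your version is more explicit where the paper is terse (the paper additionally invokes \refer{lem:Topologie_auf_CFk_ist_InitialTop_der_Ableitungen} rather than spelling out the seminorm pullback), and your closing remark about the top level $\ell = k$ correctly anticipates the only subtlety.
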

\begin{proof}
	The first relation follows immediately from
	the definition of $\FC{}{}{k}$-maps and
	\refer{id:hn(f,k)-Norm_Ableitungen_und_Ableitungen_der_Ableitung}
	in
	\refer{lem:Normbeziehung_zwischen_Ableitungen_und_Ableitungen_der_Ableitung}.
	This identity, together with \refer{lem:Topologie_auf_CFk_ist_InitialTop_der_Ableitungen},
	also implies that $\CcF{\UF}{\SY}{k+1}$ is endowed
	with the initial topology with respect to
	\[
		\FAbl{}: \CcF{\UF}{\SY}{k + 1} \to \CcF{\UF}{\Lin{\SX}{\SY}}{k}
	\]
	and the inclusion map
	\[
		\CcF{\UF}{\SY}{k+1} \to \CcF{\UF}{\SY}{0}.
	\]
	This proves the second assertion.
\end{proof}
The same argument can be made for the vanishing weighted functions.
\begin{cor}\label{cor:topologische_Zerlegung_von_CFkvan}
	Let $\SX$ and $\SY$ be normed spaces,
	$\UF \subseteq \SX$ an open nonempty set, $\GewFunk \subseteq \cl{\R}^\UF$,
	$k \in \N$ and $\gamma\in\FC{\UF}{\SY}{1}$. Then
	\[
		\gamma \in \CcFvan{\UF}{\SY}{k+1}
		\iff
		( \FAbl{\gamma} , \gamma)
		\in
		\CcFvan{\UF}{\Lin{\SX}{\SY}}{k} \times \CcFvan{\UF}{\SY}{0}.
	\]
\end{cor}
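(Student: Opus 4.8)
The plan is to mirror the proof of \refer{prop:topologische_Zerlegung_von_CFk}, replacing the finiteness of the seminorms by the decay condition that distinguishes the $\CcFvan{}{}{}$-spaces, and to push everything through the pointwise norm identity \refer{id:hn(f,k)-Norm_Ableitungen_und_Ableitungen_der_Ableitung}. First I would observe that each side of the asserted equivalence contains, as a sub-condition, exactly the membership treated by \refer{prop:topologische_Zerlegung_von_CFk}: since $\CcFvan{\UF}{\SY}{k+1} \subseteq \CcF{\UF}{\SY}{k+1}$, $\CcFvan{\UF}{\Lin{\SX}{\SY}}{k} \subseteq \CcF{\UF}{\Lin{\SX}{\SY}}{k}$ and $\CcFvan{\UF}{\SY}{0} \subseteq \CcF{\UF}{\SY}{0}$, whenever either side of the corollary holds we get $\gamma \in \CcF{\UF}{\SY}{k+1}$, equivalently $(\FAbl{\gamma}, \gamma) \in \CcF{\UF}{\Lin{\SX}{\SY}}{k} \times \CcF{\UF}{\SY}{0}$. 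In particular $\gamma \in \FC{\UF}{\SY}{k+1}$ in both cases (on the right because $\FAbl{\gamma} \in \FC{\UF}{\Lin{\SX}{\SY}}{k}$ together with the standing hypothesis $\gamma \in \FC{\UF}{\SY}{1}$ forces $\gamma \in \FC{\UF}{\SY}{k+1}$), so the identity from \refer{lem:Normbeziehung_zwischen_Ableitungen_und_Ableitungen_der_Ableitung} is available up to the top order. It therefore remains to match up the decay conditions.

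By definition, beyond the membership the space $\CcFvan{\UF}{\SY}{k+1}$ demands that for all $f \in \GewFunk$, all $\ell \in \N$ with $\ell \leq k+1$, and all $\eps > 0$ there be an $r > 0$ with $\hn{\rest{\gamma}{\UF \setminus \Ball{0}{r}}}{f}{\ell} < \eps$. I would split these conditions into the single case $\ell = 0$ and the range $1 \leq \ell \leq k+1$. The case $\ell = 0$ is verbatim the decay condition defining $\CcFvan{\UF}{\SY}{0}$. For $1 \leq \ell \leq k+1$ I apply \refer{id:hn(f,k)-Norm_Ableitungen_und_Ableitungen_der_Ableitung} with the subset $\VF \ndef \UF \setminus \Ball{0}{r}$, giving $\hn{\rest{\gamma}{\UF \setminus \Ball{0}{r}}}{f}{\ell} = \hn{\rest{(\FAbl{\gamma})}{\UF \setminus \Ball{0}{r}}}{f}{\ell - 1}$. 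As the partner index $m \ndef \ell - 1$ runs through $0, \dotsc, k$, these are precisely the decay conditions defining $\CcFvan{\UF}{\Lin{\SX}{\SY}}{k}$ for $\FAbl{\gamma}$. Assembling the two pieces yields $\gamma \in \CcFvan{\UF}{\SY}{k+1}$ if and only if $(\FAbl{\gamma}, \gamma) \in \CcFvan{\UF}{\Lin{\SX}{\SY}}{k} \times \CcFvan{\UF}{\SY}{0}$.

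I do not anticipate a genuine obstacle, since everything reduces to the pointwise norm identity, which is stated for an arbitrary subset $\VF \subseteq \UF$ and hence transfers to the sets $\UF \setminus \Ball{0}{r}$ without change; the quantifier bookkeeping over $f$, $\ell$ and $\eps$ then proceeds mechanically, and the same chosen radius $r$ serves both sides because the two seminorms agree termwise. The one point deserving a moment of care is the index range: the identity must be invoked up to order $\ell = k+1$, so one first secures $\gamma \in \FC{\UF}{\SY}{k+1}$ from \refer{prop:topologische_Zerlegung_von_CFk} (exactly as in that proof, where it is already used at the top order). Finally, unlike \refer{prop:topologische_Zerlegung_von_CFk}, this corollary asserts only the set-theoretic equivalence and no topological embedding, so no discussion of initial topologies is needed.
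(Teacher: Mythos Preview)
Your proposal is correct and follows exactly the paper's approach: the paper's proof is a one-liner saying this is ``an immediate consequence of \refer{prop:topologische_Zerlegung_von_CFk} and \refer{id:hn(f,k)-Norm_Ableitungen_und_Ableitungen_der_Ableitung} in \refer{lem:Normbeziehung_zwischen_Ableitungen_und_Ableitungen_der_Ableitung},'' and you have simply unpacked that line, first invoking the proposition for the underlying $\CcF{}{}{}$-membership and then using the norm identity on the restricted domains $\UF \setminus \Ball{0}{r}$ to transfer the decay conditions between $\gamma$ and $\FAbl{\gamma}$.
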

\begin{proof}
	This is also an immediate consequence of \refer{prop:topologische_Zerlegung_von_CFk}
	and \refer{id:hn(f,k)-Norm_Ableitungen_und_Ableitungen_der_Ableitung} in
	\refer{lem:Normbeziehung_zwischen_Ableitungen_und_Ableitungen_der_Ableitung}.
\end{proof}
%
\subsection[Projective limits and the topology of \texorpdfstring{$\CcF{\UF}{\SY}{\infty}$}{}]%
{Projective limits and the topology of \texorpdfstring{$\boldsymbol{\CcF{\UF}{\SY}{\infty}}$}{}}
\label{susec:Projektive_Limiten_Gewichtete_Funktionen}
Sometimes it is useful that $\CF{\UF}{\SY}{\cW}{k}$ can be written as the projective
limit of larger weighted functions spaces.
\begin{prop}\label{prop:CW_projektiver_LB-Raum}
	Let $\SX$ and $\SY$ be normed spaces,
	$\UF \subseteq \SX$ an open nonempty set, $k \in \cl{\N}$
	and $\cW \subseteq \cl{\R}^\UF$ a nonempty set.
	Further let $(\cF_{i})_{i \in I}$ be a directed family of nonempty subsets of $\cW$
	such that $\bigcup_{i \in I}\cF_{i} = \cW$.
	Consider $I \times \{\ell \in \N : \ell \leq k\}$ as a directed set via
	\[
		((i_1, \ell_1) \leq (i_2, \ell_2))
		\iff
		i_1 \leq i_2 \text{ and } \ell_1 \leq \ell_2.
	\]
	Then $\CF{\UF}{\SY}{\cW}{k}$ is the projective limit of
	\[
		\{\CF{\UF}{\SY}{\cF_{i}}{\ell} : \ell \in \N, \ell \leq k, i \in I\}
	\]
	in the category of topological (vector) spaces, with the inclusion maps as morphisms.
\end{prop}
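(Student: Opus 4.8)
The plan is to produce the projective limit concretely and verify that it equals $\CF{\UF}{\SY}{\cW}{k}$. I would consider the map $\Phi$ sending $\gamma$ to the constant family $(\gamma)_{(i,\ell)} \in \prod_{(i,\ell)} \CF{\UF}{\SY}{\cF_i}{\ell}$, whose $(i,\ell)$-th component is $\gamma$ regarded as an element of $\CF{\UF}{\SY}{\cF_i}{\ell}$, and show that $\Phi$ is an isomorphism of topological vector spaces onto the projective limit. Three things have to be checked: that the inclusions form a genuine projective system, that $\Phi$ is an algebraic bijection onto the limit, and that $\Phi$ is a homeomorphism onto its image.

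For the system, note that $I \times \{\ell \in \N : \ell \le k\}$ is directed because $I$ is directed and $\{\ell \le k\}$ is; and whenever $(i_1,\ell_1) \le (i_2,\ell_2)$, so that $\cF_{i_1} \subseteq \cF_{i_2}$ and $\ell_1 \le \ell_2$, every $\gamma \in \CF{\UF}{\SY}{\cF_{i_2}}{\ell_2}$ automatically lies in $\CF{\UF}{\SY}{\cF_{i_1}}{\ell_1}$, since the finiteness conditions $\hn{\gamma}{f}{m} < \infty$ (for $f \in \cF_{i_1}$, $m \le \ell_1$) form a subcollection of those defining the first space. The resulting inclusion is continuous because the seminorms generating the coarser topology are among those generating the finer one; these inclusions compose correctly, giving the projective system with inclusions as morphisms.

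Next I would identify the limit set-theoretically. Since $\cW = \bigcup_{i \in I} \cF_i$, the set identity already recorded in this section yields $\CF{\UF}{\SY}{\cW}{k} = \bigcap_{(i,\ell)} \CF{\UF}{\SY}{\cF_i}{\ell}$. A family in $\prod_{(i,\ell)} \CF{\UF}{\SY}{\cF_i}{\ell}$ is compatible precisely when all of its entries coincide as functions (the transition maps being inclusions), and such a common function belongs to the limit exactly when it lies in every $\CF{\UF}{\SY}{\cF_i}{\ell}$, i.e.\ in the intersection. Hence $\Phi$ is a bijection of $\CF{\UF}{\SY}{\cW}{k}$ onto the projective limit.

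The substantive point, and the only mildly technical step, is that $\Phi$ is a homeomorphism onto its image, i.e.\ that the $\cW$-topology coincides with the initial topology induced by the inclusions into the spaces $\CF{\UF}{\SY}{\cF_i}{\ell}$. This I would settle by comparing the two generating families of seminorms. The $\cW$-topology is generated by the $\hn{\cdot}{f}{m}$ with $f \in \cW$ and $m \le k$; given such a pair, choose $i$ with $f \in \cF_i$, and observe that the admissible index $(i,m)$ realizes $\hn{\cdot}{f}{m}$ as one of the seminorms defining $\CF{\UF}{\SY}{\cF_i}{m}$, so it is continuous for the initial topology. Conversely, each seminorm defining some $\CF{\UF}{\SY}{\cF_i}{\ell}$ is an $\hn{\cdot}{g}{j}$ with $g \in \cF_i \subseteq \cW$ and $j \le \ell \le k$, hence continuous for the $\cW$-topology. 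The two topologies therefore agree. Since the projective limit carries the subspace topology of the product, which restricts on the image of $\Phi$ to precisely this initial topology, $\Phi$ is a topological isomorphism, and the universal property of the projective limit holds automatically.
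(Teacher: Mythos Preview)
Your proof is correct and follows essentially the same approach as the paper's: identify the intersection $\bigcap_{(i,\ell)}\CF{\UF}{\SY}{\cF_i}{\ell}$ as the set-theoretic limit, then show the given topology is the initial topology with respect to the inclusions. The paper's proof is considerably terser (it simply asserts the last step is ``clear from the definition''), whereas you spell out the seminorm comparison explicitly, but the argument is the same.
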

\begin{proof}
	Since
	\[
		\CF{\UF}{\SY}{\cW}{k} = \bigcap_{\substack{i \in I\\ \ell \in \N, \ell \leq k}}
		\CF{\UF}{\SY}{\cF_{i}}{\ell},
	\]
	the set
	$\CF{\UF}{\SY}{\cW}{k}$ is the desired projective limit as a set,
	and hence also as a vector space.
	Moreover, it is well known that the initial topology
	with respect to the limit maps
	$
		\CF{\UF}{\SY}{\cW}{k} \to \CF{\UF}{\SY}{\cF_{i}}{\ell}
	$
	makes $\CF{\UF}{\SY}{\cW}{k}$ the projective limit as a topological space,
	and also as a topological vector space.
	But it is clear from the definition that the given topology on
	$\CF{\UF}{\SY}{\cW}{k}$ coincides with this initial topology.
\end{proof}
\begin{cor} \label{cor:Topologie_von_CinfF}
	Let $\SX$ and $\SY$ be normed spaces,
	$\UF \subseteq \SX$ an open nonempty set and $\GewFunk \subseteq \cl{\R}^\UF$.
	The space $\CcF{\UF}{\SY}{\infty}$ is endowed with the initial topology
	with respect to the inclusion maps
	\[
		\CcF{\UF}{\SY}{\infty}\to\CcF{\UF}{\SY}{k}.
	\]
	Moreover, $\CcF{\UF}{\SY}{\infty}$ is the projective limit
	of the spaces $\CcF{\UF}{\SY}{k}$ with $k\in \N$,
	together with the inclusion maps.
\end{cor}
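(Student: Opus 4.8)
The plan is to prove \refer{cor:Topologie_von_CinfF}
as a direct application of \refer{prop:CW_projektiver_LB-Raum}.
The corollary concerns the space $\CcF{\UF}{\SY}{\infty}$, that is,
the space $\CF{\UF}{\SY}{\GewFunk}{\infty}$ for the fixed weight set $\GewFunk$.
I would instantiate the proposition with $k = \infty$, with the weight set
$\cW \ndef \GewFunk$, and with the trivial directed family consisting of the
single set $\cF_i \ndef \GewFunk$ (indexed by a one-point set $I$, so that
$\bigcup_{i \in I}\cF_i = \GewFunk$ holds and the family is vacuously directed).

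First I would recall that, by \refer{prop:CW_projektiver_LB-Raum} applied in this way,
$\CcF{\UF}{\SY}{\infty}$ is the projective limit of the spaces
$\CF{\UF}{\SY}{\GewFunk}{\ell} = \CcF{\UF}{\SY}{\ell}$ with $\ell \in \N$,
$\ell \leq \infty$ (equivalently, all $\ell \in \N$), taken with the inclusion maps.
By the defining property of the projective limit in the category of topological
vector spaces, this means precisely that $\CcF{\UF}{\SY}{\infty}$ carries the initial
topology with respect to the inclusion maps
$\CcF{\UF}{\SY}{\infty} \to \CcF{\UF}{\SY}{k}$, $k \in \N$,
which is the first assertion of the corollary. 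The second assertion—that
$\CcF{\UF}{\SY}{\infty}$ is the projective limit of the $\CcF{\UF}{\SY}{k}$
together with the inclusion maps—is then exactly the statement delivered by the
proposition, once one observes that the index set $I \times \{\ell : \ell \leq \infty\}$
collapses (via the singleton $I$) to the directed set $\N$ of orders $k$.

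Essentially the only point needing care is the bookkeeping that identifies
$\CF{\UF}{\SY}{\GewFunk}{\ell}$ (notation of \refer{prop:CW_projektiver_LB-Raum})
with $\CcF{\UF}{\SY}{\ell}$ (notation of the corollary), which is immediate from the
definition $\CcF{\UF}{\SY}{k} = \CF{\UF}{\SY}{\GewFunk}{k}$, together with checking
that the cofinal subfamily of inclusions with fixed $\cF_i = \GewFunk$ and varying
$\ell$ induces the same initial topology as the full family. I do not anticipate a
genuine obstacle here: the heavy lifting has already been done in
\refer{prop:CW_projektiver_LB-Raum}, and the corollary is the specialization to a
single weight set with the order $k$ playing the role of the projective-limit index.
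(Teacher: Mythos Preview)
Your proposal is correct and takes essentially the same approach as the paper, which simply states that the corollary is an immediate consequence of \refer{prop:CW_projektiver_LB-Raum}. Your instantiation with the singleton family $\cF_i = \GewFunk$ and $k = \infty$ is exactly the right specialization, and you have spelled out the bookkeeping that the paper leaves implicit.
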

\begin{proof}
	This is an immediate consequence of \refer{prop:CW_projektiver_LB-Raum}.
\end{proof}
\subsection{A completeness criterion}
We describe a condition on $\GewFunk$ that ensures that $\CcF{\UF}{\SY}{k}$
is complete, provided that $\SY$ is a Banach space.
The proof uses \refer{prop:topologische_Zerlegung_von_CFk}.
We start with the following observation concerning the continuity of evaluation maps.
\begin{lem}\label{lem:Kriterium_fuer_Stetigkeit_der_Punktauswertungen}
	Let $\SX$ and $\SY$ be normed spaces, $\UF \subseteq \SX$ an open nonempty set,
	$k \in \cl{\N}$ and $x\in\UF$.
	Suppose that $\GewFunk \subseteq \cl{\R}^{\UF}$ contains
	a weight $f_x$ with $f_x(x) \neq 0$.
	Then
	\[
		\evTwo_x : \CcF{\UF}{\SY}{k} \to \SY
		: \gamma \mapsto \gamma(x)
	\]
	is a continuous linear map.
\end{lem}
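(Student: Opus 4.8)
The plan is to show that the point-evaluation functional $\delta_x$ is continuous by estimating $\norm{\gamma(x)}$ against one of the seminorms defining the topology on $\CcF{\UF}{\SY}{k}$. Since the topology on $\CcF{\UF}{\SY}{k}$ is generated by the seminorms $\hn{\cdot}{f}{\ell}$ for $f \in \cW$ and $\ell \in \N$ with $\ell \leq k$, it suffices to exhibit a single seminorm $\hn{\cdot}{f}{\ell}$ and a constant $C \geq 0$ such that $\norm{\gamma(x)} \leq C\,\hn{\gamma}{f}{\ell}$ holds for all $\gamma$, because a linear map into a normed space is continuous as soon as it is dominated by one continuous seminorm.

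**First I would** take the weight $f_x \in \cW$ provided by the hypothesis, which satisfies $f_x(x) \neq 0$. The relevant seminorm is the order-zero seminorm $\hn{\cdot}{f_x}{0}$, which by definition equals $\sup\{\abs{f_x(y)}\,\Opnorm{D^{(0)}\gamma(y)} : y \in \UF\}$. Here $D^{(0)}\gamma(y) = \gamma(y)$, so $\Opnorm{D^{(0)}\gamma(y)} = \norm{\gamma(y)}$. Evaluating the supremum-defining expression at the single point $y = x$ gives the lower bound
\[
	\abs{f_x(x)}\,\norm{\gamma(x)} \leq \hn{\gamma}{f_x}{0}.
\]
Since $f_x(x) \neq 0$, we may set $C \ndef \abs{f_x(x)}^{-1}$ and rearrange to obtain $\norm{\gamma(x)} \leq C\,\hn{\gamma}{f_x}{0}$ for every $\gamma \in \CcF{\UF}{\SY}{k}$.

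**The continuity then follows** because $\hn{\cdot}{f_x}{0}$ is one of the continuous seminorms on $\CcF{\UF}{\SY}{k}$ (it is among the generating family, as $0 \leq k$ always holds for $k \in \cl{\N}$), so $\delta_x$ is dominated by a continuous seminorm and is therefore continuous. Linearity of $\delta_x$ is immediate from the pointwise vector-space operations on functions. A small point to verify is that $\abs{f_x(x)}$ is a genuine nonzero real number and not $\pm\infty$; but the definition of the seminorm uses $\abs{f(x)}$ and the product $\abs{f_x(x)}\,\norm{\gamma(x)}$ is finite for $\gamma$ in the space, so the estimate is meaningful precisely because we only need the reverse inequality at the one point $x$.

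**The main obstacle**, if any, is purely bookkeeping: one must make sure the convention $D^{(0)}\gamma = \gamma$ is in force so that the $\ell = 0$ seminorm really controls the sup-norm of $\gamma$ itself, and that $\abs{f_x(x)} \in (0,\infty)$ so the division is legitimate. Beyond confirming these conventions, there is essentially no difficulty; the argument is a one-line domination estimate by a single seminorm.
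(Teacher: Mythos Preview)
Your approach is exactly the paper's: dominate $\norm{\gamma(x)}$ by the single seminorm $\hn{\cdot}{f_x}{0}$ via $\abs{f_x(x)}\,\norm{\gamma(x)} \leq \hn{\gamma}{f_x}{0}$ and divide. The one point you flag but do not actually resolve is the possibility $f_x(x) \in \{-\infty,\infty\}$, where division by $\abs{f_x(x)}$ is not available; the paper handles this by a clean case split: if some $f\in\cW$ has $f(x)=\pm\infty$, then finiteness of $\hn{\gamma}{f}{0}$ forces $\gamma(x)=0$ for every $\gamma$ in the space, so $\delta_x \equiv 0$ and $\norm{\delta_x(\gamma)} = 0 \leq \hn{\gamma}{f}{0}$ trivially. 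Add that one-line case and your argument is complete and identical to the paper's.
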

\begin{proof}
	If there exists $f \in \GewFunk$ with $f(x) \in \sset{-\infty, \infty}$,
	then for each $\gamma \in \CcF{\UF}{\SY}{k}$ the estimate
	\[
		\norm{\evTwo_{x}(\gamma)} = 0 \leq \hn{\gamma}{f}{0}
	\]
	holds. Otherwise, for each $f \in \GewFunk$ with $f(x) \neq 0$
	and $\gamma \in \CcF{\UF}{\SY}{k}$, we have
	\[
		\norm{\evTwo_{x}(\gamma)}
		= \norm{\gamma(x)}
		\leq \frac{1}{\abs{f(x)}} \hn{\gamma}{f}{0}.
	\]
	In both cases, these estimates ensure the continuity of $\evTwo_{x}$.
\end{proof}
We examine when the image of $\CcF{\UF}{\SY}{k + 1}$ under the embedding
described in \refer{prop:topologische_Zerlegung_von_CFk} is closed.
\begin{prop}\label{prop:Abgeschlossenheit_des_Bildes_unter_der_Einbettung}
	Let $\SX$ and $\SY$ be normed spaces,
	$\UF \subseteq \SX$ an open nonempty set
	and $k\in\N$.
	Further, let $\GewFunk \sub \cl{\R}$ such that for each compact line segment
	$S \subseteq \UF$ there exists $f_{S} \in \GewFunk$ with
	$\inf_{x\in S}\abs{f_{S}(x)} > 0$.
	Then the image of $\CcF{\UF}{\SY}{k+1}$ under the embedding
	described in \refer{prop:topologische_Zerlegung_von_CFk} is closed.
\end{prop}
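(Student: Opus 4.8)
The plan is to describe the image of the embedding concretely and then show it is closed under passage to limits of nets. By \refer{prop:topologische_Zerlegung_von_CFk}, a pair
\[
(\eta, \gamma) \in \CcF{\UF}{\Lin{\SX}{\SY}}{k} \times \CcF{\UF}{\SY}{0}
\]
lies in the image precisely when $\gamma$ is Fr\'echet differentiable with $\FAbl{\gamma} = \eta$, since then $\gamma$ automatically belongs to $\CcF{\UF}{\SY}{k+1}$ and maps to $(\eta, \gamma)$. So I would let $(\eta_i, \gamma_i)_{i \in I}$ be a net in the image converging to some $(\eta, \gamma)$ in the product; here each $\gamma_i \in \CcF{\UF}{\SY}{k+1}$ is $\FC{}{}{1}$ with $\FAbl{\gamma_i} = \eta_i$. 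It then suffices to show that $\gamma$ is Fr\'echet differentiable with $\FAbl{\gamma} = \eta$.

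The hypothesis on $\GewFunk$ is used to turn convergence in the weighted seminorms into uniform convergence on compact line segments. Given any compact line segment $S = \Strecke{x}{y} \subseteq \UF$, I choose $f_S \in \GewFunk$ with $c \ndef \inf_{z \in S}\abs{f_S(z)} > 0$. Since convergence in the product forces $\hn{\gamma_i - \gamma}{f_S}{0} \to 0$ and $\hn{\eta_i - \eta}{f_S}{0} \to 0$, dividing by $c$ yields
\[
\sup_{z \in S}\norm{\gamma_i(z) - \gamma(z)} \to 0
\quad\text{and}\quad
\sup_{z \in S}\Opnorm{\eta_i(z) - \eta(z)} \to 0 ,
\]
that is, $\gamma_i \to \gamma$ and $\eta_i \to \eta$ uniformly on every compact line segment in $\UF$.

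Now fix $x \in \UF$ and $r > 0$ with $\Ball{x}{r} \subseteq \UF$. For $y \in \Ball{x}{r}$ the segment $\Strecke{x}{y}$ lies in $\UF$, and since every $\gamma_i$ is $\FC{}{}{1}$, for each continuous linear functional $\lambda$ on $\SY$ the scalar fundamental theorem of calculus gives
\[
\lambda(\gamma_i(y)) - \lambda(\gamma_i(x)) = \int_0^1 \lambda\bigl(\eta_i(x + t(y-x))(y-x)\bigr)\, dt .
\]
Because $\eta_i \to \eta$ uniformly on $\Strecke{x}{y}$ and $\gamma_i \to \gamma$ there, I can pass to the limit under the scalar integral and obtain the same identity with $\eta, \gamma$ in place of $\eta_i, \gamma_i$. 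Subtracting $\int_0^1 \lambda(\eta(x)(y-x))\, dt$, estimating, and taking the supremum over $\norm{\lambda} \leq 1$ by Hahn-Banach yields
\[
\norm{\gamma(y) - \gamma(x) - \eta(x)(y-x)} \leq \norm{y-x}\,\sup_{t \in [0,1]}\Opnorm{\eta(x + t(y-x)) - \eta(x)} .
\]
As $\eta$ is continuous, the right-hand side is $o(\norm{y-x})$, so $\gamma$ is Fr\'echet differentiable at $x$ with $\FAbl{\gamma}(x) = \eta(x)$. Since $x$ was arbitrary and $\eta$ is continuous, $\gamma \in \FC{\UF}{\SY}{1}$ with $\FAbl{\gamma} = \eta$; hence $(\eta, \gamma)$ belongs to the image, and the image is closed.

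The main obstacle is that $\SY$ is only assumed normed, not complete, so the vector-valued integral of $t \mapsto \eta(x + t(y-x))(y-x)$ need not exist a priori and the usual vector-valued fundamental theorem of calculus is unavailable. Routing the argument through the scalar functionals $\lambda$ and recovering the norm estimate by Hahn-Banach sidesteps this entirely. The only other point demanding care is the passage from weighted-seminorm convergence to uniform convergence on line segments, which is exactly where the hypothesis on $\GewFunk$ enters and which makes the classical interchange of limit and derivative applicable.
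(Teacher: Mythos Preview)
Your proof is correct and follows essentially the same strategy as the paper's: both use the hypothesis on $\GewFunk$ to convert weighted-seminorm convergence into uniform convergence on line segments, apply the integral form of the mean value theorem to each $\gamma_i$, and pass to the limit. The only difference is in the final step: the paper works with weak integrals throughout (which exist by the fundamental theorem for $\ConDiff{}{}{1}$-curves, no completeness needed) and concludes directional differentiability via \refer{prop:Stetigkeit_parameterab_Int}, whereas you test against functionals $\lambda$ and recover the Fr\'echet remainder estimate by Hahn--Banach. Your concern about the non-completeness of $\SY$ is legitimate but equally handled in the paper's version, since the weak integrals involved are guaranteed to exist by \refer{prop:Hauptsatz} rather than by completeness.
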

\begin{proof}
	Let $(\gamma_{i})_{i\in I}$ be a net in $\CcF{\UF}{\SY}{k+1}$ such that
	$(\gamma_{i})_{i\in I}$ converges to $\gamma$ in $\CcF{\UF}{\SY}{0}$
	and the net $(\FAbl{\gamma}_{i})_{i\in I}$ converges to $\Gamma \in \CcF{\UF}{\Lin{\SX}{\SY}}{k}$.
	We have to show that $\gamma \in \CcF{\UF}{\SY}{k+1}$ with
	$\FAbl{\gamma} = \Gamma$.
	
	To this end, consider $x \in \UF$, $h\in \SX$ and $t\in\R^\ast$ such that
	the line segment $S_{x,t,h} \ndef \{x + s t h : s \in [0,1]\}$ is contained in $\UF$.
	Since evaluation maps and the weak integration are continuous
	(see \refer{lem:Kriterium_fuer_Stetigkeit_der_Punktauswertungen} and
	\refer{lem:Integral_stetiges_Funktional}, respectively)
	and the hypothesis on $\GewFunk$ implies that $(\FAbl{\gamma}_{i})_{i\in I}$ converges
	to $\Gamma$ uniformly on $S_{x,t,h}$, we derive
	\begin{equation*}
		\tfrac{\gamma(x + t h) -\gamma(x) }{t}
		=\lim_{i\in I} \tfrac{\gamma_{i}(x + t h) -\gamma_{i}(x) }{t}
		\\
		=\lim_{i\in I}
			\frac{\Mint{ \FAbl{\gamma}_{i}(x + s t h)\eval (t h) }{s}}{t}
		= \Mint{ \Gamma(x + s t h)\eval h}{s}.
	\end{equation*}
	Since $\Gamma$ is continuous, we can apply 
	\refer{prop:Stetigkeit_parameterab_Int} and get
	\[
		\lim_{t \to 0} \frac{\gamma(x + t h) -\gamma(x) }{t}
		= \Rint{0}{1}{ \lim_{t \to 0} \bigl(\Gamma(x + s t h)\eval h\bigr) }{s}
		= \Gamma(x) \eval h .
	\]
	Because $\Gamma$ and the evaluation of linear maps are continuous
	(\refer{lem:Auswertung_linearer_Abb_ist_stetig}),
	$\gamma$ is a $\ConDiff{}{}{1}$-map
	with $\dA{\gamma}{x}{\cdot} = \Gamma(x)$,
	from which we conclude with another application of the continuity
	of $\Gamma$ that $\gamma \in \FC{\UF}{\SY}{1}$ with $\FAbl{\gamma} = \Gamma$.
	Finally we conclude with \refer{prop:topologische_Zerlegung_von_CFk}
	that $\gamma \in \CcF{\UF}{\SY}{k+1}$.
\end{proof}
The last proposition allows us to conclude the completeness of $\CcF{\UF}{\SY}{k}$
from the on of $\CcF{\UF}{\SY}{0}$.
\begin{cor} \label{cor:Kriterium_Vollstaendigkeit_von_CFk}
	In the situation of
	\refer{prop:Abgeschlossenheit_des_Bildes_unter_der_Einbettung},
	assume that $\CcF{\UF}{\SY}{0}$ is complete for each Banach space $\SY$.
	Then also $\CcF{\UF}{\SY}{k}$ is complete, for each $k \in \cl{\N}$.
\end{cor}
\begin{proof}
	The proof for $k < \infty$ is by induction.
	
	$k=0$: This holds by our hypothesis.
	
	$k\to k+1$: We conclude from
	\refer{prop:Abgeschlossenheit_des_Bildes_unter_der_Einbettung}
	that $\CcF{\UF}{\SY}{k+1}$ is isomorphic to a closed vector subspace of
	$\CcF{\UF}{\Lin{\SX}{\SY}}{k} \times \CcF{\UF}{\SY}{0}$.
	Since this space is complete by induction hypothesis,
	so is $\CcF{\UF}{\SY}{k+1}$.
	
	$k=\infty$: This follows from \refer{cor:Topologie_von_CinfF}
	and the fact that $\CcF{\UF}{\SY}{k}$ is complete for all $k \in \N$
	because projective limits of complete topological vector spaces are complete.
\end{proof}
We give a sufficient condition for the completeness of $\CcF{\UF}{\SY}{0}$.
\begin{prop}\label{prop:Kriterium_Vollstaendigkeit_von_CF0}
	Let $\SX$ be a normed space, $\UF \subseteq \SX$ an open nonempty set and $\SY$ a Banach space.
	Further let $\GewFunk \sub \cl{\R}$ such that for each compact set
	$K \subseteq \UF$ there exists $f_{K} \in \GewFunk$ with
	$\inf_{x\in K}\abs{f_{K}(x)} > 0$.
	\index{weights!condition for completeness}
	Then $\CcF{\UF}{\SY}{0}$ is complete.
\end{prop}
\begin{proof}
	Let $(\gamma_{i})_{i\in I}$ be a Cauchy net in $\CcF{\UF}{\SY}{0}$.
	The hypotheses on \GewFunk\ imply that the topology of
	$\CcF{\UF}{\SY}{0}$ is finer than the topology of the uniform convergence
	on compact sets.
	Hence we deduce from the completeness of $\SY$ that there exists a map
	$\gamma: \UF \to \SY$ to which $(\gamma_{i})_{i\in I}$
	converges uniformly on each compact subset of $\UF$;
	and since each $\gamma_{i}$ is continuous, the restriction
	of $\gamma$ to each compact subset is continuous.
	Hence $\gamma$ is sequentially continuous
	since the union of a convergent sequence with its limit is compact;
	but $\UF$ is first countable, so $\gamma$ is continuous.
	
	It remains to show that $\gamma \in \CcF{\UF}{\SY}{0}$ and
	$(\gamma_{i})_{i\in I}$ converges to $\gamma$ in $\CcF{\UF}{\SY}{0}$.
	To see this, let $f \in \GewFunk$ and $\eps > 0$.
	Then there exists an $\ell\in I$ such that
	\begin{equation*}
		(\forall i, j \geq \ell)\; \hn{\gamma_i\ - \gamma_j}{f}{0} \leq \eps,
	\end{equation*}
	which is equivalent to
	\begin{equation*}
		(\forall x\in \UF, i, j \geq \ell) \; \abs{f(x)}\,
		\norm{\gamma_i(x) - \gamma_j(x)} \leq \eps .
	\end{equation*}
	If we fix $i$ in this estimate and let $\gamma_j(x)$ pass
	to its limit, then we get
	\begin{equation*}\label{est:vollstaendig2}
		\tag{\ensuremath{\ast}}
		(\forall x\in \UF, i \geq \ell) \;\abs{f(x)}\,
		\norm{\gamma_i(x) - \gamma(x)} \leq \eps .
	\end{equation*}
	From this estimate, we conclude with the triangle inequality that
	\begin{align*}
		(\forall x\in \UF) \;\abs{f(x)}\, \norm{\gamma(x)}
		\leq \eps + \abs{f(x)}\,\norm{\gamma_\ell(x)}
		\leq \eps + \hn{\gamma_\ell}{f}{0},
	\end{align*}
	so $\gamma \in \CcF{\UF}{\SY}{0}$.
	Finally we conclude from \eqref{est:vollstaendig2} that
	$\hn{\gamma_i -\gamma}{f}{0} \leq \eps$ for all $i\geq \ell$,
	so $(\gamma_i)_{i\in I}$ converges to $\gamma$ in $\CcF{\UF}{\SY}{0}$.
\end{proof}
Finally, we state the derived criterion in a more citable form.
\begin{cor}\label{cor:CkF_komplett_schwache_Bedingung}
	Let $\SX$ be a normed space, $\UF \subseteq \SX$ an open nonempty set,
	$\SY$ a Banach space and $k \in \cl{\N}$.
	Further, let $\GewFunk \sub \cl{\R}$ such that for each compact set
	$K \subseteq \UF$ there exists $f_{K} \in \GewFunk$ with
	$\inf_{x\in K}\abs{f_{K}(x)} > 0$.
	Then $\CcF{\UF}{\SY}{k}$ is complete.
\end{cor}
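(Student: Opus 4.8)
The plan is to combine the two completeness results just established, namely \refer{prop:Kriterium_f"ur_Vollst"andigkeit_von_CF0} and \refer{cor:Kriterium_f"ur_Vollst"andigkeit_von_CFk}, so that essentially no new work is required. First I would observe that the hypothesis imposed on $\GewFunk$ --- that every compact set $K \subseteq \UF$ admits an $f_K \in \GewFunk$ with $\inf_{x \in K}\abs{f_K(x)} > 0$ --- is exactly the hypothesis of \refer{prop:Kriterium_f"ur_Vollst"andigkeit_von_CF0}, and it is in particular stronger than the hypothesis of \refer{prop:Abgeschlossenheit_des_Bildes_unter_der_Einbettung}, since every compact line segment $S \subseteq \UF$ is itself a compact subset of $\UF$.

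Next I would apply \refer{prop:Kriterium_f"ur_Vollst"andigkeit_von_CF0} to conclude that $\CcF{\UF}{\SZ}{0}$ is complete for \emph{every} Banach space $\SZ$. The crucial point here is that the condition on $\GewFunk$ does not involve the target space at all, so a single verification yields completeness simultaneously for every choice of Banach target. In particular this covers not only $\SY$ itself but also the iterated operator spaces $\Lin[\ell]{\SX}{\SY}$, which are Banach spaces because $\SY$ is (they carry the operator norm). This is precisely what is needed to satisfy the standing assumption of \refer{cor:Kriterium_f"ur_Vollst"andigkeit_von_CFk}.

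Having verified both of its hypotheses, I would then invoke \refer{cor:Kriterium_f"ur_Vollst"andigkeit_von_CFk} directly to obtain completeness of $\CcF{\UF}{\SY}{k}$ for every $k \in \cl{\N}$, which is the assertion. The only genuine subtlety --- and the point I would take care to flag --- is the quantifier over all Banach targets in the hypothesis of \refer{cor:Kriterium_f"ur_Vollst"andigkeit_von_CFk}: its inductive step passes from order $k$ to order $k+1$ via the embedding of \refer{prop:topologische_Zerlegung_von_CFk} into $\CcF{\UF}{\Lin{\SX}{\SY}}{k} \times \CcF{\UF}{\SY}{0}$, so completeness of the order-zero space must be available for the operator-space target as well, not merely for $\SY$. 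Since our hypothesis on $\GewFunk$ is target-independent, this causes no difficulty, and the corollary follows.
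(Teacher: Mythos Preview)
Your proposal is correct and follows exactly the paper's approach: the paper's proof is a one-line invocation of \refer{cor:Kriterium_f"ur_Vollst"andigkeit_von_CFk} together with \refer{prop:Kriterium_f"ur_Vollst"andigkeit_von_CF0}. Your additional remarks about the target-independence of the weight condition and the fact that compact line segments are compact sets are precisely the implicit checks that make this combination work.
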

\begin{proof}
	This is an immediate consequence of
	\refer{cor:Kriterium_Vollstaendigkeit_von_CFk}
	and \refer{prop:Kriterium_Vollstaendigkeit_von_CF0}.
\end{proof}
\begin{cor}\label{cor:CkF_komplett}\label{cor:BC_komplett}
	Let $\SX$ be a normed space, $\UF \subseteq \SX$ an open nonempty set,
	$\SY$ a Banach space and $k \in \cl{\N}$.
	Further, let $\GewFunk \sub \cl{\R}$ with $1_{\UF} \in \GewFunk$.
	Then $\CcF{\UF}{\SY}{k}$ is complete;
	in particular, $\BC{\UF}{\SY}{k}$ is complete.
\end{cor}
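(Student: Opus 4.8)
The plan is to reduce the statement immediately to \refer{cor:CkF_komplett_schwache_Bedingung}. That corollary already guarantees completeness of $\CcF{\UF}{\SY}{k}$ for a Banach space $\SY$ and any $k \in \cl{\N}$, provided the set of weights $\GewFunk$ satisfies the mild condition that for every compact set $K \subseteq \UF$ there is some $f_K \in \GewFunk$ with $\inf_{x \in K}\abs{f_K(x)} > 0$. So the only thing left to do is to verify that the hypothesis $1_\UF \in \GewFunk$ forces this separation condition.

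First I would observe that the constant weight $1_\UF$ itself serves as $f_K$ for \emph{every} compact $K \subseteq \UF$ simultaneously. Indeed, $\abs{1_\UF(x)} = 1$ for all $x \in \UF$, so $\inf_{x \in K}\abs{1_\UF(x)} = 1 > 0$ whenever $K$ is nonempty, while for $K = \emptyset$ the infimum is $+\infty$ and the required strict inequality holds trivially. Thus the hypothesis of \refer{cor:CkF_komplett_schwache_Bedingung} is met with $f_K \ndef 1_\UF$, and that corollary yields completeness of $\CcF{\UF}{\SY}{k}$ for each $k \in \cl{\N}$.

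For the supplementary \quotes{in particular} assertion, I would simply recall from \refer{beisp:BC} that $\BC{\UF}{\SY}{k} = \CF{\UF}{\SY}{\{1_\UF\}}{k}$, i.e.\ it is exactly the weighted space corresponding to the singleton weight set $\GewFunk = \{1_\UF\}$. Since this set obviously contains $1_\UF$, the first part of the corollary applies verbatim and gives the completeness of $\BC{\UF}{\SY}{k}$.

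There is essentially no obstacle to overcome: all the analytic content (the completeness argument at order $0$ in \refer{prop:Kriterium_f"ur_Vollst"andigkeit_von_CF0}, the closedness of the image of the embedding in \refer{prop:Abgeschlossenheit_des_Bildes_unter_der_Einbettung}, the inductive step, and the passage to the projective limit for $k = \infty$) has already been carried out in the preceding results. This corollary is merely the specialization of \refer{cor:CkF_komplett_schwache_Bedingung} to the constant-weight case, and the \quotes{proof} consists only in checking that $1_\UF$ satisfies the required lower bound on every compact set.
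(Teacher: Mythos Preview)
Your proposal is correct and matches the paper's intended (implicit) argument: the corollary is stated in the paper without proof precisely because it is an immediate specialization of \refer{cor:CkF_komplett_schwache_Bedingung}, with $f_K \ndef 1_\UF$ witnessing the required lower bound on every compact set. Your additional remark that $\BC{\UF}{\SY}{k}$ is the case $\GewFunk = \{1_\UF\}$ is exactly the right justification for the \quotes{in particular} clause.
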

\subsubsection{An integrability criterion}
The given completeness criterion entails a criterion for the existence of the
weak integral of a continuous curve to a space $\CcF{\UF}{\SY}{k}$ where $\SY$
is not necessarily complete.
\begin{lem}
\label{lem:Kriterium_Integrierbarkeit_in_CW}
\label{lem:Punktetrennende_Menge_Integrale_CW}
	Let $\SX$ and $\SY$ be normed spaces, $\UF \subseteq \SX$
	a nonempty open set, $k\in\cl{\N}$, $\GewFunk \subseteq \cl{\R}^{\UF}$,
	$\Gamma : [a,b] \to \CcF{\UF}{\SY}{k}$ a map
	and $R \in \CcF{\UF}{\SY}{k}$.
	\begin{enumerate}
		\item\label{enum1:INtegrale_von_Kurven_in_gewichtetenAbb-1}
		Assume that $\Gamma$ is weakly integrable
		and that for each $x\in\UF$ there exists $f_x \in \cW$ with $f_x(x) \neq 0$. Then
		\[
			\Rint{a}{b}{\Gamma(s)}{s} = R
			\iff
			(\forall x \in \UF)\;
			\evTwo_x\left(\Rint{a}{b}{\Gamma(s)}{s}\right)
			= R(x) ,
		\]
		and for each $x \in \UF$ we have
		\begin{equation}\label{id:Vertauschung_Kurvenintegral_Auswertung}\tag{$\ast$}
			\evTwo_x\left(\Rint{a}{b}{\Gamma(s)}{s}\right)
			= \Rint{a}{b}{\evTwo_x(\Gamma(s))}{s} .
		\end{equation}

		\item\label{enum1:INtegrale_von_Kurven_in_gewichtetenAbb-2}
		Assume that for each compact set $K \subseteq \UF$,
		there exists $f_{K} \in \GewFunk$ with
		$
			\inf\limits_{x \in K}\abs{f_{K}(x)} > 0 ,
		$
		that $\Gamma$ is continuous and
		\begin{equation*}\label{id:punktweises_Integral_CW}\tag{$\ast\ast$}
			\Rint{a}{b}{\evTwo_x(\Gamma(s))}{s}
			= \evTwo_x(R)
		\end{equation*}
		holds for all $x \in \UF$. Then $\Gamma$ is weakly integrable with
		\[
			\Rint{a}{b}{\Gamma(s)}{s} = R.
		\]
	\end{enumerate}
\end{lem}
\begin{proof}
	\refer{enum1:INtegrale_von_Kurven_in_gewichtetenAbb-1}
	Since $\{\evTwo_x: x \in \UF\}$ separates the points on $\CcF{\UF}{\SY}{k}$,
	the stated equivalence is obvious.
	Further, we proved in \refer{lem:Kriterium_fuer_Stetigkeit_der_Punktauswertungen}
	that the condition on $\cW$ implies that
	$\{\evTwo_x: x \in \UF\} \subseteq \Lin{\CcF{\UF}{\SY}{k}}{\SY}$,
	so \eqref{id:Vertauschung_Kurvenintegral_Auswertung} follows from
	\refer{lem:stet_lin_Abb_wirken_auf_int_Kurven}.
	
	\refer{enum1:INtegrale_von_Kurven_in_gewichtetenAbb-2}
	Let $\widetilde{\SY}$ be the completion of $\SY$.
	Then $\CcF{\UF}{ \SY}{k} \subseteq \CcF{\UF}{\widetilde{\SY}}{k}$,
	and we denote the inclusion map by $\iota$.
	It is obvious that $\iota$ is a topological embedding.
	In the following, we denote the evaluation of $\CcF{\UF}{\widetilde{\SY}}{k}$
	at $x \in \UF$ with $\widetilde{\evTwo}_{x}$.
	\\
	Since we proved in \refer{cor:CkF_komplett_schwache_Bedingung}
	that the condition on $\cW$ ensures that $\CcF{\UF}{\widetilde{\SY}}{k}$ is complete,
	$\iota \circ \Gamma$ is weakly integrable.
	Since $\widetilde{\evTwo}_{x} \circ \iota = \evTwo_{x}$ for $x \in \UF$,
	we conclude from
	\refer{enum1:INtegrale_von_Kurven_in_gewichtetenAbb-1}
	(using \eqref{id:Vertauschung_Kurvenintegral_Auswertung} and \eqref{id:punktweises_Integral_CW})
	that
	\[
		\Rint{a}{b}{(\iota \circ \Gamma)(s)}{s} = \iota(R).
	\]
	This identity ensures the integrability of $\Gamma$:
	By the Hahn-Banach theorem, each $\lambda \in \CcF{\UF}{\SY}{k}'$ extends
	to a $\widetilde{\lambda} \in \CcF{\UF}{ \widetilde{\SY} }{k}'$, that is
	$
		\widetilde{\lambda} \circ \iota = \lambda
	$.
	Hence
	\begin{equation*}
		\Rint{a}{b}{ (\lambda \circ \Gamma)(s)}{s}
		= \Rint{a}{b}{ (\widetilde{\lambda} \circ \iota \circ \Gamma)(s)}{s}
		= \widetilde{\lambda}( \iota(R))
		= \lambda(R),
	\end{equation*}
	which had to be proved.
\end{proof}
\section{Composition on weighted functions and superposition operators}
\label{sec:Composition_superposition_normedspaces}
In this subsection we discuss the behaviour of weighted functions when they are
composed with certain functions.
In particular, we show that a continuous multilinear or a suitable analytic map
induce a superposition operator between weighted function spaces.
Moreover, we examine the composition between bounded functions and
between bounded functions mapping $0$ to $0$ and weighted functions.
\subsection{Composition with a multilinear map}
\label{susec:Multilineare_Abb}
We prove that a continuous multilinear map
from a normed space $\SY_1 \times \dotsb \times \SY_m$
to a normed space $\SZ$ induces
a continuous multilinear map from
$\CcF{\UF}{\SY_1}{k} \times \dotsb \times \CcF{\UF}{\SY_m}{k}$
to $\CcF{\UF}{\SZ}{k}$.
As a preparation, we calculate the differential of a composition of a multilinear
map and other differentiable maps. The following definition is quite useful to do that.
\begin{defi}\label{def:Bausteine_Differential_multilineareAbb}
	Let $\SY_1,\dotsc,\SY_m$, $\SX$ and $\SZ$ be normed spaces and
	$b:\SY_1 \times\dotsb\times \SY_m \to \SZ$
	a continuous $m$-linear map.
	For each $i \in \{1,\dotsc,m\}$ we define the $m$-linear continuous map
	\begin{align*}
		b^{(i)}:&
		\SY_1 \times \dotsb \times
		\SY_{i-1} \times \Lin{\SX}{\SY_i}\times \SY_{i+1}
		\times \dotsb\times \SY_m
		\to \Lin{\SX}{\SZ}\\
		&(y_1,\dotsc,y_{i-1},T,y_{i+1},\dotsc,y_m)\mapsto
		(h\mapsto b(y_1,\dotsc,y_{i-1},T\eval h, y_{i+1},\dotsc,y_m)).
	\end{align*}
\end{defi}

\begin{lem}\label{lem:Ableitung_Kompo_mit_multilinearer_Abb}
	Let	$\SY_1,\dotsc,\SY_m$ and $\SZ$ be normed spaces,
	$\UF$ be an open nonempty subset of the normed space $\SX$
	and $k\in\cl{\N}$.
	Further let
	$b:\SY_1 \times\dotsb\times \SY_m \to \SZ$
	be a continuous $m$-linear map and
	$\gamma_1 \in \FC{\UF}{\SY_1}{k},
	\dotsc, \gamma_m \in \FC{\UF}{\SY_m}{k}$.
	Then
	\[
		b\circ(\gamma_1,\dotsc,\gamma_m) \in \FC{\UF}{\SZ}{k}
	\]
	with
	\begin{equation}
		\label{id:Ableitung_Kompo_mit_multilinearen_Abb}
		\FAbl{(b\circ(\gamma_1,\dotsc,\gamma_m))}
		= \sum_{i=1}^m
			b^{(i)}\circ (\gamma_1, \dotsc,\gamma_{i-1}, \FAbl{\gamma_i},
				\gamma_{i+1}, \dotsc,\gamma_m).
	\end{equation}
\end{lem}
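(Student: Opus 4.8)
The plan is to argue by induction on $k \in \cl{\N}$, the whole statement being a Leibniz (product) rule for the multilinear map $b$ composed with the tuple $(\gamma_1, \dotsc, \gamma_m)$. The case $k = 0$ is immediate: $b \circ (\gamma_1, \dotsc, \gamma_m)$ is continuous as a composition of continuous maps, so it lies in $\FC{\UF}{\SZ}{0} = \ConDiff{\UF}{\SZ}{0}$, and identity \refer{id:Ableitung_Kompo_mit_multilinearen_Abb} is vacuous. The real content is the base case $k = 1$, where I establish both Fr\'echet differentiability and the derivative formula; the inductive step then feeds the formula back into itself.

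For the base case, write $\beta \ndef b \circ (\gamma_1, \dotsc, \gamma_m)$ and fix $u \in \UF$, $h \in \SX$ and small $t \neq 0$ with $u + th \in \UF$. Exploiting multilinearity of $b$, I telescope the increment:
\[
	\beta(u + th) - \beta(u)
	= \sum_{i=1}^m b\bigl(\gamma_1(u + th), \dotsc, \gamma_{i-1}(u+th),\, \gamma_i(u+th) - \gamma_i(u),\, \gamma_{i+1}(u), \dotsc, \gamma_m(u)\bigr).
\]
Dividing by $t$ moves the scalar into the $i$-th slot by linearity, producing the difference quotient $\tfrac{1}{t}(\gamma_i(u+th) - \gamma_i(u))$ there. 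Letting $t \to 0$, each such quotient tends to $\dA{\gamma_i}{u}{h} = \FAbl{\gamma_i}(u)\eval h$ while every entry $\gamma_j(u + th)$ converges to $\gamma_j(u)$ by continuity; continuity of $b$ then lets me pass to the limit in each summand. This yields $\dA{\beta}{u}{h} = \sum_{i=1}^m b^{(i)}\bigl(\gamma_1(u), \dotsc, \FAbl{\gamma_i}(u), \dotsc, \gamma_m(u)\bigr)\eval h$, which is precisely \refer{id:Ableitung_Kompo_mit_multilinearen_Abb} evaluated at $u$.

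It remains to upgrade this pointwise directional derivative to genuine Fr\'echet differentiability. The map $\FAbl{\beta}\colon \UF \to \Lin{\SX}{\SZ}$ given by the right-hand side of \refer{id:Ableitung_Kompo_mit_multilinearen_Abb} is a finite sum of terms $b^{(i)} \circ (\gamma_1, \dotsc, \FAbl{\gamma_i}, \dotsc, \gamma_m)$; since each $\gamma_j$ and each $\FAbl{\gamma_i}$ is continuous (as $\gamma_i \in \FC{\UF}{\SY_i}{k}$ with $k \geq 1$) and the multilinear maps $b^{(i)}$ are continuous, $\FAbl{\beta}$ is continuous. Continuity of $\dA{\beta}{}{}\colon \UF \times \SX \to \SZ$ follows likewise using continuity of the evaluation $\Lin{\SX}{\SZ} \times \SX \to \SZ$, so $\beta$ is a $\ConDiff{}{}{1}$-map and, $\FAbl{\beta}$ being continuous into $\Lin{\SX}{\SZ}$, we obtain $\beta \in \FC{\UF}{\SZ}{1}$. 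For the inductive step from $k$ to $k+1$, observe that for $\gamma_i \in \FC{\UF}{\SY_i}{k+1}$ both $\gamma_i$ and $\FAbl{\gamma_i}$ lie in $\FC{}{}{k}$; applying the induction hypothesis to each summand $b^{(i)} \circ (\gamma_1, \dotsc, \FAbl{\gamma_i}, \dotsc, \gamma_m)$ (with the continuous multilinear map $b^{(i)}$ in place of $b$) shows $\FAbl{\beta} \in \FC{\UF}{\Lin{\SX}{\SZ}}{k}$, whence $\beta \in \FC{\UF}{\SZ}{k+1}$. The case $k = \infty$ is then obtained from $\FC{\UF}{\SZ}{\infty} = \bigcap_{k \in \N^\ast} \FC{\UF}{\SZ}{k}$. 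I expect the main obstacle to be the base case: carefully justifying the interchange of limit and the finite sum in the telescoped difference quotient, and verifying that the operator-valued map $\FAbl{\beta}$ is continuous rather than merely Gateaux-type, which is what separates $\FC{}{}{1}$ from the weaker $\ConDiff{}{}{1}$-condition.
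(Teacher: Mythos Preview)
Your proof is correct, but it takes a more hands-on route than the paper. The paper dispatches the lemma in one line by invoking the Fr\'echet chain rule (\refer{prop:Kettenregel_Frechet}) together with the facts that $(\gamma_1,\dotsc,\gamma_m) \in \FC{\UF}{\SY_1\times\dotsb\times\SY_m}{k}$ (\refer{lem:Zusammensetzen_Frechet-diffbarer_Abb-Produkt}) and that continuous multilinear maps are $\FC{}{}{\infty}$ with derivative $\dA{b}{y}{h} = \sum_i b(y_1,\dotsc,h_i,\dotsc,y_m)$ (\refer{beisp:Frechet_diffbare_Abb}); both the $\FC{}{}{k}$-membership and the formula \refer{id:Ableitung_Kompo_mit_multilinearen_Abb} then drop out at once. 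Your approach instead unpacks this by an explicit telescoping of the difference quotient and an induction on $k$, verifying the operator-norm continuity of $\FAbl{\beta}$ by hand. This is more self-contained---it does not presuppose the chain rule or the smoothness of multilinear maps---and the telescoping argument is essentially how one would prove the multilinear case of \refer{beisp:Frechet_diffbare_Abb} anyway, so nothing is circular. The paper's version is shorter precisely because that groundwork has already been laid in the appendix.
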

\begin{proof}
	To calculate the derivative of $b\circ(\gamma_1,\dotsc,\gamma_m)$,
	we apply the chain rule and get
	\begin{align*}
		\FAbl{(b\circ(\gamma_1,\dotsc,\gamma_m))}(x)\eval h
		&= \sum_{i=1}^m
			b(\gamma_1(x),\dotsc,\gamma_{i-1}(x),\FAbl{\gamma_i}(x)\eval h,
				\gamma_{i+1}(x),\dotsc,\gamma_m(x))\\
		&= \sum_{i=1}^m
			b^{(i)} (\gamma_1(x),\dotsc,\gamma_{i-1}(x),\FAbl{\gamma_i}(x),
				\gamma_{i+1}(x),\dotsc,\gamma_m(x))
			\eval h .
	\end{align*}
	This clearly implies \eqref{id:Ableitung_Kompo_mit_multilinearen_Abb}.
\end{proof}
We are ready to prove the result about the superposition.
\begin{prop}\label{prop:multilineare_Abb_und_CF}
	Let $\UF$ be an open nonempty subset of the normed space $\SX$.
	Let $\SY_1,\dotsc,\SY_m$ be normed spaces, $k \in \cl{\N}$ and
	$\GewFunk, \cW_1,\dotsc,\cW_m \subseteq \cl{\R}^{\UF}$ nonempty sets
	such that
	\[
		(\forall f \in \GewFunk) (\exists g_{f,1} \in \cW_1,\dotsc, g_{f,m} \in\cW_m)\,
		\abs{f} \leq \abs{g_{f,1}}\dotsm \abs{g_{f, m}}.
	\]
	Further let $\SZ$ be another normed space and
	$b:\SY_1 \times\dotsb\times \SY_m \to \SZ$
	a continuous $m$-linear map.
	Then
	\[
		b\circ(\gamma_1,\dotsc,\gamma_m) \in \CF{\UF}{\SZ}{\GewFunk}{k}
	\]
	for all $\gamma_1 \in \CF{\UF}{\SY_1}{\cW_1}{k},
	\dotsc, \gamma_m \in \CF{\UF}{\SY_m}{\cW_m}{k}$.
	The map
	\[
		M_k(b):
		\CF{\UF}{\SY_1}{\cW_1}{k} \times \dotsb \times
		\CF{\UF}{\SY_m}{\cW_m}{k}
		\to\CF{\UF}{\SZ}{\GewFunk}{k}:
		(\gamma_1,\dotsc,\gamma_m)\mapsto b\circ(\gamma_1,\dotsc,\gamma_m)
	\]
	is $m$-linear and continuous.
	\index{superposition!with a multilinear map}
\end{prop}
\begin{proof}
	For $k < \infty$, we proceed by induction on $k$.
	
	$k=0$: For $f \in \GewFunk$, $x\in\UF$ and
	$\gamma_1\in\CF{\UF}{\SY_1}{\cW_1}{k},\dotsc,
	\gamma_m\in\CF{\UF}{\SY_m}{\cW_m}{k}$
	we compute
	\begin{align*}
		\abs{f(x)}\,\norm{b\circ(\gamma_1,\dotsc,\gamma_m)(x)}
		\leq \Opnorm{b} \prod_{i=1}^m \abs{g_{f, i}} \norm{\gamma_i(x)}
		\leq \Opnorm{b} \prod_{i=1}^m\hn{\gamma_i}{g_{f, i}}{0},
	\end{align*}
	so $b\circ(\gamma_1,\dotsc,\gamma_m) \in \CF{\UF}{\SZ}{\GewFunk}{0}$.
	From this estimate we also conclude
	\[
		\hn{M_0(b)(\gamma_1,\dotsc,\gamma_m)}{f}{0}
		= \hn{b\circ(\gamma_1,\dotsc,\gamma_m)}{f}{0}
		\leq \Opnorm{b}\prod_{i=1}^m\hn{\gamma_i}{g_{f, i}}{0},
	\]
	so $M_0(b)$ is continuous at $0$. Since	the $m$-linearity of $M_0(b)$ is obvious,
	this implies the continuity	of $M_0(b)$
	(see \cite[Chapter \RN{1}, \S 1, no. 6]{MR910295}).
	
	$k\to k + 1$: From \refer{prop:topologische_Zerlegung_von_CFk}
	(together with the induction base) we know that for
	$\gamma_1\in\CF{\UF}{\SY_1}{\cW_1}{k + 1},\dotsc,
	\gamma_m\in\CF{\UF}{\SY_m}{\cW_m}{k + 1}$
	\[
		b\circ(\gamma_1,\dotsc,\gamma_m) \in \CF{\UF}{\SZ}{\GewFunk}{k + 1}
		\iff
		D(b\circ(\gamma_1,\dotsc,\gamma_m)) 
		\in\CF{\UF}{\Lin{\SX}{\SZ}}{\GewFunk}{k}
	\]
	and that $M_{k+1}(b)$ is continuous if
	\[
		D\circ M_{k+1}(b):
		\CF{\UF}{\SY_1}{\cW_1}{k+1} \times \dotsb \times 
		\CF{\UF}{\SY_m}{\cW_m}{k+1}
		\to\CF{\UF}{\Lin{\SX}{\SZ}}{\GewFunk}{k}
	\]
	is so.
	We know from \eqref{id:Ableitung_Kompo_mit_multilinearen_Abb} in
	\refer{lem:Ableitung_Kompo_mit_multilinearer_Abb} that
	\[
		D(b\circ(\gamma_1,\ldots,\gamma_m))
		= \sum_{i=1}^m
			b^{(i)}\circ (\gamma_1, \dotsc,\gamma_{i-1},D\gamma_i,
				\gamma_{i+1}, \dotsc,\gamma_m).
	\]
	By the inductive hypothesis,
	\[
		b^{(i)}\circ
		(\gamma_1, \dotsc, \gamma_{i-1}, D\gamma_i, \gamma_{i+1}, \dotsc,
			\gamma_m)
		\in \CF{\UF}{\Lin{\SX}{\SZ}}{\GewFunk}{k}
	\]
	and hence
	\[
		D(b\circ(\gamma_1,\ldots,\gamma_m))
		\in\CF{\UF}{\Lin{\SX}{\SZ}}{\GewFunk}{k}.
	\]
	Since
	\[
		M_k(b^{(i)}):
		\CF{\UF}{\SY_1}{\cW_1}{k} \times \dotsb \times
		\CF{\UF}{\Lin{\SX}{\SY_i}}{\cW_i}{k} \times
		\dotsb \times \CF{\UF}{\SY_m}{\cW_m}{k}
		\to \CF{\UF}{\Lin{\SX}{\SZ}}{\GewFunk}{k}
	\]
	is continuous by the inductive hypothesis, it follows that
	$D\circ M_{k+1}(b)$ is continuous as
	\[
		(D\circ M_{k+1}(b))
		(\gamma_1, \dotsc, \gamma_m)
		= \sum_{i=1}^m M_k(b^{(i)})
		(\gamma_1, \dotsc, \gamma_{i-1}, D\gamma_i, \gamma_{i+1}, \dotsc,
			\gamma_m).
	\]	
	Furthermore, $M_{k+1}(b)$ is obviously $m$-linear, so the induction step is finished.	
	
	$k=\infty$:
	\BeweisschrittCkCinfty%
	{\CF{\UF}{\SY_1}{\cW_1}{\infty} \times \dotsb \times
		\CF{\UF}{\SY_m}{\cW_m}{\infty}}
	{M_\infty(b)}
	{\CF{\UF}{\SZ}{\GewFunk}{\infty}}
	{}
	{}
	{\CF{\UF}{\SY_1}{\cW_1}{n} \times \dotsb \times
		\CF{\UF}{\SY_m}{\cW_m}{n}}
	{M_n(b)}
	{\CF{\UF}{\SZ}{\GewFunk}{n}}
	{n}
\end{proof}
We prove an analogous result for decreasing functions.
\begin{cor}\label{cor:multilineare_Abb_und_CFvan}
	\index{superposition!with a multilinear map}
	Let $\SY_1,\dotsc,\SY_m$ be normed spaces,
	$\UF$ an open nonempty subset of the normed space $\SX$, $k \in \cl{\N}$
	and $\GewFunk, \cW_1,\dotsc,\cW_m \subseteq \cl{\R}^{\UF}$ nonempty such that
	\[
		(\forall f \in \GewFunk) (\exists g_{f,1} \in \cW_1,\dotsc, g_{f,m} \in\cW_m)\,
		\abs{f} \leq \abs{g_{f,1}}\dotsm \abs{g_{f, m}}.
	\]
	Further let $\SZ$ be another normed space,
	$b:\SY_1 \times\dotsb\times \SY_m \to \SZ$
	a continuous $m$-linear map and $j \in \{1,\dotsc,m\}$.
	Then
	\[
		\tag{\ensuremath{\dagger}}\label{Superposition_multilinear_CWvan}
		b\circ(\gamma_1,\dotsc,\gamma_j, \dotsc,\gamma_m) \in \CFvan{\UF}{\SZ}{\GewFunk}{k}
	\]
	for all $\gamma_i \in \CF{\UF}{\SY_i}{\cW_i}{k}$ ($i\neq j$)
	and $\gamma_j \in \CFvan{\UF}{\SY_j}{\cW_j}{k}$.
	Moreover, the map
	\begin{align*}
		M_k(b)&:
		\CF{\UF}{\SY_1}{\cW_1}{k}
		\times \dotsb \times
		\CFvan{\UF}{\SY_j}{\cW_j}{k}
		\times \dotsb \times
		\CF{\UF}{\SY_m}{\cW_m}{k}
		\to\CFvan{\UF}{\SZ}{\GewFunk}{k}
		\\
		&:
		(\gamma_1, \dotsc,\gamma_j, \dotsc,\gamma_m)\mapsto b\circ(\gamma_1, \dotsc,\gamma_j, \dotsc, \gamma_m)
	\end{align*}
	is $m$-linear and continuous.
\end{cor}
\begin{proof}
	Using \refer{prop:multilineare_Abb_und_CF} and \refer{lem:CWvan_closed_CW},
	we only have to prove that \eqref{Superposition_multilinear_CWvan} holds.
	This is done by induction on $k$ (which we may assume finite).
	
	$k=0$: For $f\in \GewFunk$, $x\in\UF$ and
	$\gamma_1\in\CF{\UF}{\SY_1}{\cW_1}{0},\dotsc, \gamma_j \in \CFvan{\UF}{\SY_j}{\cW_j}{0}, \dotsc,
	\gamma_m\in\CF{\UF}{\SY_m}{\cW_m}{0}$
	we compute
	\begin{multline*}
		\abs{f(x)}\,\norm{b\circ(\gamma_1, \dotsc, \gamma_j, \dotsc, \gamma_m)(x)}
		\\
		\leq \Opnorm{b} \prod_{i=1}^m \abs{g_{f,i}(x)}\norm{\gamma_i(x)}
		\leq \left(\Opnorm{b} \prod_{i\neq j}\hn{\gamma_i}{g_{f,i}}{0}\right)
			\abs{g_{f,j}(x)}\,\norm{\gamma_j(x)} .
	\end{multline*}
	With this estimate we easily see that $b\circ(\gamma_1, \dotsc, \gamma_j, \dotsc,\gamma_m) \in \CFvan{\UF}{\SZ}{\cW_j}{0}$.

	$k\to k + 1$: From \refer{cor:topologische_Zerlegung_von_CFkvan}
	(together with the induction base) we know that for
	$\gamma_1\in\CF{\UF}{\SY_1}{\cW_1}{k + 1},
	\dotsc,
	\gamma_j \in \CFvan{\UF}{\SY_j}{\cW_j}{k + 1},
	\dotsc,
	\gamma_m\in\CF{\UF}{\SY_m}{\cW_m}{k + 1}$
	\[
		b\circ(\gamma_1,\dotsc, \gamma_j, \dotsc, \gamma_m) \in \CFvan{\UF}{\SZ}{\GewFunk}{k + 1}
		\iff
		D(b\circ(\gamma_1,\dotsc, \gamma_j, \dotsc, \gamma_m)) 
		\in\CFvan{\UF}{\Lin{\SX}{\SZ}}{\GewFunk}{k}.
	\]
	We know from \eqref{id:Ableitung_Kompo_mit_multilinearen_Abb} in
	\refer{lem:Ableitung_Kompo_mit_multilinearer_Abb} that
	\begin{align*}
		D(b\circ(\gamma_1,\dotsc,\gamma_j, \dotsc, \gamma_m))
		&= \sum_{\substack{i=1\\ i\neq j}}^m
			b^{(i)}\circ (\gamma_1, \dotsc,\gamma_j, \dotsc,\gamma_{i-1},D\gamma_i,
				\gamma_{i+1}, \dotsc,\gamma_m)
		\\
		&+ b^{(j)}\circ (\gamma_1, \dotsc, \gamma_{j-1},D\gamma_j,
				\gamma_{j+1}, \dotsc,\gamma_m).
	\end{align*}
	Because $\gamma_j \in \CFvan{\UF}{\SY_j}{\cW_j}{k}$
	and $D\gamma_j \in \CFvan{\UF}{\Lin{\SX}{\SY_j}}{\cW_j}{k}$,
	we can apply the inductive hypothesis to all $b^{(i)}$ and the
	$\ConDiff{}{}{k}$-maps
	$\gamma_1, \dotsc, \gamma_{m}$ and $D\gamma_1, \dotsc, D\gamma_m$
	to see that this is an element of $\CFvan{\UF}{\Lin{\SX}{\SZ}}{\GewFunk}{k}$.
\end{proof}
%
%
We list some applications of \refer{prop:multilineare_Abb_und_CF}.
In the following corollaries, $k \in \cl{\N}$, $\UF$ is an open nonempty subset 
of the normed space $\SX$ and $\GewFunk \subseteq \cl{\R}^\UF$
always contains the constant map $1_{\UF}$.
\begin{cor}\label{cor:Multiplikation_auf_CF()(A)}
	Let $A$ be a normed algebra with the continuous multiplication $\mult$.
	Then $\CcF{\UF}{A}{k}$ is an algebra with the continuous multiplication
	\begin{align*}
		M(\mult):& \CcF{\UF}{A}{k}\times\CcF{\UF}{A}{k}
			\to\CcF{\UF}{A}{k} \\
			& M(\mult)(\gamma,\eta)(x) = \gamma(x)\mult\eta(x) .
	\end{align*}
	We shall often write $\mult$ instead of $M(\mult)$.
\end{cor}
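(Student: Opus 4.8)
The plan is to recognize this statement as a direct application of \refer{prop:multilineare_Abb_und_CF} in the bilinear case. First I would set $m = 2$ and take $\SY_1 = \SY_2 = \SZ = A$, together with the continuous bilinear map $b = \mult$. Since the weighted spaces appearing in both the source and the target are all built from the single weight set $\GewFunk$, I set $\cW_1 = \cW_2 = \GewFunk$.

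The only thing that needs checking is the hypothesis of \refer{prop:multilineare_Abb_und_CF}, namely that for each $f \in \GewFunk$ there exist $g_{f,1}, g_{f,2} \in \GewFunk$ with $\abs{f} \leq \abs{g_{f,1}}\,\abs{g_{f,2}}$. By the standing assumption of this block of corollaries, $1_\UF \in \GewFunk$, so I would simply take $g_{f,1} = f$ and $g_{f,2} = 1_\UF$. This gives
\[
	\abs{f} = \abs{f}\,\abs{1_\UF} = \abs{g_{f,1}}\,\abs{g_{f,2}},
\]
so the required inequality holds, even with equality.

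With the hypothesis verified, \refer{prop:multilineare_Abb_und_CF} immediately yields that the induced map $M(\mult) = M_k(\mult)$ sends $\CcF{\UF}{A}{k} \times \CcF{\UF}{A}{k}$ into $\CcF{\UF}{A}{k}$ and is bilinear and continuous. The asserted pointwise formula $M(\mult)(\gamma,\eta)(x) = \gamma(x)\mult\eta(x)$ is just the definition of $b \circ (\gamma,\eta)$ in that proposition. To conclude that $\CcF{\UF}{A}{k}$ is an algebra, I would note that associativity and distributivity are inherited, since they hold pointwise in $A$ for all $x \in \UF$.

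There is no genuine obstacle here; the entire content of the argument is the choice of weights, and that choice is forced and trivialized by the requirement $1_\UF \in \GewFunk$.
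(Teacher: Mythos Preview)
Your proposal is correct and is exactly the intended argument: the paper presents this corollary without proof, merely listing it as an application of \refer{prop:multilineare_Abb_und_CF} under the standing assumption $1_\UF \in \GewFunk$, and your verification of the weight hypothesis via $g_{f,1}=f$, $g_{f,2}=1_\UF$ is precisely what makes that application go through.
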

\begin{cor}\label{cor:Komposition_linearer_Abb_und_CF}
	If $E$, $F$ and $G$ are normed spaces, then the composition of
	linear operators
	\begin{align*}
		\MaMu:\Lin{F}{G}\times \Lin{E}{F}\to \Lin{E}{G}
	\end{align*}
	is bilinear and continuous and therefore induces
	the continuous bilinear maps
	\begin{align*}
		M(\MaMu) :& \CcF{\UF}{\Lin{F}{G}}{k}\times\CcF{\UF}{\Lin{E}{F}}{k}
			\to \CcF{\UF}{\Lin{E}{G}}{k}\\
		& M(\MaMu)(\gamma,\eta)(x) = \gamma(x)\MaMu\eta(x)
	\end{align*}
	and
	\begin{align*}
		M_{\cal BC}(\MaMu) :& \CcF{\UF}{\Lin{F}{G}}{k}\times\BC{\UF}{\Lin{E}{F}}{k}
			\to \CcF{\UF}{\Lin{E}{G}}{k}\\
		& M_{\cal BC}(\MaMu)(\gamma,\eta)(x) = \gamma(x)\MaMu\eta(x) .
	\end{align*}
	We shall often denote $M(\MaMu)$ just by $\MaMu$.
\end{cor}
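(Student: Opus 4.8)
The plan is to obtain both assertions as immediate instances of the multilinear composition result \refer{prop:multilineare_Abb_und_CF}, applied to the bilinear operation of composing linear operators. The first step is therefore to record the elementary fact asserted in the statement: the map
\[
	\MaMu : \Lin{F}{G}\times\Lin{E}{F}\to\Lin{E}{G} : (S,T)\mapsto S\MaMu T
\]
is bilinear (clear from the linearity of operator composition in each argument) and continuous. Continuity follows from the submultiplicativity of the operator norm, $\Opnorm{S\MaMu T}\le\Opnorm{S}\,\Opnorm{T}$ for $S\in\Lin{F}{G}$ and $T\in\Lin{E}{F}$, which exhibits $\MaMu$ as a continuous bilinear map of norm at most one.

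With this in hand I would invoke \refer{prop:multilineare_Abb_und_CF} in the case $m=2$, with $\SY_1 = \Lin{F}{G}$, $\SY_2 = \Lin{E}{F}$, $\SZ = \Lin{E}{G}$ and $b = \MaMu$. The only hypothesis of that proposition which is not automatic is the domination condition on the weight sets, namely that every $f\in\GewFunk$ be bounded in absolute value by a product $\abs{g_{f,1}}\,\abs{g_{f,2}}$ of weights drawn from the two source sets. For the map $M(\MaMu)$ I would take $\cW_1 = \cW_2 = \GewFunk$; since $1_\UF\in\GewFunk$ by the standing convention of these corollaries, the choice $g_{f,1} = f$ and $g_{f,2} = 1_\UF$ yields $\abs{f} = \abs{f}\cdot 1 = \abs{g_{f,1}}\,\abs{g_{f,2}}$, so the hypothesis holds and \refer{prop:multilineare_Abb_und_CF} delivers the asserted continuous bilinear map with values in $\CcF{\UF}{\Lin{E}{G}}{k}$.

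For the second map $M_{\cal BC}(\MaMu)$ I would use that $\BC{\UF}{\Lin{E}{F}}{k} = \CF{\UF}{\Lin{E}{F}}{\sset{1_\UF}}{k}$ by \refer{beisp:BC}, so that the relevant source sets are $\cW_1 = \GewFunk$ and $\cW_2 = \sset{1_\UF}$. Exactly the same choice $g_{f,1} = f\in\cW_1$ and $g_{f,2} = 1_\UF\in\cW_2$ verifies the domination condition, and \refer{prop:multilineare_Abb_und_CF} again furnishes the continuous bilinear map. The bilinearity of both $M(\MaMu)$ and $M_{\cal BC}(\MaMu)$ is part of the conclusion of the proposition, so nothing further is required.

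I would expect no genuine obstacle in this argument: the entire content of the corollary is carried by \refer{prop:multilineare_Abb_und_CF}, and the single point needing a word of justification is the trivial domination inequality, which is available precisely because $1_\UF$ belongs to $\GewFunk$.
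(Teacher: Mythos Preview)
Your argument is correct and matches the paper's approach exactly: the corollary is listed as an immediate application of \refer{prop:multilineare_Abb_und_CF} under the standing assumption $1_\UF\in\GewFunk$, and you have spelled out precisely the weight-domination verification that the paper leaves implicit.
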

\begin{cor}\label{cor:Auswertung_linearer_abb_und_CF}
	Let $E$ and $F$ be normed spaces.
	Then the evaluation of linear maps
	\begin{align*}
		\eval:\Lin{E}{F} \times E \to F:(T,w)\mapsto T\eval w
	\end{align*}
	is bilinear und continuous
	(see \refer{lem:Auswertung_linearer_Abb_ist_stetig})
	and hence induces the continuous bilinear map
	\begin{align*}
		M(\eval): &\CcF{\UF}{\Lin{E}{F}}{k}\times\CcF{\UF}{E}{k}
		\to \CcF{\UF}{F}{k}\\
		& M(\eval)(\Gamma,\eta)(x) = \Gamma(x)\eval\eta(x).
	\end{align*}
	Instead of $M(\eval)$ we will often write $\eval$.
\end{cor}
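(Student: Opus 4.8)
The plan is to deduce the statement directly from \refer{prop:multilineare_Abb_und_CF}. I would instantiate that proposition with $m = 2$, $\SY_1 \ndef \Lin{E}{F}$, $\SY_2 \ndef E$, $\SZ \ndef F$, and take $b$ to be the evaluation of linear maps, $b(T, w) = T \eval w$. First I would record that $b$ is bilinear, which is immediate, and continuous, which is the content of \refer{lem:Auswertung_linearer_Abb_ist_stetig} cited in the statement; concretely, the operator-norm estimate $\norm{T \eval w} \leq \Opnorm{T}\,\norm{w}$ does the job.

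Next I would supply the weight sets required by the proposition by setting $\cW_1 \ndef \cW_2 \ndef \GewFunk$ and checking the multiplicativity hypothesis
\[
	(\forall f \in \GewFunk)(\exists g_{f,1} \in \cW_1, g_{f,2} \in \cW_2)\quad
	\abs{f} \leq \abs{g_{f,1}}\,\abs{g_{f,2}}.
\]
This is exactly where the standing assumption $1_\UF \in \GewFunk$ enters: given $f \in \GewFunk$ I would take $g_{f,1} \ndef f$ and $g_{f,2} \ndef 1_\UF$, both of which lie in $\GewFunk = \cW_1 = \cW_2$, so that $\abs{g_{f,1}}\,\abs{g_{f,2}} = \abs{f}\cdot 1 = \abs{f}$ and the required inequality holds (with equality).

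With these choices \refer{prop:multilineare_Abb_und_CF} applies verbatim and yields at once that $b \circ (\Gamma, \eta)$ lies in $\CcF{\UF}{F}{k}$ for all $\Gamma \in \CcF{\UF}{\Lin{E}{F}}{k}$ and $\eta \in \CcF{\UF}{E}{k}$, and that the induced map $M_k(b)$, here denoted $M(\eval)$, is bilinear and continuous with $M(\eval)(\Gamma, \eta)(x) = \Gamma(x) \eval \eta(x)$, as asserted. I do not expect any genuine obstacle: all of the analytic content is already carried by \refer{prop:multilineare_Abb_und_CF}, and the sole additional verification — the weight condition — is immediate from $1_\UF \in \GewFunk$. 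This is entirely parallel to the two preceding corollaries \refer{cor:Multiplikation_auf_CF()(A)} and \refer{cor:Komposition_linearer_Abb_und_CF}, which are obtained from the same proposition by the identical choice $\cW_1 = \cW_2 = \GewFunk$.
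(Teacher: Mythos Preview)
Your proposal is correct and matches the paper's approach exactly: the corollary is stated as a direct application of \refer{prop:multilineare_Abb_und_CF} under the standing hypothesis $1_\UF \in \GewFunk$, and you have supplied precisely the intended instantiation and verification of the weight condition.
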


\subsection{Composition of weighted functions with bounded functions}
We explore the composition between spaces of bounded functions and spaces of weighted functions.
One case that is of particular interest is the composition between certain
subsets of the spaces $\BC{\UF}{\SY}{k}$.
\subsubsection{Composition of bounded functions}
We discuss under which conditions the composition is continuous or differentiable.
\newcommand{\Hcomp}{\ensuremath{g}}
\begin{lem}
\label{lem:BC_ist_unter_Komposition_abgeschlossen}
\label{lem:Komposition_von_BC_ist_stetig}
	Let $\SX$, $\SY$ and $\SZ$ be normed spaces,
	$\UF\subseteq \SX$ and $\VF\subseteq \SY$ open nonempty subsets
	and $k \in \cl{\N}$.
	Then for $\gamma \in \BC{\VF}{\SZ}{k + 1}$ and
	$\eta \in \BCo{\UF}{\VF}{k}$
	\[
		\gamma\circ\eta \in \BC{\UF}{\SZ}{k},
	\]
	and the map
	\begin{equation*}\label{comp_BC}
	\tag{\ensuremath{\ast}}
		\BC{\VF}{\SZ}{k + 1} \times \BCo{\UF}{\VF}{k}
		\to \BC{\UF}{\SZ}{k}
		: (\gamma,\eta)\mapsto \gamma\circ\eta
	\end{equation*}
	is continuous.
\end{lem}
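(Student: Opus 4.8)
The plan is to argue by induction on $k$, reducing everything to the order-zero case and to the behaviour of the derivative under the chain rule. For the base case $k=0$, well-definedness is immediate: if $\gamma\in\BC{\VF}{\SZ}{1}$ and $\eta\in\BCo{\UF}{\VF}{0}$, then $\gamma\circ\eta$ is continuous with $\hn{\gamma\circ\eta}{1_\UF}{0}\leq\hn{\gamma}{1_\VF}{0}<\infty$, so $\gamma\circ\eta\in\BC{\UF}{\SZ}{0}$. The substance of the base case is continuity. Fixing a reference pair $(\gamma_0,\eta_0)$, I would split
\[
\gamma(\eta(x))-\gamma_0(\eta_0(x))
=\bigl(\gamma(\eta(x))-\gamma_0(\eta(x))\bigr)+\bigl(\gamma_0(\eta(x))-\gamma_0(\eta_0(x))\bigr).
\]
The first bracket has norm at most $\hn{\gamma-\gamma_0}{1_\VF}{0}$, uniformly in $x$. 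For the second bracket I would invoke the defining property of $\BCo{\UF}{\VF}{0}$: there is an $r>0$ with $\eta_0(\UF)+\Ball[\SY]{0}{r}\subseteq\VF$, so as soon as $\hn{\eta-\eta_0}{1_\UF}{0}<r$ the whole segment $[\eta_0(x),\eta(x)]$ lies in $\VF$ and the mean value inequality gives
\[
\norm{\gamma_0(\eta(x))-\gamma_0(\eta_0(x))}\leq\hn{\gamma_0}{1_\VF}{1}\,\norm{\eta(x)-\eta_0(x)}\leq\hn{\gamma_0}{1_\VF}{1}\,\hn{\eta-\eta_0}{1_\UF}{0}.
\]
Taking the supremum over $x$ yields continuity of the composition at $(\gamma_0,\eta_0)$.

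For the inductive step $k\to k+1$, let $\gamma\in\BC{\VF}{\SZ}{k+2}$ and $\eta\in\BCo{\UF}{\VF}{k+1}$. By the chain rule $D(\gamma\circ\eta)=(\FAbl{\gamma}\circ\eta)\MaMu\FAbl{\eta}$, where $\MaMu$ is the pointwise composition of linear maps. Since $\FAbl{\gamma}\in\BC{\VF}{\Lin{\SY}{\SZ}}{k+1}$ and $\eta\in\BCo{\UF}{\VF}{k+1}\subseteq\BCo{\UF}{\VF}{k}$, the inductive hypothesis gives $\FAbl{\gamma}\circ\eta\in\BC{\UF}{\Lin{\SY}{\SZ}}{k}$; together with $\FAbl{\eta}\in\BC{\UF}{\Lin{\SX}{\SY}}{k}$ and \refer{cor:Komposition_linearer_Abb_und_CF} this shows $D(\gamma\circ\eta)\in\BC{\UF}{\Lin{\SX}{\SZ}}{k}$. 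Combined with $\gamma\circ\eta\in\BC{\UF}{\SZ}{0}$ from the base case, \refer{prop:topologische_Zerlegung_von_CFk} yields $\gamma\circ\eta\in\BC{\UF}{\SZ}{k+1}$. The same proposition says $\BC{\UF}{\SZ}{k+1}$ carries the initial topology with respect to $\FAbl{}$ and the inclusion into $\BC{\UF}{\SZ}{0}$, so for continuity it suffices to check that $(\gamma,\eta)\mapsto\gamma\circ\eta$ is continuous into $\BC{\UF}{\SZ}{0}$ (the base case, via the continuous inclusions of the domains) and that $(\gamma,\eta)\mapsto(\FAbl{\gamma}\circ\eta)\MaMu\FAbl{\eta}$ is continuous into $\BC{\UF}{\Lin{\SX}{\SZ}}{k}$. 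The latter factors through the continuous derivative operators $\gamma\mapsto\FAbl{\gamma}$ and $\eta\mapsto\FAbl{\eta}$ (the final clause of \refer{prop:topologische_Zerlegung_von_CFk}), the inductive continuity of $(\Gamma,\eta)\mapsto\Gamma\circ\eta$, and the continuous bilinear map of \refer{cor:Komposition_linearer_Abb_und_CF}.

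The case $k=\infty$ then follows from the finite cases by the projective-limit description of \refer{cor:Topologie_von_CinfF}: the finite-level composition maps fit, for each $n\in\N$, into the commutative diagram
\[
\xymatrix{
{\BC{\VF}{\SZ}{\infty}\times\BCo{\UF}{\VF}{\infty}} \ar[rr] \ar@{>->}[d] && {\BC{\UF}{\SZ}{\infty}} \ar@{>->}[d]
\\
{\BC{\VF}{\SZ}{n+1}\times\BCo{\UF}{\VF}{n}} \ar[rr] && {\BC{\UF}{\SZ}{n}}
}
\]
with the inclusion maps as vertical arrows, and since the target at level $\infty$ is the projective limit of the spaces $\BC{\UF}{\SZ}{n}$, continuity at level $\infty$ drops out. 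I expect the main obstacle to be the base-case continuity: it is the only genuinely nonlinear estimate, and it is precisely here that the distance condition encoded in $\BCo{\UF}{\VF}{0}$ (the radius $r$) is indispensable, since without it the perturbed segment $[\eta_0(x),\eta(x)]$ need not stay in $\VF$ and the mean value inequality would be unavailable.
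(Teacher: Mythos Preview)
Your proof is correct and follows essentially the same route as the paper: induction on $k$, with the base case handled by a mean-value estimate after splitting the difference, the inductive step by the chain rule together with \refer{prop:topologische_Zerlegung_von_CFk} and \refer{cor:Komposition_linearer_Abb_und_CF}, and the case $k=\infty$ by the projective-limit description of \refer{cor:Topologie_von_CinfF}. The only difference is cosmetic: in the $k=0$ continuity estimate the paper inserts $\gamma(\eta_0(x))$ (so the mean-value bound involves $\hn{\gamma}{1_\VF}{1}$), whereas you insert $\gamma_0(\eta(x))$ (so the bound involves the fixed constant $\hn{\gamma_0}{1_\VF}{1}$); both are valid, and yours is arguably the cleaner choice.
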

\begin{proof}
	For $k < \infty$ this is proved by induction:\\
	$k=0$:
	Obviously
	\[
		\BC{\VF}{\SZ}{1} \circ \BCo{\UF}{\VF}{0}
		\subseteq \BC{\UF}{\SZ}{0},
	\]
	so it remains to show that the composition is continuous.
	To this end, let $\gamma ,\gamma_0 \in \BC{\VF}{\SZ}{1}$,
	$\eta ,\eta_0 \in \BCo{\UF}{\VF}{0}$ with
	$\hn{\eta - \eta_{0}}{1_{\UF}}{0} < \dist{\eta_{0}(\UF)}{\partial\VF}$
	and $x\in\UF$. Then
	\begin{align*}
		&\norm{(\gamma \circ \eta)(x) - (\gamma_0 \circ \eta_0)(x)}
		\\
		=& \norm{\gamma(\eta(x)) - \gamma(\eta_0(x))
				+ \gamma(\eta_0(x)) - \gamma_0(\eta_0(x))}\\
		%
		\leq& \left\norm{\Rint{0}{1}{
			D\gamma(t \eta(x) + (1 - t) \eta_0(x) )\eval(\eta(x) - \eta_0(x))}%
			{t}\right}
			+ \norm{(\gamma - \gamma_0)(\eta_0(x))}\\
		\leq& \hn{ \FAbl{\gamma} }{1_{\VF}}{0} \norm{\eta(x) - \eta_0(x)}
			+ \norm{(\gamma - \gamma_0)(\eta_0(x))};
	\end{align*}
	in this estimate we used
	$\hn{\eta - \eta_{0}}{1_{\UF}}{0} < \dist{\eta_{0}(\UF)}{\partial\VF}$
	to ensure that the line segment between $\eta(x)$ und $\eta_0(x)$
	is contained in $\VF$.
	The estimate yields
	\begin{align*}
		\hn{\gamma \circ \eta - \gamma_0 \circ \eta_0}{1_{\UF}}{0}
		\leq \hn{\gamma}{1_{\VF}}{1} \hn{\eta - \eta_0}{1_{\UF}}{0}
			+ \hn{\gamma - \gamma_0}{1_{\UF}}{0},
	\end{align*}
	whence the composition is continuous.
	
	$k\to k + 1$:
	In the following, we denote the composition map \eqref{comp_BC}
	with $\Hcomp_{k,\SZ}$.
	We know from \refer{prop:topologische_Zerlegung_von_CFk}
	(and the induction base) that
	\begin{multline*}
		\Hcomp_{k+1, \SZ}
		(\BC{\VF}{\SZ}{k + 2} \times \BCo{\UF}{\VF}{k+1})
		\subseteq \BC{\UF}{\SZ}{k+1}
		\\
		\iff
		(\FAbl{}\circ \Hcomp_{k+1, \SZ})
			(\BC{\VF}{\SZ}{k + 2} \times \BCo{\UF}{\VF}{k+1})
		\subseteq \BC{\UF}{\Lin{\SX}{\SZ}}{k}
	\end{multline*}
	and $\Hcomp_{k+1, \SZ}$ is continuous iff
	$\FAbl{}\circ \Hcomp_{k+1, \SZ}$ is so,
	as a map to $\BC{\UF}{\Lin{\SX}{\SZ}}{k}$.
	An application of the chain rule gives
	\begin{equation*}\label{id:Ableitung_der_Komposition}
	\tag{\ensuremath{\ast\ast}}
		(D\circ \Hcomp_{k+1, \SZ})(\gamma,\eta)
		=\Hcomp_{k, \Lin{\SY}{\SZ} }(D\gamma, \eta)\MaMu D\eta
	\end{equation*}
	for $\gamma \in \BC{\VF}{\SZ}{k + 2}$ and
	$\eta \in \BCo{\UF}{\VF}{k+1}$,
	where $\MaMu$ denotes the composition of linear maps, see \refer{cor:Komposition_linearer_Abb_und_CF}.
	Since
	$D\gamma \in \BC{\VF}{\Lin{\SY}{\SZ}}{k + 1}$,
	we deduce from the inductive hypothesis that
	\[
		\Hcomp_{k, \Lin{\SY}{\SZ} }(D\gamma, \eta)
		\in \BC{\UF}{\Lin{\SY}{\SZ}}{k},
	\]
	and using \refer{cor:Komposition_linearer_Abb_und_CF} we get
	\[
		(D\circ \Hcomp_{k+1, \SZ})(\gamma,\eta)
			\in \BC{\UF}{\Lin{\SY}{\SZ}}{k}.
	\]
	The continuity of $D\circ \Hcomp_{k+1, \SZ}$ follows with	
	\refer{id:Ableitung_der_Komposition}
	from the continuity of 
	$\Hcomp_{k, \Lin{\SY}{\SZ}}$ (by the inductive hypothesis), 
	$\MaMu$ (by \refer{cor:Komposition_linearer_Abb_und_CF})
	and $D$ (by \refer{prop:Ableitung_ist_stetig}).
	
	$k=\infty$:
	\BeweisschrittCkCinfty%
	{\BC{\VF}{\SZ}{\infty} \times \BCo{\UF}{\VF}{\infty}}
	{\Hcomp_{\infty,\SZ}}
	{\BC{\UF}{\SZ}{\infty}}
	{}
	{}
	{\BC{\VF}{\SZ}{n + 1} \times \BCo{\UF}{\VF}{n}}
	{\Hcomp_{n,\SZ}}
	{\BC{\UF}{\SZ}{n}}
	{n}
\end{proof}
As a preparation for discussing the differentiable properties of the composition,
we prove a nice identity for its differential quotient.
\begin{lem}\label{lem:Differenzenquotient_Komposition}
	Let $\SX$, $\SY$ and $\SZ$ be normed spaces
	and $\UF\subseteq \SX$, $\VF\subseteq \SY$ be
	open subsets.
	Further, let $\gamma \in \FC{\VF}{\SZ}{1}$,
	$\tilde{\gamma} \in  \ConDiff{\VF}{\SZ}{0}$,
	$\tilde{\eta} \in  \BC{\UF}{\SY}{0}$
	and
	$\eta \in \ConDiff{\UF}{\VF}{0}$
	such that $\dist{\eta(\UF)}{\partial \VF} > 0$.
	Then, for all $x \in \UF$ and $t \in \R^\ast$ with
	\[
		\abs{t}
		\leq
		\tfrac{\dist{\eta(\UF)}{\partial \VF}}{\hn{\tilde{\eta}}{1_{\UF}}{0} + 1},
	\]
	the identity
	\begin{equation}\label{id:Differenzenquotient_Komposition}
		\evTwo_{x}\left(
		\tfrac{(\gamma + t \tilde{\gamma} )\circ (\eta + t \tilde{\eta}) - \gamma \circ \eta}{t}
		\right)
		=
		\evTwo_{x}(
			\tilde{\gamma} \circ (\eta + t \tilde{\eta})
		)
		+
		\Rint{0}{1}
		{
			\evTwo_{x}(
				(\FAbl{\gamma}\circ(\eta + s t \tilde{\eta}))\eval \tilde{\eta}
			)
		}
		{s}
	\end{equation}
	holds, where $\evTwo_{x}$ denotes the evaluation at $x$.
\end{lem}
\begin{proof}
	For $t$ as above the identity
	\begin{equation*}
		(\gamma + t \tilde{\gamma}) \circ (\eta + t \tilde{\eta}) - \gamma\circ\eta
		= \gamma \circ (\eta + t \tilde{\eta}) + t \tilde{\gamma} \circ (\eta + t \tilde{\eta}) - \gamma\circ\eta
	\end{equation*}
	holds, and an application of the mean value theorem gives
	\[
		\evTwo_{x}(\gamma \circ (\eta + t \tilde{\eta}) - \gamma\circ\eta)
		=
		\Mint
		{
			\evTwo_{x}(
				(\FAbl{\gamma}\circ(\eta + s t \tilde{\eta}))\eval t \tilde{\eta}
			)
		}
		{s}.
	\]
	Division by $t$ leads to the desired result.
\end{proof}
\newcommand{\comp}[3]{
\ensuremath{
	g_{\ifthenelse{\equal{#1}{BC}}{\mathcal BC}{#1}, #2}^{#3}
}
}
So we are ready to discuss when the composition is differentiable.
\begin{prop}\label{prop:Komposition_von_BCell_ist_diffbar}
	Let $\SX$, $\SY$ and $\SZ$ be normed spaces,
	$\UF\subseteq \SX$ and $\VF\subseteq \SY$ open subsets
	and $k \in \cl{\N}$, $\ell \in \cl{\N}^\ast$.
	Then the continuous map
	\[
		\comp{BC}{\SZ}{k + \ell + 1} :
		\BC{\VF}{\SZ}{k + \ell + 1} \times \BCo{\UF}{\VF}{k}
		\to \BC{\UF}{\SZ}{k}
		: (\gamma,\eta)\mapsto \gamma\circ\eta
	\]
	(cf. \refer{lem:BC_ist_unter_Komposition_abgeschlossen})
	is a $\ConDiff{}{}{\ell}$-map with
	\begin{equation}\label{Ableitung_Comp_BCellxBC}
		\dA{\comp{BC}{\SZ}{k + \ell + 1}}{\gamma_0,\eta_0}{\gamma,\eta}
		= \comp{BC}{\SZ}{k + \ell + 1}(\gamma,\eta_0)
		+ \comp{BC}{\Lin{\SY}{\SZ}}{k + \ell}(D\gamma_0,\eta_0) \eval \eta.
	\end{equation}
	\index{superposition!with a bounded map}%
	\index{bounded maps!composition of}%
	\index{composition!of bounded maps|see{bounded maps, composition of}}%
\end{prop}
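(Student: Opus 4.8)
The plan is to fix the spaces $\SX,\SY,\SZ$ and prove the assertion by induction on $\ell$ (for all normed spaces and finite $k$ simultaneously), postponing the cases $\ell=\infty$ and $k=\infty$ to a concluding reduction. The crux is the first step: for \emph{every} outer order $m$ with $m\ge k+2$, the composition map
\[
    \comp{BC}{\SZ}{m}\colon\BC{\VF}{\SZ}{m}\times\BCo{\UF}{\VF}{k}\to\BC{\UF}{\SZ}{k}
\]
(continuous by \refer{lem:BC_ist_unter_Komposition_abgeschlossen}) is a $\ConDiff{}{}{1}$-map with
\[
    \dA{\comp{BC}{\SZ}{m}}{\gamma_0,\eta_0}{\gamma,\eta}
    =\comp{BC}{\SZ}{m}(\gamma,\eta_0)
    +\comp{BC}{\Lin{\SY}{\SZ}}{m-1}(\FAbl{\gamma_0},\eta_0)\eval\eta.
\]
Taking $m=k+\ell+1$ this is exactly \eqref{Ableitung_Comp_BCellxBC}, so the first step already supplies the derivative at every level and settles the base case $\ell=1$ (where $m=k+2$).

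To establish this $\ConDiff{}{}{1}$-statement I would examine, for small $t\in\R^\ast$ (so that $\eta_0+t\eta\in\BCo{\UF}{\VF}{k}$, which is open), the difference quotient
\[
    Q(t):=\frac{(\gamma_0+t\gamma)\circ(\eta_0+t\eta)-\gamma_0\circ\eta_0}{t}\in\BC{\UF}{\SZ}{k},
\]
which lies in $\BC{\UF}{\SZ}{k}$ by \refer{lem:BC_ist_unter_Komposition_abgeschlossen}. Applying \refer{lem:Differenzenquotient_Komposition} pointwise gives, for each $x\in\UF$,
\[
    \delta_x\bigl(Q(t)-\gamma\circ(\eta_0+t\eta)\bigr)
    =\Rint{0}{1}{\delta_x\bigl((\FAbl{\gamma_0}\circ(\eta_0+st\eta))\eval\eta\bigr)}{s}.
\]
The curve $\Gamma_t\colon s\mapsto(\FAbl{\gamma_0}\circ(\eta_0+st\eta))\eval\eta$ is continuous into $\BC{\UF}{\SZ}{k}$ by \refer{lem:BC_ist_unter_Komposition_abgeschlossen} and \refer{cor:Auswertung_linearer_abb_und_CF}; since $1_{\UF}\in\cW$, the evaluations $\delta_x$ are continuous and separate points (\refer{lem:Kriterium_fuer_Stetigkeit_der_Punktauswertungen}), so the integrability criterion \refer{lem:Punktetrennende_Menge_Integrale_CW}\,\refer{enum1:INtegrale_von_Kurven_in_gewichtetenAbb-2} upgrades the pointwise identity to the genuine $\BC{\UF}{\SZ}{k}$-valued identity $Q(t)=\gamma\circ(\eta_0+t\eta)+\Rint{0}{1}{\Gamma_t(s)}{s}$. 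Crucially, this criterion requires only $1_\UF\in\cW$ and \emph{not} completeness of $\SZ$. Letting $t\to0$, the first summand tends to $\gamma\circ\eta_0$ by continuity of composition, and the integral tends to $\Rint{0}{1}{(\FAbl{\gamma_0}\circ\eta_0)\eval\eta}{s}=(\FAbl{\gamma_0}\circ\eta_0)\eval\eta$ by continuity of parameter-dependent integrals (\refer{prop:Stetigkeit_parameterab_Int}), as $\Gamma_0$ is constant in $s$; this produces the claimed derivative. Since that right-hand side is assembled from continuous operations only—the composition maps (\refer{lem:BC_ist_unter_Komposition_abgeschlossen}), the continuous derivative operator (\refer{prop:Ableitung_ist_stetig}) and the continuous bilinear evaluation (\refer{cor:Auswertung_linearer_abb_und_CF})—it depends continuously on $(\gamma_0,\eta_0,\gamma,\eta)$, whence $\comp{BC}{\SZ}{m}$ is $\ConDiff{}{}{1}$.

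For the step $\ell\to\ell+1$ I apply the first step to $m=k+\ell+2\;(\ge k+2)$: the map $\comp{BC}{\SZ}{k+\ell+2}$ is $\ConDiff{}{}{1}$, and its differential is the sum of the two maps on the right of the displayed formula. Since $\comp{BC}{\SZ}{k+\ell+2}$ is $\ConDiff{}{}{\ell+1}$ precisely when its differential is $\ConDiff{}{}{\ell}$, and $\ConDiff{}{}{\ell}$-maps are closed under addition and under composition with smooth (in particular continuous linear) maps, it suffices to check each summand. The first, $(\gamma,\eta_0)\mapsto\comp{BC}{\SZ}{k+\ell+2}(\gamma,\eta_0)$, equals $\comp{BC}{\SZ}{k+\ell+1}$ precomposed with the continuous linear inclusion $\BC{\VF}{\SZ}{k+\ell+2}\hookrightarrow\BC{\VF}{\SZ}{k+\ell+1}$, hence is $\ConDiff{}{}{\ell}$ by the inductive hypothesis. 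The second, $(\gamma_0,\eta_0,\eta)\mapsto\comp{BC}{\Lin{\SY}{\SZ}}{k+\ell+1}(\FAbl{\gamma_0},\eta_0)\eval\eta$, is the composite of the continuous linear operator $\FAbl{}$ (\refer{prop:Ableitung_ist_stetig}), the map $\comp{BC}{\Lin{\SY}{\SZ}}{k+\ell+1}$—which is $\ConDiff{}{}{\ell}$ by the inductive hypothesis applied with target space $\Lin{\SY}{\SZ}$—and the continuous bilinear evaluation (\refer{cor:Auswertung_linearer_abb_und_CF}); thus it too is $\ConDiff{}{}{\ell}$. Therefore $\comp{BC}{\SZ}{k+\ell+2}$ is $\ConDiff{}{}{\ell+1}$, with the derivative already recorded in the first step.

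It remains to lift the restriction to finite $k,\ell$. For $\ell=\infty$ one uses that a map is $\ConDiff{}{}{\infty}$ iff it is $\ConDiff{}{}{r}$ for every finite $r$: restricting $\comp{BC}{\SZ}{\infty}$ along the continuous linear inclusion $\BC{\VF}{\SZ}{\infty}\hookrightarrow\BC{\VF}{\SZ}{k+r+1}$ recovers the map already shown to be $\ConDiff{}{}{r}$. For $k=\infty$ one appeals to the projective-limit description \refer{cor:Topologie_von_CinfF}: composing the candidate map with each inclusion $\BC{\UF}{\SZ}{\infty}\hookrightarrow\BC{\UF}{\SZ}{n}$ yields the finite-target composition map $\comp{BC}{\SZ}{n+\ell+1}$, whose regularity is known from the finite case, and \refer{cor:Topologie_von_CinfF} transfers $\ConDiff{}{}{\ell}$-regularity back to the map into $\BC{\UF}{\SZ}{\infty}$—this is the commutative-diagram argument used for $\infty$-indices throughout the preceding proofs. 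The single genuinely delicate point is the first step, namely converting the merely pointwise difference-quotient identity of \refer{lem:Differenzenquotient_Komposition} into convergence in the topology of $\BC{\UF}{\SZ}{k}$; this is exactly where the integrability criterion \refer{lem:Punktetrennende_Menge_Integrale_CW} (valid without completeness of $\SZ$) and the continuity of parameter-dependent integrals are indispensable.
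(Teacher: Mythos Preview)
Your proposal is correct and follows essentially the same route as the paper: induction on $\ell$, with the base case established via \refer{lem:Differenzenquotient_Komposition} together with the integrability criterion \refer{lem:Kriterium_f"ur_Integrierbarkeit_in_CW} and \refer{prop:Stetigkeit_parameterab_Int}, and the inductive step read off directly from the derivative formula using the continuity of $\FAbl{}$ and of $\eval$. Your handling of the cases $\ell=\infty$ and $k=\infty$ via the inclusion maps and \refer{cor:Topologie_von_CinfF} also matches the paper's argument.
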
%
\newcommand{\BeweisKompoCBCW}[1]%
{
	For $k < \infty$, the proof is by induction on $\ell$ which we may assume finite
	because the inclusion maps
	$\ifthenelse{\equal{#1}{BC}}{\BC}{\BCzero}{\VF}{\SZ}{\infty}
	\to \ifthenelse{\equal{#1}{BC}}{\BC}{\BCzero}{\VF}{\SZ}{k + \ell + 1}$
	are continuous linear (and hence smooth).
	
	$\ell = 1$:
	Let $\gamma_0, \gamma \in \ifthenelse{\equal{#1}{BC}}{\BC}{\BCzero}{\VF}{\SZ}{k + \ell + 1}$,
	$\eta_0 \in \ifthenelse{\equal{#1}{BC}}{\BCo}{\CcFo}{\UF}{\VF}{k}$ and $\eta \in \ifthenelse{\equal{#1}{BC}}{\BC}{\CcF}{\UF}{\SY}{k}$.
	From \refer{lem:Differenzenquotient_Komposition} and
	\refer{lem:Kriterium_Integrierbarkeit_in_CW}
	we conclude that for $t \in \K$ with
	$\abs{t} \leq \frac{\dist{\eta_0(\UF)}{\partial \VF}}{\hn{\eta}{1_{\UF}}{0} + 1}$,
	the integral
	\[
		\Mint
		{
			(\FAbl{\gamma_0} \circ(\eta_0 + s t \eta))\eval \eta
		}
		{s}
	\]
	exists in $\ifthenelse{\equal{#1}{BC}}{\BC}{\CcF}{\UF}{\SZ}{k}$.
	Using \refer{id:Differenzenquotient_Komposition} we derive
	\begin{equation*}
		\tfrac{\comp{#1}{\SZ}{k + \ell + 1}(\gamma_0 + t \gamma,\eta_0 + t \eta)
			- \comp{#1}{\SZ}{k + \ell + 1}(\gamma_0,\eta_0)}{t}
		= \comp{#1}{\SZ}{k + \ell + 1}(\gamma,\eta_0 + t \eta) \\ + 
			\Mint{\comp{BC}{\Lin{\SY}{\SZ}}{k + \ell}(\FAbl{\gamma_0}, \eta_0 + s t \eta) \eval \eta}{s}.
	\end{equation*}
	We use \refer{prop:Stetigkeit_parameterab_Int}
	and the continuity of $\comp{#1}{\SZ}{k + \ell + 1}$,
	$\comp{BC}{\Lin{\SY}{\SZ}}{k + \ell}$ and $\eval$
	(cf. \ifthenelse{\equal{#1}{BC}}{}{\refer{lem:BC0_operiert_stetig_auf_CW},}
	\refer{lem:Komposition_von_BC_ist_stetig} and
	\refer{cor:Auswertung_linearer_abb_und_CF})
	to see that the right hand side of this equation converges to
	\[
		\comp{#1}{\SZ}{k + \ell + 1}(\gamma,\eta_0)
		+ \comp{BC}{\Lin{\SY}{\SZ}}{k + \ell}(\FAbl{\gamma_0},\eta_0) \eval \eta
	\]
	in $\ifthenelse{\equal{#1}{BC}}{\BC}{\CcF}{\UF}{\SZ}{k}$ as $t \to 0$. 
	Hence the $\comp{#1}{\SZ}{k + \ell + 1}$ is differentiable and its differential is given by
	\ifthenelse{\equal{#1}{BC}}%
	{\eqref{Ableitung_Comp_BCellxBC}}{\eqref{Ableitung_Comp_BC0ellxCW}}
	and thus continuous.

	$\ell - 1 \to \ell$:
	The map $\comp{#1}{\SZ}{k + \ell + 1}$ is $\ConDiff{}{}{\ell}$
	if $\dA{\comp{#1}{\SZ}{k + \ell + 1}}{}{}$ is $\ConDiff{}{}{\ell - 1}$.
	The latter follows easily from
	\ifthenelse{\equal{#1}{BC}}%
	{\eqref{Ableitung_Comp_BCellxBC}}{\eqref{Ableitung_Comp_BC0ellxCW}},
	since the inductive hypothesis \ifthenelse{\equal{#1}{BC}}%
	{ensures}{respective \refer{prop:Komposition_von_BCell_ist_diffbar} ensure}
	that $\comp{#1}{\SZ}{k + \ell + 1}$
	and $\comp{BC}{\Lin{\SY}{\SZ}}{k + \ell}$
	are $\ConDiff{}{}{\ell - 1}$; and $\eval$ and $\FAbl{}$ are smooth.
	
	If $k = \infty$, then in view of \refer{cor:Topologie_von_CinfF} and \refer{prop:Differenzierbarkeit_Abb_in_projektiven_Limes},
	$\ifthenelse{\equal{#1}{BC}}{ \comp{BC} }{ \comp{\cW} }{\SZ}{\infty}$
	is smooth as a map to $\ifthenelse{\equal{#1}{BC}}{ \BC }{ \CcF }{\UF}{\SZ}{\infty}$
	iff it is smooth as a map to $\ifthenelse{\equal{#1}{BC}}{ \BC }{ \CcF }{\UF}{\SZ}{j}$
	for each $j \in \N$. This was already proved in the case where $k = j$ and $\ell = \infty$.
}%
\begin{proof}
	\BeweisKompoCBCW{BC}
\end{proof}
\subsubsection{Composition of weighted functions with bounded functions}
Generally, we can not expect that the composition of a bounded function
with a weighted function is again a weighted function (to the same weights).
As an example, the composition of the constant $1$ function
and a Schwartz function is not a Schwartz function.
However, if we compose a bounded function mapping $0$ to $0$ with a weighted function,
we get good results.
\begin{lem}\label{lem:BC0_operiert_stetig_auf_CW}
	Let $\SX$, $\SY$ and $\SZ$ be normed spaces,
	$\UF\subseteq \SX$ and $\VF\subseteq \SY$ open subsets
	such that $\VF$ is star-shaped with center~$0$,
	$k \in \cl{\N}$ and $\cW \subseteq \cl{\R}^{\UF}$ with
	$1_{\UF} \in \cW$.
	Then for $\gamma \in \BCzero{\VF}{\SZ}{k + 1}$ and
	$\eta \in \CcFo{\UF}{\VF}{k}$
	\[
		\gamma\circ\eta \in \CcF{\UF}{\SZ}{k},
	\]
	and the composition map
	\begin{equation*}\label{comp_BCxCW}
	\tag{\ensuremath{\ast}}
		\BCzero{\VF}{\SZ}{k + 1} \times \CcFo{\UF}{\VF}{k}
		\to \CcF{\UF}{\SZ}{k}
		: (\gamma,\eta)\mapsto \gamma\circ\eta
	\end{equation*}
	is continuous.
\end{lem}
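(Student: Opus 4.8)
The plan is to mimic the reduction-to-lower-order strategy of \refer{lem:Komposition_von_BC_ist_stetig}: I treat $k=0$ by hand, deduce the case of finite $k\geq 1$ from it through \refer{prop:topologische_Zerlegung_von_CFk}, and pass to $k=\infty$ via the projective-limit description in \refer{cor:Topologie_von_CinfF}. The standing hypothesis $1_{\UF}\in\cW$ is used throughout: it yields $\CcF{\UF}{\SY}{k}\sub\BC{\UF}{\SY}{k}$ with continuous inclusion, and hence $\CcFo{\UF}{\VF}{k}\sub\BCo{\UF}{\VF}{k}$, so that the composition result for bounded functions becomes available for the pieces of the argument.

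The decisive step, and the one I expect to be the only real obstacle, is $k=0$; this is exactly where $\gamma(0)=0$ and the star-shapedness of $\VF$ enter. For $\gamma\in\BCzero{\VF}{\SZ}{1}$, $\eta\in\CcFo{\UF}{\VF}{0}$, $f\in\cW$ and $x\in\UF$, the segment $[0,\eta(x)]$ lies in $\VF$ by star-shapedness, and $\gamma(0)=0$ gives
\[
	\gamma(\eta(x))=\Rint{0}{1}{\FAbl{\gamma}(s\,\eta(x))\eval\eta(x)}{s},
\]
whence $\norm{\gamma(\eta(x))}\leq\hn{\gamma}{1_\VF}{1}\,\norm{\eta(x)}$ and therefore
\[
	\hn{\gamma\circ\eta}{f}{0}\leq\hn{\gamma}{1_\VF}{1}\,\hn{\eta}{f}{0}<\infty .
\]
The point is that the factor $\norm{\eta(x)}$ gained from the vanishing of $\gamma$ at the center is what absorbs the weight $f$; for a merely bounded $\gamma$ the composite would leave the weighted space. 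For continuity at $(\gamma_0,\eta_0)$ I would restrict to $\eta$ with $\hn{\eta-\eta_0}{1_\UF}{0}<\dist{\eta_0(\UF)}{\partial\VF}$ (positive since $\eta_0\in\CcFo$, which keeps the relevant segments inside $\VF$), write $\gamma\circ\eta-\gamma_0\circ\eta_0=(\gamma\circ\eta-\gamma\circ\eta_0)+(\gamma-\gamma_0)\circ\eta_0$, bound the first difference by the mean value theorem and the second by the estimate above applied to $\gamma-\gamma_0\in\BCzero{\VF}{\SZ}{1}$, and arrive at
\[
	\hn{\gamma\circ\eta-\gamma_0\circ\eta_0}{f}{0}\leq\hn{\gamma}{1_\VF}{1}\,\hn{\eta-\eta_0}{f}{0}+\hn{\gamma-\gamma_0}{1_\VF}{1}\,\hn{\eta_0}{f}{0},
\]
whose right-hand side tends to $0$ as $(\gamma,\eta)\to(\gamma_0,\eta_0)$.

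For finite $k\geq 1$ I invoke \refer{prop:topologische_Zerlegung_von_CFk}: the composite lies in $\CcF{\UF}{\SZ}{k}$, and $(\gamma,\eta)\mapsto\gamma\circ\eta$ is continuous into it once I know this for $\gamma\circ\eta$ into $\CcF{\UF}{\SZ}{0}$ (the base case) and for $\FAbl{(\gamma\circ\eta)}$ into $\CcF{\UF}{\Lin{\SX}{\SZ}}{k-1}$. By the chain rule,
\[
	\FAbl{(\gamma\circ\eta)}=(\FAbl{\gamma}\circ\eta)\MaMu\FAbl{\eta},
\]
with $\MaMu$ the composition of linear maps. The key simplification over \refer{lem:Komposition_von_BC_ist_stetig} is that $\FAbl{\gamma}$ need not vanish at $0$, so the factor $\FAbl{\gamma}\circ\eta$ is handled not by the present lemma but directly by \refer{lem:Komposition_von_BC_ist_stetig}: since $\FAbl{\gamma}\in\BC{\VF}{\Lin{\SY}{\SZ}}{k}$ and $\eta\in\BCo{\UF}{\VF}{k-1}$, one gets $\FAbl{\gamma}\circ\eta\in\BC{\UF}{\Lin{\SY}{\SZ}}{k-1}$, continuously in $(\gamma,\eta)$ (using also that $\FAbl{}$ is continuous, \refer{prop:Ableitung_ist_stetig}). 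The weight is then restored by the second factor $\FAbl{\eta}\in\CcF{\UF}{\Lin{\SX}{\SY}}{k-1}$: applying \refer{prop:multilineare_Abb_und_CF} to $\MaMu$ with $\cW_1=\sset{1_\UF}$, $\cW_2=\cW$ and the trivial bound $\abs{f}\leq 1\cdot\abs{f}$ gives a continuous bilinear map $\BC{\UF}{\Lin{\SY}{\SZ}}{k-1}\times\CcF{\UF}{\Lin{\SX}{\SY}}{k-1}\to\CcF{\UF}{\Lin{\SX}{\SZ}}{k-1}$. Composing the two continuous assignments settles the derivative term, and, in contrast to \refer{lem:Komposition_von_BC_ist_stetig}, no recursion on the present lemma is required.

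Finally, for $k=\infty$ I use that by \refer{cor:Topologie_von_CinfF} the target $\CcF{\UF}{\SZ}{\infty}$ carries the initial topology for the inclusions into $\CcF{\UF}{\SZ}{j}$, $j\in\N$; since $\BCzero{\VF}{\SZ}{\infty}\sub\BCzero{\VF}{\SZ}{j+1}$ and $\CcFo{\UF}{\VF}{\infty}\sub\CcFo{\UF}{\VF}{j}$ with continuous inclusions, the finite-order case $k=j$ already gives continuity of the composition into each $\CcF{\UF}{\SZ}{j}$, hence into $\CcF{\UF}{\SZ}{\infty}$. I expect the genuine difficulty to be confined to the base-case estimate; everything afterwards is bookkeeping on top of the reduction lemma and the earlier composition results.
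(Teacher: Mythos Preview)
Your proposal is correct and follows essentially the same approach as the paper: the same base-case estimate $\hn{\gamma\circ\eta-\gamma_0\circ\eta_0}{f}{0}\leq\hn{\gamma}{1_\VF}{1}\hn{\eta-\eta_0}{f}{0}+\hn{\gamma-\gamma_0}{1_\VF}{1}\hn{\eta_0}{f}{0}$ via star-shapedness and $\gamma(0)=0$, the same reduction of the derivative term $(\FAbl{\gamma}\circ\eta)\MaMu\FAbl{\eta}$ to \refer{lem:Komposition_von_BC_ist_stetig} combined with the multilinear superposition (the paper cites \refer{cor:Komposition_linearer_Abb_und_CF}, which is the specialization of \refer{prop:multilineare_Abb_und_CF} you invoke), and the same projective-limit passage for $k=\infty$. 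Your observation that no recursion on the present lemma is needed matches the paper's structure exactly.
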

\begin{proof}
	We distinguish the cases $k < \infty$ and $k = \infty$:\\
	$k < \infty$:
	To prove that for $\gamma \in \BCzero{\VF}{\SZ}{k + 1}$
	and $\eta \in \CcFo{\UF}{\VF}{k}$
	the composition $\gamma \circ \eta$ is in $\CcF{\UF}{\SZ}{k}$,
	in view of \refer{prop:topologische_Zerlegung_von_CFk} it suffices to show that
	\[
		\gamma \circ \eta \in \CcF{\UF}{\SZ}{0}
		\text{ and for $k > 0$ also }
		\FAbl{(\gamma \circ \eta)} \in \CcF{\UF}{\Lin{\SX}{\SZ}}{k - 1}.
	\]
	Similarly the continuity of the composition \eqref{comp_BCxCW}, which is denoted
	by $g_k$ in the remainder of this proof, is equivalent to the continuity of
	$\iota_0 \circ g_k$
		and for $k > 0$ also of
		$\FAbl{} \circ g_k$,
	where
	$\iota_0 : \CcF{\UF}{\SZ}{k} \to \CcF{\UF}{\SZ}{0}$
	denotes the inclusion map.
	
	First we show $\gamma \circ \eta \in \CcF{\UF}{\SZ}{0}$.
	To this end, let $f \in \cW$ and $x\in\UF$. Then
	\begin{multline*}
		\abs{f(x)}\,\norm{\gamma(\eta(x))}
		= \abs{f(x)}\,\norm{\gamma(\eta(x)) - \gamma(0)}\\
		= \abs{f(x)}\,\left\norm{\Mint{\FAbl{\gamma}(t \eta(x)) \eval \eta(x)}{t}\right}
		\leq \hn{\FAbl{\gamma}}{1_\VF}{0} \hn{\eta}{f}{0};
	\end{multline*}
	here we used that the line segment from $0$ to $\eta(x)$ is contained in $\VF$.
	So $\gamma \circ \eta \in \CcF{\UF}{\SZ}{0}$.
	To check the continuity of $\iota_0 \circ g_k$,
	let $\gamma ,\gamma_0 \in \BCzero{\VF}{\SZ}{k + 1}$
	and $\eta ,\eta_0 \in \CcFo{\UF}{\VF}{k}$ such that
	$\hn{\eta - \eta_0}{1_{\UF}}{0} < \dist{\eta_0(\UF)}{\partial\VF}$,
	$f\in\cW$ and $x\in\UF$. Then
	\begin{align*}
		&\abs{f(x)} \,\norm{(\gamma \circ \eta)(x) - (\gamma_0 \circ \eta_0)(x)}\\
		=& \abs{f(x)} \,\norm{\gamma(\eta(x)) - \gamma(\eta_0(x))
				+ \gamma(\eta_0(x)) - \gamma_0(\eta_0(x))}\\
		\leq& \abs{f(x)} \,\norm{\gamma(\eta(x)) - \gamma(\eta_0(x))}
			+ \abs{f(x)} \,\norm{(\gamma - \gamma_0)(\eta_0(x))}\\
		=& \abs{f(x)} \,\left\norm{\Rint{0}{1}{\FAbl{\gamma}(t \eta(x) + (1 - t) \eta_0(x)) \eval (\eta(x) - \eta_0(x))}{t}\right}\\
			&\hspace{3cm} + \abs{f(x)} \,\norm{(\gamma - \gamma_0)(\eta_0(x)) - (\gamma - \gamma_0)(0)}\\
		\leq& \abs{f(x)} \,\hn{\FAbl{\gamma}}{1_\VF}{0} \norm{\eta(x) - \eta_0(x)}
			+ \abs{f(x)} \,\left\norm{\Rint{0}{1}{\FAbl{(\gamma - \gamma_0)}(t \eta_0(x)) \eval \eta_0(x)}{t}\right}\\
		\leq& \abs{f(x)} \,\hn{\FAbl{\gamma}}{1_\VF}{0} \norm{\eta(x) - \eta_0(x)}
			+ \abs{f(x)} \,\hn{\FAbl{(\gamma - \gamma_0)}}{1_\VF}{0} \norm{\eta_0(x)}.
	\end{align*}
	From this, we easily see that $\iota_0 \circ g_k$ is continuous in $(\gamma_0, \eta_0)$.
	
	For $k > 0$, $\gamma \in \BCzero{\VF}{\SZ}{k + 1}$ and
	$\eta \in \CcFo{\UF}{\VF}{k}$ we apply the chain rule to get
	\begin{equation*}\label{id:Ableitung_der_Komposition_CW}
	\tag{\ensuremath{\ast\ast}}
		(\FAbl{} \circ g_k)(\gamma, \eta)
		= \FAbl{(\gamma \circ \eta)}
		=(\FAbl{\gamma} \circ \eta)\MaMu \FAbl{\eta}
		= \comp{BC}{\Lin{\SY}{\SZ}}{k}(\FAbl{\gamma}, \eta)
			\MaMu \FAbl{\eta};
	\end{equation*}
	here we used that $\eta \in \BC{\UF}{\VF}{k}$ because $1_{\UF}$ is in $\cW$.
	Since $\FAbl{\eta} \in \CcF{\UF}{\Lin{\SX}{\SY} }{k - 1}$ 
	and
	$\comp{BC}{\Lin{\SY}{\SZ}}{k}(\FAbl{\gamma}, \eta) \in \BC{\UF}{\Lin{\SY}{\SZ}}{k - 1}$
	hold, (see \refer{lem:BC_ist_unter_Komposition_abgeschlossen}),
	$(\FAbl{} \circ g_k)(\gamma, \eta)$ is in $\CcF{\UF}{\Lin{\SY}{\SZ} }{k - 1}$
	(see \refer{cor:Komposition_linearer_Abb_und_CF}).
	Using that $\FAbl{}$, $\MaMu$ and $\comp{BC}{\Lin{\SY}{\SZ}}{k}$
	are continuous (see \refer{prop:Ableitung_ist_stetig},
	\refer{cor:Komposition_linearer_Abb_und_CF} and
	\refer{lem:Komposition_von_BC_ist_stetig}, respectively),
	we deduce the continuity of
	$\FAbl{} \circ g_k$ from \eqref{id:Ableitung_der_Komposition_CW}.
	
	$k = \infty$:
	\BeweisschrittCkCinfty%
	{\BC{\VF}{\SZ}{\infty}_0 \times \CcFo{\UF}{\VF}{\infty}}
	{\Hcomp_{\infty}}
	{\CcFo{\UF}{\SZ}{\infty}}
	{}
	{}
	{\BC{\VF}{\SZ}{n + 1}_0 \times \CcFo{\UF}{\VF}{n}}
	{\Hcomp_{n}}
	{\CcFo{\UF}{\SZ}{n}}
	{n}
\end{proof}

\begin{prop}\label{prop:BC0_operiert_glatt_auf_CW}
	Let $\SX$, $\SY$ and $\SZ$ be normed spaces,
	$\UF\subseteq \SX$ and $\VF\subseteq \SY$ open subsets
	such that $\VF$ is star-shaped with center~$0$,
	$k \in \cl{\N}$, $\ell \in \cl{\N}^\ast$
	and $\cW \subseteq \cl{\R}^{\UF}$ with $1_{\UF} \in \cW$.
	Then the map
	\begin{equation*}
		\comp{\cW}{\SZ}{k + \ell + 1}
		:
		\BCzero{\VF}{\SZ}{k + \ell + 1} \times \CcFo{\UF}{\VF}{k}
		\to \CcF{\UF}{\SZ}{k}
		: (\gamma,\eta)\mapsto \gamma\circ\eta
	\end{equation*}
	whose existence was stated in \refer{lem:BC0_operiert_stetig_auf_CW}
	is a $\ConDiff{}{}{\ell}$-map with
	\begin{equation}\label{Ableitung_Comp_BC0ellxCW}
		\dA{\comp{\cW}{\SZ}{k + \ell + 1}}{\gamma_0,\eta_0}{\gamma,\eta}
		= \comp{\cW}{\SZ}{k + \ell + 1}(\gamma,\eta_0)
		+ \comp{BC}{\Lin{\SY}{\SZ}}{k + \ell}(D\gamma_0,\eta_0) \eval \eta.
	\end{equation}
	\index{superposition!with a bounded map}%
	\index{composition!of bounded maps and weighted maps}%
\end{prop}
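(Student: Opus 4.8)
The plan is to follow the template of \refer{prop:Komposition_von_BCell_ist_diffbar} almost verbatim, the only structural novelty being that the target is now the weighted space $\CcF{\UF}{\SZ}{k}$ instead of a space of bounded maps. I would argue by induction on $\ell$, which may be assumed finite: since the inclusions $\BCzero{\VF}{\SZ}{\infty} \to \BCzero{\VF}{\SZ}{k+\ell+1}$ are continuous linear, and $\CcF{\UF}{\SZ}{\infty}$ carries the projective-limit topology of the $\CcF{\UF}{\SZ}{j}$ (\refer{cor:Topologie_von_CinfF}), the case $\ell = \infty$ (and the auxiliary case $k = \infty$) will follow from the finite cases by the criterion for differentiability into a projective limit (\refer{prop:Differenzierbarkeit_Abb_in_projektiven_Limes}). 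The hypotheses that $\gamma$ vanish at $0$ and that $\VF$ be star-shaped are precisely what \refer{lem:BC0_operiert_stetig_auf_CW} needs in order to land the composition in the weighted space, via the representation $\gamma(\eta(x)) = \Rint{0}{1}{D\gamma(t\eta(x)) \eval \eta(x)}{t}$, which exhibits the weight-carrying factor $\eta(x)$; I would rely on that lemma for all continuity assertions.

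For the base case $\ell = 1$, I would fix $(\gamma_0, \eta_0)$, a direction $(\gamma, \eta)$, and form the difference quotient of $\comp{\cW}{\SZ}{k+2}$. Combining the explicit difference-quotient identity of \refer{lem:Differenzenquotient_Komposition} with the integrability criterion \refer{lem:Kriterium_f"ur_Integrierbarkeit_in_CW} (applicable because the point evaluations separate points and, after passing to the completion of $\SZ$, the relevant space is complete), one obtains, for all sufficiently small $t$,
\[
	\frac{\comp{\cW}{\SZ}{k+2}(\gamma_0 + t\gamma, \eta_0 + t\eta) - \comp{\cW}{\SZ}{k+2}(\gamma_0, \eta_0)}{t} = \comp{\cW}{\SZ}{k+2}(\gamma, \eta_0 + t\eta) + \Rint{0}{1}{\comp{BC}{\Lin{\SY}{\SZ}}{k+1}(D\gamma_0, \eta_0 + s t \eta) \eval \eta}{s}.
\]
Letting $t \to 0$ and using continuity of parameter-dependent integrals (\refer{prop:Stetigkeit_parameterab_Int}) together with the already established continuity of $\comp{\cW}{\SZ}{k+2}$, of $\comp{BC}{\Lin{\SY}{\SZ}}{k+1}$, and of the evaluation (\refer{lem:BC0_operiert_stetig_auf_CW}, \refer{lem:Komposition_von_BC_ist_stetig}, \refer{cor:Auswertung_linearer_abb_und_CF}), the right-hand side converges in $\CcF{\UF}{\SZ}{k}$ to the expression in \eqref{Ableitung_Comp_BC0ellxCW}. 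That expression is manifestly continuous in $(\gamma_0, \eta_0, \gamma, \eta)$, so $\comp{\cW}{\SZ}{k+2}$ is a $\ConDiff{}{}{1}$-map with the asserted differential.

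For the step $\ell - 1 \to \ell$, it suffices to show that the differential \eqref{Ableitung_Comp_BC0ellxCW} is $\ConDiff{}{}{\ell-1}$. Its first summand $\comp{\cW}{\SZ}{k+\ell+1}(\gamma, \eta_0)$ is, after restriction along the smooth inclusion $\BCzero{\VF}{\SZ}{k+\ell+1} \to \BCzero{\VF}{\SZ}{k+\ell}$, the map $\comp{\cW}{\SZ}{k+\ell}$, which is $\ConDiff{}{}{\ell-1}$ by the inductive hypothesis (same $k$, with $\ell$ decremented). Its second summand $\comp{BC}{\Lin{\SY}{\SZ}}{k+\ell}(D\gamma_0, \eta_0) \eval \eta$ is $\ConDiff{}{}{\ell-1}$ because $\comp{BC}{\Lin{\SY}{\SZ}}{k+\ell}$ is $\ConDiff{}{}{\ell-1}$ by \refer{prop:Komposition_von_BCell_ist_diffbar} (its bounded analogue, read with parameters $k$ and $\ell - 1$), precomposed with the smooth derivative operator $D$ and postcomposed with the smooth bilinear evaluation (\refer{cor:Auswertung_linearer_abb_und_CF}). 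Hence the differential is $\ConDiff{}{}{\ell-1}$, and therefore $\comp{\cW}{\SZ}{k+\ell+1}$ is $\ConDiff{}{}{\ell}$.

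I expect the only real obstacle to be bookkeeping rather than conceptual: one must check at the base step that the integral term genuinely lies in the weighted space $\CcF{\UF}{\SZ}{k}$, and that the limit of the difference quotient is taken in the weighted topology rather than merely in a bounded one. Both are controlled by the weighted estimates behind \refer{lem:BC0_operiert_stetig_auf_CW} and by \refer{lem:Kriterium_f"ur_Integrierbarkeit_in_CW}; once these are invoked, the induction proceeds exactly as in the bounded case, and in fact the two proofs can be carried out through a single parametrized argument.
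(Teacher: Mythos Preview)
Your proposal is correct and follows the paper's proof essentially verbatim: the paper carries out exactly this induction on $\ell$ (reducing $\ell=\infty$ to the finite case via the smooth inclusions, and $k=\infty$ via \refer{cor:Topologie_von_CinfF} and \refer{prop:Differenzierbarkeit_Abb_in_projektiven_Limes}), handles $\ell=1$ through \refer{lem:Differenzenquotient_Komposition}, \refer{lem:Kriterium_f"ur_Integrierbarkeit_in_CW} and \refer{prop:Stetigkeit_parameterab_Int}, and deduces the inductive step from the formula \eqref{Ableitung_Comp_BC0ellxCW} together with the inductive hypothesis and \refer{prop:Komposition_von_BCell_ist_diffbar}. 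Your closing remark that the bounded and weighted proofs can be run through a single parametrized argument is in fact exactly how the paper is written: the two proofs share a common macro with the target space as parameter.
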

\begin{proof}
	\BeweisKompoCBCW{\GewFunk}
\end{proof}

\subsection{Composition of weighted functions with an analytic map}
\label{susec:Composing_weighted_functions_with_analytic_maps}
We discuss a sufficient criterion for an analytic map to operate on $\CcFo{\UF}{\VF}{k}$
through (covariant) composition.
First, we state a result about the superposition on weighted functions
that is a direct consequence of \refer{prop:BC0_operiert_glatt_auf_CW}.
After that, we have to treat real and complex analytic functions seperately.
While the complex case is straightforward, in the real case we have to deal with complexifications.

We begin with a lemma about star-shaped open sets.
\begin{lem}\label{lem:Auschoepfen_sternfoermiger_Mengen}
	Let $\SX$ be a normed space and $\VF\subseteq \SX$ an open set
	that is star-shaped with center~$0$.
	Then for $d \ndef \dist{\sset{0}}{\partial V}$, there exists an anti-monotone family $\fami{\VF^\partial_r}{r}{]0, d[}$ such that
	\[
		\VF = \bigcup_{d > r > 0} \VF^\partial_r .
	\]
	Further, each $\VF^{\partial,r}$ is an open bounded set that is star-shaped with center $0$
	such that
	\begin{equation}\label{est:Abstand_Rand_Approx_sfMengen}
		\dist{\VF^{\partial,r}}{\partial \VF} \geq \tfrac{d - r}{2} \min(1, r^2).
	\end{equation}
\end{lem}
\begin{proof}
	If $\VF = \SX$, this is obviously true. Otherwise, $\partial \VF \neq \emptyset$
	and $d \in \R$. We set for $r \in ]0, d[$
	\[
		\VF^\partial_r \ndef
		[0,1] \cdot \left( \set{y \in \VF}{ \dist{\sset{y}}{\partial \VF} > r} \cap \Ball{0}{\frac{1}{r}}\right).
	\]
	This set is obviously bounded and star-shaped with center $0$.
	Further, it is open: It is the union of an open set with $\sset{0}$,
	and by the choice of $r$, it contains $\Ball{0}{d - r}$.
	So it remains to show that $\dist{\VF^\partial_r}{\partial \VF} > 0$.
	To this end, let $x \in \VF^\partial_r$. We distinguish two cases.
	
	First case: $x \in \Ball[ ]{0}{\tfrac{d - r}{2}}$.
	Then $\Ball[ ]{x}{\tfrac{d - r}{2}} \sub \VF$.
	
	Otherwise, there exists
	$z \in \set{y \in \VF}{ \dist{\sset{y}}{\partial \VF} > r} \cap \Ball{0}{\frac{1}{r}}$
	and $t \in ]0, 1]$ with $x = t z$.
	We show that $\Ball{x}{t r} \sub \VF$, and use that obviously $\Ball{z}{r} \sub \VF$.
	So let $v \in \Ball{x}{t r}$. We set $h \ndef \tfrac{v}{t} - z$.
	Then $\norm{h} < r$, and hence $v = t (z + h) \in \VF$ since $\VF$ is star-shaped.
	Further, we know that $\norm{z} < \tfrac{1}{r}$ and $\norm{x} \geq \tfrac{d - r}{2}$.
	Hence
	\[
		\tfrac{t}{r} > t \norm{z} = \norm{x}  \geq \tfrac{d - r}{2}.
	\]
	This implies that $\Ball[ ]{x}{\tfrac{d - r}{2} r^2} \sub \VF$.
	
	We deduce that \refer{est:Abstand_Rand_Approx_sfMengen} holds.
\end{proof}

\begin{lem}\label{lem:Wirkung_einer_lokal_beschraenkten_Abb_auf_CWo}
	Let $\SX$, $\SY$ and $\SZ$ be normed spaces,
	$\UF\subseteq \SX$ and $\VF\subseteq \SY$ open subsets
	such that $\VF$ is star-shaped with center~$0$,
	$k \in \cl{\N}$, $\ell \in \cl{\N}^\ast$
	and $\cW \subseteq \cl{\R}^{\UF}$ with $1_{\UF} \in \cW$.
	Suppose further that $\Phi : \VF \to \SZ$ with $\Phi(0) = 0$ satisfies
	\begin{multline*}
		\text{ $\WF$ open in $\VF$, bounded and star-shaped with center~$0$, }
		\dist{\WF}{\partial \VF} > 0
		\\\implies
		\rest{\Phi}{\WF} \in \BC{\WF}{\SZ}{k + \ell + 1}.
	\end{multline*}
	Then
	$
		\Phi \circ \gamma \in \CF{\UF}{\SZ}{\cW}{k}
	$
	for all $\gamma \in \CFo{\UF}{\VF}{\cW}{k}$, and the map
	\[
		\CFo{\UF}{\VF}{\cW}{k} \to \CF{\UF}{\SZ}{\cW}{k}
		:
		\gamma \mapsto \Phi \circ \gamma
	\]
	is $\ConDiff{}{}{\ell}$.
\end{lem}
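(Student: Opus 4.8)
The plan is to reduce the assertion to \refer{prop:BC0_operiert_glatt_auf_CW}, which treats composition $(\gamma,\eta)\mapsto\gamma\circ\eta$ when the outer map lies in $\BCzero{\VF}{\SZ}{k+\ell+1}$. Our $\Phi$ need not be globally bounded, but the hypothesis guarantees that its restriction to any bounded, star-shaped (with center $0$) open $\WF\subseteq\VF$ with $\dist{\WF}{\partial\VF}>0$ belongs to $\BCzero{\WF}{\SZ}{k+\ell+1}$. So for a fixed $\gamma_0\in\CcFo{\UF}{\VF}{k}$ I would construct such a set $\WF$ that in addition contains $\gamma_0(\UF)$ together with a margin, and then invoke the proposition with $\WF$ in place of $\VF$. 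Two preliminary observations feed the construction: since $1_\UF\in\cW$, the seminorm $\hn{\gamma_0}{1_\UF}{0}$ is finite, so $\gamma_0(\UF)$ is bounded; and by the definition of $\CcFo{\UF}{\VF}{k}$ there is an $r>0$ with $\gamma_0(\UF)+\Ball[\SY]{0}{r}\subseteq\VF$.

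The geometric construction of $\WF$ is the crux of the proof. Fix $\rho_0>0$ with $\Ball[\SY]{0}{\rho_0}\subseteq\VF$, put $\WF_0\ndef\gamma_0(\UF)+\Ball[\SY]{0}{r/2}$ — an open, bounded set with $\dist{\WF_0}{\partial\VF}\geq r/2$ — and set
\[
	\WF\ndef\Ball[\SY]{0}{\rho_0/2}\cup\bigcup_{w\in\WF_0}\Strecke{0}{w}.
\]
By construction $\WF$ is star-shaped with center $0$ and bounded; it is open because every point $sw$ with $w\in\WF_0$, $s\in(0,1]$, is interior to the star-hull, while $0$ is interior thanks to the adjoined ball. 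The decisive property $\dist{\WF}{\partial\VF}>0$ I would derive from the elementary scaling fact that $\Ball[\SY]{w}{\rho}\subseteq\VF$ implies $\Ball[\SY]{sw}{s\rho}\subseteq\VF$ for $s\in[0,1]$ (immediate from star-shapedness with center $0$). Applied to $w\in\WF_0$ with $\rho=r/2$ this yields $\dist{sw}{\partial\VF}\geq s\,r/2$; combining it with the bound $\dist{z}{\partial\VF}\geq\rho_0-\norm{z}$ valid near $0$ and splitting the cases $\norm{sw}\leq\rho_0/2$ and $\norm{sw}>\rho_0/2$ (the latter forcing $s>\rho_0/(2M)$, where $M\ndef\sup_{w\in\WF_0}\norm{w}$) produces a uniform lower bound $\dist{\WF}{\partial\VF}\geq\delta>0$. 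Finally $\WF_0\subseteq\WF$ gives $\gamma_0(\UF)+\Ball[\SY]{0}{r/2}\subseteq\WF$, i.e. $\gamma_0\in\CcFo{\UF}{\WF}{k}$.

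With $\WF$ in hand the rest is routine. By hypothesis $\rest{\Phi}{\WF}\in\BCzero{\WF}{\SZ}{k+\ell+1}$, so \refer{prop:BC0_operiert_glatt_auf_CW}, applied with $\WF$ for $\VF$, shows that $\comp{\cW}{\SZ}{k+\ell+1}$ is a $\ConDiff{}{}{\ell}$-map on $\BCzero{\WF}{\SZ}{k+\ell+1}\times\CcFo{\UF}{\WF}{k}$; in particular $\Phi\circ\gamma_0=\comp{\cW}{\SZ}{k+\ell+1}(\rest{\Phi}{\WF},\gamma_0)\in\CcF{\UF}{\SZ}{k}$, which is the first assertion. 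For the differentiability claim I would use that $\CcFo{\UF}{\WF}{k}$ is open in $\CcF{\UF}{\SY}{k}$ (as $1_\UF\in\cW$) and is an open neighborhood of $\gamma_0$ contained in $\CcFo{\UF}{\VF}{k}$; on it $\gamma\mapsto\Phi\circ\gamma$ coincides with the partial map $\gamma\mapsto\comp{\cW}{\SZ}{k+\ell+1}(\rest{\Phi}{\WF},\gamma)$ (because $\gamma(\UF)\subseteq\WF$ forces $\Phi\circ\gamma=\rest{\Phi}{\WF}\circ\gamma$), which is $\ConDiff{}{}{\ell}$ by the previous step. As $\gamma_0$ ranges over $\CcFo{\UF}{\VF}{k}$ these neighborhoods cover the domain, and since being $\ConDiff{}{}{\ell}$ is a local property, $\gamma\mapsto\Phi\circ\gamma$ is $\ConDiff{}{}{\ell}$ on all of $\CcFo{\UF}{\VF}{k}$. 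The only real obstacle is the construction in the second paragraph; everything else is an application of the already-established composition results together with the locality of differentiability.
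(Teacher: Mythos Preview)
Your proof is correct and follows essentially the same strategy as the paper: reduce to \refer{prop:BC0_operiert_glatt_auf_CW} by covering $\CcFo{\UF}{\VF}{k}$ with open subsets of the form $\CcFo{\UF}{\WF}{k}$ for suitable bounded star-shaped $\WF\subseteq\VF$ with $\dist{\WF}{\partial\VF}>0$. The only difference is in how the sets $\WF$ are produced: the paper uses a uniform one-parameter family $M_r\ndef[0,1]\cdot\bigl(\{y\in\VF:\dist{y}{\partial\VF}>r\}\cap\Ball{0}{1/r}\bigr)$ and observes $\CcFo{\UF}{\VF}{k}=\bigcup_{r>0}\CcFo{\UF}{M_r}{k}$, whereas you build a set tailored to each $\gamma_0$. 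Your construction is slightly more ad hoc, but you justify the key geometric claim $\dist{\WF}{\partial\VF}>0$ in more detail than the paper does for $M_r$; in either case the scaling argument $\Ball{w}{\rho}\subseteq\VF\Rightarrow\Ball{sw}{s\rho}\subseteq\VF$ combined with a separate estimate near $0$ is what makes it work.
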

\begin{proof}
	We let $\fami{\VF^\partial_r}{r}{]0,d[}$ as in \refer{lem:Auschoepfen_sternfoermiger_Mengen}.
	Then for each $r$, we know from \refer{prop:BC0_operiert_glatt_auf_CW} that
	\[
		\CcFo{\UF}{\VF^\partial_r}{k} \to \CcF{\UF}{\SZ}{k}
		: \gamma \mapsto \Phi\circ\gamma
	\]
	is defined and $\ConDiff{}{}{\ell}$ since $\Phi \in \BCzero{M_r}{\SZ}{k + \ell + 1}$
	by our assumption. But
	\[
		\CcFo{\UF}{\VF}{k} = \bigcup_{r>0} \CcFo{\UF}{\VF^\partial_r}{k},
	\]
	and $1_\UF \in \cW$ implies that each $\CcFo{\UF}{\VF^\partial_r}{k}$ is open in $\CcFo{\UF}{\VF}{k}$,
	hence the assertion is proved.
\end{proof}

\begin{lem}\label{lem:analyt-Abb_BC0-large_impliziert_BCinf-klein}
	Let $\SY$ and $\SZ$ be complex normed spaces,
	$\VF \subseteq \SY$ an open nonempty subset
	and $\Phi: \VF \to \SZ$ a complex analytic map.
	Further, let $\WF \subseteq \VF$ with $\dist{\WF}{\partial \VF} > 0$
	and $r > 0$ with $r < \dist{\WF}{\partial \VF}$ such that
	$\rest{\Phi}{\WF + \clBall[\SY]{0}{r}} \in \BC{\WF + \clBall[\SY]{0}{r} }{\SZ}{0}$.
	Then $\rest{\Phi}{\WF} \in \BC{\WF}{\SZ}{\infty}$.
	More explicitly, for each $k \in \N$ we have
	\begin{equation}\label{est:k-norm_durch_0-norm_anaAbb}
		\hn{\Phi}{1_\WF}{k}
		\leq \frac{(2 k)^k}{ r^k } \hn{\Phi}{1_{\WF + \clBall[\SY]{0}{r}}}{0}.
	\end{equation}
\end{lem}
\begin{proof}
	For each $x \in \WF$ and $h \in \SY$ with $\norm{h}\leq 1$,
	we get an analytic map
	\[
		\Phi_{x,h}: \Ball[\C]{0}{r + \eps} \to \SZ
			: z\mapsto \Phi(x + z h)
	\]
	by restricting $\Phi$, see \refer{satz:Char_komplex_analytischer_Abb}
	(here $\eps > 0$ exists because $r < \dist{\WF}{\partial \VF}$).
	By applying the Cauchy estimates (stated in \refer{cor:Cauchy_Abschaetzungen})
	to this map (and the circle with radius $r$ in $\C$), we get the estimate
	\[
		\norm{\Phi_{x,h}^{(k)}(0)}
		\leq
		\frac{k!}{ r^k }\noma{ \rest{\Phi}{\sset{x} + r \cl{\Disk} h  } }
		\leq
		\frac{k!}{ r^k }\noma{ \rest{\Phi}{\clBall[\SY]{x}{r}  } }.
	\]
	From \refer{lem:Komplexe_Kurven_Ableitung} and the chain rule
	we know that $\Phi_{x,h}^{(k)}(0) = \FAbl[k]{\Phi}(x)(h,\dotsc,h)$,
	so we conclude with the Polarization Formula (\refer{prop:Polarisierungsformel})
	that
	\[
		\Opnorm{\FAbl[k]{\Phi}(x)}
		\leq
		\frac{(2 k)^k}{ r^k } \noma{ \rest{\Phi}{ \clBall[\SY]{x}{r} } }.
	\]
	We derive \eqref{est:k-norm_durch_0-norm_anaAbb} and from that
	the assertion.
\end{proof}

\begin{lem}\label{lem:analyt-Abb_lokal-BCinf_wenn_lokal-BC0}
	Let $\SY$ and $\SZ$ be complex normed spaces,
	$\VF \subseteq \SY$ an open nonempty  subset
	and $\Phi: \VF \to \SZ$ a complex analytic map that satisfies
	the following condition:
	\begin{equation}\label{bed:Abb_in_BC^0}
		\WF \subseteq \VF,
		\text{ $\WF$ open in $\VF$, }
		\dist{\WF}{\partial \VF} > 0
		\implies \rest{\Phi}{\WF} \in \BC{\WF}{\SZ}{0}.
	\end{equation}
	Then $\rest{\Phi}{\WF} \in \BC{\WF}{\SZ}{\infty}$ for all open subsets
	$\WF \subseteq \VF$ with $\dist{\WF}{\partial \VF} > 0$.
\end{lem}
\begin{proof}
	Let $\WF \subseteq \VF$ open with $r \ndef \dist{\WF}{\partial \VF} > 0$.
	Then $\widetilde{\WF} \ndef \WF + \clBall{0}{\frac{r}{2}}$ is open in $\VF$
	with $\dist{\widetilde{\WF}}{\partial \VF} > 0$.
	By \eqref{bed:Abb_in_BC^0}, we have that
	$\rest{\Phi}{\widetilde{\WF}} \in \BC{\widetilde{\WF}}{\SZ}{0}$.
	Hence we can apply \refer{lem:analyt-Abb_BC0-large_impliziert_BCinf-klein} to see that
	$\rest{\Phi}{\WF} \in \BC{\WF}{\SZ}{\infty}$.
\end{proof}
\subsubsection{On real analytic maps and good complexifications}
The two previous lemmas would allow us to state the desired result concerning
covariant composition, but only for complex analytic maps.
There are examples of real analytic maps for which
the assertion of \refer{lem:analyt-Abb_lokal-BCinf_wenn_lokal-BC0} is wrong.
We define a class of real analytic maps that is suited to our need.
Before that, we state the following small result concerning complexifications.
\begin{lem}\label{lem:Komplexifizierung_gewichteter_Funktionenraeume}
	Let $\SX$ and $\SY$ be real normed spaces, $\UF \subseteq \SX$ an open nonempty set,
	$k \in \cl{\N}$ and $\GewFunk \subseteq \cl{\R}^{\UF}$.
	Further let $\iota : \SY \to \SY_\C$ denote the canonical inclusion into $\SY_\C$.
	\begin{enumerate}
		\item\label{enum1:Komplexifizierung_gewichteter_Funktionenraeume}
		Then $\CcF{\UF}{\SY_\C}{k}$ is the complexification of $\CcF{\UF}{\SY}{k}$,
		and the canonical inclusion map is given by
		\[
			\CcF{\UF}{\SY}{k} \to \CcF{\UF}{\SY_\C}{k} : \gamma \mapsto \iota \circ \gamma .
		\]

		\item\label{enum1:Komplexifizierung-Kriterium_fuer_Inklusion_CFo}
		Let $\VF \subseteq \SY$ be an open nonempty set and
		$\widetilde{\VF} \subseteq \SY_\C$ an open neighborhood of $\iota(\VF)$
		such that
		\begin{equation}\label{bed:Komplexifizierung-Kriterium_fuer_Inklusion_CFo}
			(\forall M \subseteq \VF) \, \dist{M}{\SY\setminus\VF} > 0
			\implies
			\dist{\iota(M)}{\SY_\C\setminus\widetilde{\VF}} > 0 .
		\end{equation}
		Then
		\[
			\iota \circ \CcFo{\UF}{\VF}{k} \subseteq \CcFo{\UF}{\widetilde{\VF}}{k} .
		\]
	\end{enumerate}
\end{lem}
\begin{proof}
	\refer{enum1:Komplexifizierung_gewichteter_Funktionenraeume}
	It is a well known fact that $\SY_\C \iso \SY \times \SY$ and $\iota(y) = (y,0)$ for each $y\in\SY$.
	Hence
	\[
		\CcF{\UF}{\SY_\C}{k} \iso \CcF{\UF}{\SY \times \SY}{k} \iso \CcF{\UF}{\SY}{k} \times \CcF{\UF}{\SY}{k}
	\]
	by \refer{lem:gewichtete,verschwindende_Abb_Produktisomorphie-lokalkonvex} (and \refer{prop:multilineare_Abb_und_CF}),
	and
	\[
		\iota \circ \gamma = (\gamma, 0) \in \CcF{\UF}{\SY}{k} \times \CcF{\UF}{\SY}{k} \iso \CcF{\UF}{\SY_\C}{k}
	\]
	for $\gamma \in \CcF{\UF}{\SY}{k}$.

	\refer{enum1:Komplexifizierung-Kriterium_fuer_Inklusion_CFo}
	This is an immediate consequence of \refer{enum1:Komplexifizierung_gewichteter_Funktionenraeume}
	and \refer{bed:Komplexifizierung-Kriterium_fuer_Inklusion_CFo}.
\end{proof}
\begin{defi}
	Let $\SY$ and $\SZ$ be real normed spaces, $\VF \subseteq \SY$ an open nonempty set,
	$\Phi: \VF \to \SZ$ a real analytic map.
	We say that $\Phi$ has a \emph{good complexification}%
	\index{complexification!good}\index{good complexification|see{complexification, good}}
	if there exists a complexification $\widetilde{\Phi} : \widetilde{\VF} \subseteq \SY_\C \to \SZ_\C$
	of $\Phi$ which satisfies \refer{bed:Abb_in_BC^0}
	and whose domain satisfies \refer{bed:Komplexifizierung-Kriterium_fuer_Inklusion_CFo}.
	In this case, we call $\widetilde{\Phi}$ a good complexification.
\end{defi}
The following lemma states that good complexifications always exist at least locally.
It is not needed in the further discussion.
\begin{lem}\label{lem:Verkleinerung_reell_analytischer_Abb_haben_gute_Komplexifizierung}
	Let $\SY$ and $\SZ$ be real normed spaces, $\VF \subseteq \SY$ an open nonempty set and
	$\Phi: \VF \to \SZ$ a real analytic map.
	Then for each $x \in \VF$ there exists an open neighborhood
	$\WF_x \subseteq \SY$ of $x$ such that $\rest{\Phi}{\WF_x}$
	has a good complexification.
\end{lem}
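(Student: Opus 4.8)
The plan is to use the defining property of real analyticity --- that $\Phi$ extends to a complex analytic map on a complex neighbourhood of $\VF$ --- and then to shrink that neighbourhood around the given point until the extension is bounded on its entire (small) domain. Let $\iota \colon \SY \to \SY_\C$ and $\iota_\SZ \colon \SZ \to \SZ_\C$ denote the canonical isometric inclusions. Since $\Phi$ is real analytic, there are an open neighbourhood $\widetilde{\VF}_0 \subseteq \SY_\C$ of $\iota(\VF)$ and a complex analytic map $\widetilde{\Phi}_0 \colon \widetilde{\VF}_0 \to \SZ_\C$ with $\widetilde{\Phi}_0 \circ \iota = \iota_\SZ \circ \Phi$ on $\VF$. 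Complex analytic maps are continuous (cf.\ \refer{satz:Char_komplex_analytischer_Abb}), so $\widetilde{\Phi}_0$ is bounded on some neighbourhood of $\iota(x)$. For the fixed $x \in \VF$ I would therefore pick a radius $R > 0$ so small that, at once, $\Ball[\SY]{x}{R} \subseteq \VF$, $\Ball[\SY_\C]{\iota(x)}{R} \subseteq \widetilde{\VF}_0$, and $\widetilde{\Phi}_0$ is bounded on $\Ball[\SY_\C]{\iota(x)}{R}$; the availability of such an $R$ is precisely why the assertion is only local.

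With this choice, set $\WF_x \ndef \Ball[\SY]{x}{R}$, $\widetilde{\VF} \ndef \Ball[\SY_\C]{\iota(x)}{R}$ and $\widetilde{\Phi} \ndef \rest{\widetilde{\Phi}_0}{\widetilde{\VF}}$. Then $\widetilde{\Phi}$ is complex analytic, and since $\iota(\WF_x) \subseteq \widetilde{\VF}$ and $\widetilde{\Phi}_0 \circ \iota = \iota_\SZ \circ \Phi$, it is a complexification of $\rest{\Phi}{\WF_x}$. To verify \refer{bed:Komplexifizierung-Kriterium_fuer_Inklusion_CFo} I would use that both balls have the same radius $R$ and that $\iota$ is isometric: for $m \in \WF_x$ one has $\dist{m}{\SY\setminus\WF_x} = R - \norm{m - x}$ and $\dist{\iota(m)}{\SY_\C\setminus\widetilde{\VF}} = R - \norm{m - x}$, so passing to the infimum over a set $M \subseteq \WF_x$ yields $\dist{\iota(M)}{\SY_\C\setminus\widetilde{\VF}} = \dist{M}{\SY\setminus\WF_x}$, and positivity is preserved. (Should the chosen complexification norm render $\iota$ merely bi-Lipschitz, it suffices to enlarge the radius of the complex ball accordingly; the implication still holds.)

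The crux is \refer{bed:Abb_in_BC^0}, and this is the step I expect to be the main obstacle, since it is exactly where a global argument breaks down: over a large domain $\widetilde{\VF}_0$ a subset $\WF$ with $\dist{\WF}{\partial\widetilde{\VF}_0} > 0$ need not be contained in any compact set, and as closed balls of an infinite-dimensional space fail to be compact, continuity of $\widetilde{\Phi}_0$ alone would not force it to be bounded on $\WF$. The shrinking performed above removes this difficulty: by construction $\widetilde{\Phi}$ is bounded on all of $\widetilde{\VF}$, so for every open $\WF \subseteq \widetilde{\VF}$ --- in particular for those with $\dist{\WF}{\partial\widetilde{\VF}} > 0$ --- the restriction $\rest{\widetilde{\Phi}}{\WF}$ is continuous and bounded, hence lies in $\BC{\WF}{\SZ_\C}{0}$. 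Thus \refer{bed:Abb_in_BC^0} is satisfied (the distance hypothesis is not even needed), $\widetilde{\Phi}$ is a good complexification of $\rest{\Phi}{\WF_x}$, and the lemma follows.
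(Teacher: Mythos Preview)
Your argument is correct and follows essentially the same route as the paper: take a complexification, shrink to a complex ball around $\iota(x)$ on which the extension is bounded, and set $\WF_x$ to be the real ball of the same radius. The paper's proof is terser --- it simply asserts that the resulting $\WF_x$ ``has the stated property'' --- whereas you spell out the verification of both \refer{bed:Abb_in_BC^0} and \refer{bed:Komplexifizierung-Kriterium_fuer_Inklusion_CFo} explicitly, which is entirely reasonable.
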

\begin{proof}
	Let $\widetilde{\Phi} : \widetilde{\VF} \subseteq \SY_\C \to \SZ_\C$ be a complexification of $\Phi$
	and $\iota : \VF \to \widetilde{\VF}$ the canonical inclusion. Then there exists an $r > 0$
	such that $\Ball[\SY_\C]{\iota(x)}{r} \subseteq \widetilde{\VF}$
	and $\widetilde{\Phi}$ is bounded on $\Ball[\SY_\C]{\iota(x)}{r}$.
	Then it is obvious that
	$\WF_x \ndef \iota^{-1}(\Ball[\SY_\C]{\iota(x)}{r}) = \Ball[\SY]{x}{r}$
	has the stated property.
\end{proof}

\subparagraph{Power series}
We present a class of analytic maps which have good complexifications:
Absolutely convergent power series in Banach algebras.
\begin{lem}\label{lem:Potenzreihen_in_Banachalgebren}
	Let $A$ be a Banach algebra and $\sum_{\ell=0}^{\infty}a_{\ell}z^{\ell}$
	a power series with $a_{\ell} \in \K$ and
	the radius of convergence $R \in ]0,\infty]$.
	We define for $x \in A$
	\[
		P_x
		: \Ball[A]{x}{R} \to A
		: y \mapsto \sum_{\ell=0}^{\infty}a_{\ell}(y-x)^{\ell}.
	\]
	Then the following assertions hold:
	\begin{enumerate}
		\item\label{enum1:Potenzreihen_in_Banachalgebren_a}
		The map $P_x$ is analytic.

		\item\label{enum1:Potenzreihen_in_Banachalgebren_b}
		If $\K = \C$ then $P_x$ satisfies \refer{bed:Abb_in_BC^0}.

		\item\label{enum1:Potenzreihen_in_Banachalgebren_c}
		If $\K = \R$ then $P_x$ has a good complexification.
	\end{enumerate}
	\index{complexification!of power series}%
\end{lem}
\begin{proof}
	The map $P_x$ is defined since
	$\sum_{\ell=0}^{\infty}a_{\ell}(y-x)^{\ell}$ is absolutely convergent on
	$\Ball{x}{R}$ and $A$ is complete.

	\refer{enum1:Potenzreihen_in_Banachalgebren_a}
	This is a special case of \cite[\S 3.2.9]{MR0219078}.

	\refer{enum1:Potenzreihen_in_Banachalgebren_b}
		If $\VF \subseteq \Ball[A]{x}{R}$ is open
		and $\dist{\VF}{\partial \Ball[A]{x}{R}} > 0$,
		there exists $r \in ]0, R[$ such that
		$\VF \subseteq \Ball[A]{x}{r}$ holds.
		So we compute for $y \in \VF$ that
		\[
			\left\norm{\sum_{\ell=0}^{\infty}a_{\ell} (y-x)^{\ell}\right}
			\leq \sum_{\ell=0}^{\infty}\abs{a_{\ell}}\,\norm{y-x}^{\ell}
			\leq \sum_{\ell=0}^{\infty}\abs{a_{\ell}} r^{\ell}
			< \infty.
		\]

		\refer{enum1:Potenzreihen_in_Banachalgebren_c}
		It is well known that the complexification of a Banach algebra is
		a Banach algebra as well, and a complexification of $P_x$ is given by
		\[
			\tilde{P}_x : \Ball[A_\C]{x}{R} \to A :
			y \mapsto \sum_{\ell=0}^{\infty}a_{\ell}(y-x)^{\ell}.
		\]
		This gives the assertion.
\end{proof}

\subsubsection{Main Result}
Finally, we state the desired result for complex analytic maps
and real analytic maps with good complexifications.
\begin{prop}\label{prop:Operation_analytischer_Abb_auf_CWk}
	Let $\SX$, $\SY$ and $\SZ$ be normed spaces, $\UF \subseteq \SX$ and $\VF \subseteq \SY$
	open nonempty sets such that $\VF$ is star-shaped with center~$0$,
	$k \in \cl{\N}$ and	$\GewFunk \subseteq \cl{\R}^{\UF}$ with $1_{\UF} \in \GewFunk$.
	Further, let $\Phi: \VF \to \SZ$ with $\Phi(0) = 0$ be either a
	complex analytic map that satisfies \refer{bed:Abb_in_BC^0}
	or a real analytic map that has a good complexification.
	Then for each $\gamma \in \CcFo{\UF}{\VF}{k}$
	\[
		\Phi \circ \gamma \in \CcF{\UF}{\SZ}{k},
	\]
	and the map
	\[
		\Hom{\Phi}{}{}:
		\CcFo{\UF}{\VF}{k}
		\to \CcF{\UF}{\SZ}{k}
		: \gamma \mapsto \Phi\circ\gamma
	\]
	is analytic.
	\index{superposition!with an analytic map}%
	\index{analytic maps!superposition|see{superposition with an analytic map}}
\end{prop}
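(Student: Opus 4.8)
The plan is to establish both membership and analyticity by splitting along the two hypotheses and reducing the real-analytic case to the complex-analytic one.

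\emph{Complex case.} Suppose $\Phi$ is complex analytic and satisfies \refer{bed:Abb_in_BC^0}. I would first upgrade boundedness to bounded derivatives of every order: by \refer{lem:Cauchy-Abschaetzung_Potenzreihen}, $\rest{\Phi}{\WF}\in\BC{\WF}{\SZ}{\infty}$ for every open $\WF\subseteq\VF$ with $\dist{\WF}{\partial\VF}>0$. If such a $\WF$ is in addition bounded and star-shaped with center~$0$, then $0\in\WF$, and $\Phi(0)=0$ forces $\rest{\Phi}{\WF}\in\BCzero{\WF}{\SZ}{\infty}\subseteq\BCzero{\WF}{\SZ}{k+\ell+1}$ for every $\ell$. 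Thus the hypothesis of \refer{lem:Wirkung_einer_lokal_beschraenkten_Abb_auf_CWo} is met for each $\ell\in\cl{\N}^\ast$, which yields $\Phi\circ\gamma\in\CcF{\UF}{\SZ}{k}$ for every $\gamma\in\CcFo{\UF}{\VF}{k}$ and shows that $\Hom{\Phi}{}{}$ is $\ConDiff[\C]{}{}{\ell}$ for each $\ell$, hence $\ConDiff[\C]{}{}{\infty}$. Since complex smoothness entails complex analyticity for maps between complex locally convex spaces (cf.\ \refer{satz:Char_komplex_analytischer_Abb}), $\Hom{\Phi}{}{}$ is analytic.

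\emph{Real case.} Now let $\Phi$ be real analytic with a good complexification $\widetilde\Phi:\widetilde\VF\to\SZ_\C$. Assuming for the moment that $\widetilde\VF$ may be taken star-shaped with center~$0$ (discussed below), the complex case applies to $\widetilde\Phi$ — note $\widetilde\Phi(0)=\iota_\SZ(\Phi(0))=0$ and $\widetilde\Phi$ satisfies \refer{bed:Abb_in_BC^0} by definition of a good complexification — and shows that
\[
	\Hom{\widetilde\Phi}{}{}:\CcFo{\UF}{\widetilde\VF}{k}\to\CcF{\UF}{\SZ_\C}{k},\quad\gamma\mapsto\widetilde\Phi\circ\gamma
\]
is complex analytic, in particular real analytic. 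By \refer{lem:Komplexifizierung_gewichteter_Funktionenr"aume} the spaces $\CcF{\UF}{\SY_\C}{k}$ and $\CcF{\UF}{\SZ_\C}{k}$ are the complexifications of $\CcF{\UF}{\SY}{k}$ and $\CcF{\UF}{\SZ}{k}$, with inclusions $\gamma\mapsto\iota\circ\gamma$, and \refer{bed:Komplexifizierung-Kriterium_fuer_Inklusion_CFo} guarantees $\iota_*\bigl(\CcFo{\UF}{\VF}{k}\bigr)\subseteq\CcFo{\UF}{\widetilde\VF}{k}$. Because $\widetilde\Phi$ extends $\Phi$ along $\iota$, for $\gamma\in\CcFo{\UF}{\VF}{k}$ one has $\widetilde\Phi\circ(\iota\circ\gamma)=\iota_\SZ\circ(\Phi\circ\gamma)$ pointwise.

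To finish, I would use the continuous $\R$-linear left inverse $\pi_\SZ:\CcF{\UF}{\SZ_\C}{k}\to\CcF{\UF}{\SZ}{k}$ of $\iota_{\SZ,*}$ supplied by the complexification structure. Applying $\pi_\SZ$ to the element $\widetilde\Phi\circ(\iota\circ\gamma)\in\CcF{\UF}{\SZ_\C}{k}$ gives first, pointwise, $\Phi\circ\gamma=\pi_\SZ\bigl(\widetilde\Phi\circ(\iota\circ\gamma)\bigr)\in\CcF{\UF}{\SZ}{k}$, so membership holds; and then, as maps, $\Hom{\widetilde\Phi}{}{}\circ\iota_*=\iota_{\SZ,*}\circ\Hom{\Phi}{}{}$, whence $\Hom{\Phi}{}{}=\pi_\SZ\circ\Hom{\widetilde\Phi}{}{}\circ\iota_*$. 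Since $\iota_*$ and $\pi_\SZ$ are continuous and $\R$-linear, hence real analytic, and $\Hom{\widetilde\Phi}{}{}$ is real analytic, $\Hom{\Phi}{}{}$ is real analytic as a composite.

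\emph{Main obstacle.} I expect the crux to lie entirely in the real case, specifically in legitimately invoking the complex case for $\widetilde\Phi$: this requires the complex domain $\widetilde\VF$ to be star-shaped with center~$0$ (so that \refer{lem:Wirkung_einer_lokal_beschraenkten_Abb_auf_CWo}, which covers the domain by bounded star-shaped sets, is applicable), while simultaneously retaining \refer{bed:Komplexifizierung-Kriterium_fuer_Inklusion_CFo}, i.e.\ keeping $\iota_*(\CcFo{\UF}{\VF}{k})$ inside $\CcFo{\UF}{\widetilde\VF}{k}$. Passing to the open star-shaped kernel of $\widetilde\VF$ with center~$0$ preserves \refer{bed:Abb_in_BC^0}, but reconciling star-shapedness with the uniform boundary-distance estimate of \refer{bed:Komplexifizierung-Kriterium_fuer_Inklusion_CFo} (which can degenerate along segments towards $0$ for unbounded image sets in a non-convex domain) is the delicate point. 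Once the complex domain is arranged, the complex case and the transfer via \refer{lem:Komplexifizierung_gewichteter_Funktionenr"aume} are routine.
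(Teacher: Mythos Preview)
Your approach mirrors the paper's: the complex case is exactly \refer{lem:Cauchy-Abschaetzung_Potenzreihen} feeding into \refer{lem:Wirkung_einer_lokal_beschraenkten_Abb_auf_CWo}, and the real case is reduction to the complex one via a good complexification and \refer{lem:Komplexifizierung_gewichteter_Funktionenr"aume}. The paper is terser---it simply notes that $\Phi_\ast$ is the restriction of the complex analytic $\widetilde{\Phi}_\ast$ to the real part, which is the \emph{definition} of real analyticity, so your detour through a projection $\pi_\SZ$ is not needed---but the substance is identical.

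The obstacle you isolate is genuine, and the paper glosses over it too: invoking the complex case for $\widetilde{\Phi}$ on $\widetilde{\VF}$ literally requires $\widetilde{\VF}$ star-shaped with center~$0$, which the definition of good complexification does not demand. Your instinct to shrink $\widetilde{\VF}$ globally is the wrong fix; the difficulty dissolves if you localise. Given $\gamma_0\in\CcFo{\UF}{\VF}{k}$, choose $r>0$ with $\gamma_0\in\CcFo{\UF}{M_r}{k}$, where $M_r\subseteq\VF$ is the bounded star-shaped set from the proof of \refer{lem:Wirkung_einer_lokal_beschraenkten_Abb_auf_CWo}; it has $\dist{M_r}{\partial\VF}>0$, so \refer{bed:Komplexifizierung-Kriterium_fuer_Inklusion_CFo} yields $\dist{\iota(M_r)}{\partial\widetilde{\VF}}>2\delta$ for some $\delta>0$. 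Then $\widetilde{M}_r:=\iota(M_r)+\Ball[\SY_\C]{0}{\delta}$ is open, bounded, star-shaped with center~$0$, and has $\dist{\widetilde{M}_r}{\partial\widetilde{\VF}}\geq\delta$, so $\rest{\widetilde{\Phi}}{\widetilde{M}_r}$ is bounded outright and hence trivially satisfies \refer{bed:Abb_in_BC^0}. The complex case now applies on $\widetilde{M}_r$. Since the pair $(M_r,\widetilde{M}_r)$ visibly satisfies \refer{bed:Komplexifizierung-Kriterium_fuer_Inklusion_CFo}, one has $\iota_\ast(\gamma_0)\in\CcFo{\UF}{\widetilde{M}_r}{k}$, and $(\rest{\widetilde{\Phi}}{\widetilde{M}_r})_\ast$ furnishes the required complex analytic extension of $\Phi_\ast$ on an open neighbourhood of $\iota_\ast(\gamma_0)$. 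As $\gamma_0$ was arbitrary, $\Phi_\ast$ is real analytic.
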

\begin{proof}
	If $\Phi$ is complex analytic, this is an immediate consequence of
	\refer{lem:Wirkung_einer_lokal_beschraenkten_Abb_auf_CWo}
	and \refer{lem:analyt-Abb_lokal-BCinf_wenn_lokal-BC0}.

	If $\Phi$ is real analytic, by our assumptions there exists a good complexification
	$\widetilde{\Phi} : \widetilde{\VF} \subseteq \SY_\C \to \SZ$.
	We know from the first part that $\widetilde{\Phi}$
	induces a complex analytic map
	\[
		\widetilde{\Phi}_\ast
		:
		\CcFo{\UF}{\widetilde{\VF}}{k}
		\to \CcF{\UF}{\SZ_\C}{k}
		: \gamma \mapsto \widetilde{\Phi} \circ \gamma .
	\]
	Since $\CcFo{\UF}{\VF}{k} \subseteq \CcFo{\UF}{\widetilde{\VF}}{k}$
	by \refer{lem:Komplexifizierung_gewichteter_Funktionenraeume}
	and $\Phi_\ast$ coincides with the restriction of $\widetilde{\Phi}_\ast$
	to $\CcFo{\UF}{\VF}{k}$, it follows that $\Phi_\ast$ is real analytic.	
\end{proof}
\subsubsection{Quasi-inversion algebras of weighted functions}
As an application, we see that for a set $\GewFunk$ of weights with
$1_\UF \in \GewFunk$ and a Banach algebra $A$, the space $\CcF{\UF}{A}{k}$
can be turned into a topological algebra with continuous quasi-inversion.
Details on algebras with quasi-inversion can be found in \refer{app:Quasi-Inversion}.
\begin{lem}\label{lem:CkF(,A)_topologische_Algebra}
	Let $A$ be a Banach algebra, $\SX$ a normed space,
	$\UF \subseteq \SX$ an open nonempty subset,
	$k \in \cl{\N}$ and $\GewFunk \subseteq \cl{\R}^{\UF}$
	with $1_{\UF} \in \GewFunk$.
	Then the locally convex space $\CcF{\UF}{A}{k}$ endowed
	with the multiplication described in \refer{cor:Multiplikation_auf_CF()(A)}
	becomes a complete topological algebra with continuous quasi-inversion
	in the sense of \refer{def:topologische_Quasi-Inversions-Algebren}.
	For each $\gamma \in \QInvertiblesOf{\CcF{\UF}{A}{k}}$
	\[
		\QuasiInv_{\CcF{\UF}{A}{k}}(\gamma) = \QuasiInv_{A}\circ\gamma ,
	\]
	and
	\[
		\CcFo{\UF}{\Ball[A]{0}{1}}{k}
		= \{\gamma\in\CcF{\UF}{A}{k} : \hn{\gamma}{1_{\UF}}{0} < 1\}
		\subseteq \QInvertiblesOf{\CcF{\UF}{A}{k}} .
	\]
\end{lem}
\begin{proof}
	The relation
	$\QuasiInv_{\CcF{\UF}{A}{k}}(\gamma) = \QuasiInv_{A}\circ\gamma$
	is an immediate consequence of the definition of the multiplication,
	so it only remains to show that $\QInvertiblesOf{\CcF{\UF}{A}{k}}$ is open
	and $\QuasiInv_{\CcF{\UF}{A}{k}}$ is continuous.
	We proved in \refer{lem:Kriterium_fuer_Stetigkeit_der_Adversion}
	that it suffices to find a neighborhood of $0$ that consists
	of quasi-invertible elements such that the restriction of
	$\QuasiInv_{\CcF{\UF}{A}{k}}$ to it is continuous.
	We show that $\CcFo{\UF}{\Ball[A]{0}{1}}{k}$ is such a neighborhood.
	The map
	\[
		G: \Ball{0}{1} \to A : x\mapsto \sum_{i=1}^\infty x^i
	\]
	is given by a power series and maps $0$ to $0$, hence we know from
	\refer{lem:Potenzreihen_in_Banachalgebren} and \refer{prop:Operation_analytischer_Abb_auf_CWk}
	that the map
	\[
		\CcFo{\UF}{\Ball[A]{0}{1}}{k} \to \CcF{\UF}{A}{k} : \gamma \mapsto G \circ \gamma
	\]
	is defined and analytic. Since
	\[
		G \circ \gamma = \sum_{i = 1}^{\infty}\gamma^{i}
	\]
	for each $\gamma \in \CcFo{\UF}{\Ball[A]{0}{1}}{k}$,
	we conclude from
	\refer{lem:Topologisches_Kriterium_fuer_Quasi-Invertierbarkeit}
	that $\gamma$ is quasi-invertible with
	\[
		\QuasiInv_{\CcF{\UF}{A}{k}}(\gamma) = -G \circ \gamma,
	\]
	so the proof is complete.
\end{proof}

\begin{beisp}
	Let $\SY$ be a Banach space, $\UF \subseteq \SX$ an open nonempty subset,
	$k \in \cl{\N}$ and $\GewFunk \subseteq \cl{\R}^{\UF}$
	with $1_{\UF} \in \GewFunk$.
	Then the locally convex space $\CcF{\UF}{\Lin{\SY}{\SY}}{k}$ endowed
	with the multiplication described in \refer{cor:Komposition_linearer_Abb_und_CF}
	becomes a complete topological algebra with continuous quasi-inversion.
\end{beisp}
\section{Weighted maps into locally convex spaces}
\label{sec:GewAbb_Bildbereich-lokalkonvex}
We define and examine weighted functions with values in arbitrary locally convex spaces.
In order to do this, we use tools and definitions that are provided in
\ref{sususec:Lipschitz-stetige_Abb}.
The material of this section is only needed for latter discussions of
weighted mapping groups with values in arbitrary locally convex Lie groups
in \refer{sec: Weighted_maps_into_locally_convex_Lie_groups};
readers primarily interested in diffeomorphism groups may want to skip this section.
\subsection{Definition and topological structure}
The definition of weighted function with values in locally convex spaces
relies on the one with values in normed spaces.
\begin{defi}
	Let $\SX$ be a normed space, $\UF \subseteq \SX$ an open nonempty set,
	$\SY$ a locally convex space, $k\in \cl{\N}$ and $\cW \subseteq \cl{\R}^\UF$ nonempty. We define
	\[
		\glstext{Raeume_gewichteter_Abbildungen}
		\ndef
		\set{ \gamma \in \ConDiff{\UF}{\SY}{k} }%
		{
			(\forall p \in \normsOn{\SY})\, \HomQuot{\gamma}{p} \in \CF{\UF}{\SY_p}{\cW}{k}
		},
	\]
	\index{weighted maps!into locally convex spaces}%
	using notation as in \refer{def:lokalkonvexe_Faktorraeume}.
	For $p \in \normsOn{\SY}$, $f \in \cW$ and $\ell \in \N$ with $\ell \leq k$,
	\[
		\glsdisp{gewichtete_f,k,p-Halbnorm}{\hn{\cdot}{p,f}{\ell}} : \CcF{\UF}{\SY}{k} \to \R : \gamma \mapsto \hn{\HomQuot{\gamma}{p}}{f}{\ell}
	\]
	is a seminorm on $\CF{\UF}{\SY}{\cW}{k}$.
	We endow $\CF{\UF}{\SY}{\cW}{k}$ with the locally convex vector space topology that is generated by these seminorms.
\end{defi}
We show that the structure of $\CcF{\UF}{\SY}{k}$ is already determined by
$\set{\hn{\cdot}{p,f}{\ell} }{p \in \mathcal{P}, f \in \cW, \ell \in \N \text{ with } \ell \leq k}$,
where $\mathcal{P}$ is just a generator of $\normsOn{\SY}$.
This can be useful in some cases.
\begin{lem}\label{lem:lokalkonvexe_Abbildungsraeume_Erzeugendensystem_der_HN_reicht_aus}
\label{lem:gew_Abb_in_Lokalkovexe_ist_Produkt_von_gew_Abb_in_normierte}
	Let $\SX$ be a normed space, $\UF \subseteq \SX$ an open nonempty set,
	$\SY$ a locally convex space, $k \in \cl{\N}$, $\GewFunk \subseteq \cl{\R}^\UF$ nonempty
	and $\mathcal{P} \subseteq \normsOn{\SY}$ a set that generates $\normsOn{\SY}$.
	Then for $\gamma \in \ConDiff{\UF}{\SY}{k}$
	\[
		\gamma \in \CcF{\UF}{\SY}{k}
		\iff
		(\forall p \in \mathcal{P}) \, \HomQuot{\gamma}{p} \in \CcF{\UF}{\SY_p}{k},
	\]
	and the map
	\[
		\CcF{\UF}{\SY}{k}\to
		\prod_{p \in \mathcal{P}}
		\CcF{\UF}{\SY_p}{k}
		:\gamma \mapsto (\HomQuot{\gamma}{p})_{p \in \mathcal{P}}
		\tag{\ensuremath{\dagger}}
		\label{Einbettung_lokalkonvexe_gewichtete_Raeume_Erzeuger}
	\]
	is a topological embedding.
\end{lem}
\begin{proof}
	Let $q \in \normsOn{\SY}$. Then there exist $p_1, \dotsc, p_n \in \mathcal{P}$ and $C > 0$
	such that
	\[
		q \leq C \cdot \max_{i=1,\dotsc,n} p_i .
	\]
	Further we know that for each $\ell \in \N$ with $\ell \leq k$ and $x \in \UF$, $h_1,\dotsc,h_\ell \in \SX$
	\[
		\dA[\ell]{(\HomQuot{\gamma}{q})}{x}{h_1,\dotsc, h_{\ell}}
		= (\HomQuot{\dA[\ell]{\gamma}{}{}}{q})(x, h_1,\dotsc, h_{\ell}),
	\]
	so for $y \in \UF$ we get
	\begin{align*}
		&\norm{\dA[\ell]{ ( \HomQuot{\gamma}{q} ) }{x}{h_1,\dotsc, h_{\ell}}
		- \dA[\ell]{ ( \HomQuot{\gamma}{q} ) }{y}{h_1,\dotsc, h_{\ell}}}_q
		\\
		\leq& \norm{\dA[\ell]{\gamma}{x}{h_1,\dotsc, h_{\ell}} - \dA[\ell]{\gamma}{y}{h_1,\dotsc, h_{\ell}}}_q
		\\
		\leq&\, C \cdot \max_{i=1,\dotsc,n} \norm{\dA[\ell]{\gamma}{x}{h_1,\dotsc, h_{\ell}} - \dA[\ell]{\gamma}{y}{h_1,\dotsc, h_{\ell}}}_{p_{i}}.
	\end{align*}
	Since we assumed that $\HomQuot{\gamma}{p_{i}} \in \FC{\UF}{\SY_{p_{i}}}{k}$,
	from this estimate we conclude with \refer{prop:Char_Frechet_Diff}
	that $\HomQuot{\gamma}{q} \in \FC{\UF}{\SY_q}{k}$ with
	\[
		\norm{D^{(\ell)} (\HomQuot{\gamma}{q})(x)}_{op}
		\leq C \cdot \max_{i=1,\dotsc,n}\norm{D^{(\ell)} (\HomQuot{\gamma}{p_i})(x)}_{op}
	\]
	for all $\ell \in \N$ with $\ell \leq k$ and $x \in \UF$.
	This implies that
	\[
		\hn{\gamma}{q, f}{\ell} \leq C \cdot \max_{i=1,\dotsc,n} \hn{\gamma}{p_i, f}{\ell}
	\]
	for each $f \in \GewFunk$ and $\ell \in \N$ with $\ell \leq k$. Hence
	\[
		\HomQuot{\gamma}{q} \in \CcF{\UF}{\SY_q}{k},
	\]
	and $\hn{\cdot}{q, f}{\ell}$ is continuous with respect to the initial topology
	induced by \eqref{Einbettung_lokalkonvexe_gewichtete_Raeume_Erzeuger}.
	Since $q$ was arbitrary, the proof is complete.
\end{proof}
\paragraph{An integrability criterion}
We generalize the assertion of \refer{lem:Kriterium_Integrierbarkeit_in_CW}.
\begin{lem}
\label{lem:Kriterium_Integrierbarkeit_in_CW_lkvx}
	Let $\SX$  be a normed space, $\UF \subseteq \SX$ a nonempty open set, $\SY$ a locally convex space,
	 $k\in\cl{\N}$, $\GewFunk \subseteq \cl{\R}^{\UF}$ such that for each compact set $K \subseteq \UF$,
	there exists an $f_{K} \in \GewFunk$ with
	$
		\inf_{x \in K}\abs{f_{K}(x)} > 0
	$.
	Further, let $\Gamma : [a,b] \to \CcF{\UF}{\SY}{k}$ a continuous curve
	and $R \in \CcF{\UF}{\SY}{k}$.
	Assume that
	\begin{equation*}\label{id:punktweises_Integral_CW_lkvx}\tag{$\ast$}
		\Rint{a}{b}{\evTwo_x(\Gamma(s))}{s}
		= \evTwo_x(R)
	\end{equation*}
	holds for all $x \in \UF$. Then $\Gamma$ is weakly integrable with
	\[
		\Rint{a}{b}{\Gamma(s)}{s} = R.
	\]
\end{lem}
\begin{proof}
	We derive from \refer{lem:lokalkonvexe_Abbildungsraeume_Erzeugendensystem_der_HN_reicht_aus} that the dual space of $\CcF{\UF}{\SY}{k}$
	coincides with the set of functionals
	$
		\set{\lambda \circ {\morQuot{p}}_\ast}{p \in \normsOn{\SY}, \lambda \in \CcF{\UF}{\SY_p}{k}'}.
	$
	Hence $\Gamma$ is weakly integrable with the integral $R$ iff
	\[
		\Rint{a}{b}{\lambda (\morQuot{p} \circ \Gamma)(s)}{s} = \lambda (\morQuot{p} \circ R)
	\]
	holds for all $p \in \normsOn{\SY}$ and $\lambda \in \CcF{\UF}{\SY_p}{k}'$;
	this is clearly equivalent to the weak integrability of $\morQuot{p} \circ \Gamma$ with integral $\morQuot{p} \circ R$ for all $p \in \normsOn{\SY}$.
	But we derive this assertion from \refer{id:punktweises_Integral_CW_lkvx}
	and \refer{lem:Kriterium_Integrierbarkeit_in_CW}.
\end{proof}

\subsubsection{Reduction to lower order}
We prove a generalization of \refer{prop:topologische_Zerlegung_von_CFk}.
To this end, we need a locally convex topology on $\Lin{\SX}{\SY}$,
where $\SX$ is a normed and $\SY$ a locally convex space.
We define such a topology and show that it arises as the intial topology
with respect to the embedding
$\Lin{\SX}{\SY} \to \prod_{p\in\normsOn{\SY}}\Lin{\SX}{\SY_p}$.
\paragraph{Topology on linear operators}

\begin{defi}[Topology on linear operators]
	Let $\SX$ be a normed space and $\SY$ a locally convex space. For each $p \in \normsOn{\SY}$
	and $T \in \Lin{\SX}{\SY}$, we set
	\[
		\glstext{Operatornorm_Bild_Lokalkonvex} \ndef \sup_{x \neq 0} \frac{ \norm{T x}_p }{ \norm{x} }
		= \Opnorm{\HomQuot{T}{p}}.
	\]
	This obviously defines a seminorm on $\Lin{\SX}{\SY}$, and henceforth we endow
	$\Lin{\SX}{\SY}$ with the locally convex topology that is generated by these seminorms.
	Further we define
	$
		\Lin{\SX}{\SY}_{op, p} \ndef \Lin{\SX}{\SY}_{\norm{\cdot}_{op, p} }
	$.
\end{defi}

\begin{lem}\label{lem:Charakterisierung_Topologie_der_linearen_Operatoren}
	Let $\SX$ be a normed space, $\SY$ a locally convex space and $p \in \normsOn{\SY}$.
	Then the map induced by
	\[
		(\morQuot{p})_\ast : \Lin{\SX}{\SY} \to \Lin{\SX}{\SY_p} : T \mapsto \HomQuot{T}{p}
	\]
	that makes
	\[
		\xymatrix{
		{(\Lin{\SX}{\SY}, \norm{\cdot}_{op, p})} \ar[rr]^{(\morQuot{p})_\ast} \ar@{->>}[dr]^{\morQuot{op, p}} & & {\Lin{\SX}{\SY_p}}\\
		&  {\Lin{\SX}{\SY}_{op, p}} \ar@{-->}[ur]
		}
	\]
	a commutative diagram is an isometric isomorphism onto the image of $(\morQuot{p})_\ast$.
	The map
	\[
		\Lin{\SX}{\SY} \to \prod_{p\in\normsOn{\SY}}\Lin{\SX}{\SY_p}
		: T \mapsto (\HomQuot{T}{p})_{p \in \normsOn{\SY}}
	\]
	is a topological embedding.
\end{lem}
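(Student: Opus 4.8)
The plan is to reduce everything to the single identity $\norm{T}_{op, p} = \Opnorm{(\morQuot{p})_\ast(T)}$ and then to invoke the universal property of the associated normed space. This identity holds for every $T \in \Lin{\SX}{\SY}$ by the very definition of the operator seminorm, and its first consequence is that the kernel of the linear map $(\morQuot{p})_\ast$ coincides with the null space $\{T : \norm{T}_{op, p} = 0\}$ of the seminorm $\norm{\cdot}_{op, p}$. Since $\Lin{\SX}{\SY}_{op, p}$ is by definition the normed space associated with this seminorm (cf.\ \refer{def:lokalkonvexe_Faktorraeume}), with $\morQuot{op, p}$ the canonical quotient map onto it, the universal property of that quotient furnishes a unique linear map $\Psi : \Lin{\SX}{\SY}_{op, p} \to \Lin{\SX}{\SY_p}$ with $(\morQuot{p})_\ast = \Psi \circ \morQuot{op, p}$; this is the dashed arrow of the diagram, and the equality of kernel and null space forces $\Psi$ to be injective.

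Next I would check that $\Psi$ is isometric. For $T \in \Lin{\SX}{\SY}$ the norm of $\morQuot{op, p}(T)$ in $\Lin{\SX}{\SY}_{op, p}$ equals $\norm{T}_{op, p}$, which by the identity above equals $\Opnorm{(\morQuot{p})_\ast(T)} = \Opnorm{\Psi(\morQuot{op, p}(T))}$. Thus $\Psi$ preserves norms; being an isometric linear map it is automatically a homeomorphism onto its image. Since $\morQuot{op, p}$ is surjective, the image of $\Psi$ is exactly the image of $(\morQuot{p})_\ast$, so $\Psi$ is an isometric isomorphism onto the image of $(\morQuot{p})_\ast$, as asserted (note that this image may well be a proper subspace of $\Lin{\SX}{\SY_p}$, since not every operator into $\SY_p$ need lift to $\SY$).

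Finally I would treat the product map $\Phi : T \mapsto (\HomQuot{T}{p})_{p \in \normsOn{\SY}}$. Its injectivity follows from the fact that the continuous seminorms of $\SY$ separate its points: if $\HomQuot{T}{p} = 0$ for every $p \in \normsOn{\SY}$, then $p(Tx) = 0$ for all $x \in \SX$ and all $p$, whence $Tx = 0$ and $T = 0$. Each component $(\morQuot{p})_\ast$ is continuous, since $\Opnorm{(\morQuot{p})_\ast(\cdot)} = \norm{\cdot}_{op, p}$ and $\norm{\cdot}_{op, p}$ is one of the generating seminorms of $\Lin{\SX}{\SY}$, so $\Phi$ is continuous into the product. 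Conversely, the topology of $\Lin{\SX}{\SY}$ is generated by precisely these seminorms $\norm{\cdot}_{op, p} = \Opnorm{(\morQuot{p})_\ast(\cdot)}$, which means it is the initial topology with respect to the family $\bigl((\morQuot{p})_\ast\bigr)_{p}$, equivalently with respect to $\Phi$ composed with the coordinate projections. An injective continuous map into a product whose domain carries the initial topology of the component maps is a topological embedding, which completes the argument. I do not expect a genuine obstacle here: the only points needing care are the defining identity $\norm{T}_{op, p} = \Opnorm{\HomQuot{T}{p}}$ and the routine bookkeeping of the seminorm quotient and the initial topology.
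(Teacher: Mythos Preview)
Your proposal is correct and follows essentially the same route as the paper: both rest on the identity $\norm{T}_{op,p} = \Opnorm{\HomQuot{T}{p}}$, from which the induced quotient map is immediately an isometry, and both obtain the embedding into the product by recognising that the defining seminorms of $\Lin{\SX}{\SY}$ are exactly the pullbacks of the operator norms under the components $(\morQuot{p})_\ast$. The paper phrases the second step as ``transitivity of initial topologies'' after first embedding into $\prod_p \Lin{\SX}{\SY}_{op,p}$, while you argue directly; the content is the same, and your explicit injectivity check via point-separation of the seminorms is a detail the paper leaves implicit.
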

\begin{proof}
	Since
	$
		\norm{T}_{op, p} = \Opnorm{\HomQuot{T}{p}}
	$
	for each $T \in \Lin{\SX}{\SY}$, the induced map is an isometry.
	By the definition of the topology of $\Lin{\SX}{\SY}$,
	\[
		\Lin{\SX}{\SY} \to \prod_{p\in\normsOn{\SY}}\Lin{\SX}{\SY}_{op, p}
		: T \mapsto (\HomQuot{T}{op, p})_{p \in \normsOn{\SY}}
	\]
	is an embedding, so by the transitivity of initial topologies, the proof is finished.
\end{proof}
\paragraph{Weighted maps into spaces of linear operators and the main result}
Before we can prove the main result, we have to take a look at the structure
of $\CcF{\UF}{\Lin{\SX}{\SY}}{k}$.
\begin{lem}\label{lem:Charakterisierung_gewichtete_Abb_mit_Werten_in_Linearen_Operatoren}
	Let $\SX$ be a normed space, $\SY$ a locally convex space, $\UF \subseteq \SX$ an open nonempty subset and $k\in\cl{\N}$.
	Then for $\Gamma \in \ConDiff{\UF}{\Lin{\SX}{\SY}}{k}$, nonempty $\GewFunk \subseteq \cl{\R}^\UF$ and $k \in \cl{\N}$ the equivalence
	\[
		\Gamma \in \CcF{\UF}{\Lin{\SX}{\SY}}{k}
		\iff
		(\forall p \in \normsOn{\SY})\,
		(\morQuot{p})_\ast \circ \Gamma \in \CcF{\UF}{\Lin{\SX}{\SY_p}}{k}
	\]
	holds. More precisely, for $\ell \in \N$ with $\ell \leq k$, $f \in  \cl{\R}^\UF$
	and $p \in \normsOn{\SY}$ we have
	\begin{equation}
		\label{id:Gleichheit_von_Gewichts-Halbnormen_in_Lineare_Operatoren}
		\hn{\Gamma}{\norm{\cdot}_{op, p}, f}{\ell} = \hn{(\morQuot{p})_\ast \circ \Gamma}{f}{\ell} .
	\end{equation}
	This induces that the map
	\[
		\CcF{\UF}{\Lin{\SX}{\SY}}{k}\to
		\prod_{p \in \normsOn{\SY}}
		\CcF{\UF}{\Lin{\SX}{\SY_p}}{k}
		: \Gamma \mapsto ((\morQuot{p})_\ast \circ \Gamma)_{p \in \normsOn{\SY}}
	\]
	is a topological embedding.
\end{lem}
\begin{proof}
	Note first that $\morQuot{op, p} \circ \Gamma$ is $\FC{}{}{k}$
	iff $(\morQuot{p})_\ast \circ \Gamma$ is $\FC{}{}{k}$ as a consequence of
	\refer{lem:Charakterisierung_Topologie_der_linearen_Operatoren} and \refer{prop:Char_Frechet_Diff}.
	Using \refer{lem:Charakterisierung_Topologie_der_linearen_Operatoren}
	it is easy to see that \refer{id:Gleichheit_von_Gewichts-Halbnormen_in_Lineare_Operatoren}
	is satisfied.
	This implies that for each $p\in\normsOn{\SY}$ the equivalence
	\[
		(\morQuot{p})_\ast \circ \Gamma \in \CcF{\UF}{\Lin{\SX}{\SY_p}}{k}
		\iff \morQuot{op, p} \circ \Gamma \in \CcF{\UF}{\Lin{\SX}{\SY}_{op, p}}{k}
	\]
	holds and that the isometry whose existence was stated in
	\refer{lem:Charakterisierung_Topologie_der_linearen_Operatoren}
	induces an embedding
	\[
		\CcF{\UF}{\Lin{\SX}{\SY}_{op, p}}{k} \to \CcF{\UF}{\Lin{\SX}{\SY_p}}{k}.
	\]
	Further we proved in \refer{lem:lokalkonvexe_Abbildungsraeume_Erzeugendensystem_der_HN_reicht_aus}
	that
	\[
		\CcF{\UF}{\Lin{\SX}{\SY}}{k} \to \prod_{p \in \normsOn{\SY}} \CcF{\UF}{\Lin{\SX}{\SY}_{op, p}}{k}
		: \Gamma \mapsto ((\morQuot{op, p})_\ast \circ \Gamma)_{p \in \mathcal{P}}
	\]
	is an embedding, so we are home.
\end{proof}

\begin{prop}[Reduction to lower order]\label{prop:topologische_Zerlegung_von_CFk_lokalkonvex}
	Let $\SX$ be a normed space, $\SY$ a locally convex space,
	$\UF \subseteq \SX$ an open nonempty set, $\GewFunk \subseteq \cl{\R}^\UF$ nonempty and $k \in \N$.
	Let $\gamma \in \ConDiff{\UF}{\SY}{1}$. Then
	\[
		\gamma \in \CcF{\UF}{\SY}{k+1}
		\iff
		(D\gamma , \gamma) \in
		\CcF{\UF}{\Lin{\SX}{\SY}}{k} \times
			\CcF{\UF}{\SY}{0}.
	\]
	Furthermore, the map
	\[
		\CcF{\UF}{\SY}{k+1}\to
		\CcF{\UF}{\Lin{\SX}{\SY}}{k} \times \CcF{\UF}{\SY}{0}
		:\gamma \mapsto (D\gamma , \gamma)
	\]
	is a topological embedding.
\end{prop}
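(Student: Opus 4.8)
The plan is to reduce everything to the already-proved normed-space version, Proposition~\ref{prop:topologische_Zerlegung_von_CFk}, by composing with the quotient maps $\morQuot{p}:\SY\to\SY_p$ for $p\in\normsOn{\SY}$ and invoking the product descriptions in Lemma~\ref{lem:gew_Abb_in_Lokalkovexe_ist_Produkt_von_gew_Abb_in_normierte} and Lemma~\ref{lem:Charakterisierung_gewichtete_Abb_mit_Werten_in_Linearen_Operatoren}. The bookkeeping fact that glues the two product embeddings together is that $\morQuot{p}$ is continuous linear, so the chain rule gives $D(\HomQuot{\gamma}{p}) = (\morQuot{p})_\ast \circ D\gamma$ for every $p$.

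First I would establish the equivalence. For the forward implication, assume $\gamma \in \CcF{\UF}{\SY}{k+1}$. By definition $\HomQuot{\gamma}{p}\in\CcF{\UF}{\SY_p}{k+1}\subseteq\FC{\UF}{\SY_p}{1}$ for every $p$, so Proposition~\ref{prop:topologische_Zerlegung_von_CFk} applies in the normed space $\SY_p$ and yields $D(\HomQuot{\gamma}{p})\in\CcF{\UF}{\Lin{\SX}{\SY_p}}{k}$ together with $\HomQuot{\gamma}{p}\in\CcF{\UF}{\SY_p}{0}$. Rewriting $D(\HomQuot{\gamma}{p})=(\morQuot{p})_\ast\circ D\gamma$ and letting $p$ range over $\normsOn{\SY}$, Lemma~\ref{lem:Charakterisierung_gewichtete_Abb_mit_Werten_in_Linearen_Operatoren} gives $D\gamma\in\CcF{\UF}{\Lin{\SX}{\SY}}{k}$ and Lemma~\ref{lem:gew_Abb_in_Lokalkovexe_ist_Produkt_von_gew_Abb_in_normierte} gives $\gamma\in\CcF{\UF}{\SY}{0}$.

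The converse direction is where the single genuine subtlety sits. Starting from $(D\gamma,\gamma)\in\CcF{\UF}{\Lin{\SX}{\SY}}{k}\times\CcF{\UF}{\SY}{0}$, the membership $D\gamma\in\CcF{\UF}{\Lin{\SX}{\SY}}{0}$ translates via Lemma~\ref{lem:Charakterisierung_gewichtete_Abb_mit_Werten_in_Linearen_Operatoren} into the statement that $(\morQuot{p})_\ast\circ D\gamma=D(\HomQuot{\gamma}{p})$ is a \emph{continuous} $\Lin{\SX}{\SY_p}$-valued map in operator norm. This is precisely the extra condition that promotes the $\ConDiff{}{}{1}$-map $\HomQuot{\gamma}{p}$ to a Fr\'echet $\FC{\UF}{\SY_p}{1}$-map, which is the hypothesis Proposition~\ref{prop:topologische_Zerlegung_von_CFk} requires (recall that $\gamma\in\ConDiff{\UF}{\SY}{1}$ only gives $\HomQuot{\gamma}{p}\in\ConDiff{\UF}{\SY_p}{1}$ a priori). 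Having secured $\HomQuot{\gamma}{p}\in\FC{\UF}{\SY_p}{1}$, I apply that proposition in $\SY_p$ to conclude $\HomQuot{\gamma}{p}\in\CcF{\UF}{\SY_p}{k+1}$ for every $p$, whence $\gamma\in\CcF{\UF}{\SY}{k+1}$ by definition. I expect this $\ConDiff{}{}{1}$-to-$\FC{}{}{1}$ promotion to be the main, if modest, obstacle; once noticed it is immediate.

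For the topological embedding I would argue by transitivity of initial topologies. The map $\Psi:\gamma\mapsto(D\gamma,\gamma)$ fits into a commutative square: its left vertical arrow is the embedding of Lemma~\ref{lem:gew_Abb_in_Lokalkovexe_ist_Produkt_von_gew_Abb_in_normierte} into $\prod_p\CcF{\UF}{\SY_p}{k+1}$, its right vertical arrow is the product of the embeddings of Lemmas~\ref{lem:Charakterisierung_gewichtete_Abb_mit_Werten_in_Linearen_Operatoren} and~\ref{lem:gew_Abb_in_Lokalkovexe_ist_Produkt_von_gew_Abb_in_normierte}, and its bottom arrow is $\prod_p(\mu\mapsto(D\mu,\mu))$, a product of the topological embeddings furnished by Proposition~\ref{prop:topologische_Zerlegung_von_CFk}; commutativity is once more $D(\HomQuot{\gamma}{p})=(\morQuot{p})_\ast\circ D\gamma$. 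A product of embeddings is an embedding, so the composite of the left and bottom arrows is an embedding; since it equals the composite of $\Psi$ with the right vertical embedding, and the latter is itself an embedding, the cancellation property for embeddings forces $\Psi$ to be an embedding. The only thing left to note is that the target topology on $\CcF{\UF}{\Lin{\SX}{\SY}}{k}\times\CcF{\UF}{\SY}{0}$ is exactly the initial one induced by the right vertical embedding, which is immediate from the respective definitions.
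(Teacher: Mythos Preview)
Your proposal is correct and follows essentially the same route as the paper: both proofs pass to the quotients $\SY_p$, apply the normed-space version (Proposition~\ref{prop:topologische_Zerlegung_von_CFk}) for each $p$, and then reassemble via the product embeddings of Lemmas~\ref{lem:gew_Abb_in_Lokalkovexe_ist_Produkt_von_gew_Abb_in_normierte} and~\ref{lem:Charakterisierung_gewichtete_Abb_mit_Werten_in_Linearen_Operatoren}, concluding the embedding claim from the same commutative square. You are in fact a bit more careful than the paper in spelling out why $\HomQuot{\gamma}{p}$ is $\FC{}{}{1}$ (rather than merely $\ConDiff{}{}{1}$) before invoking Proposition~\ref{prop:topologische_Zerlegung_von_CFk} in the converse direction.
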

\begin{proof}
	The definition of $\CcF{\UF}{\SY}{k+1}$,
	\refer{prop:topologische_Zerlegung_von_CFk}
	and
	\refer{lem:Charakterisierung_gewichtete_Abb_mit_Werten_in_Linearen_Operatoren}
	give the equivalences
	\begin{align*}
		\gamma \in \CcF{\UF}{\SY}{k+1}
		&\iff
		(\forall p \in \normsOn{\SY})\, \HomQuot{\gamma}{p} \in \CcF{\UF}{\SY_p}{k+1}
		\\
		&\iff
		(\forall p \in \normsOn{\SY})\,
		(D(\HomQuot{\gamma}{p}) , \HomQuot{\gamma}{p}) \in
		\CcF{\UF}{\Lin{\SX}{\SY_p}}{k} \times
			\CcF{\UF}{\SY_p}{0}
		\\
		&\iff
		(D\gamma, \gamma) \in \CcF{\UF}{\Lin{\SX}{\SY}}{k} \times\CcF{\UF}{\SY}{0}.
	\end{align*}
	Furthermore, we have the commutative diagram
	\[
		\xymatrix{
		{\CcF{\UF}{\SY}{k+1}} \ar[rr] \ar@{>->}[d] & & {\CcF{\UF}{\Lin{\SX}{\SY}}{k} \times \CcF{\UF}{\SY}{0}} \ar@{>->}[d]\\
		{\prod_{p \in \normsOn{\SY}} \CcF{\UF}{\SY_p}{k + 1}} \ar@{>->}[rr]
				& &  {\prod_{p \in \normsOn{\SY}} \CcF{\UF}{\Lin{\SX}{\SY_p}}{k} \times \CcF{\UF}{\SY_p}{0}}
		}
	\]
	and since the maps represented by the three lower arrows are embeddings,
	so is the map at the top.
\end{proof}
\subsection{Weighted decreasing maps}
We give another definition for weighted maps that decay at infinity.
Here, the domain of the maps is contained in a finite dimensional vector space.
\begin{defi}\label{defi:Definition_CFvanK}
Let $\SY$ be a normed space, $\UF$ an open nonempty subset of the \emph{finite-dimensional} space $\SX$
and $\GewFunk \subseteq \cl{\R}^\UF$ nonempty. We define for $k\in \cl{\N}$
\begin{multline*}
	\glstext{Raeume_gwichteter_fallender_Abbildungen-kompakt}
	\ndef
	\{
		\gamma \in \CcF{\UF}{\SY}{k} : (\forall f\in\cW, \ell \in \N, \ell \leq k)\,
		\\(\forall \eps > 0)(\exists K \subseteq \UF \text{ compact} )
		\hn{\rest{\gamma}{\UF\setminus K}}{f}{\ell} < \eps
	\}.
\end{multline*}
\index{weighted maps!decreasing}%
For a locally convex space $\SY$ we set
\[
	\CcFvanK{\UF}{\SY}{k}
	\ndef
	\{
		\gamma \in \CcF{\UF}{\SY}{k} :
		(\forall p \in \normsOn{\SY} )\, \HomQuot{\gamma}{p} \in \CcFvanK{\UF}{\SY_p}{k}
	\}.
\]
For a subset $\VF \sub \SY$, we define
\[
	\glstext{Raeume_gwichteter_fallender_Abbildungen-kompakt_Werte_Teilmenge}
	\ndef
	\set{\gamma \in \CcFvanK{\UF}{\SY}{k}}{ \gamma(\UF) \sub \VF}
\]
\end{defi}
As in \refer{lem:CWvan_closed_CW}, we can prove
that $\CcFvanK{\UF}{\SY}{k}$ is closed in $\CcF{\UF}{\SY}{k}$.
\begin{lem}\label{lem:CWvan_(endlichdimensional)_abgeschlossen}
	Let $\SY$ be a locally convex space, $\UF$ an open nonempty subset of the finite-dimensional space $\SX$,
	$\GewFunk \subseteq \cl{\R}^\UF$ nonempty and $k\in \cl{\N}$.
	Then
	$\CcFvanK{\UF}{\SY}{k}$ is a closed vector subspace of $\CcF{\UF}{\SY}{k}$.
\end{lem}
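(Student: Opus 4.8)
The plan is to imitate the proof of \refer{lem:elementare_Eigenschaften_CWvan} almost verbatim, the only changes being that the balls $\Ball{0}{r}$ are replaced by arbitrary compact subsets $K \sub \UF$ and that, since $\SY$ is now merely locally convex, one works with the generating seminorms $\hn{\cdot}{p,f}{\ell}$ of $\CcF{\UF}{\SY}{k}$ indexed by $p \in \normsOn{\SY}$, $f \in \GewFunk$ and $\ell \in \N$ with $\ell \leq k$. The first step is to unwind the definition: by the definition of $\CcFvanK{\UF}{\SY}{k}$ for locally convex $\SY$, a map $\gamma \in \CcF{\UF}{\SY}{k}$ lies in $\CcFvanK{\UF}{\SY}{k}$ if and only if $\HomQuot{\gamma}{p} \in \CcFvanK{\UF}{\SY_p}{k}$ for every $p \in \normsOn{\SY}$, which, spelled out through the normed-case definition, is exactly the requirement that for all such $p, f, \ell$ and every $\eps > 0$ there be a compact $K \sub \UF$ with $\hn{\rest{\gamma}{\UF\setminus K}}{p,f}{\ell} < \eps$.

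With this reformulation, that $\CcFvanK{\UF}{\SY}{k}$ is a vector subspace is immediate, since each $\hn{\cdot}{p,f}{\ell}$ is a seminorm and the displayed condition is preserved under sums and scalar multiples. For closedness I would take a net $(\gamma_i)_{i\in I}$ in $\CcFvanK{\UF}{\SY}{k}$ converging to some $\gamma \in \CcF{\UF}{\SY}{k}$ and fix $p, f, \ell$ and $\eps > 0$. Convergence in the seminorm $\hn{\cdot}{p,f}{\ell}$ provides an index $i_\eps$ with $\hn{\gamma - \gamma_{i_\eps}}{p,f}{\ell} < \eps/2$, and the membership of $\gamma_{i_\eps}$ furnishes a compact $K \sub \UF$ with $\hn{\rest{\gamma_{i_\eps}}{\UF\setminus K}}{p,f}{\ell} < \eps/2$. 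Using that passing to the smaller set $\UF \setminus K$ can only decrease the seminorm, so that $\hn{\rest{(\gamma - \gamma_{i_\eps})}{\UF\setminus K}}{p,f}{\ell} \leq \hn{\gamma - \gamma_{i_\eps}}{p,f}{\ell}$, the triangle inequality yields $\hn{\rest{\gamma}{\UF\setminus K}}{p,f}{\ell} < \eps$. Since $p, f, \ell$ and $\eps$ were arbitrary, $\gamma$ satisfies the reformulated condition, hence $\gamma \in \CcFvanK{\UF}{\SY}{k}$.

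I do not expect any genuine obstacle here; the single subtlety is the bookkeeping in the first step that rewrites membership in the locally convex space purely in terms of the seminorms $\hn{\cdot}{p,f}{\ell}$, after which the one-net argument of \refer{lem:elementare_Eigenschaften_CWvan} applies unchanged. As an alternative that avoids even this bookkeeping, one could invoke the topological embedding $\gamma \mapsto (\HomQuot{\gamma}{p})_{p \in \normsOn{\SY}}$ of \refer{lem:gew_Abb_in_Lokalkovexe_ist_Produkt_von_gew_Abb_in_normierte}: it identifies $\CcFvanK{\UF}{\SY}{k}$ with the preimage of $\prod_{p} \CcFvanK{\UF}{\SY_p}{k}$, so that once the normed case is settled by the net argument above, closedness in the locally convex case follows because the preimage of a closed set under a continuous map is closed.
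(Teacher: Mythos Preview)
Your proposal is correct and follows essentially the same argument as the paper: fix $p$, $f$, $\ell$, $\eps$, pick an index $i_\eps$ with $\hn{\gamma - \gamma_{i_\eps}}{p,f}{\ell} < \eps/2$, then a compact $K$ with $\hn{\rest{\gamma_{i_\eps}}{\UF\setminus K}}{p,f}{\ell} < \eps/2$, and conclude by the triangle inequality. Your added remark that restriction to $\UF\setminus K$ does not increase the seminorm, and the alternative route via the embedding of \refer{lem:gew_Abb_in_Lokalkovexe_ist_Produkt_von_gew_Abb_in_normierte}, are welcome clarifications but not needed beyond what the paper does.
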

\begin{proof}
	It is obvious from the definition of $\CcFvanK{\UF}{\SY}{k}$ that it is a
	vector subspace. It remains to show that it is closed.
	To this end, let $(\gamma_i)_{i\in I}$ be a net in $\CcFvanK{\UF}{\SY}{k}$ that converges
	to $\gamma \in \CcF{\UF}{\SY}{k}$ in the topology of $\CcF{\UF}{\SY}{k}$.
	Let $p \in \normsOn{\SY}$, $f \in \GewFunk$, $\ell \in \N$ with $\ell \leq k$ and $\eps > 0$.
	Then there exists an $i_\eps \in I$ such that
	\[
		i \geq i_\eps \implies \hn{\gamma - \gamma_i}{p,f}{\ell} < \frac{\eps}{2}.
	\]
	Further there exists a compact set $K$ such that
	\[
		\hn{\rest{\gamma_{i_\eps}}{\UF\setminus K}}{p,f}{\ell} < \frac{\eps}{2} .
	\]
	Hence
	\[
		\hn{\rest{\gamma}{\UF\setminus K}}{p,f}{\ell}
		\leq
		\hn{\rest{\gamma}{\UF\setminus K} - \rest{\gamma_{i_\eps}}{\UF\setminus K} }{p,f}{\ell}
		+ \hn{\rest{\gamma_{i_\eps}}{\UF\setminus K}}{p,f}{\ell}
		< \eps,
	\]
	so $\gamma \in \CcFvanK{\UF}{\SY}{k}$.
\end{proof}
Further, we prove the following convexity criterion.
\begin{lem}\label{lem:Konvexe_Mengen_im_gewichteten_Funktionenraum-lokalkonvex_van}
	Let $\SX$ be a finite-dimensional space, $\UF \subseteq \SX$ an open nonempty subset,
	$\SY$ a locally convex space,
	$\GewFunk \subseteq \cl{\R}^\UF$ with $1_\UF \in \GewFunk$, $\ell \in \cl{\N}$
	and $\VF \subseteq \SY$ convex.
	Then the set $\CcFvanK{\UF}{\VF }{\ell}$ is convex.
\end{lem}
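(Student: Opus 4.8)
The plan is to reduce the convexity of $\CcFvanK{\UF}{\VF}{\ell}$ to two facts already at our disposal: that $\CcFvanK{\UF}{\SY}{\ell}$ is a vector space, and that $\VF$ is convex as a subset of $\SY$. Recall that by definition
\[
	\CcFvanK{\UF}{\VF}{\ell} = \set{\gamma \in \CcFvanK{\UF}{\SY}{\ell}}{\gamma(\UF) \subseteq \VF},
\]
so membership in $\CcFvanK{\UF}{\VF}{\ell}$ amounts to lying in the ambient space $\CcFvanK{\UF}{\SY}{\ell}$ together with the range condition $\gamma(\UF) \subseteq \VF$.

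First I would fix $\gamma_0, \gamma_1 \in \CcFvanK{\UF}{\VF}{\ell}$ and $t \in [0,1]$, and set $\gamma \ndef (1-t)\gamma_0 + t\gamma_1$. Since \refer{lem:CWvan_(endlichdimensional)_abgeschlossen} asserts that $\CcFvanK{\UF}{\SY}{\ell}$ is a (closed) vector subspace of $\CcF{\UF}{\SY}{\ell}$, it is in particular closed under the linear combination $\gamma = (1-t)\gamma_0 + t\gamma_1$, so that $\gamma \in \CcFvanK{\UF}{\SY}{\ell}$. It then only remains to verify the range condition.

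The range condition is checked pointwise: for each $x \in \UF$ we have $\gamma(x) = (1-t)\gamma_0(x) + t\gamma_1(x)$, and since $\gamma_0(x), \gamma_1(x) \in \VF$ with $\VF$ convex, this convex combination again lies in $\VF$. Hence $\gamma(\UF) \subseteq \VF$, so $\gamma \in \CcFvanK{\UF}{\VF}{\ell}$, which is precisely the asserted convexity. There is no genuine obstacle here; the only point worth flagging is that we need the ambient space to be a \emph{vector} space (not merely convex) so that the combination is well-defined and remains weighted-decreasing, and this is exactly what \refer{lem:CWvan_(endlichdimensional)_abgeschlossen} supplies. (The same argument works verbatim for the locally convex case, since the vector-space property of $\CcFvanK{\UF}{\SY}{\ell}$ holds there too, and convexity of $\VF$ is again only used fibrewise.)
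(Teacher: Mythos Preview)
Your proof is correct and essentially the same as the paper's. The paper phrases it as observing that $\CcFvanK{\UF}{\VF}{\ell} = \CcF{\UF}{\VF}{\ell} \cap \CcFvanK{\UF}{\SY}{\ell}$ is an intersection of two convex sets (the first convex since $\VF$ is, the second since it is a vector subspace), which is exactly your argument unpacked elementwise.
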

\begin{proof}
	It is obvious that $\CcF{\UF}{\VF }{\ell}$ -- whose definition is straightforward --
	is convex since $\VF$ is so.
	But then
	\[
		\CcFvanK{\UF}{\VF }{\ell} = \CcF{\UF}{\VF }{\ell} \cap \CcFvanK{\UF}{\SY }{\ell}
	\]
	is convex as intersection of convex sets.
\end{proof}
As in \refer{cor:topologische_Zerlegung_von_CFkvan},
we prove a reduction to lower order for $\CcFvanK{\UF}{\SY}{k+1}$.
\begin{prop}\label{prop:topologische_Zerlegung_von_CFkvan-kompakte_Version}
	Let $\SX$ be a finite-dimensional space, $\SY$ a locally convex space,
	$\UF \subseteq \SX$ an open nonempty set, $\GewFunk \subseteq \cl{\R}^\UF$ nonempty, $k \in \N$
	and $\gamma \in \ConDiff{\UF}{\SY}{1}$. Then
	\[
		\gamma \in \CcFvanK{\UF}{\SY}{k+1}
		\iff
		(\FAbl{\gamma} , \gamma)\in
		\CcFvanK{\UF}{\Lin{\SX}{\SY}}{k} \times
			\CcFvanK{\UF}{\SY}{0},
	\]
	and the map
	\[
		\CcFvanK{\UF}{\SY}{k+1} \to \CcFvanK{\UF}{\Lin{\SX}{\SY}}{k} \times
			\CcFvanK{\UF}{\SY}{0}
		: \gamma \mapsto (\FAbl{\gamma} , \gamma)
	\]
	is a topological embedding.
\end{prop}
\begin{proof}
	It is a consequence of
	\refer{id:hn(f,k)-Norm_Ableitungen_und_Ableitungen_der_Ableitung}
	in \refer{lem:Normbeziehung_zwischen_Ableitungen_und_Ableitungen_der_Ableitung}
	that for each $p \in \normsOn{\SY}$
	\[
		\HomQuot{\gamma}{p} \in \CcFvanK{\UF}{\SY_p}{k+1}
		\iff
		(D (\HomQuot{\gamma}{p}) , \HomQuot{\gamma}{p})\in
		\CcFvanK{\UF}{\Lin{\SX}{\SY_p}}{k} \times
			\CcFvanK{\UF}{\SY_p}{0}.
	\]
	Further it is a consequence of \refer{id:Gleichheit_von_Gewichts-Halbnormen_in_Lineare_Operatoren}
	in \refer{lem:Charakterisierung_gewichtete_Abb_mit_Werten_in_Linearen_Operatoren}
	that
	\[
		D \gamma \in \CcFvanK{\UF}{\Lin{\SX}{\SY}}{k}
		\iff (\forall p \in \normsOn{\SY}) D (\HomQuot{\gamma}{p}) \in \CcFvanK{\UF}{\Lin{\SX}{\SY_p}}{k},
	\]
	so the equivalence is proved.
	The assertion on the embedding is a consequence of
	\refer{prop:topologische_Zerlegung_von_CFk_lokalkonvex} and
	\refer{lem:CWvan_(endlichdimensional)_abgeschlossen}.
	So the proof is finished.
\end{proof}
\subsection{Composition and Superposition}
As in \refer{sec:Composition_superposition_normedspaces},
we examine which kind of maps induce superposition operators on $\CcF{\UF}{\SY}{k}$ or $\CcFvanK{\UF}{\SY}{k}$.
We show that continuous multilinear maps induce superposition operators on both
function spaces. For $\CcFvanK{\UF}{\SY}{k}$, we can prove a much stronger result:
A smooth function mapping $0$ on $0$ induces a superposition operator between these spaces.
\subsubsection{Composition with a multilinear map}
The following definition and lemma are mostly the same as in \refer{susec:Multilineare_Abb},
but here $\SZ$ denotes a locally convex space.
\begin{defi}
	Let $\SX$ be a normed space, $\SY_1,\dotsc,\SY_m$ and $\SZ$ locally convex spaces and
	$b:\SY_1 \times\dotsb\times \SY_m \to \SZ$
	a continuous $m$-linear map.
	For each $i \in \sset{1,\dotsc,m}$, we define the $m$-linear continuous map
	\begin{align*}
		b^{(i)}:&
		\SY_1 \times \dotsb \times
		\SY_{i-1} \times \Lin{\SX}{\SY_i}\times \SY_{i+1}
		\times \dotsb\times \SY_m
		\to \Lin{\SX}{\SZ}\\
		:&(y_1,\dotsc,y_{i-1},T,y_{i+1},\dotsc,y_m)\mapsto
		(h\mapsto b(y_1,\dotsc,y_{i-1},T\eval h, y_{i+1},\dotsc,y_m)).
	\end{align*}
\end{defi}

\begin{lem}\label{lem:Ableitung_Komposition_mit_multilinearer_Abb-lokalkonvex}
	Let	$\SY_1,\dotsc,\SY_m$ and $\SZ$ be locally convex spaces,
	$\UF$ be an open nonempty subset of the normed space $\SX$
	and $k\in\cl{\N}$.
	Further let
	$b:\SY_1 \times\dotsb\times \SY_m \to \SZ$
	be a continuous $m$-linear map and
	$\gamma_1 \in \ConDiff{\UF}{\SY_1}{k},
	\dotsc, \gamma_m \in \ConDiff{\UF}{\SY_m}{k}$.
	Then
	\[
		b\circ(\gamma_1,\dotsc,\gamma_m) \in \ConDiff{\UF}{\SZ}{k}
	\]
	with
	\begin{equation}
		\label{id:Ableitung_Komposition_mit_multilinearen_Abb-lokalkonvex}
		\FAbl{(b\circ(\gamma_1,\ldots,\gamma_m))}
		= \sum_{i=1}^m
			b^{(i)}\circ (\gamma_1, \dotsc,\gamma_{i-1},\FAbl{\gamma_i},
				\gamma_{i+1}, \dotsc,\gamma_m).
	\end{equation}
\end{lem}
\begin{proof}
	To calculate the derivative of $b\circ(\gamma_1,\dotsc,\gamma_m)$,
	we apply the chain rule and get
	\begin{align*}
		\dA{(b\circ(\gamma_1,\dotsc,\gamma_m))}{x}{h}
		&= \sum_{i=1}^m
			b(\gamma_1(x),\dotsc,\gamma_{i-1}(x),\dA{\gamma_i}{x}{h},
				\gamma_{i+1}(x),\dotsc,\gamma_m(x))\\
		&= \sum_{i=1}^m
			b^{(i)} (\gamma_1(x),\dotsc,\gamma_{i-1}(x),\FAbl{\gamma_i}(x),
				\gamma_{i+1}(x),\dotsc,\gamma_m(x))
			\eval h .
	\end{align*}
	This implies \eqref{id:Ableitung_Komposition_mit_multilinearen_Abb-lokalkonvex}.
\end{proof}
Now we can prove the results about the multilinear superposition.
\begin{prop}\label{prop:multilineare_Abb_und_CF-lokalkonvex}\label{prop:lineare_Wirkung_auf_CFk}
	Let $\UF$ be an open nonempty subset of the normed space $\SX$.
	Let $\SY_1,\dotsc,\SY_m$ be locally convex spaces, $k \in \cl{\N}$ and
	$\GewFunk, \cW_1,\dotsc,\cW_m \subseteq \cl{\R}^{\UF}$ nonempty sets
	such that
	\[
		(\forall f \in \GewFunk) (\exists g_{f,1} \in \cW_1,\dotsc, g_{f,m} \in\cW_m)\,
		\abs{f} \leq \abs{g_{f,1}}\dotsm \abs{g_{f, m}}.
	\]
	Further let $\SZ$ be another locally convex space and
	$b:\SY_1 \times\dotsb\times \SY_m \to \SZ$
	a continuous $m$-linear map.
	Then
	\[
		b\circ(\gamma_1,\dotsc,\gamma_m) \in \CF{\UF}{\SZ}{\GewFunk}{k}
	\]
	for all $\gamma_1 \in \CF{\UF}{\SY_1}{\cW_1}{k},
	\dotsc, \gamma_m \in \CF{\UF}{\SY_m}{\cW_m}{k}$.
	The map
	\[
		b_*
		:
		\CF{\UF}{\SY_1}{\cW_1}{k} \times \dotsb \times
		\CF{\UF}{\SY_m}{\cW_m}{k}
		\to\CF{\UF}{\SZ}{\GewFunk}{k}
		:
		(\gamma_1,\dotsc,\gamma_m)\mapsto b\circ(\gamma_1,\dotsc,\gamma_m)
	\]
	is $m$-linear and continuous.
	\index{superposition!with a multilinear map}
\end{prop}
\begin{proof}
	Let $p$ be a continuous seminorm on $\SZ$. Then there exist
	$q_1 \in \normsOn{\SY_1}, \dotsc, q_m \in \normsOn{\SY_m}$ such that
	for all $y_1 \in \SY_1$, \ldots, $y_m \in \SY_m$,
	\[
		\norm{b(y_1,\dotsc,y_m)}_p \leq \norm{y_1}_{q_1}\dotsm \norm{y_m}_{q_m}.
	\]
	Hence there exists an $m$-linear map $\widetilde{b}$ that makes
	\[
		\xymatrix{
		{ \SY_1 \times \dotsb \times \SY_m } \ar[rr]^-{ b } \ar@{->>}[d]|{\morQuot{q_1} \times \dotsb \times \morQuot{q_m} } && { \SZ } \ar@{->>}[d]|{\morQuot{p}}
		\\
		{ \SY_{1, q_1} \times \dotsb \times \SY_{m, q_m} } \ar[rr]_-{ \widetilde{b} } && {\SZ_p}
		}
	\]
	a commutative diagram.
	For $\gamma_1 \in \CF{\UF}{\SY_1}{\cW_1}{k}, \dotsc, \gamma_m \in \CF{\UF}{\SY_m}{\cW_1}{k}$
	we know from \refer{prop:multilineare_Abb_und_CF} that
	\[
		\widetilde{b} \circ (\HomQuot{\gamma_1}{q_1},\dotsc, \HomQuot{\gamma_m}{q_m})
		\in \CF{\UF}{\SZ_p}{\GewFunk}{k}
	\]
	and the map $\widetilde{b}_*$ is continuous.
	Since
	\[
		\widetilde{b}_* \circ ((\morQuot{q_1})_* \times \dotsb \times (\morQuot{q_m})_*)
		= (\morQuot{p})_* \circ b_*
	\]
	and the left hand side is continuous,
	we conclude using \refer{lem:gew_Abb_in_Lokalkovexe_ist_Produkt_von_gew_Abb_in_normierte}
	that $b_*$ is well-defined and continuous since $p$ was arbitrary.
\end{proof}

\begin{cor}\label{cor:multilineare_Abb_und_CFvan_endl-dim}
	\index{superposition!with a multilinear map}
	Let $\SY_1,\dotsc,\SY_m$ be locally convex spaces,
	$\UF$ be an open nonempty subset of the finite-dimensional space $\SX$,
	$k \in \cl{\N}$
	and $\GewFunk, \cW_1,\dotsc,\cW_m \subseteq \cl{\R}^{\UF}$ nonempty such that
	\[
		(\forall f \in \GewFunk) (\exists g_{f,1} \in \cW_1,\dotsc, g_{f,m} \in\cW_m)\,
		\abs{f} \leq \abs{g_{f,1}}\dotsm \abs{g_{f, m}}.
	\]
	Further let $\SZ$ be another locally convex space,
	$b:\SY_1 \times\dotsb\times \SY_m \to \SZ$
	a continuous $m$-linear map, and $j \in \{1,\dotsc,m\}$.
	Then
	\[
		\tag{\ensuremath{\dagger}}\label{Superposition_multilinear_CWvanK}
		b\circ(\gamma_1,\dotsc,\gamma_j, \dotsc,\gamma_m) \in \CFvanK{\UF}{\SZ}{\GewFunk}{k}
	\]
	for all $\gamma_i \in \CF{\UF}{\SY_i}{\cW_i}{k}$ ($i\neq j$)
	and $\gamma_j \in \CFvanK{\UF}{\SY_j}{\cW_j}{k}$.
	The map
	\begin{gather*}
		\CF{\UF}{\SY_1}{\cW_1}{k}
		\times \dotsb \times
		\CFvanK{\UF}{\SY_j}{\cW_j}{k}
		\times \dotsb \times
		\CF{\UF}{\SY_m}{\cW_m}{k}
		\to\CFvanK{\UF}{\SZ}{\GewFunk}{k}
		\\
		(\gamma_1, \dotsc,\gamma_j, \dotsc,\gamma_m)\mapsto b\circ(\gamma_1, \dotsc,\gamma_j, \dotsc, \gamma_m)
	\end{gather*}
	is $m$-linear and continuous.
\end{cor}
\begin{proof}
	Using \refer{prop:multilineare_Abb_und_CF-lokalkonvex} and \refer{lem:CWvan_(endlichdimensional)_abgeschlossen},
	we only have to prove that \eqref{Superposition_multilinear_CWvanK} holds.
	This is done by induction on $k$.
	
	$k=0$:
	Let $p \in \normsOn{\SZ}$. Then there exist
	$q_1 \in \normsOn{\SY_1}, \dotsc, q_m \in \normsOn{\SY_m}$ such that
	\[
		\norm{b(y_1,\dotsc,y_m)}_p \leq \norm{y_1}_{q_1}\dotsm \norm{y_m}_{q_m}
	\]
	for all $y_1 \in \SY_1$, \ldots, $y_m \in \SY_m$.
	So for $f\in \GewFunk$, $x\in\UF$ and
	$\gamma_1\in\CF{\UF}{\SY_1}{\cW_1}{0},\dotsc, \gamma_j \in \CFvanK{\UF}{\SY_j}{\cW_j}{0}, \dotsc,
	\gamma_m\in\CF{\UF}{\SY_m}{\cW_m}{0}$
	we compute
	\begin{multline*}
		\abs{f(x)}\,\norm{b\circ(\gamma_1, \dotsc, \gamma_j, \dotsc, \gamma_m)(x)}_p
		\\
		\leq \prod_{i=1}^m \abs{g_{f,i}(x)}\,\norm{\gamma_i(x)}_{q_i}
		\leq \left( \prod_{i\neq j}\hn{\gamma_i}{q_i, g_{f,i}}{0}\right)
			\abs{g_{f,j}(x)}\,\norm{\gamma_j(x)}_{q_j} .
	\end{multline*}
	With this estimate we easily deduce that $b\circ(\gamma_1, \dotsc, \gamma_j, \dotsc,\gamma_m) \in \CFvanK{\UF}{\SZ}{\cW_j}{0}$.

	$k\to k + 1$: From \refer{prop:topologische_Zerlegung_von_CFkvan-kompakte_Version}
	(together with the induction base) we know that for
	$\gamma_1\in\CF{\UF}{\SY_1}{\cW_1}{k + 1},
	\dotsc,
	\gamma_j \in \CFvanK{\UF}{\SY_j}{\cW_j}{k + 1},
	\dotsc,
	\gamma_m\in\CF{\UF}{\SY_m}{\cW_m}{k + 1}$
	\[
		b\circ(\gamma_1,\dotsc, \gamma_j, \dotsc, \gamma_m) \in \CFvanK{\UF}{\SZ}{\GewFunk}{k + 1}
		\iff
		\FAbl{(b\circ(\gamma_1,\dotsc, \gamma_j, \dotsc, \gamma_m))}
		\in\CFvanK{\UF}{\Lin{\SX}{\SZ}}{\GewFunk}{k}.
	\]
	We know from \eqref{id:Ableitung_Komposition_mit_multilinearen_Abb-lokalkonvex} in
	\refer{lem:Ableitung_Komposition_mit_multilinearer_Abb-lokalkonvex} that
	\begin{align*}
		D(b\circ(\gamma_1,\dotsc,\gamma_j, \dotsc, \gamma_m))
		&= \sum_{\substack{i=1\\ i\neq j}}^m
			b^{(i)}\circ (\gamma_1, \dotsc,\gamma_j, \dotsc,\gamma_{i-1},D\gamma_i,
				\gamma_{i+1}, \dotsc,\gamma_m)
		\\
		&+ b^{(j)}\circ (\gamma_1, \dotsc, \gamma_{j-1},D\gamma_j,
				\gamma_{j+1}, \dotsc,\gamma_m).
	\end{align*}
	Noticing that $\gamma_j \in \CFvanK{\UF}{\SY_j}{\cW_j}{k}$
	and $D\gamma_j \in \CFvanK{\UF}{\Lin{\SX}{\SY_j}}{\cW_j}{k}$,
	we can apply the inductive hypothesis to all $b^{(i)}$ and the
	$\ConDiff{}{}{k}$-maps
	$\gamma_1, \dotsc, \gamma_{m}$ and $D\gamma_1, \dotsc, D\gamma_m$.
	Hence $D(b\circ(\gamma_1,\dotsc,\gamma_j, \dotsc, \gamma_m)) \in \CFvanK{\UF}{\Lin{\SX}{\SZ}}{\GewFunk}{k}$.
\end{proof}
As an application, we prove that the space of weighted functions into a product
is canonicly isomorphic to the product of the weighted function spaces.
\begin{lem}\label{lem:gewichtete,verschwindende_Abb_Produktisomorphie-lokalkonvex}
	Let $\SX$ be a normed space, $\UF \subseteq \SX$ an open nonempty set,
	$(\SY_i)_{i\in I}$ a family of locally convex spaces,
	$k \in \cl{\N}$ and $\GewFunk \subseteq \cl{\R}^\UF$ nonempty.
	Then for each $\gamma \in \CcF{\UF}{\prod_{i\in I} \SY_i}{k}$ and $j \in I$
	\[
		\pi_{j} \circ \gamma \in \CcF{\UF}{\SY_j}{k},
	\]
	and the map
	\[
		\tag{\ensuremath{\dagger}}
		\label{id:gewichtete_Abb_Produktisomorphie-lokalkonvex}
		\CcF{\UF}{\prod_{i\in I} \SY_i}{k} \to \prod_{i\in I}\CcF{\UF}{ \SY_i}{k}
		: \gamma \mapsto (\pi_{i} \circ \gamma)_{i \in I}
	\]
	is an isomorphism of locally convex topological vector spaces.

	The same statement holds for $\CcFvanK{\UF}{\prod_{i\in I} \SY_i}{k}$:
	\[
		\tag{\ensuremath{\dagger\dagger}}
		\label{id:gewichtete_Abb_Produktisomorphie-lokalkonvex,vanishing}
		\CcFvanK{\UF}{\prod_{i\in I} \SY_i}{k} \to \prod_{i\in I}\CcFvanK{\UF}{ \SY_i}{k}
		: \gamma \mapsto (\pi_{i} \circ \gamma)_{i \in I}
	\]
	is an isomorphism of locally convex topological vector spaces.
\end{lem}
\begin{proof}
	We proved in \refer{prop:lineare_Wirkung_auf_CFk}
	that for $\gamma \in \CcF{\UF}{\prod_{i\in I} \SY_i}{k}$
	and $j\in I$,
	$\pi_{j} \circ \gamma \in \CcF{\UF}{\SY_j}{k}$ and the map \eqref{id:gewichtete_Abb_Produktisomorphie-lokalkonvex}
	is linear and continuous.
	Since a function to a product is determined by its components, the map \eqref{id:gewichtete_Abb_Produktisomorphie-lokalkonvex} is also injective.
	What remains to be shown is the surjectivity, and the continuity of the inverse mapping.
	To this end, we notice that for each $j \in I$ and $p \in \normsOn{\SY_j}$, the map
	\[
		P_{j, p} : \prod_{i\in I} \SY_i \to \R : (y_i)_{i \in I} \mapsto \norm{y_j}_p
	\]
	is a continuous seminorm, and the set
	$
		\{P_{j, p} : j \in I, p \in \normsOn{\SY_j}\}
	$
	generates $\normsOn{\prod_{i\in I}\SY_i}$.
	For each $i \in I$, let $\gamma_i \in \CcF{\UF}{\SY_i}{k}$. We define the map
	\[
		\gamma : \UF \to \prod_{i\in I} \SY_i : x \mapsto (\gamma_i(x))_{i \in I}.
	\]
	Then $\gamma$ is a $\ConDiff{}{}{k}$-map, and
	$
		P_{j, p} \circ \gamma = p \circ \gamma_j.
	$
	We see with \refer{prop:Char_Frechet_Diff} that this implies that
	$\HomQuot{\gamma}{P_{j, p}}$ is an $\FC{}{}{k}$-map,
	and for each $f \in \GewFunk$ and $\ell \in \N$ with $\ell \leq k$ we derive the identity
	\[
		\hn{\HomQuot{\gamma}{P_{j, p}}}{P_{j, p}, f}{\ell} = \hn{\HomQuot{\gamma_j}{p}}{p, f}{\ell}.
	\]
	We proved in \refer{lem:lokalkonvexe_Abbildungsraeume_Erzeugendensystem_der_HN_reicht_aus}
	that this identity implies that
	$
		\gamma \in \CcF{\UF}{\prod_{i\in I} \SY_i}{k}.
	$
	Further it also proves that the inverse map of
	\eqref{id:gewichtete_Abb_Produktisomorphie-lokalkonvex} is continuous using that it is linear.

	The assertions about \eqref{id:gewichtete_Abb_Produktisomorphie-lokalkonvex,vanishing} follow
	from \refer{cor:multilineare_Abb_und_CFvan_endl-dim} and the assertions proved above about
	\eqref{id:gewichtete_Abb_Produktisomorphie-lokalkonvex}.
\end{proof}
\subsubsection{Superposition with differentiable functions on weighted decreasing maps}
We show that a smooth functions mapping $0$ on $0$ induces
a superposition operator on $\CcFvanK{\UF}{\SY}{k}$, provided that $1_\UF \in \GewFunk$.
The proof uses that the image of decreasing maps is (almost) compact,
and so the composition with the smooth map can be described in terms
of compositions with bounded maps taking values in normed spaces.
\paragraph{On the image of decreasing maps}
\begin{lem}\label{lem:Bild_einer_verschwindenden_gewichteten_Abb_ist_kompakt}
	Let $\UF$ be an open nonempty subset of the finite-dimensional space $\SX$,
	$\SY$ a locally convex space, $k\in \cl{\N}$, $\GewFunk \subseteq \cl{\R}^\UF$ with $1_\UF \in \GewFunk$,
	and $\gamma \in \CcFvanK{\UF}{\SY}{k}$.
	Then
	\[
		\gamma(\UF) \cup \sset{0}
	\]
	is compact.
\end{lem}
\begin{proof}
	Since $1_\UF \in \GewFunk$, $\gamma \in \CFvanK{\UF}{\SY}{\{1_\UF\}}{0}$. By the definition
	of this space, $\gamma$ extends to a continuous map $\widetilde{\gamma} : \UF \cup \sset{\infty} \to \SY$
	defined on the Alexandroff compactification of $\UF$ by setting
	$\widetilde{\gamma}(\infty) \ndef 0$.
	Hence
	\[
		\widetilde{\gamma}(\UF \cup \{\infty\}) =\gamma(\UF) \cup \{0\}
	\]
	is compact.
\end{proof}
We describe two easy consequences of the last lemma.
\begin{lem}\label{lem:CWvan_(endl-dim/normiert)_Abstand_Rand}
	Let $\UF$ be an open nonempty subset of the finite-dimensional space $\SX$,
	$\VF$ an open nonempty zero neighborhood of the \emph{normed} space $\SY$,
	$\GewFunk \subseteq \cl{\R}^\UF$ with $1_\UF \in \GewFunk$, and $k\in \cl{\N}$.
	Then
	$\CcFvanK{\UF}{\VF}{k} \subseteq \CcFo{\UF}{\VF}{k}$.
\end{lem}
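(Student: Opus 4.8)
The plan is to deduce this from the compactness of the image established in \refer{lem:Bild_einer_verschwindenden_gewichteten_Abb_ist_kompakt}, combined with the elementary fact that a compact subset of an open set in a normed space has a positive distance to the complement. Indeed, unwinding the definitions, the claim is simply that every $\gamma \in \CcFvanK{\UF}{\VF}{k}$ satisfies the existence-of-$r$ condition defining $\CcFo{\UF}{\VF}{k}$.

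First I would fix $\gamma \in \CcFvanK{\UF}{\VF}{k}$, so that $\gamma \in \CcFvanK{\UF}{\SY}{k}$ with $\gamma(\UF) \subseteq \VF$. Since $1_\UF \in \GewFunk$, \refer{lem:Bild_einer_verschwindenden_gewichteten_Abb_ist_kompakt} applies and shows that $K \ndef \gamma(\UF) \cup \{0\}$ is compact. Because $\VF$ is a zero neighborhood we have $0 \in \VF$, and together with $\gamma(\UF) \subseteq \VF$ this yields $K \subseteq \VF$; thus $K$ is a compact subset of the open set $\VF$.

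Next I would set $r \ndef \dist{K}{\SY \setminus \VF}$ (with the convention $r = \infty$ when $\VF = \SY$). As $K$ is compact, $\SY \setminus \VF$ is closed, and the two sets are disjoint, the distance $r$ is strictly positive. It then follows directly from the definition of the distance that $\gamma(\UF) + \Ball[\SY]{0}{r} \subseteq \VF$: for $x \in \gamma(\UF)$ and $h \in \SY$ with $\norm{h} < r$, the point $x + h$ cannot lie in $\SY \setminus \VF$, since $x \in K$ would then force $\norm{h} \geq \dist{x}{\SY \setminus \VF} \geq r$, contradicting $\norm{h} < r$. Hence $\gamma$ satisfies the defining condition of $\CcFo{\UF}{\VF}{k}$, and as $\gamma$ was arbitrary the asserted inclusion follows. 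The argument is routine once \refer{lem:Bild_einer_verschwindenden_gewichteten_Abb_ist_kompakt} is available, so there is no serious obstacle; the only genuine content lies in that compactness input, where the hypotheses $1_\UF \in \GewFunk$ and the finite dimensionality of $\SX$ enter, allowing $\gamma$ to be extended continuously to the one-point compactification of $\UF$.
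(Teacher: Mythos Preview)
Your proof is correct and follows exactly the paper's approach: the paper's proof is a one-line citation of \refer{lem:Bild_einer_verschwindenden_gewichteten_Abb_ist_kompakt}, and you have simply spelled out the routine compactness-and-distance argument that this citation stands for.
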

\begin{proof}
	This is an immediate consequence of \refer{lem:Bild_einer_verschwindenden_gewichteten_Abb_ist_kompakt}.
\end{proof}

\begin{lem}\label{lem:CFvan_kompakt_mit_Werten_in_offener_Menge_offen-Normierte_Version}
	Let $\UF$ be an open nonempty subset of the finite-dimensional space $\SX$,
	$\SY$ a normed space, $\VF\subseteq \SY$ an open zero neighborhood,
	$k \in \N$ and $\GewFunk \subseteq \cl{\R}^\UF$ with $1_\UF \in \GewFunk$.
	Then $\CcFvanK{\UF}{\VF}{k}$ is open in $\CcFvanK{\UF}{\SY}{k}$.
\end{lem}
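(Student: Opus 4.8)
The plan is to exhibit $\CcFvanK{\UF}{\VF}{k}$ as the intersection of the subspace $\CcFvanK{\UF}{\SY}{k}$ with a set already known to be open in the ambient space $\CcF{\UF}{\SY}{k}$, and then invoke the subspace topology. Concretely, I claim the set identity
\[
	\CcFvanK{\UF}{\VF}{k} = \CcFvanK{\UF}{\SY}{k} \cap \CcFo{\UF}{\VF}{k}.
\]
Granting this, the result follows at once: since $1_\UF \in \GewFunk$, the set $\CcFo{\UF}{\VF}{k}$ is open in $\CcF{\UF}{\SY}{k}$ (as recorded right after its definition), and $\CcFvanK{\UF}{\SY}{k}$ is a (closed) vector subspace of $\CcF{\UF}{\SY}{k}$ carrying the induced topology by \refer{lem:CWvan_(endlichdimensional)_abgeschlossen}, so the intersection on the right is open in $\CcFvanK{\UF}{\SY}{k}$.

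First I would verify the inclusion ``$\sub$''. Let $\gamma \in \CcFvanK{\UF}{\VF}{k}$. Then $\gamma \in \CcFvanK{\UF}{\SY}{k}$ with $\gamma(\UF) \sub \VF$, and by \refer{lem:CWvan_(endl-dim/normiert)_Abstand_Rand}---whose hypotheses ($\SX$ finite-dimensional, $\VF$ an open zero neighborhood, $1_\UF \in \GewFunk$) are exactly those at hand---we also have $\gamma \in \CcFo{\UF}{\VF}{k}$. Hence $\gamma$ lies in the intersection. For the reverse inclusion ``$\supseteq$'', I would use that $\CcFo{\UF}{\VF}{k} \sub \CcF{\UF}{\VF}{k}$, so every $\gamma \in \CcFvanK{\UF}{\SY}{k} \cap \CcFo{\UF}{\VF}{k}$ satisfies $\gamma(\UF) \sub \VF$ and therefore belongs to $\CcFvanK{\UF}{\VF}{k}$ by \refer{defi:Definition_CFvanK}. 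This establishes the identity and concludes the argument.

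The crux of the matter---and the only place where the decay hypothesis enters---is the inclusion $\CcFvanK{\UF}{\VF}{k} \sub \CcFo{\UF}{\VF}{k}$ furnished by \refer{lem:CWvan_(endl-dim/normiert)_Abstand_Rand}: it guarantees a \emph{uniform} positive distance $\dist{\gamma(\UF)}{\partial \VF} > 0$, which itself rests on the compactness of $\gamma(\UF) \cup \sset{0}$ from \refer{lem:Bild_einer_verschwindenden_gewichteten_Abb_ist_kompakt}. Should one prefer an argument avoiding the set identity, one can proceed directly: given $\gamma_0 \in \CcFvanK{\UF}{\VF}{k}$, choose $r > 0$ with $\gamma_0(\UF) + \Ball[\SY]{0}{r} \sub \VF$ (again via \refer{lem:CWvan_(endl-dim/normiert)_Abstand_Rand}); then for any $\gamma \in \CcFvanK{\UF}{\SY}{k}$ with $\hn{\gamma - \gamma_0}{1_\UF}{0} < r$ and every $x \in \UF$ one has $\norm{\gamma(x) - \gamma_0(x)} \leq \hn{\gamma - \gamma_0}{1_\UF}{0} < r$, whence $\gamma(x) \in \VF$, so the $\hn{\cdot}{1_\UF}{0}$-ball of radius $r$ about $\gamma_0$ is contained in $\CcFvanK{\UF}{\VF}{k}$ (this seminorm being continuous because $1_\UF \in \GewFunk$). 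Either way the proof is short, and I anticipate no genuine obstacle beyond correctly invoking the distance-to-boundary lemma.
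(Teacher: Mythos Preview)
Your proof is correct and follows exactly the same approach as the paper: establish the set identity $\CcFvanK{\UF}{\VF}{k} = \CcFo{\UF}{\VF}{k} \cap \CcFvanK{\UF}{\SY}{k}$ via \refer{lem:CWvan_(endl-dim/normiert)_Abstand_Rand}, and conclude from the openness of $\CcFo{\UF}{\VF}{k}$ in $\CcF{\UF}{\SY}{k}$. The paper's proof is just a terser version of yours, and your alternative direct argument with the $\hn{\cdot}{1_\UF}{0}$-ball is also fine but unnecessary.
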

\begin{proof}
	We proved in \refer{lem:CWvan_(endl-dim/normiert)_Abstand_Rand} that
	$\CcFvanK{\UF}{\VF}{k} \subseteq \CcFo{\UF}{\VF}{k}$.
	Hence $\CcFvanK{\UF}{\VF}{k} = \CcFo{\UF}{\VF}{k} \cap \CcFvanK{\UF}{\SY}{k}$
	is open in $\CcFvanK{\UF}{\SY}{k}$.
\end{proof}
\paragraph{Superposition with a bounded map}
As a preparation, we prove an analogous version of \refer{lem:BC0_operiert_stetig_auf_CW} for decreasing functions.
\begin{lem}\label{lem:CFvan_kompakt_unter_Superposition-Normierte_Version}
	Let $\UF$ be an open nonempty subset of the finite-dimensional space $\SX$,
	$\SY$ and $\SZ$ normed spaces, $\VF\subseteq \SY$ open and star-shaped with center~$0$,
	$k, \ell \in \N$ and $\GewFunk \subseteq \cl{\R}^\UF$ with $1_\UF \in \GewFunk$.
	Further let $\phi \in \BC{\VF}{\SZ}{k + \ell + 1}$ with $\phi(0)=0$.
	Then
	\[
		\phi \circ \CcFvanK{\UF}{\VF}{k} \subseteq \CcFvanK{\UF}{\SZ}{k},
	\]
	and
	\[
		\CcFvanK{\UF}{\VF}{k} \to \CcFvanK{\UF}{\SZ}{k} : \gamma \mapsto \phi \circ \gamma
	\]
	is a $\ConDiff{}{}{\ell}$-map.
\end{lem}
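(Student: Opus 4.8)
The plan is to reduce everything to the composition results already proved for the spaces $\CcF{\UF}{\SZ}{k}$, and to handle the decay at infinity by means of the multilinear superposition corollary \refer{cor:multilineare_Abb_und_CFvan_endl-dim}. First I would note that, $\VF$ being open and star-shaped with center~$0$, it is an open zero neighborhood, so \refer{lem:CWvan_(endl-dim/normiert)_Abstand_Rand} gives $\CcFvanK{\UF}{\VF}{k} \sub \CcFo{\UF}{\VF}{k}$. Since $\phi(0) = 0$ we have $\phi \in \BCzero{\VF}{\SZ}{k + \ell + 1}$, and fixing the first argument $\phi$ in the two-variable composition map of \refer{prop:BC0_operiert_glatt_auf_CW} (resp.\ \refer{lem:BC0_operiert_stetig_auf_CW} when $\ell = 0$) shows that $\eta \mapsto \phi \circ \eta$ is a $\ConDiff{}{}{\ell}$-map from $\CcFo{\UF}{\VF}{k}$ into $\CcF{\UF}{\SZ}{k}$. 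It therefore remains to prove (a) that $\phi \circ \gamma \in \CcFvanK{\UF}{\SZ}{k}$ for every $\gamma \in \CcFvanK{\UF}{\VF}{k}$, and (b) that the corestriction of this map to the subspace $\CcFvanK{\UF}{\SZ}{k}$ is still $\ConDiff{}{}{\ell}$.

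For (a) I would argue by induction on $k$ through the reduction to lower order. In the base case $k = 0$, star-shapedness of $\VF$ allows me to apply the mean value theorem to $\phi$ along the segment from $0$ to $\gamma(x)$, yielding $\norm{\phi(\gamma(x))} \leq \hn{D\phi}{1_\VF}{0}\,\norm{\gamma(x)}$ for all $x \in \UF$; hence $\abs{f(x)}\,\norm{\phi(\gamma(x))} \leq \hn{D\phi}{1_\VF}{0}\,\abs{f(x)}\,\norm{\gamma(x)}$ for every weight $f$, and the decay of $\gamma$ at infinity transfers to $\phi \circ \gamma$, so $\phi \circ \gamma \in \CcFvanK{\UF}{\SZ}{0}$. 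For $k \geq 1$, \refer{lem:topologische_Zerlegung_von_CFkvan-kompakte_Version} reduces the claim to $\phi \circ \gamma \in \CcFvanK{\UF}{\SZ}{0}$ (the base estimate) together with $D(\phi \circ \gamma) \in \CcFvanK{\UF}{\Lin{\SX}{\SZ}}{k-1}$. The chain rule writes $D(\phi \circ \gamma) = (D\phi \circ \gamma) \MaMu D\gamma$ with the composition $\MaMu$ of \refer{cor:Komposition_linearer_Abb_und_CF}; here $D\phi \circ \gamma \in \BC{\UF}{\Lin{\SY}{\SZ}}{k-1}$ is bounded by \refer{lem:BC_ist_unter_Komposition_abgeschlossen} (using $\gamma \in \BCo{\UF}{\VF}{k-1}$, which holds since $1_\UF \in \GewFunk$ and $\CcFvanK{\UF}{\VF}{k} \sub \CcFo{\UF}{\VF}{k}$), while $D\gamma \in \CcFvanK{\UF}{\Lin{\SX}{\SY}}{k-1}$ decays by the reduction lemma. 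Placing the decaying factor $D\gamma$ in the second slot, \refer{cor:multilineare_Abb_und_CFvan_endl-dim} (with weights $\sset{1_\UF}$ and $\GewFunk$, for which $\abs{f} \leq 1 \cdot \abs{f}$) yields $D(\phi \circ \gamma) \in \CcFvanK{\UF}{\Lin{\SX}{\SZ}}{k-1}$, completing (a).

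For (b) I would use that $\CcFvanK{\UF}{\SZ}{k}$ is a closed vector subspace of $\CcF{\UF}{\SZ}{k}$ by \refer{lem:CWvan_(endlichdimensional)_abgeschlossen}, hence carries the subspace topology as a topological embedding. The map $\gamma \mapsto \phi \circ \gamma$ factors as the inclusion $\CcFvanK{\UF}{\VF}{k} \hookrightarrow \CcFo{\UF}{\VF}{k}$ — which is the restriction of the continuous linear inclusion $\CcFvanK{\UF}{\SY}{k} \hookrightarrow \CcF{\UF}{\SY}{k}$ to the open set $\CcFvanK{\UF}{\VF}{k}$ (open by \refer{lem:CFvan_kompakt_mit_Werten_in_offener_Menge_offen-Normierte_Version}) and hence smooth — followed by the $\ConDiff{}{}{\ell}$-map of the first paragraph; it is thus $\ConDiff{}{}{\ell}$ into $\CcF{\UF}{\SZ}{k}$. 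By (a) its values lie in the closed subspace $\CcFvanK{\UF}{\SZ}{k}$, so all difference quotients and iterated directional derivatives remain in this subspace, and the corestriction is again $\ConDiff{}{}{\ell}$.

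I expect the crux to be step (a), and within it the decay of the derivative $D(\phi \circ \gamma)$: the key realization is to read the chain-rule expression $(D\phi \circ \gamma) \MaMu D\gamma$ as a bilinear superposition carrying exactly one decaying factor, so that \refer{cor:multilineare_Abb_und_CFvan_endl-dim} applies, the boundedness of $D\phi \circ \gamma$ being supplied by the $\BC$-composition lemma. Once (a) is in place, step (b) is essentially formal, resting only on the closedness of the target subspace.
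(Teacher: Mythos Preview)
Your proposal is correct and follows essentially the same approach as the paper: both reduce to \refer{prop:BC0_operiert_glatt_auf_CW} via the inclusion $\CcFvanK{\UF}{\VF}{k} \subseteq \CcFo{\UF}{\VF}{k}$, use closedness of $\CcFvanK{\UF}{\SZ}{k}$ to reduce the problem to checking that the image lies in this subspace, and verify the latter by induction with the mean value estimate at $k=0$ and the chain rule together with \refer{cor:multilineare_Abb_und_CFvan_endl-dim} in the inductive step. Your indexing in the inductive step ($D\phi \circ \gamma \in \BC{\UF}{\Lin{\SY}{\SZ}}{k-1}$) is in fact slightly more careful than the paper's, which claims one order more than is strictly needed or available when $\ell = 0$.
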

\begin{proof}
	We proved in \refer{lem:CWvan_(endl-dim/normiert)_Abstand_Rand} that
	$\CcFvanK{\UF}{\VF}{k} \subseteq \CcFo{\UF}{\VF}{k}$.
	Hence we can apply \refer{prop:BC0_operiert_glatt_auf_CW} to see that
	\[
		\phi \circ \CcFvanK{\UF}{\VF}{k} \subseteq \CcF{\UF}{\SZ}{k}
	\]
	and the map
	\[
		\CcFvanK{\UF}{\VF}{k} \to \CcF{\UF}{\SZ}{k} : \gamma \mapsto \phi \circ \gamma
	\]
	is $\ConDiff{}{}{\ell}$;
	here we used that $\CcFvanK{\UF}{\VF}{k} = \CcFo{\UF}{\VF}{k} \cap \CcFvanK{\UF}{\SY}{k}$.
	Because $\CcFvanK{\UF}{\SY}{k}$ is closed in $\CcF{\UF}{\SY}{k}$
	by \refer{lem:CWvan_(endlichdimensional)_abgeschlossen},
	it only remains to show that for each $\gamma \in \CcFvanK{\UF}{\VF}{k}$, we have
	$
		\phi \circ \gamma \in \CcFvanK{\UF}{\SZ}{k}.
	$
	This is done by induction on $k$:

	$k = 0$: Let $f \in \cW$ and $x\in\UF$. Then
	\begin{multline*}
		\abs{f(x)}\,\norm{\phi(\gamma(x))}
		= \abs{f(x)}\,\norm{\phi(\gamma(x)) - \phi(0)}\\
		= \abs{f(x)}\,\left\norm{\Mint{\FAbl{\phi}(t \gamma(x)) \eval \gamma(x)}{t}\right}
		\leq \OpInf{\FAbl{\phi}} \abs{f(x)}\,\norm{\gamma(x)};
	\end{multline*}
	here we used that the line segment from $0$ to $\gamma(x)$ is contained in $\VF$.
	From this estimate we conclude that $\phi\circ \gamma \in \CcFvanK{\UF}{\SZ}{0}$.

	$k \to k + 1$: By the chain rule
	\[
		\FAbl{ (\phi \circ \gamma)} = (\FAbl{ \phi } \circ \gamma) \MaMu \FAbl{ \gamma } .
	\]
	Now $\FAbl{ \phi} \circ \gamma \in \BC{\UF}{\Lin{\SY}{\SZ}}{k + 1}$ because of
	\refer{lem:BC_ist_unter_Komposition_abgeschlossen}, since
	$\gamma \in \BC{\UF}{\VF}{k + 1}$.
	Further $\FAbl{\gamma} \in \CcFvanK{\UF}{\Lin{\SX}{\SY}}{k}$,
	so we conclude using \refer{cor:multilineare_Abb_und_CFvan_endl-dim} that
	$( \FAbl{ \phi } \circ \gamma) \MaMu \FAbl{ \gamma } \in \CcFvanK{\UF}{\Lin{\SX}{\SZ}}{k}$.
	By \refer{prop:topologische_Zerlegung_von_CFkvan-kompakte_Version},
	the case $k + 1$ follows from the inductive hypothesis.
\end{proof}
We calculate the higher differentials of the superposition map on weighted functions
that is induced by a bounded function,
see \refer{lem:BC0_operiert_stetig_auf_CW} where a more general assertion was proved.
We will need this later to show that $\ConDiff{}{}{k + \ell + 2}$-functions induce a superposition operator
on the spaces $\CcFvanK{\UF}{\VF}{k}$, and that this superposition operator is $\ConDiff{}{}{\ell}$.
\begin{lem}\label{lem:Hoehere_Ableitungen_der_kovarianten_Kompositionsabb}
	Let $\SX$, $\SY$ and $\SZ$ be normed spaces,
	$\UF\subseteq \SX$ and $\VF\subseteq \SY$ open subsets
	such that $\VF$ is star-shaped with center~$0$,
	$k \in \cl{\N}$, $m \in \N^\ast$, $\phi \in \BCzero{\VF}{\SZ}{k + m + 1}$
	and $\cW \subseteq \cl{\R}^{\UF}$ with $1_{\UF} \in \cW$.
	By \refer{lem:BC0_operiert_stetig_auf_CW},
	\begin{equation*}
		\phi_*
		:
		\CcFo{\UF}{\VF}{k}
		\to \CcF{\UF}{\SZ}{k}
		: \gamma \mapsto \phi \circ \gamma
	\end{equation*}
	is defined and $\ConDiff{}{}{m}$. For its $\ell$-th differential, the identity
	\begin{equation*}
		\dA[\ell]{\phi_*}{\gamma}{\gamma_1,\dotsc,\gamma_\ell}
		= \dA[\ell]{\phi}{}{} \circ (\gamma,\gamma_1,\dotsc,\gamma_\ell)
	\end{equation*}
	holds ($\ell \leq m$).
\end{lem}
\begin{proof}
	Let $x\in \UF$. Using the identity
	\[
		\evTwo_x^\SZ \circ \phi_* = \phi \circ \evTwo_x^\SY
	\]
	(with self-explanatory notation for point evaluations),
	we calculate
	\begin{multline*}
		(\evTwo_x^\SZ \circ \dA[\ell]{\phi_*)}{\gamma}{\gamma_1,\dotsc,\gamma_\ell}
		= \dA[\ell]{(\evTwo_x^\SZ \circ \phi_*)}{\gamma}{\gamma_1,\dotsc,\gamma_\ell}
		\\
		= \dA[\ell]{(\phi \circ \evTwo_x^\SY)}{\gamma}{\gamma_1,\dotsc,\gamma_\ell}
		= \bigl(\dA[\ell]{\phi}{}{}
		\circ (\evTwo_x^\SY)^{\ell + 1} \bigr) (\gamma,\gamma_1,\dotsc,\gamma_\ell)
		\\
		= \evTwo_x^\SZ \bigl( \dA[\ell]{\phi}{}{} \circ (\gamma,\gamma_1,\dotsc,\gamma_\ell)\bigr);
	\end{multline*}
	here we used \refer{lem:Hoehere_Ableitungen_kovariante_komposition_linearer_Abb}
	and \refer{lem:Hoehere_Ableitungen_kontravariante_komposition_linearer_Abb}.
\end{proof}
\paragraph{The main result}
Before we can prove the main result, we need the following facts
concerning compact and star-shaped sets in topological vector spaces.
\begin{lem}\label{lem:Kompakta_und_Stern-shapedness_in_normierten_Raeumen}
	Let $\SZ$ be a locally convex space and $K \subseteq \SZ$ a compact set.
	\begin{enumerate}
		\item\label{enum1:Kom-SSness-normed-1}
		The set $[0,1] \cdot K$ is compact and star-shaped with center~$0$.

		\item\label{enum1:Kom-SSness-normed-2}
		Let $K$ be star-shaped and $\VF$ an open neighborhood of $K$.
		Then there exists an open star-shaped set $\WF$ such that
		$K \subseteq \WF \subseteq \VF$.
	\end{enumerate}
\end{lem}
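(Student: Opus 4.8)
The plan is to treat the two parts separately, in both cases exploiting the continuity of the scalar multiplication map
\[
	m : [0,1]\times\SZ\to\SZ : (t,z)\mapsto t\cdot z
\]
together with the fact that $[0,1]$ is a compact subset of $\K$ closed under multiplication and containing $0$ and $1$.

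For \refer{enum1:Kom-SSness-normed-1}, I would first observe that $[0,1]\cdot K = m([0,1]\times K)$ is the continuous image of the compact set $[0,1]\times K$, hence compact. Star-shapedness with center~$0$ is then immediate from the multiplicative structure of $[0,1]$: given $w = t\cdot x \in [0,1]\cdot K$ (with $t\in[0,1]$, $x\in K$) and $s\in[0,1]$, the product $st$ again lies in $[0,1]$, so $s\cdot w = (st)\cdot x \in [0,1]\cdot K$; thus the whole segment from $0$ to $w$ is contained in $[0,1]\cdot K$.

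For \refer{enum1:Kom-SSness-normed-2}, where $K$ is additionally star-shaped with center~$0$, the key idea is to thicken $K$ radially while staying inside $\VF$. Since $K$ is star-shaped, $m([0,1]\times K) = [0,1]\cdot K \subseteq K \subseteq \VF$, so the compact set $[0,1]\times K$ is contained in the open set $m^{-1}(\VF)$. Because both $[0,1]$ and $K$ are compact, the (generalized) tube lemma yields an open set $O$ with $K \subseteq O$ and $[0,1]\times O \subseteq m^{-1}(\VF)$, i.e.\ $[0,1]\cdot O \subseteq \VF$. If one prefers to avoid invoking the tube lemma as a black box, the same $O$ is produced directly: cover each slice $[0,1]\times\{x\}$, $x\in K$, by finitely many basic open boxes contained in $m^{-1}(\VF)$, intersect their $\SZ$-factors to obtain an open $O_x\ni x$ with $[0,1]\cdot O_x\subseteq\VF$, and then cover the compact set $K$ by finitely many such $O_x$, taking $O$ to be their union.

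I would then set $\WF \ndef [0,1]\cdot O$ and check the four required properties. Containment $K\subseteq\WF$ holds because $K\subseteq O = 1\cdot O\subseteq\WF$, and $\WF\subseteq\VF$ is exactly the defining property of $O$. Star-shapedness of $\WF$ with center~$0$ follows by the same semigroup computation as in part~\refer{enum1:Kom-SSness-normed-1}. The one genuinely delicate point — and the main obstacle — is the openness of $\WF$: the slice $0\cdot O$ collapses to the single point $\{0\}$, so $\WF$ is not visibly a union of the manifestly open sets $t\cdot O$. Here I would argue that for $t\in\,]0,1]$ the map $z\mapsto t\cdot z$ is a homeomorphism, so $t\cdot O$ is open; hence every nonzero point of $\WF$, lying in some $t\cdot O$ with $t>0$, is interior. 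The exceptional point $0$ is interior as well, since $O = 1\cdot O \subseteq \WF$ is already an open neighbourhood of $0$ (recall $0\in K\subseteq O$). Therefore $\WF$ is open, which completes the proof.
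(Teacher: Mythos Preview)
Your proof is correct, but for part~\refer{enum1:Kom-SSness-normed-2} it takes a genuinely different route from the paper's. The paper applies the Wallace lemma to the \emph{addition} map rather than scalar multiplication: from $K + \{0\} \subseteq \VF$ and the compactness of $K$, it extracts an open $0$-neighbourhood $\UF$ with $K + \UF \subseteq \VF$, then shrinks $\UF$ to be absolutely convex and takes $\WF = K + \UF$. Openness is then immediate (a union of translates of $\UF$), and star-shapedness follows because both $K$ and $\UF$ are star-shaped with center~$0$, so $t(k+u) = tk + tu \in K + \UF$. Your approach instead thickens $K$ \emph{radially} via the scalar-multiplication map, producing $\WF = [0,1]\cdot O$; this forces you into the case analysis at $0$ for openness, which you handle correctly. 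The trade-off: the paper's argument is slightly slicker on openness but genuinely uses the locally convex hypothesis (to get an absolutely convex $\UF$), whereas your argument never invokes local convexity and therefore goes through verbatim in any Hausdorff topological vector space.
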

\begin{proof}
	\refer{enum1:Kom-SSness-normed-1}
	$[0,1] \cdot K$ is compact since it is the image of a compact set under a continuous map.

	\refer{enum1:Kom-SSness-normed-2}
	The set $K \times \sset{0}$ is compact, hence using the continuity of the addition
	and the Wallace lemma, we find an open $0$-neighborhood $\UF$ such that
	$K + \UF \sub \VF$. We may assume w.l.o.g. that $\UF$ is absolutely convex.
	Then $K + \UF$ is open, star-shaped and contained in $\VF$.
\end{proof}
\begin{prop}\label{prop:Superposition_glatter_Abb_auf_weig-van_Abb}
	Let $\UF$ be an open nonempty subset of the finite-dimensional space $\SX$,
	$\SY$ and $\SZ$ locally convex spaces, $\VF\subseteq \SY$ open and star-shaped with center~$0$,
	$k, m \in \N$ and $\GewFunk \subseteq \cl{\R}^\UF$ with $1_\UF \in \GewFunk$.
	Let $\phi \in \ConDiff{\VF}{\SZ}{k + m + 2}$ with $\phi(0)=0$.
	Then for $\gamma \in \CcFvanK{\UF}{\VF}{k}$,
	\[
		\phi \circ \gamma \in \CcFvanK{\UF}{\SZ}{k}
	\]
	holds, and the map
	\[
		\phi_* : \CcFvanK{\UF}{\VF}{k} \to \CcFvanK{\UF}{\SZ}{k} : \gamma \mapsto \phi \circ \gamma
	\]
	is $\ConDiff{}{}{m}$ with
	\begin{equation*}
		\dA[\ell]{\phi_*}{\gamma}{\gamma_1,\dotsc,\gamma_\ell}
		= \dA[\ell]{\phi}{}{} \circ (\gamma,\gamma_1,\dotsc,\gamma_\ell)
	\end{equation*}
	for all $\ell \leq m$.
	\index{superposition!with a differentiable map}
\end{prop}
\begin{proof}
	Let $\widetilde{\gamma} \in \CcFvanK{\UF}{\VF}{k}$. By \refer{lem:Bild_einer_verschwindenden_gewichteten_Abb_ist_kompakt}
	and \refer{lem:Kompakta_und_Stern-shapedness_in_normierten_Raeumen},
	the set
	\[
		K \ndef [0,1] \cdot (\widetilde{\gamma}(\UF) \cup \{0\})
	\]
	is compact and star-shaped with center $0$.
	Hence by \refer{lem:stetig_diffbar_impliziert_lokal_Lipschitz_und_beschraenkt},
	for each $p \in \normsOn{\SZ}$ there exists a $q \in \normsOn{\SY}$
	and an open set $\WF \supseteq K$ w.r.t. $q$
	such that $\FakLC{q}{p}{\phi} \in \BC{\WF_q}{\SZ_p}{k + m + 1}$.
	In view of \refer{lem:Kompakta_und_Stern-shapedness_in_normierten_Raeumen},
	we may assume that $\WF$ (and hence $\WF_q$) is star-shaped with center $0$.
	We know from \refer{lem:CFvan_kompakt_mit_Werten_in_offener_Menge_offen-Normierte_Version}
	that $\CcFvanK{\UF}{\WF_q}{k}$ is a neighborhood of $\HomQuot{\widetilde{\gamma}}{q}$
	in $\CcFvanK{\UF}{\SY_q}{k}$.
	In \refer{lem:CFvan_kompakt_unter_Superposition-Normierte_Version} we stated that
	\[
		\FakLC{q}{p}{\phi}_* : \CcFvanK{\UF}{\WF_q}{k} \to \CcFvanK{\UF}{\SZ_{p}}{k} : \gamma \mapsto \FakLC{q}{p}{\phi} \circ \gamma
	\]
	is $\ConDiff{}{}{m}$.
	The diagram
	\[
		\xymatrix{
			{\CcFvanK{\UF}{\WF}{k} }
			\ar[rr]^{\morQuot{q *}}
			\ar[rd]|{(\HomQuot{\phi}{p})_* }
			&&
			{\CcFvanK{\UF}{\WF_q}{k}}
			\ar[ld]|{\FakLC{q}{p}{\phi}_*}
			\\
			&
			{\CcFvanK{\UF}{\SZ_{p}}{k}}
			&
		}
	\]
	is commutative.
	This implies that $(\HomQuot{\phi}{p})_*$ is $\ConDiff{}{}{m}$ on $\CcFvanK{\UF}{\WF}{k}$
	since it is the composition of $\FakLC{q}{p}{\phi}_*$ and the smooth map $\morQuot{q *}$ (see \refer{cor:multilineare_Abb_und_CFvan_endl-dim}).
	Using \refer{lem:Hoehere_Ableitungen_kontravariante_komposition_linearer_Abb}
	and \refer{lem:Hoehere_Ableitungen_der_kovarianten_Kompositionsabb},
	we can calculate its higher derivatives:
	\begin{multline*}
		\dA[\ell]{\rest{(\HomQuot{\phi}{p})_*}{ \CcFvanK{\UF}{\WF}{k} }}{\gamma}{\gamma_1,\dotsc,\gamma_\ell}
		\\
		= \dA[\ell]{\rest{(\FakLC{q}{p}{\phi} \circ \morQuot{q})_*}{ \CcFvanK{\UF}{\WF}{k} }}{\gamma}{\gamma_1,\dotsc,\gamma_\ell}
		= \dA[\ell]{ \FakLC{q}{p}{\phi}_*} {\HomQuot{\gamma}{q}}{\HomQuot{\gamma_1}{q},\dotsc,\HomQuot{\gamma_\ell}{q}}
		\\
		= \dA[\ell]{ \FakLC{q}{p}{\phi} }{}{} \circ (\HomQuot{\gamma}{q}, \HomQuot{\gamma_1}{q},\dotsc,\HomQuot{\gamma_\ell}{q})
		= \dA[\ell]{ (\FakLC{q}{p}{\phi} \circ \morQuot{q})}{}{} \circ (\gamma,\gamma_1,\dotsc,\gamma_\ell)
		\\
		= \dA[\ell]{ (\HomQuot{\phi}{p})}{}{} \circ (\gamma,\gamma_1,\dotsc,\gamma_\ell)
		= \HomQuot{\dA[\ell]{ \phi }{}{}}{p} \circ (\gamma,\gamma_1,\dotsc,\gamma_\ell)
	\end{multline*}
	for $\ell \in \N$ with $\ell \leq m$.
	
	Since $\widetilde{\gamma}$ and $p$ were arbitrary, we conclude that the map
	\[
		\CcFvanK{\UF}{\VF}{k} \to \prod_{p \in \normsOn{\SZ}} \CcFvanK{\UF}{\SZ_p}{k}
		: \gamma \mapsto (\HomQuot{\phi \circ \gamma}{p})_{p \in \normsOn{\SZ}}
	\]
	is $\ConDiff{}{}{m}$. Since its image and all directional derivatives are contained in $\CcFvanK{\UF}{\SZ}{k}$
	(in the sense of \refer{lem:gew_Abb_in_Lokalkovexe_ist_Produkt_von_gew_Abb_in_normierte}),
	we conclude that it is $\ConDiff{}{}{m}$ as a map to $\CcFvanK{\UF}{\SZ}{k}$.
\end{proof}
\chapter{Lie groups of weighted diffeomorphisms}
In this chapter, we prove that for each Banach space $\SX$ appropriate
subgroups of the diffeomorphism group $\Diff{\SX}{}{}$ can be turned
into Lie groups that are modelled on some weighted function space described earlier.
Further, we show that these Lie groups are regular.
Here
\[
	\glstext{Diffeomorphismen_Braum} \ndef
	\{\phi \in \FC{\SX}{\SX}{\infty}
		: \phi \text{ is bijective and }
			\phi^{-1} \in \FC{\SX}{\SX}{\infty}
	\} ;
\]
\index{diffeomorphisms}
the chain rule ensures that
$\Diff{\SX}{}{}$ is actually a group with the composition
and inversion of maps as the group operations.
\section{Weighted diffeomorphisms and endomorphisms}
In this section, we define and examine sets of \emph{weighted endomorphisms} $\EndW$
and \emph{weighted diffeomorphisms} $\DiffW$.
We show that if $1_\SX \in \GewFunk$, then $\EndW$ is a smooth monoid and $\DiffW$ is its group of units
that can be turned into a Lie group.
Further, we discuss certain subsets of these, the \emph{decreasing weighted diffeomorphisms}
respective \emph{endomorphisms}.

For nonempty $\cW \subseteq \cl{\R}^\SX$, we define
\[
	\glstext{gew_Diffeomorphismen_Braum} \ndef
	\{ \phi \in \Diff{\SX}{}{} :
		\phi - \id{\SX} ,\,\phi^{-1} - \id{\SX}
		\in \CF{\SX}{\SX}{\cW}{\infty}
	\}
\]
\index{weighted diffeomorphisms}%
\index{diffeomorphisms!weighted|see{weighted diffeomorphisms}}%
and
\[
	\glstext{gew_Endomorphismen_Braum} \ndef
	\{	\gamma + \id{\SX} :
		\gamma \in \CF{\SX}{\SX}{\cW}{\infty}
	\}.
\]
The set $\Endos{\SX}{\cW}$ can be turned into a smooth manifold
using the differentiable structure generated by the bijective map
\begin{equation}\label{def_der_Karte}
	\glstext{chart_gewEndos}
	: \CF{\SX}{\SX}{\cW}{\infty} \to \Endos{\SX}{\cW}
	: \gamma \mapsto \gamma + \id{\SX}.
\end{equation}
We clarify the relation between $\Endos{\SX}{\cW}$ and $\Diff{\SX}{}{\cW}$.
The following is obvious from the definition:
\begin{lem}\label{lem:char_DiffW}
	Let $\cW \subseteq \cl{\R}^\SX$ and	$\phi \in \Diff{\SX}{}{}$. Then
	\[
		\phi \in \Diff{\SX}{}{\cW}
		\iff
		\phi, \phi^{-1} \in \Endos{\SX}{\cW} .
	\]
\end{lem}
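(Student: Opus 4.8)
The plan is to reduce both sides of the asserted equivalence to statements about membership of $\phi - \id_{\SX}$ and $\phi^{-1} - \id_{\SX}$ in the modelling space $\CF{\SX}{\SX}{\cW}{\infty}$; once this is done, the equivalence is a tautology and no analytic content remains.

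First I would record the defining property of $\Endos{\SX}{\cW}$ that drives the argument. Since the chart map $\karte{\cW}$ of \eqref{def_der_Karte} is a bijection from $\CF{\SX}{\SX}{\cW}{\infty}$ onto $\Endos{\SX}{\cW}$ with inverse $\psi \mapsto \psi - \id_{\SX}$, a map $\psi : \SX \to \SX$ lies in $\Endos{\SX}{\cW}$ if and only if $\psi - \id_{\SX} \in \CF{\SX}{\SX}{\cW}{\infty}$. Applying this once with $\psi = \phi$ and once with $\psi = \phi^{-1}$ turns the two conditions $\phi \in \Endos{\SX}{\cW}$ and $\phi^{-1} \in \Endos{\SX}{\cW}$ into $\phi - \id_{\SX} \in \CF{\SX}{\SX}{\cW}{\infty}$ and $\phi^{-1} - \id_{\SX} \in \CF{\SX}{\SX}{\cW}{\infty}$, respectively.

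Next I would invoke the standing hypothesis $\phi \in \Diff{\SX}{}{}$, which is part of both sides of the claim and may therefore be used freely (it is in particular what allows us to speak of $\phi^{-1}$ at all). Under this assumption the definition of $\Diff{\SX}{}{\cW}$ reads precisely: $\phi \in \Diff{\SX}{}{\cW}$ exactly when $\phi - \id_{\SX} \in \CF{\SX}{\SX}{\cW}{\infty}$ and $\phi^{-1} - \id_{\SX} \in \CF{\SX}{\SX}{\cW}{\infty}$. Combining this with the characterisation of $\Endos{\SX}{\cW}$ from the previous step gives the stated equivalence at once.

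I do not expect any genuine obstacle: the statement is purely definitional, and no regularity result, chain rule, or continuity of composition is needed. The only point requiring a little care is to make explicit that the hypothesis $\phi \in \Diff{\SX}{}{}$ is common to both sides and is silently used to make sense of the inverse, so that the equivalence is really between the two supplementary decay conditions and nothing more.
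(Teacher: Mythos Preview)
Your proposal is correct and matches the paper's approach: the paper simply states that the lemma is ``obvious from the definition'' and provides no further proof, and your argument is exactly the unpacking of the definitions of $\Diff{\SX}{}{\cW}$ and $\Endos{\SX}{\cW}$ that makes this obviousness explicit.
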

Furthermore, we have
\begin{lem}\label{lem:EndW_und_DiffW}
	Let $\cW \subseteq \cl{\R}^\SX$ such that $\Endos{\SX}{\cW}$ is a monoid
	with respect to the composition of maps. Then the group of units is given by
	\[
		\Endos{\SX}{\cW}^\times = \Diff{\SX}{}{\cW};
	\]
	in particular $\Diff{\SX}{}{\cW}$ is a subgroup of $\Diff{\SX}{}{}$.
\end{lem}
\begin{proof}
	Obviously
	\[
		\phi \in \Endos{\SX}{\cW}^\times
		\iff
		\text{$\phi$ is bijective and }
		\phi, \phi^{-1} \in \Endos{\SX}{\cW}.
	\]
	Since $\Endos{\SX}{\cW}$ consists of smooth maps, the assertion follows
	from \refer{lem:char_DiffW}.
\end{proof}
In the rest of this section, we prove that $\Endos{\SX}{\cW}$ is a smooth monoid if
$1_\SX \in \cW$;
thus $\Diff{\SX}{}{\cW}$ is a group by \refer{lem:EndW_und_DiffW}.
Further, we define the set of \emph{weighted decreasing endomorphisms}
and show that it is a closed submonoid of $\Endos{\SX}{\cW}$.
The main part is to show that the monoid multiplication
\[
	\circ : \Endos{\SX}{\cW}\times\Endos{\SX}{\cW}
		\to \Endos{\SX}{\cW}
\]
is defined and smooth, so we elaborate on this.
\subsection{Composition of weigthed endomorphisms in charts}
We study how the composition looks like
with respect to the global chart $\kF^{-1}$ (from \eqref{def_der_Karte}).
For $\eta ,\,\gamma \in \CcF{\SX}{\SX}{\infty}$,
\begin{equation}\label{id:Komposition_in_Koordinaten}
	\kF(\gamma)\circ\kF(\eta)
	= (\gamma + \id{\SX})\circ (\eta + \id{\SX})
	= \gamma\circ(\eta + \id{\SX}) + \eta + \id{\SX}.
\end{equation}
Obviously $\kF(\gamma)\circ\kF(\eta) \in \EndW$ if and only if
$\gamma\circ(\eta + \id{\SX}) \in \CcF{\SX}{\SX}{\infty}$;
and the smoothness of $\circ$ is equivalent to that of
\begin{equation*}
	\CcF{\SX}{\SX}{\infty}
		\times \CcF{\SX}{\SX}{\infty}
		\to \CcF{\SX}{\SX}{\infty}
	: (\gamma,\eta) \mapsto \gamma\circ(\eta + \id{\SX}) .
\end{equation*}
\subsubsection{Important maps}
For technical reasons we look at more general maps of the form
\begin{equation}\label{Kompo_mit_+id}
	\compIdRaw
	:  \SY^{\WF} \times \VF^{\UF}
		\to \SY^{\UF}
	:(\gamma,\eta) \mapsto \gamma\circ(\eta + \id{\UF});
\end{equation}
here $\UF, \VF, \WF \subseteq \SX$ are open nonempty subsets with
$\VF + \UF \subseteq \WF$ and $\SY$ is a normed space.
These maps play an important role in further discussions.
\paragraph{Continuity properties}
We discuss when the restriction of $\compIdRaw$ to weighted function spaces
has values in a weighted function space and is continuous.
We start with the following lemma whose assertion is used as the base case for \refer{lem:comp_CF(k+1)xCF(k)_CF(k)}.%
\begin{lem}
\label{lem:Hinreichendes_fuer_Kompo_in_CF0}
\label{lem:kompo_1x0->0_stetig}
	Let $\SX$ and $\SY$ be normed spaces, $\UF, \VF, \WF \subseteq \SX$ open nonempty subsets
	such that $\VF + \UF \subseteq \WF$ and $\VF$ is balanced,
	and $\GewFunk \subseteq \cl{\R}^{\WF}$.
	\begin{enumerate}
	\item\label{enum1:Hinreichendes_fuer_Kompo_in_CF0}
		For $\gamma \in \FC{\WF}{\SY}{1}$,
		$\eta : \UF \to \VF$, $f \in \GewFunk$ and $x\in\UF$,
		the estimate
		\begin{equation}\label{est:Funktionswerte_Gewicht_K-Kompo}
			\abs{f(x)}\,\norm{\gamma \circ (\eta + \id{\SX})(x)}
			\leq \abs{f(x)}\, (\hn{\gamma}{1_{\sset{x} + \Disk\eta(\UF)}}{1}\,\norm{\eta(x)} + \norm{\gamma(x)})
		\end{equation}
		holds. In particular, if $\gamma \in \CcF{\WF}{\SY}{0} \cap \BC{\WF}{\SY}{1}$
		and $\eta \in \CcF{\UF}{\VF}{0}$, then
		\[
			\compIdRaw(\gamma , \eta)
			= \gamma \circ (\eta + \id{\SX})
			\in \CcF{\UF}{\SY}{0} .
		\]
	
	\item\label{enum1:kompo_1x0->0_stetig}
		Let $\gamma, \gamma_0 \in \CcF{\WF}{\SY}{0} \cap \BC{\WF}{\SY}{1}$
		and $\eta, \eta_0 \in \CcF{\UF}{\VF}{0}$ such that
		\[
			\{t \eta(x) + (1-t) \eta_0(x) : t \in [0,1], x\in\UF\} \subseteq \VF .
		\]
		Then for each $f \in \GewFunk$ the estimate
		\begin{equation}\label{est:f,0-Norm_Differenz_Kompo}
			\begin{multlined}[0.8\columnwidth]
				\hn{\compIdRaw(\gamma , \eta) -
					\compIdRaw(\gamma_0 , \eta_0)}{f}{0}
				\leq
					\hn{\gamma}{1_\WF}{1} \hn{\eta - \eta_0}{f}{0}\\
					+ \hn{\gamma - \gamma_0}{1_\WF}{1} \hn{\eta_0}{f}{0}
					+ \hn{\gamma - \gamma_0}{f}{0}
				\end{multlined}
		\end{equation}
		holds. In particular, if $1_\WF \in \cW$ then the map
		\[
			\compIdConCW{\SY}{0}
			:\CcF{\WF}{\SY}{1} \times \CcFo{\UF}{\VF}{0}
			\to \CcF{\UF}{\SY}{0}
			: (\gamma, \eta) \mapsto \compIdRaw(\gamma, \eta)
		\]
		is continuous.
	\end{enumerate}
\end{lem}
\begin{proof}
	\refer{enum1:Hinreichendes_fuer_Kompo_in_CF0}
	For $x \in \UF$ we derive using the triangle inequality and the mean value theorem
	\begin{align*}
		\abs{f(x)}\,\norm{\compIdRaw(\gamma , \eta)(x)}
		&= \abs{f(x)}\,\norm{\gamma(\eta(x) + x)}\\
		&\leq \abs{f(x)}\,\norm{\gamma(\eta(x) + x) - \gamma(x)}
			+ \abs{f(x)}\,\norm{\gamma(x)}\\
		&= \abs{f(x)}\,\left\norm{\Mint{\FAbl{\gamma}(x + t \eta(x))\eval \eta(x) }{t}\right}
			+ \abs{f(x)}\,\norm{\gamma(x)}\\
		&\leq \abs{f(x)}\,\OpInf{\rest{\FAbl{\gamma}}{\{x\} + \Disk\eta(\UF)}}\norm{\eta(x)} + \abs{f(x)}\,\norm{\gamma(x)}
	\end{align*}
	and from this we easily conclude the assertion. We could apply the mean value theorem
	because the line segment $\{x + t \eta(x) : t \in [0,1]\}$ is contained in $\UF + \VF$
	since $\VF$ is balanced.
	
	\refer{enum1:kompo_1x0->0_stetig}
	For $x \in \UF$ we have
	\begin{gather*}
		\abs{f(x)}\, \norm{\compIdConCW{\SY}{0}(\gamma , \eta)(x) -
		 \compIdConCW{\SY}{0}(\gamma_0 , \eta_0)(x)}
		= \abs{f(x)}\, \norm{\gamma(\eta(x) + x)
			- \gamma_0(\eta_0(x) + x)}.\\
		\intertext{
		We add $0 = \gamma(\eta_0(x) + x) - \gamma(\eta_0(x) + x)$
		and apply the triangle inequality:
		}
		= \abs{f(x)}\, \norm{\gamma(\eta(x) + x)
			- \gamma(\eta_0(x) + x)
			+ \gamma(\eta_0(x) + x)
			- \gamma_0(\eta_0(x) + x)}\\
		\leq \abs{f(x)}\, \norm{\gamma(\eta(x) + x) - \gamma(\eta_0(x) + x)}
			+ \abs{f(x)}\,\norm{(\gamma - \gamma_0)(\eta_0(x) + x)}.
	\end{gather*}
	We discuss the summands separately. For the first summand, we can apply the mean value theorem (\refer{prop:MWS_FC1_Abb}) because we assumed that
	the line segment $\{t \eta(x) + (1-t) \eta_0(x) : t \in [0,1]\}$
	is contained in $\VF$, and get
	\begin{align*}
		&\abs{f(x)}\, \norm{\gamma(\eta(x) + x) - \gamma(\eta_0(x) + x)}\\
		=& \abs{f(x)}\, \left\norm{\Rint{0}{1}{D\gamma(t\eta(x) + (1-t)\eta_0(x) + x)\eval (\eta(x) - \eta_0(x))}{t}\right}\\
		\leq& \abs{f(x)}\, \hn{\gamma}{1_{\WF}}{1} \norm{\eta(x) - \eta_0(x)} .
	\end{align*}
	By applying the mean value theorem, which is possible because $\VF$ is balanced,
	the second summand becomes:
	\begin{align*}
		&\abs{f(x)}\,\norm{(\gamma - \gamma_0)(\eta_0(x) + x)}\\
		=& \abs{f(x)}\, \norm{(\gamma - \gamma_0)(\eta_0(x) + x)
			- (\gamma - \gamma_0)(x) + (\gamma - \gamma_0)(x)}
		\\
		\leq&
		\abs{f(x)}\, \left(\left\norm{\Rint{0}{1}{D(\gamma - \gamma_0)(t \eta_0(x) + x)\eval \eta_0(x)}{t} \right}
			+ \norm{(\gamma - \gamma_0)(x)}\right)
		\\
		\leq& \abs{f(x)}\, \bigl(\hn{\gamma - \gamma_0}{1_{\WF}}{1} \norm{\eta_0(x)}
			+ \norm{(\gamma - \gamma_0)(x)}\bigr).
	\end{align*}
	Combining these two estimates gives \eqref{est:f,0-Norm_Differenz_Kompo}.

	The continuity of $\compIdConCW{\SY}{0}$ follows from this estimate:
	For each $\eta \in \CcFo{\UF}{\VF}{0}$, there exists an $r > 0$ such that
	\[
		\eta(\UF) + \Ball{0}{r} \subseteq \VF,
	\]
	and since $1_\WF \in \cW$,
	\[
		F_\eta \ndef \{\widetilde{\eta} \in \CcF{\UF}{\SX}{0} : \hn{\eta - \widetilde{\eta}}{1_\WF}{0} < r\}
	\]
	is a neighborhood of $\eta$ in $\CcFo{\UF}{\VF}{0}$.
	The \refer{est:f,0-Norm_Differenz_Kompo}
	ensures that $\compIdConCW{\SY}{0}$ is continuous on $\CcF{\WF}{\SY}{1}\times F_\eta$.
\end{proof}
\begin{lem}\label{lem:comp_CF(k+1)xCF(k)_CF(k)}
	Let $\SX$ and $\SY$ be normed spaces, $\UF, \VF, \WF \subseteq \SX$ open nonempty subsets such that
	$\VF + \UF \subseteq \WF$ and $\VF$ is balanced,
	$k \in \N$ and $\GewFunk \subseteq \cl{\R}^\WF$ with $1_\WF \in \cW$.
	Then
	\[
		\compIdRaw(
			\CcF{\WF}{\SY}{k+1} \times
			\CcF{\UF}{\VF}{k}
		) 
		\subseteq \CcF{\UF}{\SY}{k} ,
	\]
	and the map
	\[
		\compIdConCW{\SY}{k}
		:
		\CcF{\WF}{\SY}{k+1} \times \CcFo{\UF}{\VF}{k}
		\to \CcF{\UF}{\SY}{k}
		:
		(\gamma, \eta) \mapsto \compIdRaw(\gamma, \eta)
	\]
	which arises by restricting $\compIdRaw$ is continuous.
\end{lem}
\begin{proof}
	The proof is by induction. The case $k=0$ was treated in
	\refer{lem:Hinreichendes_fuer_Kompo_in_CF0}.
	
	$k\to k + 1$:
	We use \refer{prop:topologische_Zerlegung_von_CFk}
	(and \refer{lem:Hinreichendes_fuer_Kompo_in_CF0}) to see that
	\[
		\compIdRaw(\CcF{\WF}{\SY}{k+2} \times
			\CcF{\UF}{\VF}{k+1})
		\subseteq \CcF{\UF}{\SY}{k+1}
	\]
	is equivalent to
	\[
		(\FAbl{} \circ \compIdRaw) (\CcF{\WF}{\SY}{k+2}
			\times\CcF{\UF}{\VF}{k+1})
		\subseteq \CcF{\UF}{ \Lin{\SX}{\SY} }{k};
	\]
	and that the continuity of $\compIdConCW{\SY}{k + 1}$ is equivalent to that of
	$\FAbl{} \circ \compIdConCW{\SY}{k + 1}$.
	\\
	Applying the chain rule to $\compIdRaw$ yields that for
	$\gamma \in \CcF{\WF}{\SY}{k+2}$ and
	$\eta \in \CcF{\UF}{\VF}{k+1}$
	\begin{equation*}\label{id:Ableitung_nach_g}
		\tag{\ensuremath{\ast}}
		(\FAbl{}\circ \compIdRaw)(\gamma,\eta)
		=\compIdConCW{\Lin{\SX}{\SY}}{ k}(\FAbl{\gamma}, \eta)\MaMu(\FAbl{\eta} + \idco)
	\end{equation*}
	holds, where $\MaMu$ denotes the composition of linear maps (see \refer{cor:Komposition_linearer_Abb_und_CF})
	and $\idco$ denotes the constant map $x\mapsto\id{\SX}$.
	Since
	$\FAbl{\gamma} \in \CcF{\WF}{\Lin{\SX}{\SY}}{k+1}$,
	we derive from the induction hypothesis that
	\[
		\compIdConCW{\Lin{\SX}{\SY}}{k}(\FAbl{\gamma}, \eta)
		\in \CcF{\UF}{\Lin{\SX}{\SY}}{k}.
	\]
	Hence we conclude from \refer{cor:Komposition_linearer_Abb_und_CF}
	and $\FAbl{\eta} + \idco \in\BC{\UF}{\Lin{\SX}{\SX}}{k}$
	that
	\[
		(\FAbl{} \circ \compIdRaw)(\gamma,\eta) \in
		\CcF{\UF}{\Lin{\SX}{\SY}}{k}.
	\]
	The continuity of $\FAbl{} \circ \compIdConCW{\SY}{k + 1}$ follows easily from
	\eqref{id:Ableitung_nach_g}: We use the inductive hypothesis to conclude
	that $\compIdConCW{\Lin{\SX}{\SY}}{ k}$ is continuous.
	Since $\FAbl{}$ and
	\[
		\MaMu :
		\CcF{\UF}{\Lin{\SX}{\SY}}{k}
			\times \BC{\UF}{\Lin{\SX}{\SX}}{k}
		\to
		\CcF{\UF}{\Lin{\SX}{\SY}}{k}
	\]
	are smooth (see \refer{prop:Ableitung_ist_stetig}
	and \refer{cor:Komposition_linearer_Abb_und_CF}) as well as the translation
	with $\idco$ in $\BC{\UF}{\Lin{\SX}{\SX}}{k}$,
	the continuity of $\compIdConCW{\SY}{k + 1}$ is proved.
\end{proof}
\subparagraph{Restriction to decreasing functions}
Finally, we study the restriction of $\compIdConCW{\SY}{k}$ to decreasing functions.
\begin{lem}\label{lem:EndWwan_und_Kompostion}
	Let $\SX$ and $\SY$ be normed spaces, $\UF, \VF, \WF \subseteq \SX$ open nonempty subsets such that
	$\VF + \UF \subseteq \WF$ and $\VF$ is balanced,
	$k \in \N$ and $\GewFunk \subseteq \cl{\R}^\SX$ with $1_\SX \in \cW$.
	Then
	\[
		\compIdConCW{\SY}{k}(
			\CcFvan{\WF}{\SY}{k+1} \times
			\CcF{\UF}{\VF}{k}
		) 
		\subseteq \CcFvan{\UF}{\SY}{k} .
	\]
\end{lem}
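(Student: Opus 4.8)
The plan is to reduce everything to a decay statement. By \refer{prop:comp_CF(k+1)xCF(k)_CF(k)} we already know that $g_\SY(\gamma,\eta)\in\CcF{\UF}{\SY}{k}$ for $\gamma\in\CcF{\WF}{\SY}{k+1}$ and $\eta\in\CcF{\UF}{\VF}{k}$; hence for $\gamma\in\CcFvan{\WF}{\SY}{k+1}$ only the vanishing condition defining $\CcFvan{\UF}{\SY}{k}$ remains to be checked. I would treat finite $k$ by induction, using \refer{cor:topologische_Zerlegung_von_CFkvan} to split the order-$(k+1)$ decay into the order-$0$ decay of $g_\SY(\gamma,\eta)$ together with the order-$k$ decay of its derivative; the case $k=\infty$ then follows at once from $\CcFvan{\UF}{\SY}{\infty}=\bigcap_{k\in\N}\CcFvan{\UF}{\SY}{k}$ and the finite cases.

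For the base case $k=0$ I would invoke \refer{est:Funktionswerte_Gewicht_K-Kompo} of \refer{lem:Hinreichendes_fuer_Kompo_in_CF0} (whose hypotheses $\gamma\in\CcF{\WF}{\SY}{0}\cap\BC{\WF}{\SY}{1}$ and $\eta\in\CcF{\UF}{\VF}{0}$ hold because $1_\SX\in\cW$), which yields for $f\in\cW$ and $x\in\UF$
\[
	\abs{f(x)}\,\norm{g_\SY(\gamma,\eta)(x)}
	\leq \hn{\gamma}{1_{\{x\}+\Disk\eta(\UF)}}{1}\,\abs{f(x)}\,\norm{\eta(x)}
		+ \abs{f(x)}\,\norm{\gamma(x)} .
\]
Since $1_\SX\in\cW$, the map $\eta$ is bounded, so $B\ndef\hn{\eta}{1_\UF}{0}<\infty$ and $\{x\}+\Disk\eta(\UF)\subseteq\clBall{x}{B}$; thus this evaluation set recedes to infinity as $\norm{x}\to\infty$. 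Given $\eps>0$, the decay of $\gamma$ in order $1$ (weight $1_\WF$) supplies $r_1$ with $\hn{\rest{\gamma}{\WF\setminus\Ball{0}{r_1}}}{1_\WF}{1}$ arbitrarily small, and for $\norm{x}>r_1+B$ the inclusion $\{x\}+\Disk\eta(\UF)\subseteq\WF\setminus\Ball{0}{r_1}$ bounds the first summand by that quantity times the finite number $\hn{\eta}{f}{0}$; the decay of $\gamma$ in order $0$ (weight $f$) bounds the second summand via $\abs{f(x)}\,\norm{\gamma(x)}\leq\hn{\rest{\gamma}{\WF\setminus\Ball{0}{r_0}}}{f}{0}$. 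Taking the radius to be $\max(r_0,r_1+B)$ then gives $\hn{\rest{g_\SY(\gamma,\eta)}{\UF\setminus\Ball{0}{r}}}{f}{0}<\eps$, i.e.\ $g_\SY(\gamma,\eta)\in\CcFvan{\UF}{\SY}{0}$.

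For the step $k\to k+1$ I would apply \refer{cor:topologische_Zerlegung_von_CFkvan}: the order-$0$ part is the base case applied to $\gamma\in\CcFvan{\WF}{\SY}{k+2}\subseteq\CcFvan{\WF}{\SY}{1}$, and it remains to place $\FAbl{g_\SY(\gamma,\eta)}$ in $\CcFvan{\UF}{\Lin{\SX}{\SY}}{k}$. For this I would use the chain-rule identity \eqref{eq:Ableitung_nach_g}, $\FAbl{g_\SY(\gamma,\eta)}=g_{\Lin{\SX}{\SY},k}(\FAbl{\gamma},\eta)\MaMu(\FAbl{\eta}+\idco)$. Here $\FAbl{\gamma}\in\CcFvan{\WF}{\Lin{\SX}{\SY}}{k+1}$ by \refer{cor:topologische_Zerlegung_von_CFkvan}, so the inductive hypothesis (applied with $\Lin{\SX}{\SY}$ in place of $\SY$) gives $g_{\Lin{\SX}{\SY},k}(\FAbl{\gamma},\eta)\in\CcFvan{\UF}{\Lin{\SX}{\SY}}{k}$, while $\FAbl{\eta}+\idco\in\BC{\UF}{\Lin{\SX}{\SX}}{k}$; composing the vanishing factor with the bounded one through the continuous bilinear map $\MaMu$ keeps the result vanishing by \refer{cor:multilineare_Abb_und_CFvan} (with $\cW_1=\cW$ for the vanishing slot and $\cW_2=\{1_\UF\}$ for the bounded slot). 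This completes the step. The main obstacle is the base case: every summand except the first is controlled directly by the decay hypotheses on $\gamma$, whereas the first summand vanishes only because boundedness of $\eta$ forces the evaluation set $\{x\}+\Disk\eta(\UF)$ to recede to infinity, where $\FAbl{\gamma}$ is uniformly small.
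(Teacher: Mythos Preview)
Your proof is correct and follows essentially the same approach as the paper: induction on finite $k$, with the base case handled via \refer{est:Funktionswerte_Gewicht_K-Kompo} (using the boundedness of $\eta$ to push the evaluation set to infinity together with the decay of $\gamma$ in orders $0$ and $1$), and the inductive step handled via the chain-rule identity \eqref{eq:Ableitung_nach_g} combined with \refer{cor:topologische_Zerlegung_von_CFkvan} and \refer{cor:multilineare_Abb_und_CFvan}. The only cosmetic differences are that the paper chooses a single $r$ simultaneously witnessing both decay conditions rather than taking a maximum of two radii, and does not separately invoke \refer{prop:comp_CF(k+1)xCF(k)_CF(k)} since membership in $\CcF{\UF}{\SY}{k}$ is already implicit.
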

\begin{proof}
	The proof is by induction on $k$:

	$k = 0$: We use \refer{est:Funktionswerte_Gewicht_K-Kompo}
	in \refer{lem:Hinreichendes_fuer_Kompo_in_CF0}:
	\\
	Let $f \in \cW$, $\gamma \in \CcFvan{\WF}{\SY}{1}$ and $\eta \in \CcF{\UF}{\VF}{0}$.
	Then for every $\eps > 0$ there exists $r > 0$ such that
	\[
		\hn{\rest{\gamma}{\WF\setminus \Ball{0}{r}}}{f}{0} < \frac{\eps}{2}
	\]
	and (as $1_\SX \in \cW$)
	\[
		\hn{\rest{\gamma}{\WF\setminus \Ball{0}{r}}}{1_\WF}{1} < \frac{\eps}{2 (\hn{\eta}{f}{0} + 1)}.
	\]
	Since $1_\SX \in \cW$, we have $K \ndef \hn{\eta}{1_\UF}{0} < \infty$.
	Let $R \in \R$ such that $R > r + K$. Then for each $x\in \UF\setminus\clBall{0}{R}$, we have
	\[
		x + \Disk\eta(x) \subseteq \WF\setminus\clBall{0}{r},
	\]
	so we conclude from \refer{est:Funktionswerte_Gewicht_K-Kompo} that
	\[
		\abs{f(x)}\,\norm{\compIdConCW{\SY}{k}(\gamma,\eta)(x)}
		\leq \hn{\gamma}{1_{\{x\} + \Disk\eta(\UF)}}{1}\,\hn{\eta}{f}{0} + \abs{f(x)}\, \norm{\gamma(x)}
		<  \frac{\eps}{2 (\hn{\eta}{f}{0} + 1)} \hn{\eta}{f}{0} + \frac{\eps}{2}
		.
	\]
	Thus $\compIdConCW{\SY}{k}(\gamma,\eta) \in \CcFvan{\UF}{\SY}{0}$.

	$k \to k + 1$:
	We calculate using the chain rule that
	\begin{equation*}
		(\FAbl{} \circ \compIdConCW{\SY}{ k + 1})(\gamma,\eta)
		=\compIdConCW{\Lin{\SX}{\SY}}{ k}(D\gamma, \eta)\MaMu(\FAbl{\eta} + \idco) .
	\end{equation*}
	Since $\FAbl{\gamma} \in \CcFvan{\WF}{\Lin{\SX}{\SY} }{k+1}$
	(see \refer{cor:topologische_Zerlegung_von_CFkvan}),
	\[
		\compIdConCW{\Lin{\SX}{\SY}}{ k}(\FAbl{\gamma}, \eta) \in \CcFvan{\UF}{\Lin{\SX}{\SY} }{k}
	\]
	by the inductive hypothesis.
	Further, $\FAbl{\eta} + \idco \in \BC{\UF}{\Lin{\SX}{\SX}}{k}$, so we conclude
	with \refer{cor:multilineare_Abb_und_CFvan} that
	\[
		(\FAbl{} \circ \compIdConCW{\SY}{ k + 1})(\gamma,\eta) \in \CcFvan{\UF}{\Lin{\SX}{\SY} }{k}.
	\]
	From this (and the base case $k=0$) we see with \refer{cor:topologische_Zerlegung_von_CFkvan} that
	\[
		\compIdConCW{\SY}{ k + 1}(\gamma,\eta) \in \CcFvan{\UF}{\SY }{k + 1},
	\]
	so the proof is complete.
\end{proof}
\paragraph{Differentiability properties}
We discuss whether restrictions of $\compIdConCW{\SY}{k}$ to certain
weighted function spaces are differentiable.
Before we do this, we give the following definitions.
\begin{defi}
	Let $\SX$ and $\SY$ be normed spaces, $\UF, \VF, \WF \subseteq \SX$ open nonempty subsets
	such that $\VF + \UF \subseteq \WF$ and $\VF$ is balanced,
	$\GewFunk \subseteq \cl{\R}^\WF$ with $1_\WF \in \cW$
	and $k, \ell \in \cl{\N}$.
	Then the map
	\[
		\glstext{composition_map_id}
		:\CcF{\WF}{\SY}{k + \ell + 1} \times
			\CcFo{\UF}{\VF}{k}
		\to \CcF{\UF}{\SY}{k}
		: (\gamma,\eta) \mapsto \gamma\circ(\eta + \id{\UF})
	\]
	is defined by \refer{lem:comp_CF(k+1)xCF(k)_CF(k)}. Additionally,
	we set $\compIdSmoothLcW{\SY}{k} \ndef \compIdDiffKLcW{\SY}{ k}{ \infty}$
	and $\compIdcW{\SY} \ndef \compIdDiffKLcW{\SY}{ \infty}{ \infty}$.
\end{defi}
We provide a nice identity for the differential quotient of $\compIdConCW{\SY}{k}$.
\begin{lem}\label{lem:Integraldarstellung_des_DiffQuotient_Kompo}
	Let $\SX$ and $\SY$ be normed spaces, $\UF, \VF, \WF \subseteq \SX$ open nonempty subsets such that
	$\VF + \UF \subseteq \WF$ and $\VF$ is balanced,
	$\gamma, \gamma_1 \in \FC{\WF}{\SY}{1}$,
	$\eta \in \VF^\UF$, $\eta_1 \in \SX^\UF$ and $t\in\K^\ast$.
	Further, suppose that
	\[
		\set{\eta + s t \eta_1}{ s \in [0,1]} \sub \VF^\UF.
	\]
	Then for each $x \in \UF$,
	\begin{equation*}
		\evTwo_{x}
		\left(
			\tfrac{
				\compIdRaw(\gamma + t\gamma_1, \eta + t\eta_1 ) -
					\compIdRaw(\gamma ,\eta )
				}%
			{t}
		\right)
		\\=
		\Rint{0}{1}
		{
			\evTwo_{x}
			\bigl(
			\compIdRaw
			(\FAbl{(\gamma + s t \gamma_1)}, \eta + s t \eta_1) \eval \eta_1
			+
			\compIdRaw(\gamma_1, \eta + s t \eta_1)
			\bigr)
		}{s}.
	\end{equation*}
\end{lem}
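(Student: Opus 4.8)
Reduce the assertion to a one-dimensional computation in $\SY$ by applying the point evaluation $\delta_{x}$. Since $\delta_{x}$ only sees pointwise values, we have $\delta_{x}(g_{\SY,1}(\alpha,\beta)) = \alpha(\beta(x) + x)$ and $\delta_{x}(g_{\Lin{\SX}{\SY},1}(\Gamma,\beta)\eval \eta_1) = \Gamma(\beta(x)+x)\eval \eta_1(x)$, so after applying the linear map $\delta_{x}$ the claimed identity becomes an equation between elements of $\SY$. First I would introduce the auxiliary curve
\[
	h : [0,1] \to \SY,\quad
	h(s) \ndef (\gamma + s t \gamma_1)(w(s)),\qquad
	w(s) \ndef \eta(x) + x + s t\, \eta_1(x),
\]
for which $h(0) = \delta_{x}(g_{\SY,1}(\gamma,\eta))$ and $h(1) = \delta_{x}(g_{\SY,1}(\gamma + t \gamma_1, \eta + t \eta_1))$; hence the numerator of the left hand side equals $h(1) - h(0)$.

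Before differentiating I would check that $h$ is well defined. By hypothesis the segment $\{\eta(x) + s t \eta_1(x) : s \in [0,1]\}$ lies in $\VF$, so $w(s) \in \VF + \UF \subseteq \WF$ for every $s$, and $\gamma, \gamma_1$ may be evaluated at $w(s)$. Since $\gamma, \gamma_1 \in \CcF{\WF}{\SY}{2} \subseteq \FC{\WF}{\SY}{1}$ and $w$ is affine with $w'(s) = t\, \eta_1(x)$, the curve $h$ is $\ConDiff{}{}{1}$, and applying the chain rule (\refer{prop:Kettenregel_Frechet}) together with the product rule to the second summand $s t\, \gamma_1(w(s))$ gives
\[
	h'(s)
	= t\Bigl( \bigl(D\gamma + s t\, D\gamma_1\bigr)(w(s))\eval \eta_1(x) + \gamma_1(w(s)) \Bigr).
\]
The one point needing attention is the bookkeeping here: differentiating $s t\, \gamma_1(w(s))$ produces both $t\, \gamma_1(w(s))$ and the term $s t^2\, D\gamma_1(w(s))\eval \eta_1(x)$, and it is precisely the latter that combines with $t\, D\gamma(w(s))\eval \eta_1(x)$ into $t\, D(\gamma + s t \gamma_1)(w(s))\eval \eta_1(x)$, matching the shape of the integrand.

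Finally I would invoke the fundamental theorem of calculus for $\ConDiff{}{}{1}$-curves (the mean value theorem \refer{prop:MWS_f"ur_FC1_Abb} on $[0,1]$) to write $h(1) - h(0) = \Rint{0}{1}{h'(s)}{s}$. Dividing by $t \in \R^\ast$ and using the linearity of $\delta_{x}$ yields
\[
	\delta_{x}\left(\frac{g_{\SY,1}(\gamma + t\gamma_1, \eta + t\eta_1) - g_{\SY,1}(\gamma ,\eta )}{t}\right)
	= \frac{h(1) - h(0)}{t}
	= \Rint{0}{1}{\frac{h'(s)}{t}}{s}.
\]
It then remains to recognise $h'(s)/t$ as the $\delta_{x}$-evaluation of the integrand on the right hand side: we have $D(\gamma + s t \gamma_1)(w(s))\eval \eta_1(x) = \delta_{x}\bigl(g_{\Lin{\SX}{\SY},1}(D(\gamma + s t \gamma_1), \eta + s t \eta_1)\eval \eta_1\bigr)$ and $\gamma_1(w(s)) = \delta_{x}\bigl(g_{\SY,1}(\gamma_1, \eta + s t \eta_1)\bigr)$, so the integral reproduces exactly the asserted expression. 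I expect no serious obstacle beyond the careful product-rule computation above and the verification that $w(s)$ stays in $\WF$; every remaining ingredient is an already established result.
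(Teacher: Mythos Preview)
Your proof is correct and rests on the same computation as the paper's. The only organisational difference is direction: the paper starts from the integrand, splits it into the $D\gamma$-piece (handled by the mean value theorem) and the two $\gamma_1$-pieces (recognised as the derivative of $s\,\gamma_1(w(s))$), and integrates each to recover the difference quotient; you instead package everything into the single curve $h(s) = (\gamma + s t \gamma_1)(w(s))$ and apply the fundamental theorem once. Both arguments hinge on the identical chain/product-rule calculation for $h'(s)$, so there is no substantive distinction.
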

\begin{proof}
	We first prove that the relevant weak integral exists.
	To this end, we take a closer look at the integrand.
	\begin{multline*}
		\evTwo_{x}
		\bigl(
			\compIdRaw
			(\FAbl{(\gamma + s t \gamma_1)}, \eta + s t \eta_1) \eval \eta_1
			+
			\compIdRaw(\gamma_1, \eta + s t \eta_1)
		\bigr)
		\\= \FAbl{\gamma}(\eta(x) + s t \eta_1(x) + x) \eval \eta_1(x)
				+ s t \FAbl{\gamma_1}(\eta(x) + s t \eta_1(x) + x) \eval \eta_1(x)
				+ \gamma_1( \eta(x) +  s t \eta_1(x) + x).
	\end{multline*}
	Since $\set{\eta(x) + s t \eta_1(x)}{ s \in [0,1]} \subseteq \VF$,
	we apply the mean value theorem to $\gamma$
	\[
		\Rint{0}{1}{
		\FAbl{\gamma}(\eta(x) + s t \eta_1(x) + x)\eval \eta_1(x)
		}{s}
		= \frac{
		\gamma( \eta(x) +  t \eta_1(x) + x) - \gamma( \eta(x) + x)
		}{t}
	\]
	and to the function $I \to \SY : s \mapsto s \gamma_1( \eta(x) +  s t \eta_1(x) + x)$,
	where $I\supseteq [0, 1]$ is an open interval,
	\begin{multline*}
		\Rint{0}{1}
		{  \bigl(s t \FAbl{\gamma_1}(\eta(x) + s t \eta_1(x) + x)
			\eval \eta_1(x)
			+ \gamma_1( \eta(x) +  s t \eta_1(x) + x)
			\bigr)
		}{s}
		\\= \gamma_1( \eta(x) +  t \eta_1(x) + x);
	\end{multline*}
	the latter identity follows from the fact that
	\[
		\frac{d}{ds} s \gamma_1( \eta(x) +  s t \eta_1(x) + x)
		= s t \FAbl{\gamma_1}(\eta(x) + s t \eta_1(x) + x)
			\eval \eta_1(x)
			+ \gamma_1( \eta(x) +  s t \eta_1(x) + x).
	\]
	So the integral exists and has the value
	\begin{equation*}
		\tfrac{
		\gamma( \eta(x) +  t\eta_1(x) + x) - \gamma( \eta (x) + x)
		}{t}
		+ \gamma_1 ( \eta(x) +  t\eta_1(x) + x)
		\\
		= \frac{
		\compIdRaw(\gamma + t\gamma_1, \eta + t\eta_1 )(x) -
			\compIdRaw(\gamma ,\eta )(x)
		}{t},
	\end{equation*}
	and that shows the assertion.
\end{proof}

\begin{prop}\label{prop:Kompo_Koord_glatt}
	Let $\SX$ and $\SY$ be normed spaces, $\UF, \VF, \WF \subseteq \SX$ open nonempty subsets such that
	$\VF + \UF \subseteq \WF$ and $\VF$ is balanced,
	$\GewFunk \subseteq \cl{\R}^\WF$ with $1_\WF \in \cW$
	and $k, \ell \in \cl{\N}$.
	Then
	$\compIdDiffKLcW{\SY}{ k}{ \ell}$
	is a $\ConDiff{}{}{\ell}$-map. If $\ell > 0$, then it has the directional derivative
	\begin{equation}\label{id:Ableitung_Kompo}
		\dA{ \compIdDiffKLcW{\SY}{ k}{ \ell} }{\gamma, \eta}{\gamma_1, \eta_1}
		= \compIdDiffKLcW{\Lin{\SX}{\SY}}{ k}{\ell - 1}(\FAbl{\gamma}, \eta) \eval \eta_1
		+ \compIdDiffKLcW{\SY}{ k}{ \ell}(\gamma_1, \eta).
	\end{equation}
	In particular, $\compIdcW{\SY}$ and $\compIdSmoothLcW{\SY}{k}$ are smooth.
\end{prop}
\begin{proof}
	We first prove the assertion for $k, \ell < \infty$.
	We proved the assertion for $\ell = 0$ in \refer{lem:comp_CF(k+1)xCF(k)_CF(k)}.
	For $\ell > 0$, the proof is by induction on $\ell$.
	\\
	$\ell = 1$:
	From \refer{lem:Integraldarstellung_des_DiffQuotient_Kompo} and
	\refer{lem:Kriterium_Integrierbarkeit_in_CW}
	we conclude that
	for $\gamma, \gamma_1 \in \CcF{\WF}{\SY}{k + \ell + 1}$,
	$\eta \in \CcFo{\UF}{\VF}{k}$, $\eta_1 \in \CcF{\UF}{\SX}{k}$ and
	for all $t \in \R^\ast$ in a suitable neighborhood of $0$ the identity
	\begin{multline*}
		\frac{
		\compIdDiffKLcW{\SY}{ k}{ \ell}(\gamma + t\gamma_1, \eta + t\eta_1 ) -
			\compIdDiffKLcW{\SY}{ k}{ \ell}(\gamma ,\eta )
		}{t}
		= \Rint{0}{1}{
		\compIdDiffKLcW{\Lin{\SX}{\SY}}{ k}{ \ell - 1}
			(D(\gamma + s t \gamma_1), \eta + s t \eta_1) \eval \eta_1
		}{s}\\
		+
		\Rint{0}{1}{
			\compIdDiffKLcW{\SY}{ k}{ \ell}(\gamma_1, \eta + s t \eta_1)
		}{s} 
	\end{multline*}
	holds. The theorem about parameter dependent integrals (\refer{prop:Stetigkeit_parameterab_Int})
	yields the assertions if we let $t \to 0$ in the above expression.
	
	$\ell - 1 \to \ell$:
	This follows easily from \eqref{id:Ableitung_Kompo}: Since
	$D$ and $\eval$ are smooth (see \refer{prop:Ableitung_ist_stetig} and \refer{cor:Auswertung_linearer_abb_und_CF})
	and $\compIdDiffKLcW{\Lin{\SX}{\SY}}{ k}{ \ell - 1}$ resp. $\compIdDiffKLcW{\SY}{ k}{ \ell}$
	are $\ConDiff{}{}{\ell - 1}$ by the inductive hypothesis,
	$\dA{ \compIdDiffKLcW{\SY}{ k}{ \ell} }{}{}$ is $\ConDiff{}{}{\ell - 1}$
	and hence $\compIdDiffKLcW{\SY}{ k}{ \ell}$ is $\ConDiff{}{}{\ell}$.

	Now we treat the case $\ell = \infty$.
	If $k<\infty$, then it follows from the things proved above that
	$\compIdDiffKLcW{\SY}{ k}{ \infty} $ is $\ConDiff{}{}{\ell}$ for each $\ell \in \N$
	since the inclusion maps
	\[
		\CcF{\WF}{\SY}{\infty} \to \CcF{\WF}{\SY}{k + \ell + 1}
	\]
	are smooth.
	Now let $k=\infty$.
	\BeweisschrittCkCinftySmooth%
	{\CcF{\WF}{\SY}{\infty} \times
			\CcF{\UF}{\VF}{\infty}}
	{\compIdcW{\SY}}
	{\CcF{\UF}{\SY}{\infty}}
	{}
	{}
	{\CcF{\WF}{\SY}{\infty} \times
			\CcF{\UF}{\VF}{n}}
	{\compIdSmoothLcW{\SY}{n}}
	{\CcF{\UF}{\SY}{n}}
	{n}
\end{proof}
\subparagraph{Restriction to decreasing functions}
We examine the restriction of $\compIdSmoothLcW{\SY}{k}$ to decreasing functions.
We show that it takes values in the decreasing functions and is also smooth.
\begin{cor}\label{cor:Kcomp_g_van_ist_glatt_(und_definiert)}
	Let $\SX$ and $\SY$ be normed spaces, $\UF, \VF, \WF \subseteq \SX$ open nonempty subsets such that
	$\VF + \UF \subseteq \WF$ and $\VF$ is balanced,
	$\GewFunk \subseteq \cl{\R}^\WF$ with $1_\WF \in \cW$
	and $k \in \cl{\N}$.
	Then
	\[
		\compIdSmoothLcW{\SY}{k}
		(\CcFvan{\WF}{\SY}{\infty} \times \CcFvan{\UF}{\VF}{k})
		\subseteq
		\CcFvan{\UF}{\SY}{k},
	\]
	and the restriction
	$\rest{\compIdSmoothLcW{\SY}{k} }{\CcFvan{\WF}{\SY}{\infty} \times \CcFvan{\UF}{\VF}{k}}^{\CcFvan{\UF}{\SY}{k}}$
	is smooth.
\end{cor}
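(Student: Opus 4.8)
The plan is to read this corollary as a restriction of the already-established smooth map $g_{\SY, k, \infty}$ to closed subspaces of its domain and codomain, so that nothing new needs to be computed: the analytic work is already done in \refer{cor:Kcomp_g_ist_glatt_(und_definiert)} (smoothness of $g_{\SY, k, \infty}$ on $\CcF{\WF}{\SY}{\infty} \times \CcFo{\UF}{\VF}{k}$) and in \refer{lem:EndWwan_und_Kompostion} (the image statement for vanishing functions). Throughout I read the domain $\CcFvan{\UF}{\VF}{k}$ as the trace $\CcFvan{\UF}{\SY}{k} \cap \CcFo{\UF}{\VF}{k}$; since $1_\WF \in \GewFunk$ (hence $1_\UF$ is available) the set $\CcFo{\UF}{\VF}{k}$ is open in $\CcF{\UF}{\SY}{k}$, so this is an open subset of the submanifold $\CcFvan{\UF}{\SY}{k}$ and the restricted domain is a genuine manifold.

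I would first record the inclusion. For finite $k$ it is immediate from \refer{lem:EndWwan_und_Kompostion}: using $\CcFvan{\WF}{\SY}{\infty} \subseteq \CcFvan{\WF}{\SY}{k+1}$ and $\CcFvan{\UF}{\VF}{k} \subseteq \CcF{\UF}{\VF}{k}$ one gets
\[
	g_{\SY, k, \infty}(\CcFvan{\WF}{\SY}{\infty} \times \CcFvan{\UF}{\VF}{k})
	\subseteq g_{\SY, k}(\CcFvan{\WF}{\SY}{k+1} \times \CcF{\UF}{\VF}{k})
	\subseteq \CcFvan{\UF}{\SY}{k}.
\]
For $k = \infty$ I would invoke the projective-limit description \refer{cor:Topologie_von_CinfF}: a map lies in $\CcFvan{\UF}{\SY}{\infty}$ exactly when it lies in $\CcFvan{\UF}{\SY}{n}$ for every $n \in \N$, and since $\CcFvan{\UF}{\VF}{\infty} \subseteq \CcFvan{\UF}{\VF}{n}$ the finite-order inclusion applied at each $n$ gives the claim.

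For the smoothness I would note that $\CcFvan{\WF}{\SY}{\infty}$ is a closed vector subspace of $\CcF{\WF}{\SY}{\infty}$ (it is the intersection of the preimages of the closed subspaces $\CcFvan{\WF}{\SY}{n}$, cf.\ \refer{lem:elementare_Eigenschaften_CWvan} and \refer{cor:Topologie_von_CinfF}) and that $\CcFvan{\UF}{\SY}{k}$ is closed in $\CcF{\UF}{\SY}{k}$ by \refer{lem:elementare_Eigenschaften_CWvan}. Consequently the inclusion $\CcFvan{\WF}{\SY}{\infty} \times \CcFvan{\UF}{\VF}{k} \hookrightarrow \CcF{\WF}{\SY}{\infty} \times \CcFo{\UF}{\VF}{k}$ is smooth, being the product of the continuous linear inclusion of a closed subspace with the inclusion of an open subset of a closed subspace; composing it with the smooth $g_{\SY, k, \infty}$ of \refer{cor:Kcomp_g_ist_glatt_(und_definiert)} shows that the restriction is smooth as a map into $\CcF{\UF}{\SY}{k}$. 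By the inclusion its image lies in the closed subspace $\CcFvan{\UF}{\SY}{k}$, so it remains only to corestrict the codomain to this subspace.

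The step I expect to be the crux is precisely this corestriction to a closed subspace. I would justify it by the general principle that a smooth map into a locally convex space whose values lie in a closed vector subspace $F_0$ is already smooth into $F_0$: every iterated directional difference quotient lies in the linear subspace, hence each iterated directional derivative lies in its closure $F_0$, and these derivatives are continuous into $F_0$ because $F_0$ carries the subspace topology; smoothness then follows from the characterization of $\ConDiff{}{}{\infty}$-maps by iterated directional derivatives in \refer{prop:hohe_Ableitungen_d}. The only other point needing attention is the bookkeeping already mentioned, namely that $\CcFvan{\UF}{\VF}{k}$ be read as an open subset of $\CcFvan{\UF}{\SY}{k}$ sitting inside $\CcFo{\UF}{\VF}{k}$, which keeps the restricted domain both a manifold and inside the region where $g_{\SY, k, \infty}$ is known to be smooth; in the principal application $\UF = \VF = \WF = \SX$ one has $\CcFo{\SX}{\SX}{k} = \CcF{\SX}{\SX}{k}$ and the point is vacuous.
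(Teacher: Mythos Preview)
Your proof is correct and follows essentially the same route as the paper: the paper's proof is a one-liner citing \refer{cor:Kcomp_g_ist_glatt_(und_definiert)} for smoothness, \refer{lem:EndWwan_und_Kompostion} for the image inclusion, and \refer{lem:elementare_Eigenschaften_CWvan} together with \refer{prop:Differenzierbarkeit_Abb_in_projektiven_Limes} for the corestriction to the closed subspace. The corestriction principle you spell out via \refer{prop:hohe_Ableitungen_d} is exactly part (b) of \refer{prop:Differenzierbarkeit_Abb_in_projektiven_Limes}, so you could simply cite that instead; your extra care about the domain bookkeeping ($\CcFvan{\UF}{\VF}{k}$ versus $\CcFo{\UF}{\VF}{k}$) is a point the paper glosses over but, as you note, is harmless in the application $\UF = \VF = \WF = \SX$.
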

\begin{proof}
	We deduce this from \refer{lem:EndWwan_und_Kompostion},
	the smoothness of the unrestricted map (\refer{prop:Kompo_Koord_glatt})
	and \refer{prop:Differenzierbarkeit_Abb_in_projektiven_Limes}
	that can be used because $\CcFvan{\UF}{\SY}{k}$ is closed
	by \refer{lem:CWvan_closed_CW}.
\end{proof}
\subsection{Smooth monoids of weighted endomorphisms}
We are able to prove that $\EndW$ and the set $\EndWvan$ -- which is defined below --
are smooth monoids, provided that $1_\SX \in \GewFunk$.
\begin{cor}\label{cor:EndW_glattes_Monoid}\label{cor:EndWvan_Untermonoid_von_EndW}
	For $\GewFunk \subseteq \cl{\R}^\SX$ with $1_\SX \in \GewFunk$,
	$\EndW$ is a smooth monoid with the group of units
	\[
		\EndW^{\times} = \DiffW.
	\]
	Further, the set
	\begin{equation}\label{def_EndWvan}
		\glstext{gew_abfallende_Endomorphismen_Braum} \ndef \{\gamma + \id{\SX} : \gamma \in \CcFvan{\SX}{\SX}{\infty} \}
	\end{equation}
	is a closed submonoid of $\EndW$ that is a smooth monoid.
\end{cor}
\begin{proof}
	We first show that $\EndW$ is a monoid. Since $\id{\SX} \in \EndW$
	is obviously satisfied, it remains to show that it is closed under composition.
	Since every element of $\EndW$ can uniquely be written as $\phi + \id{\SX}$
	with $\phi \in \CcF{\SX}{\SX}{\infty}$, we have to show
	that for arbitrary $\gamma, \eta \in \CcF{\SX}{\SX}{\infty}$
	the relation
	\[
		\kF(\gamma)\circ\kF(\eta) - \id{\SX}
		\in \CcF{\SX}{\SX}{\infty}
	\]
	holds. But we know from \refer{id:Komposition_in_Koordinaten} that
	\[
		\kF(\gamma)\circ\kF(\eta) - \id{\SX}
		= \compIdcW{\SX}(\gamma,\eta) + \eta,
	\]
	which is in $\CcF{\SX}{\SX}{\infty}$ by \refer{prop:Kompo_Koord_glatt},
	hence $\EndW$ is a monoid.
	Further, from this identity we easily conclude the smoothness of the composition
	from the one of
	$\compIdcW{\SX}$, which was also proved in \refer{prop:Kompo_Koord_glatt}.

	$\EndWvan$ is a closed subset of $\EndW$ since $\kF$ is a homeomorphism and
	by \refer{lem:CWvan_closed_CW},
	$\CcFvan{\SX}{\SX}{\infty}$ is a closed vector subspace of $\CcF{\SX}{\SX}{\infty}$.
	We know from \refer{cor:Kcomp_g_van_ist_glatt_(und_definiert)} and the fact that $\CcFvan{\SX}{\SX}{\infty}$
	is a vector space that for $\gamma, \eta \in \CcFvan{\SX}{\SX}{\infty}$
	\[
		\kF(\gamma)\circ\kF(\eta) - \id{\SX}
		= \compIdcW{\SX}(\gamma,\eta) + \eta
		\in \CcFvan{\SX}{\SX}{\infty}
		.
	\]
	Further, we proved there that the restriction of $\compIdcW{\SX}(\gamma,\eta)$
	to decreasing maps is smooth, hence $\EndWvan$ is a smooth submonoid of $\EndW$.
		
	The relation $\EndW^{\times} = \DiffW$ was proved in \refer{lem:EndW_und_DiffW}.
\end{proof}
\section{Lie group structures on weighted diffeomorphisms}
In this section, we first prove that $\DiffW$
-- which was already shown to be a group in \refer{cor:EndW_glattes_Monoid} --
is in fact a Lie group.
Also we define and discuss the set $\DiffWvan$ of \emph{decreasing weighted diffeomorphisms}.
We show that it is a normal subgroup of $\DiffW$ that can be turned into a Lie group.
Finally, we explain when diffeomorphisms that are weighted endomorphisms are weighted diffeomorphisms.
\subsection[The Lie group structure of \texorpdfstring{$\DiffW$}{DiffW}]%
{The Lie group structure of \texorpdfstring{$\boldsymbol{\DiffW}$}{DiffW}}
\label{susec:DiffW_Inversion}
We show that $\DiffW$ is an open subset of $\EndW$
and the group inversion is smooth, whence $\DiffW$ is a Lie group.
In order to do this, we have to examine the inversion map on $\Diff{\SX}{}{} \cap \EndW$.
We begin with more general considerations.
\begin{defi}\label{defi:InversionsAbb-Koor_Domains}
	Let $\SX$ be a normed space and $U, V \sub \SX$ open nonempty subsets.
	We define
	\[
		\InvertDomRanAlg{U}{V}
		\ndef
		\{\phi \in  \SX^U:
			\phi + \id{U} \text{ injective}, V \sub (\phi + \id{U})(U) \}
	\]
	and
	\begin{equation}\label{id:Definition_der_verallgemeinerten_Koor-Inversion}
		\InvIdAlg{V}
		:
		\InvertDomRanAlg{U}{V}
		\to
		\SX^V
		:
		\phi
		\mapsto
		\rest{(\phi + \id{U})^{-1}}{V} - \id{V} .
	\end{equation}
	Further, for nonempty $\GewFunk \subseteq \cl{\R}^U$ we set $\glstext{domain_inversion_map_id} \ndef \InvertDomRanAlg{U}{V} \cap \CcF{U}{\SX}{\infty}$
	and $\glstext{inversion_map_id} \ndef \rest{\InvIdAlg{V}}{\InvertWinfDomRan{U}{V}}$.
\end{defi}

\begin{lem}
	Let $\SX$ be a normed space, $U, V \sub \SX$ open nonempty subsets
	and $\phi\in \InvertDomRanAlg{U}{V}$. Then
	\begin{equation}\label{id:I(phi)_und_phi}
		(\InvIdAlg{V}(\phi) + \id{V}) \circ \rest{(\phi + \id{U})}{(\phi + \id{U})^{-1}(V)}
		= \id{(\phi + \id{U})^{-1}(V)}
	\end{equation}
	\begin{equation}\label{id:I(phi)_und_phi2}
		(\phi + \id{U})\circ(\InvIdAlg{V}(\phi) + \id{V})
		= \id{V},
	\end{equation}
	and the identities
	\begin{align}
		\label{id:inversion_psi_gleichungen1}%
		\InvIdAlg{V}(\phi) \circ \rest{(\phi + \id{U})}{(\phi + \id{U})^{-1}(V)}
		&= - \rest{\phi}{(\phi + \id{U})^{-1}(V)}\\
		\label{id:inversion_psi_gleichungen2}%
		\phi \circ (\InvIdAlg{V}(\phi) + \id{V}) &= -\InvIdAlg{V}(\phi)
	\end{align}
	hold.
\end{lem}
\begin{proof}
	This is obvious.
\end{proof}
\subsubsection{On the range of the inversion map}
We first discuss whether the range of $\InvIdcW{V}$ consists of weighted functions,
under certain assumptions on $U$ and $V$.
\begin{lem}\label{lem:Inverse_in_BC0}
	Let $\SX$ be a normed space, $U, V \sub \SX$ open nonempty subsets
	and $\phi \in \InvertDomRanAlg{U}{V}$.
	Then $\hn{\InvIdAlg{V}(\phi)}{1_V}{0} \leq \hn{\phi}{1_U}{0}$.
\end{lem}
\begin{proof}
	This is an immediate consequence of \refer{id:inversion_psi_gleichungen2}.
\end{proof}

We provide a formula for $\FAbl{\InvIdcW{V}(\phi)}$.
\begin{lem}\label{lem:Differential_der_Inversen}
	Let $\SX$ be a Banach space, $U, V \sub \SX$ open nonempty subsets,
	$\GewFunk \subseteq \cl{\R}^{U}$ with $1_{U} \in \GewFunk$
	and $\phi \in \InvertWinfDomRan{U}{V}$.
	\begin{enumerate}
		\item\label{enum1:Differential_der_Inversen_an_einer_Stelle}
		Let $x \in (\phi + \id{U})^{-1}(V)$ such that $\Opnorm{\FAbl{\phi}(x)} < 1$. Then
		\begin{equation*}
			\FAbl{(\InvIdcW{V}(\phi))}((\phi + \id{U})(x))
			= \FAbl{\phi}(x) \MaMu \QuasiInv_{\Lin{\SX}{\SX}}(- \FAbl{\phi}(x))
				- \FAbl{\phi}(x).
		\end{equation*}

		\item\label{enum1:Differential_der_Inversen_glob}
		Suppose that $\hn{\phi}{1_U}{1} < 1$. Then
		\begin{equation}\label{id:Differential_der_inversen_Abb}
				\FAbl{\, \InvIdcW{V}(\phi)}
				= (\FAbl{\phi} \MaMu \QuasiInv(- \FAbl{\phi}) - \FAbl{\phi})
						\circ(\InvIdcW{V}(\phi) + \id{V}).
		\end{equation}
	\end{enumerate}
	Here $\QuasiInv_{\Lin{\SX}{\SX}}$ and $\QuasiInv \ndef \QuasiInv_{ \CcF{ U }{ \Lin{\SX}{\SX} }{\infty} }$
	denote the quasi-inversion (which is discussed in \refer{app:Quasi-Inversion}).
\end{lem}
\begin{proof}
	\refer{enum1:Differential_der_Inversen_an_einer_Stelle}
	From \refer{id:inversion_psi_gleichungen1} and the chain rule, we get
	\begin{equation*}
		\FAbl{\,\InvIdcW{V}(\phi)}((\phi + \id{U})(x)) \MaMu (\FAbl{\phi}(x) + \id{\SX})
		= - \FAbl{\phi}(x).
	\end{equation*}
	Since $\Opnorm{\FAbl{\phi}(x)} < 1$,
	the linear map $\FAbl{\phi}(x) + \id{\SX}$ is bijective with
	\[
		(\FAbl{\phi}(x) + \id{\SX})^{-1}
		= \sum_{k = 0}^{\infty} (- \FAbl{\phi}(x))^{k}
		= - \QuasiInv_{\Lin{\SX}{\SX}}(- \FAbl{\phi}(x)) + \id{\SX};
	\]
	(c.f. \refer{lem:Einheitskugel_quasiinvertierbar}).
	Using these two identities, we easily derive the one desired.

	\refer{enum1:Differential_der_Inversen_glob}
	Since $\hn{\phi}{1_U}{1} < 1$, we see with \refer{lem:CkF(,A)_topologische_Algebra}
	that $- \FAbl{\phi}$ is quasi-invertible in $\CinfF{U }{\Lin{\SX}{\SX}}$ with
	\[
		\QuasiInv(- \FAbl{\phi})
		= \QuasiInv_{\Lin{\SX}{\SX}} \circ (- \FAbl{\phi}).
	\]
	Hence we get with \refer{enum1:Differential_der_Inversen_an_einer_Stelle} that
	\begin{equation*}
		\FAbl{(\InvIdcW{V}(\phi))} \circ (\phi + \id{U})
		= \FAbl{\phi} \MaMu \QuasiInv(- \FAbl{\phi})
			- \FAbl{\phi}
	\end{equation*}
	on $(\phi + \id{U})^{-1}(V)$.
	Composing both sides of this identity with $\InvIdcW{V}(\phi) + \id{V}$
	on the right (see \refer{id:I(phi)_und_phi2})
	gives \refer{id:Differential_der_inversen_Abb}.
\end{proof}
Next, we discuss whether $\InvIdcW{V}(\phi) \in \CcF{V}{\SX}{\infty}$.
\begin{prop}\label{prop:Inverse_von_Abb._mit_Opnorm_kleiner_1_auf_X-B(0,r)_ist_in_CW(X-B(0,R))}
	Let $\SX$ be a Banach space, $U, V \sub \SX$ open nonempty subsets
	such that there exists $r > 0$ with $V + \Ball{0}{r} \sub U$.
	Further, let $\GewFunk \subseteq \cl{\R}^{U}$ with $1_{U} \in \GewFunk$
	and $\phi \in \InvertWinfDomRan{U}{V}$ such that
	$
		\hn{\phi }{1_{U}}{1} < 1
	$
	and
	$
		\hn{\phi}{1_U}{0} < r .
	$
	Then
	$
		\InvIdcW{V}(\phi) \in \CcF{V}{\SX}{\infty}.
	$
	In particular, for all $f\in\GewFunk$ and $x \in V$, we have the estimate
	\begin{equation}\label{est:Abschaetzung_gewichteter_FWert_der_K-Inversion}
		\abs{f(x)}\,\norm{\InvIdcW{V}(\phi)(x)}
		\leq
		\frac{\abs{f(x)}\,\norm{\phi(x)}}{1 - \hn{\phi }{1_{U}}{1}} .
	\end{equation}
\end{prop}
\begin{proof}
	By the inverse function theorem, $\InvIdcW{V}(\phi)$ is smooth.
	We prove by induction that
	$\InvIdcW{V}(\phi) \in \CcF{V}{\SX}{k}$
	for all $k \in \N$.
	
	$k = 0$:
	We compute for $f \in \GewFunk$ and $x \in V$
	using \refer{id:inversion_psi_gleichungen2} and \eqref{est:Funktionswerte_Gewicht_K-Kompo} that
	\begin{align*}
		\abs{f(x)}\,\norm{\InvIdcW{V}(\phi)(x)}
		= \abs{f(x)}\,\norm{\phi(\InvIdcW{V}(\phi)(x) + x)}
		&\leq \abs{f(x)}( \hn{\FAbl{\phi}}{1_{U}}{0} \norm{\InvIdcW{V}(\phi)(x)} + \norm{\phi(x)}) ;
	\end{align*}
	here we used that $\hn{\InvIdcW{V}(\phi)}{1_V}{0} < r$ by \refer{lem:Inverse_in_BC0}.
	From this we can derive \eqref{est:Abschaetzung_gewichteter_FWert_der_K-Inversion} since
	$\hn{\FAbl{\phi} }{1_{U}}{0} = \hn{\phi }{1_{U}}{1} < 1$,
	and we see that $\InvIdcW{V}(\phi) \in \CcF{V}{\SX}{0}$.

	$k \to k + 1$:
	Using \refer{prop:topologische_Zerlegung_von_CFk} (and
	the induction base), we see that
	\[
		\InvIdcW{V}(\phi) \in \CcF{V}{\SX}{k + 1}
		\iff
		\FAbl{ \,\InvIdcW{V}(\phi)} \in \CcF{V}{\Lin{\SX}{\SX}}{k};
	\]
	the second condition shall be verified now.
	Remember that we already provided an identity for $\FAbl{ \,\InvIdcW{V}(\phi)}$
	in \eqref{id:Differential_der_inversen_Abb}.
	We use \refer{lem:CkF(,A)_topologische_Algebra} and
	\refer{cor:Komposition_linearer_Abb_und_CF} to see that
	\[
		\FAbl{\phi} \MaMu \QuasiInv(- \FAbl{\phi})
			\in\CinfF{U}{\Lin{\SX}{\SX}}.
	\]
	Since we know from the induction hypothesis that
	$\InvIdcW{V}(\phi) \in \CcF{V }{\SX}{k}$,
	we derive from \refer{id:Differential_der_inversen_Abb}
	and \refer{prop:Kompo_Koord_glatt}
	(applied on $\CcF{U}{\SX}{\infty} \times \CcF{V}{\Ball{0}{r} }{k}$)
	that
	\begin{equation*}
		\FAbl{\, \InvIdcW{V}(\phi)} =
		\compIdSmoothLcW{\Lin{\SX}{\SX}}{ k}
		(\FAbl{\phi} \MaMu \QuasiInv(- \FAbl{\phi}) - \FAbl{\phi} ,\, \InvIdcW{V}(\phi))
		\in \CcF{V}{\Lin{\SX}{\SX}}{k},
	\end{equation*}
	which finishes the proof.
\end{proof}
%
\subsubsection{On the domain and the smoothness of the inversion map}
We investigate the smoothness of $\InvIdcW{V}$.
Later, we dicuss when $\InvertWinfDomRan{U}{V}$ is an open $0$-neighborhood.
Finally, we conclude that the inversion on $\DiffW$ is smooth.
\paragraph{Smoothness of the inversion map}
Here, we assume that $\InvertWinfDomRan{U}{V}$ contains a suitable open set.
\begin{prop}\label{prop:Stetigkeit_der_Inversion_und_Offenheit}\label{prop:KoorInversion_stetig_glatt}
	Let $\SX$ be a Banach space, $U, V \sub \SX$ open nonempty subsets
	such that $V + \Ball{0}{r} \sub U$ for some $r > 0$.
	Further, let $\GewFunk \subseteq \cl{\R}^{U}$ with $1_{U} \in \GewFunk$.
	Let $G_\GewFunk \sub \InvertWinfDomRan{U}{V}$ be an open nonempty set such that
	for each $\phi \in G_\GewFunk$, $\hn{\phi }{1_{U}}{0} < r$ and $\hn{\phi }{1_{U}}{1} < 1$.
	\begin{enumerate}
		\item\label{enum1:Differenz_KoorInversion}
		Then for $\phi, \psi \in G_\GewFunk$, the following identity holds:
		\begin{equation}\label{id:Differenz_KoorInversion}\tag{\ensuremath{\dagger}}
			\InvIdcW{V}(\psi) - \InvIdcW{V}(\phi) 
			=
			( \idco + T_{\psi, \phi})^{-1} \eval \compIdcW{\SX}(\phi - \psi,\InvIdcW{V}(\phi)),
		\end{equation}
		where the inversion is the pointwise inversion in $\Lin{\SX}{\SX}$, and
		\[
			T_{\psi, \phi} \ndef \Mint{ \compIdcW{\Lin{\SX}{\SX}}(\FAbl{\psi}, t \InvIdcW{V}(\phi) +  (1-t) \InvIdcW{V}(\psi)) }{t}
			\in \CcF{V}{\Lin{\SX}{\SX}}{\infty} .
		\]
		In particular, for $f \in \GewFunk$ we have the estimate
		\begin{equation}\label{est:f0-norm_Diff_KoorInv}
			\hn{\InvIdcW{V}(\psi) - \InvIdcW{V}(\phi)}{f}{0}
			\leq
			\tfrac{1 }{ 1 - \hn{\psi}{1_U}{1} }
			\left(
				\hn{\phi - \psi}{1_U}{1} \tfrac{\hn{\phi}{f}{0} }{ 1 - \hn{\phi}{1_U}{1}}
				+   \hn{\phi - \psi}{f}{0}
			\right).
		\end{equation}
		
		\item\label{enum1:KoorInversion_stetig}
		$
			G_\GewFunk \to \CcF{V}{\SX}{\infty}
			:\phi \mapsto \InvIdcW{V}(\phi)
		$
		is continuous.
		
		\item\label{enum1:KoorInversion_diffbar}
		$
			G_\GewFunk \to \CcF{V}{\SX}{\infty}
			:\phi \mapsto \InvIdcW{V}(\phi)
		$
		is smooth with
		\begin{equation}\label{id:Ableitung_Inversion}
			\dA{\InvIdcW{V}}{\phi}{\phi_1}
			= - \compIdcW{\SX}(\QuasiInv(\FAbl{\phi}) \eval \phi_1 + \phi_1,  \InvIdcW{V}(\phi) ).
		\end{equation}
	\end{enumerate}
\end{prop}
\begin{proof}
	By \refer{prop:Inverse_von_Abb._mit_Opnorm_kleiner_1_auf_X-B(0,r)_ist_in_CW(X-B(0,R))},
	$\InvIdcW{V}(G_\GewFunk) \sub \CcF{V}{\SX}{\infty}$, which we will use implicitly.
	
	\refer{enum1:Differenz_KoorInversion}
	Let $\phi, \psi \in G_\GewFunk$. We compute for $x \in V$ with
	\refer{id:inversion_psi_gleichungen2}, the mean value theorem (using that $\Ball{0}{r}$ is convex) and by adding
	$0 = \psi( \InvIdcW{V}(\phi)(x) + x) - \psi( \InvIdcW{V}(\phi)(x) + x)$
	that
	\begin{align*}
		&\InvIdcW{V}(\psi)(x) - \InvIdcW{V}(\phi)(x)\\
		= & \psi( \InvIdcW{V}(\phi)(x) + x) - \psi( \InvIdcW{V}(\psi)(x) + x)
			+ \phi( \InvIdcW{V}(\phi)(x) + x) - \psi( \InvIdcW{V}(\phi)(x) + x)\\
		= & \Mint%
			{\FAbl{\psi}( t \InvIdcW{V}(\phi)(x) +  (1-t) \InvIdcW{V}(\psi)(x) + x) \eval(\InvIdcW{V}(\phi)(x) - \InvIdcW{V}(\psi)(x))
			}{t}
		\\
		& \qquad + \compIdcW{\SX}(\phi - \psi,\InvIdcW{V}(\phi))(x) ;
	\end{align*}
	note that the identity
	$\compIdcW{\SX}(\phi - \psi,\InvIdcW{V}(\phi)) = (\phi - \psi)\circ ( \InvIdcW{V}(\phi) + \id{V})$
	holds because of \refer{lem:Inverse_in_BC0} and the definition of $r$.
	From this identity we can derive \eqref{id:Differenz_KoorInversion};
	note that the integral defining $T_{\psi, \phi}$ exists because $\CcF{V}{\Lin{\SX}{\SX} }{\infty}$ is complete.
	Further
	\[
		\norm{T_{\psi, \phi}(x)} \leq \hn{\psi}{1_U}{1} < 1
	\]
	for all $x \in V$,
	hence each $\id{\SX} + T_{\psi, \phi}(x)$ is invertible.
	With the Neumann series, we have
	\[
		\hn{(\idco + T_{\psi, \phi})^{-1}}{1_V}{0}
		\leq
		\frac{1}{ 1 - \hn{\psi}{1_U}{1} }.
	\]
	So we see using \eqref{est:Funktionswerte_Gewicht_K-Kompo} and \eqref{est:Abschaetzung_gewichteter_FWert_der_K-Inversion}
	that \refer{est:f0-norm_Diff_KoorInv} holds.
	
	\refer{enum1:KoorInversion_stetig}
	By \refer{cor:Topologie_von_CinfF}, $\InvIdcW{V}$ is continuous iff the corresponding maps
	\[
		I_{\ell}: G_\GewFunk \to \CcF{V}{\SX}{\ell}
	\]
	are so for each $\ell \in \N$.
	We shall verify this condition by induction on $\ell$.
	
	$\ell = 0$:
	We use \eqref{est:f0-norm_Diff_KoorInv} to see that $I_{0}$ is continuous in $\phi$.
	
	$\ell \to \ell + 1$:
	Because of \refer{prop:topologische_Zerlegung_von_CFk} (and	the induction base)
	$I_{\ell + 1}$ is continuous iff
	$\FAbl{}\circ I_{\ell + 1}: G_\GewFunk\to\CcF{V}{\Lin{\SX}{\SX}}{\ell}$
	is so.
	Using \refer{id:Differential_der_inversen_Abb}, we see that for $\phi\in G_\GewFunk$
	\[
		(\FAbl{}\circ I_{\ell + 1})(\phi)
		= \compIdSmoothLcW{\Lin{\SX}{\SX} }{\ell}
		(\FAbl{\phi} \MaMu \QuasiInv(- \FAbl{\phi}) - \FAbl{\phi}, \, I_\ell(\phi))
	\]
	holds, where $\QuasiInv \ndef \QuasiInv_{ \CcF{ U}{ \Lin{\SX}{\SX} }{\infty} }$.
	Since $\compIdSmoothLcW{\Lin{\SX}{\SX} }{\ell}$, $\FAbl{}$, $\MaMu$,
	$\QuasiInv$ and $I_{\ell}$ are continuous (see
	\refer{prop:Kompo_Koord_glatt},
	\refer{prop:Ableitung_ist_stetig},
	\refer{cor:Komposition_linearer_Abb_und_CF},
	\refer{lem:CkF(,A)_topologische_Algebra}
	and the inductive hypothesis, respectively),
	we conclude that $\FAbl{}\circ I_{\ell + 1}$ is continuous.
	
	\refer{enum1:KoorInversion_diffbar}
	We prove by induction that $\InvIdcW{V}$ is a $\ConDiff{}{}{k}$ map for all
	$k \in \N$.
	
	$k = 1:$
	Let $\phi \in G_\GewFunk $, $\phi_1 \in \CcF{U}{\SX}{\infty}$
	and $t \in \K^\ast$ such that $\phi + t \phi_1 \in  G_\GewFunk$.
	We use \eqref{id:Differenz_KoorInversion} to see that
	\begin{equation*}
		\frac{\InvIdcW{V}(\phi + t \phi_1) - \InvIdcW{V}(\phi) }{t}
		= (\idco + T_{\phi + t \phi_1, \phi})^{-1}
			\eval \compIdcW{\SX}( -\phi_1,\InvIdcW{V}(\phi)).
	\end{equation*}
	Using \refer{prop:Stetigkeit_parameterab_Int}, we see that
	\[
		\lim_{t\to 0} T_{\phi + t \phi_1, \phi}
		= \compIdcW{\Lin{\SX}{\SX}}(\FAbl{\phi}, \InvIdcW{V}(\phi) ).
	\]
	Further, by \refer{lem:Adversion_und_Inversion}
	\[
		(\idco + T_{\phi + t \phi_1, \phi})^{-1} - \idco + \idco
		= \QuasiInv(T_{\phi + t \phi_1, \phi} ) + \idco
	\]
	where $\QuasiInv \ndef \QuasiInv_{ \CcF{ V}{ \Lin{\SX}{\SX} }{\infty} }$.
	Using that $\QuasiInv$ and $\eval$ are continuous
	by \refer{lem:CkF(,A)_topologische_Algebra} and \refer{prop:multilineare_Abb_und_CF},
	we therefore get
	\begin{align*}
		\lim_{t\to 0} \frac{\InvIdcW{V}(\phi + t \phi_1) - \InvIdcW{V}(\phi) }{t}
		&= (\QuasiInv(\compIdcW{\Lin{\SX}{\SX}}(\FAbl{\phi}, \InvIdcW{V}(\phi) ) ) + \idco)
			\eval \compIdcW{\SX}( -\phi_1,\InvIdcW{V}(\phi))
		\\
		&= - (\compIdcW{\Lin{\SX}{\SX}}(\QuasiInv(\FAbl{\phi}), \InvIdcW{V}(\phi) ) ) + \idco)
			\eval \compIdcW{\SX}( \phi_1, \InvIdcW{V}(\phi))
		\\
		&= - \compIdcW{\SX}(\QuasiInv(\FAbl{\phi}) \eval \phi_1 + \phi_1, \InvIdcW{V}(\phi));
	\end{align*}
	here we used that $\QuasiInv(\Phi) = \QuasiInv_{\Lin{\SX}{\SX}} \circ \Phi$.
	Since we proved in \refer{enum1:KoorInversion_stetig} that $\InvIdcW{V}$ is continuous,
	we see from this that $\InvIdcW{V}$ is $\ConDiff{}{}{1}$ and \eqref{id:Ableitung_Inversion} holds.
	
	$k \to k + 1$:
	Since $\InvIdcW{V}$ is $\ConDiff{}{}{k}$, we conclude from \eqref{id:Ableitung_Inversion}
	and the fact that $\FAbl{}$, $\eval$, $\compIdcW{\SX}$ and $\QuasiInv$
	are smooth (see \refer{prop:Ableitung_ist_stetig},
	\refer{cor:Auswertung_linearer_abb_und_CF}
	(together with \refer{beisp:Helge_diffbare_Abb}),
	\refer{prop:Kompo_Koord_glatt}
	and \refer{lem:CkF(,A)_topologische_Algebra}, respectively)
	that $\dA{\InvIdcW{V}}{}{}$ is $\ConDiff{}{}{k}$.
	Hence $\InvIdcW{V}$ is $\ConDiff{}{}{k + 1}$ by definition.
\end{proof}
\paragraph{$\boldsymbol{\InvertWinfDomRan{U}{V}}$ contains open sets}
We show that $\InvertWinfDomRan{U}{V}$ is a neighborhood of $0$
if $V \sub U$ and $\dist{V}{\SX\setminus U} > 0$.
To this end, we need the following two technical lemmas.
\begin{lem}\label{lem:Lipschitz_gestoerter_OP-konvexer_Defb-injektiv}\label{lem:Lipschitz_gestoerter_OP-Defb_alles-surjektiv}
	Let $\SX$ be a Banach space, $U$ a convex open nonempty subset
	and $\phi \in \FC{U}{\SX}{1}$ such that $\hn{\phi}{1_U}{1} < 1$.
	Then the map $\id{U} + \phi$ is injective.
\end{lem}
\begin{proof}
	Let $x, y \in U$. Then
	\[
		(\id{U} + \phi)(y) - (\id{U} + \phi)(x)
		= y - x + \Mint{\FAbl{\phi}(t y + (1 - t) x) (y - x)}{t}.
	\]
	Since $\hn{\phi}{1_U}{1} < 1$, the norm of the integral is smaller then $\norm{y - x}$.
	We deduce with the triangle inequality that for $x \neq y$, $(\id{U} + \phi)(y) \neq (\id{U} + \phi)(x)$.
\end{proof}

\begin{lem}\label{lem:On_the_Image_of_Lipschitz_gestoerter_OP}
	Let $\SX$ be a Banach space, $U\sub \SX$ an open nonempty subset
	and $r > 0$.
	Let $\phi \in \BC{U}{\SX}{0}$ with
	$
		\hn{\phi}{1_U}{0} < r .
	$
	Further, let $y \in (\id{U} + \phi)(U)$ such that $\Ball{y}{2 r} \sub U$.
	Then for any $\psi \in \FC{U}{\SX}{1}$ with
	$
		\hn{\psi }{1_{U}}{1} < 1
	$
	and
	$
		\hn{\psi - \phi}{1_U}{0} < r (1 - \hn{\psi }{1_{U}}{1}),
	$
	$y \in (\id{U} + \psi)(U)$.
\end{lem}
\begin{proof}
	There exists $x \in U$ with $x + \phi(x) = y$. Then
	$\norm{y - x} = \norm{\phi(x)} < r$, and hence $\Ball{x}{r} \sub U$ by the triangle inequality.
	Further, we derive from the Lipschitz inverse function theorem (\refer{cor:Lipschitz_inverse-function_theorem}) that
	$\Ball{(\id{U} + \psi)(x)}{r(1 - \hn{\psi }{1_{U}}{1})}$ is contained in the image of $\id{U} +\psi$,
	and since
	\[
		\norm{y - (x + \psi(x))}
		= \norm{\phi(x) - \psi(x)}
		< r (1 - \hn{\psi }{1_{U}}{1}),
	\]
	$y$ is contained in the image of $\id{U} + \psi$.
\end{proof}

\begin{lem}\label{lem:Offenheit_H_UV}
	Let $\SX$ be a Banach space, $U, V \sub \SX$ open nonempty subsets
	such that $U$ is convex and there exists $r > 0$ with $V + \Ball{0}{r} \sub U$.
	Further, let $\GewFunk \subseteq \cl{\R}^{U}$ with $1_{U} \in \GewFunk$
	and $\phi \in \InvertWinfDomRan{U}{V}$ such that
	$
		\hn{\phi }{1_{U}}{1} < 1
	$
	and
	$
		\hn{\phi}{1_U}{0} < \frac{r}{2}  .
	$
	Then for any $\eps > 0$ such that $\hn{\phi }{1_{U}}{1} + \eps < 1$,
	\[
		\left\set{\psi \in \CcF{U}{\SX}{\infty} }{%
			\hn{\psi - \phi}{1_U}{1} < \eps
			\text{ and }
			\hn{\psi - \phi}{1_U}{0} < \frac{r}{2} (1 - \eps - \hn{\phi }{1_{U}}{1})
		\right}
	\]
	is a  neighborhood of $\phi$  that is contained in $\InvertWinfDomRan{U}{V}$
	and whose image under $\InvIdcW{V}$ is contained in $\CcF{V}{\SX}{\infty}$.
\end{lem}
\begin{proof}
	Let $\psi$ be an element of the neighborhood. Then $\hn{\psi}{1_U}{1} < 1$, hence we can apply
	\refer{lem:Lipschitz_gestoerter_OP-konvexer_Defb-injektiv}
	to see that $\id{U} + \psi$ is injective.
	Further, since
	\[
		\hn{\psi - \phi}{1_U}{0}
		< \frac{r}{2} (1 - \eps - \hn{\phi }{1_{U}}{1})
		< \frac{r}{2} (1 - \hn{\psi }{1_{U}}{1}),
	\]
	we see with \refer{lem:On_the_Image_of_Lipschitz_gestoerter_OP} that
	$V \sub (\id{U} + \psi)(U)$; hence $\psi \in \InvertWinfDomRan{U}{V}$.
	Finally, we can apply \refer{prop:Inverse_von_Abb._mit_Opnorm_kleiner_1_auf_X-B(0,r)_ist_in_CW(X-B(0,R))}
	since
	\[
		\hn{\psi }{1_U}{0} \leq \hn{\phi }{1_U}{0} + \hn{\psi - \phi }{1_U}{0} < r
	\]
	and see that $\InvIdcW{V}(\psi) \in \CcF{V}{\SX}{\infty}$.
\end{proof}
\subsubsection{The Lie group $\boldsymbol{\DiffW}$}
We put it all together and see that $\DiffW$ is a Lie group.
\begin{satz}\label{satz:DiffW_Lie-Gruppe}\label{satz:DiffW_ist_offen}
	Let $\SX$ be a Banach space and $\GewFunk \subseteq \cl{\R}^\SX$
	with $1_{\SX} \in \GewFunk$.
	Then $\DiffW$ is an open subset of $\EndW$,
	and a Lie group when endowed with the canonical differential structure.
	\index{diffeomorphisms!groups of}%
\end{satz}
\begin{proof}
	We established in \refer{cor:EndW_glattes_Monoid} that
	$\EndW$ is a smooth monoid with the unit group $\DiffW$.
	By \refer{lem:topologische_Monoide}, $\DiffW$ is open in $\EndW$
	if there exists an open neighborhood of $\id{\SX}$ in $\EndW$ that is contained in $\DiffW$.
	Moreover, the inversion is smooth if it is so on this neighborhood
	(The proof for the continuity is in \refer{lem:topologische_Monoide},
	the smoothness can be derived from \refer{lem:Erzeugung_von_Liegruppen_aus_lokalen}).
	To this end, we set
	\[
		U_\GewFunk \ndef
		\set{\phi \in \CcF{\SX}{\SX}{\infty} }{\hn{\phi}{1_\SX}{1} < 1}.
	\]
	Then for $\phi \in U_\GewFunk$, we have that $\kF(\phi) \in \DiffW$ since
	we can apply \refer{lem:Offenheit_H_UV} ($0 \in \InvertWinfDomRan{\SX}{\SX}$)
	and see that
	$\phi \in \InvertWinfDomRan{\SX}{\SX}$
	with
	$(\phi + \id{\SX})^{-1} - \id{\SX} = \InvIdcW{X}(\phi) \in \CcF{\SX}{\SX}{\infty}$;
	enabling us to use \refer{lem:char_DiffW}.
	Further, we know from \refer{prop:KoorInversion_stetig_glatt} that $\InvIdcW{\SX}$ is smooth on $U_\GewFunk$.
	This implies that the inversion map is smooth on $\kF(U_\GewFunk)$,
	see the commutative diagram
	\[
		\xymatrix{
		{\kF(U_\GewFunk)} \ar[rr]^{{}^{-1}} & & {\DiffW} \ar@{>->}[d]_{\kF^{-1}}
		\\
		{U_\GewFunk} \ar[rr]^{\InvIdcW{\SX}} \ar@{>->>}[u]^{\kF} & & {\CcF{\SX}{\SX}{\infty}} .
		}
	\]
	This finishes the proof.
\end{proof}
\subsection{On decreasing weighted diffeomorphisms and dense subgroups}
We define the set $\DiffWvan$ of \emph{decreasing weighted diffeomorphisms}
and show that it is a closed normal subgroup of $\DiffW$
which can be turned into a Lie group.
Further, we give sufficient conditions on $\GewFunk$ ensuring that the
group $\Diffc$ of compactly supported diffeomorphisms is dense in $\DiffWvan$.
\paragraph{Inversion on weighted diffeomorphisms}
First, we have to discuss the inversion map restricted to weighted functions.
\begin{lem}\label{lem:HUVsein-Mengen_verkleinern}
	Let $\SX$ be a Banach space, $U, V \sub \SX$ open nonempty subsets,
	$\phi \in \InvertWinfDomRan{U}{V} \cap \BC{U}{\SX}{0}$,
	$r > 0$ with $r > \hn{\phi}{1_U}{0}$ and
	$\widetilde{U} \sub U$ and $\widetilde{V} \sub V$ open nonempty subsets
	such that $\widetilde{V} + \Ball{0}{r} \sub \widetilde{U}$.
	\begin{enumerate}
		\item\label{enum1:HUVsein-Mengen_verkleinern-a}
		Then $\phi \in \InvertWinfDomRan{\widetilde{U}}{\widetilde{V}}$.

		\item\label{enum1:HUVsein-Mengen_verkleinern-b}
		In particular, if $R > s > 0$, $U = V = \SX$
		and $\hn{\phi}{1_\SX}{0} < R - s$, then
		$\phi \in \InvertWinfDomRan{ \SX\setminus\clBall{0}{s} }{ \SX\setminus\clBall{0}{R} }$.
	\end{enumerate}
\end{lem}
\begin{proof}
	\refer{enum1:HUVsein-Mengen_verkleinern-a}
	Obviously $\phi + \id{U}$ is injective on $\widetilde{U}$,
	so we just need to show that $\widetilde{V} \sub (\phi + \id{U})(\widetilde{U})$.
	To this end, let $y \in \widetilde{V}$.
	Since $\phi \in \InvertWinfDomRan{U}{V}$ and $\widetilde{V} \sub V$,
	there exists $x \in U$ with $\phi(x) + x = y$.
	This implies that $\norm{y - x} \leq \hn{\phi}{1_U}{0} < r$,
	and hence
	\[
		x = y + x - y \in \widetilde{V} + \Ball{0}{r} \sub \widetilde{U}.
	\]

	\refer{enum1:HUVsein-Mengen_verkleinern-b}
	This is an easy application of \refer{enum1:HUVsein-Mengen_verkleinern-a}
	since by the triangle inequality
	$\SX\setminus\clBall{0}{R} + \Ball{0}{R - s} \sub \SX\setminus\clBall{0}{s}$.
\end{proof}
\begin{lem}\label{lem:Inverse_von_CWvan_ist_(fast)_in_CWvan}
	Let $\SX$ be a Banach space, $\GewFunk \subseteq \cl{\R}^{\SX}$ with
	$1_{\SX} \in \GewFunk$ and $\phi \in \InvertWinfDomRan{\SX}{\SX} \cap \CcFvan{\SX}{\SX}{\infty}$.
	Then there exists an $R > 0$ such that
	\[
		\rest{\InvIdcW{X}(\phi)}{ \SX\setminus\clBall{0}{R} } \in \CcFvan{\SX\setminus\clBall{0}{R}}{\SX}{\infty}.
	\]
\end{lem}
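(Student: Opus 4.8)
The plan is to start from \refer{prop:Inverse_von_Abb._mit_Opnorm_kleiner_1_auf_X-B(0,r)_ist_in_CW(X-B(0,R))}, which already gives $\rest{I(\phi)}{\SX\setminus\Ball{0}{R}}\in\CcF{\SX\setminus\Ball{0}{R}}{\SX}{\infty}$ together with an explicit formula for its derivative, and to upgrade this membership from the bounded space to the vanishing space by an induction on the order of differentiation that mirrors the proof of that proposition. Since $1_\SX\in\GewFunk$ and $\phi\in\CcFvan{\SX}{\SX}{\infty}$, choosing $f=1_\SX$, $\ell=1$ in the defining condition of the vanishing space produces a radius $r$ with $\hn{\phi}{1_{\SX\setminus\Ball{0}{r}}}{1}=\sup_{x\in\SX\setminus\Ball{0}{r}}\Opnorm{\FAbl{\phi}(x)}<1$. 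I would then fix $R$ and an auxiliary radius $s$ exactly as in the cited proof, namely $\hn{I(\phi)}{1_\SX}{0}<s<R-r$, so that $\Ball{0}{s}$ is balanced, $\Ball{0}{s}+(\SX\setminus\Ball{0}{R})\subseteq\SX\setminus\Ball{0}{r}$ (by \refer{lem:estim_Summe_einer_Kugel_und_eines_Kugelkomplements}), and $I(\phi)$ maps $\SX\setminus\Ball{0}{R}$ into $\Ball{0}{s}$. Because $\CcFvan{\SX\setminus\Ball{0}{R}}{\SX}{\infty}=\CcF{\SX\setminus\Ball{0}{R}}{\SX}{\infty}\cap\bigcap_{k<\infty}\CcFvan{\SX\setminus\Ball{0}{R}}{\SX}{k}$ straight from the definition, it suffices to show $\rest{I(\phi)}{\SX\setminus\Ball{0}{R}}\in\CcFvan{\SX\setminus\Ball{0}{R}}{\SX}{k}$ for every finite $k$.

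For $k=0$ I would invoke the pointwise bound \refer{est:Abschaetzung_gewichteter_FWert_der_K-Inversion}: for $f\in\GewFunk$ and $x\in\SX\setminus\Ball{0}{R}$,
\[
	\abs{f(x)}\,\norm{I(\phi)(x)}\leq\frac{\abs{f(x)}\,\norm{\phi(x)}}{1-\hn{\phi}{1_{\SX\setminus\Ball{0}{r}}}{1}}.
\]
As $\phi\in\CcFvan{\SX}{\SX}{0}$, the numerator has $\hn{\cdot}{f}{0}$-norm that becomes arbitrarily small off large balls, and dividing by the fixed positive constant gives $\rest{I(\phi)}{\SX\setminus\Ball{0}{R}}\in\CcFvan{\SX\setminus\Ball{0}{R}}{\SX}{0}$.

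For the step $k\to k+1$ I would use the derivative formula \refer{eq:Differential_der_inversen_Abb2}, which reads, on $\SX\setminus\Ball{0}{R}$,
\[
	\FAbl{I(\phi)}=g_{\Lin{\SX}{\SX},k,\infty}\bigl(\FAbl{\phi}\MaMu\QuasiInv(-\rest{\FAbl{\phi}}{\SX\setminus\Ball{0}{r}})-\FAbl{\phi},\ \rest{I(\phi)}{\SX\setminus\Ball{0}{R}}\bigr),
\]
the index $k$ being legitimate because $g_{\Lin{\SX}{\SX},k,\infty}$ and the $g_{\Lin{\SX}{\SX},\infty}$ of the cited formula are restrictions of the same pointwise composition. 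The first argument lies in $\CcFvan{\SX\setminus\Ball{0}{r}}{\Lin{\SX}{\SX}}{\infty}$: by \refer{cor:topologische_Zerlegung_von_CFkvan}, $\rest{\FAbl{\phi}}{\SX\setminus\Ball{0}{r}}$ is vanishing, whereas $\QuasiInv(-\FAbl{\phi})=\QuasiInv_{\Lin{\SX}{\SX}}\circ(-\FAbl{\phi})$ is merely bounded, lying in $\CinfF{\SX\setminus\Ball{0}{r}}{\Lin{\SX}{\SX}}$ by \refer{prop:CkF(,A)_topologische_Algebra} and hence in $\BC{\SX\setminus\Ball{0}{r}}{\Lin{\SX}{\SX}}{\infty}$ since $1_{\SX\setminus\Ball{0}{r}}$ is a weight; applying \refer{cor:multilineare_Abb_und_CFvan} to the bilinear composition of operators with the vanishing factor $\FAbl{\phi}$ shows the product is vanishing, and since the vanishing space is a vector subspace (\refer{lem:elementare_Eigenschaften_CWvan}) so is the whole first argument. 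Using the inductive hypothesis and the choice of $s$, the second argument $\rest{I(\phi)}{\SX\setminus\Ball{0}{R}}$ lies in $\CcFvan{\SX\setminus\Ball{0}{R}}{\Ball{0}{s}}{k}$, so \refer{cor:Kcomp_g_van_ist_glatt_(und_definiert)} yields $\FAbl{I(\phi)}\in\CcFvan{\SX\setminus\Ball{0}{R}}{\Lin{\SX}{\SX}}{k}$. Together with the base case, \refer{cor:topologische_Zerlegung_von_CFkvan} then promotes $\rest{I(\phi)}{\SX\setminus\Ball{0}{R}}$ to $\CcFvan{\SX\setminus\Ball{0}{R}}{\SX}{k+1}$, closing the induction and hence the proof.

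I expect the main obstacle to be the verification that the first argument of $g$ genuinely decays at infinity: the quasi-inverse $\QuasiInv(-\FAbl{\phi})$ does \emph{not} vanish (it approaches an identity-type limit as $\norm{x}\to\infty$), so one cannot argue that this factor is itself vanishing. The decay must be extracted solely from the factor $\FAbl{\phi}$, which is exactly the situation handled by the mixed boundedness--vanishing multilinear statement \refer{cor:multilineare_Abb_und_CFvan}; everything else is the bookkeeping of matching $r$, $R$, $s$ to the domain hypotheses of \refer{cor:Kcomp_g_van_ist_glatt_(und_definiert)}.
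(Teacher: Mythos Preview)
Your proof is correct and follows essentially the same route as the paper: choose $r$ from the vanishing of $\FAbl{\phi}$, then $R>r+\hn{I(\phi)}{1_\SX}{0}$, and induct on $k$ using \refer{est:Abschaetzung_gewichteter_FWert_der_K-Inversion} for the base case and the derivative formula \refer{eq:Differential_der_inversen_Abb2} together with \refer{cor:multilineare_Abb_und_CFvan}, \refer{cor:Kcomp_g_van_ist_glatt_(und_definiert)} and \refer{cor:topologische_Zerlegung_von_CFkvan} for the step. Your observation that the decay of the first argument of $g$ comes only from the factor $\FAbl{\phi}$ (not from $\QuasiInv(-\FAbl{\phi})$) is exactly the point the paper handles via \refer{cor:multilineare_Abb_und_CFvan}.
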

\begin{proof}
	Since $\phi \in \CcFvan{\SX}{\SX}{\infty}$, there exists an $r >0$ such that
	$
		\sup_{x \in \SX\setminus\clBall{0}{r}} \Opnorm{\FAbl{\phi}(x)}
		< 1 .
	$
	We choose $R > 0$ such that $\hn{\phi }{1_\SX}{0} + r < R$.
	We see with \refer{lem:HUVsein-Mengen_verkleinern}
	that $\phi \in \InvertWinfDomRan{\SX\setminus\clBall{0}{r} }{\SX\setminus\clBall{0}{R} }$,
	and this allows the application of
	\refer{prop:Inverse_von_Abb._mit_Opnorm_kleiner_1_auf_X-B(0,r)_ist_in_CW(X-B(0,R))}
	to see that
	$\InvIdcW{\SX\setminus\clBall{0}{R} }(\phi) \in \CcF{\SX\setminus\clBall{0}{R}}{\SX}{\infty}$.
	Further, by \refer{id:inversion_psi_gleichungen2}
	\[
		\InvIdcW{\SX\setminus\clBall{0}{R}}(\phi)
		= -\phi \circ (\InvIdcW{\SX\setminus\clBall{0}{R}}(\phi) + \id{\SX\setminus\clBall{0}{R}})
		= \compIdcW{\SX}(-\phi, \InvIdcW{\SX\setminus\clBall{0}{R}}(\phi)),
	\]
	hence an application of \refer{lem:EndWwan_und_Kompostion} finishes the proof.
\end{proof}
\paragraph{A normal Lie subgroup}
To derive the desired result, we need the following technical lemma.
\begin{lem}\label{lem:Differenz_DiffW-DiffW_nach_DiffWvan}
	Let $\SX$ be a Banach space and $\GewFunk \subseteq \cl{\R}^{\SX}$ with
	$1_{\SX} \in \GewFunk$. Further, let $\phi \in \EndWvan$ and $\psi \in \DiffW$.
	Then $\psi - \psi \circ \phi \in \CcFvan{\SX}{\SX}{\infty}$.
\end{lem}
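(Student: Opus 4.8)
The plan is to reduce the claim to the behaviour of the composition maps $g$ studied in \refer{sususec:An_important_map}, via an elementary algebraic rearrangement followed by a mean value argument. Write $\psi = \id_{\SX} + \sigma$ with $\sigma \ndef \psi - \id_{\SX} \in \CcF{\SX}{\SX}{\infty}$ (which holds since $\psi \in \DiffW$) and $\phi = \id_{\SX} + \gamma$ with $\gamma \in \CcFvan{\SX}{\SX}{\infty}$ (by definition of $\EndWvan$). Throughout, the composition maps of \refer{cor:Kcomp_g_ist_glatt_(und_definiert)} are applied with $\UF = \VF = \WF = \SX$, which is legitimate since $\SX$ is balanced and $\SX + \SX \subseteq \SX$. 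In this notation $g_{\SX, \infty}(\sigma, \gamma) = \sigma \circ (\gamma + \id_{\SX}) = \sigma \circ \phi$, and a direct computation gives
\[
	\psi - \psi \circ \phi
	= -\gamma - \bigl(g_{\SX, \infty}(\sigma, \gamma) - \sigma\bigr).
\]
Since $\CcFvan{\SX}{\SX}{\infty}$ is a vector space containing $-\gamma$, the whole claim reduces to showing $g_{\SX, \infty}(\sigma, \gamma) - \sigma \in \CcFvan{\SX}{\SX}{\infty}$; note this difference is a priori an element of $\CcF{\SX}{\SX}{\infty}$ by \refer{cor:Kcomp_g_ist_glatt_(und_definiert)}.

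Second, I would produce an integral representation of this difference that exposes the decaying factor $\gamma$ explicitly. Applying the mean value theorem pointwise (the segment from $x$ to $x + \gamma(x)$ lies in $\SX$), for each $x \in \SX$ one has
\[
	\delta_x\bigl(g_{\SX, \infty}(\sigma, \gamma) - \sigma\bigr)
	= \sigma(x + \gamma(x)) - \sigma(x)
	= \Rint{0}{1}{D\sigma(x + s\gamma(x)) \eval \gamma(x)}{s},
\]
which is exactly $\delta_x$ applied to the integrand $g_{\Lin{\SX}{\SX}, \infty}(D\sigma, s\gamma) \eval \gamma$. The strategy is then to verify that the curve $[0,1] \to \CcFvan{\SX}{\SX}{\infty}$, $s \mapsto g_{\Lin{\SX}{\SX}, \infty}(D\sigma, s\gamma) \eval \gamma$, is continuous and weakly integrable, and that its integral coincides with $g_{\SX, \infty}(\sigma, \gamma) - \sigma$; the identification follows from the displayed pointwise identity together with \refer{lem:Punktetrennende_Menge_Integrale_CW}, since (as $1_\SX \in \GewFunk$) the evaluations $\delta_x$ are continuous and separate points (\refer{lem:Kriterium_fuer_Stetigkeit_der_Punktauswertungen}).

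For the integrand, note that $D\sigma \in \CcF{\SX}{\Lin{\SX}{\SX}}{\infty}$ by \refer{prop:topologische_Zerlegung_von_CFk} and $s\gamma \in \CcF{\SX}{\SX}{\infty}$, so $g_{\Lin{\SX}{\SX}, \infty}(D\sigma, s\gamma) \in \CcF{\SX}{\Lin{\SX}{\SX}}{\infty}$ by \refer{cor:Kcomp_g_ist_glatt_(und_definiert)}; as $1_\SX \in \GewFunk$ this space is contained in $\BC{\SX}{\Lin{\SX}{\SX}}{\infty}$. Feeding this bounded operator-valued map and $\gamma \in \CcFvan{\SX}{\SX}{\infty}$ into the continuous bilinear evaluation map (\refer{cor:Auswertung_linearer_abb_und_CF}), an application of \refer{cor:multilineare_Abb_und_CFvan} with $\cW_1 = \{1_\SX\}$, $\cW_2 = \GewFunk$ and the decaying factor placed in position $j = 2$ yields $g_{\Lin{\SX}{\SX}, \infty}(D\sigma, s\gamma) \eval \gamma \in \CcFvan{\SX}{\SX}{\infty}$. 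Continuity in $s$ follows from the smoothness of $g_{\Lin{\SX}{\SX}, \infty}$ and the continuity of $\eval$, while $\CcFvan{\SX}{\SX}{\infty}$ is complete, being a closed subspace (\refer{lem:elementare_Eigenschaften_CWvan}) of the complete space $\CcF{\SX}{\SX}{\infty}$ (\refer{cor:CkF_komplett}); hence the weak integral exists inside $\CcFvan{\SX}{\SX}{\infty}$, and combining with the previous paragraph finishes the proof.

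The main obstacle is not a single hard estimate but the bookkeeping needed to pass rigorously from the pointwise mean value identity to a genuine vector-valued integral living in the closed subspace $\CcFvan{\SX}{\SX}{\infty}$. The crucial point is that neither $g_{\SX,\infty}(\sigma,\gamma)$ nor $\sigma$ lies in $\CcFvan{\SX}{\SX}{\infty}$ individually; the mean value representation is precisely what extracts the factor $\gamma$, after which \refer{cor:multilineare_Abb_und_CFvan} does the essential work of converting the decay of $\gamma$ into decay of the product. One must then confirm that the integrand takes values in this subspace before invoking completeness, and use that evaluations separate points to identify the abstract integral with $g_{\SX, \infty}(\sigma, \gamma) - \sigma$.
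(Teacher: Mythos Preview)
Your proof is correct and follows essentially the same route as the paper: both apply the mean value theorem to produce an integral of the form $\int_0^1 (\text{something in }\BC{\SX}{\Lin{\SX}{\SX}}{\infty}) \eval (\phi - \id_\SX)\, dt$, then invoke \refer{cor:multilineare_Abb_und_CFvan} together with the closedness of $\CcFvan{\SX}{\SX}{\infty}$. The only cosmetic difference is that the paper applies the mean value theorem directly to $\psi$ (so the operator factor is $D\psi \in \BC{\SX}{\Lin{\SX}{\SX}}{\infty}$), whereas you first split off the $-\gamma$ term algebraically and work with $D\sigma$ instead; this is the same argument with one extra line of bookkeeping.
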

\begin{proof}
	We calculate using \refer{lem:Kriterium_Integrierbarkeit_in_CW} and the mean value theorem that
	\[
		\psi - \psi \circ \phi
		= \Rint{0}{1}{ \FAbl{\psi}(\id{\SX} + t (\phi - \id{\SX})) \eval (\phi - \id{\SX}) }{t}.
	\]
	Since $\FAbl{\psi} \in \BC{\SX}{\Lin{\SX}{\SX}}{\infty}$,
	we conclude with \refer{prop:Kompo_Koord_glatt} that
	$\FAbl{\psi}(\id{\SX} + t (\phi - \id{\SX})) \in \BC{\SX}{\Lin{\SX}{\SX}}{\infty}$.
	Since $\phi - \id{\SX} \in \CcFvan{\SX}{\SX}{\infty}$,
	the assertion follows from \refer{cor:multilineare_Abb_und_CFvan}
	and the fact that $\CcFvan{\SX}{\SX}{\infty}$ is closed in $\CcF{\SX}{\SX}{\infty}$.
\end{proof}

\begin{prop}\label{prop:DiffWvan_UnterLiegruppen_von_DiffW}
	Let $\SX$ be a Banach space and $\GewFunk \subseteq \cl{\R}^{\SX}$ with
	$1_{\SX} \in \GewFunk$. The set
	\[
		\glstext{gew_abfallende_Diffeomorphismen_Braum} \ndef
		\DiffW \cap \EndWvan
		= \set{\phi \in \DiffW}{\phi - \id{\SX} \in \CcFvan{\SX}{\SX}{\infty}}
	\]
	\index{weighted diffeomorphisms!decreasing}%
	\index{diffeomorphisms!groups of}%
	is a closed normal Lie subgroup of $\DiffW$.
\end{prop}
\begin{proof}
	In \refer{cor:EndWvan_Untermonoid_von_EndW} it was proved that $\EndWvan$ is a
	smooth submonoid of $\EndW$ and a closed subset.
	Since $\DiffW$ is open in $\EndW$, we conclude that
	$\DiffWvan$ is a smooth submonoid of $\DiffW$ that is closed.
	Further, it is a direct consequence of
	\refer{lem:Inverse_von_CWvan_ist_(fast)_in_CWvan}
	that the inverse function of an element of $\DiffWvan$ is in $\DiffWvan$,
	whence using \refer{lem:SUbmanifols_sind_initial} we see that the latter is a closed Lie subgroup of $\DiffW$.

	It remains to show that $\DiffWvan$ is normal.
	To this end, let $\phi \in \DiffWvan$ and $\psi \in \DiffW$.
	Then
	\[
		\psi \circ \phi \circ \psi^{-1} - \id{\SX}
		= \psi \circ \phi \circ \psi^{-1} - \psi \circ \phi^{-1} \circ \phi \circ \psi^{-1}
		= (\psi - \psi \circ \phi^{-1}) \circ \phi \circ \psi^{-1},
	\]
	so we derive the assertion from \refer{lem:Differenz_DiffW-DiffW_nach_DiffWvan}
	and \refer{lem:EndWwan_und_Kompostion}.
\end{proof}
\paragraph{On the density of compactly supported diffeomorphisms}
As promised, we give a sufficient criterium on $\GewFunk$ that makes $\Diffc$
a dense subgroup of $\DiffW$.
\begin{lem}\label{lem:bedingungen_an_Gewichte_fuer_Dichte_Testfunktionen}
	Let $\SX$ and $\SY$ be finite-dimensional normed spaces
	and $\UF \subseteq \SX$ an open nonempty set.
	Further, let $\GewFunk \subseteq \R^\UF$ a set of weights such that
	\begin{equation}\label{bedingungen_an_Gewichte_fuer_Dichte_Testfunktionen}
		\begin{aligned}
			&\bullet \GewFunk \subseteq \ConDiff{\UF}{[0,\infty[}{\infty}\\
			&\bullet (\forall x \in \UF) (\exists f \in \GewFunk)\, f(x) > 0\\
			&\bullet 
			\begin{aligned}(\forall f_1, \dotsc,f_n \in \GewFunk)(\forall k_1,\dotsc,k_n \in \N) (\exists f \in \GewFunk, C > 0)
			\\(\forall x \in \UF) \Opnorm{\FAbl[k_1]{f_1}(x)} \dotsb \Opnorm{\FAbl[k_n]{f_n}(x)} \leq C f(x).
			\end{aligned}
		\end{aligned}
	\end{equation}
	Then $\CF{\UF}{\SY}{c}{\infty}$ is dense in $\CcFvan{\UF}{\SY}{r}$.
\end{lem}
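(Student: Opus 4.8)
The plan is to approximate a given $\gamma\in\CcFvan{\UF}{\SY}{r}$ in two stages: first by a compactly supported $\ConDiff{}{}{r}$-map obtained through a cut-off, and then by a smooth compactly supported map obtained through convolution. Since the topology of $\CcFvan{\UF}{\SY}{r}$ is generated by the seminorms $\hn{\cdot}{f}{\ell}$ with $f\in\GewFunk$ and $\ell\leq r$ (and, for $r=\infty$, is the one of the projective limit described in \refer{cor:Topologie_von_CinfF}), it suffices to fix finitely many weights $f_1,\dotsc,f_m\in\GewFunk$, a finite order bound, and $\eps>0$, and to produce one $\beta\in\CF{\UF}{\SY}{c}{\infty}$ with $\hn{\gamma-\beta}{f_i}{\ell}<\eps$ for all $i$ and all relevant $\ell$.

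First I would carry out the cut-off. Using the second condition in \eqref{bedingungen_an_Gewichte_fuer_Dichte_Testfunktionen}, the positivity loci of the (continuous) weights form an open cover of $\UF$, which together with the finite dimension of $\SX$ lets me fix an exhaustion $K_1\subseteq K_2\subseteq\dotsb$ of $\UF$ by compacta and smooth cut-offs $\chi_n\in\ConDiff{\UF}{[0,1]}{\infty}$ with $\chi_n\equiv1$ on $K_n$ and support compact in $\UF$, designed (by composing a fixed one-variable cut-off with weight-built functions) so that the operator norms $\Opnorm{\FAbl[j]{\chi_n}}$ are dominated by products of operator norms of derivatives of weights. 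By the Leibniz rule,
\[
	\Opnorm{\FAbl[\ell]{((1-\chi_n)\gamma)}(x)}
	\leq \sum_{j=0}^{\ell}\binom{\ell}{j}\Opnorm{\FAbl[j]{(1-\chi_n)}(x)}\,\Opnorm{\FAbl[\ell-j]{\gamma}(x)},
\]
and $(1-\chi_n)\gamma$ vanishes on $K_n$. Multiplying by $\abs{f_i(x)}$ and using the third condition in \eqref{bedingungen_an_Gewichte_fuer_Dichte_Testfunktionen} to absorb each factor $\abs{f_i}\,\Opnorm{\FAbl[j]{\chi_n}}$ into a single weight $\tilde f\in\GewFunk$ (with $f_i$ counted as a zeroth-order derivative), I bound $\hn{(1-\chi_n)\gamma}{f_i}{\ell}$ by a finite sum of tail seminorms $\hn{\rest{\gamma}{\UF\setminus K_n}}{\tilde f}{\ell-j}$. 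These tend to $0$ as $n\to\infty$ by the decay property defining $\CcFvan{\UF}{\SY}{r}$, so $\chi_n\gamma\to\gamma$; for $n$ large, $g\ndef\chi_n\gamma$ is a compactly supported $\ConDiff{}{}{r}$-map with $\hn{\gamma-g}{f_i}{\ell}<\eps/2$.

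It then remains to smooth $g$. Extending $g$ by $0$ to all of $\SX$ (legitimate since $\supp{g}$ is compact in the open set $\UF$), I would set $\beta\ndef g\ast\rho_\delta$ for a standard mollifier $\rho_\delta$. For $\delta$ small enough $\supp{\beta}$ stays inside a fixed compact $K\subseteq\UF$, so $\beta\in\CF{\UF}{\SY}{c}{\infty}$, and $\FAbl[\ell]{\beta}=\FAbl[\ell]{g}\ast\rho_\delta$ converges uniformly to $\FAbl[\ell]{g}$ as $\delta\to0$, each $\FAbl[\ell]{g}$ being uniformly continuous with compact support. Since every $f_i$ is continuous by the first condition in \eqref{bedingungen_an_Gewichte_fuer_Dichte_Testfunktionen}, hence bounded on $K$, uniform convergence on $K$ yields $\hn{g-\beta}{f_i}{\ell}\to0$; shrinking $\delta$ makes this $<\eps/2$, and the triangle inequality completes the approximation.

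The main obstacle is the cut-off step, specifically the simultaneous control of the (potentially large) derivatives of the cut-offs against all prescribed weights. This is exactly the purpose of the closure property \eqref{bedingungen_an_Gewichte_fuer_Dichte_Testfunktionen} of $\GewFunk$ under products of derivatives: it lets every term $\abs{f_i}\,\Opnorm{\FAbl[j]{\chi_n}}\,\Opnorm{\FAbl[\ell-j]{\gamma}}$ be bounded by a single-weight tail seminorm of $\gamma$, which then vanishes by the decay built into $\CcFvan{\UF}{\SY}{r}$; the smoothing step, by contrast, is routine once compact support inside $\UF$ has been achieved.
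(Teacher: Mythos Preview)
The paper does not supply its own argument here; it simply refers to \cite[\S V, 19 b)]{MR0442658}, so there is no in-paper proof to compare your approach against.

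Your two-stage plan (cut off, then mollify) is the standard route, and the mollification step is unproblematic. The cut-off step, however, contains a gap. You assert that the tail seminorms $\hn{\rest{\gamma}{\UF\setminus K_n}}{\tilde f}{\ell-j}$ tend to $0$ ``by the decay property defining $\CcFvan{\UF}{\SY}{r}$''. But that property (see the definition just before \refer{lem:elementare_Eigenschaften_CWvan}) concerns complements of balls $\UF\setminus\Ball{0}{\rho}$, not complements of an arbitrary compact exhaustion of $\UF$. When $\UF=\SX$ one may take $K_n=\clBall{0}{n}$ and standard radial cut-offs with uniformly bounded derivatives; then the tails are exactly those of the definition and your Leibniz estimate goes through --- in fact the third condition in \eqref{bedingungen_an_Gewichte_fuer_Dichte_Testfunktionen} is not even needed for this step, since the uniform bound on $\FAbl[j]{\chi_n}$ lets you keep the \emph{same} weight $f_i$. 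For a proper open $\UF\subsetneq\SX$, however, the sets $\UF\setminus K_n$ contain a neighbourhood of $\partial\UF$ on which membership in $\CcFvan{\UF}{\SY}{r}$ imposes no smallness whatsoever: with $\UF=(0,1)$ and $\GewFunk=\{1_\UF\}$ the hypotheses \eqref{bedingungen_an_Gewichte_fuer_Dichte_Testfunktionen} are satisfied, yet $\CcFvan{(0,1)}{\R}{0}=\BC{(0,1)}{\R}{0}$, and the constant $1$ there is not a uniform limit of functions compactly supported in $(0,1)$. Your phrase ``cut-offs built from weights'' cannot repair this when the weights are constants. So your outline is sound for $\UF=\SX$ --- the only case the paper subsequently uses --- but the step invoking decay along the compact exhaustion is precisely where the general-$\UF$ argument breaks down.
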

\begin{proof}
	A proof can be found in \cite[\S \RN{5}, 19 b)]{MR0442658}.
\end{proof}

\begin{lem}
	Let $\SX$ be a finite-dimensional normed space,
	$\GewFunk \subseteq \R^\SX$ such that $1_\SX \in \GewFunk$
	and \eqref{bedingungen_an_Gewichte_fuer_Dichte_Testfunktionen}
	is satisfied (where $\UF = \SX$).
	Then the set of compactly supported diffeomorphisms $\Diffc$ is dense in $\DiffWvan$.
	\index{compactly supported diffeomorphisms!density in $\DiffWvan$}%
\end{lem}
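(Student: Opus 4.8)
The plan is to transport the problem through the global chart $\kF$ and reduce it to the density statement already available for weighted function spaces, namely \refer{lem:bedingungen_an_Gewichte_fuer_Dichte_Testfunktionen}. First I would record that $\Diffc \subseteq \DiffWvan$: if $\phi \in \Diffc$ coincides with $\id_\SX$ outside a compact set $K$, then $\phi - \id_\SX$ and $\phi^{-1} - \id_\SX$ are smooth with compact support (the support of the latter lying in $\phi(K)$), so both vanish on $\SX\setminus\Ball{0}{r}$ for $r$ large; hence the seminorms $\hn{\rest{(\phi - \id_\SX)}{\SX\setminus\Ball{0}{r}}}{f}{\ell}$ are eventually zero, whence $\phi - \id_\SX \in \CcFvan{\SX}{\SX}{\infty}$. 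Here the finiteness and smoothness of the weights, guaranteed by \eqref{bedingungen_an_Gewichte_fuer_Dichte_Testfunktionen}, also ensure that the seminorms are finite on the compact set $K$. Consequently $\kF^{-1}(\Diffc) = \CF{\SX}{\SX}{c}{\infty} \cap M_\GewFunk$ while $\kF^{-1}(\DiffWvan) = M_\GewFunk \cap \CcFvan{\SX}{\SX}{\infty}$.

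Since $\kF$ is a homeomorphism and the differentiable structure of $\DiffW$ (hence of its submanifold $\DiffWvan$) is generated by $\kF$ (\refer{cor:DiffW_ist_offen}), proving density of $\Diffc$ in $\DiffWvan$ is equivalent to proving density of $\CF{\SX}{\SX}{c}{\infty} \cap M_\GewFunk$ in $M_\GewFunk \cap \CcFvan{\SX}{\SX}{\infty}$, where $\CcFvan{\SX}{\SX}{\infty}$ carries the topology induced from $\CcF{\SX}{\SX}{\infty}$. The decisive structural fact is that $M_\GewFunk$ is \emph{open} in $\CcF{\SX}{\SX}{\infty}$ (\refer{cor:DiffW_ist_offen}); therefore $M_\GewFunk \cap \CcFvan{\SX}{\SX}{\infty}$ is open in $\CcFvan{\SX}{\SX}{\infty}$.

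With this in hand the argument is short. I would fix $\gamma_0 \in M_\GewFunk \cap \CcFvan{\SX}{\SX}{\infty}$ together with an open neighbourhood $O$ of $\gamma_0$ in $\CcFvan{\SX}{\SX}{\infty}$ satisfying $O \subseteq M_\GewFunk$. Applying \refer{lem:bedingungen_an_Gewichte_fuer_Dichte_Testfunktionen} with $\SY = \SX$, $\UF = \SX$ and $r = \infty$ (its hypotheses hold by assumption, and $\SX$ is finite-dimensional), the space $\CF{\SX}{\SX}{c}{\infty}$ is dense in $\CcFvan{\SX}{\SX}{\infty}$, so there exists $\gamma \in O \cap \CF{\SX}{\SX}{c}{\infty}$. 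Because $\gamma \in O \subseteq M_\GewFunk$, the map $\kF(\gamma) = \gamma + \id_\SX$ lies in $\DiffW$ and is in particular a diffeomorphism; being compactly supported it belongs to $\Diffc$, and it lies in the neighbourhood $\kF(O)$ of $\kF(\gamma_0)$. As $\gamma_0$ and $O$ were arbitrary, $\Diffc$ is dense in $\DiffWvan$.

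The only genuine analysis is buried in \refer{lem:bedingungen_an_Gewichte_fuer_Dichte_Testfunktionen}, which encodes the approximation of weighted decreasing functions by compactly supported ones and is quoted from the literature. The one subtle point in the present step is to ensure that the compactly supported \emph{functions} furnished by that lemma actually produce compactly supported \emph{diffeomorphisms}; this is exactly what the openness of $M_\GewFunk$ provides, since any function close enough to $\gamma_0$ remains inside $M_\GewFunk$ and hence is sent to $\DiffW$ by $\kF$.
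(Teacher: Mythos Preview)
Your argument is correct and follows exactly the same route as the paper: transport through the global chart $\kF$, use that $M_\GewFunk$ is open in $\CcF{\SX}{\SX}{\infty}$ (hence $M_\GewFunk \cap \CcFvan{\SX}{\SX}{\infty}$ is open in $\CcFvan{\SX}{\SX}{\infty}$), and invoke \refer{lem:bedingungen_an_Gewichte_fuer_Dichte_Testfunktionen} to pass from density of $\CF{\SX}{\SX}{c}{\infty}$ in $\CcFvan{\SX}{\SX}{\infty}$ to density after intersecting with the open set. Your write-up is simply more explicit than the paper's three-line version (in particular you spell out $\Diffc \subseteq \DiffWvan$ and the role of openness), but the content is the same.
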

\begin{proof}
	The set
	$M_\GewFunk^\circ \ndef \kF^{-1}(\DiffW) \cap \CcFvan{\SX}{\SX}{\infty} = \kF^{-1}(\DiffWvan)$
	is open in $\CcFvan{\SX}{\SX}{\infty}$,
	and hence $M_c \ndef \DCcInf{\SX}{\SX} \cap M_\GewFunk^\circ$
	is dense in $M_\GewFunk^\circ$ by \refer{lem:bedingungen_an_Gewichte_fuer_Dichte_Testfunktionen}.
	But $M_c = \kF^{-1}(\Diffc)$, from which the assertion follows.
\end{proof}

\subsection{On diffeomorphisms that are weighted endomorphisms}
It is obvious that the relation
\[
	\Diff{\SX}{}{\cW} \subseteq \Endos{\SX}{\cW} \cap \Diff{\SX}{}{}
\]
holds.
We give a sufficient criterion on $\cW$ that ensures that these two sets are identical,
provided that $\SX$ is finite-dimensional.
Further we show that
$\Diff{\R}{}{ \{1_{\R}\} } \neq \Endos{\R}{\{1_{\R}\}} \cap \Diff{\SX}{}{}$.
\begin{prop}
	Let $\SX$ be a \emph{finite-dimensional} Banach space
	and $\cW \subseteq \cl{\R}^\SX$
	with $1_\SX \in \cW$. If there exists $\widehat{f} \in \cW$ such that
	\begin{equation}\label{bed:Bedingung_Gleichheit_DiffW_Diff_cap_EndW}
		(\forall R > 0) (\exists r > 0)\,
		\norm{x} \geq r \implies \abs{\widehat{f}(x)} \geq R
	\end{equation}
	and if each function in $\cW$ is bounded on bounded sets, then
	\[
		\Diff{\SX}{}{\cW} =
		\Endos{\SX}{\cW} \cap \Diff{\SX}{}{}.
	\]%
	\index{weighted diffeomorphisms!easier description}%
\end{prop}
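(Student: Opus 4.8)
The inclusion $\Diff{\SX}{}{\cW} \subseteq \Endos{\SX}{\cW} \cap \Diff{\SX}{}{}$ is immediate from the definitions, so the plan is to establish the reverse inclusion. Take $\phi \in \Endos{\SX}{\cW} \cap \Diff{\SX}{}{}$ and write $\phi = \varphi + \id_\SX$ with $\varphi \in \CcF{\SX}{\SX}{\infty}$; since $\phi$ is a diffeomorphism we have $\varphi \in H_\cW$, and $\phi^{-1} - \id_\SX = I(\varphi)$, which is smooth on all of $\SX$ because $\phi^{-1} \in \FC{\SX}{\SX}{\infty}$. It therefore remains to show that $I(\varphi) \in \CcF{\SX}{\SX}{\infty}$, i.e. that each seminorm $\hn{I(\varphi)}{f}{\ell}$ with $f \in \cW$, $\ell \in \N$ is finite.

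The first step is to exploit the coercivity hypothesis on $\widehat f$ to gain smallness of $\FAbl{\varphi}$ at infinity. Since $\varphi \in \CcF{\SX}{\SX}{\infty}$, the quantity $M \ndef \hn{\varphi}{\widehat f}{1}$ is finite, so $\abs{\widehat f(x)}\,\Opnorm{\FAbl{\varphi}(x)} \leq M$ for all $x$, whence $\Opnorm{\FAbl{\varphi}(x)} \leq M/\abs{\widehat f(x)}$ whenever $\widehat f(x) \neq 0$. Applying \refer{bed:Bedingung_Gleichheit_DiffW_Diff_cap_EndW} with $R \ndef M + 1$ yields an $r > 0$ such that $\abs{\widehat f(x)} \geq M + 1 > 0$ whenever $\norm{x} \geq r$; hence $\sup_{\norm{x} \geq r} \Opnorm{\FAbl{\varphi}(x)} \leq M/(M+1) < 1$. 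This is precisely the hypothesis of \refer{prop:Inverse_von_Abb._mit_Opnorm_kleiner_1_auf_X-B(0,r)_ist_in_CW(X-B(0,R))}, which (using that $\hn{I(\varphi)}{1_\SX}{0} < \infty$ by \refer{lem:Inversion_in_BC0}) provides an $R > r + \hn{I(\varphi)}{1_\SX}{0}$ with
\[
	\rest{I(\varphi)}{\SX \setminus \Ball{0}{R}} \in \CcF{\SX \setminus \Ball{0}{R}}{\SX}{\infty}.
\]
Unwinding the definition, this says that all seminorms $\hn{\rest{I(\varphi)}{\SX\setminus\Ball{0}{R}}}{f}{\ell}$ are finite, which disposes of the region outside the ball.

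Next I would control the complementary region $\clBall{0}{R}$, and here finite-dimensionality of $\SX$ is essential: the closed ball $\clBall{0}{R}$ is compact, so each continuous map $\FAbl[\ell]{I(\varphi)}$ is bounded on it, and by assumption each weight $f \in \cW$ is bounded on the bounded set $\clBall{0}{R}$. Consequently $\sup_{\norm{x} \leq R} \abs{f(x)}\,\Opnorm{\FAbl[\ell]{I(\varphi)}(x)} < \infty$ for all $f$ and $\ell$. Splitting $\SX$ into $\{\norm{x} \geq R\}$ and $\{\norm{x} \leq R\}$ and combining with the previous step gives $\hn{I(\varphi)}{f}{\ell} < \infty$ for all $f \in \cW$ and $\ell \in \N$, so $I(\varphi) \in \CcF{\SX}{\SX}{\infty}$ and therefore $\phi \in \Diff{\SX}{}{\cW}$, as desired.

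The two hypotheses play complementary roles, and the main obstacle is conceptual rather than computational: one must force $\Opnorm{\FAbl{\varphi}}$ below $1$ outside some ball in order to invoke the inversion estimate, and this smallness is exactly what condition \refer{bed:Bedingung_Gleichheit_DiffW_Diff_cap_EndW} supplies through the bound $\Opnorm{\FAbl{\varphi}(x)} \leq M/\abs{\widehat f(x)} \to 0$ as $\norm{x} \to \infty$. The remaining work, bounding the weighted seminorms on the central bounded region, then dissolves once compactness of closed balls (finite dimension) and local boundedness of the weights are available; without finite-dimensionality the reduction to \refer{prop:Inverse_von_Abb._mit_Opnorm_kleiner_1_auf_X-B(0,r)_ist_in_CW(X-B(0,R))} on the ball would break down, which is why the hypothesis is imposed.
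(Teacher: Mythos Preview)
Your proof is correct and follows essentially the same approach as the paper's: both split $\SX$ into a large ball and its complement, use the coercivity of $\widehat f$ together with $\hn{\varphi}{\widehat f}{1}<\infty$ to force $\Opnorm{\FAbl\varphi}<1$ outside some ball, invoke \refer{prop:Inverse_von_Abb._mit_Opnorm_kleiner_1_auf_X-B(0,r)_ist_in_CW(X-B(0,R))} for the exterior region, and use compactness of closed balls (finite dimension) plus boundedness of the weights on bounded sets for the interior. The only cosmetic difference is that the paper treats the bounded region first and the exterior second, whereas you do the reverse.
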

\begin{proof}
	It remains to show that
	\[
		\Endos{\SX}{\cW} \cap \Diff{\SX}{}{} \subseteq \Diff{\SX}{}{\cW}.
	\]
	So let $\psi$ be in $\Endos{\SX}{\cW} \cap \Diff{\SX}{}{}$.
	Then $\phi \ndef \psi - \id{\SX} \in \InvertWinfDomRan{\SX}{\SX}$,
	and the equivalences
	\begin{multline*}
		\psi \in \Diff{\SX}{}{\cW}
		\iff
		\psi^{-1} \in \Endos{\SX}{\cW}
		\\
		\iff
		\psi^{-1} - \id{\SX} \in \CF{\SX}{\SX}{\cW}{\infty}
		\iff
		\InvIdcW{\SX}(\phi) \in \CF{\SX}{\SX}{\cW}{\infty}
	\end{multline*}
	hold
	(see \refer{lem:char_DiffW} and the definition of $\InvIdcW{X}$
	in \eqref{id:Definition_der_verallgemeinerten_Koor-Inversion}).
	The last statement clearly holds iff
	\[
		(\exists R, r > 0)\,
		\rest{\InvIdcW{\SX}(\phi) }{\SX\setminus\clBall{0}{R}} \in \CF{\SX\setminus\clBall{0}{R}}{\SX}{\cW}{\infty}
		\text{ and }
		\rest{\InvIdcW{\SX}(\phi)}{\Ball{0}{R + r} } \in \CF{\Ball{0}{R + r}}{\SX}{\cW}{\infty},
	\]
	and this shall be proved now.
	Obviously $\rest{\InvIdcW{\SX}(\phi)}{\Ball{0}{R}} \in \CF{\Ball{0}{R}}{\SX}{\cW}{\infty}$ for each $R > 0$
	because each $f \in \cW$ is bounded on bounded sets,
	the maps $\FAbl[\ell]{\InvIdcW{\SX}(\phi)}$ are continuous and each
	 bounded subset of $\SX$ is relatively compact (as $\SX$ is finite-dimensional).
	It remains to show that there exists $R  >0$ such that
	$
		\rest{\InvIdcW{\SX}(\phi)}{\SX\setminus\clBall{0}{R} } \in \CF{\SX\setminus\clBall{0}{R}}{\SX}{\cW}{\infty}.
	$
	We set $K_{\phi} \ndef \hn{\phi}{\widehat{f} }{1 } < \infty$
	and conclude from \eqref{bed:Bedingung_Gleichheit_DiffW_Diff_cap_EndW}
	that there exists an $r_{\phi}$ with
	\[
		\norm{x} \geq r_{\phi}
		\implies
		\abs{\widehat{f}(x)} \geq K_{\phi} + 1 .
	\]
	Since
	$
		\abs{\widehat{f}(x)}\, \Opnorm{\FAbl{\phi}(x)} \leq K_{\phi}
	$
	for each $x \in \SX$, we see that
	\[
		\hn{ \rest{\phi}{\SX\setminus\clBall{0}{r_\phi} } }{1_{\SX}}{1}
		\leq \frac{K_{\phi}}{K_{\phi} + 1}
		< 1.
	\]
	We choose $R_\phi > 0$ such that $R_\phi  > r_\phi + \hn{\phi}{1_\SX}{0}$.
	We see with \refer{lem:HUVsein-Mengen_verkleinern} that
	$\phi \in \InvertWinfDomRan{\SX\setminus \clBall{0}{r_\phi}}{\SX\setminus \clBall{0}{R_\phi}}$,
	so we can apply \refer{prop:Inverse_von_Abb._mit_Opnorm_kleiner_1_auf_X-B(0,r)_ist_in_CW(X-B(0,R))}
	to see that
	\[
		\rest{\InvIdcW{\SX}(\phi)}{\SX\setminus\clBall{0}{R_\phi} }
		= \InvIdcW{\SX\setminus\clBall{0}{R_\phi} }(\phi)
		\in \CF{\SX\setminus\clBall{0}{R_\phi} }{\SX}{\cW}{\infty},
	\]
	and this finishes the proof.
\end{proof}
We give an affirmative example.
\begin{beisp}
	The space $\Diff{\R^n}{}{\mathcal{S}}$ satisfies
	\refer{bed:Bedingung_Gleichheit_DiffW_Diff_cap_EndW}.
	We just have to set $\widehat{f}(x_{1},\dotsc, x_{n}) = x_{1}^2 + \dotsb + x_{n}^2$
	which clearly is a polynomial function on $\R^n$.
\end{beisp}
As announced, we give a counterexample. As preparation, we prove the following lemma.
\begin{lem}\label{lem:Kriterium_id+func_Diffeo_R}
	Let $\gamma \in \ConDiff{\R}{\R}{\infty}$ be a \emph{bounded} map that satisfies
	\begin{equation*}\label{bed:Abschaetzung_Ableitung}\tag{\ensuremath{\ast}}
		(\forall x\in \R) \: \gamma'(x) > -1.
	\end{equation*}
	Then $\gamma + \id{\R} \in \Diff{\R}{}{}$.
\end{lem}
\begin{proof}
	We conclude from \eqref{bed:Abschaetzung_Ableitung} that
	$
		(\gamma(x) + \id{\R})'(x) > 0
	$
	for all $x \in \R$, so $\gamma + \id{\R}$ is strictly monotone and hence injective.
	Since $\gamma$ is bounded, $\gamma + \id{\R}$ is unbounded above and below
	and hence surjective
	(by the intermediate value theorem).
\end{proof}

\begin{beisp}
	We give an example of a map $\gamma \in \BC{\R}{\R}{\infty}$
	with the property that $\gamma + \id{\R} \in \Diff{\R}{}{}$,
	but $(\gamma + \id{\R})^{-1} - \id{\R} \not\in \BC{\R}{\R}{\infty}$.
	To this end, let $\phi$ be an antiderivative of the function
	$x \mapsto \frac{2}{\pi} \arctan(x)$ with $\phi(0) = 0$.
	Then $\sin \circ\, \phi$ and $\cos \circ\, \phi$ are in $\BC{\R}{\R}{\infty}$
	by a simple induction
	since $\cos, \sin, \arctan \in \BC{\R}{\R}{\infty}$,
	\[
		\tag{\ensuremath{\ast}}
		\label{Ableitung_Beispielfunktion}
		(\sin \circ\, \phi)'(x) = \frac{2}{\pi} \arctan(x) (\cos \circ\, \phi)(x),
	\]
	and an analogous formula holds for $(\cos \circ\, \phi)'$.
	We see with \eqref{Ableitung_Beispielfunktion} that $(\sin \circ\, \phi)'(x) > -1$ for all $x \in \R$,
	so $\sin \circ\, \phi + \id{\R} \in \Diff{\R}{}{}$
	by \refer{lem:Kriterium_id+func_Diffeo_R}.
	But since
	\[
		((\sin \circ\, \phi + \id{\R})^{-1} - \id{\R})'(x)
		= \frac{1}{(\sin \circ\, \phi)'((\sin \circ\, \phi + \id{\R})^{-1}(x)) + 1} - 1
	\]
	and there exists a sequence $(y_{n})_{n\in\N}$ in $\R$ with
	\[
		\lim_{n\to \infty} \frac{2}{\pi} \arctan(y_{n}) (\cos \circ\, \phi)(y_{n}) = -1,
	\]
	$((\sin \circ\, \phi + \id{\R})^{-1} - \id{\R})'$ clearly is not bounded.
\end{beisp}

\section{Regularity}
We prove that the Lie groups $\DiffW$ and $\DiffWvan$ are regular.
For the definition of regularity, see \refer{susec:LieGroups-Regularity}.
\subsection[The regularity differential equation of \texorpdfstring{$\DiffW$}{DiffW}]%
{The regularity differential equation of \texorpdfstring{$\boldsymbol{\DiffW}$}{DiffW}}
\newcommand{\mW}{\ensuremath{m_\cW}}
\newcommand{\TmW}{\ensuremath{\Tang{m_\cW}}}
We examine the general (right) regularity differential equation
(which is stated in \refer{ivp:diffeq_Rechts-regularitaet_allgemein})
and turn it into a differential equation on $\CcF{\SX}{\SX}{\infty}$.
To this end, we first describe the group multiplication of the tangent group
$\Tang{\DiffW}$ and the right action of $\DiffW$ on $\Tang{\DiffW}$ with respect to
the chart $\Tang{\kF^{-1}}$.
\begin{lem}[Tangent group of $\DiffW$]\label{lem:Tangentengruppenop_DiffW_in_Karten}
	Let $\SX$ be a Banach space and $\GewFunk \subseteq \cl{\R}^\SX$ with $1_\SX \in \GewFunk$.
	In the following, we denote the multiplication on $\DiffW$ with respect to
	the chart $\kF^{-1}$ by $\mW$.
	Note that the tangent group $\Tang{\DiffW}$ is canonicly isomorphic to
	$\CcF{\SX}{\SX}{\infty} \rtimes \DiffW$.
	\begin{enumerate}
	\item\label{enum1:Tangentengruppenop_in_Karten}
	The group multiplication $\TmW$ on $\Tang{\DiffW}$
	(with respect to $\Tang{\kF^{-1}}$) is given by
	\[
		\TmW\bigl((\gamma, \gamma_1),(\eta, \eta_1)\bigr)
		=
		\bigl(\mW(\gamma, \eta), \FAbl{\gamma} \circ (\eta + \id{\SX}) \eval \eta_1 + \gamma_1 \circ (\eta + \id{\SX}) + \eta_1\bigr) .
	\]
	\item\label{enum1:Rechtswirkung_in_Karten}
	Let $\phi \in \DiffW$.
	Then the right action $\Tang{\rho_\phi}$ of $\phi$ on $\Tang{\DiffW}$
	with respect to $\Tang{\kF^{-1}}$ is given by
	\[
		\Tang{(\kF^{-1} \circ \rho_\phi \circ \kF)}(\gamma, \gamma_1)
		= \bigl(\mW(\gamma, \kF^{-1}(\phi)),
			\gamma_1 \circ \phi\bigr).
	\]
	\end{enumerate}
\end{lem}
\begin{proof}
	\refer{enum1:Tangentengruppenop_in_Karten}
	We have
	\[
		\mW(\gamma, \eta) = \gamma \circ (\eta + \id{\SX}) + \eta
	\]
	and the commutative diagram
	\[
		\xymatrix{
		{\DiffW \times \DiffW} \ar[rr]^-{\circ} && {\DiffW}
		\\
		{\kF^{-1}(\DiffW) \times \kF^{-1}(\DiffW)} \ar[rr]^-{\mW} \ar@{>->>}[u]|{\kF \times \kF} && {\kF^{-1}(\DiffW)} \ar@{>->>}[u]|{\kF}
		}.
	\]
	The group multiplication on the tangent group is given by applying
	the tangent functor $\Tang{}$ to the group multiplication on $\DiffW$,
	and therefore we obtain the group multiplication on $\Tang{\DiffW}$ in charts
	by applying $\Tang{}$ to $\mW$ (up to a permutation). Since
	\[
		\Tang{\mW}(\gamma, \eta; \gamma_1, \eta_1)
		= \bigl(\mW(\gamma, \eta), \FAbl{\gamma} \circ (\eta + \id{\SX}) \eval \eta_1 + \gamma_1 \circ (\eta + \id{\SX}) + \eta_1\bigr)
	\]
	by \eqref{id:Ableitung_Kompo}, the asserted identity holds.
	
	\refer{enum1:Rechtswirkung_in_Karten}
	Obviously
	$(\kF^{-1} \circ \rho_\phi \circ \kF)(\cdot) = \mW(\cdot,\kF^{-1}(\phi))$,
	so we derive the assertion if we apply
	the identity proved in \refer{enum1:Tangentengruppenop_in_Karten}
	with $\eta = \kF^{-1}(\phi)$ and $\eta_1 = 0$.
\end{proof}
We aim to turn \eqref{ivp:diffeq_Rechts-regularitaet_allgemein}
into an ODE on a vector space. Before we can do this, a definition is useful:
\newcommand{\TkF}{\ensuremath{\Tang{\kF}}}
\begin{defi}\label{def:AWP_Regularitaet_Koordinaten}
	Let $\SX$ be a normed space, $\GewFunk \subseteq \cl{\R}^\SX$ with $1_\SX \in \GewFunk$,
	$k \in \cl{\N}$ and $\cF$ be a subset of $\GewFunk$ with $1_\SX \in \cF$.
	By \refer{prop:Kompo_Koord_glatt}, the map
	\begin{align*}
		F_{\cF,k}
		&:
		[0,1] \times \CF{\SX}{\SX}{\cF}{k} \times
			\ConDiff{[0,1]}{ \CcF{\SX}{\SX}{\infty} }{\infty}
			\to \CF{\SX}{\SX}{\cF}{k}
		\\
		&: (t, \gamma, p) \mapsto p(t) \circ (\gamma + \id{\SX})
	\end{align*}
	is well-defined and smooth (since the evaluation of curves is smooth by \refer{lem:Auswertung_von_Kurven_glatt}).
	For each parameter curve
	$p \in \ConDiff{[0,1]}{ \CcF{\SX}{\SX}{\infty} }{\infty}$,
	we consider the initial value problem
	\begin{align}\label{ivp:DGL_Regularitaet_Koordinaten}
		\begin{split}
			\Gamma'(t) &= F_{\cF,k}(t, \Gamma(t), p)\\
			\Gamma(0) &= 0,
		\end{split}
	\end{align}
	where $t \in [0,1]$.
\end{defi}

\begin{lem}\label{lem:Aequivalenz_Regularitaet_DiffW}
	Let $\SX$ be a Banach space and $\GewFunk \subseteq \cl{\R}^\SX$ with $1_\SX \in \GewFunk$.
	\begin{enumerate}
		\item 
		For $\gamma \in 
		\ConDiff{[0,1]}{ \Tang[\id{\SX}]{\DiffW} }{\infty}$,
		the initial value problem
		\begin{align*}
			\begin{split}
				\eta'(t) &= \gamma(t) \cdot \eta(t)\\
				\eta(0) &= \id{\SX}
			\end{split}
		\end{align*}
		has a smooth solution
		\[
			\rEvol[\DiffW]{\gamma} : [0,1] \to \DiffW
		\]
		iff the initial value problem
		\eqref{ivp:DGL_Regularitaet_Koordinaten}
		(in \refer{def:AWP_Regularitaet_Koordinaten})
		with $\cF = \cW$, $k = \infty$ and $p = \dA{\kF^{-1}}{}{} \circ \gamma$
		has a smooth solution
		\[
			\Gamma_p : [0,1] \to \kF^{-1}(\DiffW).
		\]
		In this case,
		\[
			\rEvol[\DiffW]{\gamma} = \kF \circ \Gamma_{p} .
		\]

		\item
		Let
		$\Omega \subseteq \ConDiff{[0,1]}{  \Tang[\id{\SX}]{\DiffW} }{\infty}$
		be an open set such that for each $\gamma \in \Omega$ there exists
		a right evolution $\rEvol[\DiffW]{\gamma}$.
		Then $\rest{\revol[\DiffW]{} }{\Omega}$ is smooth iff the map
		\[
			(\dA{\kF^{-1}}{}{} \circ \Omega) \to \CcF{\SX}{\SX}{\infty}
			: p \mapsto \Gamma_p(1)
		\]
		is so. As above, $\Gamma_p$ denotes a solution to \eqref{ivp:DGL_Regularitaet_Koordinaten}
		with respect to $p$.
	\end{enumerate}
\end{lem}
\begin{proof}
	This is an easy computation involving the previous results.
\end{proof}
\subsubsection{Solving the differential equation}
We show that the regularity differential equation for $\DiffW$ is solvable.
In order to do this, we use that $\CcF{\SX}{\SX}{\infty}$ is a
projective limit of Banach spaces, see \refer{prop:CW_projektiver_LB-Raum}.
We solve the differential equation on each step of the projective limit,
see that these solutions are compatible with the bonding morphisms
of the projective limit and thus obtain a solution on the limit.
Before we do this, we state the following obvious lemma.
\begin{lem}\label{lem:Vertraeglichkeit_Loesungen_mit_Limes}
	Let $\SX$ be a Banach space and $\GewFunk \subseteq \cl{\R}^\SX$ with $1_\SX \in \GewFunk$.
	Further, let $\cF \sub \GewFunk$ with
	$1_\SX \in \cF$ and $k \in \cl{\N}$,
	$p \in \ConDiff{[0,1]}{ \CcF{\SX}{\SX}{\infty} }{\infty}$
	and $\Gamma : I \to \CF{\SX}{\SX}{\cF}{k}$
	a solution to \eqref{ivp:DGL_Regularitaet_Koordinaten}
	corresponding to $p$.
	Then $\Gamma$ solves \eqref{ivp:DGL_Regularitaet_Koordinaten}
	also for all subsets $\mathcal{G} \sub \cF$ containing $1_\SX$ and $\ell \in \cl{\N}$ with $\ell \leq k$.
\end{lem}
\begin{proof}
This is an easy calculation since the
inclusion map $\CF{\SX}{\SX}{\cF}{k} \to \CF{\SX}{\SX}{\mathcal{G}}{\ell}$
is continuous linear.
\end{proof}
\paragraph{Solving the differential equation on the steps}
First, we solve \eqref{ivp:DGL_Regularitaet_Koordinaten}
on function spaces that are Banach spaces.
To this end, we need tools from the theory of ordinary
differential equations on Banach spaces. The required facts are described
in \refer{sec:Fakten_ueber_DGLn}.
The hard part will be to show that the solutions are defined on the whole interval $[0,1]$.%
\subparagraph{The solution on $\boldsymbol{ \CF{\SX}{\SX}{\cF}{0} }$}
We start with the function space $\CF{\SX}{\SX}{\cF}{0}$,
where $\cF \sub \GewFunk$ is finite and contains $1_\SX$.
Then the initial value problem \eqref{ivp:DGL_Regularitaet_Koordinaten} satisfies
a global Lipschitz condition and hence is globally solvable.
\begin{lem}\label{lem:rechte_Seite_RegDGL_DiffW_global_Lipschitz}
	Let $\SX$ be a normed space, $\GewFunk \subseteq \cl{\R}^\SX$ with $1_\SX \in \GewFunk$,
	$\cF \sub \GewFunk$ with $1_\SX \in \cF$
	and $p \in \ConDiff{[0,1]}{ \CcF{\SX}{\SX}{\infty} }{\infty}$.
	Then there exists $K > 0$ such that for each $f\in\cF$,
	all $t \in [0,1]$ and $\gamma, \gamma_0 \in \CF{\SX}{\SX}{\cF}{0}$
	\[
		\hn{F_{\cF,0}(t,\gamma, p) - F_{\cF,0}(t,\gamma_0, p)}{f}{0}
		\leq
		K \cdot \hn{\gamma - \gamma_0}{f}{0}.
	\]
\end{lem}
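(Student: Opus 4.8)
The plan is to write $F_{\cF,0}(t,\gamma,p) = p(t)\circ(\gamma+\id_\SX)$, so that the difference in question,
\[
	F_{\cF,0}(t,\gamma,p) - F_{\cF,0}(t,\gamma_0,p)
	= p(t)\circ(\gamma+\id_\SX) - p(t)\circ(\gamma_0+\id_\SX),
\]
is a difference of two compositions with a \emph{common} outer map $p(t)$. The key structural observation is exactly this: since the outer function does not change, only the contribution coming from the perturbation of the inner function survives. Because $1_\SX \in \GewFunk$, we have $p(t) \in \CcF{\SX}{\SX}{\infty} \subseteq \BC{\SX}{\SX}{1}$, so the first derivative of $p(t)$ is globally bounded.

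The core of the argument is a single application of the mean value theorem, carried out exactly as in the derivation of estimate \eqref{eq:Abschaetzung_f,0-Norm_Differenz_Kompo} in \refer{lem:Hinreichendes_fuer_Kompo_in_CF0}. For each $x \in \SX$ the segment joining $\gamma_0(x)+x$ and $\gamma(x)+x$ lies in $\SX$, which is the full domain of $p(t)$, so no domain restriction intervenes and one obtains
\[
	\abs{f(x)}\,\norm{p(t)(\gamma(x)+x) - p(t)(\gamma_0(x)+x)}
	\leq \hn{p(t)}{1_\SX}{1}\,\abs{f(x)}\,\norm{\gamma(x)-\gamma_0(x)}.
\]
Taking the supremum over $x \in \SX$ then gives, for every weight $f \in \cF$,
\[
	\hn{F_{\cF,0}(t,\gamma,p) - F_{\cF,0}(t,\gamma_0,p)}{f}{0}
	\leq \hn{p(t)}{1_\SX}{1}\,\hn{\gamma-\gamma_0}{f}{0}.
\]
Equivalently, this is just \eqref{eq:Abschaetzung_f,0-Norm_Differenz_Kompo} specialized to identical outer functions (so that the second and third summands there vanish), applied with $\UF = \VF = \WF = \SX$; note that $\VF = \SX$ is balanced and the line-segment hypothesis of part \refer{enum1:kompo_1x0->0_stetig} holds trivially here.

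It then remains to make the Lipschitz constant uniform in $t$ (and in $f$), which is the only genuinely non-routine point. I would set $K \ndef \sup_{t\in[0,1]}\hn{p(t)}{1_\SX}{1}$. Since $1_\SX \in \GewFunk$, the map $\hn{\cdot}{1_\SX}{1}$ is one of the defining, hence continuous, seminorms of $\CcF{\SX}{\SX}{\infty}$; composing it with the continuous curve $p$ yields a continuous function $[0,1]\to[0,\infty[$, which is bounded on the compact interval $[0,1]$. Thus $K<\infty$, and as $K$ depends on none of $f$, $t$, $\gamma$, $\gamma_0$, both the global Lipschitz assertion and the uniform estimate follow at once. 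The whole proof is therefore essentially a specialization of the composition estimate already established, the one substantive ingredient being the compactness of $[0,1]$ together with the continuity of $p$.
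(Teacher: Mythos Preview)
Your proof is correct and follows essentially the same approach as the paper: both invoke the estimate \eqref{eq:Abschaetzung_f,0-Norm_Differenz_Kompo} from \refer{lem:Hinreichendes_fuer_Kompo_in_CF0} with identical outer functions to obtain $\hn{F_{\cF,0}(t,\gamma,p) - F_{\cF,0}(t,\gamma_0,p)}{f}{0} \leq \hn{p(t)}{1_\SX}{1}\,\hn{\gamma-\gamma_0}{f}{0}$, and then set $K = \sup_{t\in[0,1]}\hn{p(t)}{1_\SX}{1}$. The only cosmetic difference is that the paper justifies $K<\infty$ by noting that $p([0,1])$ is compact and hence bounded, whereas you compose the continuous seminorm with $p$ directly; both arguments are equivalent.
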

\begin{proof}
	We have
	\[
		F_{\cF,0}(t,\gamma, p) - F_{\cF,0}(t,\gamma_0, p)
		= \compIdSmoothLWeights{\SX}{0}{\cF}(p(t), \gamma) - \compIdSmoothLWeights{\SX}{0}{\cF}(p(t), \gamma_0),
	\]
	and deduce from \refer{est:f,0-Norm_Differenz_Kompo}
	in \refer{lem:Hinreichendes_fuer_Kompo_in_CF0} that
	\[
		\hn{F_{\cF,0}(t,\gamma, p) - F_{\cF,0}(t,\gamma_0, p)}{f}{0}
		\leq \hn{p(t)}{1_\SX}{1} \hn{\gamma - \gamma_0}{f}{0}.
	\]
	Since $p([0,1])$ is a compact (and therefore bounded) subset of
	$\CcF{\SX}{\SX}{\infty}$,
	\[
		K \ndef \sup_{t\in[0,1]} \hn{p(t)}{1_\SX}{1}
	\]
	is finite. This proves the assertion.
\end{proof}

\begin{lem}\label{lem:loesung_regularitaetsDGL_k=0_F_endl}
	Let $\SX$ be a Banach space, $\cF, \GewFunk \subseteq \cl{\R}^\SX$ with $1_\SX \in \cF \subseteq \GewFunk$
	and $\abs{\cF} < \infty$,
	$p \in \ConDiff{[0,1]}{ \CcF{\SX}{\SX}{\infty} }{\infty}$ and $k = 0$.
	Then the initial value problem \eqref{ivp:DGL_Regularitaet_Koordinaten}
	corresponding to $p$
	has a unique solution which is defined on the whole interval $[0,1]$.
\end{lem}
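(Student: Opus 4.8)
The plan is to recognise \eqref{eq:DGL_Regularitaet_Koordinaten} (for $k=0$) as an ordinary differential equation on a genuine Banach space and then to invoke a global existence-and-uniqueness theorem of Picard--Lindel\"of type. First I would observe that, since $\cF$ is finite and $1_\SX \in \cF$, the locally convex space $\CF{\SX}{\SX}{\cF}{0}$ is in fact a Banach space: its topology is generated by the finitely many seminorms $\hn{\cdot}{f}{0}$ with $f \in \cF$, and the single seminorm $\hn{\cdot}{1_\SX}{0} = \noma{\cdot}$ is already a norm, so $\gamma \mapsto \max_{f \in \cF}\hn{\gamma}{f}{0}$ is a norm defining this topology. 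Completeness with respect to it is exactly \refer{cor:CkF_komplett}, applied with the weight set $\cF$ (which contains $1_\SX$) and the Banach space $\SY = \SX$. Hence $E \ndef \CF{\SX}{\SX}{\cF}{0}$, equipped with $\max_{f\in\cF}\hn{\cdot}{f}{0}$, is a Banach space.

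Next I would write the right-hand side as the map $G : [0,1]\times E \to E$, $G(t,\gamma) \ndef F_{\cF,0}(t,\gamma,p) = p(t)\circ(\gamma + \id_\SX)$, so that the initial value problem becomes $\Gamma'(t) = G(t,\Gamma(t))$, $\Gamma(0) = 0$, on $E$. Two properties of $G$ then have to be checked. Continuity of $G$ follows from the smoothness of $F_{\cF,0}$ established above, which in turn rests on \refer{cor:Kcomp_g_ist_glatt_(und_definiert)} and \refer{lem:Auswertung_von_Kurven_glatt}. The crucial point is a global Lipschitz estimate in the second variable, uniform in $t$: \refer{lem:rechte_Seite_RegDGL_DiffW_global_Lipschitz} provides a constant $K>0$ with $\hn{G(t,\gamma) - G(t,\gamma_0)}{f}{0} \leq K\,\hn{\gamma - \gamma_0}{f}{0}$ for every $f \in \cF$, every $t\in[0,1]$ and all $\gamma, \gamma_0 \in E$. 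Taking the maximum over the finite set $\cF$ on both sides yields the same Lipschitz bound for the norm of $E$, so $G$ is globally Lipschitz in $\gamma$ with constant $K$, uniformly in $t$.

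With these two facts in hand I would simply quote the global Picard--Lindel\"of theorem for Banach-valued ordinary differential equations collected in \refer{app:Fakten_ueber_DGLn}: a continuous right-hand side that is globally Lipschitz in the state variable, uniformly in time, admits a unique solution on the whole time interval $[0,1]$. Here the global Lipschitz bound precludes blow-up in finite time, so no maximal-interval argument is needed and the solution is immediately seen to extend across all of $[0,1]$. This produces the desired unique solution $\Gamma_p : [0,1] \to \CF{\SX}{\SX}{\cF}{0}$ of \eqref{eq:DGL_Regularitaet_Koordinaten}.

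The only genuinely delicate step is the first one, namely being sure that $\CF{\SX}{\SX}{\cF}{0}$ really is a Banach space rather than merely a Fr\'echet space; this is precisely what the finiteness hypothesis $\abs{\cF}<\infty$ buys us, and it is what allows the classical Banach-space theory of ordinary differential equations to apply verbatim. Everything else is bookkeeping: transporting the Lipschitz estimate of \refer{lem:rechte_Seite_RegDGL_DiffW_global_Lipschitz} to the max-norm of $E$ and citing the appendix.
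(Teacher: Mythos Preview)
Your argument is correct and matches the paper's proof essentially line for line: identify $\CF{\SX}{\SX}{\cF}{0}$ as a Banach space using finiteness of $\cF$ and $1_\SX\in\cF$, invoke \refer{lem:rechte_Seite_RegDGL_DiffW_global_Lipschitz} for the global Lipschitz condition, and then apply the global Picard--Lindel\"of theorem from the appendix (the paper routes through \refer{lem:VF-global_Lipschitz_impliziert_linear_beschrankt} and \refer{satz:globale_Loesbarkeit_von_AWP_lineare_Gebundenheit}, or alternatively cites Dieudonn\'e). Your write-up is simply more explicit about why the space is Banach, which the paper asserts without further comment.
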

\begin{proof}
	We deduce from \refer{lem:rechte_Seite_RegDGL_DiffW_global_Lipschitz}
	that we can find a norm on $\CF{\SX}{\SX}{\cF}{0}$
	such that $F_{\cF,0}(\cdot, \cdot, p)$ satisfies a global Lipschitz condition
	with respect to the second argument.
	Since $\CF{\SX}{\SX}{\cF}{0}$ is a Banach space,
	there exists a unique solution
	\[
		\Gamma : [0,1] \to \CF{\SX}{\SX}{\cF}{0}
	\]
	of \eqref{ivp:DGL_Regularitaet_Koordinaten}
	which is defined on the whole interval $[0,1]$; see
	\cite[\S 10.6.1]{MR0120319} or \refer{satz:globale_Loesbarkeit_von_AWP_lineare_Gebundenheit}
	and \refer{lem:VF-global_Lipschitz_impliziert_linear_beschrankt}.
\end{proof}
\subparagraph{Solutions in spaces of differentiable functions}
On the spaces $\CF{\SX}{\SX}{\cF}{k}$ with $k \geq 1$,
it is harder to show that the maximal solution is defined on the whole of $[0,1]$.
To show this, we first verify that the differential curve $\FAbl{}\circ\gamma$
of a solution $\gamma : I \to \CF{\SX}{\SX}{\cF}{k}$ to \eqref{ivp:DGL_Regularitaet_Koordinaten}
is itself a solution to a linear ODE.
We start with the following definition.
\begin{defi}
	Let $\SX$ be a Banach space and $\GewFunk \subseteq \cl{\R}^\SX$ with $1_\SX \in \GewFunk$.
	Further, let $\cF$ be a subset of $\GewFunk$ with $1_\SX \in \cF$,
	$k \in \cl{\N}$ and
	$\Gamma : [0,1] \to \CF{\SX}{\SX}{\cF}{k}$
	and
	$P : [0,1] \to \CcF{\SX}{ \Lin{\SX}{\SX} }{\infty}$
	be continuous curves.
	We define the continuous map
	\begin{align*}
		G_{\cF,k}^{\Gamma, P}
		&:
		[0,1] \times
		\CF{\SX}{\Lin{\SX}{\SX}}{\cF}{k}
		\to \CF{\SX}{\Lin{\SX}{\SX}}{\cF}{k}
		\\
		&: (t, \gamma) \mapsto
		\bigl(P(t) \circ (\Gamma(t) + \id{\SX})\bigr) \MaMu (\gamma +\idco)
	\end{align*}
	and consider the initial value problem
	\begin{align}\label{ivp:Hilfs_DGL_fuer_Regularitaet}
		\begin{split}
			\Phi'(t) &= G_{\cF,k}^{\Gamma, P}(t, \Phi(t))\\
			\Phi(0) &= 0.
		\end{split}
	\end{align}
\end{defi}
\begin{lem}\label{lem:Differential_C(K+1)-Kurve_loest_lineare_DGL}
	Let $\SX$ be a Banach space and $\GewFunk \subseteq \cl{\R}^\SX$ with $1_\SX \in \GewFunk$.
	Further, let $\cF$ be a finite subset of $\GewFunk$ with $1_\SX \in \cF$, $k \in \N$ and
	$p \in \ConDiff{[0,1]}{ \CcF{\SX}{\SX}{\infty} }{\infty}$.
	If
	\[
		\Gamma_k : [0,1] \to \CF{\SX}{\SX}{\cF}{k}
		\quad\text{ and }\quad
		\Gamma_{k + 1} : I \subseteq [0,1] \to \CF{\SX}{\SX}{\cF}{k + 1}
	\]
	are solutions to \eqref{ivp:DGL_Regularitaet_Koordinaten} corresponding to $p$,
	then the curve $\FAbl{} \circ \Gamma_{k+1} : I \to \CF{\SX}{\Lin{\SX}{\SX} }{\cF}{k}$ is
	a solution to the \refer{ivp:Hilfs_DGL_fuer_Regularitaet}
	with $\Gamma = \Gamma_{k}$ and $P = \FAbl{} \circ p$.
\end{lem}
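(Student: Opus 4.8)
The plan is to differentiate the curve $\FAbl{} \circ \Gamma_{k+1}$ termwise and recognise the result as the right-hand side of \eqref{eq:Hilfs_DGL_fuer_Regularitaet}. The starting observation is that the derivative operator $\FAbl{} : \CF{\SX}{\SX}{\cF}{k+1} \to \CF{\SX}{\Lin{\SX}{\SX}}{\cF}{k}$ is continuous and linear (\refer{prop:topologische_Zerlegung_von_CFk}); being continuous linear, it interchanges with the differentiation of a curve, so for $t \in I$
\[
    (\FAbl{} \circ \Gamma_{k+1})'(t) = \FAbl{}\bigl(\Gamma_{k+1}'(t)\bigr) = \FAbl{}\bigl(p(t) \circ (\Gamma_{k+1}(t) + \id_{\SX})\bigr),
\]
where the last equality uses that $\Gamma_{k+1}$ solves \eqref{eq:DGL_Regularitaet_Koordinaten}, i.e.\ $\Gamma_{k+1}'(t) = F_{\cF,k+1}(t,\Gamma_{k+1}(t),p) = p(t) \circ (\Gamma_{k+1}(t) + \id_{\SX})$. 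The initial condition is immediate: since $\Gamma_{k+1}(0) = 0$, we get $(\FAbl{} \circ \Gamma_{k+1})(0) = \FAbl{}(0) = 0$.

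Next I would apply the chain rule to the composition on the right, exactly as in the computation of $D \circ g_\SX$ in \refer{prop:comp_CF(k+1)xCF(k)_CF(k)}. Since $p(t) \circ (\Gamma_{k+1}(t) + \id_{\SX})$ is of the form $g_\SX(p(t),\Gamma_{k+1}(t))$, this yields
\[
    \FAbl{}\bigl(p(t) \circ (\Gamma_{k+1}(t) + \id_{\SX})\bigr)
    = \bigl(\FAbl{p(t)} \circ (\Gamma_{k+1}(t) + \id_{\SX})\bigr) \MaMu \bigl(\FAbl{\Gamma_{k+1}(t)} + \idco\bigr),
\]
with $\MaMu$ the pointwise composition of linear maps (\refer{cor:Komposition_linearer_Abb_und_CF}) and $\idco$ the constant map $x \mapsto \id_{\SX}$; here I used $\FAbl{}(\Gamma_{k+1}(t) + \id_{\SX}) = \FAbl{\Gamma_{k+1}(t)} + \idco$, the derivative of the linear map $\id_{\SX}$ being the constant $\idco$. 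Writing $P := \FAbl{} \circ p$, so that $P(t) = \FAbl{p(t)}$, this already has the shape of $G_{\cF,k}^{\Gamma,P}\bigl(t,(\FAbl{} \circ \Gamma_{k+1})(t)\bigr)$, except that the inner map of the first factor is $\Gamma_{k+1}(t)$ rather than the $\Gamma_k(t)$ demanded by the choice $\Gamma = \Gamma_k$.

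Closing this gap is the one genuinely substantive step. The point is that $\Gamma_k$ and the image $\inkl{\cF}{k}{\cF}{k+1} \circ \Gamma_{k+1}$ of $\Gamma_{k+1}$ under the inclusion both solve the $k$-level initial value problem \eqref{eq:DGL_Regularitaet_Koordinaten}, the latter by the compatibility of $F$ with the inclusions, \refer{lem:Vertr"aglichkeit_F_mit_Inklusionen}. By uniqueness of solutions — the content of \refer{lem:Vertr"aglichkeit_L"osungen_mit_Limes} — these two curves coincide on $I$, and since the inclusion leaves the underlying map $\SX \to \SX$ unchanged, $\Gamma_k(t) = \Gamma_{k+1}(t)$, hence $\Gamma_k(t) + \id_{\SX} = \Gamma_{k+1}(t) + \id_{\SX}$, for $t \in I$. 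Replacing $\Gamma_{k+1}(t)$ by $\Gamma_k(t)$ in the first factor above then gives
\[
    (\FAbl{} \circ \Gamma_{k+1})'(t)
    = \bigl((\FAbl{} \circ p)(t) \circ (\Gamma_k(t) + \id_{\SX})\bigr) \MaMu \bigl((\FAbl{} \circ \Gamma_{k+1})(t) + \idco\bigr)
    = G_{\cF,k}^{\Gamma_k, \FAbl{} \circ p}\bigl(t, (\FAbl{} \circ \Gamma_{k+1})(t)\bigr),
\]
which together with the initial condition shows that $\FAbl{} \circ \Gamma_{k+1}$ solves \eqref{eq:Hilfs_DGL_fuer_Regularitaet} with $\Gamma = \Gamma_k$ and $P = D \circ p$. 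The main obstacle is thus not the (routine) chain-rule bookkeeping but the identification $\Gamma_k = \Gamma_{k+1}$ on $I$, which is precisely where the uniqueness of solutions, and with it the Banach-space structure underlying \refer{lem:Vertr"aglichkeit_L"osungen_mit_Limes}, enters.
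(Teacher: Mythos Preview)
Your proof is correct and follows essentially the same approach as the paper: commute $\FAbl{}$ with the curve derivative, apply the chain rule to $p(t)\circ(\Gamma_{k+1}(t)+\id_\SX)$, and then invoke \refer{lem:Vertr"aglichkeit_L"osungen_mit_Limes} to replace $\Gamma_{k+1}(t)$ by $\Gamma_k(t)$ in the first factor. The paper's proof is slightly terser but the structure and the key lemmas used are identical.
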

\begin{proof}
	We have
	\[
		(\FAbl{} \circ \Gamma_{k+1})' = \FAbl{} \circ \Gamma_{k+1}'
	\]
	and therefore for $t \in I$
	\begin{align*}
		(\FAbl{} \circ \Gamma_{k+1})'(t)
		&= \FAbl{ \,F_{\cF, k+1 }(t, \Gamma_{k+1}(t), p) }\\
		&= \bigl( \FAbl{ p(t) } \circ (\Gamma_{k+1}(t) + \id{\SX}) \bigr)
			\MaMu (\FAbl{ \Gamma_{k+1}(t) } + \idco).\\
		&= \bigl( (\FAbl{} \circ p)(t) \circ (\Gamma_{k+1}(t) + \id{\SX}) \bigr)
			\MaMu \bigl( (\FAbl{} \circ \Gamma_{k+1})(t) + \idco \bigr)\\
		&= G_{\cF, k}^{\Gamma_{k}, \FAbl{} \circ p}(t, (\FAbl{} \circ \Gamma_{k+1})(t)),
	\end{align*}
	where we used that $\rest{\Gamma_{k}}{I} = \Gamma_{k+1}$ by \refer{lem:Vertraeglichkeit_Loesungen_mit_Limes}
	since $\CF{\SX}{\SX}{\cF}{k}$ is a Banach space.
	Obviously $(\FAbl{} \circ \Gamma_{k+1})(0) = 0$, so the assertion is proved.
\end{proof}
Now we use the embedding from \refer{prop:topologische_Zerlegung_von_CFk}
to show that the maximal solution to \eqref{ivp:DGL_Regularitaet_Koordinaten} is defined on $[0,1]$.
\begin{lem}\label{lem:loesung_regularitaetsDGL_k_F<INF}
	Let $\SX$ be a Banach space, $\GewFunk \subseteq \cl{\R}^\SX$ with $1_\SX \in \GewFunk$,
	$\cF \subseteq \GewFunk $ finite with $1_\SX \in \cF$,
	$p \in \ConDiff{[0,1]}{ \CcF{\SX}{\SX}{\infty} }{\infty}$ and $k \in \N$.
	Then the initial value problem \eqref{ivp:DGL_Regularitaet_Koordinaten}
	corresponding to $p$ has a unique solution
	which is defined on the whole interval $[0,1]$.
\end{lem}
\begin{proof}
	This is proved by induction on $k$. The case $k = 0$ was treated
	in \refer{lem:loesung_regularitaetsDGL_k=0_F_endl}.
	
	$k \to k + 1$:
	We denote the solutions for $k$ and $0$ with $\Gamma_{k}$ and $\Gamma_{0}$, respectively.
	Since the function $F_{\cF, k + 1}$ is smooth
	and $\CF{\SX}{\SX}{\cF}{k + 1}$ is a Banach space, there exists a unique
	\emph{maximal} solution
	$\Gamma_{k + 1} : I \to \CF{\SX}{\SX}{\cF}{k + 1}$
	to \eqref{ivp:DGL_Regularitaet_Koordinaten} (see \refer{prop:Existenz_maximale_Lsg_ODEs}).
	Using \refer{lem:Differential_C(K+1)-Kurve_loest_lineare_DGL}, we conclude that
	$\FAbl{} \circ \Gamma_{k+1}$ is a solution to \eqref{ivp:Hilfs_DGL_fuer_Regularitaet},
	where $\Gamma = \Gamma_{k}$ and $P = \FAbl{} \circ p$;
	here we used that by the induction hypothesis, $\Gamma_{k}$ is defined on $[0,1]$.
	Since the latter ODE is linear, there exists a unique solution
	\[
		S : [0,1] \to \CF{\SX}{\Lin{\SX}{\SX}}{\cF}{k}
	\]
	that is defined on the whole interval $[0,1]$
	(see \cite[\S 10.6.3]{MR0120319} or \refer{satz:globale_Loesbarkeit_von_AWP_lineare_Gebundenheit}).
	Let
	\[
		\iota : \CF{\SX}{\SX}{\cF}{k + 1} \to
			\CF{\SX}{\SX}{\cF}{0}
			\times \CF{\SX}{\Lin{\SX}{\SX}}{\cF}{k}
	\]
	be the embedding from \refer{prop:topologische_Zerlegung_von_CFk}.
	By \refer{lem:Vertraeglichkeit_Loesungen_mit_Limes},
	$\Gamma_{k+1}$ is a solution to \eqref{ivp:DGL_Regularitaet_Koordinaten}
	for the right hand side $F_{\cF, 0}$, so $\Gamma_{k+1} = \rest{\Gamma_0}{I}$
	since solutions to initial value problems in Banach spaces are unique.
	Hence
	\[
		\Gamma_{k + 1}(I) \subseteq
		\iota^{-1}\bigl(\Gamma_{0}([0,1]) \times S([0,1])\bigr).
	\]
	Further, $\Gamma_{0}([0,1]) \times S([0,1])$ is compact
	and the image of $\iota$ is a closed subset of
	$\CF{\SX}{\SX}{\cF}{0}
	\times \CF{\SX}{\Lin{\SX}{\SX}}{\cF}{k}$
	(by \refer{prop:Abgeschlossenheit_des_Bildes_unter_der_Einbettung}).
	Hence, because $\iota^{-1}$ is a homeomorphism, the image of $\Gamma_{k + 1}$
	is contained in a compact set. Since $\Gamma_{k + 1}$ is maximal,
	this implies that $\Gamma_{k + 1}$ must be defined on the whole of $[0,1]$;
	see \refer{satz:globale_Loesbarkeit_von_AWP_kompaktes_Bild}.
\end{proof}
\paragraph{Smooth dependence on the parameter and taking the solution to the limit}
We use the constructed solutions on $\CF{\SX}{\SX}{\cF}{k}$
and show that there exists a solution to \eqref{ivp:DGL_Regularitaet_Koordinaten}
on $\CcF{\SX}{\SX}{\infty}$, depending smoothly on the parameter curve.
\begin{prop}\label{prop:Loesungen_der_RegDGL}
	Let $\SX$ be a Banach space and $\cW \subseteq \cl{\R}^\SX$ with $1_\SX \in \cW $.
	For each
	$p \in \ConDiff{[0,1]}{ \CF{\SX}{\SX}{\cW}{\infty} }{\infty}$
	there exists a solution $\Gamma_{p}$ to \eqref{ivp:DGL_Regularitaet_Koordinaten}
	defined on $[0,1]$ which corresponds to $p$, $\cW$ and $\infty$.
	The map
	\begin{equation}\label{Parameterloesung_RegDGL}\tag{\ensuremath{\dagger}}
		[0,1] \times \ConDiff{[0,1]}{ \CF{\SX}{\SX}{\cW}{\infty} }{\infty}
		\to
		\CF{\SX}{\SX}{\cW}{\infty}
		:
		(t , p) \mapsto \Gamma_{p}(t)
	\end{equation}
	is smooth.
\end{prop}
\begin{proof}
	For $p \in \ConDiff{[0,1]}{ \CcF{\SX}{\SX}{\infty} }{\infty}$,
	we denote the solution $[0, 1] \to \CF{\SX}{\SX}{\sset{1_\SX}}{0}$ to \eqref{ivp:DGL_Regularitaet_Koordinaten}
	corresponding to $p$, $0$ and $\sset{1_\SX}$
	-- which exists by \refer{lem:loesung_regularitaetsDGL_k_F<INF} -- with $\Gamma_p$.
	By \refer{lem:Vertraeglichkeit_Loesungen_mit_Limes},
	a solution $\Gamma : [0,1] \to \CF{\SX}{\SX}{\cF}{k}$ to \eqref{ivp:DGL_Regularitaet_Koordinaten}
	corresponding to $p$, a finite set $\cF \sub \cW$ containing $1_\SX$ and $k\in\N$
	-- which exists by \refer{lem:loesung_regularitaetsDGL_k_F<INF} --
	also solves \eqref{ivp:DGL_Regularitaet_Koordinaten} for $p$, $0$ and $\sset{1_\SX}$.
	Hence, by the uniqueness of solutions to initial value problems for Banach spaces,
	$\Gamma_p = \Gamma$.
	Since $\cF$ and $k$ were arbitrary,
	the image of $\Gamma_p$ is contained in $\CF{\SX}{\SX}{\cW}{\infty}$,
	and we easily calculate that $\Gamma_p$
	is a solution to \eqref{ivp:DGL_Regularitaet_Koordinaten} corresponding to $p$, $\cW$ and $\infty$.

	It remains to show that the map \eqref{Parameterloesung_RegDGL} is smooth.
	The space $\CcF{\SX}{\SX}{\infty}$ is the projective limit of
	\[
		\set{\CF{\SX}{\SX}{\cF}{k} }{ k \in \N, \cF \subseteq \cW, \abs{\cF} < \infty, 1_\SX \in \cF }
	\]
	by \refer{prop:CW_projektiver_LB-Raum}.
	Hence using the universal property of the projective limit (see \refer{prop:Differenzierbarkeit_Abb_in_projektiven_Limes}),
	we just have to show that the map
	\[
		[0,1] \times \ConDiff{[0,1]}{ \CF{\SX}{\SX}{\cW}{\infty} }{\infty}
		\to
		\CF{\SX}{\SX}{\cF}{k}
		:
		(t , p) \mapsto \Gamma_{p}(t)
	\]
	with a finite set $\cF \sub \cW$ containing $1_\SX$ and $k\in\N$ is smooth.
	We deduce this from \refer{cor:Ck_Abhaengigkeit_LsgDGL_von_Parameter}
	since the map
	$\ConDiff{[0,1]}{ \CcF{\SX}{\SX}{\infty} }{\infty} \to \CF{\SX}{\SX}{\cF}{k}
	: p \mapsto 0$
	is smooth. Here, we used implicitely that the inclusion map
	$\CF{\SX}{\SX}{\cW}{\infty} \to \CF{\SX}{\SX}{\cF}{k}$ is smooth.
\end{proof}
%
\subsection{Conclusion and calculation of one-parameter groups}
We are ready to prove the regularity of $\DiffW$ and $\DiffWvan$.
After that, we calculate their one-parameter groups and show that these
induce flows on certain weighted vector fields.
\begin{satz}\label{satz:Gewichtete_Diffeos_regulaere_Liegruppen}
	Let $\SX$ be a Banach space and $\GewFunk \subseteq \cl{\R}^\SX$ with $1_\SX \in \GewFunk$.
	Then the Lie group $\DiffW$ is regular.
	\index{regularity!of $\DiffW$}
\end{satz}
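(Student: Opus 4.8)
The plan is to read the assertion off the two coordinate results established immediately above, namely the translation \refer{lem:"Aquivalenz_Regularit"at_DiffW} of the right regularity differential equation \eqref{eq:diffeq_Rechts-regularitaet_allgemein} into the initial value problem \eqref{eq:DGL_Regularitaet_Koordinaten} on the modelling space $\CcF{\SX}{\SX}{\infty}$, together with the solvability and smoothness statement \refer{prop:L"osungen_der_RegDGL}. Recall that $\DiffW$ is a Lie group by \refer{satz:DiffW_Lie-Gruppe} with Lie algebra $\Tang[1]{\DiffW} \cong \CcF{\SX}{\SX}{\infty}$, so by the definition of regularity (\refer{sususec:LieGroups-Regularity}) it suffices to show that every smooth curve $\gamma \in \ConDiff{[0,1]}{\CcF{\SX}{\SX}{\infty}}{\infty}$ admits a right evolution $\rEvol[\DiffW]{\gamma}$ and that the resulting evolution map $\revol[\DiffW]{}$ is smooth.

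First I would fix such a $\gamma$ and pass to the associated parameter function $p = (\dA{\kF^{-1}}{}{}) \circ \gamma$ (this is $\TkF^{-1}\circ\gamma$ in the notation of \refer{lem:"Aquivalenz_Regularit"at_DiffW}). By \refer{prop:L"osungen_der_RegDGL} the initial value problem \eqref{eq:DGL_Regularitaet_Koordinaten} with $\cF = \cW$ and $k=\infty$ has a solution $\Gamma^p_{\cW,\infty}:[0,1]\to\CcF{\SX}{\SX}{\infty}$, and by \refer{lem:"Aquivalenz_Regularit"at_DiffW} it then remains only to check that this solution actually takes values in $\kF^{-1}(\DiffW) = M_\GewFunk$; once this is settled, $\rEvol[\DiffW]{\gamma} = \kF\circ\Gamma^p_{\cW,\infty}$ is the desired evolution. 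For the $M_\GewFunk$-valuedness I would argue by a connectedness argument using the flow cocycle: writing $\phi_t \ndef \kF(\Gamma^p_{\cW,\infty}(t))$ for the time-$t$ map of the flow generated by $p$, one has $\phi_{t'} = \phi_{t',t}\circ\phi_t$, where the short-time map $\phi_{t',t}$ is again of the form $\kF(\cdot)$ with argument in $\CcF{\SX}{\SX}{\infty}$ (by the same Proposition applied to a rescaled parameter) and lies in $\kF(U_\GewFunk)\subseteq\DiffW$ for $\lvert t'-t\rvert$ smaller than a uniform $\delta$, since its first derivative is controlled by $\sup_{s\in[0,1]}\hn{p(s)}{1_\SX}{1}<\infty$. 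As $M_\GewFunk$ is an open neighbourhood of $0=\Gamma^p_{\cW,\infty}(0)$ (being $\kF^{-1}$ of the open subgroup $\DiffW$, see \refer{cor:DiffW_ist_offen}) and $\DiffW$ is a group (\refer{cor:EndW_glattes_Monoid}), the uniform $\delta$ shows that $\{t:\Gamma^p_{\cW,\infty}(t)\in M_\GewFunk\}$ is both open and closed in $[0,1]$, hence all of $[0,1]$.

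With evolutions now available for every $\gamma$, I would invoke \refer{lem:"Aquivalenz_Regularit"at_DiffW} with $\Omega = \ConDiff{[0,1]}{\CcF{\SX}{\SX}{\infty}}{\infty}$: smoothness of $\revol[\DiffW]{}$ is equivalent to smoothness of the map $p\mapsto \Gamma^p_{\cW,\infty}(1)$. But this map is exactly $p\mapsto\Phi(1,p)$ for the map $\Phi$ of \refer{prop:L"osungen_der_RegDGL}, which is smooth there, and precomposition with the smooth slice $p\mapsto(1,p)$ preserves smoothness. Hence $\revol[\DiffW]{}$ is smooth and $\DiffW$ is regular. I expect essentially no obstacle at this final stage, as all of the analytic work lies upstream — in solving \eqref{eq:DGL_Regularitaet_Koordinaten} on the projective limit $\CcF{\SX}{\SX}{\infty}$ with smooth parameter dependence (\refer{prop:L"osungen_der_RegDGL}) and in the coordinate descriptions underlying \refer{lem:"Aquivalenz_Regularit"at_DiffW}. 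The single step here that genuinely requires an argument is the verification that the coordinate solution never leaves $\kF^{-1}(\DiffW)$, i.e.\ that the flow of the time-dependent weighted vector field $p$ consists of genuine weighted diffeomorphisms; the cocycle argument sketched above is the natural way to secure it.
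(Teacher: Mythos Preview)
Your approach is correct but differs from the paper's in how the key step --- showing the coordinate solution $\Gamma^p_{\cW,\infty}$ takes values in $M_\GewFunk = \kF^{-1}(\DiffW)$ --- is handled. The paper does \emph{not} establish this for every $p$. Instead it argues locally: since the map $\Phi$ of \refer{prop:L"osungen_der_RegDGL} is continuous with $\Phi([0,1]\times\{0\}) = \{0\} \subseteq M_\GewFunk$ and $M_\GewFunk$ is open by \refer{cor:DiffW_ist_offen}, a compactness argument on $[0,1]$ produces an open neighbourhood $U$ of $0$ in parameter space with $\Phi([0,1]\times U)\subseteq M_\GewFunk$. Via \refer{lem:"Aquivalenz_Regularit"at_DiffW} this yields existence and smoothness of $\revol[\DiffW]{}$ on a neighbourhood of the zero curve, and the general local-to-global principle \refer{lem:Lie-Gruppe_lokal_reg-->global_reg} then gives regularity. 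Your cocycle argument instead targets the global statement directly and bypasses \refer{lem:Lie-Gruppe_lokal_reg-->global_reg} entirely. This is valid, but two points need care: the uniform $\delta$ you invoke requires an explicit Gronwall bound on $\hn{\phi_{t',t}-\id_\SX}{1_\SX}{1}$ (differentiate the flow ODE in the space variable to obtain $\hn{\phi_{t',t}-\id_\SX}{1_\SX}{1}\le e^{K\lvert t'-t\rvert}-1$ with $K=\sup_s\hn{p(s)}{1_\SX}{1}$), and the short-time flows $\phi_{t',t}$ with initial time $t\neq 0$ are not literally supplied by \refer{prop:L"osungen_der_RegDGL} as stated --- one must either rerun that proposition with variable initial time or reparametrise with some care. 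The paper's route is shorter because \refer{lem:Lie-Gruppe_lokal_reg-->global_reg} absorbs all this bookkeeping; yours has the merit of showing explicitly that every such flow consists of weighted diffeomorphisms.
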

\begin{proof}
	We proved in \refer{prop:Loesungen_der_RegDGL} that for each
	smooth curve $p : [0,1] \to \CcF{\SX}{\SX}{\infty}$ the initial value problem
	\eqref{ivp:DGL_Regularitaet_Koordinaten} has a solution
	$\Gamma_{p} : [0,1] \to \CcF{\SX}{\SX}{\infty}$ and that the map
	\[
		\Gamma :
		[0,1] \times \ConDiff{[0,1]}{ \CcF{\SX}{\SX}{\infty} }{\infty}
		\to
		\CcF{\SX}{\SX}{\infty}
		: (t,p) \mapsto \Gamma_{p}(t)
	\]
	is smooth. Obviously, $\Gamma$ maps $[0,1] \times \{0\}$ to $0$.
	Since $\kF^{-1}(\DiffW)$ is an open neighborhood of $0$ in $\CcF{\SX}{\SX}{\infty}$
	(see \refer{satz:DiffW_ist_offen}) and $\Gamma$ is continuous,
	a compactness argument gives a neighborhood $U$ of $0$ such that
	\[
		\Gamma([0,1] \times U) \subseteq \kF^{-1}(\DiffW).
	\]
	We recorded in \refer{lem:Aequivalenz_Regularitaet_DiffW} that this is equivalent
	to the existence of an open neighborhood $V$ of $0 \in \ConDiff{[0,1]}{ \CcF{\SX}{\SX}{\infty} }{\infty}$
	such that for each $\gamma \in V$, there exists a right evolution $\rEvol[\DiffW]{\gamma}$
	and that $\rest{\revol[\DiffW]{}}{V}$ is smooth.
	But we know from \refer{lem:Lie-Gruppe_lokal_reg-->global_reg} that this entails the regularity of $\DiffW$.
\end{proof}
\begin{cor}
	Let $\SX$ be a Banach space and $\GewFunk \sub \cl{\R}^\SX$ with $1_\SX \in \GewFunk$.
	Then $\DiffWvan$ is a regular Lie group.
	\index{regularity!of $\DiffWvan$}%
\end{cor}
\begin{proof}
	Let $\gamma \in \ConDiff{[0,1]}{ \Tang[\id{\SX}]{\DiffWvan} }{\infty}$.
	Since $\Tang[\id{\SX}]{\DiffWvan} \sub \Tang[\id{\SX}]{\DiffW}$
	and $\DiffW$ is regular by \refer{satz:Gewichtete_Diffeos_regulaere_Liegruppen},
	there exists a right evolution $\rEvol{\gamma} : [0,1] \to \DiffW$.
	We proved in \refer{lem:Aequivalenz_Regularitaet_DiffW} that the curve
	$\Gamma \ndef \kF \circ \rEvol{\gamma}$ is a solution to the initial value problem
	\eqref{ivp:DGL_Regularitaet_Koordinaten},
	where $\cF = \cW$, $k = \infty$ and $p = \dA{\kF^{-1}}{}{} \circ \gamma$.
	So for $t \in [0,1]$,
	\[
		\Gamma(t) = \Rint{0}{t}{\Gamma'(s)}{s}
		= \Rint{0}{t}{p(s) \circ (\Gamma(s) + \id{\SX})}{s}.
	\]
	Hence we see with \refer{lem:EndWwan_und_Kompostion} and the fact that
	$\CcFvan{\SX}{\SX}{\infty}$ is closed in $\CcF{\SX}{\SX}{\infty}$
	by \refer{lem:CWvan_closed_CW} that
	$\rEvol{\gamma}$ takes its values in $\DiffW \cap \EndWvan = \DiffWvan$.
	From this and the smoothness of $\revol[\DiffW]{}$
	we easily conclude that $\revol[\DiffWvan]{}$ is smooth,
	and this finishes the proof.
\end{proof}
\paragraph{On the one-parameter groups}
We calculate the one-parameter groups of $\DiffW$ (and hence for $\DiffWvan$).
As excepted, these arise as flows of vector fields.
\begin{lem}
	Let $\SX$ be a Banach space and $\GewFunk \subseteq \cl{\R}^\SX$ with $1_\SX \in \GewFunk$.
	Then for $\gamma \in \CcF{\SX}{\SX}{\infty}$,
	the associated flow of the one-parameter subgroup of $\DiffW$
	with the right logarithmic derivative $\Tang[0]{\kF}(\gamma)$
	is the flow of $\gamma$ (as a vector field).
\end{lem}
\begin{proof}
	We proved in \refer{satz:Gewichtete_Diffeos_regulaere_Liegruppen}
	that $\DiffW$ is regular, hence the one-parameter subgroup $\mathcal{P}$ of $\DiffW$
	with $\RLA{\mathcal{P}}(t) =  \Tang[0]{\kF}(\gamma)$ for all $t \in \R$ exists.
	We have to show that for any $x \in \SX$, the curve $\R \to \SX : t \mapsto \mathcal{P}(t)(x)$
	is the solution to the ODE
	\begin{align*}
		\begin{split}
			f'(t) &= \gamma(f(t))\\
			f(0) &= x.
		\end{split}
	\end{align*}
	Obviously, $\mathcal{P}(0)(x) = \id{\SX}(x) = x$.
	Further, $\mathcal{P}(t)(x) = (\evTwo_x \circ \kF \circ \kF^{-1} \circ \mathcal{P})(t)$.
	It is an easy computation to see that $\evTwo_x \circ \kF$ is $\ConDiff{}{}{1}$ with
	\[
		\dA{(\evTwo_x \circ \kF)}{\gamma}{\gamma_1} = \evTwo_x(\gamma_1).
	\]
	By our assumptions, for $t \in \R$
	\[
		\mathcal{P}'(t) = \Tang[0]{\kF}(\gamma) \cdot \mathcal{P}(t)
		= \Tang{\rho_{\mathcal{P}(t)}} (\Tang[0]{\kF}(\gamma))
		= \Tang{(\rho_{\mathcal{P}(t)} \circ \kF) }(0, \gamma).
	\]
	So by using the last two identities and \refer{lem:Tangentengruppenop_DiffW_in_Karten}, we get
	\begin{multline*}
		(\evTwo_x \circ \mathcal{P})'(t)
		= (\dA{ (\evTwo_x \circ \kF) }{}{} \circ \Tang{\kF^{-1}}) (\mathcal{P}'(t))
		\\
		= \dA{ (\evTwo_x \circ \kF) }{\kF^{-1}(\mathcal{P}(t))}{\gamma \circ \mathcal{P}(t)}
		= \gamma(\mathcal{P}(t)(x)).
	\end{multline*}
	This proves that the curve $\R \to \SX : t \mapsto \mathcal{P}(t)(x)$ is the integral curve
	of $\gamma$ to the initial value $x$.
\end{proof}

\chapter{Integration of certain Lie algebras of vector fields}
The aim of this chapter is the integration of Lie algebras
that arise as the semidirect product of a weighted function space
$\CcF{\SX}{\SX}{\infty}$ and $\LieAlg{G}$, where $G$
is a subgroup of $\Diff{\SX}{}{}$
which is a Lie group with respect to composition and inversion of functions.

The canonical candidate for this purpose is the semidirect product
of $\DiffW$ and $G$ -- if it can be constructed.
Hence we need criteria when
\[
	\G \times \DiffW \to \Diff{\SX}{}{}
	: (T, \phi) \mapsto T \circ \phi \circ T^{-1}
\]
takes its image in $\DiffW$ and is smooth.

\section[On the smoothness of the conjugation action on \texorpdfstring{$\DiffWz$}{}]%
{On the smoothness of the conjugation action on \texorpdfstring{$\boldsymbol{\DiffWz}$}{}}
We slightly generalize our approach by allowing arbitrary Lie groups to
act on $\DiffW$. We need the following notation.
\begin{defi}
	Let $\G$ be a group and $\omega : \G \times M \to M$
	an action of $\G$ on the set $M$.
	\begin{enumerate}
		\item
		For $g \in \G$, we denote the partial map $\omega(g, \cdot) : M \to M$ by $\omega_g$.

		\item
		Assume that $\G$ is a locally convex Lie group with the identity element $e$, $M$ a smooth manifold
		and $\omega$ is smooth. We define the linear map
		\[
			\glstext{Ableitung_Gruppenwirkung} : \LieAlg{\G} \to \VecFields{M}
		\]
		by
		\[
			\AblAct{\omega}(x)(m) = - \Tang[e]{\omega(\cdot, m)} (x).
		\]
		Note that $\AblAct{\omega}$ takes its values in the smooth vector fields
		because $\omega$ is smooth.
	\end{enumerate}
\end{defi}
Now we can state a first criterion for smoothness of
the conjugation action -- however only on the identity component $\glstext{gew_Diffeomorphismen_Braum_ZKo}$ of $\DiffW$.
\begin{lem}
\label{lem:Glatte_Wirkung_Lie-Gruppe_auf_DiffWz}
	Let $\SX$ be a Banach space, $\GewFunk \subseteq \cl{\R}^\SX$
	with $1_{\SX} \in \GewFunk$, $\G$ a Lie group and
	$\omega : \G \times \SX \to \SX$ a smooth action.
	We define the map
	\[
		\alpha :
		\G \times \DiffW \to \Diff{\SX}{}{}
		: (T, \phi) \mapsto \omega_T \circ \phi \circ \omega_{T^{-1}}.
	\]
	Assume that there exists an open set $\Omega \in \neigh[\G]{\one}$ such that the maps
	\begin{equation}
		\label{Komposition-gewAbbinfty-Liegruppe}
		\CcF{\SX}{ \SX }{\infty} \times \Omega \to \CcF{\SX}{ \SX }{\infty}
			:
			(\gamma, T) \mapsto \gamma \circ \omega_T
	\end{equation}
	and
	\begin{equation}\label{Auswertung-gewAbbinfty-Ableitung_Liegruppe}
		\CcF{\SX}{ \SX }{\infty} \times \Omega \to \CcF{\SX}{ \SX }{\infty}
			:
			(\gamma, T) \mapsto \FAbl{\omega_T} \eval \gamma
	\end{equation}
	are well-defined and smooth.
	\begin{enumerate}
		\item
		\label{enum1:Glatte_Wirkung_Lie-Gruppe_auf_DiffWz--lokal}
		Then for each open identity neighborhood $\UF_\GewFunk \subseteq \DiffW$
		such that $\Strecke{\phi}{\id{\SX}} \ndef \set{t \phi + (1 - t) \id{\SX}}{t \in [0,1]} \subseteq \DiffW$
		for each $\phi \in \UF_\GewFunk$, the map
		\[
			\tag{\ensuremath{\dagger}}
			\label{lokale_Gruppenwirkung_semidirektes_Produkt}
			(\Omega \cap \Omega^{-1}) \times \UF_\GewFunk \to \EndW
			: (T, \phi) \mapsto \alpha(T, \phi)
		\]
		is well-defined and smooth.
		
		\item
		\label{enum1:Glatte_Wirkung_Lie-Gruppe_auf_DiffWz--global}
		Suppose that $\Omega = \G$. Then the map
		\[
			\tag{\ensuremath{\dagger\dagger}}
			\label{globale_Gruppenwirkung_semidirektes_Produkt-Zshgskomp}
			\G \times \DiffWz \to \DiffWz
			: (T, \phi) \mapsto \alpha(T, \phi)
		\]
		is well-defined and smooth.
	\end{enumerate}
	\index{semidirect product!of $\DiffWz$ and a Lie group acting on $\SX$}
\end{lem}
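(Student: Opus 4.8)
The plan is to handle part (a) by expressing the conjugation, read in the global chart $\kF$, as a parameter integral whose integrand factors through exactly the two maps \eqref{Komposition-gewAbbinfty-Liegruppe} and \eqref{Auswertung-gewAbbinfty-Ableitung_Liegruppe}, and then to deduce part (b) from (a) by a connectedness-and-translation argument. For (a), fix $T\in\Omega\cap\Omega^{-1}$ and $\phi=\kF(\gamma)\in\UF_\GewFunk$, and set $\sigma_s\ndef s\gamma+\id_\SX=\kF(s\gamma)$; the segment hypothesis on $\UF_\GewFunk$ ensures $\sigma_s\in\DiffW$ for every $s\in[0,1]$, so $I_\GewFunk(s\gamma)$ is defined and $\zeta_s\ndef g_{\SX,\infty}(\gamma,I_\GewFunk(s\gamma))=\gamma\circ\sigma_s^{-1}$ lies in $\CcF{\SX}{\SX}{\infty}$. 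Applying the mean value theorem to $\omega_T$ along the segment joining $\omega_{T^{-1}}(x)$ to $\phi(\omega_{T^{-1}}(x))$ and rewriting $\gamma(\omega_{T^{-1}}(x))=\zeta_s(\sigma_s(\omega_{T^{-1}}(x)))$ yields, pointwise in $x$, the identity
\[
\alpha(T,\phi)-\id_\SX=\Rint{0}{1}{\Bigl(\bigl(\FAbl{\omega_T}\eval\zeta_s\bigr)\circ\sigma_s\Bigr)\circ\omega_{T^{-1}}}{s}.
\]

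The rearrangement is designed so that the integrand is built only from maps already known to be smooth: $\FAbl{\omega_T}\eval\zeta_s$ by \eqref{Auswertung-gewAbbinfty-Ableitung_Liegruppe} (using $T\in\Omega$), right composition with $\sigma_s$ by $g_{\SX,\infty}$ (\refer{cor:Kcomp_g_ist_glatt_(und_definiert)}), and the outer precomposition with $\omega_{T^{-1}}$ by \eqref{Komposition-gewAbbinfty-Liegruppe} (using $T^{-1}\in\Omega$, which is exactly why $\Omega\cap\Omega^{-1}$ appears); that $(\gamma,s)\mapsto\zeta_s$ is smooth follows from \refer{prop:Inversion_Koord_glatt} and \refer{cor:Kcomp_g_ist_glatt_(und_definiert)}. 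Hence the integrand is a smooth curve in $s$ depending smoothly on $(\gamma,T)$. As $\CcF{\SX}{\SX}{\infty}$ is complete (\refer{cor:CkF_komplett}) the weak integral exists; comparing point evaluations through \refer{lem:Punktetrennende_Menge_Integrale_CW} (admissible since $1_\SX\in\GewFunk$) identifies it with $\alpha(T,\phi)-\id_\SX$, so $\alpha(T,\phi)\in\EndW$, and integrating the smooth family against the continuous-linear weak-integration map (as in the proof of \refer{prop:Inversion_Koord_glatt}) shows \eqref{lokale_Gruppenwirkung_semidirektes_Produkt} is smooth.

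For (b) I would first sharpen the target of (a) from $\EndW$ to $\DiffW$ and then globalize. Each partial map $\alpha_T\ndef\alpha(T,\cdot)$ is a group homomorphism of $\Diff{\SX}{}{}$. Pick a symmetric identity neighborhood $N=N^{-1}\subseteq\UF_\GewFunk$; part (a) with $\Omega=\G$ gives $\alpha_T(N)\subseteq\EndW$, and since for $\psi\in N$ also $\psi^{-1}\in N$, both $\alpha_T(\psi)$ and $\alpha_T(\psi)^{-1}=\alpha_T(\psi^{-1})$ lie in $\EndW$, so $\alpha_T(N)\subseteq\DiffW$ by \refer{lem:char_DiffW}. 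Since $\DiffWz$ is generated by $N$ and $\alpha_T$ is a homomorphism into the group $\DiffW$, we get $\alpha_T(\DiffWz)\subseteq\DiffW$; connectedness together with $\alpha_T(\id_\SX)=\id_\SX$ then forces the image into $\DiffWz$, proving well-definedness.

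Smoothness of \eqref{globale_Gruppenwirkung_semidirektes_Produkt-Zshgskomp} I would check near an arbitrary $(T_0,\phi_0)$ using the homomorphism identity $\alpha(T,\phi)=\alpha(T,\phi\circ\phi_0^{-1})\circ\alpha(T,\phi_0)$. The first factor is smooth by (a), because right translation by $\phi_0^{-1}$ is smooth and carries a neighborhood of $\phi_0$ into $\UF_\GewFunk$, while $\DiffWz$ is open in $\EndW$ (\refer{cor:DiffW_ist_offen}); for the second factor, writing the fixed element as a finite product $\phi_0=\psi_1\circ\dotsb\circ\psi_n$ with $\psi_i\in N$ yields $\alpha(T,\phi_0)=\alpha(T,\psi_1)\circ\dotsb\circ\alpha(T,\psi_n)$, a finite composition of maps $T\mapsto\alpha(T,\psi_i)$ that are smooth by (a), hence smooth in $T$. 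As multiplication in $\DiffWz$ is smooth, $\alpha$ is smooth near $(T_0,\phi_0)$, and therefore everywhere. The main obstacle is the reduction in (a): producing the integral representation whose integrand passes through precisely the two hypothesised maps — in particular the substitution introducing $\zeta_s=\gamma\circ\sigma_s^{-1}$, where the segment hypothesis and the regularity of inversion enter — after which (b) is a standard connectedness argument.
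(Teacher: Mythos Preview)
Your proof follows essentially the same route as the paper's. In part (a) your integrand $\bigl((\FAbl{\omega_T}\eval\zeta_s)\circ\sigma_s\bigr)\circ\omega_{T^{-1}}$ is precisely the paper's $\psi_{s,T,\phi}$, obtained via the same mean-value-theorem computation and the same substitution $\gamma=\zeta_s\circ\sigma_s$; the appeal to completeness, \refer{cor:Kcomp_g_ist_glatt_(und_definiert)}, smoothness of inversion, and the two hypothesised maps is likewise identical. For part (b) the paper argues exactly as you do---choosing a symmetric $\UF_\GewFunk$, using (a) and \refer{lem:char_DiffW} to land in $\DiffW$, then invoking that a connected identity neighborhood generates $\DiffWz$---except that it packages your explicit translation/finite-product smoothness argument into a citation of \refer{lem:Kriterium-fuer-glatte-Gruppenwirkung} (whose proof is precisely your argument). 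One small point: your connectedness step ``$\alpha_T(\DiffWz)\subseteq\DiffW$ and connectedness forces $\DiffWz$'' tacitly uses continuity of $\alpha_T$ on all of $\DiffWz$, which is not yet available; the clean fix (which the paper also uses implicitly) is to observe directly that $\alpha_T(N)\subseteq\DiffWz$ because $N$ can be chosen connected and $\rest{\alpha_T}{N}$ is continuous by (a), and then $\alpha_T(\DiffWz)=\langle\alpha_T(N)\rangle\subseteq\DiffWz$.
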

\begin{proof}
	\refer{enum1:Glatte_Wirkung_Lie-Gruppe_auf_DiffWz--lokal}
	Using \refer{prop:Kompo_Koord_glatt}, \refer{satz:DiffW_Lie-Gruppe}
	and the smoothness of \eqref{Komposition-gewAbbinfty-Liegruppe}
	and \eqref{Auswertung-gewAbbinfty-Ableitung_Liegruppe},
	for each $t \in [0,1]$, $T \in \Omega \cap \Omega^{-1}$ and $\phi \in \UF_\GewFunk$ we see that
	\[
		\psi_{t, T, \phi}\ndef
		(\FAbl{\omega_T} \eval ((\phi - \id{\SX})\circ (t \phi + (1 - t) \id{\SX})^{-1})) \circ (t \phi + (1 - t) \id{\SX}) \circ \omega_T^{-1}
		\in \CcF{\SX}{\SX}{\infty},
	\]
	and $\psi_{t, T, \phi}$ is a smooth map.
	Further, using that $t \phi + (1 - t) \id{\SX}$ is a diffeomorphism for each $t \in [0,1]$, we calculate
	\begin{align*}
		&(\omega_T \circ \phi \circ \omega_{T^{-1}})(x) - x
		\\
		=& (\omega_T \circ \phi \circ \omega_T^{-1})(x) - (\omega_T \circ \omega_T^{-1})(x)
		\\
		=& \Rint{0}{1}{ \FAbl{\omega_T}\circ(t \phi + (1 - t) \id{\SX})(\omega_T^{-1}(x)) \eval (\phi - \id{\SX})(\omega_T^{-1}(x))}{t}
		\\
		=& \Rint{0}{1}{ (\FAbl{\omega_T} \eval ((\phi - \id{\SX})\circ (t \phi + (1 - t) \id{\SX})^{-1})) \circ (t \phi + (1 - t) \id{\SX})(\omega_T^{-1}(x))}{t}.
	\end{align*}
	Hence
	$\omega_T \circ \phi \circ \omega_{T^{-1}} - \id{\SX}
	= \Rint{0}{1}{ \psi_{t, T, \phi} }{t}\in \CcF{\SX}{\SX}{\infty}$
	by \refer{prop:Stetigkeit_parameterab_Int},
	using that we proved in \refer{cor:CkF_komplett} that $\CcF{\SX}{\SX}{\infty}$ is complete.

	Since $\psi_{t, T, \phi}$ is smooth as a function of $t$, $T$ and $\phi$,
	we can use \refer{prop:Glattheit_parameterab_Int}
	to see that \eqref{lokale_Gruppenwirkung_semidirektes_Produkt}
	is defined and smooth.

	\refer{enum1:Glatte_Wirkung_Lie-Gruppe_auf_DiffWz--global}
	Since $\DiffW$ is locally convex,
	we find a symmetric open $\UF_\GewFunk \in \neigh{\id{\SX}}$
	such that $\Strecke{\UF_\GewFunk}{\id{\SX}} \subseteq \DiffW$.
	Using the symmetry of $\UF_\GewFunk$ and the results of
	\refer{enum1:Glatte_Wirkung_Lie-Gruppe_auf_DiffWz--lokal},
	we see that
	$\alpha(\G \times \UF_\GewFunk) \subseteq \DiffWz$.
	Since $\UF_\GewFunk$ generates $\DiffWz$,
	we can apply \refer{lem:Kriterium_fuer_Wirkung_auf_Untergruppe}
	to conclude that $\alpha(\G \times \DiffWz) \subseteq \DiffWz$.
	Further \eqref{globale_Gruppenwirkung_semidirektes_Produkt-Zshgskomp}
	is smooth by \refer{enum1:Glatte_Wirkung_Lie-Gruppe_auf_DiffWz--lokal}
	and \refer{lem:Kriterium-fuer-glatte-Gruppenwirkung}.
\end{proof}
So all we need are criteria for the smoothness of the maps \eqref{Komposition-gewAbbinfty-Liegruppe}
and \eqref{Auswertung-gewAbbinfty-Ableitung_Liegruppe}.
This will be the topic of the next two subsections.
Before we proceed, the following definition is useful.
\begin{defi}\label{def:alle_Gewichte,die_gleichen_Raum_erzeugen}
	Let $\SX$ be a normed space, $\UF \subseteq \SX$ an open nonempty subset,
	and $\GewFunk \subseteq \cl{\R}^\UF$ a nonempty set of weights.
	We define the \emph{maximal extension} $\glstext{max_extend_weights} \subseteq \cl{\R}^\UF$ of $\GewFunk$ as the set of
	functions $f$ for which $\hn{\cdot}{f}{0}$ is a continuous seminorm
	on $\CcF{\UF}{\SY}{0}$, for each normed space $\SY$.
	Obviously $\GewFunk \subseteq \ExtWeights{\GewFunk}$
	and by \refer{lem:Normbeziehung_zwischen_Ableitungen_und_Ableitungen_der_Ableitung},
	$\hn{\cdot}{f}{\ell}$ is a continuous seminorm on $\CcF{\UF}{\SY}{k}$,
	provided that $f \in \ExtWeights{\GewFunk}$ and $\ell \leq k$.
	\index{weights!maximal extension}
\end{defi}
\subsection{Bilinear action on weighted functions}
\newcommand{\mur}{multiplier}
We first elaborate on the map \eqref{Auswertung-gewAbbinfty-Ableitung_Liegruppe}.
To this end, we define a class of functions, the \emph{\mur{}s}.
These have the property that for a \mur{} $M$, a weighted function $\gamma$
and a continuous bilinear map $b$, the map $b \circ (M, \gamma)$ is a weighted function.
Finally, we provide a criterion ensuring that a topology on a set of \mur{}s
makes the map $(M, \gamma) \mapsto b \circ (M, \gamma)$ continuous.
\subsubsection{Multipliers}
\begin{defi}
	Let $\SX$ be a normed space, $\UF \subseteq \SX$ an open nonempty set
	and $\GewFunk \subseteq \cl{\R}^\UF$ a nonempty set of weights.
	\begin{enumerate}
		\item
		A function $g : \UF \to \R$ is called a \emph{multiplicative weight (for \GewFunk)} if
		\index{weights!multiplicative}
		\[
			(\forall f \in \GewFunk)\, f \cdot g \in \ExtWeights{\GewFunk}.
		\]

		\item
		Let $\SY$ be another normed space and $k \in \cl{\N}$.
		A $\ConDiff{}{}{k}$-map $M : \UF \to \SZ$ is called a \emph{$k$-\mur{} (for \GewFunk)}
		if $\Opnorm{\FAbl[\ell]{M}}$ is a multiplicative weight
		for all $\ell \in \N$ with $\ell \leq k$.
		An $\infty$-\mur{} is also called a \emph{\mur{}}.
	\end{enumerate}
\end{defi}

\begin{lem}
\label{lem:Charakterisierung_Multiplier_via_Differential}
	Let $\SX$ and $\SY$ be normed spaces, $\UF \subseteq \SX$ an open nonempty set,
	$\GewFunk \subseteq \cl{\R}^\UF$ a nonempty set of weights and $k \in \cl{\N}$.
	\begin{enumerate}
		\item\label{enum1:k-multiplier_VS}
		The set of $k$-multipliers from $\UF$ to $\SY$ is a vector space.
		
		\item\label{enum1:Charakterisierung_Multiplier_via_Differential}
		A map $M : \UF \to \SY$ is a $(k+1)$-multiplier iff $M$ is a $0$-\mur{}
		and $\FAbl{M} : \UF \to \Lin{\SX}{\SY}$ is a $k$-\mur{}.
	\end{enumerate}
\end{lem}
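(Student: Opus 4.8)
The plan is to first isolate the algebraic backbone of both assertions: the set of multiplicative weights is a \emph{solid} linear subspace of $\cl{\R}^\UF$, meaning it is closed under addition and scalar multiplication and, in addition, $\abs{g} \leq \abs{g'}$ pointwise together with $g'$ a multiplicative weight forces $g$ to be a multiplicative weight. All three properties descend from the corresponding properties of $\widetilde{\GewFunk}$, which are immediate from the shape of the seminorm: since $\hn{\gamma}{h}{0} = \sup_{x \in \UF}\abs{h(x)}\,\norm{\gamma(x)}$ depends on $h$ only through $\abs{h}$ and is monotone in it, one has $\hn{\cdot}{h}{0} \leq \hn{\cdot}{h'}{0}$ whenever $\abs{h} \leq \abs{h'}$, and $\hn{\cdot}{h_1 + h_2}{0} \leq \hn{\cdot}{h_1}{0} + \hn{\cdot}{h_2}{0}$ by the triangle inequality. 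A seminorm dominated by a continuous seminorm (or a finite sum of such) is continuous, so $\widetilde{\GewFunk}$ is solid; multiplying by a fixed $f \in \GewFunk$ then transfers solidity to the multiplicative weights, because $f(g_1+g_2) = fg_1 + fg_2$, $f(\lambda g) = \lambda(fg)$ and $\abs{fg} \leq \abs{fg'}$ when $\abs{g}\leq\abs{g'}$.

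Granting this, I would prove \refer{enum1:k-multiplier_VS} as follows. The zero map is a $k$-multiplier since $\Opnorm{\FAbl[\ell]{0}} = 0$ is a multiplicative weight. For $k$-multipliers $M_1, M_2$ and $\lambda \in \K$, linearity of the derivative operator gives $\FAbl[\ell]{(M_1 + M_2)} = \FAbl[\ell]{M_1} + \FAbl[\ell]{M_2}$ and $\FAbl[\ell]{(\lambda M_1)} = \lambda\,\FAbl[\ell]{M_1}$, whence by the triangle inequality for the operator norm
\[
	\Opnorm{\FAbl[\ell]{(M_1 + M_2)}} \leq \Opnorm{\FAbl[\ell]{M_1}} + \Opnorm{\FAbl[\ell]{M_2}}, \qquad \Opnorm{\FAbl[\ell]{(\lambda M_1)}} = \abs{\lambda}\,\Opnorm{\FAbl[\ell]{M_1}}
\]
for each $\ell \leq k$. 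The right-hand sides are multiplicative weights by the closure properties above, so by solidity the left-hand sides are as well; hence $M_1 + M_2$ and $\lambda M_1$ are $k$-multipliers.

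For \refer{enum1:Charakterisierung_Multiplier_via_Differential} the main tool is the norm relation $\Opnorm{\FAbl[\ell]{(\FAbl{M})}(x)} = \Opnorm{\FAbl[\ell+1]{M}(x)}$ coming from \refer{id:Normbeziehung_zwischen_Ableitungen_und_Ableitungen_der_Ableitung} in \refer{lem:Normbeziehung_zwischen_Ableitungen_und_Ableitungen_der_Ableitung} (valid for every order admitted by the differentiability of $M$, via the isometric identification underlying that lemma). Assume first that $M$ is a $(k+1)$-multiplier; then $M \in \FC{\UF}{\SY}{k+1}$, so $M$ is a $\FC{}{}{0}$-map with $\Opnorm{\FAbl[0]{M}} = \norm{M(\cdot)}$ a multiplicative weight (the case $\ell = 0$), i.e.\ $M$ is a $0$-multiplier; and $\FAbl{M} \in \FC{\UF}{\Lin{\SX}{\SY}}{k}$ satisfies $\Opnorm{\FAbl[\ell]{(\FAbl{M})}} = \Opnorm{\FAbl[\ell+1]{M}}$, a multiplicative weight for every $\ell \leq k$, so $\FAbl{M}$ is a $k$-multiplier. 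Conversely, if $M$ is a $0$-multiplier and $\FAbl{M}$ is a $k$-multiplier, then $\FAbl{M}$ exists and lies in $\FC{\UF}{\Lin{\SX}{\SY}}{k}$, so $M \in \FC{\UF}{\SY}{k+1}$ by the definition of $\FC{}{}{k+1}$; moreover $\Opnorm{\FAbl[0]{M}} = \norm{M(\cdot)}$ is a multiplicative weight by the $0$-multiplier hypothesis, and for $1 \leq \ell \leq k+1$ the same norm relation gives $\Opnorm{\FAbl[\ell]{M}} = \Opnorm{\FAbl[\ell-1]{(\FAbl{M})}}$, a multiplicative weight since $\ell - 1 \leq k$. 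Thus $M$ is a $(k+1)$-multiplier.

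The only genuinely substantive step is the solidity of the multiplicative weights; everything else is bookkeeping with the derivative operator and the norm identity of \refer{lem:Normbeziehung_zwischen_Ableitungen_und_Ableitungen_der_Ableitung}. I expect the sole point requiring care to be the behaviour of $\hn{\cdot}{f}{0}$ for weights attaining the value $\pm\infty$, where one should note that the monotonicity and subadditivity estimates hold in $[0,\infty]$ and that the continuity characterization defining $\widetilde{\GewFunk}$ is insensitive to such values.
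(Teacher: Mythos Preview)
Your proof is correct and follows the same approach as the paper, which merely records that \refer{enum1:k-multiplier_VS} is ``obvious from the definition'' and that \refer{enum1:Charakterisierung_Multiplier_via_Differential} follows from the identity $\Opnorm{\FAbl[\ell]{(\FAbl{M})}} = \Opnorm{\FAbl[\ell + 1]{M}}$ of \refer{lem:Normbeziehung_zwischen_Ableitungen_und_Ableitungen_der_Ableitung}. Your contribution is to spell out what the paper suppresses---chiefly the solidity of the multiplicative weights needed for \refer{enum1:k-multiplier_VS}---but there is no genuine difference in strategy.
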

\begin{proof}
	\refer{enum1:k-multiplier_VS}
	This is obvious from the definition.
	
	\refer{enum1:Charakterisierung_Multiplier_via_Differential}
	This follows from the identity $\Opnorm{\FAbl[\ell]{(\FAbl{M})}} = \Opnorm{\FAbl[\ell + 1]{M}}$,
	see \refer{lem:Normbeziehung_zwischen_Ableitungen_und_Ableitungen_der_Ableitung}.
\end{proof}

\begin{lem}
\label{lem:Multiplier_operieren_stetig_auf_gewichteten_Abb}
	Let $\SX$, $\SY_{1}$, $\SY_{2}$ and $\SZ$ be normed spaces, $\UF \subseteq \SX$ an open nonempty set,
	$\GewFunk \subseteq \cl{\R}^\UF$ a nonempty set of weights and $k \in \cl{\N}$.
	Further, let $b : \SY_{1} \times \SY_{2} \to \SZ$ be a continuous bilinear map,
	$M : \UF \to \SY_{1}$ a $k$-\mur{} and $\gamma \in \CcF{\UF}{\SY_{2}}{k}$.
	Then
	\[
		b \circ (M, \gamma) \in \CcF{\UF}{\SZ}{k}.
	\]
	Moreover, the map
	\[
		\tag{\ensuremath{\dagger}}\label{lineare_Wirkung_fester-Multiplier_gewichtete_Abb}
		\CcF{\UF}{\SY_{2}}{k} \to \CcF{\UF}{\SZ}{k}
		:
		\gamma \mapsto b \circ (M, \gamma)
	\]
	is continuous linear and hence smooth.
\end{lem}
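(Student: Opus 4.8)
The plan is to prove both assertions simultaneously by induction on $k$, mirroring the proof of \refer{prop:multilineare_Abb_und_CF} but with the first argument fixed to the \mur{} $M$. The linearity of $\gamma \mapsto b\circ(M,\gamma)$ is immediate from the bilinearity of $b$, so it suffices to establish the membership $b\circ(M,\gamma)\in\CcF{\UF}{\SZ}{k}$ together with continuity of the induced map; smoothness of a continuous linear map is then automatic in the $\ConDiff{}{}{k}$-calculus. For $k=\infty$ I would reduce to the finite case: by \refer{cor:Topologie_von_CinfF} the space $\CcF{\UF}{\SZ}{\infty}$ carries the initial topology with respect to the inclusions $\CcF{\UF}{\SZ}{\infty}\to\CcF{\UF}{\SZ}{k}$, and composing $\gamma\mapsto b\circ(M,\gamma)$ with such an inclusion factors through $\CcF{\UF}{\SY_{2}}{k}$, where the finite-order result applies (an $\infty$-\mur{} restricts to a $k$-\mur{} for every $k$).

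The base case $k=0$ is where the \mur{} hypothesis does its work. Writing $g\ndef\norm{M(\cdot)}=\Opnorm{\FAbl[0]{M}}$, the assumption that $M$ is a $0$-\mur{} means precisely that $g$ is a multiplicative weight, i.e. $f\cdot g\in\widetilde{\GewFunk}$ for every $f\in\GewFunk$. Since $g\geq 0$, for $\gamma\in\CcF{\UF}{\SY_{2}}{0}$ and $f\in\GewFunk$ the estimate
\[
	\hn{b\circ(M,\gamma)}{f}{0}
	= \sup_{x\in\UF}\abs{f(x)}\,\norm{b(M(x),\gamma(x))}
	\leq \Opnorm{b}\,\sup_{x\in\UF}\abs{f(x)}\,g(x)\,\norm{\gamma(x)}
	= \Opnorm{b}\,\hn{\gamma}{f\cdot g}{0}
\]
holds. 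Because $f\cdot g\in\widetilde{\GewFunk}$, the seminorm $\hn{\cdot}{f\cdot g}{0}$ is continuous on $\CcF{\UF}{\SY_{2}}{0}$ by the very definition of $\widetilde{\GewFunk}$; this bound simultaneously yields $b\circ(M,\gamma)\in\CcF{\UF}{\SZ}{0}$ and continuity of the linear map for $k=0$.

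For the inductive step $k\to k+1$ I would differentiate. By \refer{lem:Ableitung_Kompo_mit_multilinearer_Abb} (with $m=2$),
\[
	D\bigl(b\circ(M,\gamma)\bigr)
	= b^{(1)}\circ(\FAbl{M},\gamma) + b^{(2)}\circ(M,\FAbl{\gamma}),
\]
and a $(k+1)$-\mur{} is in particular a $k$-\mur{}, while by \refer{lem:Charakterisierung_Multiplier_via_Differential} its derivative $\FAbl{M}$ is a $k$-\mur{}. Applying the inductive hypothesis to the continuous bilinear map $b^{(1)}$ with the $k$-\mur{} $\FAbl{M}$, and to $b^{(2)}$ with the $k$-\mur{} $M$, both summands lie in $\CcF{\UF}{\Lin{\SX}{\SZ}}{k}$ for $\gamma\in\CcF{\UF}{\SY_{2}}{k+1}$; hence $D(b\circ(M,\gamma))\in\CcF{\UF}{\Lin{\SX}{\SZ}}{k}$, and with the base case and \refer{prop:topologische_Zerlegung_von_CFk} we conclude $b\circ(M,\gamma)\in\CcF{\UF}{\SZ}{k+1}$. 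For continuity I would use that, by \refer{prop:topologische_Zerlegung_von_CFk}, $\CcF{\UF}{\SZ}{k+1}$ carries the initial topology with respect to $\FAbl{}$ and the inclusion into $\CcF{\UF}{\SZ}{0}$: the term $\gamma\mapsto b^{(1)}\circ(\FAbl{M},\gamma)$ is continuous after precomposing with the continuous inclusion $\CcF{\UF}{\SY_{2}}{k+1}\to\CcF{\UF}{\SY_{2}}{k}$ and invoking the inductive hypothesis, the term $\gamma\mapsto b^{(2)}\circ(M,\FAbl{\gamma})$ is continuous because $\FAbl{}$ is continuous (again \refer{prop:topologische_Zerlegung_von_CFk}) followed by the inductive hypothesis, and the $\CcF{\UF}{\SZ}{0}$-component reduces to the base case.

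The only genuinely delicate point is the base case, specifically the translation of the \mur{} condition on $M$ into the statement that $f\cdot\norm{M(\cdot)}$ induces a continuous seminorm through membership in $\widetilde{\GewFunk}$; everything else is the routine reduction-to-lower-order induction used repeatedly in \refer{sec:Modellraeume}. I would also note that the derivative formula presupposes $M\in\FC{\UF}{\SY_{1}}{k}$, which is implicit in $M$ being a $k$-\mur{} (the symbols $\FAbl[\ell]{M}$ in the definition only make sense under this differentiability), so no additional hypothesis is needed.
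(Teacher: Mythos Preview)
Your proposal is correct and follows essentially the same route as the paper: induction on $k$ with the $k=0$ estimate $\hn{b\circ(M,\gamma)}{f}{0}\leq\Opnorm{b}\,\hn{\gamma}{\abs{f}\cdot\norm{M}}{0}$ exploiting the multiplier hypothesis, the derivative formula from \refer{lem:Ableitung_Kompo_mit_multilinearer_Abb} together with \refer{lem:Charakterisierung_Multiplier_via_Differential} for the inductive step via \refer{prop:topologische_Zerlegung_von_CFk}, and the standard reduction through \refer{cor:Topologie_von_CinfF} for $k=\infty$. The paper's proof is slightly terser but structurally identical.
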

\begin{proof}
	For $k < \infty$ the proof is by induction on $k$:
	\\
	$k = 0$:
	We calculate for $x\in\UF$ and $f\in\GewFunk$:
	\[
		\abs{f(x)}\,\norm{(b \circ (M, \gamma))(x)}
		\leq
		\Opnorm{b} \, \abs{f(x)}\, \norm{M(x)}\, \norm{\gamma(x)}
		\leq
		\Opnorm{b} \, \hn{\gamma}{\abs{f}\cdot \norm{M}}{0},
	\]
	and since $\norm{M}$ is a multiplicative weight,
	the right hand side is finite.
	Hence
	\[
		\hn{b \circ (M, \gamma)}{f}{0}
		\leq \Opnorm{b} \, \hn{\gamma}{\abs{f}\cdot \norm{M}}{0},
	\]
	entailing that $b \circ (M, \gamma) \in \CcF{\UF}{\SZ}{0}$
	and the linear map \eqref{lineare_Wirkung_fester-Multiplier_gewichtete_Abb} is continuous.
	\\
	$k \to k + 1$:
	By \refer{prop:topologische_Zerlegung_von_CFk}, we need to prove that
	$\FAbl{(b \circ (M, \gamma))} \in \CcF{\UF}{\Lin{\SX}{\SZ}}{k}$
	and that the map
	\[
		\CcF{\UF}{\SY_{2}}{k + 1} \to \CcF{\UF}{\Lin{\SX}{\SZ}}{k}
		:
		\gamma \mapsto \FAbl{(b \circ (M, \gamma))}
	\]
	is continuous.
	Using \refer{lem:Ableitung_Kompo_mit_multilinearer_Abb} we get
	\[
		\FAbl{(b \circ (M, \gamma))}
		= b^{(1)} \circ (\FAbl{M}, \gamma) +  b^{(2)} \circ (M, \FAbl{\gamma});
	\]
	for the definition of the maps $b^{(i)}$ see \refer{susec:Multilineare_Abb}.
	So by applying the inductive hypothesis to the maps
	$b^{(1)} \circ (\FAbl{M}, \gamma)$ and $b^{(2)} \circ (M, \FAbl{\gamma})$
	(by \refer{lem:Charakterisierung_Multiplier_via_Differential}, $\FAbl{M}$ is a $k$-\mur{}),
	we see that $\FAbl{(b \circ (M, \gamma))}$ is in $\CcF{\UF}{\Lin{\SX}{\SZ}}{k}$
	and the map \eqref{lineare_Wirkung_fester-Multiplier_gewichtete_Abb} is continuous.
	\\
	$k = \infty$:
	\BeweisschrittCkCinfty%
	{ \CcF{\UF}{\SY_{2}}{\infty}  }
	{ b(M, \cdot)_{*, \infty} }
	{ \CcF{\UF}{\SZ}{\infty} }
	{}
	{}
	{ \CcF{\UF}{\SY_{2}}{n} }
	{ b(M, \cdot)_* }
	{ \CcF{\UF}{\SZ}{n} }
	{n}
\end{proof}


\paragraph{Topologies on spaces of \mur{}s}

\begin{lem}\label{lem:Kriterium_simultane_Stetigkeit_bilineare_Operation_Multiplier-gewAbb}
	Let $\SX$, $\SY_1$, $\SY_2$ and $\SZ$ be normed spaces, $\UF \subseteq \SX$ an open nonempty set,
	$\GewFunk \subseteq \cl{\R}^\UF$ a nonempty set of weights, $k \in \cl{\N}$
	and $b : \SY_1 \times \SY_2 \to \SZ$ a continuous bilinear map.
	Further, let $\mathcal{T}$ be a topological space and $(M_T)_{T \in \mathcal{T}}$
	a family of $k$-\mur{}s such that
	\begin{equation}\label{bed:Bedingung_Stetigkeit_bilineare_OperationMultiplier}
		\begin{multlined}[0.8\columnwidth]
			(\forall f \in \GewFunk, T \in \mathcal{T}, \ell \in \N : \ell \leq k)
			(\exists g \in \ExtWeights{\GewFunk})
			\\
			(\forall \eps > 0) (\exists \Omega \in \neigh[\mathcal{T}]{T})
			\,
			\forall S \in \Omega : 
			\abs{f}\,\norm{\FAbl[\ell]{(M_T - M_S)}} \leq \eps \,\abs{g}.
		\end{multlined}
	\end{equation}
	Then the map
	\[
		\tag{\ensuremath{\dagger}}\label{bilineare_Wirkung_Multiplier_gewichtete_Abb}
		\mathcal{T} \times \CcF{\UF}{\SY_{2}}{k} \to \CcF{\UF}{\SZ}{k}
		:
		(T, \gamma) \mapsto b \circ (M_T, \gamma)
	\]
	which is defined by \refer{lem:Multiplier_operieren_stetig_auf_gewichteten_Abb}
	is continuous.
\end{lem}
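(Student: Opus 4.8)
The plan is to prove continuity at an arbitrary point $(T_0, \gamma_0) \in \mathcal{T} \times \CcF{\UF}{\SY_2}{k}$ by induction on $k$, mirroring the structure of the proof of \refer{lem:Multiplier_operieren_stetig_auf_gewichteten_Abb}. The conceptual heart is the bilinear splitting
\[
	b \circ (M_S, \eta) - b \circ (M_{T_0}, \gamma_0)
	= b \circ (M_S, \eta - \gamma_0) + b \circ (M_S - M_{T_0}, \gamma_0),
\]
which is legitimate because the $k$-\mur{}s form a vector space (\refer{lem:Charakterisierung_Multiplier_via_Differential}), so that $M_S - M_{T_0}$ is again a $k$-\mur{}. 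The first summand carries the dependence on $\eta$ and the second the dependence on the parameter $S$; condition \eqref{bed:Bedingung_Stetigkeit_bilineare_OperationMultiplier} will be exploited for both, once with $\eps = 1$ (as a uniform bound) and once with $\eps$ arbitrary (as a smallness estimate).

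For the base case $k = 0$ I fix $f \in \GewFunk$ and bound both summands with the zeroth-order estimate from the proof of \refer{lem:Multiplier_operieren_stetig_auf_gewichteten_Abb}, namely $\hn{b \circ (M, \delta)}{f}{0} \leq \Opnorm{b}\,\hn{\delta}{\abs{f}\cdot\norm{M}}{0}$. Applying \eqref{bed:Bedingung_Stetigkeit_bilineare_OperationMultiplier} at $\ell = 0$, $T = T_0$, yields a fixed $g \in \widetilde{\GewFunk}$. Taking $\eps = 1$ produces a neighborhood on which $\abs{f}\,\norm{M_S} \leq \abs{f}\,\norm{M_{T_0}} + \abs{g} =: G$; here $G \in \widetilde{\GewFunk}$ because $\abs{f}\,\norm{M_{T_0}} \in \widetilde{\GewFunk}$ (as $\norm{M_{T_0}}$ is a multiplicative weight) and $\widetilde{\GewFunk}$ is closed under sums and under domination (if $0 \le h \le H$ pointwise with $H \in \widetilde{\GewFunk}$ then $\hn{\cdot}{h}{0} \le \hn{\cdot}{H}{0}$, so $h \in \widetilde{\GewFunk}$). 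Thus the first summand is dominated by $\Opnorm{b}\,\hn{\eta - \gamma_0}{G}{0}$, a single continuous seminorm of $\eta - \gamma_0$, uniformly for $S$ near $T_0$. For the second summand, the same $g$ with arbitrary $\eps$ gives a neighborhood on which $\abs{f}\,\norm{M_S - M_{T_0}} \leq \eps\,\abs{g}$, whence it is bounded by $\Opnorm{b}\,\eps\,\hn{\gamma_0}{g}{0}$ with $\hn{\gamma_0}{g}{0} < \infty$. Combining the two bounds yields continuity at $(T_0,\gamma_0)$.

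For the inductive step $k \to k+1$ I invoke the reduction to lower order (\refer{prop:topologische_Zerlegung_von_CFk}): continuity of $(S,\eta) \mapsto b\circ(M_S,\eta)$ into $\CcF{\UF}{\SZ}{k+1}$ is equivalent to continuity of its inclusion into $\CcF{\UF}{\SZ}{0}$ (the base case) together with continuity of $(S,\eta) \mapsto \FAbl{(b\circ(M_S,\eta))}$ into $\CcF{\UF}{\Lin{\SX}{\SZ}}{k}$. By \refer{lem:Ableitung_Kompo_mit_multilinearer_Abb} the latter equals $b^{(1)}\circ(\FAbl{M_S},\eta) + b^{(2)}\circ(M_S,\FAbl{\eta})$, and I apply the inductive hypothesis to each summand. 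The family $(\FAbl{M_S})$ consists of $k$-\mur{}s (\refer{lem:Charakterisierung_Multiplier_via_Differential}) and again satisfies \eqref{bed:Bedingung_Stetigkeit_bilineare_OperationMultiplier} at all orders $\leq k$, since $\Opnorm{\FAbl[\ell]{(\FAbl{M_S} - \FAbl{M_T})}} = \Opnorm{\FAbl[\ell+1]{(M_S - M_T)}}$ by \refer{lem:Normbeziehung_zwischen_Ableitungen_und_Ableitungen_der_Ableitung}; precomposing with the continuous inclusion $\CcF{\UF}{\SY_2}{k+1}\to\CcF{\UF}{\SY_2}{k}$ settles the $b^{(1)}$-term, and precomposing with the continuous derivative operator $\FAbl{}$ (\refer{prop:Ableitung_ist_stetig}) together with the inductive hypothesis for $(M_S)$ and $b^{(2)}$ settles the $b^{(2)}$-term. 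The case $k = \infty$ then follows from \refer{cor:Topologie_von_CinfF}, since continuity into $\CcF{\UF}{\SZ}{\infty}$ is equivalent to continuity into each $\CcF{\UF}{\SZ}{n}$.

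The main obstacle is the \emph{joint} continuity as opposed to separate continuity: for fixed $S$ the map is continuous linear by \refer{lem:Multiplier_operieren_stetig_auf_gewichteten_Abb}, and the only new difficulty in letting $S$ and $\eta$ vary together is controlling the $S$-dependence of the weights $\abs{f}\,\Opnorm{\FAbl[\ell]{M_S}}$. This is exactly what condition \eqref{bed:Bedingung_Stetigkeit_bilineare_OperationMultiplier} supplies, and the delicate point is that it must be used in two incompatible-looking ways at once — as a uniform upper bound to tame the $\eta$-term and as a smallness estimate to tame the $S$-term — while verifying that the dominating functions produced ($G$ and $g$) remain in $\widetilde{\GewFunk}$, so that the resulting estimates are genuinely continuous seminorms on the modelling space.
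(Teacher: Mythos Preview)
Your proof is correct and follows essentially the same route as the paper: the same bilinear splitting in the base case, the same induction via \refer{prop:topologische_Zerlegung_von_CFk} and the derivative formula from \refer{lem:Ableitung_Kompo_mit_multilinearer_Abb}, and the same passage to $k=\infty$ via \refer{cor:Topologie_von_CinfF}. If anything, your base case is more careful: the paper bounds the first summand by $\Opnorm{b}\,\hn{\gamma-\eta}{f\cdot\norm{M_S}}{0}$ and asserts this can be made small without explicitly addressing the $S$-dependence of the seminorm, whereas your use of \eqref{bed:Bedingung_Stetigkeit_bilineare_OperationMultiplier} with $\eps=1$ to produce a fixed dominating weight $G\in\widetilde{\GewFunk}$ closes that gap cleanly.
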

\begin{proof}
	For $k < \infty$. the proof is by induction on $k$.
	\\
	$k = 0$:
	For $S, T \in \mathcal{T}$ and $\gamma, \eta \in \CcF{\UF}{\SY_{2}}{0}$,
	we have
	\[
		b \circ (M_S, \eta) - b \circ (M_T,\gamma)
		= b \circ (M_S, \eta - \gamma) + b \circ (M_S - M_T, \gamma).
	\]
	We treat each summand separately.
	To this end, let $f \in \GewFunk$ and $x \in \UF$.
	Then we calculate for first summand
	\[
		\abs{f(x)}\,\norm{b(M_S(x), (\gamma - \eta)(x)}
		\leq \Opnorm{b}\abs{f(x)}\,\norm{M_S(x)}\,\norm{(\gamma - \eta)(x)}.
	\]
	For the second summand we get
	\[
		\abs{f(x)}\, \norm{b \circ (M_S - M_T, \gamma)(x)}
		\leq \Opnorm{b} \abs{f(x)}\, \norm{(M_S - M_T)(x)}\, \norm{\gamma(x)}.
	\]
	Let $g \in \ExtWeights{\GewFunk}$ as in \refer{bed:Bedingung_Stetigkeit_bilineare_OperationMultiplier}.
	Given $\eps > 0$, let $\Omega \in \neigh[\mathcal{T}]{T}$ be as in
	\refer{bed:Bedingung_Stetigkeit_bilineare_OperationMultiplier}.
	For $S \in \Omega$, we derive from the estimates above that
	\[
		\abs{f(x)}\,\norm{(b \circ (M_S, \eta) - b \circ (M_T,\gamma))(x)}
		\leq
		\Opnorm{b} (\hn{\gamma - \eta}{f \cdot \norm{M_S} }{0} + \eps \hn{\gamma}{g}{0}).
	\]
	As the right hand side can be made arbitrarily small,
	we see that \eqref{bilineare_Wirkung_Multiplier_gewichtete_Abb}
	is continuous.

	$k \to k + 1$:
	Using \refer{prop:topologische_Zerlegung_von_CFk}, we just need to prove that
	for $\gamma \in \CcF{\UF}{\SY_{2}}{k}$ and $T \in \mathcal{T}$,
	the map $\FAbl{(b \circ (M_T, \gamma))} \in \CcF{\UF}{\Lin{\SX}{\SZ}}{k}$
	and that
	\[
		\mathcal{T} \times \CcF{\UF}{\SY_{2}}{k + 1} \to \CcF{\UF}{\Lin{\SX}{\SZ}}{k}
		:
		\gamma \mapsto \FAbl{(b \circ (M_T, \gamma))}
	\]
	is continuous.
	Using \refer{lem:Ableitung_Kompo_mit_multilinearer_Abb} we get
	\[
		\FAbl{(b \circ (M_T, \gamma))}
		= b^{(1)} \circ (\FAbl{M_T}, \gamma) +  b^{(2)} \circ (M_T, \FAbl{\gamma}),
	\]
	with $b^{(i)}$ as in \refer{susec:Multilineare_Abb}.
	So by applying the inductive hypothesis to the maps
	$b^{(1)} \circ (\FAbl{M_T}, \gamma)$ and $b^{(2)} \circ (M_T, \FAbl{\gamma})$,
	we see that $\FAbl{(b \circ (M_T, \gamma))}$ is in $\CcF{\UF}{\Lin{\SX}{\SZ}}{k}$
	and the map \eqref{bilineare_Wirkung_Multiplier_gewichtete_Abb} is continuous.
	\\
	$k = \infty$:
	\BeweisschrittCkCinfty%
	{ \mathcal{T} \times \CcF{\UF}{\SY_{2}}{\infty}  }
	{ b_{*, \infty} }
	{ \CcF{\UF}{\SZ}{\infty} }
	{}
	{}
	{ \mathcal{T} \times \CcF{\UF}{\SY_{2}}{n} }
	{ b_* }
	{ \CcF{\UF}{\SZ}{n} }
	{n}
\end{proof}

\subsection{Contravariant composition on weighted functions}
\newcommand{\logbound}[1]{ \ifthenelse{ \equal{#1}{} }{}{\ensuremath{#1}-}logarithmically bounded}
Here we prove sufficient conditions that make \eqref{Komposition-gewAbbinfty-Liegruppe} smooth.
Since the second factor of the domain of this map in general is not contained in a vector space,
we have to wrestle with certain technical difficulties,
leading to the definition of a notion of
\emph{\logbound{}} identity neighborhoods in Lie groups.

\begin{lem}
	Let $\G$ be a Lie group and $\omega : \G \times M \to M$
	a smooth action of $\G$ on the smooth manifold $M$.
	\begin{enumerate}
		\item\label{enum1:verschiedenes_glatte_Wirkungen_2}
		For any $g \in \G$, the identity
		\[
			\Tang{ \omega }
			= \Tang{ \omega_g } \circ \Tang{ \omega } \circ (\Tang{ \lambda_{g^{-1}}} \times \id{ \Tang{M} })
		\]
		holds, where $\lambda_{g^{-1}} : \G \to \G$ denotes the left multiplication with $g^{-1}$.
	\end{enumerate}
	In the following, let $S, T \in \G$ and $W : [0,1] \to \G$ be a smooth curve with
	$W(0) = S$ and $W(1) = T$.
	\begin{enumerate}[resume]
		\item\label{enum1:verschiedenes_glatte_Wirkungen_3}
		Let $N$ be another smooth manifold and $\gamma : M \to N$ a $\ConDiff{}{}{1}-map$.
		Then for $t \in [0,1]$ and $x \in M$, we have
		\begin{equation*}\tag{\ensuremath{\dagger}}\label{id:Ableitung_Funktion-Wirkung-Weg,id}
			\Tang{ (\gamma \circ \omega \circ (W \times \id{M}) ) }(t, 1, 0_x)
			= \Tang{\gamma} \circ \Tang{ \omega_{W(t)} } (- \AblAct{\omega}( \LLA{W}(t) )(x)).
		\end{equation*}

		\item\label{enum1:verschiedenes_glatte_Wirkungen_4}
		Let $\SX$ and $\SY$ be normed spaces. Assume that $M$ is an open nonempty subset of $\SX$.
		Then for $\gamma, \eta \in \ConDiff{M}{\SY}{1}$ and $x \in M$, we have
		\begin{equation}
		\label{id:Differenz-Wirkung_Gruppe_von_hinten-gewAbb}
			\begin{aligned}
			&(\gamma \circ \omega_T)(x) - (\eta \circ \omega_S)(x)
			\\
			=& ((\gamma - \eta)\circ \omega_T)(x)
			- \Rint{0}{1}{ \FAbl{\eta}(\omega_{W(t)}(x) ) \MaMu \FAbl{\omega_{W(t)}}(x)%
					\eval\AblAct{\omega}( \LLA{W}(t) )(x)
				}{t}.
			\end{aligned}
		\end{equation}
	\end{enumerate}
\end{lem}
\begin{proof}
	\refer{enum1:verschiedenes_glatte_Wirkungen_2}
	We calculate for $h \in \G$ and $m \in M$ that
	\[
		\omega(h, m)
		= \omega(g g^{-1} h, m)
		= \omega(g, \omega(g^{-1} h, m))
		= \omega_g( \omega( \lambda_{g^{-1}} (h), m ) ).
	\]
	Applying the tangent functor gives the assertion.

	\refer{enum1:verschiedenes_glatte_Wirkungen_3}
	We calculate
	\begin{multline*}
		\Tang{ (\gamma \circ \omega \circ (W \times \id{M}) ) }(t, 1, 0_x)
		= \Tang{ \gamma} \circ \Tang{\omega}( W'(t),  0_x)
		\\
		= \Tang{ \gamma} \circ \Tang{ \omega_{W(t)} } \circ \Tang{ \omega } (W(t)^{-1} \cdot W'(t), 0_x)
		= \Tang{ \gamma} \circ \Tang{ \omega_{W(t)} }( - \AblAct{\omega}( W(t)^{-1} W'(t) )(x)).
	\end{multline*}
	Here we used \refer{enum1:verschiedenes_glatte_Wirkungen_2}.

	\refer{enum1:verschiedenes_glatte_Wirkungen_4}
	By adding $0 = \eta \circ \omega_T - \eta \circ \omega_T$, we get
	\[
		(\gamma \circ \omega_T)(x) - (\eta \circ \omega_S)(x)
		= ((\gamma - \eta)\circ \omega_T)(x) + (\eta \circ \omega_T)(x) - (\eta \circ \omega_S)(x)
	\]
	We elaborate on the second summand:
	\begin{align*}
		(\eta \circ \omega_T)(x) - (\eta \circ \omega_S)(x)
		&= \eta(\omega(W(1), x)) - \eta(\omega(W(0), x))
		\\
		&= \Rint{0}{1}{ \FAbl{(\eta \circ \omega \circ (W \times \id{\UF}))}(t, x) \eval (1,0)
		}{t}
		\\
		&= - \Rint{0}{1}{ \FAbl{\eta}(\omega_{W(t)}(x)) \MaMu \FAbl{\omega_{W(t)}}(x)%
				\eval\AblAct{\omega}( \LLA{W}(t) )(x)
			}{t}.
	\end{align*}
	Here we used \refer{id:Ableitung_Funktion-Wirkung-Weg,id}.
\end{proof}

\begin{defi}
	Let $\G$ be a Lie group and $\UF \subseteq \G$, $\VF \subseteq \LieAlg{\G}$ sets.
	We call a path $W \in \ConDiff{[0,1]}{\G}{1}$ \emph{ \logbound{\VF} }
	if $\LLA{W}([0,1]) \subseteq \VF$.
	The set $\UF$ is called \emph{ \logbound{\VF} } if
	for all $g, h\in \UF$ there exists an \logbound{\VF} $W \in \ConDiff{[0,1]}{\VF}{\infty}$
	with $W(0) = g$ and $W(1) = h$.
\end{defi}

\newcommand{\compo}{\ensuremath{\mathfrak c}}

\begin{prop}\label{prop:Diffbarkeit-Komposition-gewAbb-Liegruppe}
	Let $\SX$ and $\SY$ be normed spaces, $\UF \subseteq \SX$ an open nonempty set,
	$k \in \cl{\N}$, $\GewFunk \subseteq \cl{\R}^\UF$ a nonempty set of weights,
	$\G$ a locally convex Lie group and $\omega : \G \times \UF \to \UF$ a smooth action.
	Assume that there exists an open neighborhood $\Omega$ of $\one$ in $\G$ such that
	\begin{equation}
		\label{bed:Stetigkeit-Komposition-gewAbb_LieGr}
		\begin{multlined}[0.8\columnwidth]
			(\forall f \in \GewFunk, T \in \Omega)
			\exists g \in \ExtWeights{\GewFunk}
			(\forall \eps > 0)
			\\
			\exists \VF \in \neigh[\LieAlg{\G}]{0}, \widetilde{\Omega} \in \neigh[\Omega]{T} \text{ \logbound{\VF}}
			\\
			(\forall S \in \widetilde{\Omega}, v \in \VF)
			:
			\abs{f} \cdot \norm{\FAbl{\omega_S} \eval \AblAct{\omega}(v)}
			< \eps \abs{g \circ \omega_S}.
		\end{multlined}
	\end{equation}
	Further assume that $\GewFunk \circ \omega_{\Omega}^{-1} \subseteq \ExtWeights{\GewFunk}$,
	and that for all $m \in \N$ with $m < k$ and normed spaces $\SZ$, the map
	\begin{equation}
		\label{pktw-Kompo-lineare-Abb_gewAbb-und-LieG-(k-1)}
		\CcF{\UF}{ \Lin{\SX}{\SZ} }{m} \times \Omega \to \CcF{\UF}{ \Lin{\SX}{\SZ} }{m}
		:
		(\Gamma, T) \mapsto \Gamma \MaMu \FAbl{\omega_T}
	\end{equation}
	is defined and continuous.
	\begin{enumerate}
		\item
		\label{enum1:stetigkeit_Kompo_von_hinten}
		Then the map
		\[
			\CcF{\UF}{\SY}{k + 1} \times \Omega \to \CcF{\UF}{\SY}{k}
			: (\gamma, T) \mapsto \gamma \circ \omega_T
		\]
		is well-defined and continuous.
		\index{composition!of weighted maps und certain subsets of Lie groups}%

		\item
		\label{enum1:diffbarkeit_Kompo_von_hinten}
		Let $\ell \in \N^*$. Additionally assume that the maps
		\begin{equation}
			\label{pktw-Kompo-lineare-Abb_gewAbb-und-LieG-k}
			\CcF{\UF}{ \Lin{\SX}{\SY} }{k} \times \Omega \to \CcF{\UF}{ \Lin{\SX}{\SY} }{k}
			:
			(\Gamma, T) \mapsto \Gamma \MaMu \FAbl{\omega_T}
		\end{equation}
		and
		\begin{equation}
			\label{pktw-Auswerung-lineare-Abb-gewAbb_Vektorfelder_LieAlg}
			\CcF{\UF}{\Lin{\SX}{\SY} }{k} \times \LieAlg{\G} \to \CcF{\UF}{ \SY }{k}
			:
			(\Gamma, v) \mapsto \Gamma \eval \AblAct{\omega}(v)
		\end{equation}
		are well-defined and $\ConDiff{}{}{\ell - 1}$.
		Then the map
		\[
			\compo:
			\CcF{\UF}{\SY}{k + \ell + 1} \times \Omega \to \CcF{\UF}{\SY}{k}
			: (\gamma, T) \mapsto \gamma \circ \omega_T
		\]
		is $\ConDiff{}{}{\ell}$ with the derivative
		\[
			\tag{\ensuremath{\dagger}}\label{id:Ableitung_Kompo-gewAbb-LieGruppe}
			\dA{\compo}{(\gamma, S)}{(\gamma_1, S_1)}
			= - (\FAbl{\gamma} \circ \omega_{S} ) \MaMu \FAbl{\omega_{S }}%
					\eval\AblAct{\omega}( S^{-1} \cdot S_1 ) + \gamma_1 \circ \omega_{S}.
		\]
	\end{enumerate}
\end{prop}
\begin{proof}
	\refer{enum1:stetigkeit_Kompo_von_hinten}
	For $k < \infty$, this is proved by induction on $k$.

	$k = 0$:
	Let $\gamma, \eta \in \CcF{\UF}{\SY}{1}$, $T \in \Omega$ and $f\in \GewFunk$.
	Let $g\in \ExtWeights{\GewFunk}$ as in \refer{bed:Stetigkeit-Komposition-gewAbb_LieGr}.
	Given $\eps > 0$, we find a neighborhood $\widetilde{\Omega}$ of $T$
	and $\VF \in \neigh[\LieAlg{\G}]{0}$
	such that \refer{bed:Stetigkeit-Komposition-gewAbb_LieGr} is satisfied.
	Using \refer{id:Differenz-Wirkung_Gruppe_von_hinten-gewAbb},
	we calculate for $S \in \widetilde{\Omega}$,
	a \logbound{\VF} path $W : [0,1] \to \widetilde{\Omega}$ connecting $S$ and $T$,
	and $x\in\UF$ that
	\begin{align*}
		&\abs{f(x)}\, \norm{(\gamma \circ \omega_T)(x) - (\eta \circ \omega_S)(x)}
		\\
		\leq&
			\abs{f(x)}
			\left(
			\norm{
				((\gamma - \eta)\circ \omega_T)(x)
			}
			+
			\left\norm{
				\Rint{0}{1}{ \FAbl{\eta}(\omega_{W(t)}(x) ) \MaMu \FAbl{\omega_{W(t)}}(x)%
					\eval\AblAct{\omega}( \LLA{W}(t) )(x)
				}{t}
			\right}
			\right)
		\\
		\leq&
			\hn{\gamma - \eta}{f\circ\omega_T^{-1}}{0}
			+
			\Rint{0}{1}{
				\abs{f(x)}\,
				\Opnorm{\FAbl{\eta}(\omega_{W(t)}(x) )}
				\cdot
				\norm{\FAbl{\omega_{W(t)}}(x) \eval\AblAct{\omega}( \LLA{W}(t) )(x)}
			}{t}
		\\
		\leq&
			\hn{\gamma - \eta}{f\circ\omega_T^{-1}}{0}
			+
			\eps
			\Rint{0}{1}{ \abs{(g \circ \omega_{W(t)}) (x)} \, \Opnorm{\FAbl{\eta}(\omega_{W(t)}(x))}
				}{t}
		\\
		\leq&
			\hn{\gamma - \eta}{f\circ\omega_T^{-1}}{0}
			+
			\eps
			\hn{\eta}{g}{1}.
	\end{align*}
	The continuity at $(\gamma, \eta)$ follows from this estimate.

	$k \to k + 1$:
	By \refer{prop:topologische_Zerlegung_von_CFk} and the inductive hypothesis, we just need to check
	that the map
	\[
		\CcF{\UF}{\SY}{k + 2} \times \Omega \to \CcF{\UF}{\Lin{\SX}{\SY}}{k}
		:
		(\gamma, T) \mapsto \FAbl{(\gamma \circ \omega_T)}
	\]
	is well-defined and continuous.
	For $\gamma \in \CcF{\UF}{\SY}{k + 2}$ and $T\in\Omega$, we have
	\[
		\FAbl{(\gamma \circ \omega_T)}
		= ( \FAbl{\gamma} \circ \omega_T ) \MaMu \FAbl{\omega_T}.
	\]
	Hence by the inductive hypothesis and
	the continuity of \eqref{pktw-Kompo-lineare-Abb_gewAbb-und-LieG-(k-1)},
	the induction is finished.

	$k = \infty$:
	This is an easy consequence of the case $k < \infty$
	and \refer{cor:Topologie_von_CinfF}.

	\refer{enum1:diffbarkeit_Kompo_von_hinten}
	We prove this by induction on $\ell$.

	$\ell = 1$:
	Let $\gamma, \gamma_1 \in \CcF{\UF}{\SY}{k + \ell + 1}$,
	$S \in \Omega$ and $S_1 \in \Tang[S]{\Omega}$.
	Further, let $\Gamma : \,]\!-\!\delta, \delta[\, \to \Omega$ be a smooth curve
	with $\Gamma(0) = S$ and $\Gamma'(0) = S_1$.
	Then we calculate for a sufficiently small $t \neq 0$:
	\[
		\frac{1}{t} ( (\gamma + t \gamma_1) \circ \omega_{\Gamma(t)} - \gamma \circ \omega_S )
		= \frac{1}{t} (\gamma \circ w_{\Gamma(t)} - \gamma \circ \omega_S)
			+ \gamma_1 \circ \omega_{\Gamma(t)}.
	\]
	Using \refer{id:Differenz-Wirkung_Gruppe_von_hinten-gewAbb} we elaborate on the first summand:
	\begin{align*}
		\frac{1}{t} (\gamma \circ w_{\Gamma(t)} - \gamma \circ \omega_S)(x)
		=
		- \frac{1}{t} \Rint{0}{1}{ \FAbl{\gamma}(\omega_{ \Gamma(s t)}(x) ) \MaMu \FAbl{\omega_{\Gamma(s t) }}(x)%
					\eval\AblAct{\omega}(t \LLA{\Gamma}( s t) )(x)
				}{s}
		.
	\end{align*}
	Hence
	\begin{align*}
		\frac{1}{t} (\gamma \circ w_{\Gamma(t)} - \gamma \circ \omega_S)
		= - \Rint{0}{1}{ (\FAbl{\gamma} \circ \omega_{ \Gamma(s t)}) \MaMu \FAbl{\omega_{\Gamma(s t) }}%
					\eval\AblAct{\omega}(\LLA{\Gamma}( s t) )
				}{s};
	\end{align*}
	note that the integral on the right hand side exists by \refer{lem:Kriterium_Integrierbarkeit_in_CW}
	since the curve
	\[
		[0,1] \to \CcF{\UF}{\SY}{k}
		:
		s \mapsto (\FAbl{\gamma} \circ \omega_{ \Gamma(s t)}) \MaMu \FAbl{\omega_{\Gamma(s t) }}%
					\eval\AblAct{\omega}(\LLA{\Gamma}( s t))
	\]
	is well-defined and continuous by \refer{enum1:stetigkeit_Kompo_von_hinten}
	and the continuity of \eqref{pktw-Kompo-lineare-Abb_gewAbb-und-LieG-k}
	and \eqref{pktw-Auswerung-lineare-Abb-gewAbb_Vektorfelder_LieAlg}.
	Hence by \refer{prop:Stetigkeit_parameterab_Int},
	\[
		\lim_{t\to 0}
		\frac{1}{t} ( (\gamma + t \gamma_1) \circ \omega_{\Gamma(t)} - \gamma \circ \omega_S )
		= - (\FAbl{\gamma} \circ \omega_{S} ) \MaMu \FAbl{\omega_{S }}%
					\eval\AblAct{\omega}( S^{-1} \cdot S_1 ) + \gamma_1 \circ \omega_S,
	\]
	so the directional derivatives of $\compo$ exist,
	are of the form \eqref{id:Ableitung_Kompo-gewAbb-LieGruppe}
	and depend continuously on the directions
	by \refer{enum1:stetigkeit_Kompo_von_hinten}
	and the continuity of \eqref{pktw-Kompo-lineare-Abb_gewAbb-und-LieG-k}
	and \eqref{pktw-Auswerung-lineare-Abb-gewAbb_Vektorfelder_LieAlg}.

	$\ell \to \ell + 1$:
	Since \eqref{pktw-Kompo-lineare-Abb_gewAbb-und-LieG-k}
	and \eqref{pktw-Auswerung-lineare-Abb-gewAbb_Vektorfelder_LieAlg}
	are $\ConDiff{}{}{\ell}$ by assumption,
	we conclude from \eqref{id:Ableitung_Kompo-gewAbb-LieGruppe}
	and the inductive hypothesis that $\dA{\compo}{}{}$ is $\ConDiff{}{}{\ell}$,
	whence $\compo$ is $\ConDiff{}{}{\ell + 1}$.
\end{proof}
\section{Conclusion and Examples}
Finally, we prove a sufficient criterion
for the smoothness of the conjugation action of a Lie group $\G$ acting on $\SX$ and $\DiffWz$.
\begin{satz}\label{satz:Hinreichend_Konjugation_Liegruppe-DiffWz_glatt}
	Let $\SX$ be a Banach space, $\G$ a Lie group, $\omega : \G \times \SX \to \SX$ a smooth action
	and $\GewFunk \subseteq \cl{\R}^\SX$ with $1_\SX \in \GewFunk$. Assume that 
	$
		\set{f \circ \omega_T}{f \in \GewFunk, T \in \G} \subseteq \ExtWeights{\GewFunk}
	$
	(we defined $\ExtWeights{\GewFunk}$ in \refer{def:alle_Gewichte,die_gleichen_Raum_erzeugen}),
	$
		\set{\FAbl{\omega_T} }{ T \in \G } \subseteq \BC{\SX}{\Lin{\SX}{\SX}}{\infty},
	$
	the maps
	\[
		\label{Differential_Liegruppe_in_BCinf-Abb_glatt}
		\tag{\ensuremath{\dagger}}
		\FAbl{} : \G \to \BC{\SX}{\Lin{\SX}{\SX}}{\infty} : T \mapsto \FAbl{\omega_T}
	\]
	and \eqref{pktw-Auswerung-lineare-Abb-gewAbb_Vektorfelder_LieAlg} are well-defined and smooth
	and \refer{bed:Stetigkeit-Komposition-gewAbb_LieGr} is satisfied.
	Then the map
	\[
		\G \times \DiffWz \to \DiffWz : (T, \phi) \mapsto \omega_T \circ \phi \circ \omega_T^{-1}
	\]
	is well-defined and smooth.
	\index{semidirect product!of $\DiffWz$ and a Lie group acting on $\SX$}%
	\index{diffeomorphisms!groups of!semidirect product with|see{semidirect product}}%
\end{satz}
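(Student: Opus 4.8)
The strategy is to reduce the claim to the global statement \refer{enum1:Glatte_Wirkung_Lie-Gruppe_auf_DiffWz--global} of \refer{lem:Glatte_Wirkung_Lie-Gruppe_auf_DiffWz}, applied with $\Omega = \G$. According to that lemma, the assertion follows once we have shown that the two maps \eqref{Komposition-gewAbbinfty-Liegruppe} and \eqref{Auswertung-gewAbbinfty-Ableitung_Liegruppe} are well-defined on all of $\CcF{\SX}{\SX}{\infty} \times \G$ and smooth. Thus the entire proof consists in verifying the smoothness of these two auxiliary maps from the hypotheses at hand.

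The map \eqref{Auswertung-gewAbbinfty-Ableitung_Liegruppe}, $(\gamma, T) \mapsto \FAbl{\omega_T} \eval \gamma$, is the easier one. I would factor it as the composition of $(\gamma, T) \mapsto (\FAbl{\omega_T}, \gamma)$ with the bilinear evaluation map induced by $\eval : \Lin{\SX}{\SX} \times \SX \to \SX$. Since $T \mapsto \FAbl{\omega_T}$ is smooth into $\BC{\SX}{\Lin{\SX}{\SX}}{\infty}$ by hypothesis \eqref{Differential_Liegruppe_in_BCinf-Abb_glatt}, the first map is smooth. For the second, \refer{prop:multilineare_Abb_und_CF} (in the form of \refer{cor:Auswertung_linearer_abb_und_CF}) applies with the weight data $\cW_1 = \{1_\SX\}$, $\cW_2 = \GewFunk$ and $g_{f,1} = 1_\SX$, $g_{f,2} = f$, so that $\abs{f} \leq \abs{g_{f,1}}\abs{g_{f,2}}$ holds trivially; this yields a continuous, hence smooth, bilinear map $\BC{\SX}{\Lin{\SX}{\SX}}{\infty} \times \CcF{\SX}{\SX}{\infty} \to \CcF{\SX}{\SX}{\infty}$. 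Composing, \eqref{Auswertung-gewAbbinfty-Ableitung_Liegruppe} is smooth.

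For the map \eqref{Komposition-gewAbbinfty-Liegruppe}, $(\gamma, T) \mapsto \gamma \circ \omega_T$, I would invoke \refer{enum1:diffbarkeit_Kompo_von_hinten} of \refer{prop:Diffbarkeit-Komposition-gewAbb-Liegruppe} with $\UF = \SY = \SX$, $k = \infty$, $\Omega = \G$, for each $\ell \in \N^\ast$ in turn, so that $\ConDiff{}{}{\ell}$-regularity for every $\ell$ gives smoothness. Among the hypotheses of that proposition, \refer{bed:Stetigkeit-Komposition-gewAbb_LieGr} and the smoothness of \eqref{pktw-Auswerung-lineare-Abb-gewAbb_Vektorfelder_LieAlg} are assumed outright in the present theorem, and the condition $\GewFunk \circ \omega_\G^{-1} \subseteq \widetilde{\GewFunk}$ is exactly the assumption $\set{f \circ \omega_T}{f \in \GewFunk, T \in \G} \subseteq \widetilde{\GewFunk}$ (using $\omega_T^{-1} = \omega_{T^{-1}}$ and $\G = \G^{-1}$). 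The remaining ingredients are the maps \eqref{pktw-Kompo-lineare-Abb_gewAbb-und-LieG-(k-1)} and \eqref{pktw-Kompo-lineare-Abb_gewAbb-und-LieG-k}, $(\Gamma, T) \mapsto \Gamma \MaMu \FAbl{\omega_T}$: I would again factor these through $(\Gamma, T) \mapsto (\Gamma, \FAbl{\omega_T})$, smooth by \eqref{Differential_Liegruppe_in_BCinf-Abb_glatt} after restricting the target to $\BC{\SX}{\Lin{\SX}{\SX}}{m}$, followed by the $\mathcal{BC}$-variant of the composition-of-operators map of \refer{cor:Komposition_linearer_Abb_und_CF}, which is continuous bilinear and hence smooth; this renders them smooth, a fortiori $\ConDiff{}{}{\ell-1}$. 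Thus all hypotheses are met and \eqref{Komposition-gewAbbinfty-Liegruppe} is smooth.

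I expect the main obstacle to be bookkeeping rather than any deep difficulty: one must keep careful track of the three weight sets $\GewFunk$, $\{1_\SX\}$ and $\widetilde{\GewFunk}$ and of the distinct target spaces $\Lin{\SX}{\SZ}$, $\Lin{\SX}{\SX}$ of the composition-of-operators maps, and confirm that the smoothness of $T \mapsto \FAbl{\omega_T}$ into $\BC{\SX}{\Lin{\SX}{\SX}}{\infty}$ together with the continuity of the bilinear maps indeed feeds correctly into the all-orders hypotheses of \refer{prop:Diffbarkeit-Komposition-gewAbb-Liegruppe}. Once both auxiliary maps are smooth, \refer{enum1:Glatte_Wirkung_Lie-Gruppe_auf_DiffWz--global} of \refer{lem:Glatte_Wirkung_Lie-Gruppe_auf_DiffWz} yields that $\G \times \DiffWz \to \DiffWz$, $(T, \phi) \mapsto \omega_T \circ \phi \circ \omega_T^{-1}$, is well-defined and smooth, completing the proof.
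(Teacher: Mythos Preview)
Your proposal is correct and follows essentially the same route as the paper: verify the smoothness of \eqref{Auswertung-gewAbbinfty-Ableitung_Liegruppe} via \eqref{Differential_Liegruppe_in_BCinf-Abb_glatt} and \refer{cor:Auswertung_linearer_abb_und_CF}, verify the smoothness of \eqref{pktw-Kompo-lineare-Abb_gewAbb-und-LieG-(k-1)} and \eqref{pktw-Kompo-lineare-Abb_gewAbb-und-LieG-k} via \eqref{Differential_Liegruppe_in_BCinf-Abb_glatt} and \refer{cor:Komposition_linearer_Abb_und_CF}, feed these into \refer{prop:Diffbarkeit-Komposition-gewAbb-Liegruppe} to obtain smoothness of \eqref{Komposition-gewAbbinfty-Liegruppe}, and conclude with \refer{lem:Glatte_Wirkung_Lie-Gruppe_auf_DiffWz}~\refer{enum1:Glatte_Wirkung_Lie-Gruppe_auf_DiffWz--global}. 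Your write-up is in fact more explicit than the paper's in spelling out the weight bookkeeping and the identification $\GewFunk \circ \omega_\G^{-1} = \set{f \circ \omega_T}{f \in \GewFunk, T \in \G}$.
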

\begin{proof}
	Since \eqref{Differential_Liegruppe_in_BCinf-Abb_glatt} is well-defined and smooth,
	we can apply \refer{cor:Auswertung_linearer_abb_und_CF} to see that
	\eqref{Auswertung-gewAbbinfty-Ableitung_Liegruppe} is also well-defined and smooth.
	Similarly, using \refer{cor:Komposition_linearer_Abb_und_CF}, we see that \eqref{pktw-Kompo-lineare-Abb_gewAbb-und-LieG-(k-1)}
	and \eqref{pktw-Kompo-lineare-Abb_gewAbb-und-LieG-k}
	are well-defined and smooth.
	Hence \refer{prop:Diffbarkeit-Komposition-gewAbb-Liegruppe}
	shows that \eqref{Komposition-gewAbbinfty-Liegruppe} is smooth.
	The assertion follows from \refer{lem:Glatte_Wirkung_Lie-Gruppe_auf_DiffWz}.
\end{proof}
Finally, we give a positive and a negative example.
The first example shows that we can form the semidirect product
$\Diff{\SX}{}{\mathcal{S}}_0 \rtimes \GL{\SX}$ with respect to the conjugation.
\begin{lem}\label{lem:Multiplication_LineareAbb_gewAbb}
	Let $\SX$, $\SY$ and $\SZ$ be normed spaces,
	$\UF \sub \SX$ an open nonempty set,
	$\GewFunk \sub \cl{\R}^\UF$ nonempty
	such that for each $f \in \GewFunk$, $f \norm{\cdot} \in \ExtWeights{\GewFunk}$.
	Further, let $k \in \cl{\N}$ and $b : \SY \times \SX \to \SZ$
	a continuous bilinear map.
	Then
	\[
		\CcF{\UF}{\SY}{k} \times \Lin{\SX}{\SX} \to \CcF{\UF}{\SZ}{k}
		: (\gamma, T) \mapsto b \circ (\gamma, T)
		\tag{\ensuremath{\dagger}}\label{bilineareWirkung_gewAbb_lineareOp}
	\]
	is well-defined and smooth.
\end{lem}
\begin{proof}
	The assertion holds for $k = \infty$ if it holds for all $k \in \N$.
	For $k \neq \infty$, the proof is by induction on $k$.

	$k=0$:
	Since \eqref{bilineareWirkung_gewAbb_lineareOp}
	is bilinear, it is smooth iff it is continuous in $0$.
	So we only prove that.
	Let $f \in\GewFunk$, $\gamma \in \CcF{\UF}{\SY}{k}$, $T \in \Lin{\SX}{\SX}$
	and $x \in \UF$.
	Then
	\[
		\abs{f(x)}\,\norm{b(\gamma(x), T(x))}
		\leq \Opnorm{b}\abs{f(x)}\,\norm{x}\,\norm{\gamma(x)}\,\Opnorm{T}
		\leq \Opnorm{b} \hn{\gamma}{f \norm{\cdot}}{0} \Opnorm{T}.
	\]
	We conclude that $b \circ (\gamma, T) \in \CcF{\UF}{\SZ}{0}$
	and that \eqref{bilineareWirkung_gewAbb_lineareOp} is continuous in $0$.

	$k \to k+1$:
	By \refer{lem:Ableitung_Kompo_mit_multilinearer_Abb},
	we have for $\gamma \in \CcF{\UF}{\SY}{k}$ and $T \in \Lin{\SX}{\SX}$ that
	\[
		\FAbl{(b \circ (\gamma, T))}
		= b^{(1)} \circ (\FAbl{\gamma}, T) + b^{(2)} \circ (\gamma, \FAbl{T}).
	\]
	Since $\FAbl{T} \in \BC{\SX}{\Lin{\SX}{\SX}}{\infty}$,
	by \refer{prop:multilineare_Abb_und_CF}
	$b^{(2)} \circ (\gamma, \FAbl{T}) \in \CcF{\UF}{\Lin{\SX}{\SZ} }{k + 1}$
	and the map $(\gamma, T) \mapsto b^{(2)} \circ (\gamma, \FAbl{T})$ is smooth
	(here we use that $\Lin{\SX}{\SX} \to \BC{\SX}{\Lin{\SX}{\SX}}{\infty} : T \mapsto \FAbl{T}$ is smooth).
	By the induction hypothesis, the same holds for
	$(\gamma, T) \mapsto b^{(1)} \circ (\FAbl{\gamma}, T)$.
	So using \refer{prop:topologische_Zerlegung_von_CFk}, the proof is finished.
\end{proof}
\begin{lem}\label{lem:Berechnungen_Wirkung_GL_Schwartzraum}
	Let $\SX$ be a Banach space and $\G \ndef \GL{\SX}$.
	We define the action
	\[
		\omega : \G \times \SX \to \SX : (g, x) \mapsto g(x)
	\]
	and set $\GewFunk \ndef \set{x \mapsto \norm{x}^n}{n\in\N}$.
	Then
	\begin{enumerate}
		\item\label{enum1:Berechnungen_Wirkung_GL_SchwartzraumA}
		The map \eqref{pktw-Auswerung-lineare-Abb-gewAbb_Vektorfelder_LieAlg} is smooth.

		\item\label{enum1:Berechnungen_Wirkung_GL_SchwartzraumB}
		The \refer{bed:Stetigkeit-Komposition-gewAbb_LieGr} is satisfied.
	\end{enumerate}
\end{lem}
\begin{proof}
	We easily see that $\AblAct{\omega} = -\id{\Lin{\SX}{\SX}}$ (since $\LieAlg{\G} = \Lin{\SX}{\SX}$),
	and for each $S \in \G$ and $x\in\SX$, $\omega_S = S$ and $\FAbl{S}(x) = S$.
	For \refer{enum1:Berechnungen_Wirkung_GL_SchwartzraumA}, we give two different proofs.
	The first one uses \refer{lem:Multiplication_LineareAbb_gewAbb},
	the second uses a topology on the multiplier space $\Lin{\SX}{\SX}$.
	
	\refer{enum1:Berechnungen_Wirkung_GL_SchwartzraumA}
	\emph{First variant:}
	Let $\SY$ be another normed space.
	Since for $\Gamma \in \CcF{\SX}{\Lin{\SX}{\SY}}{k}$ and $S \in \LieAlg{\G}$,
	$\Gamma \cdot \AblAct{\omega}(S) = \evTwo_{\Lin{\SX}{\SY}} \circ (\Gamma, -S)$
	and $\evTwo_{\Lin{\SX}{\SY}}$ is bilinear and continuous,
	this is a consequence of \refer{lem:Multiplication_LineareAbb_gewAbb}.
	
	\emph{Second variant:}
	Obviously $\AblAct{\omega}(\LieAlg{\G}) = \Lin{\SX}{\SX}$
	consists of \mur{}s.
	Further, \refer{bed:Bedingung_Stetigkeit_bilineare_OperationMultiplier}
	is satisfied (where $\mathcal{T} = \Lin{\SX}{\SX}$ and the family of \mur{}s is given by $\id{\Lin{\SX}{\SX}}$)
	since for $A, B \in \Lin{\SX}{\SX}$ and $x \in \SX$
	\[
		\norm{(A - B)(x)} \leq \Opnorm{A - B} \norm{x}
	\]
	and
	\[
		\norm{\FAbl{(A - B)}(x)} = \Opnorm{A - B}
	\]
	and $\norm{\FAbl[k]{(A - B)}} = \norm{0} = 0$ for $k > 1$.
	Hence we can apply \refer{lem:Kriterium_simultane_Stetigkeit_bilineare_Operation_Multiplier-gewAbb}
	to see that \eqref{pktw-Auswerung-lineare-Abb-gewAbb_Vektorfelder_LieAlg} is smooth.

	\refer{enum1:Berechnungen_Wirkung_GL_SchwartzraumB}
	Let $f = \norm{\cdot}^n \in \GewFunk$, $T\in\G$ and $\eps > 0$.
	There exists an open convex $\UF \in \neigh[\G]{T}$ such that for all $S \in \UF$,
	\begin{itemize}
		\item
		$\Opnorm{S - T} < \eps$

		\item
		$\Opnorm{S^{-1}} < 2 \Opnorm{T^{-1}}$

		\item
		$\Opnorm{S} < 2 \Opnorm{T}$.
	\end{itemize}
	Then the path
	$W : [0,1] \to \G : t \mapsto t T + (1 - t) S$ has the left logarithmic derivative
	$\LLA{W}(t) = W(t)^{-1} (T - S)$, hence $\UF$ is
	\logbound{\clBall[\Lin{\SX}{\SX}]{0}{2 \Opnorm{T} \eps}}.
	We calculate for $x \in \SX$, $S \in \UF$ and $A \in \clBall[\Lin{\SX}{\SX}]{0}{2 \Opnorm{T} \eps}$ that
	\begin{multline*}
		\abs{f(x)}\, \norm{\FAbl{\omega_S}(x) \eval \AblAct{\omega}(A)(x)}
		= \norm{x}^n \norm{(S \circ A)(x)}
		\leq \Opnorm{S} \Opnorm{A} \norm{x}^{n+1}
		\\
		\leq 4 \Opnorm{T}^2 \eps \norm{S^{-1} S x}^{n+1}
		\leq \eps 2^{n + 3} \Opnorm{T}^2 \Opnorm{T^{-1}}^{n + 1} \norm{S x}^{n+1}.
	\end{multline*}
	Since $x \mapsto 2^{n + 3} \Opnorm{T}^2 \Opnorm{T^{-1}}^{n + 1} \norm{x}^{n+1} \in \ExtWeights{\GewFunk}$,
	we see that \refer{bed:Stetigkeit-Komposition-gewAbb_LieGr}
	is satisfied.
\end{proof}
\begin{beisp}\label{beisp:GL_operiert_glatt_auf_DiffS}
	Let $\SX$, $\G$, $\omega$ and $\GewFunk$ be as in \refer{lem:Berechnungen_Wirkung_GL_Schwartzraum}.
	For each $S \in \G$ and $x\in\SX$, $\FAbl{S}(x) = S$. Hence the map
	\[
		\FAbl{} : \G \to \BC{\SX}{\Lin{\SX}{\SX}}{\infty} : S \mapsto \FAbl{S}
	\]
	is smooth.
	By \refer{lem:Berechnungen_Wirkung_GL_Schwartzraum}, the assumptions of \refer{satz:Hinreichend_Konjugation_Liegruppe-DiffWz_glatt} hold
	(since $\GewFunk \circ \G \subseteq \ExtWeights{\GewFunk}$ is obviously true),
	hence the map
	\[
		\GL{\SX} \times \DiffWz \to \DiffWz : (T, \phi) \mapsto T \circ \phi \circ T^{-1}
	\]
	is smooth.
	So using \refer{lem:Semidirektes_Produkt_Liegruppen-glatteWirkung-ist_Liegruppe},
	we can form the semidirect product
	\[
		\DiffWz \rtimes \GL{\SX}
	\]
	with respect to the inner automorphisms on $\DiffWz$ that are induced by $\GL{\SX}$.
\end{beisp}
Finally, we show that the the conjugation of $\GL{\R}$ on $\DiffGeWz{\sset{1_\R}}$,
if it was defined, could not be continuous.
\begin{beisp}\label{beisp:GL_operiert_NICHT_auf_DiffBC}
	For each $n\in\N$, $\sin((1 + \frac{1}{2 n}) n \pi) = \pm 1$,
	but $\sin(n \pi) = 0$.
	Hence
	\[
		\left\hn{ \sin(t_n \cdot) -  \sin\right}{1_\R}{0} \geq 1
	\]
	for each $n \in \N$, where $t_n \ndef 1 + \frac{1}{2 n}$.
	We see with \refer{lem:Offenheit_H_UV} that
	\[
		\frac{1}{2} \sin \in \karte{\sset{1_\R}}^{-1}(\Diff{\R}{}{\sset{1_\R}}),
	\]
	and obviously $\karte{\sset{1_\R}}(\frac{1}{2} \sin) \in \DiffGeWz{\sset{1_\R}}$.
	If the conjugation of $\GL{\R}$ on $\DiffGeWz{\sset{1_\R}}$ was defined and continuous,
	then the map
	\[
		\R\setminus\sset{0} \times \BC{\R}{\R}{\infty} \to \BC{\R}{\R}{\infty}
		: (t, \gamma) \mapsto t^{-1} \gamma(t \cdot)
	\]
	would be continuous in $(1, \frac{1}{2} \sin)$.
	But it is not since for $ t > 0$ and $x \in \R$
	\begin{multline*}
		 \norm{t^{-1} \sin(t x) - \sin(x)}
		\geq
		t^{-1} \norm{\sin(t x) - \sin(x)} - \norm{(t^{-1} -  1) \sin(x)}
		\\
		\geq
		t^{-1} \norm{\sin(t x) - \sin(x)} - \abs{t^{-1} -  1};
	\end{multline*}
	hence we can calculate that for sufficiently large $n$,
	\[
		\left\hn{ \tfrac{1}{2} t_n^{-1} \sin(t_n \cdot) - \tfrac{1}{2} \sin\right}{1_\R}{0}
		\geq \tfrac{1}{4}.
	\]
\end{beisp}
\chapter{Lie group structures on weighted mapping groups}
In this chapter we will use the weighted function spaces
discussed in \refer{chp:Modellraeume} for the construction
of locally convex Lie groups, the \emph{weighted mapping groups}.
These groups arise as subgroups of $\G^{\UF}$,
where $\G$ is a suitable Lie group and $\UF$ is an open nonempty subset of a normed space.
First, we give some definitions that are used throughout this chapter.
\setcounter{COUNT}{0}
\begin{defi}\label{defi:Gruppenoperationen_Abbildungsgruppen}
	Let $\UF$ be a nonempty set and $\G$ be a group with the multiplication map $m_\G$
	and the inversion map $I_\G$.
	Then $\G^\UF$ can be endowed with a group structure:
	The multiplication is given by
	\[
		((g_u)_{u \in \UF}, (h_u)_{u \in \UF}) \mapsto (m_\G(g_u, h_u))_{u \in \UF} = m_\G \circ ((g_u)_{u \in \UF}, (h_u)_{u \in \UF})
	\]
	and the inversion by
	\[
		(g_u)_{u \in \UF} \mapsto (I_\G(g_u))_{u \in \UF} = I_\G \circ (g_u)_{u \in \UF}.
	\]
	Further we call a set $A \subseteq \G$ \emph{symmetric} if
	\[
		A = I_\G(A).
	\]
	Inductively, for $n \in \N$ with $n \geq 1$ we define
	\[
		A^{n + 1} \ndef m_\G(A^{n} \times A),
	\]
	where $A^{1} \ndef A$.
\end{defi}

\begin{defi}
	Let $\G$ be a Lie group and $\phi : \VF \to \LieAlg{\G}$ a chart.
	We call the pair $(\phi,\VF)$ \emph{centered around $\one$} or just \emph{centered}
	if $\VF \subseteq \G$ is an open identity neighborhood and $\phi(\one)= 0$.
	\index{centered chart}%
\end{defi}

\section{Weighted maps into Banach Lie groups}
\label{sec:gewAbbildungsgruppen_Banach-Lie}
In this section, we discuss certain subgroups of $\G^\UF$, where $\G$ is a Banach Lie group
and $\UF$ an open subset of a normed space $\SX$.
We construct a subgroup $\CcF{\UF}{\G}{k}$ consisting of \emph{weighted mappings} that can be turned into a (connected) Lie group.
Its modelling space is $\CcF{\UF}{\LieAlg{\G}}{k}$, where $k \in \cl{\N}$
and $\GewFunk$ is a set of weights on $\UF$ containing $1_\UF$.
Later we prove that these groups are regular Lie groups.
Finally, we discuss the case when $\UF = \SX$. Then $\DiffW$ acts on $\CcF{\SX}{\G}{\infty}$,
and this we can turn the semidirect product of these groups into a Lie group.

\subsection{Construction of the Lie group}
We construct the Lie group from local data using \refer{lem:Erzeugung_von_Liegruppen_aus_lokalen}.
For a chart $(\phi, \VF)$ of $\G$, we can endow the set $\phi^{-1} \circ \CcFo{\UF}{\phi(\VF)}{k} \sub \G^\UF$
with the manifold structure that turns the superposition operator $\phi_\ast$ into a chart.
We need to check whether the local multiplication and inversion on this set
are smooth with respect to this manifold structure.
The group operations on $\G^\UF$ arise as the composition of
the corresponding operations on $\G$ with the mappings (see \refer{defi:Gruppenoperationen_Abbildungsgruppen}).
Since the group operations of Banach Lie groups are analytic,
we will use the results of \refer{susec:Composing_weighted_functions_with_analytic_maps} as our main tools.
The use of this tools allows to construct $\CcF{\UF}{\G}{k}$
when $\G$ is an analytic Lie group modelled on an arbitrary normed space.

\begin{bem}
	We call a Lie group $\G$ \emph{normed} if $\LieAlg{\G}$ is a normable space.
	A \emph{normed analytic} Lie group is a normed Lie group which is an analytic Lie group.
\end{bem}

\paragraph{Local multiplication}
The treatment of the group multiplication is a simple application of \refer{prop:Operation_analytischer_Abb_auf_CWk}.
\begin{lem}\label{lem:Multiplikation_von_gewichteten_Abbildungsraumen}
	Let $\SX$ be a normed space, $\UF \subseteq \SX$ an open nonempty subset,
	$\GewFunk \subseteq \cl{\R}^\UF$ with $1_\UF \in \GewFunk$, $\ell \in \cl{\N}$,
	$\G$ an normed analytic Lie group with the group multiplication $m_\G$
	and $(\phi, \VF)$ a centered chart of $\G$.
	Then there exists an open identity neighborhood $\WF \subseteq \VF$ such that the map
	\[
		\CcFo{\UF}{\phi(\WF)}{\ell} \times \CcFo{\UF}{\phi(\WF)}{\ell} \to \CcFo{\UF}{\phi(\VF)}{\ell}
		: (\gamma, \eta) \mapsto \phi \circ m_\G  \circ (\phi^{-1} \circ \gamma, \phi^{-1} \circ \eta)
		\tag{\ensuremath{\dagger}}\label{lokale_Gruppen-Multiplikation_in_Koordinaten}
	\]
	 is defined and analytic.
\end{lem}
\begin{proof}
	By \refer{lem:gewichtete,verschwindende_Abb_Produktisomorphie-lokalkonvex}, the map \eqref{lokale_Gruppen-Multiplikation_in_Koordinaten} is defined and analytic
	iff there exists an open identity neighborhood $\WF \subseteq \G$ such that
	\[
		\Hom{(\phi \circ m_\G \circ (\phi^{-1} \times \phi^{-1}) )}{\cW}{}
		: \CcFo{\UF}{\phi(\WF) \times \phi(\WF)}{\ell} \to \CcFo{\UF}{\phi(\VF)}{\ell}
	\]
	is so.
	There exists an open bounded zero neighborhood $\widetilde{\WF_L} \subseteq \LieAlg{\G}$ such that
	$\widetilde{\WF_L} + \widetilde{\WF_L} \subseteq \phi(\VF)$.
	By the continuity of the multiplication $m_\G$
	there exists an open $\one$-neighborhood $\WF$ with
	$m_\G(\WF \times \WF) \subseteq \phi^{-1}(\widetilde{\WF_L})$.
	We may assume w.l.o.g. that $\phi(\WF)$ is star-shaped with center~$0$. Then
	\[
		(\phi \circ m_\G \circ (\phi^{-1} \times \phi^{-1}))(\phi(\WF) \times \phi(\WF))
		\subseteq \widetilde{\WF_L}.
	\]
	Further the restriction of $\phi \circ m_\G \circ (\phi^{-1} \times \phi^{-1})$ to $\phi(\WF) \times \phi(\WF)$ is analytic,
	takes $(0,0)$ to $0$ and has bounded image, since $\phi$ is centered and $\widetilde{\WF_L} $ is bounded.
	In the real case, using \refer{lem:Verkleinerung_reell_analytischer_Abb_haben_gute_Komplexifizierung}
	we can choose $\phi(\WF)$ sufficiently small such that
	the restriction of $\phi \circ m_\G \circ (\phi^{-1} \times \phi^{-1})$
	to $\phi(\WF)$ has a good complexification.
	Hence we can apply \refer{prop:Operation_analytischer_Abb_auf_CWk} to see that
	\[
		(\phi \circ m_\G \circ (\phi^{-1} \times \phi^{-1}))
		\circ \CcFo{\UF}{\phi(\WF) \times \phi(\WF)}{\ell}
		\in \CcF{\UF}{\widetilde{\WF_L}}{\ell}
	\]
	and that the map $\Hom{(\phi \circ m_\G \circ (\phi^{-1} \times \phi^{-1})) }{\cW}{}$
	is analytic.
	But
	\[
		\CcF{\UF}{\widetilde{\WF_L}}{\ell}
		\subseteq \CcFo{\UF}{\phi(\VF)}{\ell}
	\]
	by the definition of $\widetilde{\WF_L}$, and this gives the assertion.
\end{proof}

\paragraph{Local inversion}
The discussion of the inversion is more delicate.
For a short explanation, let $(\phi, \widetilde{\VF})$
be a chart for $\G$, $\VF \sub \widetilde{\VF}$ a symmetric open identity neighborhood
and $I_\G$ the inversion of $\G$.
Then the superposition of $\phi \circ I_\G \circ \phi^{-1}$ described in \refer{prop:Operation_analytischer_Abb_auf_CWk}
does not necessarily map $\CcFo{\UF}{\phi(\VF)}{\ell}$ into itself;
hence we have to work to construct symmetrical open subsets.

\begin{lem}\label{lem:kleine_symmetrische_offene_Mengen_in_lokalen_Top_Gruppen}
	Let $\G$ be a group, $\UF \subseteq \G$ a topological space and
	$\VF \subseteq \UF$ a symmetric subset with $\one \in \interior{\VF}$
	such that the inversion $I_\G : \VF \to \VF$ is continuous.
	Then
	\[
		\interior{\VF} \cap I_\G(\interior{\VF})
	\]
	is a symmetric set that is open in $\UF$ and contains $\one$.
\end{lem}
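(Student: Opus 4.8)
The plan is to exploit that the inversion, restricted to $\VF$, is an involutive homeomorphism of the subspace $\VF$, and then to dispatch the three assertions separately, with only the openness claim requiring real topological care.

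First I would record the basic structural fact. Since $\VF$ is symmetric, $I_\G$ restricts to a map $I_\G : \VF \to \VF$; this restriction is continuous by hypothesis, and it is its own inverse because $I_\G \circ I_\G = \id_\G$. Hence $I_\G : \VF \to \VF$ is a homeomorphism of the subspace $\VF$ of the topological space $\UF$, and in particular it maps subsets of $\VF$ that are open in $\VF$ to subsets that are again open in $\VF$.

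For the openness assertion, write $\WF \ndef \interior{\VF} \cap I_\G(\interior{\VF})$. The set $\interior{\VF}$ is open in $\UF$, hence a fortiori open in the subspace $\VF$; by the homeomorphism property its image $I_\G(\interior{\VF})$ is open in $\VF$, so $I_\G(\interior{\VF}) = \VF \cap O$ for some $O$ open in $\UF$. Intersecting with $\interior{\VF} \subseteq \VF$ then yields $\WF = \interior{\VF} \cap O$, which is open in $\UF$ as an intersection of two $\UF$-open sets. This passage from \emph{open in $\VF$} to \emph{open in $\UF$}, effected by intersecting with the $\UF$-open set $\interior{\VF}$, is the one step I expect to demand genuine attention; the rest is formal.

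The remaining two claims are then immediate. For symmetry, injectivity of $I_\G$ lets it distribute over the intersection, and the involution property gives
\[
    I_\G(\WF) = I_\G(\interior{\VF}) \cap I_\G(I_\G(\interior{\VF})) = I_\G(\interior{\VF}) \cap \interior{\VF} = \WF .
\]
Finally, $\one \in \WF$ because $\one \in \interior{\VF}$ by hypothesis and $I_\G(\one) = \one$, whence $\one \in I_\G(\interior{\VF})$ as well.
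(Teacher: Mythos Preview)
Your proof is correct and follows essentially the same route as the paper's: establish that $I_\G : \VF \to \VF$ is an involutive homeomorphism, deduce that $I_\G(\interior{\VF})$ is open in $\VF$, and then observe that intersecting with the $\UF$-open set $\interior{\VF}$ yields a $\UF$-open set; the symmetry and $\one \in \WF$ arguments are identical. Your treatment of the openness step is in fact slightly more explicit than the paper's, which simply asserts that $\WF$ is open in $\interior{\VF}$ and hence in $\UF$.
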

\begin{proof}
	Let $\WF \ndef \interior{\VF} \cap I_\G(\interior{\VF})$. Then $\one\in \WF$, and since
	\[
		\WF^{-1} = I_\G(\WF) = I_\G(\interior{\VF} \cap I_\G(\interior{\VF}))
		= I_\G(\interior{\VF}) \cap I_\G(I_\G(\interior{\VF}))
		= I_\G(\interior{\VF}) \cap \interior{\VF}
		= \WF ,
	\]
	it is a symmetric set. Since $I_\G$ is a homeomorphism,
	$I_\G(\interior{\VF})$ is an open subset of $\VF$.
	Hence $\WF = I_\G(\interior{\VF}) \cap \interior{\VF}$
	is an open subset of $\interior{\VF}$ and hence of $\UF$.
\end{proof}

\begin{lem}\label{lem:Inversion_auf_gewichteten_Abbildungsraumen}
	Let $\SX$ be a normed space, $\UF \subseteq \SX$ an open nonempty subset,
	$\GewFunk \subseteq \cl{\R}^\UF$ with $1_\UF \in \GewFunk$, $\ell \in \cl{\N}$,
	$\G$ an normed analytic Lie group with the group inversion $I_\G$, $(\phi, \VF)$ a centered chart of $\G$
	such that $\phi(\VF)$ is bounded and $\VF$ is symmetric.
	Then the following statements hold:
	\begin{enumerate}
		\item\label{enum1:Lokale_Inversion_analytisch_und_wirkt_analytisch_auf_Funktionenraeumen}
		The map
		\[
			I_L \ndef \phi \circ I_\G \circ \phi^{-1} : \phi(\VF) \to \phi(\VF)
		\]
		is an analytic bijective involution.
		Hence for any open and star-shaped set $\WF \subseteq \phi(\VF)$ with center~$0$,
		the map
		\[
			\CcFo{\UF}{\WF}{\ell} \to \CcF{\UF}{\phi(\VF)}{\ell} : \gamma \mapsto I_L \circ \gamma
		\]
		is analytic, assuming in the real case that $\rest{I_L}{\WF}$ has a good complexification.

		\item\label{enum1:Konstruktion_von_symmetrischen_Mengen_in_Abbildungsgruppen}
		Let $\Omega \subseteq \CcFo{\UF}{\phi(\VF)}{\ell}$. Then
		$
			\phi^{-1} \circ (\Omega \cap I_L \circ \Omega)
		$
		is a symmetric subset of $\G^\UF$.

		\item\label{enum1:Inneres_Bild_Inversion-mapgrps_nichtleer}
		For any open zero neighborhood $\widetilde{\WF} \subseteq \phi(\VF)$ there
		exists an open convex zero neighborhood $\WF \sub \widetilde{\WF} $ such that
		\[
			\CcFo{\UF}{\WF}{\ell} \subseteq
			\CcFo{\UF}{\widetilde{\WF}}{\ell} \cap I_L \circ \CcFo{\UF}{\widetilde{\WF}}{\ell}.
		\]

		\item\label{enum1:Inversion-mapgrps_lokal_glatt}
		There exists an open convex zero neighborhood $\WF \subseteq \phi(\VF)$
		and a zero neighborhood $C_\GewFunk^\ell \subseteq \CcFo{\UF}{\phi(\VF)}{\ell}$
		such that
		\[
			\CcFo{\UF}{\WF}{\ell} \subseteq \interior{(C_\GewFunk^\ell)} \cap I_L \circ \interior{(C_\GewFunk^\ell)},
		\]
		$
			\phi^{-1} \circ C_\GewFunk^\ell
		$
		is symmetric in $\G^\UF$, the map
		\[
			C_\GewFunk^\ell \to C_\GewFunk^\ell : \gamma \mapsto I_L \circ \gamma
		\]
		is continuous and its restriction to $\interior{(C_\GewFunk^\ell)}$ is analytic.
		The set $\WF$ can be chosen independently of $\ell$ and $\GewFunk$.
	\end{enumerate}
\end{lem}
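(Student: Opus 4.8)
The plan is to establish (a)--(c) first and then assemble them into the construction required by (d), which is the substantive part.

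\emph{Part (a).} First I would note that $I_L=\phi\circ I_\G\circ\phi^{-1}$ is $\K$-analytic, being a composition of the $\K$-analytic inversion $I_\G$ with the analytic chart maps; it is a bijection of $\phi(\VF)$ onto itself because $\VF$ is symmetric, i.e.\ $I_\G(\VF)=\VF$; and it is an involution since $I_\G\circ I_\G=\id_\G$ gives $I_L\circ I_L=\phi\circ I_\G\circ I_\G\circ\phi^{-1}=\id_{\phi(\VF)}$. As $\phi$ is centered, $I_L(0)=\phi(I_\G(\one))=0$. The asserted analyticity of $\gamma\mapsto I_L\circ\gamma$ on a star-shaped $\WF$ is then exactly \refer{prop:Operation_analytischer_Abb_auf_CWk} applied to $\Phi=\rest{I_L}{\WF}$: in the complex case condition \refer{bed:Abb_in_BC^0} is automatic because $I_L(\WF)\subseteq\phi(\VF)$ is bounded, so $I_L$ is bounded on every subset of $\WF$, and in the real case the good complexification is supplied by hypothesis.

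\emph{Parts (b) and (c).} For (b), I would compute the inverse of $\phi^{-1}\circ\gamma$ in $\G^\UF$, namely $I_\G\circ\phi^{-1}\circ\gamma=\phi^{-1}\circ I_L\circ\gamma$, and check that $I_L\circ\gamma$ lands back in $\Omega\cap I_L\circ\Omega$: writing $\gamma=I_L\circ\eta$ with $\eta\in\Omega$ and using the involution gives $I_L\circ\gamma=\eta\in\Omega$ while $I_L\circ\gamma\in I_L\circ\Omega$ trivially. Hence the inverse again lies in $\phi^{-1}\circ(\Omega\cap I_L\circ\Omega)$, so this set is symmetric. For (c), since membership in $\CcFo{\UF}{\WF}{\ell}$ forces $\gamma(\UF)\subseteq\WF$, it suffices to shrink to a small ball. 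Choose $\rho>0$ with $\Ball[\LieAlg\G]{0}{2\rho}\subseteq\widetilde\WF$, and then, by continuity of $I_L$ at $0$ together with $I_L(0)=0$ and \refer{lem:Verkleinerung_reell_analytischer_Abb_haben_gute_Komplexifizierung}, a radius $\delta>0$ with $\Ball[\LieAlg\G]{0}{\delta}\subseteq\widetilde\WF$, $I_L(\Ball[\LieAlg\G]{0}{\delta})\subseteq\Ball[\LieAlg\G]{0}{\rho}$, and $\rest{I_L}{\Ball[\LieAlg\G]{0}{\delta}}$ admitting a good complexification. Setting $\WF:=\Ball[\LieAlg\G]{0}{\delta}$, the inclusion $\WF\subseteq\widetilde\WF$ gives $\CcFo{\UF}{\WF}{\ell}\subseteq\CcFo{\UF}{\widetilde\WF}{\ell}$, and for $\gamma\in\CcFo{\UF}{\WF}{\ell}$ one has $(I_L\circ\gamma)(\UF)\subseteq\Ball[\LieAlg\G]{0}{\rho}$, whence $(I_L\circ\gamma)(\UF)+\Ball[\LieAlg\G]{0}{\rho}\subseteq\Ball[\LieAlg\G]{0}{2\rho}\subseteq\widetilde\WF$; with (a) ensuring $I_L\circ\gamma\in\CcF{\UF}{\LieAlg\G}{\ell}$, this yields $I_L\circ\gamma\in\CcFo{\UF}{\widetilde\WF}{\ell}$, i.e.\ $\gamma\in I_L\circ\CcFo{\UF}{\widetilde\WF}{\ell}$ by the involution. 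Crucially $\WF$ is built solely from $I_L$ and $\widetilde\WF$, hence is independent of $\ell$ and $\GewFunk$.

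\emph{Part (d) and the main difficulty.} The idea is to produce an open symmetric set on which the induced inversion is a genuine involution. I would first fix a bounded convex open zero neighborhood $\widetilde\WF\subseteq\phi(\VF)$, chosen inside $\LieAlg\G$ alone (hence independent of $\ell,\GewFunk$) and small enough, via \refer{lem:Verkleinerung_reell_analytischer_Abb_haben_gute_Komplexifizierung}, that $\rest{I_L}{\widetilde\WF}$ has a good complexification. Then $\Omega:=\CcFo{\UF}{\widetilde\WF}{\ell}$ is open in $\CcF{\UF}{\LieAlg\G}{\ell}$ because $1_\UF\in\GewFunk$, and by (a) the map $\gamma\mapsto I_L\circ\gamma$ is analytic on $\Omega$. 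I would then set
\[
	C_\GewFunk^\ell:=\Omega\cap(I_L\circ\Omega),
\]
which is open (so that $\interior{(C_\GewFunk^\ell)}=C_\GewFunk^\ell$), contains $0$, and, by the argument of (b), is stable under $\gamma\mapsto I_L\circ\gamma$; thus this map is a continuous, indeed analytic, involution of $C_\GewFunk^\ell$ with $I_L\circ C_\GewFunk^\ell=C_\GewFunk^\ell$, and $\phi^{-1}\circ C_\GewFunk^\ell$ is symmetric by (b). Feeding this $\widetilde\WF$ into (c) produces a convex open zero neighborhood $\WF\subseteq\widetilde\WF$, independent of $\ell$ and $\GewFunk$, with
\[
	\CcFo{\UF}{\WF}{\ell}\subseteq\Omega\cap(I_L\circ\Omega)=C_\GewFunk^\ell=\interior{(C_\GewFunk^\ell)}\cap\bigl(I_L\circ\interior{(C_\GewFunk^\ell)}\bigr),
\]
which is the required containment. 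The points demanding the most care are verifying that $C_\GewFunk^\ell$ is honestly stable under the induced inversion---so that the map is a self-map and the symmetry in (b) holds exactly---and checking that every geometric choice ($\widetilde\WF$, $\WF$, and the radii $\rho,\delta$) lives entirely in $\LieAlg\G$, which is what makes $\WF$ independent of $\ell$ and $\GewFunk$.
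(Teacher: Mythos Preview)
Your argument is correct. Parts (a)--(c) match the paper's proof essentially verbatim (the paper uses a generic open set $H$ with $H+H\subseteq\widetilde\WF$ in place of your concrete balls, but the logic is identical).

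For part (d) you take a genuinely shorter route than the paper. The paper builds a chain $\WF_1\subseteq\WF_2\subseteq\WF_3$ of convex zero neighborhoods, applies (c) \emph{twice}, sets $C_\GewFunk^\ell:=\CcFo{\UF}{\WF_3}{\ell}\cap I_L\circ\CcFo{\UF}{\WF_3}{\ell}$, and treats $C_\GewFunk^\ell$ merely as a zero neighborhood, carefully distinguishing it from its interior throughout. You instead observe that $C_\GewFunk^\ell=\Omega\cap(I_L\circ\Omega)$ is already \emph{open}, which collapses the chain to a single application of (c) and makes $\interior{(C_\GewFunk^\ell)}=C_\GewFunk^\ell$ trivially. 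This observation is correct---using the involution, $\Omega\cap(I_L\circ\Omega)=\{\gamma\in\Omega:I_L\circ\gamma\in\Omega\}$ is the preimage of the open set $\Omega$ under the continuous map $\gamma\mapsto I_L\circ\gamma$ on $\Omega$ furnished by (a)---but you should state this one-line justification explicitly, since it is the hinge on which your simplification turns. What your approach buys is economy; what the paper's approach buys is that it never needs to argue about the topology of $I_L\circ\Omega$ at all.
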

\begin{proof}
	\refer{enum1:Lokale_Inversion_analytisch_und_wirkt_analytisch_auf_Funktionenraeumen}
	The assertions concerning $I_L$ follow from the fact that $\VF$ is symmetric and $\G$
	is an analytic Lie group.
	\\
	The assertion on the superposition map of $I_L$ is a consequence of
	\refer{prop:Operation_analytischer_Abb_auf_CWk} since $\WF$ is star-shaped with center~$0$
	and $\phi(\VF)$ is bounded.

	\refer{enum1:Konstruktion_von_symmetrischen_Mengen_in_Abbildungsgruppen}
	This is an easy computation.

	\refer{enum1:Inneres_Bild_Inversion-mapgrps_nichtleer}
	By the continuity of the addition, we find an open zero neighborhood $H$
	with $H + H \subseteq \widetilde{\WF}$.
	Since $I_L$ is continuous in $0$ there exists an open convex zero neighborhood $\WF$
	with $I_L(\WF) \subseteq H$ and $\WF \subseteq \widetilde{\WF}$.
	Then
	\[
		\CcFo{\UF}{\WF}{\ell} \subseteq \CcFo{\UF}{\widetilde{\WF}}{\ell}
	\]
	and by \refer{enum1:Lokale_Inversion_analytisch_und_wirkt_analytisch_auf_Funktionenraeumen}
	\[
		I_L \circ \CcFo{\UF}{\WF}{\ell} \subseteq \CcF{\UF}{H}{\ell} \subseteq \CcFo{\UF}{\widetilde{\WF}}{\ell}.
	\]
	The fact that $I_L \circ I_L = \id{\phi(V)}$ completes the argument.

	\refer{enum1:Inversion-mapgrps_lokal_glatt}
	Let $\WF_3 \subseteq \phi(\VF)$ be an open convex zero neighborhood.
	Then by \refer{enum1:Inneres_Bild_Inversion-mapgrps_nichtleer}
	we find open convex zero neighborhoods $\WF_1, \WF_2 \subseteq \phi(\VF)$ such that
	\[
		\CcFo{\UF}{\WF_i}{\ell} \subseteq
			\CcFo{\UF}{\WF_{i + 1}}{\ell} \cap I_L \circ \CcFo{\UF}{\WF_{i + 1}}{\ell}
	\]
	for $i = 1, 2$. So
	\[
		C_\GewFunk^\ell \ndef \CcFo{\UF}{\WF_{3}}{\ell} \cap I_L \circ \CcFo{\UF}{\WF_{3}}{\ell}
	\]
	is a zero neighborhood, and by \refer{enum1:Konstruktion_von_symmetrischen_Mengen_in_Abbildungsgruppen},
	$\phi^{-1} \circ C_\GewFunk^\ell$ is symmetric.
	Hence the superposition of $I_L$ maps $C_\GewFunk^\ell$ into itself and
	is continuous on $C_\GewFunk^\ell$ and analytic on $\interior{(C_\GewFunk^\ell)}$
	(see \refer{enum1:Lokale_Inversion_analytisch_und_wirkt_analytisch_auf_Funktionenraeumen}).
	Further
	\[
		\interior{(C_\GewFunk^\ell)} \cap I_L \circ \interior{(C_\GewFunk^\ell)}
		\supseteq
		\CcFo{\UF}{\WF_{2}}{\ell} \cap I_L \circ \CcFo{\UF}{\WF_{2}}{\ell}
		\supseteq
		\CcFo{\UF}{\WF_{1}}{\ell},
	\]
	whence \refer{enum1:Inversion-mapgrps_lokal_glatt} is established with $\WF \ndef \WF_{1}$.
\end{proof}

\paragraph{Construction of the Lie group structure}
After discussing the group operations locally,
we turn a subgroup of $\G^\UF$ into a Lie group for each centered chart of $\G$.
We will also show that the identity component of this group
does not depend on the chart.

\begin{lem}\label{lem:Konvexe_Mengen_im_gewichteten_Funktionenraum}
	Let $\SX$ and $\SY$ be normed spaces, $\UF \subseteq \SX$ an open nonempty subset,
	$\GewFunk \subseteq \cl{\R}^\UF$ with $1_\UF \in \GewFunk$, $\ell \in \cl{\N}$
	and $\VF \subseteq \SY$ convex.
	Then the set $\CcFo{\UF}{\VF }{\ell}$ is convex.
\end{lem}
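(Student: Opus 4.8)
The plan is to reduce the claim directly to the convexity of the target set $\VF$ together with the fact that the vanishing-at-the-boundary condition defining $\CcFo{\UF}{\VF}{\ell}$ is preserved under convex combinations. First I would take two functions $\gamma_0, \gamma_1 \in \CcFo{\UF}{\VF}{\ell}$ and a scalar $t \in [0,1]$, and set $\gamma_t \ndef t \gamma_1 + (1 - t) \gamma_0$. The underlying space $\CcF{\UF}{\SY}{\ell}$ is a vector space, so $\gamma_t$ lies in it; what must be checked is that $\gamma_t \in \CcFo{\UF}{\VF}{\ell}$, i.e.\ that $\gamma_t(\UF) \subseteq \VF$ and that there exists an $r > 0$ with $\gamma_t(\UF) + \Ball[\SY]{0}{r} \subseteq \VF$.

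For the first point, for each $x \in \UF$ we have $\gamma_t(x) = t \gamma_1(x) + (1 - t) \gamma_0(x)$, a convex combination of $\gamma_1(x), \gamma_0(x) \in \VF$; since $\VF$ is convex, $\gamma_t(x) \in \VF$. For the stronger condition defining $\CcFo{\UF}{\VF}{\ell}$, pick $r_0, r_1 > 0$ with $\gamma_i(\UF) + \Ball[\SY]{0}{r_i} \subseteq \VF$ for $i = 0,1$, and set $r \ndef \min\{r_0, r_1\}$. Then for any $y \in \Ball[\SY]{0}{r}$ and $x \in \UF$,
\[
	\gamma_t(x) + y = t(\gamma_1(x) + y) + (1 - t)(\gamma_0(x) + y),
\]
and each of $\gamma_1(x) + y$, $\gamma_0(x) + y$ lies in $\VF$ by the choice of $r$; convexity of $\VF$ then gives $\gamma_t(x) + y \in \VF$. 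Hence $\gamma_t(\UF) + \Ball[\SY]{0}{r} \subseteq \VF$, so $\gamma_t \in \CcFo{\UF}{\VF}{\ell}$.

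This argument is entirely elementary, so there is no serious obstacle; the only point requiring a moment's care is that the $\CcFo{}{}{}$-condition is the uniform-distance version (a single $r$ working for all of $\UF$) rather than a pointwise one, which is why I would take the minimum of the two radii rather than argue pointwise. An alternative, even shorter, packaging would mirror the proof of \refer{lem:Konvexe_Mengen_im_gewichteten_Funktionenraum-lokalkonvex_van}: observe that $\CcF{\UF}{\VF}{\ell}$ is convex because $\VF$ is, and that $\CcFo{\UF}{\VF}{\ell}$ is the intersection of $\CcF{\UF}{\VF}{\ell}$ with the open set realizing the positive-distance condition, but writing out the explicit radius estimate above is the cleanest self-contained route.
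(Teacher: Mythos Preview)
Your argument is correct. The paper's proof takes a slightly different packaging: it observes that $\CcF{\UF}{\VF}{\ell}$ is convex (since $\VF$ is), and then notes that $\CcFo{\UF}{\VF}{\ell}$ is precisely the interior of $\CcF{\UF}{\VF}{\ell}$ with respect to the seminorm $\hn{\cdot}{1_\UF}{0}$, whence convexity follows from the general fact that the interior of a convex set is convex. Your direct computation with $r = \min\{r_0, r_1\}$ and the identity $\gamma_t(x) + y = t(\gamma_1(x) + y) + (1-t)(\gamma_0(x) + y)$ is essentially an unpacking of that topological fact in this concrete instance; it is more self-contained, while the paper's version is terser and highlights the structural reason (interior of convex is convex) behind the result. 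You even gesture toward the paper's viewpoint in your final paragraph, so you clearly see both routes.
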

\begin{proof}
	It is obvious that $\CcF{\UF}{\VF }{\ell}$ is convex since $\VF$ is so.
	The set $\CcFo{\UF}{\VF }{\ell}$ is the interior of $\CcF{\UF}{\VF }{\ell}$
	with respect to the norm $\hn{\cdot}{1_\UF}{0}$, hence it is convex.
\end{proof}

\renewcommand{\H}{H}
\begin{prop}\label{prop:Liegruppenstruktur_auf_gewichteten_Abbildungsgruppen;Zusammenhangskomponente}
	Let $\SX$ be a normed space, $\UF \subseteq \SX$ an open nonempty subset,
	$\GewFunk \subseteq \cl{\R}^\UF$ with $1_\UF \in \GewFunk$, $\ell \in \cl{\N}$,
	$\G$ an normed analytic Lie group
	and $(\phi, \VF)$ a centered chart.
	There exist a subgroup $(\G, \phi)^\UF_{\GewFunk, \ell}$ of $\G^\UF$
	that can be turned into an analytic Lie group which is modelled on $\CcF{\UF}{\LieAlg{\G}}{\ell}$;
	and an open $\one$-neighborhood $\WF \subseteq \VF$ which is independent of $\GewFunk$ and $\ell$ such that
	\[
		\CcFo{\UF}{\phi(\WF)}{\ell} \to (\G, \phi)^\UF_{\GewFunk, \ell}
		: \gamma \mapsto \phi^{-1} \circ \gamma
	\]
	is an analytic embedding onto an open set.
	Moreover, for any convex open zero neighborhood $\widetilde{\WF} \subseteq \phi(\WF)$,
	the set
	$
		\phi^{-1} \circ \CcFo{\UF}{\widetilde{\WF}}{\ell}
	$
	generates the identity component of $(\G, \phi)^\UF_{\GewFunk, \ell}$ as a group.
\end{prop}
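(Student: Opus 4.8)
The plan is to build the group from local data by invoking \refer{lem:Erzeugung_von_Liegruppen_aus_lokalen}, fed with the candidate identity neighbourhood coming from the chart $\phi$ together with the local multiplication and inversion supplied by \refer{lem:Multiplikation_von_gewichteten_Abbildungsraumen} and \refer{lem:Inversion_auf_gewichteten_Abbildungsraumen}. Concretely, write $I_L \ndef \phi \circ I_\G \circ \phi^{-1}$ and let $C_\GewFunk^\ell \subseteq \CcFo{\UF}{\phi(\VF)}{\ell}$ together with the convex open zero neighbourhood $\WF_0 \subseteq \phi(\VF)$ be as produced by part \refer{enum1:Inversion-mapgrps_lokal_glatt} of \refer{lem:Inversion_auf_gewichteten_Abbildungsraumen}. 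I would then declare
\[
	(\G,\phi)^\UF_{\GewFunk,\ell} \ndef \langle\, \phi^{-1}\circ C_\GewFunk^\ell \,\rangle \subseteq \G^\UF
\]
to be the subgroup generated by $\phi^{-1}\circ C_\GewFunk^\ell$, and equip $\phi^{-1}\circ\interior{(C_\GewFunk^\ell)}$ with the analytic manifold structure that makes $\gamma\mapsto\phi^{-1}\circ\gamma$ a chart modelled on the open set $\interior{(C_\GewFunk^\ell)}\subseteq\CcF{\UF}{\LieAlg{\G}}{\ell}$.

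Before applying the abstract lemma I would collect its hypotheses. For symmetry, parts \refer{enum1:Konstruktion_von_symmetrischen_Mengen_in_Abbildungsgruppen} and \refer{enum1:Inversion-mapgrps_lokal_glatt} of \refer{lem:Inversion_auf_gewichteten_Abbildungsraumen} show that $\phi^{-1}\circ C_\GewFunk^\ell$ is symmetric and contains $\one=\phi^{-1}\circ 0$; since $I_L$ is an analytic involution (part \refer{enum1:Lokale_Inversion_analytisch_und_wirkt_analytisch_auf_Funktionenraeumen} of the same lemma), it restricts to a homeomorphism of $\interior{(C_\GewFunk^\ell)}$, so the interior is symmetric as well. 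For inversion, part \refer{enum1:Inversion-mapgrps_lokal_glatt} also gives that covariant composition with $I_L$ is continuous on $C_\GewFunk^\ell$ and analytic on its interior, which is exactly the group inversion expressed in the chart. For multiplication, \refer{lem:Multiplikation_von_gewichteten_Abbildungsraumen} furnishes an identity neighbourhood on which the coordinate multiplication $\mu:(\gamma,\eta)\mapsto\phi\circ m_\G\circ(\phi^{-1}\circ\gamma,\phi^{-1}\circ\eta)$ is analytic; using continuity of $\mu$ at $(0,0)$ together with $\mu(0,0)=0$, I would shrink to an open convex zero neighbourhood $V_L\subseteq\interior{(C_\GewFunk^\ell)}$ with $\mu(V_L\times V_L)\subseteq\interior{(C_\GewFunk^\ell)}$, producing a local neighbourhood $V\ndef\phi^{-1}\circ V_L$ with $V\cdot V\subseteq\phi^{-1}\circ C_\GewFunk^\ell$ and jointly analytic multiplication. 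The conjugation compatibility is then not an extra condition: joint smoothness of $\mu$ makes the slice maps $\lambda_g=\mu(g,\cdot)$ and $\rho_g=\mu(\cdot,g)$ smooth, hence $c_g=\lambda_g\circ\rho_{g^{-1}}$ is smooth on any identity neighbourhood small enough to map into $\phi^{-1}\circ C_\GewFunk^\ell$.

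With these ingredients \refer{lem:Erzeugung_von_Liegruppen_aus_lokalen} endows $(\G,\phi)^\UF_{\GewFunk,\ell}$ with an analytic Lie group structure modelled on $\CcF{\UF}{\LieAlg{\G}}{\ell}$ for which $\phi^{-1}\circ\interior{(C_\GewFunk^\ell)}$ is an open submanifold. Setting $\WF\ndef\phi^{-1}(\WF_0)$, so that $\phi(\WF)=\WF_0$, restriction of the chart to the open convex set $\CcFo{\UF}{\phi(\WF)}{\ell}\subseteq\interior{(C_\GewFunk^\ell)}$ (the inclusion is part \refer{enum1:Inversion-mapgrps_lokal_glatt} of \refer{lem:Inversion_auf_gewichteten_Abbildungsraumen}) yields the asserted analytic embedding onto an open set, and the independence of $\WF$ from $\GewFunk$ and $\ell$ is recorded in the same part. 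For the final statement I would take any convex open zero neighbourhood $\widetilde{\WF}\subseteq\phi(\WF)$; then $\CcFo{\UF}{\widetilde{\WF}}{\ell}$ is convex by \refer{lem:Konvexe_Mengen_im_gewichteten_Funktionenraum}, hence connected, so $N\ndef\phi^{-1}\circ\CcFo{\UF}{\widetilde{\WF}}{\ell}$ is a connected open identity neighbourhood. A standard argument finishes: $\langle N\rangle=\bigcup_{n}(N\cup N^{-1})^n$ is connected (each factor is connected and contains $\one$) and open, therefore clopen, so it coincides with the identity component of $(\G,\phi)^\UF_{\GewFunk,\ell}$.

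I expect the genuine difficulty here to be organizational rather than analytic, since the analytic content is already packaged in \refer{lem:Multiplikation_von_gewichteten_Abbildungsraumen} and \refer{lem:Inversion_auf_gewichteten_Abbildungsraumen}. Those two lemmas deliver local multiplication and inversion on a priori \emph{different} neighbourhoods, so the delicate point is to isolate a single candidate identity set — namely $C_\GewFunk^\ell$, and inside it the shrunk neighbourhood $V_L$ — on which symmetry, closure under the restricted multiplication, and the mixed continuous/analytic inversion hold simultaneously, and to match these exactly against the hypotheses of the abstract construction lemma.
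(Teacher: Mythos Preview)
Your proposal is correct and follows essentially the same approach as the paper. The only organizational difference is the order in which the two local lemmas are applied: the paper first invokes \refer{lem:Multiplikation_von_gewichteten_Abbildungsraumen} to obtain a symmetric $\widetilde{\WF}\subseteq\VF$ on which the local multiplication is analytic, and \emph{then} applies \refer{lem:Inversion_auf_gewichteten_Abbildungsraumen}\refer{enum1:Inversion-mapgrps_lokal_glatt} (together with \refer{lem:kleine_symmetrische_offene_Mengen_in_lokalen_Top_Gruppen}) with $\widetilde{\WF}$ in the role of $\VF$, so that the resulting open symmetric set $H$ automatically sits inside $\CcFo{\UF}{\phi(\widetilde{\WF})}{\ell}$ where multiplication is already defined. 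This avoids your second shrinking step to $V_L$ and lets the paper apply \refer{lem:Erzeugung_von_Liegruppen_aus_lokalen} directly with $U=\phi^{-1}\circ\CcFo{\UF}{\phi(\VF)}{\ell}$ and $V=\phi^{-1}\circ H$. Two minor points in your version: your $V_L$ must also be made symmetric (intersect with its image under $I_{L*}$), and your conjugation paragraph is both unnecessary---you are using the $\langle V\rangle$ clause of the abstract lemma, which does not require condition (c)---and slightly imprecise, since $\mu(g,\cdot)$ is not literally defined for $g$ outside the chart domain. Neither affects the substance of the argument.
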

\begin{proof}
	Using \refer{lem:Multiplikation_von_gewichteten_Abbildungsraumen}
	we find an open $\one$-neighborhood $\widetilde{\WF} \subseteq \VF$ such that
	\[
		\CcFo{\UF}{\phi(\widetilde{\WF})}{\ell} \times \CcFo{\UF}{\phi(\widetilde{\WF})}{\ell}
			\to \CcFo{\UF}{\phi(\VF)}{\ell}
		: (\gamma, \eta) \mapsto \phi \circ m_\G  \circ (\phi^{-1} \circ \gamma, \phi^{-1} \circ \eta)
	\]
	is analytic.
	We may assume w.l.o.g. that $\widetilde{\WF}$ is symmetric. With
	\refer{lem:Inversion_auf_gewichteten_Abbildungsraumen}~\refer{enum1:Inversion-mapgrps_lokal_glatt}
	and \refer{lem:kleine_symmetrische_offene_Mengen_in_lokalen_Top_Gruppen},
	we find an open zero neighborhood $\H \subseteq \CcFo{\UF}{\phi(\widetilde{\WF})}{\ell}$
	such that $\phi^{-1} \circ \H$ is symmetric, the map
	\[
		\H \to \H : \gamma \mapsto I_L \circ \gamma
	\]
	is analytic and $\CcFo{\UF}{\phi(\WF)}{\ell} \subseteq \H$
	for some open $\one$-neighborhood $\WF \subseteq \VF$, which is independent of $\GewFunk$ and $\ell$.
	We endow $\phi^{-1} \circ \H$ with the differential structure
	which turns the bijection
	\[
		\phi^{-1} \circ \H \to \H : \gamma \mapsto \phi \circ \gamma
	\]
	into an analytic diffeomorphism.
	Then we can apply \refer{lem:Erzeugung_von_Liegruppen_aus_lokalen}
	to construct an analytic Lie group structure on the subgroup $(\G, \phi)^\UF_{\GewFunk, \ell}$ of $\G^\UF$
	which is generated by $\phi^{-1} \circ \H$ such that $\phi^{-1} \circ \H$ becomes an open subset of $(\G, \phi)^\UF_{\GewFunk, \ell}$.

	Since we may assume w.l.o.g. that $\phi(\WF)$ is convex,
	$\CcFo{\UF}{\phi(\WF)}{\ell}$ is open and convex
	(see \refer{lem:Konvexe_Mengen_im_gewichteten_Funktionenraum}),
	hence the set
	\[
		\phi^{-1} \circ \CcFo{\UF}{\phi(\WF)}{\ell}
	\]
	is connected and open by the construction of the differential structure of $(\G, \phi)^\UF_{\GewFunk, \ell}$.
	Furthermore it obviously contains the unit element, whence it generates the identity component.
\end{proof}

\begin{lem}\label{lem:Zusammenhangskomponente_gewichtete_Abbildungsgruppen_kartenunabhaengig}
	Let $\SX$ be a normed space, $\UF \subseteq \SX$ an open nonempty subset,
	$\GewFunk \subseteq \cl{\R}^\UF$ with $1_\UF \in \GewFunk$, $\ell \in \cl{\N}$
	and $\G$ be an normed analytic Lie group.
	Then for centered charts $(\phi_1, \VF_1)$, $(\phi_2, \VF_2)$,
	the identity component of $(\G, \phi_1)^\UF_{\GewFunk, \ell}$ coincides with the one of $(\G, \phi_2)^\UF_{\GewFunk, \ell}$,
	and the identity map between them is an analytic diffeomorphism.
\end{lem}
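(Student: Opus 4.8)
The plan is to recognize that the change of coordinates between the two chart-induced Lie group structures, read off near the constant map $\one$, is exactly the covariant composition with the (analytic) transition map of $\G$, and then to promote analyticity from a neighborhood of $\one$ to the whole identity component by exploiting that the identity map is a group homomorphism.

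First I would set $T \ndef \phi_2 \circ \phi_1^{-1}$, defined on the open $0$-neighborhood $\phi_1(\VF_1 \cap \VF_2) \subseteq \LieAlg{\G}$. Since both charts are centered, $T(0) = \phi_2(\one) = 0$, and $T$ is an analytic bijection with analytic inverse $T^{-1} = \phi_1 \circ \phi_2^{-1}$, likewise centered. For an element $f \in \G^\UF$ whose values lie in $\VF_1 \cap \VF_2$, the $\phi_1$-coordinate is $\gamma = \phi_1 \circ f$ while the $\phi_2$-coordinate is $\phi_2 \circ f = T \circ \gamma$. Thus the coordinate change between the two structures is precisely the superposition map $\gamma \mapsto T \circ \gamma$.

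Next I would apply \refer{prop:Operation_analytischer_Abb_auf_CWk} to $T$. After shrinking to a bounded, convex, star-shaped zero neighborhood $\WF_0 \subseteq \phi_1(\VF_1 \cap \VF_2)$ (using \refer{lem:Kompakta_und_Stern-shapedness_in_normierten_Raeumen}, and in the real-analytic case passing to a good complexification of $\rest{T}{\WF_0}$ via \refer{lem:Verkleinerung_reell_analytischer_Abb_haben_gute_Komplexifizierung}), the map $\gamma \mapsto T \circ \gamma$ is analytic on $\CcFo{\UF}{\WF_0}{\ell}$, and the same holds for $T^{-1}$. As these superposition maps are mutually inverse, the coordinate change is an analytic diffeomorphism onto its image. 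Consequently the identity map $\iota : (\G, \phi_1)^\UF_{\GewFunk, \ell} \to (\G, \phi_2)^\UF_{\GewFunk, \ell}$ is an analytic local diffeomorphism on the set $N \ndef \phi_1^{-1} \circ \CcFo{\UF}{\WF_0}{\ell}$; here $N$ is connected, since $\CcFo{\UF}{\WF_0}{\ell}$ is convex by \refer{lem:Konvexe_Mengen_im_gewichteten_Funktionenraum}, and it is an open $\one$-neighborhood in both structures.

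Finally I would propagate this globally. Because the group operation on $\G^\UF$ is the pointwise operation inherited from $\G$ and is shared by both structures, $\iota$ is a group homomorphism. By \refer{prop:Liegruppenstruktur_auf_gewichteten_Abbildungsgruppen;Zusammenhangskomponente} (choosing $\WF_0$ inside the relevant chart domains), the connected open $\one$-neighborhood $N$ generates the identity component in the $\phi_1$-structure, and being also an open connected $\one$-neighborhood in the $\phi_2$-structure it generates the identity component there as well; hence the two identity components coincide as subsets of $\G^\UF$. A homomorphism that is analytic on a neighborhood of $\one$ is analytic on the entire identity component, since near any $g$ one may write $\iota = \lambda_{\iota(g)} \circ \iota \circ \lambda_{g^{-1}}$ with the left translations $\lambda_{g^{-1}}$ and $\lambda_{\iota(g)}$ analytic; applying this to both $\iota$ and $\iota^{-1}$ yields that the identity map restricts to an analytic diffeomorphism between the identity components. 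The main obstacle is the middle step, namely checking the hypotheses of \refer{prop:Operation_analytischer_Abb_auf_CWk} for $T$, that is, securing boundedness, star-shapedness, and (in the real case) a good complexification of the transition map after suitably shrinking its domain.
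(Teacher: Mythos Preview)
Your proposal is correct and follows essentially the same approach as the paper: both arguments reduce to showing that the superposition with the transition map $T=\phi_2\circ\phi_1^{-1}$ is analytic via \refer{prop:Operation_analytischer_Abb_auf_CWk} after shrinking to a bounded star-shaped zero neighborhood (and passing to a good complexification in the real case), and then globalize by symmetry. The paper phrases the globalization as ``one inclusion plus exchanging the roles of $\phi_1$ and $\phi_2$'', whereas you use the equivalent translation argument for a homomorphism analytic near $\one$; the only point handled slightly more explicitly in the paper is ensuring that $T_*$ actually lands in $\CcFo{\UF}{\phi_1(\WF_1)}{\ell}$ (via the condition $\widetilde{\WF}^L_1+\widetilde{\WF}^L_1\subseteq\phi_1(\WF_1)$), which you subsume under ``suitably shrinking''.
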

\begin{proof}
	We may assume w.l.o.g. that $\phi_1(\VF_1)$ and $\phi_2(\VF_2)$ are bounded.
	Using \refer{prop:Liegruppenstruktur_auf_gewichteten_Abbildungsgruppen;Zusammenhangskomponente},
	we find open $\one$-neighborhoods $\WF_1 \subseteq \VF_1$, $\WF_2 \subseteq \VF_2$
	such that the identity component of $(\G, \phi_i)^\UF_{\GewFunk, \ell}$ is generated
	by $\phi^{-1}_i \circ \CcFo{\UF}{\phi_i(\WF_i)}{\ell}$ for $i \in\{1,2\}$.
	Since $\phi_1 \circ \phi_2^{-1}$ is analytic, we find open zero neighborhoods
	$\widetilde{\WF}^L_1 \subseteq \phi_1(\WF_1)$ and $\widetilde{\WF}^L_2 \subseteq \phi_2(\WF_2)$
	such that
	\[
		(\phi_1 \circ \phi_2^{-1})(\widetilde{\WF}^L_2) \subseteq \widetilde{\WF}^L_1
		\text{ and }
		\widetilde{\WF}^L_1 + \widetilde{\WF}^L_1 \subseteq \phi_1(\WF_1)
	\]
	and $\widetilde{\WF}^L_2$ is convex.
	Then by \refer{prop:Liegruppenstruktur_auf_gewichteten_Abbildungsgruppen;Zusammenhangskomponente},
	the identity component of $(\G, \phi_2)^\UF_{\GewFunk, \ell}$ is generated by
	$
		\phi^{-1}_2 \circ \CcFo{\UF}{ \widetilde{\WF}^L_2 }{\ell},
	$
	and in the real case we may assume that $\rest{\phi_1 \circ \phi_2^{-1} }{\widetilde{\WF}^L_2}$
	has a good complexification.
	By \refer{prop:Operation_analytischer_Abb_auf_CWk} the map
	\[
		\CcFo{\UF}{\widetilde{\WF}^L_2}{\ell} \to \CcFo{\UF}{\phi_1(\WF_1)}{\ell}
		:
		\gamma \mapsto \phi_1 \circ \phi_2^{-1} \circ \gamma
	\]
	is defined and analytic, and this implies that
	\[
		\phi_2^{-1} \circ \CcFo{\UF}{\widetilde{\WF}^L_2}{\ell}
		\subseteq
		\phi_1^{-1} \circ \CcFo{\UF}{\phi_1(\WF_1)}{\ell}.
	\]
	Hence the identity component of $(\G, \phi_2)^\UF_{\GewFunk, \ell}$
	is contained in the one of $(\G, \phi_1)^\UF_{\GewFunk, \ell}$,
	and the inclusion map of the former into the latter is analytic.

	Exchanging the roles of $\phi_1$ and $\phi_2$ in the preceding argument, we get the assertion.
\end{proof}

\begin{defi}\label{def:Zsghd_gewichtete_Abb_Gruppe}
	Let $\SX$ be a normed space, $\UF \subseteq \SX$ an open nonempty subset,
	$\GewFunk \subseteq \cl{\R}^\UF$ with $1_\UF \in \GewFunk$, $\ell \in \cl{\N}$
	and $\G$ be an normed analytic Lie group.
	We write $\glstext{gew_Abbildungsgruppe_BL-Gruppe}$
	\index{weighted maps!into Banach Lie groups}%
	\index{mapping groups!with values in a Banach Lie group}
	for  the connected Lie group that was constructed
	in \refer{prop:Liegruppenstruktur_auf_gewichteten_Abbildungsgruppen;Zusammenhangskomponente}.
	There and in \refer{lem:Zusammenhangskomponente_gewichtete_Abbildungsgruppen_kartenunabhaengig}
	it was proved that for any centered chart $(\phi, \VF)$ of $\G$ and $\WF \sub \VF$ such that $\phi(\WF)$ is convex,
	the inverse map of
	\[
		\CcFo{\UF}{\phi(\WF)}{\ell} \to \CcF{\UF}{\G}{\ell} : \gamma \mapsto \phi^{-1} \circ \gamma
	\]
	is a chart.
\end{defi}

\subsection{Regularity}
We show that for a Banach Lie group $\G$, the Lie group $\CcF{\UF}{\G}{\ell}$
is regular.

\begin{lem}\label{lem:LLA_Komposition_mit_Lie-Morph}
Let $G, H$ be Lie groups and $\phi : G \to H$ a Lie group morphism.
	\begin{enumerate}
		\item\label{enum1:Tangentialabbildung_Gruppenmorphismus}
		For each $g \in G$ and $v \in \Tang[g]{G}$, we have
		$
			\Tang[g]{\phi}(v) = \phi(g) \cdot \LieAlg{\phi}(g^{-1}\cdot v).
		$

		\item\label{enum1:LLA_Komposition_mit_Lie-Morph}
		Let $\gamma \in \ConDiff{[0,1]}{\G}{1}$.
		Then $\LLA{\phi \circ \gamma} = \LieAlg{\phi} \circ \LLA{\gamma}$.
	\end{enumerate}
\end{lem}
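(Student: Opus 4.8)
The plan is to obtain \refer{enum1:LLA_Komposition_mit_Lie-Morph} as a pointwise consequence of \refer{enum1:Tangentialabbildung_Gruppenmorphismus}, and to prove \refer{enum1:Tangentialabbildung_Gruppenmorphismus} itself from the homomorphism property of $\phi$ together with the functoriality of the tangent functor $\Tang{}$.

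For \refer{enum1:Tangentialabbildung_Gruppenmorphismus}, the starting point is the intertwining relation $\phi \circ \lambda_g = \lambda_{\phi(g)} \circ \phi$, valid for every $g \in G$ because $\phi(g h) = \phi(g)\phi(h)$ (here $\lambda_a$ denotes left translation by $a$). Given $v \in \Tang[g]{G}$, I would set $w := g^{-1} \cdot v = \Tang[g]{\lambda_{g^{-1}}}(v) \in \LieAlg{G}$; since $\lambda_{g^{-1}} \circ \lambda_g = \id_G$, the map $\Tang[e]{\lambda_g}$ is a linear isomorphism with inverse $\Tang[g]{\lambda_{g^{-1}}}$, so that $v = \Tang[e]{\lambda_g}(w)$. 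Applying $\Tang{}$ to the intertwining relation and chasing base points then gives
\[
	\Tang[g]{\phi}(v)
	= \Tang[g]{\phi}\bigl( \Tang[e]{\lambda_g}(w) \bigr)
	= \Tang[e]{(\lambda_{\phi(g)} \circ \phi)}(w)
	= \Tang[e]{\lambda_{\phi(g)}}\bigl( \LieAlg{\phi}(w) \bigr)
	= \phi(g) \cdot \LieAlg{\phi}(g^{-1} \cdot v),
\]
where I used that $\LieAlg{\phi} = \Tang[e]{\phi}$ and that $\Tang[g]{\phi} \circ \Tang[e]{\lambda_g} = \Tang[e]{(\phi \circ \lambda_g)}$ by the chain rule.

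For \refer{enum1:LLA_Komposition_mit_Lie-Morph}, I would unravel the left logarithmic derivative through its definition $\LLA{\gamma}(t) = \gamma(t)^{-1} \cdot \gamma'(t)$ and argue at each $t \in [0,1]$. Writing $(\phi \circ \gamma)'(t) = \Tang[\gamma(t)]{\phi}(\gamma'(t))$ via the chain rule and invoking \refer{enum1:Tangentialabbildung_Gruppenmorphismus} with $g = \gamma(t)$ and $v = \gamma'(t)$ yields $(\phi \circ \gamma)'(t) = \phi(\gamma(t)) \cdot \LieAlg{\phi}(\LLA{\gamma}(t))$. Left-translating by $\phi(\gamma(t))^{-1}$ and using $h^{-1} \cdot (h \cdot w) = w$ for $h = \phi(\gamma(t))$ and $w \in \LieAlg{H}$ (which is $\lambda_{h^{-1}} \circ \lambda_h = \id_H$ differentiated at the identity) then gives
\[
	\LLA{\phi \circ \gamma}(t)
	= \phi(\gamma(t))^{-1} \cdot (\phi \circ \gamma)'(t)
	= \LieAlg{\phi}(\LLA{\gamma}(t))
	= (\LieAlg{\phi} \circ \LLA{\gamma})(t).
\]

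The whole argument is formal, so I expect no serious obstacle; the only delicate point is the bookkeeping of base points under the various tangent maps — in particular checking that $g^{-1} \cdot v$ lands in $\LieAlg{G} = \Tang[e]{G}$ and that $\phi(g) \cdot (\cdot)$ maps $\LieAlg{H}$ into $\Tang[\phi(g)]{H}$ — together with the invertibility of $\Tang[e]{\lambda_g}$, both of which are immediate from the fact that left translations are diffeomorphisms. All these identities hold verbatim in the locally convex setting, since the chain rule and the functoriality of $\Tang{}$ are available in Keller's $\ConDiff{}{}{\infty}$-calculus and left translations remain smooth diffeomorphisms there.
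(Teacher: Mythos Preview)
Your proof is correct and follows essentially the same approach as the paper: the paper declares \refer{enum1:Tangentialabbildung_Gruppenmorphismus} ``straightforward'' without details (your argument via $\phi \circ \lambda_g = \lambda_{\phi(g)} \circ \phi$ is exactly the intended one), and for \refer{enum1:LLA_Komposition_mit_Lie-Morph} both you and the paper compute $(\phi \circ \gamma)'(t) = \Tang[\gamma(t)]{\phi}(\gamma'(t))$ via the chain rule, invoke part \refer{enum1:Tangentialabbildung_Gruppenmorphismus}, and then left-translate by $\phi(\gamma(t))^{-1}$.
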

\begin{proof}
	The proof of \refer{enum1:Tangentialabbildung_Gruppenmorphismus}
	being straightforward, we turn to \refer{enum1:LLA_Komposition_mit_Lie-Morph}.
	We calculate the derivative of $\phi \circ \gamma$ using \refer{enum1:Tangentialabbildung_Gruppenmorphismus}
	and the fact that $\phi$ is a Lie group morphism:
	\[
		(\phi \circ \gamma)'(t)
		= \Tang{(\phi \circ \gamma)} (t,1)
		= \Tang[\gamma(t)]{\phi} (\gamma'(t))
		= \phi(\gamma(t)) \cdot \LieAlg{\phi}(\gamma(t)^{-1} \cdot \gamma'(t)).
	\]
	From this we derive
	\[
		\LLA{\phi \circ \gamma}(t)
		=(\phi \circ \gamma)(t)^{-1} \cdot (\phi \circ \gamma)'(t)
		= \LieAlg{\phi}(\gamma(t)^{-1} \cdot \gamma'(t))
		= \LieAlg{\phi}(\LLA{\gamma}(t)),
	\]
	and the proof is finished.
\end{proof}

The following is well known from the theory of Banach Lie groups.
\begin{lem}\label{lem:Nette_Reg-DGL_Abschaetzung_BLG}
	Let $\G$ be a Banach Lie group and $\VF \in \neigh{\one}$.
	Then there exists a balanced open $\WF \in \neigh[\LieAlg{\G}]{0}$
	such that
	\begin{equation}\label{Nette_Reg-DGL_Abschaetzung_BLG-LinksEvol}
		\gamma \in \ConDiff{[0,1]}{\WF}{0}
		\implies
		\lEvol[\G]{\gamma} \in \ConDiff{[0,1]}{\VF}{0}.
	\end{equation}
	Furthermore, the map $\levol[\G]{} : \ConDiff{[0,1]}{\WF}{0} \to \G$ is continuous.
\end{lem}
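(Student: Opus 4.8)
The plan is to pass to a centered chart of $\G$, turn the left evolution equation into an ordinary differential equation on the Banach space $\LieAlg{\G}$, and control its solutions by an elementary a priori estimate together with the global solvability results from the appendix; the continuity of $\levol[\G]{}$ will then follow from a Gronwall argument. First I would fix a chart $(\phi, \VF_0)$ of $\G$ centered around $\one$ with $\VF_0 \subseteq \VF$, and choose $r > 0$ with $\clBall{0}{r} \subseteq \phi(\VF_0)$. Recall that $\lEvol[\G]{\gamma}$ is by definition the solution $\eta$ of $\LLA{\eta} = \gamma$, $\eta(0) = \one$, i.e.\ $\eta'(t) = \Tang[\one]{\lambda_{\eta(t)}}(\gamma(t))$. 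Writing $\xi \ndef \phi \circ \eta$ and
\[
	\Lambda : \phi(\VF_0) \to \Lin{\LieAlg{\G}}{\LieAlg{\G}}, \qquad \Lambda(\xi) \ndef \Tang[\phi^{-1}(\xi)]{\phi} \circ \Tang[\one]{\lambda_{\phi^{-1}(\xi)}},
\]
a curve $\eta$ with values in $\VF_0$ solves the evolution equation iff $\xi$ solves $\xi'(t) = \Lambda(\xi(t))\eval \gamma(t)$, $\xi(0) = 0$. Since multiplication, inversion and $\phi$ are smooth, $\Lambda$ is smooth; in particular it is bounded and Lipschitz on $\clBall{0}{r}$, say $\Opnorm{\Lambda(\xi)} \le C$ and $\Opnorm{\Lambda(\xi) - \Lambda(\zeta)} \le L \norm{\xi - \zeta}$ for $\xi, \zeta \in \clBall{0}{r}$.

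Next I would set $\rho \ndef \frac{r}{2C}$ and $\WF \ndef \Ball[\LieAlg{\G}]{0}{\rho}$, which is open and balanced. For $\gamma \in \ConDiff{[0,1]}{\WF}{0}$ the right hand side $(t, \xi) \mapsto \Lambda(\xi) \eval \gamma(t)$ is continuous in $t$ and locally Lipschitz in $\xi$, so by \refer{prop:Existenz_maximale_Lsg_ODEs} there is a unique maximal solution $\xi$. As long as $\xi$ remains in $\clBall{0}{r}$ one has $\norm{\xi'(t)} \le C \rho$, hence $\norm{\xi(t)} \le C \rho\, t \le C \rho = \tfrac r2 < r$ on $[0,1]$; thus $\xi$ never reaches the boundary of the ball. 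To upgrade this a priori bound into existence on all of $[0,1]$ I would replace $\Lambda$ by $\beta \MaMu \Lambda$ for a Lipschitz cut-off $\beta$ equal to $1$ on $\clBall{0}{r/2}$ and vanishing outside $\Ball{0}{r}$. The resulting vector field is globally Lipschitz, hence linearly bounded by \refer{lem:VF-global_Lipschitz_impliziert_linear_beschrankt}, so \refer{satz:globale_Loesbarkeit_von_AWP_lineare_Gebundenheit} yields a solution on $[0,1]$; the same estimate keeps it inside $\clBall{0}{r/2}$, where $\beta \equiv 1$, so it in fact solves the original equation. Therefore $\xi$ is defined on $[0,1]$ with values in $\phi(\VF_0)$, and $\lEvol[\G]{\gamma} = \phi^{-1} \circ \xi \in \ConDiff{[0,1]}{\VF_0}{0} \subseteq \ConDiff{[0,1]}{\VF}{0}$, which is the implication \eqref{Nette_Reg-DGL_Abschaetzung_BLG-LinksEvol}.

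Finally, for the continuity of $\levol[\G]{}$ I would compare two solutions $\xi, \xi_0$ associated with $\gamma, \gamma_0 \in \ConDiff{[0,1]}{\WF}{0}$. Splitting $\Lambda(\xi)\eval\gamma - \Lambda(\xi_0)\eval\gamma_0 = \Lambda(\xi)\eval(\gamma - \gamma_0) + (\Lambda(\xi) - \Lambda(\xi_0))\eval\gamma_0$ and using $\Opnorm{\Lambda} \le C$, $\norm{\gamma_0(s)} \le \rho$ and the Lipschitz bound gives
\[
	\norm{\xi(t) - \xi_0(t)} \le C\, \noma{\gamma - \gamma_0} + L \rho \Rint{0}{t}{\norm{\xi(s) - \xi_0(s)}}{s},
\]
so Gronwall's inequality yields $\noma{\xi - \xi_0} \le C e^{L\rho} \noma{\gamma - \gamma_0}$. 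Hence $\gamma \mapsto \xi$ is (Lipschitz) continuous for the supremum norm; evaluation at $t = 1$ (continuous by \refer{lem:Auswertung_von_Kurven_glatt}) and composition with the continuous map $\phi^{-1}$ then show that $\levol[\G]{\gamma} = \phi^{-1}(\xi(1))$ depends continuously on $\gamma$. The main obstacle is precisely the passage from the local a priori estimate to global existence on $[0,1]$: since closed balls in the infinite-dimensional space $\LieAlg{\G}$ are not compact, a naive compactness continuation fails and one is forced into the cut-off argument to invoke the linear-boundedness criterion.
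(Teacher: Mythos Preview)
The paper does not actually prove this lemma: it is introduced with ``The following is well known from the theory of Banach Lie groups'' and stated without proof. So there is no paper argument to compare against; your proposal supplies what the paper omits.

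Your approach is sound. One point deserves a cleaner justification: you claim that since $\Lambda$ is smooth it is ``in particular bounded and Lipschitz on $\clBall{0}{r}$''. In an infinite-dimensional Banach space closed balls are not compact, so smoothness alone does not give this for an arbitrary preassigned $r$. What does work is that $\Lambda$ and $\FAbl{\Lambda}$ are continuous at $0$, so there is \emph{some} radius $r>0$ (possibly smaller than your initial choice) with $\clBall{0}{r} \subseteq \phi(\VF_0)$ on which both $\Opnorm{\Lambda}$ and $\Opnorm{\FAbl{\Lambda}}$ are bounded; the mean value theorem then yields the Lipschitz bound. Since you are free to shrink $r$ at this stage, this is harmless, but the sentence as written suggests a compactness argument that is not available. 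Similarly, your Lipschitz cut-off $\beta$ exists because one can take $\beta(\xi) = h(\norm{\xi})$ for a scalar Lipschitz function $h$; this is worth a word since smooth bump functions need not exist on arbitrary Banach spaces.

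The rest---the a priori bound keeping $\xi$ in $\clBall{0}{r/2}$, the cut-off to obtain a globally Lipschitz vector field on all of $\LieAlg{\G}$ so that \refer{lem:VF-global_Lipschitz_impliziert_linear_beschrankt} and \refer{satz:globale_Loesbarkeit_von_AWP_lineare_Gebundenheit} apply, and the Gronwall estimate for continuity---is correct and is the natural route.
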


We define some terminology needed for the proof.

\begin{defi}\label{def:Inklusion_Diagramme_und_Lie-Funktoren}
	Let $\SX$ be a normed space, $\UF \subseteq \SX$ an open nonempty set,
	$\GewFunk \subseteq \cl{\R}^\UF$ with $1_\UF \in \GewFunk$, $k\in\cl{\N}$
	and $\G$ be a Banach Lie group.
	Further, let $\cF_1, \cF_2 \subseteq \GewFunk$ such that $1_\UF \in \cF_1 \subseteq \cF_2$
	and $\ell_1, \ell_2 \in \cl{\N}$ such that $\ell_1 \leq \ell_2 \leq k$.
	We denote the inclusion
	\[
		\CF{\UF}{\LieAlg{\G}}{\cF_2}{\ell_2} \to \CF{\UF}{\LieAlg{\G}}{\cF_1}{\ell_1}.
	\]
	by $\iota_{(\cF_2, \ell_2), (\cF_1, \ell_1)}^L$ and the inclusion
	\[
		\CF{\UF}{\G}{\cF_2}{\ell_2} \to \CF{\UF}{\G}{\cF_1}{\ell_1}
	\]
	by $\iota_{(\cF_2, \ell_2), (\cF_1, \ell_1)}^\G$.
	Further, we define $\iota_{\cF_1, \ell_1}^L \ndef \iota_{(\GewFunk, k), (\cF_1, \ell_1)}^L$
	and $\iota_{\cF_1, \ell_1}^\G \ndef \iota_{(\GewFunk, k), (\cF_1, \ell_1)}^\G$.
	Then for a suitable centered chart $(\phi, \VF)$ of $\G$, the diagram
	\[
		\xymatrix{
		{ \CFo{\UF}{\phi(\VF)}{\cF_2}{\ell_2} } \ar[rr]^-{ \phi^{-1}_* } \ar[d]|{ \iota^L_{(\cF_2, \ell_2), (\cF_1, \ell_1)} }%
		&& { \CF{\UF}{\G}{\cF_2}{\ell_2} } \ar[d]|{ \iota_{(\cF_2, \ell_2), (\cF_1, \ell_1)}^\G }
		\\
		{ \CFo{\UF}{\phi(\VF)}{\cF_1}{\ell_1} } \ar[rr]_-{ \phi^{-1}_* }%
		&& { \CF{\UF}{\G}{\cF_1}{\ell_1} }
		}
	\]
	commutes. Hence we derive the identity
	\[
		\LieAlg{\iota_{(\cF_2, \ell_2), (\cF_1, \ell_1)}^\G}
		= \Tang[0]{\phi^{-1}_*} \circ \Tang[0]{\iota_{(\cF_2, \ell_2), (\cF_1, \ell_1)}^L} \circ \Tang[\one]{\phi_*}.
	\]
	Let $x \in \UF$. We let $\evTwo_x^\G$ resp. $\evTwo_x^L$ denote the maps
	\[
		\evTwo_x^\G : \CFo{\UF}{\G}{\cF_1}{\ell_1} \to \G : \gamma \mapsto \gamma(x)
		\qquad
		\evTwo_x^L : \CFo{\UF}{\LieAlg{\G}}{\cF_1}{\ell_1}  \to \LieAlg{\G} : \gamma \mapsto \gamma(x).
	\]
	Obviously, the diagram
	\[
		\xymatrix{
		{ \CFo{\UF}{\phi(\VF)}{\cF_1}{\ell_1} } \ar[rr]^-{ \phi^{-1}_* } \ar[d]|{ \evTwo_x^L }%
		&& { \CF{\UF}{\G}{\cF_1}{\ell_1} } \ar[d]^{ \evTwo_x^\G }
		\\
		{ \phi(\VF) } \ar[rr]_-{ \phi^{-1} }%
		&& { \G }
		}
	\]
	commutes, so we derive the identity
	\[
		\LieAlg{\evTwo_x^\G}
		= \Tang[0]{\phi^{-1}} \circ \Tang[0]{\evTwo_x^L} \circ \Tang[\one]{\phi_*}.
	\]
\end{defi}

\begin{bem}
In the following, if $E$ is a locally convex vector space, we shall frequently identity
$\Tang[0]{E} = \sset{0} \times E$ with $E$ in the obvious way.
Then for a Banach Lie group $\G$ and a centered chart $(\phi, \VF)$ of $\G$
such that $\rest{\dA{\phi}{}{} }{\LieAlg{\G}} = \id{\LieAlg{\G}}$,
we can identify $\CcF{\UF}{\LieAlg{\G} }{k}$ with $\LieAlg{ \CcF{\UF}{\G}{k} }$
via $\Tang[0]{\phi^{-1}_*}$ and $\Tang[\one]{\phi_*}$, respectively.
\end{bem}

\begin{lem}\label{lem:Evolution_Abb-Gruppen-vs-punktweise_Evol}
	Let $\SX$ be a normed space, $\UF \subseteq \SX$ an open nonempty set,
	$\GewFunk \subseteq \cl{\R}^\UF$ with $1_\UF \in \GewFunk$, $k\in\cl{\N}$
	$\G$ a Banach Lie group and $(\phi, \VF)$ a centered chart for $\G$ such that $\rest{\dA{\phi}{}{} }{\LieAlg{\G}} = \id{\LieAlg{\G}}$.
	Further, let $x \in \UF$ and $\Gamma : [0,1] \to \CcF{\UF}{\LieAlg{\G} }{k}$
	a smooth curve whose left evolution exists.
	Then $\evTwo_x^\G \circ \lEvol{\Tang[0]{\phi^{-1}_*} \circ \Gamma}$ is the left evolution
	of $\evTwo_x^L \circ \Gamma$.
\end{lem}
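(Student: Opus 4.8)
The plan is to exhibit $\evTwo_x^\G$ as a Lie group morphism and then invoke the functoriality of left evolutions under Lie group morphisms, which is encapsulated in \refer{lem:LLA_Komposition_mit_Lie-Morph}. First I would check that the point evaluation
\[
	\evTwo_x^\G : \CcF{\UF}{\G}{k} \to \G : \gamma \mapsto \gamma(x)
\]
is a Lie group morphism. That it is a group homomorphism is clear, since the group operations on $\G^\UF$ are defined pointwise (see \refer{defi:Gruppenoperationen_Abbildungsgruppen}). For smoothness, in the chart $\phi_*^{-1}$ of $\CcF{\UF}{\G}{k}$ from \refer{def:Zsghd_gewichtete_Abb_Gruppe} the map $\evTwo_x^\G$ reads as $\phi^{-1} \circ \evTwo_x^L \circ \phi_*$, and $\evTwo_x^L$ is continuous linear by \refer{lem:Kriterium_fuer_Stetigkeit_der_Punktauswertungen} (using $1_\UF \in \GewFunk$); hence $\evTwo_x^\G$ is smooth near $\one$ and, being a homomorphism, smooth everywhere.

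Next I would set $\mu \ndef \Tang[0]{\phi^{-1}_*} \circ \Gamma$, a smooth curve in $\LieAlg{\CcF{\UF}{\G}{k}}$, and let $\eta \ndef \lEvol{\mu}$ be its left evolution in $\CcF{\UF}{\G}{k}$, which exists by hypothesis; thus $\eta(0) = \one$ and $\LLA{\eta} = \mu$. Applying the morphism $\evTwo_x^\G$, I get $(\evTwo_x^\G \circ \eta)(0) = \evTwo_x^\G(\one) = \one_\G$ (the identity of $\CcF{\UF}{\G}{k}$ being the constant map $\one$), while \refer{lem:LLA_Komposition_mit_Lie-Morph}~\refer{enum1:LLA_Komposition_mit_Lie-Morph} yields
\[
	\LLA{\evTwo_x^\G \circ \eta} = \LieAlg{\evTwo_x^\G} \circ \LLA{\eta} = \LieAlg{\evTwo_x^\G} \circ \mu .
\]
Hence $\evTwo_x^\G \circ \eta$ is the left evolution of $\LieAlg{\evTwo_x^\G} \circ \mu$, and the whole lemma reduces to the identity $\LieAlg{\evTwo_x^\G} \circ \mu = \evTwo_x^L \circ \Gamma$.

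This last identity is where the bookkeeping with the identifications happens, and it is the only real obstacle, though a routine one. Using the formula
$\LieAlg{\evTwo_x^\G} = \Tang[0]{\phi^{-1}} \circ \Tang[0]{\evTwo_x^L} \circ \Tang[\one]{\phi_*}$
from \refer{def:Inklusion_Diagramme_und_Lie-Funktoren} together with the chain rule $\Tang[\one]{\phi_*} \circ \Tang[0]{\phi^{-1}_*} = \Tang[0]{(\phi_* \circ \phi^{-1}_*)} = \id$, I obtain
\[
	\LieAlg{\evTwo_x^\G} \circ \mu
	= \Tang[0]{\phi^{-1}} \circ \Tang[0]{\evTwo_x^L} \circ \Gamma .
\]
Now I would invoke the standing assumption $\rest{\dA{\phi}{}{}}{\LieAlg{\G}} = \id_{\LieAlg{\G}}$, which forces $\Tang[\one]{\phi} = \id$ and hence $\Tang[0]{\phi^{-1}} = \id$ under the identification $\Tang[0]{E} \iso E$; and since $\evTwo_x^L$ is linear, $\Tang[0]{\evTwo_x^L} = \evTwo_x^L$ under the same identification. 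This gives $\LieAlg{\evTwo_x^\G} \circ \mu = \evTwo_x^L \circ \Gamma$, completing the argument. The care needed is entirely in tracking the canonical identifications $\Tang[0]{E} = \{0\} \times E \iso E$ consistently across the commuting diagrams of \refer{def:Inklusion_Diagramme_und_Lie-Funktoren}; no further analytic input is required beyond the already-established morphism property and \refer{lem:LLA_Komposition_mit_Lie-Morph}.
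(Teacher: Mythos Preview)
Your proposal is correct and follows essentially the same route as the paper: set $\eta = \lEvol{\Tang[0]{\phi^{-1}_*} \circ \Gamma}$, apply \refer{lem:LLA_Komposition_mit_Lie-Morph} to the Lie group morphism $\evTwo_x^\G$, and then unwind $\LieAlg{\evTwo_x^\G}$ via the formula in \refer{def:Inklusion_Diagramme_und_Lie-Funktoren} to arrive at $\evTwo_x^L \circ \Gamma$. The paper compresses this into a single displayed chain of equalities and does not spell out that $\evTwo_x^\G$ is a smooth homomorphism or that $\Tang[0]{\phi^{-1}}$ reduces to the identity under the standing identification; your version makes these steps explicit, but the argument is the same.
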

\begin{proof}
	We set $\eta \ndef \lEvol{\Tang[0]{\phi^{-1}_*} \circ \Gamma}$ and calculate using \refer{lem:LLA_Komposition_mit_Lie-Morph}
	and \refer{def:Inklusion_Diagramme_und_Lie-Funktoren} that
	\[
		\LLA{\evTwo_x^\G \circ \eta}
		= \LieAlg{\evTwo_x^\G} \circ \LLA{\eta}
		= \Tang[0]{\phi^{-1}} \circ \Tang[0]{\evTwo_x^L} \circ \Tang[\one]{\phi_*} \circ \Tang[0]{\phi^{-1}_*} \circ \Gamma
		= \evTwo_x^L \circ \Gamma.
	\]
	This shows the assertion.
\end{proof}

\begin{prop}
	\index{regularity!of $\CcF{\UF}{\G}{k}$}
	Let $\SX$ be a normed space, $\UF \subseteq \SX$ an open nonempty set,
	$\GewFunk \subseteq \cl{\R}^\UF$ with $1_\UF \in \GewFunk$, $k\in\cl{\N}$
	and $\G$ a Banach Lie group.
	Then the following assertions hold:
	\begin{enumerate}
		\item\label{enum1:AbbGruppe_BL-Gruppe_regular}
		$\CcF{\UF}{\G}{k}$, endowed with the Lie group structure
		described in \refer{def:Zsghd_gewichtete_Abb_Gruppe}, is regular.
		
		\item\label{enum1:AbbGruppe_BL-Gruppe_Exponentialfunktion}
		The exponential function of $\CcF{\UF}{\G }{k}$ is given by
		\[
			\CcF{\UF}{\LieAlg{\G} }{k} \to \CcF{\UF}{\G}{k}
			: \gamma \mapsto \ex[\G] \circ \gamma ,
		\]
		where we identify $\CcF{\UF}{\LieAlg{\G} }{k}$ with $\LieAlg{ \CcF{\UF}{\G}{k} }$.
	\end{enumerate}
\end{prop}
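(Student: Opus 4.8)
The plan is to deduce \refer{enum1:AbbGruppe_BL-Gruppe_regular} by rewriting the left regularity equation as an ordinary differential equation on the modelling space $\CcF{\UF}{\LieAlg{\G}}{k}$ and solving it with smooth dependence on parameters, and then to read off \refer{enum1:AbbGruppe_BL-Gruppe_Exponentialfunktion} from the pointwise behaviour of the evolution. Fix a centered chart $(\phi,\VF)$ of $\G$ with $\phi(\VF)$ bounded and $\rest{\dA{\phi}{}{}}{\LieAlg{\G}} = \id_{\LieAlg{\G}}$, so that (as in the Remark preceding \refer{lem:Evolution_Abb-Gruppen-vs-punktweise_Evol}) $\CcF{\UF}{\LieAlg{\G}}{k}$ is identified with $\LieAlg{\CcF{\UF}{\G}{k}}$. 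The left evolution equation of the Banach Lie group $\G$, written in the chart $\phi$, has the form $\zeta'(t)(x) = A(\zeta(t)(x)) \eval \Gamma(t)(x)$ for an analytic map $A : \phi(\VF) \to \Lin{\LieAlg{\G}}{\LieAlg{\G}}$ with $A(0) = \id_{\LieAlg{\G}}$, linear in $\Gamma$; writing $A = \idco + B$ with $B$ analytic and $B(0) = 0$, the corresponding equation on $\CcF{\UF}{\G}{k}$ becomes, in the chart $\phi^{-1}_*$, the initial value problem
\[
	\zeta'(t) = \Gamma(t) + (B \circ \zeta(t)) \eval \Gamma(t), \qquad \zeta(0) = 0 .
\]

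First I would construct the solution pointwise. For a smooth curve $\Gamma : [0,1] \to \CcF{\UF}{\LieAlg{\G}}{k}$ and $x \in \UF$, the curve $\evTwo_x^L \circ \Gamma$ is smooth in $\LieAlg{\G}$, so --- $\G$ being a regular Banach Lie group --- its left evolution exists, and I set $\eta(t)(x) \ndef \lEvol[\G]{\evTwo_x^L \circ \Gamma}(t)$ and $\zeta(t) \ndef \phi \circ \eta(t)$. Using \refer{lem:Nette_Reg-DGL_Abschaetzung_BLG} I would fix a balanced zero neighborhood $\WF \subseteq \LieAlg{\G}$ with $\phi^{-1}(\WF) \subseteq \VF$ such that every $\Gamma$ in the open neighborhood $\set{\Gamma}{\Gamma([0,1]) \subseteq \CcFo{\UF}{\WF}{k}}$ of $0$ has its pointwise evolutions remaining inside $\VF$ for all $t \in [0,1]$; thus $\zeta(t)$ takes values in $\phi(\VF)$ pointwise throughout the interval, and $\zeta$ solves the above initial value problem pointwise.

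The crux is to upgrade this pointwise solution to a smooth curve in $\CcF{\UF}{\LieAlg{\G}}{k}$ depending smoothly on $\Gamma$. By the results of \refer{sususec:Composing_weighted_functions_with_analytic_maps}, the right hand side of the problem is a composition of the analytic covariant superposition operator induced by $B$ (well defined and analytic on $\CcFo{\UF}{\phi(\VF)}{k}$ by \refer{prop:Operation_analytischer_Abb_auf_CWk}, after shrinking the chart to a bounded star-shaped domain and, in the real case, passing to a good complexification) with the continuous bilinear evaluation of \refer{cor:Auswertung_linearer_abb_und_CF}; hence it is smooth and affine in $\Gamma$. I would then use that $\CcF{\UF}{\LieAlg{\G}}{k}$ is the projective limit of the Banach spaces $\CF{\UF}{\LieAlg{\G}}{\cF}{m}$ over finite $\cF \subseteq \GewFunk$ with $1_\UF \in \cF$ and $m \leq k$ (\refer{prop:CW_projektiver_LB-Raum}, complete by \refer{cor:CkF_komplett}). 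On each such Banach step the equation has, by classical ODE theory, a unique local solution with smooth dependence on $\Gamma$; the pointwise solution $\zeta$ restricts to a solution on every step that is already defined on all of $[0,1]$, so by uniqueness the maximal step solutions are global and agree with $\zeta$. Compatibility with the bonding inclusions then yields $\zeta(t) \in \CcF{\UF}{\LieAlg{\G}}{k}$, and \refer{prop:Differenzierbarkeit_Abb_in_projektiven_Limes} gives smooth joint dependence of $\zeta(t)$ on $(t,\Gamma)$. The main obstacle I anticipate is precisely this step: verifying that the superposition right hand side is well defined and smooth on the weighted spaces and is compatible with the Banach reduction, which requires the analytic superposition machinery together with careful control of the chart domains (boundedness, star-shapedness, good complexifications).

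With $\zeta$ in hand, $\eta = \phi^{-1}_* \circ \zeta$ is a smooth curve in $\CcF{\UF}{\G}{k}$ with $\LLA{\eta} = \Gamma$, so $\eta = \lEvol{\Gamma}$, the consistency $\evTwo_x^\G \circ \eta = \lEvol[\G]{\evTwo_x^L \circ \Gamma}$ being exactly \refer{lem:Evolution_Abb-Gruppen-vs-punktweise_Evol}. Since evolutions thus exist and depend smoothly on $\Gamma$ for all $\Gamma$ in a neighborhood of $0$ in $\ConDiff{[0,1]}{\CcF{\UF}{\LieAlg{\G}}{k}}{\infty}$, the local-to-global criterion \refer{lem:Lie-Gruppe_lokal_reg-->global_reg} establishes \refer{enum1:AbbGruppe_BL-Gruppe_regular}. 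Finally, for \refer{enum1:AbbGruppe_BL-Gruppe_Exponentialfunktion} I would evaluate the time-one evolution of a constant curve: for fixed $\gamma \in \CcF{\UF}{\LieAlg{\G}}{k}$, applying $\evTwo_x^L$ to the constant curve with value $\gamma$ gives the constant $\gamma(x)$, whose $\G$-evolution at time one is $\ex[\G](\gamma(x))$, so \refer{lem:Evolution_Abb-Gruppen-vs-punktweise_Evol} yields $\evTwo_x^\G(\exp(\gamma)) = \ex[\G](\gamma(x))$ for every $x \in \UF$, i.e. $\exp(\gamma) = \ex[\G] \circ \gamma$; its membership in $\CcF{\UF}{\G}{k}$ is automatic as the value of the evolution just constructed.
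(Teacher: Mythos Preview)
Your overall strategy---projective limit over Banach steps, pointwise identification via \refer{lem:Evolution_Abb-Gruppen-vs-punktweise_Evol}, local-to-global via \refer{lem:Lie-Gruppe_lokal_reg-->global_reg}---is the same as the paper's, and your treatment of \refer{enum1:AbbGruppe_BL-Gruppe_Exponentialfunktion} is essentially identical. The difference, and the gap, lies in how you obtain global solutions on each Banach step.

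You write the regularity equation explicitly as an ODE on the modelling space and invoke classical ODE theory on each $\CF{\UF}{\LieAlg{\G}}{\cF}{m}$. That gives a unique \emph{local} solution with smooth parameter dependence; the problem is your claim that ``the pointwise solution $\zeta$ restricts to a solution on every step that is already defined on all of $[0,1]$''. You have only constructed $\zeta(t)$ as a function $\UF \to \LieAlg{\G}$; nothing yet shows $\zeta(t) \in \CF{\UF}{\LieAlg{\G}}{\cF}{m}$. Conversely, knowing that the step solution agrees pointwise with $\zeta$ (which does follow from uniqueness of the pointwise ODE in $\G$) only tells you it takes values in $\phi(\VF)$ pointwise---this controls the $\hn{\cdot}{1_\UF}{0}$ seminorm but not the higher-order or weighted seminorms, so it does not preclude blow-up in the Banach norm before time $1$. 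To close this gap along your lines you would need a bootstrap argument of the type used for $\DiffW$ in \refer{lem:Differential_C(K+1)-Kurve_loest_lineare_DGL} and the surrounding lemmas: show the $\cF,0$ step is global (e.g.\ via a Lipschitz estimate for $B_*$), then inductively observe that $\FAbl{}\circ\zeta_{\cF,m+1}$ solves a linear ODE in $\CF{\UF}{\Lin{\SX}{\LieAlg{\G}}}{\cF}{m}$, hence is global, and conclude via \refer{satz:globale_Loesbarkeit_von_AWP_kompaktes_Bild}.

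The paper sidesteps all of this with a cleaner observation: for finite $\cF \ni 1_\UF$ and finite $\ell$, the space $\CF{\UF}{\LieAlg{\G}}{\cF}{\ell}$ is Banach (\refer{cor:CkF_komplett}), so $\CF{\UF}{\G}{\cF}{\ell}$ is a \emph{Banach Lie group} by the very construction of \refer{prop:Liegruppenstruktur_auf_gewichteten_Abbildungsgruppen;Zusammenhangskomponente}. Banach Lie groups are regular, so the step evolutions $\eta_{\cF,\ell}$ exist on $[0,1]$ for free---no explicit ODE analysis on the modelling space is needed. One then checks compatibility with the inclusions $\iota^\G_{(\cF_2,\ell_2),(\cF_1,\ell_1)}$ via \refer{lem:LLA_Komposition_mit_Lie-Morph}, glues to the projective limit by \refer{prop:Differenzierbarkeit_Abb_in_projektiven_Limes}, and uses \refer{lem:Nette_Reg-DGL_Abschaetzung_BLG} only to ensure the step evolutions land in the common chart domain so that the gluing makes sense.
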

\begin{proof}
	\refer{enum1:AbbGruppe_BL-Gruppe_regular}
	Let $(\phi, \widetilde{\VF})$ be a centered chart of $\G$
	such that $\rest{\dA{\phi}{}{} }{\LieAlg{\G}} = \id{\LieAlg{\G}}$.
	We set
	\[
		\mathbf{F} \ndef \{\cF \subseteq \GewFunk : 1_\UF \in \cF, \abs{\cF} < \infty\}.
	\]
	After shrinking $\widetilde{\VF}$, we may assume that the inverse map of
	\[
		\CFo{\UF}{\widetilde{\VF}}{\cF}{\ell} \to \CF{\UF}{\G}{\cF}{\ell} : \Gamma \mapsto \phi^{-1} \circ \Gamma
	\]
	is a chart around the identity for $\cF \in \mathbf{F}$ and $\ell \in \N$ with $\ell \leq k$  (see \refer{def:Zsghd_gewichtete_Abb_Gruppe}).
	Let $\VF \subseteq \widetilde{\VF}$ an open $\one$-neighborhood such that
	$\phi(\VF) + \phi(\VF) \subseteq \phi(\widetilde{\VF})$.
	We choose an open zero neighborhood $\WF \sub \phi(\widetilde{\VF})$
	such that the implication \eqref{Nette_Reg-DGL_Abschaetzung_BLG-LinksEvol} holds.
	Let $\Gamma : [0,1] \to \CcFo{\UF}{\WF}{k}$ be a smooth curve.
	Then $\Gamma_{\cF, \ell} \ndef \iota_{\cF, \ell}^L \circ \Gamma$ is smooth,
	and since $\CF{\UF}{\G}{\cF}{\ell}$ is a Banach Lie group,
	the curve $\Tang[0]{\phi^{-1}_*} \circ \Gamma_{\cF, \ell}$
	has a smooth left evolution $\eta_{\cF, \ell} : [0,1] \to \CF{\UF}{\G}{\cF}{\ell}$.
	Then, for each $x \in \UF$,
	$\evTwo_x^\G \circ \eta_{\cF, \ell}$ is the left evolution of $\evTwo_x^L \circ \Gamma_{\cF, \ell}$ by \refer{lem:Evolution_Abb-Gruppen-vs-punktweise_Evol}.
	Since we assumed that \eqref{Nette_Reg-DGL_Abschaetzung_BLG-LinksEvol} holds,
	we conclude that for each $t \in [0,1]$, the image of $\eta_{\cF, \ell}(t)$ is contained in $\VF$.

	Further, for $\cF_1, \cF_2 \in \mathbf{F}$ such that $\cF_1 \subseteq \cF_2$
	and $\ell_1, \ell_2 \in \N$ such that $\ell_1 \leq \ell_2 \leq k$,
	\begin{multline*}
		\LLA{ \iota^\G_{(\cF_2, \ell_2), (\cF_1, \ell_1)} \circ \eta_{\cF_2, \ell_2} }
		= \LieAlg{\iota^\G_{(\cF_2, \ell_2), (\cF_1, \ell_1)} } \circ \LLA{\eta_{\cF_2, \ell_2} }
		\\
		= \Tang[0]{\phi^{-1}_*} \circ \Tang[0]{\iota_{(\cF_2, \ell_2), (\cF_1, \ell_1)}^L} \circ \Tang[\one]{\phi_*}
		\circ \LLA{\eta_{\cF_2, \ell_2} }
		= \Tang[0]{\phi^{-1}_*} \circ \Gamma_{\cF_1, \ell_1}
		= \LLA{\eta_{\cF_1, \ell_1} }.
	\end{multline*}
	Hence $\eta_{\cF_1, \ell_1} = \iota^\G_{(\cF_2, \ell_2), (\cF_1, \ell_1)} \circ \eta_{\cF_2, \ell_2}$.
	So the family $(\phi_* \circ \eta_{\cF, \ell})_{\cF \in \mathbf{F},\ell \leq k}$
	is compatible with the inclusion maps,
	hence using \refer{prop:CW_projektiver_LB-Raum} and \refer{prop:Differenzierbarkeit_Abb_in_projektiven_Limes},
	we derive a smooth curve $\widetilde{\eta} : [0,1] \to \CcFo{\UF}{\phi(\widetilde{\VF})}{k}$
	such that for all $\cF \in \mathbf{F}$ and $\ell \in \N$ with $\ell \leq k$,
	we have $\iota_{\cF, \ell}^L \circ \widetilde{\eta} = \phi_* \circ \eta_{\cF, \ell}$.
	We set $\eta \ndef \phi^{-1}_* \circ \widetilde{\eta}$.
	Then
	\[
		\Tang[0]{\phi^{-1}_*} \circ \Tang[0]{\iota_{\cF, \ell}^L} \circ \Tang[\one]{\phi_*} \circ \LLA{ \eta }
		= \LieAlg{\iota_{\cF, \ell}^\G } \circ \LLA{ \eta }
		= \LLA{ \eta_{\cF, \ell} }
		= \Tang[0]{\phi^{-1}_*} \circ \Gamma_{\cF, \ell}
		= \Tang[0]{\phi^{-1}_*} \circ \iota_{\cF, \ell}^L \circ \Gamma,
	\]
	and since $\cF$ and $\ell$ were arbitrary, we conclude (using \refer{prop:CW_projektiver_LB-Raum})
	that $\Tang[\one]{\phi_*} \circ \LLA{ \eta } = \Gamma$ and thus
	\[
		\LLA{ \eta } = \Tang[0]{\phi^{-1}_*} \circ \Gamma .
	\]
	
	It remains to show that the left evolution is smooth.
	To this end, we denote the left evolution of $\CF{\UF}{\G}{\cF}{\ell}$ with $\mathrm{evol}_{\cF, \ell}$
	and the one of $\CF{\UF}{\G}{\GewFunk}{k}$ with $\mathrm{evol}$.
	From our results above and \refer{def:Inklusion_Diagramme_und_Lie-Funktoren}, we derive the commutative diagram
	\[
		\xymatrix{
		{ \ConDiff{[0,1]}{ \CFo{\UF}{\WF}{\GewFunk}{k} }{\infty} } \ar[rrr]^-{ \mathrm{evol} \circ \Tang[0]{\phi^{-1}_*} } \ar[d]|{ \iota_{\cF, \ell}^L }%
		&&& { \phi^{-1}_\ast \circ \CFo{\UF}{\phi(\widetilde{\VF}) }{\GewFunk}{k} } \ar[d]^{ \iota_{\cF, \ell}^\G }
		\\
		{ \ConDiff{[0,1]}{ \CFo{\UF}{\WF}{\cF}{\ell} }{\infty} } \ar[rrr]_-{ \mathrm{evol}_{\cF, \ell} \circ \Tang[0]{\phi^{-1}_*} }%
		&&& { \phi^{-1}_\ast \circ \CFo{\UF}{\phi(\widetilde{\VF}) }{\cF}{\ell} }
		}
	\]
	Since the three lower arrows represent smooth maps, the map
	\[
		 \phi_\ast \circ \iota_{\cF, \ell}^\G \circ \mathrm{evol} \circ \Tang[0]{\phi^{-1}_*}
		= \iota_{\cF, \ell}^L \circ \phi_\ast \circ \mathrm{evol} \circ \Tang[0]{\phi^{-1}_*}
	\]
	is smooth on $\ConDiff{[0,1]}{ \CFo{\UF}{\WF}{\GewFunk}{k} }{\infty}$.
	Using \refer{prop:Differenzierbarkeit_Abb_in_projektiven_Limes} and
	\refer{susec:Projektive_Limiten_Gewichtete_Funktionen},
	we conclude that $\phi_\ast \circ \mathrm{evol} \circ \Tang[0]{\phi^{-1}_*}$ is smooth,
	and since $\phi_\ast$ and $\Tang[0]{\phi^{-1}_*}$ are diffeomorphisms,
	using \refer{lem:Lie-Gruppe_lokal_reg-->global_reg} we deduce that $\mathrm{evol}$ is smooth.

	\refer{enum1:AbbGruppe_BL-Gruppe_Exponentialfunktion}
	Let $(\phi, \VF)$ be a centered chart of $\G$
	such that $\rest{\dA{\phi}{}{} }{\LieAlg{\G}} = \id{\LieAlg{\G}}$.
	We denote the exponential function of $\CcF{\UF}{\G }{k}$ by $\ex[\GewFunk]$.
	Let $x \in \UF$ and $\gamma \in \CcF{\UF}{\LieAlg{\G} }{k}$.
	We denote the constant, $\gamma$-valued curve from $[0,1]$ to $\CcF{\UF}{\LieAlg{\G} }{k}$ by $\Gamma$.
	We proved in \refer{lem:Evolution_Abb-Gruppen-vs-punktweise_Evol}
	that $\evTwo_x^\G \circ \lEvol{\phi^{-1}_* \circ \Gamma}$ is the left evolution of $\evTwo_x^L \circ \Gamma$.
	On the other hand, since $\Gamma$ is constant, the left evolution of $\evTwo_x^L \circ \Gamma$
	is the restriction of the $1$-parameter group $\R \to \G : t \mapsto \ex[\G] (t \evTwo_x^L(\gamma))$. Hence
	\[
		\ex[\G] (\evTwo_x^L(\gamma))
		=
		(\evTwo_x^\G \circ \lEvol{\phi^{-1}_* \circ\Gamma})(1)
		= \evTwo_x^\G \circ\, \levol{\phi^{-1}_* \circ\Gamma}
		= \evTwo_x^\G \circ \ex[\GewFunk](\phi^{-1}_* (\gamma)).
	\]
	Thus $\ex[\GewFunk](\phi^{-1}_* (\gamma))(x) = \ex[\G](\gamma(x))$, from which we conclude the assertion
	since $x \in \UF$ was arbitrary.
\end{proof}

\subsection{Semidirect products with weighted diffeomorphisms}
In this subsection we discuss an action of the diffeomorphism group
$\DiffW$ on the Lie group $\CcF{\SX}{\G}{\infty}$, where $\G$ is a Banach Lie group.
This action can be used to construct the semidirect product $\CcF{\SX}{\G}{\infty} \rtimes \DiffW$
and turn it into a Lie group.
For technical reasons, we first discuss the following action of $\DiffW$ on $\G^\SX$.
\begin{defi}
	Let $\SX$ be a Banach space, $\G$ a Banach Lie group and $\GewFunk \subseteq \cl{\R}^\SX$
	with $1_\SX \in \GewFunk$. We define the map
	\[
		\widetilde{\omega}
		: \DiffW \times \G^\SX \to \G^\SX
		: (\phi, \gamma) \mapsto \gamma \circ \phi^{-1} .
	\]
\end{defi}
It is easy to see that $\widetilde{\omega}$ is in fact a group action, and moreover that it is a group
morphism in its second argument:
\begin{lem}\label{lem:Eigenschaften_der_Gruppenwirkung_von_DiffW(X)_auf_G^X}
	\begin{enumerate}
	\item
	$\widetilde{\omega}$ is a group action of $\DiffW$ on $\G^\SX$.
	\item
	For each $\phi \in \DiffW$ the partial map $\widetilde{\omega}(\phi,\cdot)$ is a
	group homomorphism.
	\end{enumerate}
\end{lem}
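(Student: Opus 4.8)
The plan is to verify both assertions directly from the definitions, since each reduces to an elementary identity about composition and the pointwise group structure on $\G^\SX$.

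First I would establish that $\widetilde{\omega}$ is a group action. Recall that the group operation on $\DiffW$ is composition of maps, with neutral element $\id_{\SX}$. The unit axiom is immediate: for $\gamma \in \G^\SX$ one has $\widetilde{\omega}(\id_{\SX}, \gamma) = \gamma \circ \id_{\SX}^{-1} = \gamma$. For the compatibility axiom, let $\phi, \psi \in \DiffW$ and $\gamma \in \G^\SX$. Using that inversion reverses composition, $(\phi \circ \psi)^{-1} = \psi^{-1} \circ \phi^{-1}$, I would compute
\[
	\widetilde{\omega}(\phi, \widetilde{\omega}(\psi, \gamma))
	= (\gamma \circ \psi^{-1}) \circ \phi^{-1}
	= \gamma \circ (\psi^{-1} \circ \phi^{-1})
	= \gamma \circ (\phi \circ \psi)^{-1}
	= \widetilde{\omega}(\phi \circ \psi, \gamma),
\]
which is exactly the left-action identity. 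The only point deserving attention is this order reversal: it is precisely what makes $\gamma \mapsto \gamma \circ \phi^{-1}$ (rather than $\gamma \mapsto \gamma \circ \phi$) a \emph{left} action, and it explains the inverse in the definition of $\widetilde{\omega}$.

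For the second assertion I would recall from \refer{defi:Gruppenoperationen_Abbildungsgruppen} that the multiplication on $\G^\SX$ is pointwise, so that $(\gamma \cdot \eta)(x) = \gamma(x) \cdot \eta(x)$ for all $x \in \SX$, where the product on the right is taken in $\G$. Fixing $\phi \in \DiffW$, for $\gamma, \eta \in \G^\SX$ and every $x \in \SX$ I would evaluate
\[
	\bigl(\widetilde{\omega}(\phi, \gamma \cdot \eta)\bigr)(x)
	= (\gamma \cdot \eta)(\phi^{-1}(x))
	= \gamma(\phi^{-1}(x)) \cdot \eta(\phi^{-1}(x))
	= \bigl(\widetilde{\omega}(\phi, \gamma) \cdot \widetilde{\omega}(\phi, \eta)\bigr)(x),
\]
whence $\widetilde{\omega}(\phi, \gamma \cdot \eta) = \widetilde{\omega}(\phi, \gamma) \cdot \widetilde{\omega}(\phi, \eta)$. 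Since precomposition with a fixed map commutes with pointwise operations, no further argument is required.

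I do not expect any real obstacle: both parts are formal consequences of the definitions, and the proof is finished once these two computations are recorded. The value of the lemma lies not in its difficulty but in its role as the structural prerequisite for the conjugation action and the eventual construction of the semidirect product $\CcF{\SX}{\G}{\infty} \rtimes \DiffW$.
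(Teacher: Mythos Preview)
Your proposal is correct and is precisely the approach the paper intends: the paper's own proof reads in its entirety ``These are easy computations,'' and you have simply written those computations out. There is nothing to add.
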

\begin{proof}
	These are easy computations.
\end{proof}
We show that this action leaves $\CcF{\SX}{\G}{\infty}$ invariant.
Since we proved in \refer{lem:Eigenschaften_der_Gruppenwirkung_von_DiffW(X)_auf_G^X}
that $\widetilde{\omega}$ is a group morphism in its second argument, it suffices to show
that it maps a generating set of $\CcF{\SX}{\G}{\infty}$ into this space. 
\begin{lem}\label{lem:Bild_der_Wirkung_DiffW(X)_auf_G^X}
\label{lem:Wirkung_DiffW_auf_C(X,G)_lokal_glatt}
	Let $\SX$ be a Banach space, $\G$ a Banach Lie group, $\GewFunk \subseteq \cl{\R}^\SX$
	with $1_\SX \in \GewFunk$, $(\phi, \widetilde{\VF})$ a centered chart of $\G$
	and $\VF$ an open identity neighborhood such that $\phi(\VF)$ is convex.
	Then
	\[
		\widetilde{\omega}\bigl(\DiffW \times (\phi^{-1} \circ \CcFo{\SX}{\phi(\VF)}{\infty})\bigr)
		\subseteq
		\phi^{-1} \circ \CcFo{\SX}{\phi(\VF)}{\infty} ,
	\]
	and the map
	\[
		\DiffW \times \CcFo{\SX}{\phi(\VF)}{\infty}
		\to \CcFo{\SX}{\phi(\VF)}{\infty}
		:
		(\psi, \gamma)  \mapsto \phi \circ \widetilde{\omega}(\psi, \phi^{-1} \circ \gamma)
	\]
	is smooth.
	Moreover,
	\[
		\widetilde{\omega}(\DiffW \times \CcF{\SX}{\G}{\infty})
		\subseteq
		\CcF{\SX}{\G}{\infty}.
	\]
\end{lem}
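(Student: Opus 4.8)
The plan is to read the action in the chart $\kF$ of $\DiffW$ and the chart $\phi_\ast$ of $\CcF{\SX}{\G}{\infty}$, and to recognize that the resulting coordinate map is precisely the contravariant composition map $g_{\LieAlg{\G},\infty}$ from \refer{sususec:An_important_map}, precomposed with the (smooth) group inversion of $\DiffW$. First I would compute, for $\psi \in \DiffW$ and $\gamma \in \CcFo{\SX}{\phi(\VF)}{\infty}$,
\[
	\phi \circ \widetilde{\omega}(\psi, \phi^{-1} \circ \gamma)
	= \phi \circ (\phi^{-1} \circ \gamma) \circ \psi^{-1}
	= \gamma \circ \psi^{-1}.
\]
Since $\psi \in \DiffW$ implies $\psi^{-1} \in \DiffW$, the map $\eta_\psi \ndef \psi^{-1} - \id_{\SX} = \kF^{-1}(\psi^{-1})$ lies in $\CcF{\SX}{\SX}{\infty}$ and $\psi^{-1} = \eta_\psi + \id_{\SX}$. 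Hence
\[
	\gamma \circ \psi^{-1} = \gamma \circ (\eta_\psi + \id_{\SX}) = g_{\LieAlg{\G}, \infty}(\gamma, \eta_\psi),
\]
where $g_{\LieAlg{\G},\infty}$ is taken with $\UF = \VF = \WF = \SX$ (note $\SX$ is balanced and $\SX + \SX \subseteq \SX$). By \refer{cor:Kcomp_g_ist_glatt_(und_definiert)} this element lies in $\CcF{\SX}{\LieAlg{\G}}{\infty}$.

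For the first inclusion I would then argue by the image: since $\psi^{-1} : \SX \to \SX$ is a bijection, $(\gamma \circ \psi^{-1})(\SX) = \gamma(\SX)$, so if $r > 0$ witnesses $\gamma(\SX) + \Ball{0}{r} \subseteq \phi(\VF)$ (i.e.\ $\gamma \in \CcFo{\SX}{\phi(\VF)}{\infty}$), the same $r$ witnesses $\gamma \circ \psi^{-1} \in \CcFo{\SX}{\phi(\VF)}{\infty}$. Combined with the previous paragraph this gives $\widetilde{\omega}(\psi, \phi^{-1} \circ \gamma) = \phi^{-1} \circ (\gamma \circ \psi^{-1}) \in \phi^{-1} \circ \CcFo{\SX}{\phi(\VF)}{\infty}$, which is the first assertion. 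For smoothness, the map $\psi \mapsto \eta_\psi = \kF^{-1}(\psi^{-1})$ is the group inversion of $\DiffW$ read in the chart $\kF$, hence equals $I_\GewFunk$ and is smooth by \refer{prop:Inversion_Koord_glatt} (cf.\ \refer{satz:DiffW_Lie-Gruppe}). Therefore $(\psi,\gamma) \mapsto \gamma \circ \psi^{-1} = g_{\LieAlg{\G},\infty}(\gamma, \eta_\psi)$ is a composition of smooth maps by \refer{cor:Kcomp_g_ist_glatt_(und_definiert)}, and since $\CcFo{\SX}{\phi(\VF)}{\infty}$ is open in $\CcF{\SX}{\LieAlg{\G}}{\infty}$ (as $1_\SX \in \GewFunk$), it is smooth into that open subset, giving the second assertion.

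The global invariance statement I would obtain from the group-theoretic structure. By \refer{lem:Eigenschaften_der_Gruppenwirkung_von_DiffW(X)_auf_G^X} each partial map $\widetilde{\omega}(\psi, \cdot)$ is a group homomorphism of $\G^{\SX}$. By \refer{prop:Liegruppenstruktur_auf_gewichteten_Abbildungsgruppen;Zusammenhangskomponente}, applied to the chart $(\phi,\widetilde{\VF})$, there is a convex open zero neighborhood $\widetilde{\WF} \subseteq \phi(\VF)$ such that $\phi^{-1} \circ \CcFo{\SX}{\widetilde{\WF}}{\infty}$ generates the connected group $\CcF{\SX}{\G}{\infty}$ (choose $\widetilde{\WF}$ inside the intersection of $\phi(\VF)$ with the neighborhood produced there). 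By the first assertion $\widetilde{\omega}(\psi,\cdot)$ maps this generating set into $\phi^{-1} \circ \CcFo{\SX}{\phi(\VF)}{\infty} \subseteq \CcF{\SX}{\G}{\infty}$; being a homomorphism into the subgroup $\CcF{\SX}{\G}{\infty}$, it then maps the whole generated group $\CcF{\SX}{\G}{\infty}$ into itself.

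I expect the only point requiring genuine care to be the bookkeeping that $\gamma \circ \psi^{-1}$ stays inside $\CcFo{\SX}{\phi(\VF)}{\infty}$ rather than merely in $\CcF{\SX}{\LieAlg{\G}}{\infty}$; the surjectivity of $\psi^{-1}$ makes the distance-to-boundary condition automatic, so once the identification with $g_{\LieAlg{\G},\infty}$ is in place no real obstacle remains, all other steps being direct appeals to the results of \refer{sususec:An_important_map} and \refer{satz:DiffW_Lie-Gruppe}.
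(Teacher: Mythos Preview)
Your proof is correct and follows essentially the same route as the paper: compute $\phi \circ \widetilde{\omega}(\psi,\phi^{-1}\circ\gamma)=\gamma\circ\psi^{-1}$, identify this with $g_{\LieAlg{\G},\infty}(\gamma,\kF^{-1}(\psi^{-1}))$, and invoke \refer{cor:Kcomp_g_ist_glatt_(und_definiert)} for well-definedness and smoothness together with \refer{prop:Inversion_Koord_glatt} for the inversion; then obtain the global invariance from the homomorphism property and the fact that $\phi^{-1}\circ\CcFo{\SX}{\phi(\VF)}{\infty}$ generates $\CcF{\SX}{\G}{\infty}$. Your version is simply more explicit (in particular the observation that surjectivity of $\psi^{-1}$ preserves the image condition defining $\CcFo{\SX}{\phi(\VF)}{\infty}$), but the argument is the same.
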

\begin{proof}
	Let $\psi$ be an element of $\DiffW$ and
	$\gamma \in \CcFo{\SX}{\phi(\VF)}{\infty}$. Then
	\[
		\widetilde{\omega}(\psi, \phi^{-1} \circ \gamma)
		= \phi^{-1} \circ (\gamma \circ \psi^{-1}),
	\]
	and using \refer{prop:Kompo_Koord_glatt} this identity proves the first
	and the second assertion.
	The final assertion follows immediately from the first assertion
	since we proved in \refer{lem:Eigenschaften_der_Gruppenwirkung_von_DiffW(X)_auf_G^X}
	that $\widetilde{\omega}$ is a group morphism in its second argument,
	and in \refer{def:Zsghd_gewichtete_Abb_Gruppe} that
	that $\CcF{\SX}{\G}{\infty}$ is generated by $\phi^{-1} \circ \CcFo{\SX}{\phi(\VF)}{k}$.
\end{proof}
So by restricting $\widetilde{\omega}$ to $\DiffW \times \CcF{\SX}{\G}{\infty}$, we
get a group action of $\DiffW$ on $\CcF{\SX}{\G}{\infty}$.
\begin{defi}
	We define
	\[
		\omega \ndef \rest{\widetilde{\omega}}{\DiffW \times \CcF{\SX}{\G}{\infty}}
		: \DiffW \times \CcF{\SX}{\G}{\infty} \to \CcF{\SX}{\G}{\infty}
		: (\phi, \gamma) \mapsto \gamma \circ \phi^{-1} .
	\]
\end{defi}

Finally, we are able to turn the semidirect product $\CcF{\SX}{\G}{\infty} \rtimes_\omega \DiffW$
into a Lie group.
\begin{satz}
	Let $\SX$ be a Banach space, $\G$ a Banach Lie group and $\GewFunk \subseteq \cl{\R}^\SX$
	with $1_\SX \in \GewFunk$.
	Then $\CcF{\SX}{\G}{\infty} \rtimes_\omega \DiffW$ can be turned into a
	Lie group modelled on $\CcF{\SX}{\LieAlg{\G}}{\infty} \times \CcF{\SX}{\SX}{\infty}$.
	\index{semidirect product!of $\CcF{\SX}{\G}{\infty}$ and $\DiffW$}%
\end{satz}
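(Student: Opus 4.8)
The plan is to exhibit $\CcF{\SX}{\G}{\infty} \rtimes_\omega \DiffW$ as the semidirect product of the two Lie groups already constructed, namely $\CcF{\SX}{\G}{\infty}$ from \refer{def:Zsghd_gewichtete_Abb_Gruppe} (with domain $\SX$ and $k = \infty$) and $\DiffW$ from \refer{satz:DiffW_Lie-Gruppe}, and then to invoke the abstract construction \refer{lem:Semidirektes_Produkt_Liegruppen-glatteWirkung-ist_Liegruppe} that was already applied in \refer{beisp:GL_operiert_glatt_auf_DiffS}. That lemma equips a semidirect product $N \rtimes_\omega H$ with a Lie group structure modelled on $\LieAlg{N} \times \LieAlg{H}$ as soon as $\omega : H \times N \to N$ is a smooth action of $H$ on $N$ by group automorphisms. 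In our situation $N = \CcF{\SX}{\G}{\infty}$ is modelled on $\CcF{\SX}{\LieAlg{\G}}{\infty}$ and $H = \DiffW$ on $\CcF{\SX}{\SX}{\infty}$, so the resulting modelling space is exactly the asserted $\CcF{\SX}{\LieAlg{\G}}{\infty} \times \CcF{\SX}{\SX}{\infty}$. Everything therefore reduces to verifying the two hypotheses on $\omega$.

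The algebraic hypotheses are essentially already available. By \refer{lem:Eigenschaften_der_Gruppenwirkung_von_DiffW(X)_auf_G^X} the map $\widetilde{\omega}$ is a group action of $\DiffW$ on $\G^\SX$ whose partial maps $\widetilde{\omega}(\phi, \cdot)$ are group homomorphisms, and the final assertion of \refer{lem:Bild_der_Wirkung_DiffW(X)_auf_G^X} shows that the subgroup $\CcF{\SX}{\G}{\infty}$ is invariant under $\widetilde{\omega}$. Consequently the restriction $\omega$ is a well-defined group action on $\CcF{\SX}{\G}{\infty}$ whose partial maps $\omega_\phi$ are group homomorphisms; being part of an action, $\omega_\phi$ admits the two-sided inverse $\omega_{\phi^{-1}}$ and is thus a group automorphism. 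Smoothness of each $\omega_\phi$ as a self-map of $\CcF{\SX}{\G}{\infty}$ will follow a posteriori by restricting the joint smoothness established in the next step.

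It remains to prove that $\omega$ is smooth on all of $\DiffW \times \CcF{\SX}{\G}{\infty}$, and I would proceed exactly as in part~\refer{enum1:Glatte_Wirkung_Lie-Gruppe_auf_DiffWz--global} of \refer{lem:Glatte_Wirkung_Lie-Gruppe_auf_DiffWz}. First, \refer{lem:Bild_der_Wirkung_DiffW(X)_auf_G^X} already supplies smoothness of $\omega$ in a neighborhood of the identity of $\CcF{\SX}{\G}{\infty}$: for a suitable centered chart $(\phi, \widetilde{\VF})$ of $\G$ and an open identity neighborhood $\VF$ with $\phi(\VF)$ convex, the map $(\psi, \gamma) \mapsto \phi \circ \widetilde{\omega}(\psi, \phi^{-1}\circ\gamma)$ is smooth on $\DiffW \times \CcFo{\SX}{\phi(\VF)}{\infty}$. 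Since $\CcF{\SX}{\G}{\infty}$ is connected and, by \refer{def:Zsghd_gewichtete_Abb_Gruppe}, is generated as a group by the identity neighborhood $\phi^{-1} \circ \CcFo{\SX}{\phi(\VF)}{\infty}$, this local smoothness together with the already-established invariance of $\CcF{\SX}{\G}{\infty}$ lets me upgrade to global smoothness via \refer{lem:Kriterium-fuer-glatte-Gruppenwirkung}. With $\omega$ now a smooth action by automorphisms, \refer{lem:Semidirektes_Produkt_Liegruppen-glatteWirkung-ist_Liegruppe} produces the desired Lie group structure.

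The genuinely hard analytic work — the smoothness of the covariant composition $(\psi, \gamma) \mapsto \gamma \circ \psi^{-1}$ near the identity — has already been discharged inside \refer{lem:Bild_der_Wirkung_DiffW(X)_auf_G^X} by combining the smoothness of $g_{\SX, \infty}$ from \refer{cor:Kcomp_g_ist_glatt_(und_definiert)} with the smoothness of the coordinate inversion $I_\GewFunk$ from \refer{prop:Inversion_Koord_glatt}. Within the present proof, therefore, the only point demanding genuine care is the bookkeeping in the local-to-global step: one must check that the identity neighborhood furnished by \refer{lem:Bild_der_Wirkung_DiffW(X)_auf_G^X} coincides with (or is contained in) the generating neighborhood used to build the Lie group $\CcF{\SX}{\G}{\infty}$, so that the connectedness and generation hypotheses of \refer{lem:Kriterium-fuer-glatte-Gruppenwirkung} are met verbatim. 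This is exactly the alignment of charts that \refer{def:Zsghd_gewichtete_Abb_Gruppe} was designed to guarantee, so I expect no further obstruction.
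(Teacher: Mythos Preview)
Your proposal is correct and follows essentially the same route as the paper: establish local smoothness of $\omega$ via \refer{lem:Wirkung_DiffW_auf_C(X,G)_lokal_glatt}, upgrade to global smoothness using \refer{lem:Kriterium-fuer-glatte-Gruppenwirkung} together with the fact that $\phi^{-1} \circ \CcFo{\SX}{\phi(\VF)}{\infty}$ generates $\CcF{\SX}{\G}{\infty}$, and then apply \refer{lem:Semidirektes_Produkt_Liegruppen-glatteWirkung-ist_Liegruppe}. Your extra care about aligning the chart neighborhood with the generating set is well placed and is exactly what \refer{def:Zsghd_gewichtete_Abb_Gruppe} guarantees; the paper's proof simply takes this for granted.
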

\begin{proof}
	We proved in \refer{lem:Wirkung_DiffW_auf_C(X,G)_lokal_glatt}
	that $\omega$ is smooth on a neighborhood of $(\id{\SX}, \one)$,
	and since this neighborhood is the product of generators
	of $\DiffW$ resp. $\CcF{\SX}{\G}{\infty}$,
	we can use
	\refer{lem:Kriterium-fuer-glatte-Gruppenwirkung}
	to see that $\omega$ is smooth.
	Hence we can apply
	\refer{lem:Semidirektes_Produkt_Liegruppen-glatteWirkung-ist_Liegruppe}
	and are home.
\end{proof}

\section{Weighted maps into locally convex Lie groups}
\label{sec: Weighted_maps_into_locally_convex_Lie_groups}
In this section, we discuss certain subgroups of $\G^\UF$, where $\G$ is a Lie group
and $\UF$ an open subset of a finite dimensional space $\SX$.
We construct a subgroup $\CcFvanK{\UF}{\G}{k}$ consisting of \emph{weighted decreasing mappings}
that can be turned into a (connected) Lie group.
After that, we extend this group to a Lie group $\CcFvanKBig{\UF}{\G}{k}$
which contains $\CcFvanK{\UF}{\G}{k}$ as an open normal subgroup,
and discuss its relation with \enquote{rapidly decreasing mappings}.

The modelling space of these groups is $\CcFvanK{\UF}{\LieAlg{\G}}{k}$,
where $k \in \cl{\N}$ and $\GewFunk$ is a set of weights on $\UF$ containing $1_\UF$.
These spaces are introduced in \refer{sec:GewAbb_Bildbereich-lokalkonvex}.

\subsection{Construction of the Lie group}
We construct the Lie group from local data using \refer{lem:Erzeugung_von_Liegruppen_aus_lokalen}.
For a chart $(\phi, \VF)$ of $\G$, we can endow the set $\phi^{-1} \circ \CcFvanK{\UF}{\phi(\VF)}{k} \sub \G^\UF$
with the manifold structure that turns the superposition operator $\phi_\ast$ into a chart.
We then need to check whether the multiplication and inversion on $\G^\UF$
are smooth with respect to this manifold structure.
The group operations on $\G^\UF$ arise as the composition of
the corresponding group operations on $\G$ with the mappings in $\G^\UF$ (see \refer{defi:Gruppenoperationen_Abbildungsgruppen}).
The main tool used in this subsection is the superposition with smooth maps
that we discussed in \refer{prop:Superposition_glatter_Abb_auf_weig-van_Abb}.

\paragraph{Local group operations}
We first discuss the local multiplication.
\begin{lem}\label{lem:Multiplikation_von_gewichteten_Abbildungsraumen-lokalkonvex}
	Let $\SX$ be a finite-dimensional space, $\UF \subseteq \SX$ an open nonempty subset,
	$\GewFunk \subseteq \cl{\R}^\UF$ with $1_\UF \in \GewFunk$, $\ell \in \cl{\N}$,
	$\G$ a locally convex Lie group with the group multiplication $m_\G$ and $(\phi, \VF)$ a centered chart of $\G$.
	Then there exists an open identity neighborhood $\WF \subseteq \VF$ such that the map
	\[
		\CcFvanK{\UF}{\phi(\WF)}{\ell} \times \CcFvanK{\UF}{\phi(\WF)}{\ell} \to \CcFvanK{\UF}{\phi(\VF)}{\ell}
		: (\gamma, \eta) \mapsto \phi \circ m_\G  \circ (\phi^{-1} \circ \gamma, \phi^{-1} \circ \eta)
		\tag{\ensuremath{\dagger}}\label{lokale_Gruppen-Multiplikation_in_Koordinaten-lokalkonvex}
	\]
	 is defined and smooth.
\end{lem}
\begin{proof}
	By \refer{lem:gewichtete,verschwindende_Abb_Produktisomorphie-lokalkonvex},
	the map \eqref{lokale_Gruppen-Multiplikation_in_Koordinaten-lokalkonvex} is defined and smooth
	iff there exists an open neighborhood $\WF \subseteq \G$ such that
	\[
		\Hom{ (\phi \circ m_\G \circ (\phi^{-1} \times \phi^{-1})) }{\cW}{}
		: \CcFvanK{\UF}{\phi(\WF) \times \phi(\WF)}{\ell} \to \CcFvanK{\UF}{\phi(\VF)}{\ell}
	\]
	is so.
	By the continuity of the multiplication $m_\G$ there exists an open subset
	$\WF \subseteq \VF$ such that $m_\G(\WF \times \WF) \subseteq \VF$.
	We may assume that $\phi(\WF)$ is star-shaped with center~$0$.
	Since the map $\phi \circ m_\G \circ (\phi^{-1} \times \phi^{-1})$ is smooth and
	maps $(0,0)$ to $0$, we can apply \refer{prop:Superposition_glatter_Abb_auf_weig-van_Abb} to see that
	\[
		(\phi \circ m_\G \circ (\phi^{-1} \times \phi^{-1}))
		\circ \CcFvanK{\UF}{\phi(\WF) \times \phi(\WF)}{\ell}
		\subseteq \CcFvanK{\UF}{\phi(\VF)}{\ell}
	\]
	and that the map $\Hom{ (\phi \circ m_\G \circ (\phi^{-1} \times \phi^{-1})) }{\cW}{}$
	is smooth.
\end{proof}
Now, we turn to the local inversion.
\begin{lem}\label{lem:Inversion_auf_gewichteten_Abbildungsraumen-lokalkonvex}
	Let $\SX$ be a finite-dimensional space, $\UF \subseteq \SX$ an open nonempty subset,
	$\GewFunk \subseteq \cl{\R}^\UF$ with $1_\UF \in \GewFunk$, $\ell \in \cl{\N}$,
	$\G$ a locally convex Lie group with the group inversion $I_\G$ and $(\phi, \VF)$ a centered chart such that $\VF$ is symmetric.
	Further let $\WF \subseteq \VF$ be a symmetric open $\one$-neighborhood such that
	there exists an open star-shaped set $\WF_{L}$ with center~$0$ and
	$\phi(\WF) \subseteq \WF_{L} \subseteq \phi(\VF)$.
	Then for each $\gamma \in \CcFvanK{\UF}{\phi(\WF)}{\ell}$,
	\[
		(\phi \circ I_\G \circ \phi^{-1}) \circ \gamma \in \CcFvanK{\UF}{\WF}{\ell},
	\]
	and the map
	\[
		\CcFvanK{\UF}{\phi(\WF)}{\ell} \to \CcFvanK{\UF}{\phi(\WF)}{\ell}
		: \gamma \mapsto (\phi \circ I_\G \circ \phi^{-1}) \circ \gamma
	\]
	is smooth.
\end{lem}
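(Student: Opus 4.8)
Write $I_L \ndef \phi \circ I_\G \circ \phi^{-1} : \phi(\VF) \to \phi(\VF)$ for the group inversion read off in the chart $\phi$. The plan is to realise the asserted map as a superposition operator and to reduce everything to \refer{prop:Superposition_glatter_Abb_auf_weig-van_Abb}. First I would record the elementary properties of $I_L$. Since $\G$ is a locally convex Lie group, $I_\G$ is smooth, so $I_L$ is smooth; because $(\phi,\VF)$ is centered, $I_L(0) = \phi(I_\G(\one)) = \phi(\one) = 0$. Moreover, as $\VF$ is symmetric one has $I_L(\phi(\VF)) = \phi(\VF)$, and, crucially, the symmetry of $\WF$ gives the identity $I_L(\phi(\WF)) = \phi(I_\G(\WF)) = \phi(\WF)$. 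This last identity is exactly what will force the image of the superposition back into $\phi(\WF)$ rather than merely into $\LieAlg{\G}$.

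Next, since $\WF_L \subseteq \phi(\VF)$ is open and star-shaped with center~$0$ and $\rest{I_L}{\WF_L}$ is smooth with $I_L(0)=0$, I would apply \refer{prop:Superposition_glatter_Abb_auf_weig-van_Abb} with $\SY = \SZ = \LieAlg{\G}$, the star-shaped set $\WF_L$, and $k = \ell$ (assuming $\ell$ finite for the moment; $I_L$ being smooth, it is $\ConDiff{}{}{\ell+m+2}$ for every $m$). This yields that
\[
	(I_L)_* : \CcFvanK{\UF}{\WF_L}{\ell} \to \CcFvanK{\UF}{\LieAlg{\G}}{\ell}
	: \gamma \mapsto I_L \circ \gamma
\]
is well-defined and smooth. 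Because $\phi(\WF) \subseteq \WF_L$, the inclusion $\CcFvanK{\UF}{\phi(\WF)}{\ell} \subseteq \CcFvanK{\UF}{\WF_L}{\ell}$ holds; for $\gamma$ in the former, its image $I_L \circ \gamma$ takes values in $I_L(\phi(\WF)) = \phi(\WF)$, and combined with $(I_L)_*(\gamma) \in \CcFvanK{\UF}{\LieAlg{\G}}{\ell}$ this gives $I_L \circ \gamma \in \CcFvanK{\UF}{\phi(\WF)}{\ell}$, establishing the first assertion.

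For the smoothness of the corestricted map I would first check that $\CcFvanK{\UF}{\phi(\WF)}{\ell}$ is open in the modelling space $\CcFvanK{\UF}{\LieAlg{\G}}{\ell}$. Given $\gamma$ in this set, \refer{lem:Bild_einer_verschwindenden_gewichteten_Abb_ist_kompakt} shows that $K \ndef \gamma(\UF) \cup \{0\}$ is a compact subset of the open set $\phi(\WF)$; a Wallace-lemma argument as in \refer{lem:Kompakta_und_Stern-shapedness_in_normierten_Raeumen} then produces a continuous seminorm $p \in \normsOn{\LieAlg{\G}}$ and $\eps > 0$ with $K + \{y : p(y) < \eps\} \subseteq \phi(\WF)$, so that every $\eta$ with $\hn{\eta - \gamma}{p, 1_\UF}{0} < \eps$ still has image in $\phi(\WF)$. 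Hence $\CcFvanK{\UF}{\phi(\WF)}{\ell}$ is open, and the map in question is the restriction of the smooth map $(I_L)_*$ to the open subset $\CcFvanK{\UF}{\phi(\WF)}{\ell}$ of its domain, corestricted to the open submanifold $\CcFvanK{\UF}{\phi(\WF)}{\ell}$ of its codomain; it is therefore smooth. Finally, for $\ell = \infty$ I would reduce to the finite case using that $\CcFvanK{\UF}{\LieAlg{\G}}{\infty}$ is the projective limit of the spaces $\CcFvanK{\UF}{\LieAlg{\G}}{k}$, $k \in \N$ (cf. \refer{cor:Topologie_von_CinfF}), together with \refer{prop:Differenzierbarkeit_Abb_in_projektiven_Limes}.

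The genuine analytic content — that post-composition with a smooth map vanishing at $0$ operates smoothly on weighted decreasing maps — is entirely supplied by \refer{prop:Superposition_glatter_Abb_auf_weig-van_Abb}, so the main obstacle here is purely the bookkeeping around the two target sets: one must exploit the symmetry of $\WF$ to keep the image inside $\phi(\WF)$, and separately verify openness of $\CcFvanK{\UF}{\phi(\WF)}{\ell}$ in the locally convex setting (where the normed-space result \refer{lem:CFvan_kompakt_mit_Werten_in_offener_Menge_offen-Normierte_Version} is not directly available) so that the corestriction is legitimate.
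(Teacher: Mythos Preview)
Your approach is essentially the paper's: define $I_L$, apply \refer{prop:Superposition_glatter_Abb_auf_weig-van_Abb} on the star-shaped set $\WF_L$, then use the symmetry of $\WF$ and openness of $\CcFvanK{\UF}{\phi(\WF)}{\ell}$ to corestrict. You are in fact more careful than the paper at two points: the paper simply invokes the normed-space openness lemma \refer{lem:CFvan_kompakt_mit_Werten_in_offener_Menge_offen-Normierte_Version} (which your compactness-plus-Wallace argument legitimately replaces in the locally convex setting), and the paper does not separately address the case $\ell = \infty$.
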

\begin{proof}
	Since $I_L \ndef \phi \circ I_\G \circ \phi^{-1} : \phi(\VF) \to \phi(\VF)$
	is smooth and $I_{L}(0) = 0$, 
	we conclude with \refer{prop:Superposition_glatter_Abb_auf_weig-van_Abb} that
	\[
		\CcFvanK{\UF}{\WF_{L}}{\ell} \to \CcFvanK{\UF}{\phi(\VF)}{\ell}
		: \gamma \mapsto I_L \circ \gamma
	\]
	is smooth.
	Since we proved in \refer{lem:CFvan_kompakt_mit_Werten_in_offener_Menge_offen-Normierte_Version}
	that $\CcFvanK{\UF}{\phi(\WF)}{\ell}$ is an open subset of $\CcFvanK{\UF}{\WF_{L}}{\ell}$,
	the restriction of this map is also smooth, and since $\WF$ is symmetric, it takes values in this set.
\end{proof}

\paragraph{Conclusion}
We put everything together to obtain a Lie group for each centered chart of $\G$.
We show that the identity component does not depend on the used chart.
\begin{lem}\label{lem:Liegruppenstruktur_auf_gewichteten_Abbildungsgruppen;Zusammenhangskomponente-lokalkonvex}
	Let $\SX$ be a finite-dimensional space, $\UF \subseteq \SX$ an open nonempty subset,
	$\GewFunk \subseteq \cl{\R}^\UF$ with $1_\UF \in \GewFunk$, $\ell \in \cl{\N}$,
	$\G$ a locally convex Lie group and $(\phi, \VF)$ a centered chart.
	Then there exists a subgroup $(\G, \phi)^\UF_{\GewFunk, \ell}$ of $\G^\UF$
	that can be turned into a Lie group.
	It is modelled on $\CcFvanK{\UF}{\LieAlg{\G}}{\ell}$ in such a way that there exists
	an open $\one$-neighborhood $\WF \subseteq \VF$ such that
	\[
		\CcFvanK{\UF}{\phi(\WF)}{\ell} \to (\G, \phi)^\UF_{\GewFunk, \ell} : \gamma \mapsto \phi^{-1} \circ \gamma
	\]
	becomes a smooth embedding and its image is open.
	Further, for any subset $\widetilde{\WF} \subseteq \WF$ such that
	$\phi(\widetilde{\WF})$ is an open convex zero neighborhood,
	\[
		\phi^{-1} \circ \CcFvanK{\UF}{\phi(\widetilde{\WF})}{\ell}
	\]
	generates the identity component of $(\G, \phi)^\UF_{\GewFunk, \ell}$.
\end{lem}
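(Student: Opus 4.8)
The plan is to mirror the proof of \refer{prop:Liegruppenstruktur_auf_gewichteten_Abbildungsgruppen;Zusammenhangskomponente}: produce local multiplication and inversion data with respect to the chart $(\phi, \VF)$, and then invoke \refer{lem:Erzeugung_von_Liegruppen_aus_lokalen} to build a Lie group structure on the subgroup of $\G^\UF$ generated by a symmetric chart neighborhood. The essential simplification compared with the Banach case is that here the superposition of the inversion is \emph{smooth} rather than merely analytic, so the delicate construction of the symmetric sets $C_\GewFunk^\ell$ from \refer{lem:Inversion_auf_gewichteten_Abbildungsraumen} is unnecessary; symmetry can be arranged directly from a symmetric chart domain.

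First I would apply \refer{lem:Multiplikation_von_gewichteten_Abbildungsraumen-lokalkonvex} to obtain an open identity neighborhood on which the local multiplication in coordinates is defined and smooth, and shrink it, using \refer{lem:kleine_symmetrische_offene_Mengen_in_lokalen_Top_Gruppen}, to a symmetric open $\one$-neighborhood $\WF \subseteq \VF$. Invoking \refer{lem:Kompakta_und_Stern-shapedness_in_normierten_Raeumen} I may further assume that there is an open star-shaped set $\WF_L$ with $\phi(\WF) \subseteq \WF_L \subseteq \phi(\VF)$. Then \refer{lem:Inversion_auf_gewichteten_Abbildungsraumen-lokalkonvex} shows that $\gamma \mapsto (\phi \circ I_\G \circ \phi^{-1}) \circ \gamma$ is smooth on $\CcFvanK{\UF}{\phi(\WF)}{\ell}$ and, by symmetry of $\WF$, maps this set into itself. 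Setting $H \ndef \CcFvanK{\UF}{\phi(\WF)}{\ell}$ (after a final shrinking so that the multiplication sends $H \times H$ into the coordinate domain $\CcFvanK{\UF}{\phi(\VF)}{\ell}$) and transporting the manifold structure along $\gamma \mapsto \phi \circ \gamma$, both the multiplication and the inversion of $\G^\UF$ become smooth in these coordinates, and $\phi^{-1} \circ H$ is symmetric. An application of \refer{lem:Erzeugung_von_Liegruppen_aus_lokalen} then yields the Lie group $(\G, \phi)^\UF_{\GewFunk, \ell}$, modelled on $\CcFvanK{\UF}{\LieAlg{\G}}{\ell}$, with $\phi^{-1} \circ H$ as an open submanifold.

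It remains to identify the identity component. For $\widetilde{\WF} \subseteq \WF$ with $\phi(\widetilde{\WF})$ an open convex zero neighborhood, the set $\CcFvanK{\UF}{\phi(\widetilde{\WF})}{\ell}$ is convex by \refer{lem:Konvexe_Mengen_im_gewichteten_Funktionenraum-lokalkonvex_van}, hence connected; and it is open because each of its elements $\gamma$ has $\gamma(\UF) \cup \{0\}$ compact (\refer{lem:Bild_einer_verschwindenden_gewichteten_Abb_ist_kompakt}) and contained in the open set $\phi(\widetilde{\WF})$, so a suitable seminorm ball around $\gamma$ (available since $1_\UF \in \GewFunk$) still takes its values in $\phi(\widetilde{\WF})$. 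Thus $\phi^{-1} \circ \CcFvanK{\UF}{\phi(\widetilde{\WF})}{\ell}$ is an open connected set containing $\one$, and therefore generates the identity component.

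The main obstacle I expect lies in the bookkeeping of nested neighborhoods required by \refer{lem:Erzeugung_von_Liegruppen_aus_lokalen}: one must arrange a single symmetric $H$ (or a nested family) on which multiplication, inversion, and the partial associativity axioms hold simultaneously, which forces several successive shrinkings of $\WF$ while keeping it symmetric and $\phi(\WF)$ star-shaped so that \refer{lem:Inversion_auf_gewichteten_Abbildungsraumen-lokalkonvex} still applies. The accompanying openness argument for $\CcFvanK{\UF}{\phi(\widetilde{\WF})}{\ell}$ in the locally convex, rather than normed, setting is the other point needing care, but it reduces to the compactness of $\gamma(\UF) \cup \{0\}$ together with a standard Wallace-type neighborhood argument.
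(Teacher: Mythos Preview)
Your proposal is correct and follows essentially the same route as the paper: apply the multiplication and inversion lemmas, pass to a symmetric $\WF$ admitting a star-shaped intermediate set, invoke \refer{lem:Erzeugung_von_Liegruppen_aus_lokalen}, and then use convexity (via \refer{lem:Konvexe_Mengen_im_gewichteten_Funktionenraum-lokalkonvex_van}) for the identity-component claim. Your openness argument for $\CcFvanK{\UF}{\phi(\widetilde{\WF})}{\ell}$ via compactness of $\gamma(\UF)\cup\{0\}$ and a Wallace-type neighborhood is in fact more explicit than the paper, which simply asserts openness ``by the construction of the differential structure.''
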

\begin{proof}
	Using \refer{lem:Multiplikation_von_gewichteten_Abbildungsraumen-lokalkonvex}
	we find an open $\one$-neighborhood $\WF \subseteq \VF$ such that
	\[
		\CcFvanK{\UF}{\phi(\WF)}{\ell} \times \CcFvanK{\UF}{\phi(\WF)}{\ell}
			\to \CcFvanK{\UF}{\phi(\VF)}{\ell}
		: (\gamma, \eta) \mapsto \phi \circ m_\G  \circ (\phi^{-1} \circ \gamma, \phi^{-1} \circ \eta)
	\]
	is smooth.
	We may assume w.l.o.g. that $\WF$ is symmetric and
	that there exists an open convex set $H$ such that
	$\phi(\WF) \subseteq H \subseteq \phi(\VF)$.
	We know from \refer{lem:Inversion_auf_gewichteten_Abbildungsraumen-lokalkonvex}
	that the set
	\[
		\phi^{-1} \circ \CcFvanK{\UF}{\phi(\WF)}{\ell} \subseteq \G^\UF
	\]
	is symmetric and
	\[
		\CcFvanK{\UF}{\phi(\WF)}{\ell} \to \CcFvanK{\UF}{\phi(\WF)}{\ell}
		: \gamma \mapsto \phi \circ I_\G \circ \phi^{-1} \circ \gamma
	\]
	is smooth.
	We endow $\phi^{-1} \circ \CcFvanK{\UF}{\phi(\WF)}{\ell} $ with the differential structure
	which turns the bijection
	\[
		\phi^{-1} \circ \CcFvanK{\UF}{\phi(\WF)}{\ell} \to \CcFvanK{\UF}{\phi(\WF)}{\ell}
		: \gamma \mapsto \phi \circ \gamma
	\]
	into a smooth diffeomorphism.
	Then we can apply \refer{lem:Erzeugung_von_Liegruppen_aus_lokalen}
	to construct a Lie group structure on the subgroup $(\G, \phi)^\UF_{\GewFunk, \ell}$ of $\G^\UF$
	which is generated by $\phi^{-1} \circ \CcFvanK{\UF}{\phi(\WF)}{\ell}$,
	such that $\phi^{-1} \circ \CcFvanK{\UF}{\phi(\WF)}{\ell}$ becomes an open subset.

	Moreover, for each open $\one$-neighborhood $\widetilde{\WF} \subseteq \WF$
	such that $\phi(\widetilde{\WF})$ is convex, the set $\CcFvanK{\UF}{\phi(\widetilde{\WF})}{\ell}$
	is convex (\refer{lem:Konvexe_Mengen_im_gewichteten_Funktionenraum-lokalkonvex_van}).
	Hence $\phi^{-1} \circ \CcFvanK{\UF}{\phi(\widetilde{\WF})}{\ell}$ is connected,
	and it is open by the construction of the differential structure of $(\G, \phi)^\UF_{\GewFunk, \ell}$.
	Further it obviously contains the unit element, hence it generates the identity component.
\end{proof}

\begin{lem}\label{lem:Zusammenhangskomponente_gewichtete_Abbildungsgruppen_kartenunabhaengig-lokalkonvex}
	Let $\SX$ be a finite-dimensional space, $\UF \subseteq \SX$ an open nonempty subset,
	$\GewFunk \subseteq \cl{\R}^\UF$ with $1_\UF \in \GewFunk$, $\ell \in \cl{\N}$
	and $\G$ a locally convex Lie group.
	Then for centered charts $(\phi_1, \VF_1)$ and $(\phi_2, \VF_2)$,
	the identity component of $(\G, \phi_1)^\UF_{\GewFunk, \ell}$ coincides with the one of $(\G, \phi_2)^\UF_{\GewFunk, \ell}$,
	and the identity map between them is a smooth diffeomorphism.
\end{lem}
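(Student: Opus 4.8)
The plan is to follow the proof of the Banach-Lie-group analogue \refer{lem:Zusammenhangskomponente_gewichtete_Abbildungsgruppen_kartenunabhaengig} almost verbatim, replacing the analytic superposition result \refer{prop:Operation_analytischer_Abb_auf_CWk} by the smooth superposition result \refer{prop:Superposition_glatter_Abb_auf_weig-van_Abb} and the spaces $\CcFo{\UF}{\cdot}{\ell}$ by the spaces $\CcFvanK{\UF}{\cdot}{\ell}$. By the symmetry of the statement in $\phi_1$ and $\phi_2$, it suffices to prove one containment: that the identity component of $(\G, \phi_2)^\UF_{\GewFunk, \ell}$ is contained in that of $(\G, \phi_1)^\UF_{\GewFunk, \ell}$ and that the resulting inclusion is smooth. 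Exchanging the roles of $\phi_1$ and $\phi_2$ then gives equality of the identity components and that the identity map between them is a smooth diffeomorphism.

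First I would invoke \refer{lem:Liegruppenstruktur_auf_gewichteten_Abbildungsgruppen;Zusammenhangskomponente-lokalkonvex} to obtain open $\one$-neighborhoods $\WF_1 \subseteq \VF_1$ and $\WF_2 \subseteq \VF_2$ such that the identity component of $(\G, \phi_i)^\UF_{\GewFunk, \ell}$ is generated by $\phi_i^{-1} \circ \CcFvanK{\UF}{\phi_i(\WF_i)}{\ell}$ for $i \in \{1,2\}$. The chart transition $\psi \ndef \phi_1 \circ \phi_2^{-1}$ is a $\ConDiff{}{}{\infty}$-map of $\G$, and since both charts are centered, $\psi(0) = 0$. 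Using continuity of $\psi$ at $0$, I would then pick an open convex zero-neighborhood $\widetilde{\WF}^L_2 \subseteq \phi_2(\WF_2)$ with $\psi(\widetilde{\WF}^L_2) \subseteq \phi_1(\WF_1)$; being convex with $0 \in \widetilde{\WF}^L_2$, it is star-shaped with center $0$, and by the same lemma $\phi_2^{-1} \circ \CcFvanK{\UF}{\widetilde{\WF}^L_2}{\ell}$ still generates the identity component of $(\G, \phi_2)^\UF_{\GewFunk, \ell}$.

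Next I would apply \refer{prop:Superposition_glatter_Abb_auf_weig-van_Abb} with $\SY = \SZ = \LieAlg{\G}$, the star-shaped domain $\widetilde{\WF}^L_2$, and the smooth map $\rest{\psi}{\widetilde{\WF}^L_2}$ (which sends $0$ to $0$): this yields that $\gamma \mapsto \psi \circ \gamma$ is a $\ConDiff{}{}{\infty}$-map $\CcFvanK{\UF}{\widetilde{\WF}^L_2}{\ell} \to \CcFvanK{\UF}{\LieAlg{\G}}{\ell}$, and since $\psi(\widetilde{\WF}^L_2) \subseteq \phi_1(\WF_1)$ its image is contained in $\CcFvanK{\UF}{\phi_1(\WF_1)}{\ell}$. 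For $\gamma \in \CcFvanK{\UF}{\widetilde{\WF}^L_2}{\ell}$ one has $\phi_2^{-1} \circ \gamma = \phi_1^{-1} \circ (\psi \circ \gamma)$, so $\phi_2^{-1} \circ \CcFvanK{\UF}{\widetilde{\WF}^L_2}{\ell} \subseteq \phi_1^{-1} \circ \CcFvanK{\UF}{\phi_1(\WF_1)}{\ell}$; and in the charts $\gamma \mapsto \phi_2^{-1}\circ\gamma$ (source) and $\gamma \mapsto \phi_1^{-1}\circ\gamma$ (target) the identity map of $\G^\UF$ is represented precisely by the smooth map $\gamma \mapsto \psi \circ \gamma$. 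Thus the identity map is smooth on this generating identity neighborhood, and since it is a group homomorphism of $\G^\UF$ whose restriction to a generating neighborhood is smooth, smoothness propagates to the whole identity component (the local-to-global mechanism underlying \refer{lem:Erzeugung_von_Liegruppen_aus_lokalen}), giving the desired smooth inclusion.

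The main point to get right, rather than a genuine obstacle, is the applicability of \refer{prop:Superposition_glatter_Abb_auf_weig-van_Abb}: it is exactly here that the finite-dimensionality of $\SX$ enters, since the proof of that proposition relies on the compactness of $\gamma(\UF) \cup \{0\}$ for $\gamma \in \CcFvanK{\UF}{\cdot}{\ell}$ (\refer{lem:Bild_einer_verschwindenden_gewichteten_Abb_ist_kompakt}). Because $\G$ is only assumed smooth, we no longer have analyticity and hence need no good complexification as in the Banach case; conversely, since $\psi$ is $\ConDiff{}{}{\infty}$ the order hypothesis $\phi \in \ConDiff{}{}{k+m+2}$ of \refer{prop:Superposition_glatter_Abb_auf_weig-van_Abb} is satisfied for every $m$, so one obtains full smoothness. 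The remaining work is purely bookkeeping: arranging the star-shaped/convex neighborhoods so that $\psi(\widetilde{\WF}^L_2) \subseteq \phi_1(\WF_1)$ and checking the target-space membership and the homomorphism-smoothness propagation.
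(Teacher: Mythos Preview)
Your proposal is correct and follows essentially the same route as the paper's proof: choose generating neighborhoods via \refer{lem:Liegruppenstruktur_auf_gewichteten_Abbildungsgruppen;Zusammenhangskomponente-lokalkonvex}, shrink to a convex zero-neighborhood on which the chart transition is defined and lands in the target chart domain, apply \refer{prop:Superposition_glatter_Abb_auf_weig-van_Abb} to get smoothness of the superposition, and then use symmetry. The only cosmetic difference is that the paper secures $\psi(\widetilde{\WF}^L_2) \subseteq \phi_1(\WF_1)$ by choosing $\widetilde{\WF}^L_2 \subseteq \phi_2(\WF_1 \cap \WF_2)$ rather than by a separate continuity argument, which is equivalent.
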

\begin{proof}
	Using \refer{lem:Liegruppenstruktur_auf_gewichteten_Abbildungsgruppen;Zusammenhangskomponente-lokalkonvex},
	we find open $\one$-neighborhoods $\WF_1 \subseteq \VF_1$, $\WF_2 \subseteq \VF_2$
	such that the identity component of $(\G, \phi_i)^\UF_{\GewFunk, \ell}$ is generated
	by $\phi^{-1}_i \circ \CcFvanK{\UF}{\phi_i(\WF_i)}{\ell}$ for $i \in\{1,2\}$.
	Since $\phi_1 \circ \phi_2^{-1}$ is smooth, we find an open convex zero neighborhood
	$\widetilde{\WF}^L_2 \subseteq \phi_2(\WF_1 \cap \WF_2)$.
	By \refer{prop:Superposition_glatter_Abb_auf_weig-van_Abb}, the map
	\[
		\CcFvanK{\UF}{\widetilde{\WF}^L_2}{\ell} \to \CcFvanK{\UF}{\phi_1(\WF_1)}{\ell}
		:
		\gamma \mapsto \phi_1 \circ \phi_2^{-1} \circ \gamma
	\]
	is defined and smooth. This implies that
	\[
		\phi_2^{-1} \circ \CcFvanK{\UF}{\widetilde{\WF}^L_2}{\ell}
		\subseteq
		\phi_1^{-1} \circ \CcFvanK{\UF}{\phi_1(\WF_1)}{\ell}.
	\]
	Hence the identity component of $(\G, \phi_2)^\UF_{\GewFunk, \ell}$
	is contained in the one of $(\G, \phi_1)^\UF_{\GewFunk, \ell}$,
	and the inclusion map of the former into the latter is smooth.

	Exchanging the roles of $\phi_1$ and $\phi_2$ in the preceding argument, we get the assertion.
\end{proof}

\begin{defi}\label{def:Zsghd_gewichtete_Abb_Gruppe-lokalkonvex}
	Let $\SX$ be a finite-dimensional space, $\UF \subseteq \SX$ an open nonempty subset,
	$\GewFunk \subseteq \cl{\R}^\UF$ with $1_\UF \in \GewFunk$, $\ell \in \cl{\N}$
	and $\G$ a locally convex Lie group.
	Henceforth, we write $\glstext{gew_Abbildungsgruppe_abfallend-kompakt}$
	\index{weighted maps!into locally convex Lie groups}
	\index{mapping groups!with values in a locally convex Lie group}
	for the connected Lie group that was constructed in
	\refer{lem:Liegruppenstruktur_auf_gewichteten_Abbildungsgruppen;Zusammenhangskomponente-lokalkonvex}.
	There and in \refer{lem:Zusammenhangskomponente_gewichtete_Abbildungsgruppen_kartenunabhaengig-lokalkonvex}
	it was proved that for any centered chart $(\phi, \VF)$ of $\G$
	there exists an open $\one$-neighborhood $\WF$ such that the inverse map of
	\[
		\CcFvanK{\UF}{\phi(\WF)}{\ell} \to \CcF{\UF}{\G}{\ell} : \gamma \mapsto \phi^{-1} \circ \gamma
	\]
	is a chart, and that for any convex zero neighborhood $\widetilde{\WF} \subseteq \phi(\WF)$, the set
	\[
		\phi^{-1} \circ \CcFvanK{\UF}{\widetilde{\WF}}{\ell}
	\]
	generates $\CcFvanK{\UF}{\G}{\ell}$.
\end{defi}

\subsection{A larger Lie group of weighted mappings}
\label{susec:LArger_group_CWvanK}
We extend the Lie group described in \refer{def:Zsghd_gewichtete_Abb_Gruppe-lokalkonvex}.
Generally, it is possible using \refer{lem:Erzeugung_von_Liegruppen_aus_lokalen}
to extend a Lie group $\G$ that is a subgroup of a larger group $H$
by looking at its \enquote{smooth normalizer}\index{smooth normalizer}, that is all $h \in H$ that normalize $\G$
and for which the inner automorphism, restricted to suitable $\one$-neighborhoods, is smooth.
This approach has the disadvantage that we do not really know which maps are contained in the smooth normalizer.
So in the following, we will define a subset of $\G^\UF$ and show that it is a group
contained in the smooth normalizer of $\CcFvanK{\UF}{\G}{\ell}$.

Further, we show that this bigger group contains certain groups of \emph{rapidly decreasing mappings}
constructed in \cite{MR654676} as open subgroups.
\subsubsection{A group of mappings}
We define a set of mappings.
\begin{defi}
	Let $\G$ be a locally convex Lie group, $\SX$ a finite-dimensional vector space,
	$\UF \subseteq \SX$ a nonempty open subset, $\GewFunk \subseteq \cl{\R}^\UF$ nonempty and $k\in\cl{\N}$.
	Then for any centered chart $(\phi, \VF_\phi)$ of $\G$,
	compact set $K \subseteq \UF$ and $h \in \DCcInf{\UF}{\R}$ with $h \equiv 1_\UF$ on a neighborhood of $K$
	we define $M((\phi, \VF_\phi), K, h)$ as the set
	\[
		\{
		\gamma \in \ConDiff{\UF}{\G}{k} :
		\gamma(\UF\setminus K)\subseteq \VF_\phi \text{ and }
		\rest{(1_\UF - h) \cdot (\phi \circ \gamma)}{\UF\setminus K} \in \CcFvanK{\UF\setminus K}{\LieAlg{\G}}{k}
		\}.
	\]
	Further we define
	\[
		\glstext{Dicke_gew_Abbildungsgruppe_abfallend-kompakt}
		\ndef \bigcup_{(\phi, \VF_\phi), K, h} M((\phi, \VF_\phi), K, h).
	\]
	\index{weighted maps!into locally convex Lie groups}
\end{defi}
In the following, we show that $\CcFvanKBig{\UF}{\G}{k}$ is a subgroup of $\G^\UF$.
In order to do this, we provide some technical tools.
First, we show that we can use a cutoff technique to shrink the domain of a decreasing function.
\begin{lem}\label{lem:Kriterium_fuer_Bleiben_in_van-weigh_maps_bei_Einschraenken}
	Let $\SX$ be a finite-dimensional space,  $\UF \subseteq \SX$ an open nonempty subset,
	$\SY$ a locally convex space
	and $\GewFunk \subseteq \cl{\R}^\UF$ nonempty.
	Let $k \in \cl{\N}$ and $\gamma \in \ConDiff{\UF}{\SY}{k}$.
	\begin{enumerate}
		\item\label{enum1:Kriterium_fuer_Bleiben_in_van-weigh_maps_bei_Einschraenken-a}
		Suppose that $\gamma \in \CcFvanK{\UF}{\SY}{k}$.
		Let $A \subseteq \UF$ be a closed nonempty set such that $\rest{\gamma}{\UF\setminus A} \equiv 0$
		and $\VF \sub \UF$ an open neighborhood of $A$.
		Then $\rest{\gamma}{\VF} \in \CcFvanK{\VF}{\SY}{k}$.

		\item\label{enum1:Kriterium_fuer_Bleiben_in_van-weigh_maps_bei_Einschraenken-b}
		Let $K_1 \subseteq K_2 \subseteq \UF$ be closed sets such that
		$\rest{\gamma}{\UF\setminus K_1} \in \CcFvanK{\UF\setminus K_1}{\SY}{k}$
		and $h \in \BC{\UF}{\R}{\infty}$ such that $h \equiv 1$ on a neighborhood
		of $K_2$. Then
		\[
			\rest{(1_\UF - h) \cdot \gamma}{\UF\setminus K_2} \in \CcFvanK{\UF\setminus K_2}{\SY}{k}.
		\]
	\end{enumerate}
\end{lem}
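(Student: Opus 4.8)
The plan is to handle the two parts in turn, in each case first reducing to a \emph{normed} range and then using the elementary observation that a map which vanishes on an open set has all its derivatives vanishing there as well. The reduction works uniformly: by \refer{defi:Definition_CFvanK} together with \refer{lem:gew_Abb_in_Lokalkovexe_ist_Produkt_von_gew_Abb_in_normierte}, a map belongs to $\CcFvanK{\cdot}{\SY}{k}$ if and only if $\HomQuot{\cdot}{p}$ belongs to $\CcFvanK{\cdot}{\SY_p}{k}$ for every $p \in \normsOn{\SY}$. Since $\HomQuot{\cdot}{p}$ is continuous linear and commutes both with restriction and with multiplication by the scalar function $1_\UF - h$, it suffices to establish each assertion for every quotient $\SY_p$. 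Hence throughout I may assume $\SY$ normed.

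For \refer{enum1:Kriterium_fuer_Bleiben_in_van-weigh_maps_bei_Einschraenken-a} I would argue as follows. As $\rest{\gamma}{\UF\setminus A}\equiv 0$ on the open set $\UF\setminus A$, all iterated derivatives satisfy $\FAbl[\ell]{\gamma}(x)=0$ for $x\in\UF\setminus A$, and in particular for $x\in\VF\setminus A$ (using $A\subseteq\VF$). Restriction keeps $\gamma$ in $\CcF{\VF}{\SY}{k}$, so only the decay condition must be verified. Given $f\in\cW$, $\ell\le k$ and $\eps>0$, choose by hypothesis a compact $K_0\subseteq\UF$ with $\hn{\rest{\gamma}{\UF\setminus K_0}}{f}{\ell}<\eps$, and set $K:=K_0\cap A$. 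Then $K$ is compact with $K\subseteq A\subseteq\VF$, and every $x\in\VF\setminus K$ either lies outside $A$, where $\FAbl[\ell]{\gamma}(x)=0$, or lies in $A\setminus K = A\setminus K_0\subseteq\UF\setminus K_0$. Taking suprema gives $\hn{\rest{\gamma}{\VF\setminus K}}{f}{\ell}\le\hn{\rest{\gamma}{\UF\setminus K_0}}{f}{\ell}<\eps$, whence $\rest{\gamma}{\VF}\in\CcFvanK{\VF}{\SY}{k}$.

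For \refer{enum1:Kriterium_fuer_Bleiben_in_van-weigh_maps_bei_Einschraenken-b} I would write $g:=1_\UF-h\in\BC{\UF}{\R}{\infty}$, $\Omega_1:=\UF\setminus K_1$ and $\Omega_2:=\UF\setminus K_2\subseteq\Omega_1$, and proceed in two steps. First, I claim $\rest{(g\gamma)}{\Omega_1}\in\CcFvanK{\Omega_1}{\SY}{k}$: this follows from the bilinear superposition result \refer{cor:multilineare_Abb_und_CFvan_endl-dim}, applied to scalar multiplication $\R\times\SY\to\SY$ with $\rest{g}{\Omega_1}\in\BC{\Omega_{1}}{\R}{k}=\CF{\Omega_{1}}{\R}{\sset{1_{\Omega_{1}}}}{k}$ in a non-vanishing slot and $\rest{\gamma}{\Omega_1}\in\CcFvanK{\Omega_1}{\SY}{k}$ (the hypothesis) in the vanishing slot, the weight condition being trivially satisfied through $\abs{f}\le 1\cdot\abs{f}$. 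Second, I reduce to part (a): since $h\equiv 1$ on an open neighborhood $N$ of $K_2$, we have $g\equiv 0$, hence $g\gamma\equiv 0$, on $\Omega_1\cap N$. Putting $A:=\Omega_1\setminus N$, this is closed in $\Omega_1$ with $\rest{(g\gamma)}{\Omega_1\setminus A}\equiv 0$, and because $K_2\subseteq N$ one checks $A\subseteq\Omega_2$, so that $\Omega_2$ is an open neighborhood of $A$ inside $\Omega_1$. Applying \refer{enum1:Kriterium_fuer_Bleiben_in_van-weigh_maps_bei_Einschraenken-a} to $g\gamma$ on $\Omega_1$ then yields $\rest{(g\gamma)}{\Omega_2}\in\CcFvanK{\Omega_2}{\SY}{k}$, which is the assertion; the degenerate case $A=\emptyset$ is trivial since then $g\gamma$ vanishes on all of $\Omega_1$.

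The main obstacle is conceptual rather than computational: the naive restriction of a weighted decreasing map to a smaller open set need \emph{not} remain weighted decreasing, since the map need not decay near the newly exposed boundary. The decisive point in (a) is that the hypotheses $A\subseteq\VF$ and $\rest{\gamma}{\UF\setminus A}\equiv 0$ force the new boundary to lie in the zero set of $\gamma$ and of all its derivatives, which is exactly what the choice $K=K_0\cap A$ exploits; and part (b) is arranged so that multiplication by $g$ first manufactures precisely such a locally vanishing map, after which (a) applies verbatim.
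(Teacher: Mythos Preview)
Your proof is correct and follows essentially the same approach as the paper: in part (a) you take $K=K_0\cap A$ exactly as the paper does (the paper writes $\widetilde{K}=K\cap A$), and in part (b) you first apply \refer{cor:multilineare_Abb_und_CFvan_endl-dim} to land in $\CcFvanK{\Omega_1}{\SY}{k}$ and then invoke part (a) with the closed set where $g\gamma$ is supported, just as the paper does. The only cosmetic difference is that you reduce to normed $\SY$ at the outset while the paper carries the seminorm $p$ through directly; you also handle the degenerate case $A=\emptyset$ explicitly, which the paper omits.
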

\begin{proof}
	\refer{enum1:Kriterium_fuer_Bleiben_in_van-weigh_maps_bei_Einschraenken-a}
	It is obvious that $\rest{\gamma}{\VF} \in \CcF{\VF}{\SY}{k}$.
	Let $f \in \GewFunk$ and $\ell \in \N$ with $\ell \leq k$.
	For $\eps > 0$ and $p \in \normsOn{\SY}$
	there exists a compact set $K \subseteq \UF$
	such that $\hn{\rest{\gamma}{\UF\setminus K}}{p, f}{\ell} < \eps$.
	The set $\widetilde{K} \ndef K \cap A$ is compact and contained in $\VF$.
	Further $\hn{\rest{\gamma}{\VF\setminus \widetilde{K}}}{p, f}{\ell} < \eps$
	since $\FAbl[\ell]{\rest{\gamma}{\UF \setminus A}} = 0$.

	\refer{enum1:Kriterium_fuer_Bleiben_in_van-weigh_maps_bei_Einschraenken-b}
	Let $\VF \supseteq K$ be open in $\UF$ such that $\rest{h}{\VF} \equiv 1$.
	Then by \refer{cor:multilineare_Abb_und_CFvan_endl-dim},
	\[
		\rest{(1_\UF - h) \cdot \gamma}{\UF\setminus K_1} \in \CcFvanK{\UF\setminus K_1}{\SY}{k}.
	\]
	Further $\rest{(1_\UF - h) \cdot \gamma}{\UF \setminus (\UF \setminus \VF)} \equiv 0$.
	Since $\UF \setminus K_2$ is an open neighborhood of $\UF \setminus \VF$,
	an application of \refer{enum1:Kriterium_fuer_Bleiben_in_van-weigh_maps_bei_Einschraenken-a}
	finishes the proof.
\end{proof}
Now we examine $\CcFvanKBig{\UF}{\G}{k}$. We show that for a mapping in this set,
we can change the chart of $\G$, shrink the $\one$-neighborhood
and enlarge the compact set.
\begin{lem}\label{lem:Kartenwechsel_bei_normalteilenden_weigh.-van._Abb}
	Let $\SX$ be a finite-dimensional vector space,  $\UF \subseteq \SX$ an open nonempty subset,
	$\G$ a locally convex Lie group,
	$\GewFunk \subseteq \cl{\R}^\UF$ with $1_\UF \in \GewFunk$ and $k \in \cl{\N}$.
	Further, let $\gamma \in M((\phi, \VF_\phi), K, h)$.
	\begin{enumerate}
		\item\label{enum1:Kartenwechsel_bei_normalteilenden_weigh.-van._Abb-1}
		Then for each $\one$-neighborhood $\VF \subseteq \VF_\phi$,
		there exists a compact set $K_\VF \sub \UF$ such that
		for each map $h_\VF \in \DCcInf{\UF}{\R}$ with $h_\VF \equiv 1$ on a neighborhood of $K_\VF$,
		the map
		$
			\gamma \in M((\rest{\phi}{\VF}, \VF), K_\VF, h_\VF)
		$.

		\item\label{enum1:Kartenwechsel_bei_normalteilenden_weigh.-van._Abb-2}
		Let $(\psi, \VF_\psi)$ be a centered chart. Then there exists
		a compact set $K_\psi \subseteq \UF$ such that
		$
			\gamma \in M((\psi, \VF_\psi), K_\psi, h_\psi)
		$
		for each $h_\psi \in \DCcInf{\UF}{\R}$ with $h_\psi \equiv 1$ on a neighborhood of $K_\psi$.
		
		\item\label{enum1:Kartenwechsel_bei_normalteilenden_weigh.-van._Abb-3}
		Let $\eta \in M((\phi, \VF_\phi), \widetilde{K}, \widetilde{h})$.
		There exists a compact set $L$ such that
		for each $g \in \DCcInf{\UF}{\R}$ with $g \equiv 1$ on a neighborhood of $L$, we have
		$
			\gamma, \eta \in M((\phi, \VF_\phi), L, g).
		$
	\end{enumerate}
\end{lem}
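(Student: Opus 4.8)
The plan is, in each of the three cases, to verify the two defining conditions for membership in a set $M((\cdot),\cdot,\cdot)$: that $\gamma$ takes its values in the relevant chart domain off a compact set, and that the cut-off coordinate map lies in the appropriate space $\CcFvanK{\cdot}{\LieAlg{\G}}{k}$. Three elementary facts will be used throughout. First, the restriction of a map in $\CcFvanK{\UF'}{\LieAlg{\G}}{k}$ to an open subset of $\UF'$ again lies in the corresponding space, which is immediate from the definition since the relevant suprema only shrink. Second, the product of such a map with a bounded smooth function (an element of $\BC{\UF}{\R}{\infty}$, for instance $1_\UF - h$ for a cut-off $h \in \DCcInf{\UF}{\R}$) stays in the space; this is \refer{cor:multilineare_Abb_und_CFvan_endl-dim} applied to scalar multiplication. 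Third, a map in $\CcFvanK{\UF'}{\LieAlg{\G}}{k}$ decays into every $0$-neighborhood: given finitely many $p_1,\dotsc,p_n \in \normsOn{\LieAlg{\G}}$ and $r > 0$, there is a compact $C$ off which all $p_i$ of its values are $< r$ (the defining property for the weight $1_\UF$ and order $0$; cf.\ also \refer{lem:Bild_einer_verschwindenden_gewichteten_Abb_ist_kompakt}). The change of chart additionally rests on the superposition theorem \refer{prop:Superposition_glatter_Abb_auf_weig-van_Abb}.

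For (a), set $w \ndef \rest{(1_\UF - h)\cdot(\phi\circ\gamma)}{\UF\setminus K} \in \CcFvanK{\UF\setminus K}{\LieAlg{\G}}{k}$, which exists by hypothesis. On the open set $\UF\setminus(K\cup\supp{h})$ we have $h = 0$, hence $\phi\circ\gamma = w$ there, so $\rest{(\phi\circ\gamma)}{\UF\setminus(K\cup\supp{h})}$ is a genuine vanishing weighted map. Choosing $p_1,\dotsc,p_n$ and $r > 0$ with $\{y : p_i(y) < r \text{ for all } i\} \subseteq \phi(\VF)$ and invoking the third fact, I obtain a compact $C$ off which $\phi\circ\gamma$ takes values in $\phi(\VF)$, i.e.\ $\gamma$ takes values in $\VF$. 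Then $K_\VF \ndef K\cup\supp{h}\cup C$ is compact, the values condition holds off $K_\VF$, and for any admissible $h_\VF$ one has $\rest{(1_\UF - h_\VF)\cdot(\phi\circ\gamma)}{\UF\setminus K_\VF} = (1_\UF - h_\VF)\cdot\rest{w}{\UF\setminus K_\VF}$, a bounded-smooth multiple of a vanishing map, hence in $\CcFvanK{\UF\setminus K_\VF}{\LieAlg{\G}}{k}$ by the first two facts.

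Part (b) is the main obstacle, since it requires reconciling two unrelated charts through the transition diffeomorphism. First I apply (a) to shrink to a chart $(\rest{\phi}{\VF},\VF)$ with $\VF \subseteq \VF_\phi\cap\VF_\psi$ and $\phi(\VF)$ star-shaped about $0$, obtaining a compact $K_\VF$ off which $\rest{(\phi\circ\gamma)}{\UF\setminus K_\VF}$ is an honest element of $\CcFvanK{\UF\setminus K_\VF}{\LieAlg{\G}}{k}$ with values in the star-shaped set $\phi(\VF)$. The transition map $\tau \ndef \psi\circ\phi^{-1}$ is smooth with $\tau(0) = 0$, so \refer{prop:Superposition_glatter_Abb_auf_weig-van_Abb}, applied on the open set $\UF\setminus K_\VF$, gives $\rest{(\psi\circ\gamma)}{\UF\setminus K_\VF} = \tau\circ\rest{(\phi\circ\gamma)}{\UF\setminus K_\VF} \in \CcFvanK{\UF\setminus K_\VF}{\LieAlg{\G}}{k}$. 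Setting $K_\psi \ndef K_\VF$ (the values already lie in $\VF \subseteq \VF_\psi$), the values condition is automatic, and for any admissible $h_\psi$ the map $\rest{(1_\UF - h_\psi)\cdot(\psi\circ\gamma)}{\UF\setminus K_\psi}$ is again a bounded-smooth multiple of a vanishing map. The delicate point is precisely that superposition demands an honest vanishing map valued in a star-shaped $0$-neighborhood, which is why one first passes through (a) to isolate a region on which $\phi\circ\gamma$ itself, rather than only $(1_\UF - h)\cdot(\phi\circ\gamma)$, vanishes, and why $\phi(\VF)$ must be arranged star-shaped and inside both chart domains simultaneously.

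Part (c) is routine. I take any compact $L \supseteq K\cup\widetilde{K}\cup\supp{h}\cup\supp{\widetilde{h}}$ and an admissible $g$ for $L$. The hypothesis on $\gamma$ gives $\rest{(1_\UF - h)\cdot(\phi\circ\gamma)}{\UF\setminus K}\in\CcFvanK{\UF\setminus K}{\LieAlg{\G}}{k}$, whence $\rest{(\phi\circ\gamma)}{\UF\setminus(K\cup\supp{h})}$ lies in the corresponding space; an application of \refer{lem:Kriterium_fuer_Bleiben_in_van-weigh_maps_bei_Einschraenken}\,\refer{enum1:Kriterium_fuer_Bleiben_in_van-weigh_maps_bei_Einschraenken-b}, with the cut-off $g$ (using that $\supp{h}\setminus K \subseteq L\setminus K$ and $g \equiv 1$ near $L\setminus K$), yields $\rest{(1_\UF - g)\cdot(\phi\circ\gamma)}{\UF\setminus L}\in\CcFvanK{\UF\setminus L}{\LieAlg{\G}}{k}$, while the values condition holds because $L \supseteq K$. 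The identical argument for $\eta$ gives $\gamma,\eta \in M((\phi,\VF_\phi),L,g)$, as required.
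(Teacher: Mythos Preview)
Your ``first elementary fact''---that restricting a map in $\CcFvanK{\UF'}{\LieAlg{\G}}{k}$ to an open subset $\UF'' \subseteq \UF'$ again lands in $\CcFvanK{\UF''}{\LieAlg{\G}}{k}$---is false, and this is where the argument breaks. The decay condition demands, for each $\eps$, a compact set \emph{inside $\UF''$} off which the seminorm is small; the compact set furnished by the hypothesis lives only in $\UF'$ and may protrude into $\UF'\setminus\UF''$ in a way that cannot be trimmed compactly. A concrete failure: $\UF' = \R$, $\UF'' = \R\setminus\{0\}$, $\GewFunk = \{1\}$, $w(x) = e^{-x^2}$. Then $w \in \CcFvanK{\R}{\R}{\infty}$, but $\rest{w}{\UF''}\notin\CcFvanK{\UF''}{\R}{0}$, because $w$ stays near $1$ on both sides of $0$ and $\{x : 0 < |x| \le \sqrt{\ln 2}\}$ is not compact. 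Exactly the same obstruction applies when you pass from $\UF\setminus K$ to $\UF\setminus K_\VF$.

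This gap affects all three parts, most seriously (b): there you need $\rest{(\phi\circ\gamma)}{\UF\setminus K_\VF}$ itself, not just a cut-off version, to lie in $\CcFvanK{\UF\setminus K_\VF}{\LieAlg{\G}}{k}$ before you can feed it to the superposition theorem; that claim is unjustified. The repair is precisely the lemma you already cite in (c), \refer{lem:Kriterium_fuer_Bleiben_in_van-weigh_maps_bei_Einschraenken}\,\refer{enum1:Kriterium_fuer_Bleiben_in_van-weigh_maps_bei_Einschraenken-b}: never restrict plainly, but always multiply by a further cut-off that vanishes near the set being removed. In (a) apply that lemma with $K_1 = K$, $K_2 = K_\VF$ to $(1_\UF - h)(\phi\circ\gamma)$, noting that on $\UF\setminus K_\VF$ one has $(1_\UF - h_\VF)(1_\UF - h) = (1_\UF - h_\VF)$. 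In (b) apply superposition to the cut-off map $(1_\UF - \widetilde{h})(\phi\circ\gamma)$ itself, choosing $\widetilde{h}$ to be $[0,1]$-valued so that the product still takes values in the star-shaped set $\phi(\VF)$; afterwards enlarge to $K_\psi = \widetilde{K}\cup\supp{\widetilde{h}}$ and cut off once more. In (c) apply the lemma directly with $K_1 = K$, $K_2 = L$, skipping the intermediate (false) restriction step.
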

\begin{proof}
	\refer{enum1:Kartenwechsel_bei_normalteilenden_weigh.-van._Abb-1}
	Since $\rest{(1_\UF - h) \cdot (\phi \circ \gamma)}{\UF\setminus K} \in \CcFvanK{\UF\setminus K}{\LieAlg{\G}}{k}$ and $1_\UF \in \GewFunk$,
	there exists a compact set $\widetilde{K} \subseteq \UF$ such that
	\[
		(1_\UF - h) \cdot (\phi \circ \gamma) ((\UF\setminus K)\setminus \widetilde{K}) \subseteq \phi(\VF).
	\]
	We define the compact set $K_\VF \ndef \widetilde{K} \cup \supp{h}$ and choose
	$h_\VF \in \DCcInf{\UF}{\R}$ with $h_\VF \equiv 1$ on a neighborhood of $K_\VF$.
	Using \refer{lem:Kriterium_fuer_Bleiben_in_van-weigh_maps_bei_Einschraenken}
	and the fact that $h \equiv 0$ on $\UF\setminus K_\VF$, we see that
	\[
		\rest{(1_\UF - h_\VF) \cdot (\phi \circ \gamma)}{\UF\setminus K_\VF}
		=
		\rest{(1_\UF - h_\VF) (1_\UF - h) \cdot (\phi \circ \gamma)}{\UF\setminus K_\VF}
		\in \CcFvanK{\UF\setminus K_\VF}{\LieAlg{\G}}{k}.
	\]
	Further we calculate using again that $h \equiv 0$ on $\UF\setminus K_\VF$:
	\[
		(\phi \circ \gamma)(\UF \setminus K_\VF) = (1_\UF - h) \cdot (\phi \circ \gamma) ((\UF\setminus K)\setminus K_\VF)
		\subseteq \phi(\VF).
	\]
	
	\refer{enum1:Kartenwechsel_bei_normalteilenden_weigh.-van._Abb-2}
	There exists an open $\one$-neighborhood $\VF \subseteq \VF_\phi \cap \VF_\psi$
	such that $\phi(\VF)$ is star-shaped with center~$0$.
	We know from \refer{enum1:Kartenwechsel_bei_normalteilenden_weigh.-van._Abb-1}
	that there exist a compact set $\widetilde{K} \subseteq \UF$ and
	a map $\widetilde{h} \in \DCcInf{\UF}{[0,1]}$ with $\widetilde{h} \equiv 1$
	on a neighborhood of $\widetilde{K}$ such that
	\[
		\gamma \in M((\rest{\phi}{\VF}, \VF), \widetilde{K}, \widetilde{h}).
	\]
	We conclude with \refer{prop:Superposition_glatter_Abb_auf_weig-van_Abb} that
	\[
		(\psi \circ \phi^{-1}) \circ (\rest{(1_\UF - \widetilde{h}) \cdot (\phi \circ \gamma)}{\UF\setminus \widetilde{K}} \in \CcFvanK{\UF\setminus \widetilde{K}}{\LieAlg{\G}}{k}.
	\]
	Let $h_\psi \in \DCcInf{\UF}{\R}$ such that $h_\psi \equiv 1$ on a neighborhood of $K_\psi$,
	where $K_\psi \ndef \widetilde{K} \cup \supp{\widetilde{h}}$.
	We conclude with \refer{lem:Kriterium_fuer_Bleiben_in_van-weigh_maps_bei_Einschraenken} that
	\[
		(1_\UF - h_\psi) \cdot (\psi \circ \phi^{-1}) \circ (\rest{(1_\UF - \widetilde{h}) \cdot (\phi \circ \gamma)}{\UF\setminus K_\psi} \in \CcFvanK{\UF\setminus K_\psi}{\LieAlg{\G}}{k}.
	\]
	Since $(1_\UF - \widetilde{h}) \equiv 1_\UF$ on $\UF\setminus K_\psi$, the proof is finished.
	
	\refer{enum1:Kartenwechsel_bei_normalteilenden_weigh.-van._Abb-3}
	We set $L \ndef \supp{h} \cup \supp{\widetilde{h}}$.
	Then
	\[
		\gamma(\UF \setminus L) \subseteq \gamma(\UF \setminus K) \subseteq \VF_\phi,
	\]
	and for $g \in \DCcInf{\UF}{\R}$ with $g \equiv 1$ on a neighborhood of $L$
	we conclude using \refer{lem:Kriterium_fuer_Bleiben_in_van-weigh_maps_bei_Einschraenken} that
	\[
		\rest{(1_\UF - g) \cdot (\phi \circ \gamma)}{U\setminus L}
		= \rest{(1_\UF - g) \cdot (1_\UF - h) \cdot (\phi \circ \gamma)}{U\setminus L}
		\in \CcFvanK{\UF\setminus L}{\LieAlg{\G}}{k}.
	\]
	Since the argument for $\eta$ is the same, we are home.
\end{proof}
Now we are ready to show that $\CcFvanKBig{\UF}{\G}{k}$ is a group.
\begin{lem}\label{lem:Dicke_Abbildungsgruppe_ist_Gruppe}
	Let $\SX$ be a finite-dimensional vector space,  $\UF \subseteq \SX$ an open nonempty subset,
	$\G$ a locally convex Lie group,
	$\GewFunk \subseteq \cl{\R}^\UF$ with $1_\UF \in \GewFunk$ and $k \in \cl{\N}$.
	Then the set $\CcFvanKBig{\UF}{\G}{k}$ is a subgroup of $\G^\UF$.
\end{lem}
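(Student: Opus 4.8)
The plan is to verify the three subgroup axioms for the pointwise operations on $\G^\UF$ (as in \refer{defi:Gruppenoperationen_Abbildungsgruppen}): that the constant map $e : \UF \to \G$, $x \mapsto \one$, lies in $\CcFvanKBig{\UF}{\G}{k}$, and that the set is closed under pointwise inversion and pointwise multiplication. The identity is immediate: for any centered chart $(\phi, \VF_\phi)$, any compact $K$ and any admissible $h$ we have $e(\UF\setminus K) = \{\one\} \subseteq \VF_\phi$ and $\phi \circ e \equiv 0$, so $\rest{(1_\UF - h)\cdot(\phi\circ e)}{\UF\setminus K} \equiv 0 \in \CcFvanK{\UF\setminus K}{\LieAlg{\G}}{k}$, whence $e \in M((\phi, \VF_\phi), K, h)$ and the set is nonempty.

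The essential preparatory step — which is exactly what \refer{lem:Kartenwechsel_bei_normalteilenden_weigh.-van._Abb} was designed to supply — is to reduce two given elements to common data. For the product, starting from $\gamma \in M((\phi_1, \VF_1), K_1, h_1)$ and $\eta \in M((\phi_2, \VF_2), K_2, h_2)$, I would first apply \refer{enum1:Kartenwechsel_bei_normalteilenden_weigh.-van._Abb-2} to pass to a single centered chart $(\phi, \VF)$ with $\phi(\VF)$ star-shaped, obtaining $\gamma, \eta \in M((\phi, \VF), \cdot, \cdot)$, and then \refer{enum1:Kartenwechsel_bei_normalteilenden_weigh.-van._Abb-3} to find a single compact set $L$ and a single cutoff $g$ with $\gamma, \eta \in M((\phi, \VF), L, g)$. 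Thus $(1_\UF - g)\cdot(\phi\circ\gamma)$ and $(1_\UF - g)\cdot(\phi\circ\eta)$ both lie in $\CcFvanK{\UF\setminus L}{\LieAlg{\G}}{k}$ and agree with $\phi\circ\gamma$, $\phi\circ\eta$ off $\supp{g}$.

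The heart of the argument is then confinement near infinity combined with superposition. I would choose a smaller symmetric $\one$-neighborhood $\WF \subseteq \VF$ with $\phi(\WF)$ star-shaped and $m_\G(\WF\times\WF)\subseteq\VF$. By \refer{lem:Bild_einer_verschwindenden_gewichteten_Abb_ist_kompakt} the images of the two vanishing weighted maps above are relatively compact and tend to $0$, so there is a compact $L' \supseteq L$ outside which $\gamma$ and $\eta$ take values in $\WF$; hence $(\gamma\eta)(\UF\setminus L') \subseteq m_\G(\WF\times\WF) \subseteq \VF$, which is the first defining condition for $\gamma\eta$ in the chart $(\phi,\VF)$. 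For the second condition, the pair $\bigl((1_\UF - g)\,\phi\circ\gamma,\ (1_\UF - g)\,\phi\circ\eta\bigr)$ is, via the product isomorphism of \refer{lem:gewichtete,verschwindende_Abb_Produktisomorphie-lokalkonvex}, a decreasing weighted map into $\phi(\WF)\times\phi(\WF)$, so applying \refer{prop:Superposition_glatter_Abb_auf_weig-van_Abb} to the smooth map $\phi\circ m_\G\circ(\phi^{-1}\times\phi^{-1})$ (which sends $(0,0)$ to $0$) shows that $\phi\circ(\gamma\eta)$, cut off by a suitable $g'$ supported near $L'$, is a decreasing weighted map; \refer{lem:Kriterium_fuer_Bleiben_in_van-weigh_maps_bei_Einschraenken} then transfers this to the restriction to $\UF\setminus L'$, placing $\gamma\eta$ in $M((\phi,\VF), L', g')$. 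Inversion is the one-variable analogue: after arranging a symmetric chart via \refer{enum1:Kartenwechsel_bei_normalteilenden_weigh.-van._Abb-1}, one confines $\gamma(x)$ to a small $\WF$ with $\WF^{-1}\subseteq\VF$ outside a compact set and applies \refer{prop:Superposition_glatter_Abb_auf_weig-van_Abb} to the local inversion $\phi\circ I_\G\circ\phi^{-1}$ (which fixes $0$).

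The main obstacle is bookkeeping rather than conceptual: one must coordinate the several compact sets and cutoff functions so that the hypotheses of \refer{prop:Superposition_glatter_Abb_auf_weig-van_Abb} (star-shaped domain, value $0$ at $0$, values confined to the chart) hold simultaneously for the product, and so that the confinement ``$\gamma(x),\eta(x)\in\WF$ outside a compact set'' is genuinely available. That confinement is precisely where the hypothesis $1_\UF \in \GewFunk$ enters, forcing, through \refer{lem:Bild_einer_verschwindenden_gewichteten_Abb_ist_kompakt}, the decay of $\gamma$ and $\eta$ towards $\one$; checking it carefully — rather than the formal invocation of the superposition proposition — is the delicate point.
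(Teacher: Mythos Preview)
Your proposal is correct and follows essentially the same route as the paper: normalize both maps to common data via \refer{lem:Kartenwechsel_bei_normalteilenden_weigh.-van._Abb}, pass through the product isomorphism, apply \refer{prop:Superposition_glatter_Abb_auf_weig-van_Abb} to the local group operation, and clean up with \refer{lem:Kriterium_fuer_Bleiben_in_van-weigh_maps_bei_Einschraenken}. The paper streamlines things slightly by treating $H_\G(x,y) = x\cdot y^{-1}$ in a single step and by shrinking the chart to $(\rest{\phi}{\VF},\VF)$ via part~\refer{enum1:Kartenwechsel_bei_normalteilenden_weigh.-van._Abb-1} of the Kartenwechsel lemma (so that the confinement $\gamma(\UF\setminus K),\eta(\UF\setminus K)\subseteq\VF$ is already part of the $M$-data) rather than enlarging the compact set afterwards as you do, but the substance is identical.
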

\begin{proof}
	Let $(\phi, \VF_\phi)$ be a centered chart for $\G$
	and $\VF \subseteq \VF_\phi$ an open neighborhood of $\one$ such that
	$m_\G (\VF \times I_\G(\VF)) \subseteq \VF_\phi$ and $\phi(\VF)$ is star-shaped.
	We define the map
	\[
		H_\G : \VF \times \VF \to \VF_\phi : (x, y) \mapsto m_\G(x, I_\G(y)).
	\]
	Let $\gamma, \eta \in \CcFvanKBig{\UF}{\G}{k}$.
	Using \refer{lem:Kartenwechsel_bei_normalteilenden_weigh.-van._Abb}
	we find a compact set $K\subseteq \UF$ and a map $h \in \DCcInf{\UF}{[0,1]}$
	with $h \equiv 1_\UF$ on $K$ such that
	\[
		\gamma, \eta \in M((\rest{\phi}{\VF}, \VF), K, h).
	\]
	We define $H_\phi \ndef \rest{\phi \circ H_\G \circ (\phi^{-1} \times \phi^{-1})}{\VF\times\VF}$
	and want to show that there exists a compact set $\widetilde{K}$
	and $\widetilde{h} \in \DCcInf{\UF}{\R}$ with
	$\widetilde{h} \equiv 1$ on a neighborhood of $\widetilde{K}$
	such that $H_\G \circ (\gamma, \eta) \in M((\phi, \VF_\phi), \widetilde{K}, \widetilde{h})$.
	It is obvious that
	\[
		(H_\G \circ (\gamma, \eta)) (\UF\setminus K) \subseteq m_\G (\VF \times I_\G(\VF)) \subseteq \VF_\phi.
	\]
	Since we know with \refer{lem:gewichtete,verschwindende_Abb_Produktisomorphie-lokalkonvex}
	that
	\[
		(1_\UF - h)\cdot (\phi \circ\gamma, \phi \circ\eta)
		= ((1_\UF - h)\cdot (\phi \circ\gamma), (1_\UF - h)\cdot (\phi \circ\eta))
		\in \CcFvanK{\UF\setminus K}{\LieAlg{\G} \times \LieAlg{\G}}{k},
	\]
	we conclude using \refer{prop:Superposition_glatter_Abb_auf_weig-van_Abb} that
	\[
		H_\phi \circ ((1_\UF - h)\cdot (\phi \circ\gamma, \phi \circ\eta)) \in \CcFvanK{\UF\setminus K}{\LieAlg{\G}}{k}.
	\]
	Further, $\widetilde{K} \ndef K \cup \supp{h}$ is a compact set, so by
	\refer{lem:Kriterium_fuer_Bleiben_in_van-weigh_maps_bei_Einschraenken}
	\[
		(1_\UF-\widetilde{h}) \cdot H_\phi \circ ((1_\UF - h)\cdot (\phi \circ\gamma, \phi \circ\eta))
		\in \CcFvanK{\UF\setminus \widetilde{K}}{\LieAlg{\G}}{k}
	\]
	for any $\widetilde{h} \in \DCcInf{\UF}{\R}$ with
	$\widetilde{h} \equiv 1$ on a neighborhood of $\widetilde{K}$.
	Since $(1_\UF - h) \equiv 0$ on $\UF\setminus \widetilde{K}$,
	$
	(1_\UF-\widetilde{h}) \cdot \rest{(\phi \circ H_\G \circ (\gamma,\eta))}{\UF\setminus \widetilde{K}}
		\in \CcFvanK{\UF\setminus \widetilde{K}}{\LieAlg{\G}}{k}
	$
	and hence
	\[
		H_\G \circ (\gamma, \eta) \in M((\phi, \VF_\phi), \widetilde{K}, \widetilde{h}).
	\]
	The proof is complete.
\end{proof}
\subsubsection{Inclusion in the smooth normalizer}
We show that $\CcFvanKBig{\UF}{\G}{k}$ is contained in the
smooth normalizer of $\CcFvanK{\UF}{\G}{k}$.
To this end, we show that each $\gamma \in \CcFvanKBig{\UF}{\G}{k}$ can be written
as a product of a compactly supported $\ConDiff{}{}{k}$-map and a $\ConDiff{}{}{k}$-map
that takes values in a chosen chart domain.
After that, we show that these two classes of mappings
are contained in the smooth normalizer of $\CcFvanK{\UF}{\G}{k}$.

We start with the following technical lemma about extending decreasing functions.
\begin{lem}\label{lem:Kriterium_fuer_Ausdehnen_von_van-weigh_maps}
	Let $\SX$ be a finite-dimensional space, $\UF \subseteq \SX$ an open nonempty subset,
	$A \subseteq \UF$ a closed subset,
	$\SY$ a locally convex space,
	$\GewFunk \subseteq \cl{\R}^\UF$ with $1_\UF \in \GewFunk$, $k \in \cl{\N}$
	and $\gamma \in \CcFvanK{\UF\setminus A}{\SY}{k}$.
	Then the map
	\[
		\widetilde{\gamma} : \UF \to \SY
		: x \mapsto \begin{cases}
		\gamma(x) & \text{if } x\in\UF\setminus A,\\
		0 & else
		\end{cases}
	\]
	is in $\CcFvanK{\UF}{\SY}{k}$.
\end{lem}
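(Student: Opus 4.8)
The plan is to show that the zero extension $\widetilde{\gamma}$ inherits each defining property of $\CcFvanK{\UF}{\SY}{k}$ from $\gamma$. The only genuinely delicate point is that $\widetilde{\gamma}$ is a $\ConDiff{}{}{k}$-map \emph{across} the boundary of $A$; once this is settled, with $\FAbl[\ell]{\widetilde{\gamma}}$ the zero extension of $\FAbl[\ell]{\gamma}$ for each $\ell\le k$, the weighted bounds and the compact-vanishing condition of \refer{defi:Definition_CFvanK} transfer almost mechanically. For instance (say $\SY$ normed), given a weight $f$, an order $\ell$ and $\eps>0$, a compact $K\subseteq\UF\setminus A$ witnessing $\hn{\rest{\gamma}{(\UF\setminus A)\setminus K}}{f}{\ell}<\eps$ is also compact in $\UF$, and since $\FAbl[\ell]{\widetilde{\gamma}}$ vanishes on $A$ we get $\hn{\rest{\widetilde{\gamma}}{\UF\setminus K}}{f}{\ell}<\eps$; finiteness of $\hn{\widetilde{\gamma}}{f}{\ell}$ is seen the same way. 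So everything reduces to the differentiability statement.

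I would first settle the case of a normed target $E$ (to be applied afterwards to $\SY_p$ and to $\Lin{\SX}{\SY_p}$) by induction on $k$. For $k=0$, continuity of $\widetilde{\gamma}$ at a point $x_0\in A$ is the Alexandroff-compactification argument of \refer{lem:Bild_einer_verschwindenden_gewichteten_Abb_ist_kompakt}: applying the vanishing condition with the weight $1_\UF\in\GewFunk$ produces a compact $K\subseteq\UF\setminus A$ with $\norm{\gamma}<\eps$ off $K$, and $\UF\setminus K$ is then a neighbourhood of $x_0$ on which $\norm{\widetilde{\gamma}}<\eps$. For the step $k\to k+1$ I would use the reduction to lower order \refer{lem:topologische_Zerlegung_von_CFkvan-kompakte_Version}: from $\gamma\in\CcFvanK{\UF\setminus A}{E}{k+1}$ it yields $\FAbl{\gamma}\in\CcFvanK{\UF\setminus A}{\Lin{\SX}{E}}{k}$, so by the inductive hypothesis its zero extension $\widetilde{\FAbl{\gamma}}\in\CcFvanK{\UF}{\Lin{\SX}{E}}{k}$ is continuous on $\UF$ and vanishes on $A$. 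The crux is then to show $\widetilde{\gamma}\in\FC{\UF}{E}{1}$ with $\FAbl{\widetilde{\gamma}}=\widetilde{\FAbl{\gamma}}$; granting this, $\FAbl{\widetilde{\gamma}}$ is $\ConDiff{}{}{k}$, and a second application of \refer{lem:topologische_Zerlegung_von_CFkvan-kompakte_Version} (forward direction, with the $k=0$ case for the second component) places $\widetilde{\gamma}$ in $\CcFvanK{\UF}{E}{k+1}$. The case $k=\infty$ is immediate from the finite cases.

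The differentiability at a boundary point is the main obstacle, and I would treat it by a mean value estimate along segments. Differentiability on $\UF\setminus A$ and on $\interior{A}$ being clear, fix $x_0\in A$ and $h$ small enough that $[x_0,x_0+h]\subseteq\UF$; as the candidate derivative $\widetilde{\FAbl{\gamma}}(x_0)$ and $\widetilde{\gamma}(x_0)$ both vanish, we must show $\norm{\widetilde{\gamma}(x_0+h)}/\norm{h}\to 0$. If $x_0+h\in A$ this is trivial; otherwise set $z\ndef x_0+t_1 h$ with $t_1\ndef\sup\{t\in[0,1]:x_0+th\in A\}$, so that $z\in A$ (as $A$ is closed) while the half-open segment $(z,x_0+h]$ lies in $\UF\setminus A$, where $\gamma$ is differentiable. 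Since $\widetilde{\gamma}$ is continuous on $[z,x_0+h]$ with $\widetilde{\gamma}(z)=0$, the mean value theorem (\refer{prop:MWS_f"ur_FC1_Abb}) gives
\[
	\norm{\widetilde{\gamma}(x_0+h)}
	= \norm{\widetilde{\gamma}(x_0+h)-\widetilde{\gamma}(z)}
	\le \norm{h}\cdot\sup\{\Opnorm{\widetilde{\FAbl{\gamma}}(y)}:\norm{y-x_0}\le\norm{h}\}.
\]
Because $\widetilde{\FAbl{\gamma}}$ is continuous at $x_0$ with $\widetilde{\FAbl{\gamma}}(x_0)=0$, the supremum tends to $0$ as $h\to 0$, which is exactly the required estimate; continuity of $\FAbl{\widetilde{\gamma}}=\widetilde{\FAbl{\gamma}}$ being already known, this yields $\widetilde{\gamma}\in\FC{\UF}{E}{1}$ and closes the induction.

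Finally I would lift to an arbitrary locally convex target $\SY$. Zero extension commutes with the quotient maps, so $\HomQuot{\widetilde{\gamma}}{p}=\widetilde{\HomQuot{\gamma}{p}}$ for every $p\in\normsOn{\SY}$, and $\HomQuot{\gamma}{p}\in\CcFvanK{\UF\setminus A}{\SY_p}{k}$ by definition; the normed case just proved gives $\HomQuot{\widetilde{\gamma}}{p}\in\CcFvanK{\UF}{\SY_p}{k}$, which is precisely the family of quotient conditions in \refer{defi:Definition_CFvanK}. It remains to verify $\widetilde{\gamma}\in\ConDiff{\UF}{\SY}{k}$, for which I would invoke \refer{prop:hohe_Ableitungen_d}: continuity of $\widetilde{\gamma}$ follows from that of each $\HomQuot{\widetilde{\gamma}}{p}$, and an induction on $\ell$ shows that every iterated directional derivative $\dA[\ell]{\widetilde{\gamma}}{}{}$ exists in $\SY$ and is the zero extension of $\dA[\ell]{\gamma}{}{}$ — at a point of $A$ its existence and vanishing are read off seminorm by seminorm from the fact that $\HomQuot{\widetilde{\gamma}}{p}\in\CcFvanK{\UF}{\SY_p}{k}\subseteq\ConDiff{\UF}{\SY_p}{k}$ has vanishing iterated derivatives there. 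Since $\morQuot{p}\circ\dA[\ell]{\widetilde{\gamma}}{}{}=\dA[\ell]{\HomQuot{\widetilde{\gamma}}{p}}{}{}$ is continuous for each $p$, so is $\dA[\ell]{\widetilde{\gamma}}{}{}$, and \refer{prop:hohe_Ableitungen_d} gives $\widetilde{\gamma}\in\ConDiff{\UF}{\SY}{k}$, completing the proof.
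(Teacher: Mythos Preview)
Your proposal is correct and follows essentially the same route as the paper: reduce to a normed target via $\HomQuot{\widetilde{\gamma}}{p}=\widetilde{\HomQuot{\gamma}{p}}$, show that each $\widetilde{\FAbl[\ell]{\gamma}}$ is continuous (hence in $\CcFvanK{\UF}{\cdot}{0}$) from the vanishing condition with the weight $1_\UF$, and establish differentiability across $\partial A$ by a mean value estimate along segments. The paper organises the induction over the order $\ell$ of the derivative via \refer{lem:Topologie_auf_CFk_ist_InitialTop_der_Ableitungen}, whereas you induct on $k$ via \refer{lem:topologische_Zerlegung_von_CFkvan-kompakte_Version}; these are cosmetically different packagings of the same argument.

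One small technical point in your segment estimate: \refer{prop:MWS_f"ur_FC1_Abb} as stated needs $\FC{}{}{1}$ on the whole segment $[z,x_0+h]$, but at the endpoint $z\in A$ you do not yet know $\widetilde{\gamma}$ is differentiable. The fix is immediate---apply the estimate on $[x_0+th,x_0+h]$ for $t>t_1$ and let $t\to t_1^+$, using the continuity of $\widetilde{\gamma}$ at $z$ (already secured by the $k=0$ case) to pass to the limit. The paper handles this in exactly the same spirit, approaching the infimum $t=\inf\{\tau>0:\,]\tau,t_n]\subseteq\UF\setminus A\}$ from above.
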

\begin{proof}
	Obviously, the assertion holds on $\UF\setminus A$ and $\interior{A}$,
	since $\widetilde{\gamma}$ and its derivatives vanish on $\interior{A}$.
	We show that $\widetilde{\gamma}$ is $\ConDiff{}{}{k}$ on $\partial A$
	and it and its derivatives also vanish there.
	Since this is true iff for each $p \in \normsOn{\SY}$,
	the map $\HomQuot{\widetilde{\gamma}}{p}$ is $\ConDiff{}{}{k}$ on $\partial A$
	and it and its derivatives vanish there, and the identity
	$\HomQuot{ \widetilde{\gamma} }{p} = \widetilde{\HomQuot{\gamma}{p} }$ holds,
	we may assume w.l.o.g. that $\SY$ is normable.
	
	Since $1_\UF \in \GewFunk$, for each $\ell \in \N$ with $\ell \leq k$,
	the map $\widetilde{ \FAbl[\ell]{\gamma} }$ is continuous and hence
	\[
		\widetilde{ \FAbl[\ell]{\gamma} }
		\in \CcFvanK{\UF}{\Lin[\ell]{\SX}{\SY}}{0}.
	\]
	Using \refer{lem:Topologie_auf_CFk_ist_InitialTop_der_Ableitungen}, it remains to show that $\widetilde{\gamma}$ is $\ConDiff{}{}{k}$ with
	$\FAbl[\ell]{ \widetilde{\gamma} } = \widetilde{ \FAbl[\ell]{\gamma} }$
	for all $\ell \in \N$ with $\ell \leq k$.
	We show the assertion by an induction over $\ell$.
	
	$\ell = 1$:
	Let $x \in \partial A$ and $h \in \SX$. If there exists $\delta > 0$ such that
	$x + ]\!-\!\delta, 0]\, h \sub A$ or $x + [0, \delta[\, h \sub A$,
	then $D_h \widetilde{\gamma}(x) = 0 = \widetilde{ \FAbl{\gamma} }(x) h $.
	\\
	Otherwise, there exists a null sequence $(t_n)_{n\in\N}$ in $]-\infty, 0[$ or $]0,\infty[$
	such that for each $n \in \N$, $x + t_n h \in \UF \setminus A$.
	After replacing $h$ by $-h$ if necessary, we may assume w.l.o.g. that all $t_n$ are positive.
	Since $1_\UF \in \GewFunk$, $\widetilde{ \FAbl{\gamma} }$ is continuous and $\widetilde{ \FAbl{\gamma} }(x) = 0$,
	given $\eps > 0$ we find $\delta > 0$ such that
	for all $s \in ]\!-\!\delta, \delta[$,
	\[
		\Opnorm{\widetilde{ \FAbl{\gamma} }(x + s h) } < \eps.
	\]
	We find an $n \in \N$ such that $t_n \in ]\!-\!\delta, \delta[$.
	Then we define
	\[
		t \ndef \inf \set{\tau > 0}{]\tau, t_n] \sub \UF\setminus A} > 0.
	\]
	We calculate for $\tau \in ]t, t_n[$:
	\begin{multline*}
		\left\norm{\tfrac{\widetilde{\gamma}(x + t_n h) - \widetilde{\gamma}(x + \tau h)}{t_n} \right}
		< \left\norm{\tfrac{\widetilde{\gamma}(x + t_n h) - \widetilde{\gamma}(x + \tau h)}{t_n - \tau}\right}
		\\
		=  \left\norm{ \Rint{0}{1}{ \FAbl{\gamma}\bigl(x + (s t_n + (1 - s) \tau) h\bigr) \eval \tfrac{t_n - \tau}{t_n - \tau} h }{s} \right}
		< \eps \norm{h}.
	\end{multline*}
	But $\widetilde{\gamma}(x + \tau h) \to 0$ as $\tau \to t$, and hence
	\[
		\left\norm{\tfrac{\widetilde{\gamma}(x + t_n h) - \widetilde{\gamma}(x)}{t_n} \right}
		= \left\norm{\tfrac{\widetilde{\gamma}(x + t_n h) }{t_n} \right}
		\leq \eps \norm{h}.
	\]
	Since $\eps$ was arbitrary, we conclude that
	$D_h \widetilde{\gamma}(x) = 0 = \widetilde{ \FAbl{\gamma} }(x) h $.

	$\ell \to \ell + 1$:
	Using the inductive hypothesis, we conclude that
	$\widetilde{\FAbl{\gamma}}$ is $\FC{}{}{\ell}$,
	and $\FAbl[\ell]{ \widetilde{\FAbl{\gamma}} } = \widetilde{ \FAbl[\ell]{ \FAbl{\gamma} }}$.
	Hence $\widetilde{\gamma}$ is $\FC{}{}{\ell + 1}$, so by \refer{lem:Ableitungen_von_D}
	$\FAbl[\ell + 1]{ \widetilde{\gamma} } = \widetilde{\FAbl[\ell + 1]{\gamma}}$.
\end{proof}

\begin{prop}\label{prop:Zerlegung_von_dicken_Abb-gruppen-Elementen}
	Let $\SX$ be a finite-dimensional space, $\UF \subseteq \SX$ an open nonempty subset,
	$\G$ a locally convex Lie group,
	$\GewFunk \subseteq \cl{\R}^\UF$ with $1_\UF \in \GewFunk$, $k \in \cl{\N}$,
	$(\phi, \VF_\phi)$ a centered chart of $\G$
	and $\gamma \in \CcFvanKBig{\UF}{\G}{k}$.
	Then there exist maps $\eta \in M((\phi, \VF_\phi), \emptyset, 0_\UF)$
	and $\chi \in \DCc{\UF}{\G}{k}$ such that
	\[
		\gamma = \eta \cdot \chi.
	\]
\end{prop}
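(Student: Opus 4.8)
The plan is to reduce everything to a star-shaped chart, carry out an explicit cut-and-paste construction there, and then observe that the resulting factor automatically lies in the given chart domain. First I would bring $\gamma$ into a usable form for the chart $\phi$ fixed in the statement: since $\gamma \in \CcFvanKBig{\UF}{\G}{k}$, it lies in some $M((\phi', \VF_{\phi'}), K, h)$, and by the change-of-chart statement \refer{lem:Kartenwechsel_bei_normalteilenden_weigh.-van._Abb}~\refer{enum1:Kartenwechsel_bei_normalteilenden_weigh.-van._Abb-2} we may assume $\gamma \in M((\phi, \VF_\phi), K, h)$. Next, choosing a convex open $0$-neighborhood $\WF_L \subseteq \phi(\VF_\phi)$ and setting $\WF \ndef \phi^{-1}(\WF_L)$, the set $\phi(\WF) = \WF_L$ is star-shaped with center~$0$; by \refer{lem:Kartenwechsel_bei_normalteilenden_weigh.-van._Abb}~\refer{enum1:Kartenwechsel_bei_normalteilenden_weigh.-van._Abb-1} there is a compact $K_\WF \subseteq \UF$ so that $\gamma \in M((\rest{\phi}{\WF}, \WF), K_\WF, h_\WF)$ for any $h_\WF \in \DCcInf{\UF}{[0,1]}$ with $h_\WF \equiv 1_\UF$ on a neighborhood of $K_\WF$; I would fix such a bump function $h_\WF$, which exists since $\SX$ is finite-dimensional.

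With these choices, the construction is: let $\zeta \ndef \widetilde{(1_\UF - h_\WF)\cdot(\phi\circ\gamma)}$ be the extension by zero across $K_\WF$ of $(1_\UF - h_\WF)\cdot(\phi\circ\gamma)|_{\UF \setminus K_\WF}$, and set $\eta \ndef \phi^{-1}\circ\zeta$ and $\chi \ndef \eta^{-1}\cdot\gamma$ (pointwise in $\G$). The membership $(1_\UF - h_\WF)\cdot(\phi\circ\gamma)|_{\UF\setminus K_\WF} \in \CcFvanK{\UF\setminus K_\WF}{\LieAlg{\G}}{k}$ is exactly the defining condition of $M((\rest{\phi}{\WF},\WF), K_\WF, h_\WF)$, so \refer{lem:Kriterium_fuer_Ausdehnen_von_van-weigh_maps} (with $A = K_\WF$, closed) yields $\zeta \in \CcFvanK{\UF}{\LieAlg{\G}}{k}$. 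To see $\zeta$ is $\phi(\WF)$-valued I argue pointwise: on $K_\WF$ we have $\zeta = 0$, while for $x \in \UF \setminus K_\WF$ the value $\zeta(x) = (1 - h_\WF(x))\,(\phi\circ\gamma)(x)$ lies on the segment $[0, (\phi\circ\gamma)(x)]$, which is contained in $\phi(\WF)$ by star-shapedness together with $\gamma(\UF\setminus K_\WF)\subseteq\WF$. Hence $\zeta \in \CcFvanK{\UF}{\phi(\WF)}{k}$, so $\eta$ is a well-defined $\ConDiff{}{}{k}$-map with $\eta(\UF)\subseteq\WF$ and $\phi\circ\eta = \zeta \in \CcFvanK{\UF}{\LieAlg{\G}}{k}$; that is, $\eta \in M((\rest{\phi}{\WF},\WF),\emptyset,0_\UF)$.

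It then remains to verify the three claims $\gamma = \eta\cdot\chi$, $\chi \in \DCc{\UF}{\G}{k}$ and $\eta \in M((\phi,\VF_\phi),\emptyset,0_\UF)$. The identity $\gamma = \eta\cdot\chi$ is immediate from $\chi = \eta^{-1}\cdot\gamma$. The map $\chi$ is $\ConDiff{}{}{k}$, being assembled from $\eta$, $\gamma$ and the smooth operations $m_\G$, $I_\G$; and on $\UF\setminus\supp{h_\WF}$ (which lies in $\UF\setminus K_\WF$, as $h_\WF\equiv 1_\UF$ near $K_\WF$) we have $h_\WF = 0$, hence $\zeta = \phi\circ\gamma$ and $\eta = \gamma$, so $\chi \equiv \one$ there; since $\supp{h_\WF}$ is compact, $\chi \in \DCc{\UF}{\G}{k}$. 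Finally, because $\WF\subseteq\VF_\phi$, the inclusion $\eta(\UF)\subseteq\WF\subseteq\VF_\phi$ together with $\phi\circ\eta\in\CcFvanK{\UF}{\LieAlg{\G}}{k}$ gives $\eta\in M((\phi,\VF_\phi),\emptyset,0_\UF)$, since enlarging the chart domain only relaxes the image condition in the definition of $M$.

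The genuinely delicate point is not the algebra of the decomposition but the interaction between the fixed chart and the star-shapedness needed to keep $\zeta$ inside the chart: this is precisely why I perform the construction on the smaller star-shaped chart $\WF$ and then invoke $M((\rest{\phi}{\WF},\WF),\emptyset,0_\UF)\subseteq M((\phi,\VF_\phi),\emptyset,0_\UF)$. The only non-formal analytic input is the smoothness and decay of the zero-extension $\zeta$, which is exactly \refer{lem:Kriterium_fuer_Ausdehnen_von_van-weigh_maps}; once that is granted, every other step is routine.
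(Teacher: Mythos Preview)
Your proof is correct and follows essentially the same route as the paper: both use \refer{lem:Kartenwechsel_bei_normalteilenden_weigh.-van._Abb} to bring $\gamma$ into $M((\phi,\VF_\phi),K,h)$, extend $(1_\UF-h)\cdot(\phi\circ\gamma)\vert_{\UF\setminus K}$ by zero via \refer{lem:Kriterium_fuer_Ausdehnen_von_van-weigh_maps}, set $\eta \ndef \phi^{-1}\circ\zeta$ and $\chi \ndef \eta^{-1}\cdot\gamma$, and check $\chi$ has compact support. Your additional pass through a star-shaped subchart $\WF\subseteq\VF_\phi$ makes explicit why $\zeta$ lands in the chart range (so that $\phi^{-1}\circ\zeta$ is defined), a point the paper's proof leaves implicit; otherwise the arguments coincide.
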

\begin{proof}
	Using \refer{lem:Kartenwechsel_bei_normalteilenden_weigh.-van._Abb}
	we find a compact set $K$ and $h \in \DCcInf{\UF}{[0,1]}$ such that
	$\gamma \in M((\phi, \VF_\phi), K, h)$.
	Using \refer{lem:Kriterium_fuer_Ausdehnen_von_van-weigh_maps} we see
	that
	\[
		\eta \ndef \phi^{-1} \circ \widetilde{\rest{(1_\UF - h)\cdot(\phi \circ \gamma)}{\UF\setminus K} }
		\in M((\phi, \VF_\phi), \emptyset, 0_\UF),
	\]
	and it is obvious that $\rest{\eta}{\UF \setminus \supp{h}} = \rest{\gamma}{\UF \setminus \supp{h}}$.
	Hence
	\[
		\chi \ndef  \eta^{-1} \cdot \gamma \in \DCc{\UF}{\G}{k},
	\]
	and obviously $\gamma = \eta \cdot \chi$.
\end{proof}
We now show that the weighted maps that take values in a suitable chart domain
are contained in the smooth normalizer.
\begin{lem}\label{lem:Normalteiler-Operation_ist_lokal_glatt-lokalkonvex}
	Let $\SX$ be a finite-dimensional space, $\UF \subseteq \SX$ an open nonempty subset,
	$\G$ a locally convex Lie group,
	$\GewFunk \subseteq \cl{\R}^\UF$ with $1_\UF \in \GewFunk$, $k \in \cl{\N}$
	and $(\phi, \VF_\phi)$ a centered chart of $\G$.
	Further let $\WF_\phi \subseteq \VF_\phi$ be an open $\one$-neighborhood such that
	\[
		\WF_\phi \cdot \WF_\phi \cdot \WF_\phi^{-1} \subseteq \VF_\phi
	\]
	and $\phi(\WF_\phi)$ is star-shaped with center~$0$. Then for each
	$\eta \in M((\phi, \WF_\phi), \emptyset, 0_\UF)$, the map
	\[
		\CcFvanK{\UF}{\phi(\WF_\phi)}{k} \to \CcFvanK{\UF}{\phi(\VF_\phi)}{k}
		: \gamma \mapsto \phi \circ (\eta \cdot (\phi^{-1} \circ \gamma) \cdot \eta^{-1})
	\]
	is smooth.
\end{lem}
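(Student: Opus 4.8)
The plan is to recognize the conjugation map as the superposition of a single \emph{fixed} smooth map applied to the pair consisting of the (fixed, but only weighted-decreasing) map $\eta$ and the variable $\gamma$, and then to invoke \refer{prop:Superposition_glatter_Abb_auf_weig-van_Abb}. First I would unwind the hypothesis $\eta \in M((\phi, \WF_\phi), \emptyset, 0_\UF)$: since $K = \emptyset$ and $h = 0_\UF$, this just says $\eta \in \ConDiff{\UF}{\G}{k}$ with $\eta(\UF) \subseteq \WF_\phi$ and $a \ndef \phi \circ \eta \in \CcFvanK{\UF}{\LieAlg{\G}}{k}$, so that $a \in \CcFvanK{\UF}{\phi(\WF_\phi)}{k}$. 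Next I introduce the map
\[
	\Psi : \phi(\WF_\phi) \times \phi(\WF_\phi) \to \LieAlg{\G}
	: (u,v) \mapsto \phi\bigl(\phi^{-1}(u) \cdot \phi^{-1}(v) \cdot \phi^{-1}(u)^{-1}\bigr),
\]
where $\cdot$ and $(\cdot)^{-1}$ denote the operations of $\G$. As $\phi^{-1}(u), \phi^{-1}(v) \in \WF_\phi$, the argument of $\phi$ lies in $\WF_\phi \cdot \WF_\phi \cdot \WF_\phi^{-1} \subseteq \VF_\phi$, so $\Psi$ is well-defined with values in $\phi(\VF_\phi)$; it is smooth because multiplication, inversion and the chart maps of the Lie group $\G$ are smooth, and $\Psi(0,0) = \phi(\one) = 0$ since $(\phi, \WF_\phi)$ is centered. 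A pointwise computation, using $\phi^{-1}(a(x)) = \eta(x)$, then yields the identity $\Psi \circ (a, \gamma) = \phi \circ (\eta \cdot (\phi^{-1} \circ \gamma) \cdot \eta^{-1})$, so the map under consideration is precisely $\gamma \mapsto \Psi \circ (a, \gamma)$.

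I would then factor this map as
\[
	\CcFvanK{\UF}{\phi(\WF_\phi)}{k}
	\xrightarrow{\;\gamma \mapsto (a,\gamma)\;}
	\CcFvanK{\UF}{\phi(\WF_\phi) \times \phi(\WF_\phi)}{k}
	\xrightarrow{\;\Psi_*\;}
	\CcFvanK{\UF}{\phi(\VF_\phi)}{k}.
\]
The first arrow is the composition of the continuous affine map $\gamma \mapsto (a, \gamma)$ into the product $\CcFvanK{\UF}{\phi(\WF_\phi)}{k} \times \CcFvanK{\UF}{\phi(\WF_\phi)}{k}$ with the inverse of the isomorphism of \refer{lem:gewichtete,verschwindende_Abb_Produktisomorphie-lokalkonvex} (its $\dagger\dagger$-version), hence smooth; here I use that $a$ is weighted-decreasing with values in $\phi(\WF_\phi)$. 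For the second arrow, $\phi(\WF_\phi) \times \phi(\WF_\phi)$ is open and star-shaped with center~$0$ and $\Psi$ is smooth with $\Psi(0,0)=0$, so for finite $k$ the superposition $\Psi_*$ is smooth by \refer{prop:Superposition_glatter_Abb_auf_weig-van_Abb} (applied with $\SY = \LieAlg{\G} \times \LieAlg{\G}$, $\SZ = \LieAlg{\G}$ and arbitrarily large order $m$); since $\Psi$ has values in $\phi(\VF_\phi)$, the image lands in $\CcFvanK{\UF}{\phi(\VF_\phi)}{k}$. The composite is thus smooth. For $k = \infty$ I would reduce to the finite case by the usual projective-limit argument: $\Psi_*$ as a map into $\CcFvanK{\UF}{\LieAlg{\G}}{n}$ factors through the continuous-linear (hence smooth) inclusion $\CcFvanK{\UF}{\phi(\WF_\phi) \times \phi(\WF_\phi)}{\infty} \to \CcFvanK{\UF}{\phi(\WF_\phi) \times \phi(\WF_\phi)}{n}$ followed by the finite-order superposition, so smoothness into each step yields smoothness into the projective limit by \refer{prop:Differenzierbarkeit_Abb_in_projektiven_Limes}.

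The main obstacle is that $\eta$ is only weighted-decreasing, neither bounded below nor valued in a fixed compact neighborhood of $\one$; in particular one cannot treat conjugation by $\eta$ as superposition with one fixed map $\Phi_\eta$, since such a map would vary with the point $x \in \UF$. The crucial move is to promote $\eta$, via $a = \phi \circ \eta$, to one of the two \emph{arguments} of the single fixed smooth map $\Psi$, and to observe — using \refer{lem:gewichtete,verschwindende_Abb_Produktisomorphie-lokalkonvex} — that $(a, \gamma)$ again lies in a weighted-decreasing space with values in a star-shaped set; here both $a$ (by the definition of $M((\phi,\WF_\phi),\emptyset,0_\UF)$) and $\gamma$ (by hypothesis) must be decreasing. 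Checking that $\Psi$ is well-defined with the prescribed codomain is exactly where the hypothesis $\WF_\phi \cdot \WF_\phi \cdot \WF_\phi^{-1} \subseteq \VF_\phi$ is used.
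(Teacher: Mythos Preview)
Your proof is correct and takes essentially the same approach as the paper: both arguments reduce the conjugation map to the superposition of a single fixed smooth map (vanishing at $0$) on a star-shaped domain, invoking \refer{prop:Superposition_glatter_Abb_auf_weig-van_Abb} together with the product isomorphism \refer{lem:gewichtete,verschwindende_Abb_Produktisomorphie-lokalkonvex}. The only cosmetic difference is that the paper uses the three-variable map $(\gamma_1,\gamma_2,\gamma_3)\mapsto \phi\bigl((\phi^{-1}\circ\gamma_1)\cdot(\phi^{-1}\circ\gamma_2)\cdot(\phi^{-1}\circ\gamma_3)^{-1}\bigr)$ and then substitutes $\gamma_1=\gamma_3=a$, whereas you fold the two $a$-slots into one; your explicit treatment of the case $k=\infty$ via the projective-limit description is a welcome bit of extra care.
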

\begin{proof}
	As a consequence of \refer{prop:Superposition_glatter_Abb_auf_weig-van_Abb}
	and \refer{lem:gewichtete,verschwindende_Abb_Produktisomorphie-lokalkonvex}, the map
	\begin{align*}
			&\CcFvanK{\UF}{\phi(\WF_\phi)}{k} \times \CcFvanK{\UF}{\phi(\WF_\phi)}{k} \times \CcFvanK{\UF}{\phi(\WF_\phi)}{k}
			\to \CcFvanK{\UF}{\phi(\VF_\phi)}{k}
			\\
			:& (\gamma_1, \gamma_2, \gamma_3)
			\mapsto \phi \circ ((\phi^{-1} \circ \gamma_1) \cdot (\phi^{-1} \circ \gamma_2) \cdot (\phi^{-1} \circ \gamma_3)^{-1})
	\end{align*}
	is smooth. We easily deduce the desired assertion.
\end{proof}
\paragraph{Normalization with compactly supported mappings}
While the treatment of $\ConDiff{}{}{k}$-maps with values in a suitable chart domain was straightforward,
we need to develop other tools to deal with the compactly supported mappings.
The main problem is that a compactly supported map may not take values in any chart domain.
To get around this problem, we need more technical machinery.
As motivation for the following, let $\chi \in \DCc{\UF}{\G}{k}$ and $(\phi, V_\phi)$ be a centered chart of $\G$.
Using that $\chi(\UF)$ is compact, we can find a symmetrical neighborhood $O$ of $\chi(\UF)$
and an open $\one$-neighborhood $W_\phi \sub V_\phi$ such that $O \cdot \WF_{\phi} \cdot O^{-1} \subseteq \VF_{\phi}$.
Then we can define the \enquote{normalization map in charts}
\[
	N : O \times \phi(\WF_{\phi}) \to \phi(\VF_\phi)
	: (g, y) \mapsto \phi(g\cdot \phi^{-1}(y) \cdot g^{-1}).
\]
We can calculate that for $\gamma \in \phi(\WF_{\phi})^\UF$,
we have the identity
\[
	\phi \circ (\chi \cdot \gamma \cdot \chi^{-1})
	= N \circ (\chi \times \id{\phi(\WF_\phi)}) \circ (\id{\UF}, \gamma).
\]
In the following two lemmas, we will examine the properties of maps of the form
$N \circ (\chi \times \id{\phi(\WF_\phi)})$
and whether they induce a kind of superposition operator for decreasing weighted functions.
\begin{lem}\label{lem:Funktionen_zum_Sternen}
	Let $\SX$, $\SY$ and $\SZ$ be locally convex spaces, $\UF \subseteq \SX$, $\VF \subseteq \SY$ and $\WF \subseteq \SZ$
	open nonempty subsets, $M$ a locally convex manifold and $k\in\cl{\N}$.
	Let $\Gamma \in \ConDiff{M \times \VF}{\WF}{\infty}$ and $\eta \in \ConDiff{\UF}{M}{k}$.
	Then the map
	\[
		\Xi \ndef \Gamma \circ (\eta \times \id{\VF}) : \UF \times \VF \to \WF
	\]
	has the following properties:
	\begin{enumerate}
		\item\label{enum1:Funktionen_zum_Sternen-1}
		The second partial derivative of $\Xi$ is
		\[
			\dA{_{2}\Xi}{}{} = (\pi_{2} \circ \Tang[2]{\Gamma}) \circ (\eta \times \id{\VF \times \SY})
		\]
		and if $k \geq 1$, the first partial derivative of $\Xi$ is
		\[
			\dA{_{1}\Xi}{}{} = (\pi_{2} \circ \Tang[1]{\Gamma}) \circ (\Tang{\eta} \times \id{\VF}) \circ S,
		\]
		where $\pi_{2}$ denotes the projection $\WF \times \SZ \to \SZ$ on the second component,
		and
		$S : \UF \times \VF \times \SX \to \UF \times \SX \times \VF : (x,y,h)\mapsto (x,h,y)$ denotes the swap map.
		
		\item\label{enum1:Funktionen_zum_Sternen-2}
		For all $x\in\UF$, the partial map $\Xi(x,\cdot) : \VF \to \WF$ is smooth,
		and for all $\ell\in\N$ the map
		$\dA[\ell]{_{2}\Xi}{}{} : \UF \times \VF \times \SY^{\ell} \to \WF$
		is $\ConDiff{}{}{k}$.
		
		\item\label{enum1:Funktionen_zum_Sternen-3}
		Assume that $\SX$ has finite dimension. Then for
		\[
			A_{1} : \UF \times \VF \to \Lin{\SX}{\SZ}
			: (x, y) \mapsto (h\mapsto \dA{_{1}\Xi}{x, y}{h})
		\]
		(which is only defined if $k \geq 1$) and
		\[
			A_{2} : \UF \times \VF \times \Lin{\SX}{\SY} \to \Lin{\SX}{\SZ}
			: (x, y, T) \mapsto (h\mapsto \dA{_{2}\Xi}{x, y}{T\eval h}),
		\]
		all partial maps $A_{1}(x,\cdot)$ and $A_{2}(x,\cdot)$ are smooth and all partial derivatives
		$\dA[\ell]{_{2}A_{1}}{}{}$ and $\dA[\ell]{_{2}A_{2}}{}{}$ are $\ConDiff{}{}{k-1}$,
		respectively $\ConDiff{}{}{k}$.
	\end{enumerate}
\end{lem}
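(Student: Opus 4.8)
The three parts can be handled in order, each building on the previous, and the whole argument rests on a single organizing principle: derivatives of $\Xi$ in the second variable fall only on the smooth factor $\Gamma$, so they cost no differentiability of $\eta$, whereas each derivative in the first variable costs one derivative of $\eta$.

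For \refer{enum1:Funktionen_zum_Sternen-1} I would simply compute the two partial derivatives by the chain rule and then rewrite them through the partial tangent maps of $\Gamma$. Fixing $x$ and differentiating $\Xi(x,\cdot)=\Gamma(\eta(x),\cdot)$ gives $\dA{_2\Xi}{x,y}{h}=\dA{_2\Gamma}{\eta(x),y}{h}$, which is exactly $(\pi_2\circ\Tang[2]{\Gamma})\circ(\eta\times\id_{\VF\times\SY})$ evaluated at $(x,y,h)$. For the first partial derivative (available once $k\geq 1$) the chain rule yields $\dA{_1\Xi}{x,y}{h}=\dA{_1\Gamma}{\eta(x),y}{\dA{\eta}{x}{h}}$; feeding $(x,y,h)$ through $S$, then $\Tang{\eta}\times\id_\VF$, and finally $\pi_2\circ\Tang[1]{\Gamma}$ reproduces precisely this expression. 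No real difficulty arises here; it is bookkeeping with $\Tang[1]{\Gamma}$ and $\Tang[2]{\Gamma}$.

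For \refer{enum1:Funktionen_zum_Sternen-2} the point is that for fixed $x$ the partial map $\Xi(x,\cdot)=\Gamma(\eta(x),\cdot)$ is a partial map of the smooth map $\Gamma$, hence smooth, with $\ell$-th derivative $\dA[\ell]{_2\Gamma}{\eta(x),y}{\cdot}$. Letting $x$ vary, this identifies $\dA[\ell]{_2\Xi}{}{}$ with the composite of the map $(x,y,h_1,\dotsc,h_\ell)\mapsto(\eta(x),y,h_1,\dotsc,h_\ell)$ and the iterated partial derivative $\dA[\ell]{_2\Gamma}{}{}$. The first map is $\ConDiff{}{}{k}$, being the product of the $\ConDiff{}{}{k}$-map $\eta$ with smooth maps, and the second is smooth, being an iterated partial derivative of a smooth map; so the composite is $\ConDiff{}{}{k}$ by the chain rule. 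The case $\ell=0$ gives in particular that $\Xi$ is $\ConDiff{}{}{k}$.

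Part \refer{enum1:Funktionen_zum_Sternen-3} is the substantial one, and it is where the finite-dimensionality of $\SX$ enters. I would fix a basis $e_1,\dotsc,e_n$ of $\SX$ and use that $T\mapsto(T\eval e_1,\dotsc,T\eval e_n)$ identifies $\Lin{\SX}{\SZ}$ topologically with $\SZ^n$, so that a map into $\Lin{\SX}{\SZ}$ is $\ConDiff{}{}{m}$ precisely when each of its $n$ components into $\SZ$ is. Under this identification the $i$-th component of $A_2(x,y,T)$ is $\dA{_2\Gamma}{\eta(x),y}{T\eval e_i}$, an expression involving $\eta$ but not its derivative; exactly as in \refer{enum1:Funktionen_zum_Sternen-2}, every $\dA[\ell]{_2A_2}{}{}$ is then a composite of a $\ConDiff{}{}{k}$-map (built from $\eta$ and from the smooth maps in $y$ and $T$, including the evaluation $T\mapsto T\eval e_i$) with a smooth mixed partial derivative of $\Gamma$, hence $\ConDiff{}{}{k}$. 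The $i$-th component of $A_1(x,y)$, on the other hand, is $\dA{_1\Gamma}{\eta(x),y}{\Tang[x]{\eta}(e_i)}$, which involves the first derivative of $\eta$ through $\Tang{\eta}$; writing the $x$-dependence as the $\ConDiff{}{}{k-1}$-map $x\mapsto\Tang{\eta}(x,e_i)$ (the tangent map of a $\ConDiff{}{}{k}$-map is $\ConDiff{}{}{k-1}$), the same composition argument shows each $\dA[\ell]{_2A_1}{}{}$ is $\ConDiff{}{}{k-1}$. Fixing $x$ removes the $\eta$-dependence and makes $A_1(x,\cdot)$ and $A_2(x,\cdot)$ partial maps of smooth maps, hence smooth. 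The main obstacle I anticipate is the careful bookkeeping in this part: one must respect that $M$ is only a manifold, so $\Tang[x]{\eta}(e_i)$ lives in $\Tang[\eta(x)]{M}$ and the relevant smooth factors are iterated \emph{partial} tangent maps of $\Gamma$ accepting tangent vectors of $M$; one must keep the identification $\Lin{\SX}{\SZ}\cong\SZ^n$ compatible with the $\ConDiff{}{}{k}$-structure; and above all one must track which occurrences of $\eta$ appear undifferentiated (yielding $\ConDiff{}{}{k}$, as for $A_2$) versus differentiated once via $\Tang{\eta}$ (yielding $\ConDiff{}{}{k-1}$, as for $A_1$).
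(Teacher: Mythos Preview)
Your proposal is correct and follows essentially the same approach as the paper. The only stylistic difference is in part \refer{enum1:Funktionen_zum_Sternen-2}: the paper argues inductively by observing from \refer{enum1:Funktionen_zum_Sternen-1} that $\dA{_2\Xi}{}{}$ again has the form $\widetilde{\Gamma}\circ(\eta\times\id_{\VF\times\SY})$, whereas you identify $\dA[\ell]{_2\Xi}{}{}$ directly as the composite of $\dA[\ell]{_2\Gamma}{}{}$ with $(\eta\times\id)$; both lead to the same conclusion, and in \refer{enum1:Funktionen_zum_Sternen-3} the paper likewise reduces to evaluations $A_i(\cdot)\eval h$ (using finite-dimensionality of $\SX$ exactly as you do via a basis) and then invokes \refer{enum1:Funktionen_zum_Sternen-2} after recognizing these evaluations as maps of the same structural form as $\Xi$.
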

\begin{proof}
	\refer{enum1:Funktionen_zum_Sternen-1}
	We calculate for $x \in \UF$, $y \in \VF$ and $h \in \SY$ that
	\[
		\dA{_{2}\Xi}{(x, y)}{h}
		= \lim_{t \to 0} \tfrac{\Xi(x, y + t h) - \Xi(x, y)}{t}
		= \lim_{t \to 0} \tfrac{\Gamma(\eta(x), y + t h) - \Gamma(\eta(x), y)}{t}
		= (\pi_{2} \circ \Tang[2]{\Gamma})(\eta(x), y, h).
	\]
	This shows the desired identity for $\dA{_{2}\Xi}{}{}$.
	If $k > 0$, we get using the chain rule that
	\[
		\dA{\Xi}{}{} \circ P = \pi_{2} \circ \Tang{\Xi} \circ P = \pi_{2} \circ \Tang{\Gamma} \circ (\Tang{\eta} \times \id{\Tang{\VF}}),
	\]
	where $P : \UF \times \SX \times \VF \times \SY \to \UF \times \VF \times \SX \times \SY$
	permutes the middle arguments.
	Since $\dA{_{1}\Xi}{(x,y)}{h_{x}} = \dA{\Xi}{(x,y)}{(h_{x}, 0)}$,
	we get the assertion for $\dA{_{1}\Xi}{}{}$.

	\refer{enum1:Funktionen_zum_Sternen-2}
	It is obvious that the partial maps are smooth. We prove the second assertion by induction on $\ell$:
	
	$\ell = 0:$
	This is obvious.
	
	$\ell \to \ell + 1:$
	In \refer{enum1:Funktionen_zum_Sternen-1} we proved that $\dA{_{2}\Xi}{}{}$ is of the same form as $\Xi$.
	By the inductive hypothesis,
	\[
		\dA[\ell]{_{2} (\dA{_{2}\Xi}{}{}) }{}{} : \UF \times \VF \times \SY \times (\SY \times \SY)^{\ell} \to \WF
	\]
	is a $\ConDiff{}{}{k}$-map. But
	\[
		\dA[\ell + 1]{_{2}\Xi}{x,y}{h_{1},h_{2},\dotsc,h_{\ell + 1}}
		= \dA[\ell]{_{2} (\dA{_{2}\Xi}{}{}) }{x,y,h_{1}}{(h_{2},0),\dotsc,(h_{\ell + 1},0)},
	\]
	so $\dA[\ell + 1]{_{2}\Xi}{}{}$ is $\ConDiff{}{}{k}$.

	\refer{enum1:Funktionen_zum_Sternen-3}
	The partial maps $A_{1}(x,\cdot)$ and $A_{2}(x,\cdot)$ are smooth and the maps
	$\dA[\ell]{_{2}A_{1}}{}{}$ and $\dA[\ell]{_{2}A_{2}}{}{}$ are $\ConDiff{}{}{k-1}$
	respective $\ConDiff{}{}{k}$ iff for each $h\in\SX$, the maps
	$A_{1}(x,\cdot)\eval h$ and $A_{2}(x,\cdot)\eval h$
	have the corresponding properties. By \refer{enum1:Funktionen_zum_Sternen-1},
	\[
		A_{1}(x,y) \eval h = \dA{_{1}\Xi}{x, y}{h}
		= (\pi_{2} \circ \Tang{_{1}\Gamma}) \circ (\Tang{\eta} \times \id{\VF}) \circ S(x,y,h)
	\]
	and
	\begin{align*}
		A_{2}(x,y,T) \eval h = \dA{_{2}\Xi}{x, y}{T\eval h}
		&= (\pi_{2} \circ \Tang{_{2}\Gamma}) \circ (\eta \times \id{\VF \times \SY})(x,y,T\eval h)
		\\
		&= (\pi_{2} \circ \Tang{_{2}\Gamma} \circ S_{1}) \circ (\eta \times \mathrm{ev}_{h} \times \id{\VF}) \circ S_{2} (x,y,T).
	\end{align*}
	Here $S_1$ and $S_2$ denote the swap maps
	\[
		M \times \SY \times \VF \to M \times \VF \times \SY,
	\]
	and
	\[
		\UF \times \VF \times \Lin{\SX}{\SY}
		\to \UF \times \Lin{\SX}{\SY} \times \VF
	\]
	respectively.
	Since $S$, $S_1$ and $S_{2}$ are restrictions of continuous linear maps,
	\refer{enum1:Funktionen_zum_Sternen-2} applies to both
	$A_{1}(x,\cdot)\eval h$ and $A_{2}(x,\cdot)\eval h$.
\end{proof}

\begin{lem}\label{lem:Sternoperation_auf_gew.van.Abb}
	Let $\SX$ be a finite-dimensional space, $\UF \subseteq \SX$ an open nonempty subset,
	$\SY$ and $\SZ$ locally convex spaces, $M$ a locally convex manifold,
	$\VF \subseteq \SY$ an open zero neighborhood that is star-shaped with center~$0$,
	$\GewFunk \subseteq \cl{\R}^\UF$ with $1_\UF \in \GewFunk$ and $k \in \cl{\N}$.
	Further, let $\Gamma \in \ConDiff{M \times \VF}{\SZ}{\infty}$, and $\theta \in \ConDiff{\UF}{M}{k}$
	such that the map
	\[
		\Xi \ndef \Gamma \circ (\theta \times \id{\VF}) : \UF \times \VF \to \SZ
	\]
	satisfies
	\begin{itemize}
		\item
		$\Xi(\UF \times \sset{0}) = \sset{0}$,

		\item
		There exists a compact set $K \subseteq \UF$ such that
		$\Xi((\UF\setminus K) \times \VF) = \sset{0}$.
	\end{itemize}
	Then for any $\gamma \in \CcFvanK{\UF}{\VF}{k}$
	\[
		\label{SternOperationNormalteiler-definiert}
		\tag{\ensuremath{\dagger}}
		\Xi \circ (\id{\UF}, \gamma) \in \CcFvanK{\UF}{\SZ}{k},
	\]
	and the map
	\[
		\Xi_{\ast}:
		\CcFvanK{\UF}{\VF}{k} \to \CcFvanK{\UF}{\SZ}{k}
		: \gamma \mapsto \Xi \circ (\id{\UF}, \gamma)
	\]
	is smooth.
\end{lem}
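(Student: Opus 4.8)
The plan is to prove both assertions at once, not for the single map $\Xi$ but for the whole class of maps of the given shape $\Xi=\Gamma\circ(\theta\times\id_{\VF})$ with $\theta\in\ConDiff{\UF}{M}{k}$, $\Gamma$ smooth, fibre variable ranging over an open star\-shaped zero neighborhood of a locally convex space, $\Xi$ vanishing on $\UF\times\sset{0}$, and $\Xi$ supported in some compact $K\subseteq\UF$ in the base variable. The point is that this class is \emph{self\-reproducing}: the operations occurring when one differentiates $\Xi_\ast$, namely passing to the second partial differential $\dA{_2\Xi}{}{}$ and forming the maps $A_1,A_2$ of \refer{lem:Funktionen_zum_Sternen}, again produce members of the class, and \refer{lem:Funktionen_zum_Sternen}~\refer{enum1:Funktionen_zum_Sternen-1}--\refer{enum1:Funktionen_zum_Sternen-3} guarantee that these derived maps keep enough differentiability in $x$ for an induction on $k$ to apply. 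I would first reduce to normed $\SZ$: by \refer{lem:gew_Abb_in_Lokalkovexe_ist_Produkt_von_gew_Abb_in_normierte} and \refer{lem:gewichtete,verschwindende_Abb_Produktisomorphie-lokalkonvex} it suffices to treat $\pi_p\circ\Xi=(\pi_p\circ\Gamma)\circ(\theta\times\id)$ for every $p\in\normsOn{\SZ}$, which has the same shape with $\SZ_p$ normed; since $\SX$ is finite\-dimensional, all spaces $\Lin{\SX}{\SZ}$ arising below stay normed, which is what lets me avoid $\Lin{\SY}{\SZ}$ entirely.

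The first step is well\-definedness and continuity of $\Xi_\ast\colon\CcFvanK{\UF}{\VF}{k}\to\CcFvanK{\UF}{\SZ}{k}$, by induction on $k$. For $k=0$, since $\Xi(x,0)=0$ and $\VF$ is star\-shaped the mean value theorem gives $\norm{\Xi(x,\gamma(x))}\le C\norm{\gamma(x)}$, where $C$ is the (finite) supremum of the operator norms of the differentials $\dA{_2\Xi}{x,z}{\cdot}$ over $x\in K$ and $z\in[0,1]\cdot(\gamma(\UF)\cup\sset{0})$; this is compact by \refer{lem:Bild_einer_verschwindenden_gewichteten_Abb_ist_kompakt}. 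Hence $\abs{f(x)}\norm{\Xi_\ast(\gamma)(x)}\le C\hn{\gamma}{f}{0}$, so $\Xi_\ast(\gamma)\in\CcF{\UF}{\SZ}{0}$, and as it vanishes off $K$ it lies in $\CcFvanK{\UF}{\SZ}{0}$; the same estimate applied to $\Xi(x,\gamma(x))-\Xi(x,\gamma_0(x))$ (with $C$ uniform on a suitable neighborhood of $\gamma_0$) yields continuity. Note that pairing $\abs{f}$ with the factor $\norm{\gamma(x)}$ rather than with $\abs{f}$ alone keeps these estimates insensitive to weights taking the value $\infty$. For $k\to k+1$ the chain rule gives
\[
  \FAbl{(\Xi_\ast(\gamma))}(x)=A_1(x,\gamma(x))+A_2(x,\gamma(x),\FAbl{\gamma}(x)),
\]
with $A_1,A_2$ as in \refer{lem:Funktionen_zum_Sternen}. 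Both are again maps of the class, valued in the normed space $\Lin{\SX}{\SZ}$, supported in $K$, with $A_1(x,0)=0$ and $A_2(x,y,0)=0$, and of differentiability order at least $k$ by \refer{lem:Funktionen_zum_Sternen}~\refer{enum1:Funktionen_zum_Sternen-3}; the inductive hypothesis therefore places $A_1(\cdot,\gamma(\cdot))$ and $A_2(\cdot,\gamma(\cdot),\FAbl{\gamma}(\cdot))$ in $\CcFvanK{\UF}{\Lin{\SX}{\SZ}}{k}$, continuously in $\gamma$ (feeding the pair $(\gamma,\FAbl{\gamma})$ into $A_2$ via the product isomorphism \refer{lem:gewichtete,verschwindende_Abb_Produktisomorphie-lokalkonvex} and using continuity of $\FAbl{}$). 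By \refer{lem:topologische_Zerlegung_von_CFkvan-kompakte_Version} this is exactly what gives $\Xi_\ast(\gamma)\in\CcFvanK{\UF}{\SZ}{k+1}$ and continuity of $\Xi_\ast$. The case $k=\infty$ follows since $\CcFvanK{\UF}{\SZ}{\infty}$ carries the initial topology with respect to the inclusions into the $\CcFvanK{\UF}{\SZ}{n}$ (cf. \refer{cor:Topologie_von_CinfF} together with \refer{lem:CWvan_(endlichdimensional)_abgeschlossen}).

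For smoothness I would show that each $\Xi_\ast$ in the class is $\ConDiff{}{}{\infty}$ by exhibiting its derivative as another member. Setting $\Psi(x,(y,h)):=\dA{_2\Xi}{x,y}{h}$, \refer{lem:Funktionen_zum_Sternen}~\refer{enum1:Funktionen_zum_Sternen-1}--\refer{enum1:Funktionen_zum_Sternen-2} show $\Psi$ is again of our form, with fibre variable in the star\-shaped set $\VF\times\SY$, supported in $K$, and $\Psi(x,0)=0$. For $\gamma\in\CcFvanK{\UF}{\VF}{k}$, a direction $\eta\in\CcFvanK{\UF}{\SY}{k}$, and $\abs{t}$ small enough that $\gamma+t\eta$ still takes values in $\VF$ (possible because $\gamma(\UF)\cup\sset{0}$ is compact in the open set $\VF$ and $\eta$ has compact image), the difference quotient is, evaluated pointwise,
\[
  \tfrac1t\bigl(\Xi_\ast(\gamma+t\eta)-\Xi_\ast(\gamma)\bigr)(x)=\int_0^1\dA{_2\Xi}{x,\gamma(x)+st\eta(x)}{\eta(x)}\,ds=\int_0^1\Psi_\ast(\gamma+st\eta,\eta)(x)\,ds .
\]
The curve $s\mapsto\Psi_\ast(\gamma+st\eta,\eta)$ is continuous into $\CcFvanK{\UF}{\SZ}{k}$ by the first step applied to $\Psi$, so \refer{lem:Kriterium_f"ur_Integrierbarkeit_in_CW_lkvx} upgrades the pointwise identity to one of weak integrals in $\CcFvanK{\UF}{\SZ}{k}$, and \refer{prop:Stetigkeit_parameterab_Int} lets me pass to $t\to0$. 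This gives $\dA{\Xi_\ast}{\gamma}{\eta}=\Psi_\ast(\gamma,\eta)$, continuous in $(\gamma,\eta)$, so $\Xi_\ast$ is $\ConDiff{}{}{1}$. Since $\Psi$ is of the same type, $\Psi_\ast$ is likewise $\ConDiff{}{}{1}$, whence $\dA{\Xi_\ast}{}{}=\Psi_\ast$ is $\ConDiff{}{}{1}$ and $\Xi_\ast$ is $\ConDiff{}{}{2}$; iterating yields smoothness.

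The main obstacle, and the reason \refer{lem:Funktionen_zum_Sternen} is isolated in advance, is the bookkeeping that keeps everything inside this self\-reproducing class: one must verify that $\dA{_2\Xi}{}{}$, $A_1$ and $A_2$ are once more of the form $\widetilde\Gamma\circ(\widetilde\theta\times\id)$, that they inherit both the vanishing on $\UF\times\sset{0}$ and the compact support in $x$, and above all that they retain the correct order of differentiability in $x$ so the inductive hypothesis at the lower order applies. Carrying the $\Lin{\SX}{\SZ}$\-valued maps $A_1,A_2$ (with the \emph{finite\-dimensional} source $\SX$) instead of $\Lin{\SY}{\SZ}$\-valued objects is precisely what sidesteps the failure of continuity of operator composition and evaluation for a general locally convex $\SY$; everything else reduces to the routine mean value estimates indicated above.
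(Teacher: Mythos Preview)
Your approach is essentially the paper's: continuity of $\Xi_\ast$ by induction on $k$ via the maps $A_1,A_2$ of \refer{lem:Funktionen_zum_Sternen}, then smoothness by recognising $\dA{\Xi_\ast}{\gamma}{\eta}=(\dA{_2\Xi}{}{})_\ast(\gamma,\eta)$ and iterating through the self-reproducing class. The paper does not first project to normed $\SZ_p$ but works with seminorms $p\in\normsOn{\SZ}$ throughout; your reduction is legitimate but does not actually buy anything, because $\SY$ remains a general locally convex space.

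That is where your $k=0$ estimate is imprecise: the phrase ``operator norms of $\dA{_2\Xi}{x,z}{\cdot}$'' and the inequality $\norm{\Xi(x,\gamma(x))}\le C\,\norm{\gamma(x)}$ presuppose a norm on $\SY$ that is not there. The paper handles this via the Wallace lemma: since $\dA{_2\Xi}{}{}$ is continuous and vanishes on $K\times\widetilde K\times\sset{0}$ (with $\widetilde K$ the compact set $[0,1]\cdot(\gamma(\UF)\cup\eta(\UF)\cup\sset{0})$), for each $p\in\normsOn{\SZ}$ one finds $q\in\normsOn{\SY}$ with $\norm{\dA{_2\Xi}{x,y}{h}}_p\le\norm{h}_q$ uniformly on $K\times\widetilde K$, whence $\abs{f(x)}\,\norm{\Xi(x,\gamma(x))-\Xi(x,\eta(x))}_p\le\abs{f(x)}\,\norm{\gamma(x)-\eta(x)}_q$. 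With this correction your argument is complete and coincides with the paper's.
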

\begin{proof}
	We first prove that $\Xi_\ast$ is defined and continuous, by induction on $k$:
	
	$k=0:$
	Let $\gamma, \eta \in \CcFvanK{\UF}{\VF}{k}$ such that the line segment
	$\{t \gamma + (1-t) \eta : t \in [0,1]\} \subseteq \CcFvanK{\UF}{\VF}{k}$.
	We easily prove using \refer{lem:Bild_einer_verschwindenden_gewichteten_Abb_ist_kompakt} that the set
	\[
		\widetilde{K} \ndef \{t \gamma(x) + (1-t) \eta(x) : t \in [0,1], x \in \UF\}
	\]
	is relatively compact in $\VF$. Since $\dA{_{2}\Xi}{}{}$ is continuous by \refer{lem:Funktionen_zum_Sternen} \refer{enum1:Funktionen_zum_Sternen-2}
	and satisfies $\dA{_{2}\Xi}{}{}(\UF \times \VF \times \{0\}) = \{0\}$,
	we conclude using the Wallace Lemma that for each $p\in\normsOn{\SZ}$,
	there exists  $q\in\normsOn{\SY}$ such that
	\[
		\dA{_{2}\Xi}{}{}(K \times \widetilde{K} \times \Ball[q]{0}{1}) \subseteq \Ball[p]{0}{1}.
	\]
	This relation implies that
	\[
		\forall x \in K, y \in \widetilde{K}, h \in \SY:
		\norm{\dA{_{2}\Xi}{x,y}{h}}_{p} \leq \norm{h}_{q}.
	\]
	For each $x \in \UF$, we calculate
	\begin{equation*}
		\Xi(x, \gamma(x)) - \Xi(x, \eta(x))
		= \Rint{0}{1}{ \dA{_{2}\Xi}{x, t \gamma(x) + (1-t) \eta(x)}{\gamma(x) - \eta(x)} }{t}.
	\end{equation*}
	Hence for each $f\in\GewFunk$, we have
	\[
		\abs{f(x)}\, \norm{\Xi(x, \gamma(x)) - \Xi(x, \eta(x))}_{p}
		\leq \abs{f(x)}\, \norm{\gamma(x) - \eta(x)}_{q}.
	\]
	Taking $\eta = 0$, this estimate implies \eqref{SternOperationNormalteiler-definiert}.
	Further, since we proved in \refer{lem:CWvan_(endl-dim/normiert)_Abstand_Rand}
	that $\CcFvanK{\UF}{\VF}{k}$ is open,
	$\gamma$ has a convex neighborhood in $\CcFvanK{\UF}{\VF}{k}$;
	hence the estimate also implies the continuity of $\Xi_{\ast}$ in $\gamma$.

	$k \to k +1:$
	For each $x \in \UF$, $h \in \SX$ and $\gamma \in \CcFvanK{\UF}{\VF}{k + 1}$, we calculate
	\begin{align*}
		\dA{(\Xi\circ (\id{\UF}, \gamma))}{x}{h}
		&= \dA{\Xi}{x, \gamma(x)}{h, D\gamma(x)\eval h}
		\\
		&= \dA{_{1}\Xi}{x, \gamma(x)}{h}
		+ \dA{_{2}\Xi}{x, \gamma(x)}{D\gamma(x)\eval h}.
	\end{align*}
	Recall the maps $A_{1}$ and $A_{2}$ defined in \refer{lem:Funktionen_zum_Sternen}\refer{enum1:Funktionen_zum_Sternen-3}.
	We get the identity
	\[
		D(\Xi\circ (\id{\UF}, \gamma))(x) = (A_{1}\circ (\id{\UF}, \gamma))(x)
		+ (A_{2} \circ (\id{\UF}, \gamma, D\gamma))(x).
	\]
	We prove that $A_{1}$ and $A_{2}$ satisfy the same properties
	as $\Xi$ does:
	For $x \in \UF$, $y\in\VF$, $h\in\SX$, we have
	\[
		A_{1}(x, 0)\eval h = \dA{_{1}\Xi}{x, 0}{h} = \lim_{t\to 0}\frac{\Xi(x + t h, 0) - \Xi(x, 0) }{t} = 0,
	\]
	whence $A_{1}(x, 0) = 0$. Let $x\in \UF\setminus K$. Then
	\[
		A_{1}(x, y)\eval h = \dA{_{1}\Xi}{x, y}{h} = \lim_{t\to 0}\frac{\Xi(x + t h, y) - \Xi(x, y) }{t} = 0
	\]
	since $\UF\setminus K$ is open, hence $A_{1}(x, y) = 0$.
	\\
	As to $A_{2}$, for $x \in \UF$, $y\in\VF$ and $h\in\SX$ we calculate	
	\[
		A_{2}(x, y, 0)\eval h = \dA{_{2}\Xi}{x, y}{0\eval h} = 0,
	\]
	whence $A_{2}(x, y, 0) = 0$. Let $x\in \UF\setminus K$ and $T \in \Lin{\SX}{\SY}$. Then
	\[
		A_{2}(x, y, T)\eval h = \dA{_{2}\Xi}{x, y}{T\eval h}
		= \lim_{t\to 0}\frac{\Xi(x, y  + t T\eval h) - \Xi(x, y) }{t} = 0,
	\]
	hence $A_{2}(x, y, T) = 0$.
	
	So we can apply the inductive hypothesis to $A_{1}$ and $A_{2}$ and conclude that
	\[
		A_{1}\circ (\id{\SX}, \gamma),\,
		A_{2} \circ (\id{\SX}, \gamma, \FAbl{\gamma}) \in \CcFvanK{\UF}{\Lin{\SX}{\SZ}}{k}
	\]
	and the maps $\CcFvanK{\UF}{\VF}{k + 1} \to \CcFvanK{\UF}{\Lin{\SX}{\SZ}}{k}$
	\[
		\gamma \mapsto A_{1}\circ (\id{\SX}, \gamma)\text{ and }
		\gamma \mapsto A_{2} \circ (\id{\SX}, \gamma, \FAbl{\gamma})
	\]
	are continuous.
	In view of \refer{prop:topologische_Zerlegung_von_CFkvan-kompakte_Version},
	the continuity of $\Xi_\ast$ is established.

	We pass on to prove the smoothness of $\Xi_{\ast}$.
	In order to do this, we have to examine $\dA{_{2}\Xi}{}{}$.
	By \refer{lem:Funktionen_zum_Sternen} \refer{enum1:Funktionen_zum_Sternen-1},
	$\dA{_{2}\Xi}{}{} = \pi_2 \circ \Tang[2]{\Gamma} \circ (\theta \times \id{\VF \times \SY})$,
	and we easily see that
	\[
		\dA{_{2}\Xi}{}{}(\UF\times\sset{0}\times\sset{0})
		= \dA{_{2}\Xi}{}{}((\UF\setminus K) \times \VF \times \SY)
		= \sset{0}.
	\]
	Hence by the results already established, the map
	\[
		(\dA{_{2}\Xi}{}{})_{\ast}
		: \CcFvanK{\UF}{\VF \times \SY}{k} \to \CcFvanK{\UF}{\SZ}{k}
		: (\gamma) \mapsto \dA{_{2}\Xi}{}{} \circ (\id{\UF}, \gamma)
	\]
	is defined and continuous.
	Now let $\gamma \in \CcFvanK{\UF}{\VF}{k}$ and $\gamma_{1} \in \CcFvanK{\UF}{\SY}{k}$.
	Since $\CcFvanK{\UF}{\VF}{k}$ is open,
	there exists an $r > 0$ such that $\set{\gamma + s \gamma_{1} }{ s \in \Ball[\K]{0}{r}} \subseteq \CcFvanK{\UF}{\VF}{k}$.
	We calculate for $x \in \UF$ and $t \in \Ball[\K]{0}{r}[\setminus\sset{0}$ (using \refer{lem:gewichtete,verschwindende_Abb_Produktisomorphie-lokalkonvex} implicitly) that
	\begin{multline*}
		\frac{\Xi_\ast(\gamma + t\gamma_{1})(x) - \Xi_\ast(\gamma)(x) }{t}
		= \frac{\Xi(x, \gamma(x) + t\gamma_{1}(x)) - \Xi(x, \gamma(x))}{t}\\
		= \Rint{0}{1}{ \dA{_{2}\Xi}{(x, \gamma(x) + s t \gamma_{1}(x))}{\gamma_{1}(x)} }{s}
		= \Rint{0}{1}{( \dA{_{2}\Xi}{}{})_{\ast}(\gamma + s t \gamma_1, \gamma_1)(x) }{s}.
	\end{multline*}
	Hence by \refer{lem:Kriterium_Integrierbarkeit_in_CW_lkvx}
	and \refer{prop:Stetigkeit_parameterab_Int}, $\Xi_{\ast}$ is $\ConDiff{}{}{1}$ with
	\[
		\dA{\Xi_{\ast}}{\gamma}{\gamma_{1}}
		= (\dA{_{2}\Xi}{}{})_{\ast}(\gamma, \gamma_{1}).
	\]
	So using an easy induction argument we conclude from this identity
	that $\Xi_{\ast}$ is $\ConDiff{}{}{\ell}$ for each $\ell \in \N$ and hence smooth.
\end{proof}
Now we are ready to deal with the inner automorphism induced by a compactly supported map.
\begin{lem}\label{lem:Normalteileroperation_kompakt_getragene_Abb-lkvxAbbGruppen}
	Let $\SX$ be a finite-dimensional space, $\UF \subseteq \SX$ an open nonempty subset,
	$\G$ a locally convex Lie group,
	$\GewFunk \subseteq \cl{\R}^\UF$ with $1_\UF \in \GewFunk$, $k \in \cl{\N}$
	and $(\phi, \VF_\phi)$ a centered chart for $\G$.
	Let $\chi \in \DCc{\UF}{\G}{k}$. Then there exists an open $\one$-neighborhood $\WF_\phi \subseteq \VF_{\phi}$
	such that the map
	\[
		\tag{\ensuremath{\dagger}}
		\label{Abb-Normalteileroperation_kompakt_getragene_Abb-lkvxAbbGruppen}
		\CcFvanK{\UF}{\phi(\WF_\phi)}{k} \to \CcFvanK{\UF}{\LieAlg{\G}}{k}
		: \gamma \mapsto \phi \circ (\chi \cdot (\phi^{-1} \circ \gamma) \cdot \chi^{-1})
	\]
	is defined and smooth.
\end{lem}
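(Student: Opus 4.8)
The plan is to express the map \eqref{Abb-Normalteileroperation_kompakt_getragene_Abb-lkvxAbbGruppen} as the sum of the inclusion $\gamma \mapsto \gamma$ and a ``star operation'' $\Xi_\ast$ of the type handled by \refer{lem:Sternoperation_auf_gew.van.Abb}. First I would exploit that $\chi \in \DCc{\UF}{\G}{k}$ is compactly supported: writing $K \ndef \supp{\chi}$, the map $\chi$ equals $\one$ on $\UF \setminus K$, and the image $C \ndef \chi(\UF) \cup \sset{\one}$ is compact in $\G$. Let $c : \G \times \G \to \G$, $(g,h) \mapsto g \cdot h \cdot g^{-1}$ denote the (smooth) conjugation. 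Since $c(C \times \sset{\one}) = \sset{\one} \subseteq \VF_\phi$ and $c$ is continuous, the Wallace lemma provides an open neighborhood $M \supseteq C$ in $\G$ and an open $\one$-neighborhood $\WF_\phi \subseteq \VF_\phi$ with $c(M \times \WF_\phi) \subseteq \VF_\phi$; after shrinking $\WF_\phi$ I may assume that $\phi(\WF_\phi)$ is convex, hence star-shaped with center~$0$.

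Next I would introduce the map
\[
	\Gamma : M \times \phi(\WF_\phi) \to \LieAlg{\G}
	: (g, v) \mapsto \phi(g \cdot \phi^{-1}(v) \cdot g^{-1}) - v ,
\]
which is well-defined by the choice of $M$ and $\WF_\phi$ and is smooth, being built from the group operations of $\G$, the chart $\phi$, its inverse, and the projection onto the second factor. Setting $\theta \ndef \chi \in \ConDiff{\UF}{M}{k}$ (its image lies in $C \subseteq M$) and $\Xi \ndef \Gamma \circ (\theta \times \id_{\phi(\WF_\phi)})$, I would check the two hypotheses of \refer{lem:Sternoperation_auf_gew.van.Abb}: for $v = 0$ one has $\phi^{-1}(0) = \one$, so $\Xi(x, 0) = \phi(\one) - 0 = 0$ for all $x$; and for $x \in \UF \setminus K$ one has $\chi(x) = \one$, whence $\Xi(x, v) = \phi(\phi^{-1}(v)) - v = 0$. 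Thus \refer{lem:Sternoperation_auf_gew.van.Abb} applies and shows that
\[
	\Xi_\ast :
	\CcFvanK{\UF}{\phi(\WF_\phi)}{k} \to \CcFvanK{\UF}{\LieAlg{\G}}{k}
	: \gamma \mapsto \Xi \circ (\id_\UF, \gamma)
\]
is defined and smooth.

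To conclude, I would note that $\Xi_\ast(\gamma)(x) = \phi(\chi(x) \cdot \phi^{-1}(\gamma(x)) \cdot \chi(x)^{-1}) - \gamma(x)$, so the map \eqref{Abb-Normalteileroperation_kompakt_getragene_Abb-lkvxAbbGruppen} is precisely $\gamma \mapsto \gamma + \Xi_\ast(\gamma)$. Since $\CcFvanK{\UF}{\phi(\WF_\phi)}{k}$ is an open subset of the vector space $\CcFvanK{\UF}{\LieAlg{\G}}{k}$ (it is a chart domain of $\CcFvanK{\UF}{\G}{k}$; see \refer{def:Zsghd_gewichtete_Abb_Gruppe-lokalkonvex}), the inclusion $\gamma \mapsto \gamma$ is smooth, and adding the smooth map $\Xi_\ast$ keeps the image in $\CcFvanK{\UF}{\LieAlg{\G}}{k}$ and is smooth. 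This gives both well-definedness and smoothness of \eqref{Abb-Normalteileroperation_kompakt_getragene_Abb-lkvxAbbGruppen}.

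The main obstacle, and the reason the splitting is needed, is that the conjugating map does not vanish off the support of $\chi$: on $\UF \setminus K$ one has $\chi = \one$, so $\phi \circ (\chi \cdot (\phi^{-1}\circ\gamma) \cdot \chi^{-1})$ reduces to $\gamma$ rather than to $0$, and \refer{lem:Sternoperation_auf_gew.van.Abb} cannot be applied to it directly. Subtracting the identity contribution $v$ in the definition of $\Gamma$ repairs this, at the cost of having to add $\gamma$ back at the end. The second delicate point is that $\Gamma$ must be defined for \emph{all} $g$ in a neighborhood of the (compact) image of $\chi$ simultaneously; this global control in the $\G$-variable is exactly what the compactness and Wallace-lemma argument secures, by forcing $\WF_\phi$ to be so small that all conjugates $g \cdot \phi^{-1}(v) \cdot g^{-1}$ with $g \in M$ remain inside the chart domain $\VF_\phi$.
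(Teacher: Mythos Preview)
Your proposal is correct and follows essentially the same route as the paper: define the auxiliary map $N(g,y)=\phi(g\cdot\phi^{-1}(y)\cdot g^{-1})-y$ (your $\Gamma$), check that $N\circ(\chi\times\id)$ satisfies the hypotheses of \refer{lem:Sternoperation_auf_gew.van.Abb}, and then recover the original map by adding back~$\gamma$. Your use of the Wallace lemma to restrict the $\G$-variable to an open neighborhood $M$ of the compact image $\chi(\UF)$ is in fact slightly more careful than the paper, which writes $N:\G\times\phi(\WF_\phi)\to\LieAlg{\G}$ even though $N$ is really only well-defined on such a neighborhood.
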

\begin{proof} 
	Since $\chi(\UF)$ is compact, we can find an open $\one$-neighborhood
	$\WF_\phi \subseteq \VF_{\phi}$ and an open symmetrical neighborhood $O$ of $\chi(\UF)$ such that
	\[
		O \cdot \WF_{\phi} \cdot O^{-1} \subseteq \VF_{\phi};
	\]
	we may assume w.l.o.g. that $\phi(\WF_\phi)$ is star-shaped with center~$0$.
	We define the smooth map
	\[
		N : O \times \phi(\WF_{\phi}) \to \LieAlg{\G}
		: (g, y) \mapsto \phi(g\cdot \phi^{-1}(y) \cdot g^{-1}) - y.
	\]
	Then it is easy to see that
	\[
		N \circ (\chi \times \id{\phi(\WF_{\phi})})
		: \UF \times \phi(\WF_{\phi}) \to \LieAlg{\G}
	\]
	satisfies the assumptions of \refer{lem:Sternoperation_auf_gew.van.Abb},
	and that for $\gamma \in \CcFvanK{\UF}{\phi(\WF_\phi)}{k}$
	\[
		(N \circ (\chi \times \id{\phi(\WF_{\phi})})) \circ (\id{\UF},\gamma)
		= \phi \circ (\chi \cdot (\phi^{-1} \circ \gamma) \cdot \chi^{-1}) - \gamma .
	\]
	Hence the map
	\[
		\CcFvanK{\UF}{\phi(\WF_\phi)}{k} \to \CcFvanK{\UF}{\LieAlg{\G}}{k} :
		\gamma \mapsto \phi \circ (\chi \cdot (\phi^{-1} \circ \gamma) \cdot \chi^{-1}) - \gamma
	\]
	is smooth. Since the vector space addition is smooth, \eqref{Abb-Normalteileroperation_kompakt_getragene_Abb-lkvxAbbGruppen}
	is defined and smooth.
\end{proof}

\paragraph{Conclusion and the Lie group structure}
Finally, we put everything together and show that $\CcFvanKBig{\UF}{\G}{k}$
is contained in the smooth normalizer of $\CcFvanK{\UF}{\G}{k}$.
As mentioned above, we this allows the construction of a Lie group structure on $\CcFvanKBig{\UF}{\G}{k}$.
\begin{lem}\label{lem:Normalteiler-Operation_dicke_Gruppen_ist_lokal_glatt-lokalkonvex}
	Let $\SX$ be a finite-dimensional space, $\UF \subseteq \SX$ an open nonempty subset,
	$\G$ a locally convex Lie group,
	$\GewFunk \subseteq \cl{\R}^\UF$ with $1_\UF \in \GewFunk$, $k \in \cl{\N}$
	and $(\phi, \VF_\phi)$ a centered chart for $\G$.
	Let $\theta \in \CcFvanKBig{\UF}{\G}{k}$. Then there exists an open $\one$-neighborhood $\WF_\phi \subseteq \VF_{\phi}$
	such that the map
	\[
		\tag{\ensuremath{\dagger}}
		\CcFvanK{\UF}{\phi(\WF_\phi)}{k} \to \CcFvanK{\UF}{\phi(\VF_\phi)}{k}
		: \gamma \mapsto \phi \circ (\theta \cdot (\phi^{-1} \circ \gamma) \cdot \theta ^{-1})
	\]
	is defined and smooth.
\end{lem}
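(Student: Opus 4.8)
The plan is to reduce the conjugation by $\theta$ to the two special cases already handled, namely conjugation by a map with values in a chart domain (\refer{lem:Normalteiler-Operation_ist_lokal_glatt-lokalkonvex}) and conjugation by a compactly supported map (\refer{lem:Normalteileroperation_kompakt_getragene_Abb-lkvxAbbGruppen}), using the factorization of \refer{prop:Zerlegung_von_dicken_Abb-gruppen-Elementen}. First I would fix an open $\one$-neighborhood $\WF_\eta \subseteq \VF_\phi$ with $\WF_\eta \cdot \WF_\eta \cdot \WF_\eta^{-1} \subseteq \VF_\phi$ and $\phi(\WF_\eta)$ star-shaped with center $0$, so that \refer{lem:Normalteiler-Operation_ist_lokal_glatt-lokalkonvex} applies with $\WF_\phi = \WF_\eta$. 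Applying \refer{prop:Zerlegung_von_dicken_Abb-gruppen-Elementen} to $\theta$ with respect to the centered chart $(\rest{\phi}{\WF_\eta}, \WF_\eta)$ yields a factorization $\theta = \eta \cdot \chi$ with $\eta \in M((\phi, \WF_\eta), \emptyset, 0_\UF)$ (in particular $\eta(\UF) \subseteq \WF_\eta$) and $\chi \in \DCc{\UF}{\G}{k}$.

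The key algebraic observation is that the inner automorphism factors accordingly. Writing $\tilde I_g(\gamma) \ndef \phi \circ (g \cdot (\phi^{-1} \circ \gamma) \cdot g^{-1})$ for $g \in \{\theta, \eta, \chi\}$, one has $\tilde I_\theta = \tilde I_\eta \circ \tilde I_\chi$ wherever both sides are defined, because $\theta = \eta \cdot \chi$ and $\phi^{-1} \circ \phi = \id$ on $\VF_\phi$ (this identity needs $\tilde I_\chi(\gamma)$ to take values in $\phi(\VF_\phi)$, which will be guaranteed below). By \refer{lem:Normalteiler-Operation_ist_lokal_glatt-lokalkonvex} the map $\tilde I_\eta : \CcFvanK{\UF}{\phi(\WF_\eta)}{k} \to \CcFvanK{\UF}{\phi(\VF_\phi)}{k}$ is smooth, and by \refer{lem:Normalteileroperation_kompakt_getragene_Abb-lkvxAbbGruppen} there is an open $\one$-neighborhood $\WF_\chi \subseteq \VF_\phi$ such that $\tilde I_\chi : \CcFvanK{\UF}{\phi(\WF_\chi)}{k} \to \CcFvanK{\UF}{\LieAlg{\G}}{k}$ is smooth.

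It then remains to choose the final neighborhood $\WF_\phi$ so that the composition is actually defined on $\CcFvanK{\UF}{\phi(\WF_\phi)}{k}$, i.e. so that $\tilde I_\chi$ maps this set into the domain $\CcFvanK{\UF}{\phi(\WF_\eta)}{k}$ of $\tilde I_\eta$; this is the only real obstacle. Exploiting that $\chi(\UF)$ is compact and that $\chi \equiv \one$ outside a compact set, I would use continuity of conjugation in $\G$ (a Wallace-lemma argument, exactly as at the start of the proof of \refer{lem:Normalteileroperation_kompakt_getragene_Abb-lkvxAbbGruppen}) to find an open $\one$-neighborhood $\WF_\phi \subseteq \WF_\chi \cap \WF_\eta$ with $\chi(x) \cdot \WF_\phi \cdot \chi(x)^{-1} \subseteq \WF_\eta$ for all $x \in \UF$. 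For $\gamma \in \CcFvanK{\UF}{\phi(\WF_\phi)}{k}$ this forces $\tilde I_\chi(\gamma)(x) \in \phi(\WF_\eta)$ for every $x$: outside $\supp{\chi}$ one has $\tilde I_\chi(\gamma) = \gamma$, with values in $\phi(\WF_\phi) \subseteq \phi(\WF_\eta)$, while on $\supp{\chi}$ the inclusion $\chi(x) \cdot \phi^{-1}(\gamma(x)) \cdot \chi(x)^{-1} \in \WF_\eta$ gives the claim. Since $\tilde I_\chi(\gamma) \in \CcFvanK{\UF}{\LieAlg{\G}}{k}$ by \refer{lem:Normalteileroperation_kompakt_getragene_Abb-lkvxAbbGruppen}, and $\CcFvanK{\UF}{\phi(\WF_\eta)}{k}$ is open in $\CcFvanK{\UF}{\LieAlg{\G}}{k}$ (by \refer{lem:CFvan_kompakt_mit_Werten_in_offener_Menge_offen-Normierte_Version} applied through the embedding into the product over seminorms, as in \refer{def:Zsghd_gewichtete_Abb_Gruppe-lokalkonvex}), it follows that $\tilde I_\chi$ corestricts to a smooth map $\CcFvanK{\UF}{\phi(\WF_\phi)}{k} \to \CcFvanK{\UF}{\phi(\WF_\eta)}{k}$. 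Consequently $\tilde I_\theta = \tilde I_\eta \circ \tilde I_\chi$ is a composition of smooth maps on $\CcFvanK{\UF}{\phi(\WF_\phi)}{k}$, hence smooth, and $\WF_\phi \subseteq \VF_\phi$ is the required neighborhood.
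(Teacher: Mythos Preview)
Your proof is correct and follows the same strategy as the paper: factor $\theta = \eta \cdot \chi$ via \refer{prop:Zerlegung_von_dicken_Abb-gruppen-Elementen}, then compose the conjugation maps from \refer{lem:Normalteileroperation_kompakt_getragene_Abb-lkvxAbbGruppen} and \refer{lem:Normalteiler-Operation_ist_lokal_glatt-lokalkonvex}. The paper is a bit terser: it simply applies \refer{lem:Normalteileroperation_kompakt_getragene_Abb-lkvxAbbGruppen} with the smaller centered chart $(\rest{\phi}{\widetilde{\VF_\phi}}, \widetilde{\VF_\phi})$ (your $\WF_\eta$) and records the target as $\CcFvanK{\UF}{\phi(\widetilde{\VF_\phi})}{k}$, whereas you carry out the Wallace argument and the openness of $\CcFvanK{\UF}{\phi(\WF_\eta)}{k}$ explicitly to justify this corestriction; the content is the same.
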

\begin{proof}
	Let $\widetilde{\VF_\phi} \subseteq \VF_\phi$ be an open $\one$-neighborhood such that
	\[
		\widetilde{\VF_\phi} \cdot \widetilde{\VF_\phi} \cdot \widetilde{\VF_\phi}^{-1} \subseteq \VF_\phi
	\]
	and $\phi(\widetilde{\VF_\phi})$ is star-shaped with center~$0$.
	According to \refer{prop:Zerlegung_von_dicken_Abb-gruppen-Elementen} there exist
	$\eta \in M((\phi, \widetilde{\VF_\phi}), \emptyset, 0_\UF)$ and $\chi \in \DCc{\UF}{\G}{k}$
	such that $\theta = \eta \cdot \chi$.
	By \refer{lem:Normalteileroperation_kompakt_getragene_Abb-lkvxAbbGruppen},
	there exists an open $\one$-neighborhood $\WF_\phi \subseteq \VF_{\phi}$
	such that
	\[
		\CcFvanK{\UF}{\phi(\WF_\phi)}{k} \to \CcFvanK{\UF}{\phi(\widetilde{\VF_\phi})}{k}
		: \gamma \mapsto \phi \circ (\chi \cdot (\phi^{-1} \circ \gamma) \cdot \chi^{-1})
	\]
	is smooth, and by \refer{lem:Normalteiler-Operation_ist_lokal_glatt-lokalkonvex}
	the map
	\[
		\CcFvanK{\UF}{\phi(\widetilde{\VF_\phi})}{k} \to \CcFvanK{\UF}{\phi(\VF_\phi)}{k}
		: \gamma \mapsto \phi \circ (\eta \cdot (\phi^{-1} \circ \gamma) \cdot \eta^{-1})
	\]
	is also smooth. Composing these two maps, we obtain the assertion.
\end{proof}

\begin{satz}
	Let $\SX$ be a finite-dimensional space, $\UF \subseteq \SX$ an open nonempty subset,
	$\G$ a locally convex Lie group,
	$\GewFunk \subseteq \cl{\R}^\UF$ with $1_\UF \in \GewFunk$ and $k \in \cl{\N}$.
	Then $\CcFvanKBig{\UF}{\G}{k}$ can be made into a Lie group that contains
	$\CcFvanK{\UF}{\G}{k}$ as an open normal subgroup.
	\index{weighted maps!into locally convex Lie groups}
	\index{mapping groups!with values in a locally convex Lie group}
\end{satz}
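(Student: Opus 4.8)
The plan is to realize $\CcFvanKBig{\UF}{\G}{k}$ as the \emph{smooth normalizer} of the Lie group $N \ndef \CcFvanK{\UF}{\G}{k}$ inside the ambient group $H \ndef \G^\UF$, and then to feed the resulting local data into \refer{lem:Erzeugung_von_Liegruppen_aus_lokalen}. First I would record that $N$ is contained in $\CcFvanKBig{\UF}{\G}{k}$: fixing a centered chart $(\phi, \VF_\phi)$, every generator $\phi^{-1}\circ\gamma$ of $N$ with $\gamma\in\CcFvanK{\UF}{\phi(\WF)}{k}$ lies in $M((\phi, \VF_\phi), \emptyset, 0_\UF)$ (this uses $1_\UF\in\GewFunk$ and $\phi(\WF)\sub\phi(\VF_\phi)$), and since $\CcFvanKBig{\UF}{\G}{k}$ is a group by \refer{lem:Dicke_Abbildungsgruppe_ist_Gruppe}, all of $N$ is contained in it.

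Next I would show that $N$ is normal in $\CcFvanKBig{\UF}{\G}{k}$ and that the conjugation action is smooth near the identity. Fix $\theta\in\CcFvanKBig{\UF}{\G}{k}$. By \refer{lem:Normalteiler-Operation_dicke_Gruppen_ist_lokal_glatt-lokalkonvex} there is an identity neighborhood $\WF_\phi\subseteq\VF_\phi$ such that the inner automorphism $c_\theta\colon g\mapsto \theta\cdot g\cdot\theta^{-1}$, read in the chart $\phi^{-1}_\ast$, is a smooth map $\CcFvanK{\UF}{\phi(\WF_\phi)}{k}\to\CcFvanK{\UF}{\phi(\VF_\phi)}{k}$ taking $0$ to $0$. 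Shrinking $\WF_\phi$ by continuity at $0$, I may assume the image lies inside a chart neighborhood of $N$, so that $c_\theta$ carries an open identity neighborhood of $N$ smoothly into $N$. Because $N$ is connected, it is generated by any such neighborhood; hence $c_\theta(N)\subseteq N$, and applying the same argument to $\theta^{-1}$ yields $c_\theta(N)=N$. Thus $\CcFvanKBig{\UF}{\G}{k}$ normalizes $N$ and acts on it by smooth automorphisms.

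With these data in hand, the construction of the Lie group structure is an application of \refer{lem:Erzeugung_von_Liegruppen_aus_lokalen} with the local model $N$: its multiplication and inversion are globally smooth, and the previous paragraph supplies, for each $\theta$, the required identity neighborhood on which conjugation by $\theta$ is a smooth self-map compatible with the manifold structure of $N$. The lemma then produces a Lie group structure on $\CcFvanKBig{\UF}{\G}{k}$ for which $N=\CcFvanK{\UF}{\G}{k}$ is an open subgroup; combined with the normality established above, $N$ is an open normal subgroup, which is exactly the assertion.

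The genuine difficulty has already been discharged by the preparatory results, above all \refer{prop:Zerlegung_von_dicken_Abb-gruppen-Elementen} (the factorisation of an element of $\CcFvanKBig{\UF}{\G}{k}$ into a map with values in a chart and a compactly supported map), together with \refer{lem:Sternoperation_auf_gew.van.Abb} and \refer{lem:Normalteiler-Operation_dicke_Gruppen_ist_lokal_glatt-lokalkonvex}. The main point that remains to be checked carefully is that the hypotheses of \refer{lem:Erzeugung_von_Liegruppen_aus_lokalen} are met verbatim --- in particular that the shrinking of $\WF_\phi$ needed to land inside a chart of $N$ is harmless, and that the chart-independence from \refer{lem:Zusammenhangskomponente_gewichtete_Abbildungsgruppen_kartenunabhaengig-lokalkonvex} renders the resulting structure independent of the auxiliary choices of $(\phi,\VF_\phi)$. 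These are routine once the smoothness of the conjugation maps is available, so I expect the proof itself to be short, the weight of the argument resting on the lemmas cited.
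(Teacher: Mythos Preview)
Your proposal is correct and follows essentially the same approach as the paper: use that $\CcFvanKBig{\UF}{\G}{k}$ is a group (\refer{lem:Dicke_Abbildungsgruppe_ist_Gruppe}), invoke \refer{lem:Normalteiler-Operation_dicke_Gruppen_ist_lokal_glatt-lokalkonvex} to get smoothness of each conjugation on an identity neighborhood, and feed this into \refer{lem:Erzeugung_von_Liegruppen_aus_lokalen}. The paper's proof is terser---it does not spell out the inclusion $N\subseteq\CcFvanKBig{\UF}{\G}{k}$, the shrinking step, or the derivation of normality---but your additional verifications are exactly the routine checks one needs, and the core argument is identical.
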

\begin{proof}
	We showed in \refer{def:Zsghd_gewichtete_Abb_Gruppe-lokalkonvex} that $\CcFvanK{\UF}{\G}{k}$ can be turned into a Lie group
	such that there exists a centered chart $(\phi, \VF_\phi)$ for which
	\[
		\CcFvanK{\UF}{\phi(\VF_\phi)}{k} \to \CcFvanK{\UF}{\G}{k}
		: \gamma \mapsto \phi^{-1} \circ \gamma
	\]
	is an embedding and its image generates $\CcFvanK{\UF}{\G}{k}$.
	Further, we proved in \refer{lem:Dicke_Abbildungsgruppe_ist_Gruppe}
	and \refer{lem:Normalteiler-Operation_dicke_Gruppen_ist_lokal_glatt-lokalkonvex}
	that $\CcFvanKBig{\UF}{\G}{k}$ is a subgroup of $\G^\UF$ and for each
	$\theta \in \CcFvanKBig{\UF}{\G}{k}$ there exists an open $\one$-neighborhood $\WF_\phi \subseteq \VF_\phi$
	such that the conjugation operation
	\[
		\CcFvanK{\UF}{\phi(\WF_\phi)}{k} \to \CcFvanK{\UF}{\phi(\VF_\phi)}{k}
		: \gamma \mapsto \phi \circ (\theta \cdot (\phi^{-1} \circ \gamma) \cdot \theta ^{-1})
	\]
	is smooth. Hence \refer{lem:Erzeugung_von_Liegruppen_aus_lokalen} gives the assertion.
\end{proof}
\subsubsection{Comparison with groups of \emph{rapidly decreasing mappings}}
In the book \cite[Section~4.2.1, pages 111-117]{MR654676},
for weights that satisfy conditions described below in \refer{def:BCR-weights},
certain \emph{$\Gamma$-rapidly decreasing functions} with values in locally convex spaces are defined
and used to construct \emph{$\Gamma$-rapidly decreasing mappings} that take values in Lie groups.
We compare these function spaces with our weighted decreasing functions
and will see that they coincide.
Further, we will show that the $\Gamma$-rapidly decreasing mappings are open subgroups
of a certain $\CcFvanKBig{\UF}{\G}{k}$.
\paragraph{$\boldsymbol{\GewFunk}$-rapidly decreasing functions}
We give the definition of the $\GewFunk$-rapidly decreasing functions.
\begin{defi}[BCR-weights]\label{def:BCR-weights}
	Let $\SX$ be a finite-dimensional vector space and $\GewFunk \subseteq [1,\infty]^{\SX}$ such that
	\begin{enumerate}
		\item[\ensuremath{(\GewFunk 1)}]
		for all $f, g \in \GewFunk$, the sets
		$f^{-1}(\infty)$ and $g^{-1}(\infty) \defn M_\infty$ coincide,

		\item[\ensuremath{(\GewFunk 2)}]
		$\GewFunk$ is directed upwards and contains a smallest element $f_{\text{min}}$
		defined by
		\[
			f_{\text{min}}(x)
			= \begin{cases}
			  	1 & x \not\in M_\infty\\
			  	\infty & \text{else},
			  \end{cases}
		\]

		\item[\ensuremath{(\GewFunk 3)}]\label{BCR-Bed3}
		and for each $f_1 \in \GewFunk$ there exists $f_2 \in \GewFunk$
		such that
		\[
			(\forall \eps > 0)(\exists n \in \N)
			\,
			\norm{x} \geq n \text{ or } f_1(x) \geq n
			\implies f_1(x) \leq \eps \cdot f_2(x).
		\]
	\end{enumerate}
	Furthermore each $f \in \GewFunk$ has to be continuous on the complement of $M_\infty$.
\end{defi}

\newcommand{\BCRspace}[3]{\ensuremath{S(#1, #2; #3)}}
\newcommand{\BCRnorm}[4]{\ensuremath{\norm{#1}_{#2,#3}^{#4}}}

\begin{defi}[$\GewFunk$-rapidly decreasing functions]
	Let $\GewFunk$ be a set of weights as in \refer{def:BCR-weights},
	$\UF \subseteq \R^m$ open and nonempty and $\SY$ a locally convex space.
	A smooth function $\gamma : \UF \to \SY$ is called \emph{$\GewFunk$-rapidly decreasing}
	if for each $f \in \GewFunk$ and $\beta \in \N^m$
	we have $\rest{\parAbl{\beta}{\gamma}}{\UF \cap M_\infty} \equiv 0$, and the function
	\[
		f \cdot \parAbl{\beta}{\gamma} : \UF \to \SY
	\]
	is continuous and bounded, where $\infty \cdot 0 = 0$.
	The set
	\[
		\BCRspace{\UF}{\SY}{\GewFunk}
		\ndef
		\set{\gamma \in \ConDiff{\UF}{\SY}{\infty} }{ \text{$\gamma$ is $\GewFunk$-rapidly decreasing}}
	\]
	endowed with the seminorms
	\[
		\BCRnorm{\gamma}{q}{f}{k}
		\ndef
		\sup \set{ q( f \cdot \parAbl{\beta}{\gamma} (x))}{ x \in \UF, \abs{\beta} \leq k}
	\]
	(where $q \in \normsOn{\SY}$, $k \in \N$ and $f \in \GewFunk$)
	becomes a locally convex space.
\end{defi}

\paragraph{Comparison of $\boldsymbol{\BCRspace{\UF}{\SY}{\GewFunk}}$ and $\boldsymbol{\CcF{\UF}{\SY}{\infty}}$}
We show that these function spaces coincide as topological vector spaces.
To this end, we need the following technical lemma.
\begin{lem}\label{lem:BCR-Gamma3-beschraenkt_impliziert_stetig}
	Let $\GewFunk$ be a set of weights as in \refer{def:BCR-weights},
	$\UF \subseteq \R^m$ open and nonempty, $F$ a locally convex space,
	$\gamma : \UF \to F$ a smooth function
	and $\beta \in \N^m$.
	Suppose that $\rest{\parAbl{\beta}{\gamma}}{\UF \cap M_\infty} \equiv 0$ and that
	for each $f \in \GewFunk$ the function
	\[
		f \cdot \parAbl{\beta}{\gamma}: \UF \to F
	\]
	is bounded. Then for each $f \in \GewFunk$,
	the function $f \cdot \parAbl{\beta}{\gamma}$ is continuous.
\end{lem}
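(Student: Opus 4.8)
The plan is to fix a weight $f \in \GewFunk$ and a point $x_0 \in \UF$ and to show that $f \cdot \parAbl{\beta}{\gamma}$ is continuous at $x_0$; since $x_0$ is arbitrary this yields the assertion. Throughout I abbreviate $\psi \ndef \parAbl{\beta}{\gamma}$, which is continuous because $\gamma$ is smooth, and I recall that $\psi$ vanishes on $\UF \cap M_\infty$ by hypothesis and that, by $(\GewFunk 1)$ together with the last sentence of \refer{def:BCR-weights}, each weight is finite and continuous on $\UF \setminus M_\infty$. The argument then splits according to whether $\psi(x_0)$ is zero or not.

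If $\psi(x_0) \neq 0$, then $x_0 \notin M_\infty$, and I would first produce an open neighborhood $V$ of $x_0$ disjoint from $M_\infty$. Indeed, choosing a continuous seminorm $q$ on $F$ with $q(\psi(x_0)) > 0$, the set $\{x \in \UF : q(\psi(x)) > \tfrac12 q(\psi(x_0))\}$ is such a neighborhood, since on it $\psi$ is nonzero and $\psi$ vanishes on $M_\infty$. On $V$ the weight $f$ is real-valued and continuous, so $f \cdot \psi$ is a product of continuous functions and hence continuous at $x_0$.

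The substantial case is $\psi(x_0) = 0$, where $(f \cdot \psi)(x_0) = 0$ irrespective of whether $x_0 \in M_\infty$. Here I invoke $(\GewFunk 3)$: taking $f_2 \in \GewFunk$ associated to $f_1 \ndef f$, the hypothesis furnishes, for each continuous seminorm $q$, a finite bound $C \ndef \sup_{x \in \UF} q(f_2(x)\psi(x))$. Given $\eps > 0$, I apply $(\GewFunk 3)$ with $\eps' \ndef \eps/(C+1)$ to obtain $n \in \N$ with $f(x) \leq \eps' f_2(x)$ whenever $f(x) \geq n$, and then use continuity of $\psi$ at $x_0$ (with $\psi(x_0) = 0$) to choose a neighborhood $V$ of $x_0$ on which $q(\psi(x)) < \eps/n$. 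For $x \in V$ I distinguish three possibilities: if $x \in M_\infty$ then $\psi(x) = 0$ and the product vanishes; if $x \notin M_\infty$ and $f(x) < n$ then $q(f(x)\psi(x)) = f(x)\,q(\psi(x)) < n \cdot \eps/n = \eps$; and if $x \notin M_\infty$ and $f(x) \geq n$ then $q(f(x)\psi(x)) = f(x)\,q(\psi(x)) \leq \eps' q(f_2(x)\psi(x)) \leq \eps' C < \eps$. In every case $q(f(x)\psi(x)) < \eps$, which is continuity at $x_0$.

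I expect the only delicate point to be the third subcase of $\psi(x_0) = 0$, namely controlling $f \cdot \psi$ at points where the weight $f$ blows up as $x_0 \in M_\infty$ is approached; this is exactly what $(\GewFunk 3)$ is designed for, converting the global boundedness of $f_2 \cdot \psi$ into $\eps$-smallness of $f \cdot \psi$ on the region $\{f \geq n\}$, while the vanishing of $\psi$ at $x_0$ handles the complementary region $\{f < n\}$. The bookkeeping with seminorms, since $F$ is merely locally convex, is routine, as every estimate is carried out one continuous seminorm $q$ at a time.
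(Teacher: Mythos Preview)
Your proof is correct and follows essentially the same approach as the paper's: both hinge on invoking $(\GewFunk 3)$ to trade boundedness of $f_2 \cdot \psi$ for $\eps$-smallness of $f \cdot \psi$ on the region $\{f \geq n\}$, while continuity of $\psi$ with $\psi(x_0)=0$ handles $\{f < n\}$. The only cosmetic difference is that the paper splits cases on whether $x_0 \in \overline{M_\infty \cap \UF}$ (and then further on whether $f$ is locally bounded near $x_0$), whereas you split on whether $\psi(x_0)=0$ and treat the second case uniformly via $(\GewFunk 3)$---your organization is slightly cleaner.
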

\begin{proof}
	Let $f \in \GewFunk$ and $x \in \UF$.
	If $x \not\in \cl{M_\infty \cap \UF}$, $f \cdot \parAbl{\beta}{\gamma}$
	is continuous on a suitable neighborhood of $x$ since $f$ is so.
	\\
	Otherwise, $\parAbl{\beta}{\gamma}(x) = 0$ because $\parAbl{\beta}{\gamma}$ is continuous.
	If there exists $\VF \in \neigh{x}$ such that $f$ is bounded on $\VF \setminus M_\infty$,
	the map $f\cdot\parAbl{\beta}{\gamma}$ is continuous on $\VF$ because for $y \in \VF \setminus M_\infty$
	and $q \in \normsOn{F}$
	\[
		\norm{f(y) \parAbl{\beta}{\gamma}(y) - f(x) \parAbl{\beta}{\gamma}(x)}_q
		= \norm{f(y) \parAbl{\beta}{\gamma}(y) }_q
		\leq \noma{\rest{f}{\VF \setminus M_\infty}} \norm{\parAbl{\beta}{\gamma}(y) }_q,
	\]
	and this estimate is valid for $y \in M_\infty$.

	Otherwise, we choose $g \in \GewFunk$ such that (\ensuremath{\GewFunk 3}) holds.
	Let $\eps > 0$. There exists an $n \in \N$ such that
	\[
		(\forall y \in \UF).\,
		f(y) \geq n
		\implies
		f(y) \leq \frac{\eps}{\BCRnorm{\gamma}{q}{g}{ \abs{\beta} } + 1} g(y).
	\]
	For $q \in \normsOn{F}$ there exists $\VF \in \neigh{x}$ such that for $y \in \VF$
	\[
		\norm{\parAbl{\beta}{\gamma}(y)}_q
		<
		\frac{\eps}{n}.
	\]
	Let $y \in \VF$. If $f(y) \geq n$, we calculate
	\[
		\norm{f(y) \parAbl{\beta}{\gamma}(y)}_q
		= f(y) \norm{\parAbl{\beta}{\gamma}(y)}_q
		\leq \frac{\eps}{\BCRnorm{\gamma}{q}{g}{ \abs{\beta} } + 1} g(y) \norm{\parAbl{\beta}{\gamma}(y)}_q
		< \eps.
	\]
	Otherwise
	\[
		\norm{f(y) \parAbl{\beta}{\gamma}(y)}_q
		\leq n \norm{\parAbl{\beta}{\gamma}(y)}_q
		< \eps.
	\]
	So the assertion holds in all cases.
\end{proof}

\begin{lem}\label{lem:BCR_Abb-sind-CcF_Abb}
	Let $\GewFunk$ be a set of weights as in \refer{def:BCR-weights},
	$\UF \subseteq \R^m$ open and nonempty and $F$ a locally convex space.
	Then $\CcF{\UF}{\SY}{\infty} = \BCRspace{\UF}{\SY}{\GewFunk}$
	as a topological vector space.
\end{lem}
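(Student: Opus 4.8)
The plan is to prove that the two spaces agree as sets first, and then to observe that their generating seminorms coincide up to finite maxima, so that the topologies agree as well. Throughout I would use the finite-dimensional description of $\CcF{\UF}{\SY}{\infty}$ recalled after \refer{beisp:BC} (via \refer{prop:Endlichdimensionale_Diffbarkeit}): for normed $\SY$ it consists of the smooth maps $\gamma$ with $\hn{\gamma}{f}{\alpha} = \sup_{x \in \UF}\abs{f(x)}\,\norm{\parAbl{\alpha}{\gamma}(x)} < \infty$ for all $f \in \GewFunk$ and all multiindices $\alpha$, and for a general locally convex $\SY$ one reduces to the quotients $\SY_p$ via \refer{lem:gew_Abb_in_Lokalkovexe_ist_Produkt_von_gew_Abb_in_normierte}. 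The crucial point is that, compared with $\CcF{\UF}{\SY}{\infty}$, the space $\BCRspace{\UF}{\SY}{\GewFunk}$ carries two extra requirements, namely the vanishing $\rest{\parAbl{\beta}{\gamma}}{\UF \cap M_\infty} \equiv 0$ and the continuity of each $f\cdot\parAbl{\beta}{\gamma}$, and the whole argument amounts to showing that both are automatically satisfied by every element of $\CcF{\UF}{\SY}{\infty}$.

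The inclusion $\BCRspace{\UF}{\SY}{\GewFunk} \subseteq \CcF{\UF}{\SY}{\infty}$ is immediate: by definition a $\GewFunk$-rapidly decreasing map has $f\cdot\parAbl{\beta}{\gamma}$ bounded for all $f$ and $\beta$, which for each continuous seminorm $p$ is exactly the finiteness of $\hn{\HomQuot{\gamma}{p}}{f}{\beta}$, i.e. membership in $\CcF{\UF}{\SY}{\infty}$. For the reverse inclusion I would fix $\gamma \in \CcF{\UF}{\SY}{\infty}$ and first establish the vanishing on $\UF \cap M_\infty$: by $(\GewFunk 1)$ of \refer{def:BCR-weights} every weight equals $\infty$ on $M_\infty$, so for $x_0 \in \UF \cap M_\infty$ the finite bound $\abs{f(x_0)}\,\norm{\parAbl{\beta}{\gamma}(x_0)} \leq \hn{\gamma}{f}{\beta} < \infty$ together with the convention $\infty\cdot 0 = 0$ forces $\parAbl{\beta}{\gamma}(x_0) = 0$ (in each $\SY_p$, hence in $\SY$ since the seminorms separate points). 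With this vanishing and the boundedness of all $f\cdot\parAbl{\beta}{\gamma}$ in hand, the continuity of each $f\cdot\parAbl{\beta}{\gamma}$ is precisely the conclusion of the preparatory \refer{lem:BCR-Gamma3-beschraenkt_impliziert_stetig}. Hence $\gamma$ satisfies all defining conditions of $\BCRspace{\UF}{\SY}{\GewFunk}$, and the two spaces coincide as sets.

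Finally, for the topological statement I would compare the two families of seminorms. For normed $\SY$ one has $\BCRnorm{\gamma}{\norm{\cdot}}{f}{k} = \max_{\abs{\beta}\leq k}\hn{\gamma}{f}{\beta}$, so each $\BCRspace$-seminorm is a finite maximum of $\CcF$-seminorms while, conversely, $\hn{\gamma}{f}{\alpha}\leq \BCRnorm{\gamma}{\norm{\cdot}}{f}{\abs{\alpha}}$; thus the two seminorm systems are equivalent and define the same locally convex topology. The general locally convex case follows verbatim after replacing $\norm{\cdot}$ by $p \in \normsOn{\SY}$ and passing through the embedding of \refer{lem:gew_Abb_in_Lokalkovexe_ist_Produkt_von_gew_Abb_in_normierte}. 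I expect the only genuine subtlety, the interplay between the $\infty$-valued weights and continuity at points of $M_\infty$ (where condition $(\GewFunk 3)$ is used), to be entirely absorbed into \refer{lem:BCR-Gamma3-beschraenkt_impliziert_stetig}; here the main work is merely to assemble these observations, so I anticipate no further obstacle.
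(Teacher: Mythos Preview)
Your proposal is correct and follows essentially the same route as the paper: both directions of the set equality hinge on the vanishing of partial derivatives on $M_\infty$ together with \refer{lem:BCR-Gamma3-beschraenkt_impliziert_stetig}, and the topological identification comes from comparing the two families of seminorms. The only cosmetic difference is that you invoke the partial-derivative description of $\CcF{\UF}{\SY}{\infty}$ directly, whereas the paper works with the Fr\'echet-derivative seminorms $\hn{\cdot}{p,f}{k}$ and converts via \refer{eq:D^k_und_partielle_Abl}, obtaining the estimate $\hn{\gamma}{p,f}{k}\leq \BCRnorm{\gamma}{p}{f}{k}\sum_{\abs{\alpha}=k}\Opnorm{S_\alpha}$ rather than your exact identity $\BCRnorm{\gamma}{\norm{\cdot}}{f}{k}=\max_{\abs{\beta}\leq k}\hn{\gamma}{f}{\beta}$.
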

\begin{proof}
	We first prove that $\CcF{\UF}{\SY}{\infty} = \BCRspace{\UF}{\SY}{\GewFunk}$
	as set. To this end, let $\gamma \in \CcF{\UF}{\SY}{\infty}$, $f \in \GewFunk$
	and $\beta \in \N^m$.
	We set $k \ndef \abs{\beta}$. We know that for $p \in \normsOn{\SY}$,
	the map $\FAbl[k]{(\HomQuot{\gamma}{p})}$ vanishes on $M_\infty$, and
	\[
		f \cdot \FAbl[k]{(\HomQuot{\gamma}{p})} : \UF \to \Lin[k]{\R^m}{\SY_p}
	\]
	is bounded. Since the evaluation $\Lin[k]{\R^m}{\SY_p} \to \SY_p$
	at a fixed point is continuous linear, the map
	$f \cdot \parAbl{\beta}{ (\HomQuot{\gamma}{p})} =  \HomQuot{(f \cdot \parAbl{\beta}{\gamma}}{p})
	: \UF \to \SY_p$
	is also bounded. Hence $f \cdot \parAbl{\beta}{\gamma}$ is bounded,
	so an application of \refer{lem:BCR-Gamma3-beschraenkt_impliziert_stetig}
	gives $\gamma \in \BCRspace{\UF}{\SY}{\GewFunk}$.

	On the other hand, let $\gamma \in \BCRspace{\UF}{\SY}{\GewFunk}$ and $k \in \N$.
	For each $p \in \normsOn{\SY}$, we get with \refer{id:D^k_und_partielle_Abl}
	\[
		\FAbl[k]{(\HomQuot{\gamma}{p})}
		= \sum_{\substack{\alpha \in \N^m\\\abs{\alpha}=k}}
		S_\alpha \cdot \parAbl{\alpha}{(\HomQuot{\gamma}{p})}
		= \sum_{\substack{\alpha \in \N^m\\\abs{\alpha}=k}}
		S_\alpha \cdot (\HomQuot{ \parAbl{\alpha}{\gamma}}{p})
	\]
	Hence for $f \in \GewFunk$
	\[
		\tag{\ensuremath{\dagger}}\label{est:BCR-norm_richtige-Norm}
		\hn{\gamma}{p, f}{k}
		\leq \BCRnorm{\gamma}{p}{f}{k} \cdot
			\sum_{\substack{\alpha \in \N^n\\\abs{\alpha}=k}} \Opnorm{S_\alpha} < \infty.
	\]
	So $\gamma \in \CcF{\UF}{\SY}{\infty}$.

	We see from \eqref{est:BCR-norm_richtige-Norm} that for each $p \in \normsOn{\SY}$,
	$f\in \GewFunk$ and $k\in\N$ the seminorm $\hn{\cdot}{p, f}{k}$
	is continuous on $\BCRspace{\UF}{\SY}{\GewFunk}$.
	Since the seminorms $\BCRnorm{\cdot}{p}{f}{k}$ are obviously continuous
	on $\CcF{\UF}{\SY}{\infty}$, the spaces are the same as topological vector spaces.
\end{proof}
\begin{bem}
	Let $\GewFunk$ be a set of weights as in \refer{def:BCR-weights}.
	Then $1_\UF \in \GewFunk \iff M_\infty = \emptyset$.
	But obviously
	$\CcF{\UF}{\SY}{k} = \CF{\UF}{\SY}{\GewFunk \cup \sset{1_\UF}}{k}$
	and
	$\CcFvanK{\UF}{\SY}{k} = \CFvanK{\UF}{\SY}{\GewFunk \cup \sset{1_\UF}}{k}$
	as topological vector spaces.
\end{bem}
\paragraph{Rapidly decreasing mappings}
In \cite[Section~4.2.1, page 117--118]{MR654676},
the set of \emph{$\Gamma$-rapidly decreasing mappings} is defined.
We will show that these mappings are open subgroups of $\CcFvanKBig{\R^m}{\G}{\infty}$.
\begin{defi}[$\GewFunk$-rapidly decreasing mappings]
	Let $m \in \N$, $\G$ a locally convex Lie group and
	$\GewFunk$ a set of weights as in \refer{def:BCR-weights}.
	We define $\BCRspace{\R^m}{\G}{\GewFunk}$
	as the set of smooth functions $\gamma : \R^m \to \G$
	such that
	\begin{itemize}
		\item
		$\gamma(x) = \one$ for each $x \in M_\infty$, and
		$\gamma(x) \to \one$ if $\norm{x} \to \infty$.
		
		\item
		For any centered chart $(\phi, \widetilde{\VF})$ of $\G$
		and each open $\one$-neighborhood $\VF$ with $\cl{\VF} \subseteq \widetilde{\VF}$,
		$\phi \circ \rest{\gamma}{\gamma^{-1}(\VF)} \in \BCRspace{\gamma^{-1}(\VF)}{\LieAlg{\G}}{\GewFunk}$.
	\end{itemize}
\end{defi}
In the next lemmas, we provide tools needed for the further discussion.
First, we show that for weights as in \refer{def:BCR-weights},
the product of a weighted function with an suitable cutoff function
is a weighted decreasing function.
We use this result to prove a superposition lemma for the spaces $\CcF{\UF}{\SY}{k}$.
\begin{lem}\label{lem:Abschneiden_CcF_liefert_CcFvan(Kompakta)}
	Let $K$ be a compact subset of the finite-dimensional vector space $\SX$,
	$\SY$ a locally convex space, $k\in\N$,
	$\GewFunk$ a set of weights as in \refer{def:BCR-weights},
	$\gamma \in \CcF{\UF}{\SY}{k}$ (where $\UF \ndef \SX \setminus K$)
	and $h \in \DCcInf{\SX}{\R}$ such that
	$h \equiv 1$ on a neighborhood $\VF$ of $K$.
	Then
	\[
		\rest{(1 - h)}{\UF} \cdot \gamma \in \CcFvanK{\UF}{\SY}{k}.
	\]
\end{lem}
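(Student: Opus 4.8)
The plan is to reduce to a normed range space and then check the two defining properties of $\CcFvanK{\UF}{\SY}{k}$ (see \refer{defi:Definition_CFvanK}) in turn: membership in $\CcF{\UF}{\SY}{k}$, which is routine, and the decay condition, which carries the real content. For the reduction, note that each quotient map $\morQuot p$ is linear, so $\HomQuot{((1-h)\cdot\gamma)}{p}=(1-h)\cdot\HomQuot{\gamma}{p}$ for every $p\in\normsOn{\SY}$, and $\HomQuot{\gamma}{p}\in\CcF{\UF}{\SY_p}{k}$ because $\gamma\in\CcF{\UF}{\SY}{k}$. Hence it suffices to prove the statement when $\SY$ is normed, and throughout I would use that for BCR-weights the relevant family is $\GewFunk\cup\sset{1_\UF}$ (the remark following \refer{lem:BCR_Abb-sind-CcF_Abb}).

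First I would record that $\rest{(1-h)}{\UF}\in\BC{\UF}{\R}{\infty}$: it is smooth, bounded by $1$, and all its derivatives coincide with the (compactly supported, hence bounded) derivatives of $-h$. Multiplying by scalars is a continuous bilinear map $\R\times\SY\to\SY$, so \refer{prop:multilineare_Abb_und_CF} (with the trivial domination $\abs f\leq\abs{1_\UF}\cdot\abs f$) yields $(1-h)\cdot\gamma\in\CcF{\UF}{\SY}{k}$.

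For the decay I first fix the geometry. Put $L\ndef\supp{\FAbl h}$. Since $h$ is compactly supported and $\equiv 1$ on the neighbourhood $\VF$ of $K$, the set $L$ is contained in $\supp h\setminus\VF$, so it is compact and disjoint from $K$, i.e. a compact subset of $\UF$. Off $L$ the function $h$, and therefore $1-h$, is locally constant, so the product rule gives $\FAbl[\ell]{((1-h)\gamma)}(x)=(1-h(x))\,\FAbl[\ell]{\gamma}(x)$ for all $x\in\UF\setminus L$ and $\ell\leq k$. Now fix $\ell\leq k$ and $\eps>0$. Since $1_\UF\leq f_{\text{min}}$ pointwise with $f_{\text{min}}\in\GewFunk$ by $(\GewFunk 2)$, it is enough to find the required compact set for an arbitrary $f\in\GewFunk$. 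For such $f$ choose $f_2\in\GewFunk$ as in $(\GewFunk 3)$, set $C\ndef\hn{\gamma}{f_2}{\ell}<\infty$ and $\eps'\ndef\eps/(C+1)$, and let $n$ be the integer furnished by $(\GewFunk 3)$ for $\eps'$. I would then take
\[
	K'\ndef L\cup\bigl(\sset{x\in\SX:\norm x\leq n}\setminus\VF\bigr),
\]
a compact subset of $\UF$. For $x\in\UF\setminus K'$ one has $x\notin L$, so only the zeroth-order term survives; if $x\in\VF$ then $1-h(x)=0$ and it vanishes, and otherwise $\norm x>n$, whence $(\GewFunk 3)$ gives $f(x)\leq\eps' f_2(x)$ and
\[
	\abs{f(x)}\,\Opnorm{\FAbl[\ell]{((1-h)\gamma)}(x)}
	=\abs{f(x)}\,\abs{1-h(x)}\,\Opnorm{\FAbl[\ell]{\gamma}(x)}
	\leq\eps'\,f_2(x)\,\Opnorm{\FAbl[\ell]{\gamma}(x)}\leq\eps' C<\eps.
\]
Thus $\hn{\rest{(1-h)\gamma}{\UF\setminus K'}}{f}{\ell}\leq\eps$, giving the decay condition and hence $\rest{(1-h)}{\UF}\cdot\gamma\in\CcFvanK{\UF}{\SY}{k}$.

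I expect the decay estimate to be the only real obstacle, and it is worth emphasising why the obvious shortcuts fail: $\gamma$ by itself is not in $\CcFvanK{\UF}{\SY}{k}$ because it need not decay as $x$ approaches the removed compactum $K$, while the factor $1-h$ does not decay at spatial infinity; hence \refer{cor:multilineare_Abb_und_CFvan_endl-dim} is inapplicable with either factor as the vanishing one. The point is to combine the two decay mechanisms: $1-h$ produces the decay near $K$ (it vanishes on $\VF$), whereas condition $(\GewFunk 3)$ of \refer{def:BCR-weights} produces the decay at infinity, and the observation that $1-h$ is locally constant off $L$ is precisely what allows all higher Leibniz terms to be discarded outside the single compact set $K'$.
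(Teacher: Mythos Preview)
Your argument is correct and takes a genuinely different route from the paper. The paper proceeds by induction on $k$: the base case $k=0$ is handled much as you do (using $(\GewFunk 3)$ to produce a radius $n$ and then intersecting $\clBall{0}{n}$ with a suitable sublevel set of $1-h$ to obtain the compact set), while the inductive step computes $\FAbl{((1-h)\gamma)} = (1-h)\FAbl{\gamma} - \FAbl{h}\cdot\gamma$, applies the inductive hypothesis to the first summand, and invokes \refer{cor:multilineare_Abb_und_CFvan_endl-dim} (using that $\FAbl{h}$ has compact support in $\UF$) for the second, concluding via \refer{lem:topologische_Zerlegung_von_CFkvan-kompakte_Version}. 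Your observation that $1-h$ is locally constant off $L=\supp(\FAbl h)$, so that all Leibniz cross-terms vanish outside a single compact set and the order-$\ell$ estimate reduces to an order-$0$ estimate for $\FAbl[\ell]{\gamma}$, lets you treat all $\ell\leq k$ uniformly and avoid the inductive machinery altogether. This is more elementary and self-contained; the paper's version is shorter given the tools already available and fits its reduction-to-lower-order style.

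One small slip to fix: in your final chain of inequalities you silently drop the factor $\abs{1-h(x)}$, and earlier you claim $1-h$ is ``bounded by $1$''. Since $h$ is only assumed to be in $\DCcInf{\SX}{\R}$ (not $[0,1]$-valued), neither is justified; however $\noma{1-h}<\infty$, so taking $\eps'\ndef\eps/\bigl((C+1)(1+\noma{1-h})\bigr)$ repairs the estimate without further change.
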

\begin{proof}
	We prove this by induction on $k$.
	
	$k = 0$:
	Let $f \in \GewFunk$, $p \in \normsOn{\SY}$ and $\eps > 0$.
	We use (\ensuremath{\GewFunk 3}) to see that there exists a $n \in \N$ such that
	\[
		\hn{\rest{\gamma}{\UF \setminus \clBall{0}{n} }}{p, f}{0}
		< \frac{\eps}{ 1 + \noma{1 - h} }.
	\]
	Further, the set
	\[
		A \ndef
		\left\set{x \in \SX}{\abs{(1 - h)(x)} \geq \frac{\eps}{ \hn{\gamma}{p, f}{0} + 1 }\right}
		\cap
		\clBall{0}{n}
	\]
	is compact and contained in $\UF$ since $(1 - h) \equiv 0$ on $\VF$.
	Using this two estimates, we easily calculate that
	$\hn{\rest{(1 - h)\cdot\gamma}{\UF \setminus A }}{p, f}{0} < \eps$.

	$k \to k + 1$:
	We have
	\[
		\FAbl{ (\rest{(1 - h)}{\UF} \cdot \gamma)}
		= \rest{(1 - h)}{\UF} \cdot \FAbl{\gamma} - \rest{\FAbl{h}}{\UF} \cdot \gamma .
	\]
	By the inductive hypothesis,
	$\rest{(1 - h)}{\UF} \cdot \FAbl{\gamma} \in \CcFvanK{\UF}{\Lin{\SX}{\SY}}{k}$,
	and since $\rest{\FAbl{h}}{\UF} \in \DCcInf{\UF}{\Lin{\SX}{\R}}$,
	we use \refer{cor:multilineare_Abb_und_CFvan_endl-dim}
	and \refer{prop:topologische_Zerlegung_von_CFkvan-kompakte_Version}
	to finish the proof.
\end{proof}
\begin{lem}\label{lem:Superposition_von_CcF-Abb(kompakt)}
	Let $m \in \N$, $k \in \cl{\N}$, $\GewFunk$ a set of weights as in \refer{def:BCR-weights},
	$\SY$ and $\SZ$ locally convex spaces, $\Omega \sub \SY$ open and balanced,
	$\phi : \Omega \to \SZ$ a smooth map with $\phi(0) = 0$
	and $\UF \sub \R^m$ open and nonempty such that $\R^m\setminus \UF$ is compact
	and $\cl{M_\infty} \sub \UF$.
	Further, let $\gamma \in \CcF{\UF}{\SY}{k}$
	such that $\gamma(\UF) \sub \Omega$.
	Then there exists an open set $\VF \sub \UF$ such that $\R^m\setminus \VF$ is compact,
	$\cl{M_\infty} \sub \VF$ and
	$\rest{\phi \circ \gamma}{\VF} \in \CcF{\VF}{\SZ}{k}$.
\end{lem}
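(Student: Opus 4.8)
The plan is to shrink $\UF$ to an open set $\VF$ whose closure still lies inside $\UF$, so that $\gamma$ stays continuous up to the boundary of $\VF$ and the only noncompact direction remaining in $\VF$ is $\norm{x}\to\infty$; there the BCR-weight hypotheses force $\gamma$ to decay to $0$, landing in a region of $\Omega$ where the merely smooth map $\phi$ can be controlled. Concretely, first I would construct $\VF$. Since $C\ndef\R^m\setminus\UF$ is compact and $\cl{M_\infty}$ is closed with $C\cap\cl{M_\infty}=\emptyset$, we have $\delta\ndef\dist{C}{\cl{M_\infty}}>0$; setting $C'\ndef\set{x}{\dist{x}{C}\leq\delta/2}$ and $\VF\ndef\R^m\setminus C'$ yields a compact $C'$ with $C\sub\interior{C'}$ and $C'\cap\cl{M_\infty}=\emptyset$. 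Then $\R^m\setminus\VF=C'$ is compact, $\cl{M_\infty}\sub\VF$, and $\cl{\VF}=\R^m\setminus\interior{C'}\sub\R^m\setminus C=\UF$, so $\gamma$ is continuous on all of $\cl{\VF}$.

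By \refer{lem:gew_Abb_in_Lokalkovexe_ist_Produkt_von_gew_Abb_in_normierte} it suffices to show, for each $p\in\normsOn{\SZ}$, that $\HomQuot{(\phi\circ\gamma)}{p}=(\HomQuot{\phi}{p})\circ\gamma$, restricted to $\VF$, lies in $\CcF{\VF}{\SZ_p}{k}$; here $\HomQuot{\phi}{p}\colon\Omega\to\SZ_p$ is smooth and vanishes at $0$. Fix such a $p$ and assume first $k<\infty$. Applying \refer{lem:stetig_diffbar_impliziert_lokal_Lipschitz_und_beschraenkt} at $0$ produces a seminorm $q\in\normsOn{\SY}$ and a radius $\rho>0$ so that the induced map $\FakLC{q}{p}{\phi}$ is defined on $\WF_q$ and lies in $\BCzero{\WF_q}{\SZ_p}{k+1}$, where $\WF\ndef\Ball[q]{0}{\rho}$ is balanced and star-shaped. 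The decisive input is the decay of $\gamma$: choosing $f_2\in\GewFunk$ for the least weight $f_{\mathrm{min}}$ as in condition $(\GewFunk\,3)$ of \refer{def:BCR-weights}, and using $\hn{\gamma}{q,f_2}{0}<\infty$ together with $f_{\mathrm{min}}\geq 1$ off $M_\infty$, one obtains for every $\eps>0$ an $R$ with $\norm{\gamma(x)}_q<\eps$ whenever $x\in\UF$, $\norm{x}\geq R$. Taking $\eps=\rho/2$ places $\gamma(x)$ in $\Omega_0\ndef\Ball[q]{0}{\rho/2}$ for $\norm{x}\geq R$. Since $f_{\mathrm{min}}$ forces $\gamma\equiv 0$ on $M_\infty$ (via $\infty\cdot 0=0$), hence on $\cl{M_\infty}$ by continuity, the set $K\ndef\set{x\in\cl{\VF}}{\norm{x}\leq R,\ \gamma(x)\notin\Omega_0}$ is a compact subset of $\cl{\VF}$ that is disjoint from $\cl{M_\infty}$.

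On the open set $\VF\setminus K$ we then have $\gamma(\VF\setminus K)\sub\Omega_0$, so $\HomQuot{\gamma}{q}$ restricts to an element of $\CcFo{\VF\setminus K}{\WF_q}{k}$ (its image has distance $\geq\rho/2$ to $\partial\WF_q$); as noted in the remark preceding the statement we may add $1_\UF$ to the weights without changing the spaces, so \refer{lem:BC0_operiert_stetig_auf_CW} (with the star-shaped $\WF_q$ and $\FakLC{q}{p}{\phi}\in\BCzero{\WF_q}{\SZ_p}{k+1}$) gives $\HomQuot{(\phi\circ\gamma)}{p}=\FakLC{q}{p}{\phi}\circ\HomQuot{\gamma}{q}\in\CcF{\VF\setminus K}{\SZ_p}{k}$, so each weighted seminorm of this restriction is finite. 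On the complementary part $K\cap\VF\sub K$ the map $\HomQuot{(\phi\circ\gamma)}{p}$ is $\ConDiff{}{}{k}$ by the chain rule, and since $K$ is compact and avoids $\cl{M_\infty}$ every $f\in\GewFunk$ is continuous and bounded on $K$, whence those weighted seminorms are finite too. As $\VF=(K\cap\VF)\cup(\VF\setminus K)$, taking the maximum of the two finite suprema yields $\HomQuot{(\phi\circ\gamma)}{p}\in\CcF{\VF}{\SZ_p}{k}$ for every $p$, hence $\rest{(\phi\circ\gamma)}{\VF}\in\CcF{\VF}{\SZ}{k}$. The case $k=\infty$ follows by running the argument for each finite $n$ with the same $\VF$ and invoking \refer{cor:Topologie_von_CinfF}.

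The main obstacle is exactly the interplay exploited above: because $\phi$ is only smooth, it is controlled solely near $0$ (via \refer{lem:stetig_diffbar_impliziert_lokal_Lipschitz_und_beschraenkt}) and on compacta, so $\VF$ must be split into a far region where $\gamma$ is genuinely small and a compact core. Making this rigorous requires (i) extracting honest decay of $\gamma$ at infinity from the abstract condition $(\GewFunk\,3)$, rather than the mere boundedness that membership in $\CcF$ provides directly, and (ii) ensuring $\gamma$ does not run out to $\partial\Omega$ as $x$ approaches the removed hole $C$ — which is precisely why one is forced to pass from $\UF$ to a smaller $\VF$ with $\cl{\VF}\sub\UF$.
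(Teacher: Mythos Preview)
Your argument is correct, but the paper's proof takes a markedly different and shorter route. Rather than constructing $\VF$ geometrically and then splitting it by hand into a compact core and a far region, the paper chooses a smooth cutoff $h\in\DCcInf{\R^m}{[0,1]}$ with $h\equiv 1$ near $\R^m\setminus\UF$ and $h\equiv 0$ near $\cl{M_\infty}$, invokes \refer{lem:Abschneiden_CcF_liefert_CcFvan(Kompakta)} to obtain $(1-h)\cdot\gamma\in\CcFvanK{\UF}{\SY}{k}$, and then applies the already-established superposition result \refer{prop:Superposition_glatter_Abb_auf_weig-van_Abb} to conclude $\phi\circ((1-h)\cdot\gamma)\in\CcFvanK{\UF}{\SZ}{k}$; setting $\VF\ndef\R^m\setminus\supp{h}$ finishes the proof in two lines, since $(1-h)\equiv 1$ there. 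The difference is structural: the paper has already packaged the compact-core/far-region dichotomy and the local $\BC$-control of $\phi$ into the $\CcFvanK$ machinery (in particular into \refer{prop:Superposition_glatter_Abb_auf_weig-van_Abb} and \refer{lem:Bild_einer_verschwindenden_gewichteten_Abb_ist_kompakt}), so here it simply cashes that in. Your approach reconstructs those ingredients from scratch --- extracting decay directly from $(\GewFunk\,3)$, invoking \refer{lem:stetig_diffbar_impliziert_lokal_Lipschitz_und_beschraenkt} at $0$, and using only the more elementary \refer{lem:BC0_operiert_stetig_auf_CW} --- which makes your proof longer but essentially self-contained and independent of the $\CcFvanK$ theory of \refer{susec:GewAbb_Bildbereich-lokalkonvex}.
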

\begin{proof}
	By our assumptions, there exists $h \in \DCcInf{\R^m}{[0,1]}$ with $h \equiv 1$ on
	a neighborhood of $\R^m\setminus \UF$
	and $h \equiv 0$ on a neighborhood of $\cl{M_\infty}$.
	Using \refer{lem:Abschneiden_CcF_liefert_CcFvan(Kompakta)}
	and \refer{prop:Superposition_glatter_Abb_auf_weig-van_Abb}
	we see that
	\[
		\phi \circ ((1 - h)\cdot\gamma) \in \CcFvanK{\UF}{\SZ}{k},
	\]
	so $\rest{\phi \circ \gamma}{\VF} \in \CcF{\VF}{\SZ}{k}$,
	where $\VF \ndef \R^m \setminus \supp{h}$.
	Further, $\R^m\setminus \VF$ is compact and $\cl{M_\infty} \sub \VF$, so the proof is finished.
\end{proof}
To complete our preparations, we prove a kind of extension lemma for weighted functions.
\begin{lem}\label{lem:Fortsetzen_von_CcF-Abb(kompakt)}
	Let $m \in \N$, $k \in \cl{\N}$, $\GewFunk$ a set of weights as in \refer{def:BCR-weights},
	$\SY$ a locally convex space,
	$\VF \sub \UF$ open and nonempty subsets of $\R^m$ such that $\R^m\setminus \VF$ is compact
	and $\cl{M_\infty} \sub \VF$.
	Further, let $\gamma \in \ConDiff{\UF}{\SY}{k}$
	such that $\rest{\gamma}{\VF} \in \CcF{\VF}{\SY}{k}$.
	Then for any open set $\WF$ with $\cl{\WF} \sub \UF$,
	the map $\rest{\gamma}{\WF}$ is in $\CcF{\WF}{\SY}{k}$.
\end{lem}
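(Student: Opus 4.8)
The plan is to reduce the whole statement to the finiteness of finitely many seminorms and to a domain split. Concretely, I would fix a continuous seminorm $p \in \normsOn{\SY}$, a weight $f \in \GewFunk$ and an order $\ell \in \N$ with $\ell \le k$, and show that $\hn{\rest{\gamma}{\WF}}{p,f}{\ell} < \infty$. Since $\rest{\gamma}{\WF}$ is automatically a $\ConDiff{}{}{k}$-map (it is the restriction of the $\ConDiff{}{}{k}$-map $\gamma$ to the open subset $\WF \sub \UF$), the membership $\rest{\gamma}{\WF} \in \CcF{\WF}{\SY}{k}$ will then follow directly from the definition of the weighted spaces together with \refer{lem:gew_Abb_in_Lokalkovexe_ist_Produkt_von_gew_Abb_in_normierte}, which characterises membership through the components $\HomQuot{\gamma}{p}$.

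Write $\gamma_p \ndef \HomQuot{\gamma}{p}$. First I would split the domain as $\WF = (\WF \cap \VF) \cup (\WF \setminus \VF)$, so that $\hn{\rest{\gamma}{\WF}}{p,f}{\ell}$ is the maximum of the two suprema of $x \mapsto \abs{f(x)}\,\Opnorm{D^{(\ell)}\gamma_p(x)}$ over these two pieces. On $\WF \cap \VF$ the supremum is bounded by $\hn{\rest{\gamma}{\VF}}{p,f}{\ell}$, which is finite by the hypothesis $\rest{\gamma}{\VF} \in \CcF{\VF}{\SY}{k}$; this is the cheap half, using only that restriction to a smaller domain can only decrease a supremum seminorm.

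The remaining piece is where the assumptions on $\GewFunk$ and on $\VF$ enter. I would set $L \ndef \cl{\WF} \cap (\R^m \setminus \VF)$ and observe that $\WF \setminus \VF \sub L$, that $L$ is compact (a closed subset of the compact set $\R^m \setminus \VF$), and that $L \sub \cl{\WF} \sub \UF$, so that $\gamma_p$ and its derivatives are defined and continuous on $L$. The key point is that $L$ is disjoint from $M_\infty$: indeed $M_\infty \sub \cl{M_\infty} \sub \VF$ by hypothesis, whereas $L \sub \R^m \setminus \VF$. Consequently, by axiom $(\GewFunk 1)$ every $f \in \GewFunk$ is finite on $L$, and since each weight is continuous on $\R^m \setminus M_\infty \supseteq L$, the function $\abs{f}$ is bounded on the compact set $L$; likewise $x \mapsto \Opnorm{D^{(\ell)}\gamma_p(x)}$ is continuous, hence bounded, on $L$. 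Therefore the supremum of $\abs{f}\,\Opnorm{D^{(\ell)}\gamma_p}$ over $\WF \setminus \VF$ is finite as well, and combining the two estimates yields $\hn{\rest{\gamma}{\WF}}{p,f}{\ell} < \infty$.

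Since $p$, $f$ and $\ell$ were arbitrary, this establishes $\HomQuot{\rest{\gamma}{\WF}}{p} = \rest{\gamma_p}{\WF} \in \CcF{\WF}{\SY_p}{k}$ for every $p$, whence $\rest{\gamma}{\WF} \in \CcF{\WF}{\SY}{k}$. I do not expect a genuinely hard step here: the only thing to handle with care is the verification that $L$ avoids $M_\infty$ and that the weights are therefore finite and bounded on $L$, which is exactly where the three hypotheses $\cl{M_\infty} \sub \VF$, the compactness of $\R^m \setminus \VF$, and the BCR-axioms for $\GewFunk$ are used in concert.
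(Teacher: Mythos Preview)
Your proof is correct and follows essentially the same route as the paper: split $\WF$ into $\WF\cap\VF$ (handled by the hypothesis) and the set $L=\cl{\WF}\cap(\R^m\setminus\VF)$, which is compact, contained in $\UF$, and disjoint from $M_\infty$, so that the weight $f$ is continuous and finite there and the product $\abs{f}\cdot\Opnorm{D^{(\ell)}\gamma_p}$ is bounded by compactness. The paper argues identically, just phrasing the bound as ``$f\cdot D^{(\ell)}\gamma$ is bounded'' without spelling out the reduction to a single seminorm $p$.
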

\begin{proof}
	Obviously
	$\cl{ \WF \setminus \VF } \sub \cl{ \WF } \cap (\R^m\setminus \VF)$,
	hence $\cl{\WF \setminus \VF}$ is compact and does not meet $M_\infty$.
	So for each $f \in \GewFunk$ and $\ell \in \N$ with $\ell \leq k$,
	the map $f\cdot \FAbl[\ell]{\gamma}$ is bounded on $\cl{\WF \setminus \VF}$
	since $f$ is continuous on this set.
	But $f\cdot \FAbl[\ell]{\gamma}$ is bounded on $\VF$ by our assumption.
	Hence $f\cdot \FAbl[\ell]{\gamma}$ is bounded on all of $\WF$
	and the proof is finished.
\end{proof}
Now we are able to prove the main results.
\begin{prop}\label{prop:Vergleich_Boseck-Gruppen_meine-Gruppen}
	Let $m \in \N$, $\G$ a locally convex Lie group and
	$\GewFunk$ a set of weights as in \refer{def:BCR-weights}.
	Then the following assertions hold:
	\begin{enumerate}
		\item\label{enum1:BCR-Gruppen_sind_Gruppen}
		$\BCRspace{\R^m}{\G}{\GewFunk}$ is a group.
		
		\item\label{enum1:BCR-Gruppen_enthalten_ZSHGs-Komponente}
		$\CcFvanK{\R^m}{\G}{\infty} \subseteq \BCRspace{\R^m}{\G}{\GewFunk}$.
		
		\item\label{enum1:BCR-Gruppen_enthalten_in_dicken_Gruppen}
		$\BCRspace{\R^m}{\G}{\GewFunk} \subseteq \CcFvanKBig{\R^m}{\G}{\infty}$.
	\end{enumerate}
	\index{mapping groups!with values in a locally convex Lie group}%
\end{prop}
\begin{proof}
	\refer{enum1:BCR-Gruppen_sind_Gruppen}
	Let $\gamma_1, \gamma_2 \in \BCRspace{\R^m}{\G}{\GewFunk}$.
	We set $\gamma \ndef \gamma_1 \cdot \gamma_2^{-1}$.
	Then for $x \in M_\infty$, we have $\gamma(x) = \gamma_1(x) \cdot \gamma_2^{-1}(x) = \one$,
	and it is easy to see that $\gamma(x) \to \one$ if $\norm{x} \to \infty$.
	\\
	Let $(\phi, \widetilde{\VF})$ be a centered chart of $\G$ and $\VF \sub \widetilde{\VF}$
	an open $\one$-neighborhood with $\cl{\VF} \sub \widetilde{\VF}$.
	There exist centered charts $(\phi_1, \VF_1)$ and $(\phi_2, \VF_2)$
	such that
	$\phi_i \circ \gamma_i \in \BCRspace{\gamma_i^{-1}(\VF_i)}{\LieAlg{\G}}{\GewFunk}$,
	where $i\in \sset{1,2}$;
	we may assume w.l.o.g. that $\VF_1 \cdot \VF_2^{-1} \sub \VF$, $\VF_2 \sub \VF$ and
	$\phi_1(\VF_1)$ and $\phi_2(\VF_2)$ are balanced.
	We define $\WF \ndef \bigcap_{i\in\sset{1,2}}\gamma_i^{-1}(\VF_i)$.
	Then by \refer{lem:gewichtete,verschwindende_Abb_Produktisomorphie-lokalkonvex}
	and \refer{lem:BCR_Abb-sind-CcF_Abb}
	\[
		(\phi_1 \circ \rest{\gamma_1}{\WF}, \phi_2 \circ \rest{\gamma_2}{\WF})
		\in \CcF{\WF}{ \phi_1(\VF_1) \times \phi_2(\VF_2) }{\infty}.
	\]
	Further $\R^m \setminus \WF$ is compact, and since
	there exist closed $A_i \in \neigh[\G]{\one}$ with $A_i \sub \VF_i$ ($i\in\sset{1,2}$),
	we have $\cl{M_\infty} \sub \bigcap_{i\in\sset{1,2}}\gamma_i^{-1}(A_i) \sub \WF$.
	We now apply \refer{lem:Superposition_von_CcF-Abb(kompakt)} to
	$(\phi_1 \circ \rest{\gamma_1}{\WF}, \phi_2 \circ \rest{\gamma_2}{\WF})$
	and the map
	\[
		\phi \circ \widetilde{m_\G} \circ (\phi_1^{-1} \times \phi_2^{-1})
		: \phi_1(\VF_1) \times \phi_2(\VF_2) \to \LieAlg{\G}
	\]
	(where $\widetilde{m_\G}$ denotes the map $\G \times \G \to \G : (g, h) \mapsto g \cdot h^{-1}$)
	and find an open set $\WF' \sub \WF$ such that $\cl{M_\infty} \sub \WF'$,
	$\R^m \setminus \WF'$ is compact
	and $\rest{\phi \circ \gamma}{\WF'} \in \CcF{\WF' }{ \LieAlg{\G} }{\infty}$.
	Applying \refer{lem:Fortsetzen_von_CcF-Abb(kompakt)}
	with the open sets $\WF' \sub \gamma^{-1}(\widetilde{\VF})$ and $\gamma^{-1}(\VF) \sub \gamma^{-1}(\widetilde{\VF})$,
	we obtain
	\[
		\rest{\phi \circ \gamma}{ \gamma^{-1}(\VF)}
		\in \CcF{ \gamma^{-1}(\VF) }{ \LieAlg{\G} }{\infty}
		= \BCRspace{\gamma^{-1}(\VF)}{\LieAlg{\G}}{\GewFunk}.
	\]

	\refer{enum1:BCR-Gruppen_enthalten_ZSHGs-Komponente}
	Since we proved that $\BCRspace{\R^m}{\G}{\GewFunk}$ is a group,
	we just have to show that it contains a generating set of $\CcFvanK{\R^m}{\G}{\infty}$.
	We know from \refer{def:Zsghd_gewichtete_Abb_Gruppe-lokalkonvex}
	that $\CcFvanK{\R^m}{\G}{\infty}$ is generated by
	$
		\phi^{-1} \circ \CcFvanK{\R^m}{\WF}{\infty},
	$
	where $(\phi, \widetilde{\WF})$ is a centered chart of $\G$ and
	$\WF \subseteq \phi(\widetilde{\WF})$ is an open convex zero neighborhood.
	Let $\gamma \in \CcFvanK{\R^m}{\WF}{\infty}$.
	Then $\rest{\gamma}{M_\infty} \equiv 0$,
	hence $\rest{\phi^{-1} \circ \gamma}{M_\infty} \equiv \one$.
	Further, since $1_{\R^m} \in \GewFunk$, $\gamma(x) \to 0$ if $\norm{x} \to \infty$,
	and thus $(\phi^{-1} \circ \gamma)(x) \to \one$ if $\norm{x} \to \infty$.
	Now let $(\psi, \widetilde{\VF})$ be a centered chart of $\G$ and $\VF \subseteq \widetilde{\VF}$
	an open $\one$-neighborhood with $\cl{\VF} \sub \widetilde{\VF}$.
	There exists an open balanced set $\Omega \sub \WF$ such that $\phi^{-1}(\Omega) \sub \VF$. 
	We set $\UF \ndef \gamma^{-1}(\Omega)$.
	Then $\rest{\gamma}{\UF} \in \CcF{\UF}{\LieAlg{\G}}{\infty}$, $\R^m \setminus \UF$
	is compact, and $\cl{M_\infty} \sub \gamma^{-1}(\sset{0}) \sub \UF$.
	Hence we can apply \refer{lem:Superposition_von_CcF-Abb(kompakt)}
	to $\rest{\gamma}{\UF}$ and $\rest{\psi \circ \phi^{-1}}{\Omega}$ to see that
	$
	 	\rest{\psi \circ \phi^{-1} \circ \gamma}{\UF}
		\in \CcF{\UF}{\LieAlg{\G}}{\infty}
	$
	Applying \refer{lem:Fortsetzen_von_CcF-Abb(kompakt)}
	with the open sets $\UF \sub (\psi \circ \phi^{-1} \circ \gamma)^{-1}(\widetilde{\VF})$
	and $(\psi \circ \phi^{-1} \circ \gamma)^{-1}(\VF) \sub (\psi \circ \phi^{-1} \circ \gamma)^{-1}(\widetilde{\VF})$,
	we obtain
	\[
		\rest{\psi \circ \phi^{-1} \circ \gamma}{ (\psi \circ \phi^{-1} \circ \gamma)^{-1}(\VF)}
		\in \CcF{ (\psi \circ \phi^{-1} \circ \gamma)^{-1}(\VF) }{ \LieAlg{\G} }{\infty}
		= \BCRspace{(\psi \circ \phi^{-1} \circ \gamma)^{-1}(\VF)}{\LieAlg{\G}}{\GewFunk}.
	\]

	\refer{enum1:BCR-Gruppen_enthalten_in_dicken_Gruppen}
	Let $\gamma \in \BCRspace{\R^m}{\G}{\GewFunk}$, $(\phi, \widetilde{\VF})$ be a centered chart of $\G$
	and $\VF$ an open $\one$-neighbor\-hood with $\cl{\VF} \subseteq \widetilde{\VF}$.
	Then the set $K \ndef \R^m \setminus \gamma^{-1}(\VF)$ is closed and bounded,
	hence compact, and
	\[
		\rest{\phi \circ \gamma}{\R^m\setminus K} \in \BCRspace{\R^m\setminus K}{\LieAlg{\G}}{\GewFunk}
		= \CcF{\R^m\setminus K}{\LieAlg{\G}}{\infty};
	\]
	the last identity is by \refer{lem:BCR_Abb-sind-CcF_Abb}.
	Let $h \in \DCcInf{\R^m}{\R}$ such that $h \equiv 1$ on a neighborhood of $K$.
	Then by \refer{lem:Abschneiden_CcF_liefert_CcFvan(Kompakta)}
	\[
		\rest{(1_{\R^m} - h) \cdot \phi \circ \gamma}{\R^m\setminus K}
		\in \CcFvanK{\R^m\setminus K}{\LieAlg{\G}}{\infty}.
	\]
	Hence $\gamma \in \CcFvanKBig{\R^m}{\G}{\infty}$.
\end{proof}
We characterize when $\CcFvanKBig{\R^m}{\G}{\infty}$
consists entirely of $\GewFunk$-rapidly decreasing mappings.
\begin{lem}
	Let $m \in \N$, $\G$ a locally convex Lie group and
	$\GewFunk$ a set of weights as in \refer{def:BCR-weights}.
		The following equivalence holds:
		\[
			\CcFvanKBig{\R^m}{\G}{\infty} = \BCRspace{\R^m}{\G}{\GewFunk}
			\iff
			M_\infty = \emptyset
			.
		\]
\end{lem}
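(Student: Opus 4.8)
The plan is to prove the two implications separately, exploiting that the inclusion $\BCRspace{\R^m}{\G}{\GewFunk} \subseteq \CcFvanKBig{\R^m}{\G}{\infty}$ always holds by \refer{lem:Vergleich_Boseck-Gruppen_meine-Gruppen}~\refer{enum1:BCR-Gruppen_enthalten_in_dicken_Gruppen}. Recall also that $\CcFvanKBig{\R^m}{\G}{\infty}$ is formed with the weight set $\GewFunk \cup \sset{1_{\R^m}}$, and that by the remark following \refer{lem:BCR_Abb-sind-CcF_Abb} one has $1_{\R^m} \in \GewFunk$ precisely when $M_\infty = \emptyset$. Thus the whole statement reduces to deciding when the reverse inclusion $\CcFvanKBig{\R^m}{\G}{\infty} \subseteq \BCRspace{\R^m}{\G}{\GewFunk}$ is valid.

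First I would treat the implication $M_\infty = \emptyset \implies$ equality. Given $\gamma \in \CcFvanKBig{\R^m}{\G}{\infty}$, I would fix data $(\phi, \VF_\phi)$, a compact set $K$ and a cutoff $h$ with $\gamma \in M((\phi, \VF_\phi), K, h)$ and verify the three defining conditions of $\BCRspace{\R^m}{\G}{\GewFunk}$ directly. The requirement $\rest{\gamma}{M_\infty} \equiv \one$ is vacuous since $M_\infty = \emptyset$; the requirement $\gamma(x) \to \one$ as $\norm{x} \to \infty$ follows because outside the compact set $\supp{h} \cup K$ we have $(1_{\R^m} - h)\cdot(\phi \circ \gamma) = \phi \circ \gamma$, which tends to $0$ by the $\CcFvanK$-membership with the weight $1_{\R^m}$. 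The essential work is the chart condition: for any centered chart $(\psi, \widetilde{\VF})$ and open $\VF$ with $\cl{\VF} \sub \widetilde{\VF}$ I must show $\psi \circ \rest{\gamma}{\gamma^{-1}(\VF)} \in \CcF{\gamma^{-1}(\VF)}{\LieAlg{\G}}{\infty}$, which by \refer{lem:BCR_Abb-sind-CcF_Abb} coincides with membership in $\BCRspace{\gamma^{-1}(\VF)}{\LieAlg{\G}}{\GewFunk}$. Here I would proceed exactly as in the proofs of \refer{enum1:BCR-Gruppen_enthalten_ZSHGs-Komponente} and \refer{enum1:BCR-Gruppen_enthalten_in_dicken_Gruppen}: restrict $\phi \circ \gamma$ to the complement of a sufficiently large compact set, on which it takes values in a balanced, star-shaped neighborhood $\Omega$ of $0$; apply the superposition result \refer{lem:Superposition_von_CcF-Abb(kompakt)} to $\rest{\psi \circ \phi^{-1}}{\Omega}$ (which fixes $0$) to obtain weightedness of $\psi \circ \gamma$ on an open set with compact complement; and finally use the extension lemma \refer{lem:Fortsetzen_von_CcF-Abb(kompakt)} to upgrade this to $\gamma^{-1}(\VF)$. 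Combined with \refer{enum1:BCR-Gruppen_enthalten_in_dicken_Gruppen}, this gives the desired equality.

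For the converse I would argue by contraposition: if $M_\infty \neq \emptyset$, I construct an element of $\CcFvanKBig{\R^m}{\G}{\infty} \setminus \BCRspace{\R^m}{\G}{\GewFunk}$. Choosing $x_0 \in M_\infty$, a centered chart $(\phi, \VF_\phi)$ with $\phi(\VF_\phi)$ star-shaped with center $0$, a nonzero $v \in \phi(\VF_\phi)$ and a bump function $b \in \DCcInf{\R^m}{[0,1]}$ with $b(x_0) = 1$, the map $\chi \ndef \phi^{-1} \circ (b \cdot v)$, extended by $\one$ off $\supp{b}$, lies in $\DCc{\R^m}{\G}{\infty}$ and hence in $\CcFvanKBig{\R^m}{\G}{\infty}$ (a compactly supported map belongs to $M((\phi, \VF_\phi), \supp{b}, h)$ for any cutoff $h \equiv 1$ near $\supp{b}$). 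Since $\chi(x_0) = \phi^{-1}(v) \neq \one$, it violates the defining requirement $\rest{\chi}{M_\infty} \equiv \one$ of $\BCRspace{\R^m}{\G}{\GewFunk}$, so the two groups cannot coincide.

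The implication $\implies$ is the easy part, needing only this single counterexample. The main obstacle is the chart condition in the direction $M_\infty = \emptyset \implies$ equality: one must keep track of the interplay of the regions $K$, $\supp{h}$ and the sublevel set $\gamma^{-1}(\VF)$ and check that the hypotheses of \refer{lem:Superposition_von_CcF-Abb(kompakt)} and \refer{lem:Fortsetzen_von_CcF-Abb(kompakt)} are met — in particular that the relevant complements are compact and that $\cl{M_\infty}$ is contained in each domain, which here is automatic because $M_\infty = \emptyset$. The one point requiring genuine care is ensuring that $\phi \circ \gamma$ really does take values in a balanced, star-shaped neighborhood of $0$ once restricted far enough towards infinity, so that the superposition with $\psi \circ \phi^{-1}$ is legitimate.
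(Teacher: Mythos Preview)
Your proposal is correct and follows essentially the same strategy as the paper. The only difference is in the forward direction $M_\infty = \emptyset \Rightarrow$ equality: the paper invokes \refer{lem:Kartenwechsel_bei_normalteilenden_weigh.-van._Abb} to pass directly from the original data $(\phi,\VF_\phi,K,h)$ to data $(\psi,\widetilde{\VF},K_\psi,h_\psi)$ in the \emph{target} chart, after which only \refer{lem:Fortsetzen_von_CcF-Abb(kompakt)} is needed. You instead carry out the chart transition $\psi\circ\phi^{-1}$ by hand via \refer{lem:Superposition_von_CcF-Abb(kompakt)}, which is precisely the computation hidden inside the proof of the Kartenwechsel lemma. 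Both routes work; citing the Kartenwechsel lemma is slightly shorter and sidesteps the care you rightly flag about arranging $\phi\circ\gamma$ to land in a balanced star-shaped $\Omega$ on which $\psi\circ\phi^{-1}$ is defined. For the converse direction your bump-function construction is exactly what the paper does, stated more explicitly.
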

\begin{proof}
	Suppose that $M_\infty = \emptyset$. Let $\gamma \in \CcFvanKBig{\R^m}{\G}{\infty}$,
	$(\psi, \widetilde{\VF})$ a centered chart of $\G$
	and $\VF$ a $\one$-neighborhood with $\cl{\VF} \sub \widetilde{\VF}$.
	By \refer{lem:Kartenwechsel_bei_normalteilenden_weigh.-van._Abb}, 
	there exist a compact set $K \subseteq \R^m$ and $h \in \DCcInf{\R^m}{\R}$ with $h \equiv 1$ on a neighborhood of $K$
	such that $\gamma(\R^m\setminus K)\subseteq \widetilde{\VF}$ and
	$\rest{(1 - h) \cdot (\psi \circ \gamma)}{\R^m\setminus K} \in \CcFvanK{\R^m\setminus K}{\LieAlg{\G}}{\infty}$.
	Since $1_{\R^m} \in \GewFunk$ and $K$ and $\supp{h}$ are compact,
	$(\psi \circ \gamma)(x) \to 0$ if $\norm{x} \to \infty$,
	hence $\gamma(x) \to \one$ if $\norm{x} \to \infty$.
	Further
	$\rest{\psi \circ \gamma}{\R^m\setminus \supp{h}} \in \CcF{\R^m\setminus \supp{h}}{\LieAlg{\G}}{\infty}$,
	so we apply \refer{lem:Fortsetzen_von_CcF-Abb(kompakt)}
	with the open sets $\R^m\setminus \supp{h} \sub \gamma^{-1}(\widetilde{\VF})$ and $\gamma^{-1}(\VF) \sub \gamma^{-1}(\widetilde{\VF})$
	and get
	$\rest{\psi \circ \gamma}{\gamma^{-1}(\VF)} \in \CcF{\gamma^{-1}(\VF)}{\LieAlg{\G}}{\infty}.$
	Hence $\gamma \in \BCRspace{\R^m}{\G}{\GewFunk}$, so in view of \refer{prop:Vergleich_Boseck-Gruppen_meine-Gruppen},
	the implication holds.

	Now let $M_\infty \neq \emptyset$.
	By definition, $\DCcInf{\R^m}{\G} \sub \CcFvanKBig{\R^m}{\G}{\infty}$,
	so there exists a $\gamma \in \CcFvanKBig{\R^m}{\G}{\infty}$
	such that $\gamma \not\equiv \one$ on $M_\infty$. Then $\gamma \not\in \BCRspace{\R^m}{\G}{\GewFunk}$.
\end{proof}

\begin{bem}\label{bem:BCR}
	In the book \cite{MR654676}, the groups $\BCRspace{\R^m}{\G}{\GewFunk}$
	are only defined if $\G$ is a so-called \emph{LE-Lie group}.
	Since we do not need this concept, we do not discuss it further.
	In \refer{prop:Vergleich_Boseck-Gruppen_meine-Gruppen} we proved that
	$\BCRspace{\R^m}{\G}{\GewFunk}$ is an open subgroup of $\CcFvanKBig{\R^m}{\G}{\infty} $
	and hence a Lie group.
	Further, for a set $\GewFunk$ of weights as in \refer{def:BCR-weights}
	obviously $\CcFvanK{\R^m}{\LieAlg{\G}}{\infty} = \CcF{\R^m}{\LieAlg{\G}}{\infty}$,
	whence the results derived by \cite{MR654676} concerning the Lie group structure of $\BCRspace{\R^m}{\G}{\GewFunk}$
	are special cases of our more general construction.

	It should be noted that the proof of \cite[Lemma 4.2.1.9]{MR654676}
	(whose assertion resembles \refer{prop:Superposition_glatter_Abb_auf_weig-van_Abb})
	is not really complete:
	The boundedness of $\gamma \cdot \parAbl{\beta}{(g \circ f)}$, where $\abs{\beta} > 0$,
	is hardly discussed. In the finite-dimensional case, compactness arguments
	simular to the one in \refer{lem:Bild_einer_verschwindenden_gewichteten_Abb_ist_kompakt}
	and the Fa\`a di Bruno-formula should save the day,
	but the infinite-dimensional case requires more work.
\end{bem}
\appendix
\chapter{Differential calculus}
In this chapter, we present the tools of Michal-Bastiani and Fr\'echet differential
calculus used in this work.
For proofs of the assertions, we refer the reader to
\cite{MR830252}, \cite{MR656198}, or \cite{MR583436}.
Further, we state some facts about ordinary differential equations.

In the following, let $\SX$, $\SY$ and $\SZ$ denote 
locally convex topological vector spaces over the same field $\K \in \sset{\R,\C}$.
\section{Differential calculus of maps between locally convex spaces}
\label{sec:Helge_diffbar}
\subsection{Curves and integrals}
\begin{defi}[Curves]
	A continuous map $\gamma: I \to \SX$ that is defined
	on a proper interval $I\subseteq\R$ is called a $\ConDiff{}{}{0}$-curve.
	A $\ConDiff{}{}{0}$-Kurve $\gamma: I \to \SX$ is called a $\ConDiff{}{}{1}$-curve
	if the limit
	\[
		\gamma^{(1)}(s)\ndef \lim_{t\to 0}\frac{\gamma(s+t)-\gamma(s)}{t}
	\]
	exists for all $s\in I$ and the map $\gamma^{(1)} : I \to \SX$
	is a $\ConDiff{}{}{0}$-curve.
	
	Inductively, for $k\in\N$ a map $\gamma: I \to \SX$ is called a
	$\ConDiff{}{}{k}$-curve if it is a $\ConDiff{}{}{1}$-curve
	and the map $\gamma^{(1)}$ is a $\ConDiff{}{}{k - 1}$-curve.
	We then define $\gamma^{(k)}\ndef (\gamma^{(1)})^{(k-1)}$.
	
	If $\gamma$ is a $\ConDiff{}{}{k}$-curve for each $k\in\N$,
	we call $\gamma$ a $\ConDiff{}{}{\infty}$- or \emph{smooth} curve.
\end{defi}

\begin{defi}[Weak integral]
	Let $\gamma: [a,b] \to \SX$ be a map. If there exists
	$x\in\SX$ such that
	\[
		\lambda(x) = \Rint{a}{b}{(\lambda\circ\gamma)(t)}{t}
		\text{\qquad for all $\lambda\in\SX'$,}
	\]
	we call $\gamma$ \emph{weakly integrable} with the \emph{weak integral} $x$
	and write
	\[
		 \Rint{a}{b}{\gamma(t)}{t}\ndef x .
	\]
\end{defi}

\begin{defi}[Line integral]
	Let $\gamma : [a,b] \to \SX$ be a $\ConDiff{}{}{1}$-curve and
	$f:\gamma([a,b])\to\SY$ a continuous map.
	We define the line integral of $f$ on $\gamma$ by
	\[
		\Lint{\gamma}{f(\zeta)}{\zeta} \ndef
		\Rint{a}{b}{f(\gamma(t))\cdot \gamma^{(1)}(t)}{t}
	\]
	if the weak integral on the right hand side exists.
\end{defi}

We record some properties of weak integrals.
\begin{lem}\label{lem:stet_lin_Abb_wirken_auf_int_Kurven}
	Let $\gamma: [a,b] \to \SX$ be a weakly integrable curve
	and $A: \SX \to \SY$ a continuous linear map.
	Then the map $A\circ\gamma$ is weakly integrable with the integral
	\[
		\Rint{a}{b}{(A\circ\gamma)(t)}{t}
		= A\left(\Rint{a}{b}{\gamma(t)}{t}\right) .
	\]
\end{lem}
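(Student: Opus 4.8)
The plan is to unwind the definition of weak integrability and exploit the fact that precomposing a continuous linear functional on $\SY$ with the continuous linear map $A$ yields a continuous linear functional on $\SX$. First I would observe that $A \circ \gamma : [a,b] \to \SY$ is a $\ConDiff{}{}{0}$-curve, since it is the composition of the continuous curve $\gamma$ with the continuous map $A$; in particular, for every $\mu \in \SY'$ the scalar function $\mu \circ A \circ \gamma$ is continuous on $[a,b]$, so the ordinary Riemann integral $\Rint{a}{b}{(\mu \circ A \circ \gamma)(t)}{t}$ exists.

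Next I would set $x \ndef \Rint{a}{b}{\gamma(t)}{t} \in \SX$, which exists by hypothesis, and propose $A(x)$ as the weak integral of $A \circ \gamma$. To verify this, fix an arbitrary $\mu \in \SY'$. The key step is that $\mu \circ A : \SX \to \K$ is continuous and linear, being the composition of the continuous linear maps $A$ and $\mu$; hence $\mu \circ A \in \SX'$. Applying the defining property of the weak integral $x$ to the functional $\lambda \ndef \mu \circ A$ gives
\[
	\mu(A(x)) = (\mu \circ A)(x) = \Rint{a}{b}{((\mu \circ A) \circ \gamma)(t)}{t} = \Rint{a}{b}{(\mu \circ (A \circ \gamma))(t)}{t}.
\]
Since $\mu \in \SY'$ was arbitrary, this is precisely the statement that $A \circ \gamma$ is weakly integrable with weak integral $A(x)$, which is the claimed identity.

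There is no real obstacle here: the entire content is the elementary observation that $\SY' \circ A \subseteq \SX'$, and the argument is a one-line manipulation of the definition. The only points requiring a word of care are the continuity of $A \circ \gamma$ (needed for the scalar integrals on the right-hand side to be defined) and the linearity-plus-continuity of $\mu \circ A$ (needed to invoke the defining property of $x$); both are immediate from the assumptions that $A$ is continuous linear and $\gamma$ is a curve. Accordingly I would keep the proof to just these few lines.
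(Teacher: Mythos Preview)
Your argument is correct and is exactly the standard one: for $\mu \in \SY'$ one has $\mu \circ A \in \SX'$, so the defining property of the weak integral of $\gamma$ gives $\mu(A(x)) = \Rint{a}{b}{(\mu \circ A \circ \gamma)(t)}{t}$, which is precisely weak integrability of $A \circ \gamma$ with value $A(x)$. The paper records this lemma in the appendix without proof, so there is nothing to compare against; your write-up would serve perfectly well. One small remark: you do not actually need the continuity of $A \circ \gamma$ to know that $\Rint{a}{b}{(\mu \circ A \circ \gamma)(t)}{t}$ exists---this is already part of the hypothesis that $\gamma$ is weakly integrable, applied to the functional $\mu \circ A \in \SX'$---so that paragraph can be dropped.
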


\begin{prop}[Fundamental theorem of calculus]
\label{prop:Hauptsatz}
	Let $\gamma: [a,b] \to \SX$ be a $\ConDiff{}{}{1}$-curve.
	Then $\gamma^{(1)}$ is weakly integrable with the integral
	\[
		\Rint{a}{b}{\gamma^{(1)}(t)}{t} = \gamma(b) - \gamma(a) .
	\]
\end{prop}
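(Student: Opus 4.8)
The plan is to unfold the definition of the weak integral and reduce everything to the classical fundamental theorem of calculus for scalar-valued functions by testing against continuous linear functionals. By the definition of weak integrability, proving the statement amounts to showing that the element $x \ndef \gamma(b) - \gamma(a)$ satisfies
\[
	\lambda(\gamma(b) - \gamma(a)) = \Rint{a}{b}{(\lambda\circ\gamma^{(1)})(t)}{t}
\]
for every $\lambda \in \SX'$. So the first step is to fix an arbitrary $\lambda \in \SX'$ and observe that, since $\gamma^{(1)}$ is a $\ConDiff{}{}{0}$-curve by hypothesis and $\lambda$ is continuous, the composition $\lambda\circ\gamma^{(1)} : [a,b] \to \K$ is continuous and hence Riemann integrable; thus the right-hand side is well-defined.

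The key step is to verify that $\lambda\circ\gamma : [a,b] \to \K$ is a scalar $\ConDiff{}{}{1}$-curve with $(\lambda\circ\gamma)^{(1)} = \lambda\circ\gamma^{(1)}$. This follows by pulling $\lambda$ through the difference quotient using its linearity and then exchanging $\lambda$ with the limit using its continuity:
\[
	(\lambda\circ\gamma)^{(1)}(s)
	= \lim_{t\to 0}\lambda\!\left(\frac{\gamma(s+t)-\gamma(s)}{t}\right)
	= \lambda\!\left(\lim_{t\to 0}\frac{\gamma(s+t)-\gamma(s)}{t}\right)
	= \lambda(\gamma^{(1)}(s)),
\]
where the middle limit exists precisely because $\gamma$ is a $\ConDiff{}{}{1}$-curve. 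This identifies $\lambda\circ\gamma$ as a continuously differentiable $\K$-valued function.

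Finally I would apply the classical fundamental theorem of calculus to the scalar $\ConDiff{}{}{1}$-function $\lambda\circ\gamma$, obtaining $\Rint{a}{b}{(\lambda\circ\gamma)^{(1)}(t)}{t} = (\lambda\circ\gamma)(b) - (\lambda\circ\gamma)(a)$; substituting the identity $(\lambda\circ\gamma)^{(1)} = \lambda\circ\gamma^{(1)}$ and using linearity of $\lambda$ on the right gives exactly the required equation. Since $\lambda$ was arbitrary, this establishes both the weak integrability of $\gamma^{(1)}$ and the value of its integral. There is essentially no deep obstacle here; the only points demanding care are the two interchanges with $\lambda$ (difference quotient and integral), and — if one works over $\K = \C$ — noting that the classical fundamental theorem applies to $\C$-valued functions by treating real and imaginary parts separately, so that the scalar statement is genuinely available.
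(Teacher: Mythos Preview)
Your proof is correct and is the standard argument; the paper itself states this proposition without proof, referring to the literature (\cite{MR830252}, \cite{MR656198}, \cite{MR583436}) for all results in that appendix section.
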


\begin{lem}\label{lem:vollstaendig_stetig_integrierbar}
	If $\SX$ is sequentially complete, each continuous curve in
	$\SX$ is weakly integrable.
\end{lem}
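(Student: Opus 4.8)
The plan is to construct the weak integral as a limit of Riemann sums taken along a fixed nested sequence of partitions, using sequential completeness of $\SX$ to secure the limit and the continuity of the functionals $\lambda \in \SX'$ to identify it as the weak integral. First I would fix the continuous curve $\gamma : [a,b] \to \SX$ and record that, since $[a,b]$ is compact, $\gamma$ is uniformly continuous: for every continuous seminorm $p$ on $\SX$ and every $\eps > 0$ there is a $\delta > 0$ with $p(\gamma(s) - \gamma(t)) < \eps$ whenever $\abs{s - t} < \delta$. I would then introduce the nested dyadic partitions $P_k$ of $[a,b]$ (so that $P_{k+1}$ refines $P_k$ and the mesh tends to $0$) together with the associated Riemann sums $S_k \ndef \sum_i \gamma(\xi_i^{(k)})\,(t_{i+1}^{(k)} - t_i^{(k)})$, using, say, left-endpoint tags.

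The second step is to show that $(S_k)_{k \in \N}$ is a Cauchy sequence. Because the partitions are nested, for $l \geq k$ the difference $S_l - S_k$ decomposes as a sum, over the subintervals of $P_k$, of terms each controlled by the oscillation of $\gamma$ on the corresponding subinterval; hence once the mesh of $P_k$ falls below the $\delta$ associated to a given pair $(p, \eps)$, one obtains $p(S_l - S_k) \leq \eps\,(b - a)$ for all $l \geq k$. As this holds for every continuous seminorm $p$, the single sequence $(S_k)$ is Cauchy in $\SX$, and sequential completeness then yields a limit $x \ndef \lim_k S_k \in \SX$.

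Finally I would verify that $x$ is the weak integral of $\gamma$. For any $\lambda \in \SX'$, continuity of $\lambda$ gives $\lambda(x) = \lim_k \lambda(S_k)$; on the other hand $\lambda(S_k) = \sum_i (\lambda \circ \gamma)(\xi_i^{(k)})\,(t_{i+1}^{(k)} - t_i^{(k)})$ is an ordinary Riemann sum for the scalar-valued continuous function $\lambda \circ \gamma$ on $[a,b]$, and therefore converges to $\Rint{a}{b}{(\lambda \circ \gamma)(t)}{t}$. Comparing the two limits gives $\lambda(x) = \Rint{a}{b}{(\lambda \circ \gamma)(t)}{t}$ for every $\lambda \in \SX'$, which is precisely the defining property of weak integrability, so $\Rint{a}{b}{\gamma(t)}{t} = x$.

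The only delicate point is the interplay between sequential completeness, which only guarantees limits of \emph{sequences}, and the locally convex topology, in which Cauchyness must be tested against all continuous seminorms simultaneously. Committing to one fixed sequence of \emph{nested} partitions resolves both difficulties at once: it produces a single sequence $(S_k)$ that is Cauchy in every seminorm, and the nesting makes the seminorm estimates for $S_l - S_k$ elementary, avoiding any common-refinement bookkeeping between unrelated partitions.
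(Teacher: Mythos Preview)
Your argument is correct and is the standard one. Note, however, that the paper does not actually prove this lemma: it is stated in the appendix among basic facts on curves and integrals, with the remark that proofs can be found in the cited references (\cite{MR830252}, \cite{MR656198}, \cite{MR583436}). So there is no paper proof to compare against; your write-up would serve as a self-contained replacement for that external reference.
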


\begin{lem}\label{lem:Integral_stetiges_Funktional}
	We endow the set of weakly integrable continuous curves
	from $[a,b]$ to $\SX$ with the topology of uniform convergence.
	The weak integral defines a continuous linear map between
	this space and $\SX$.
	In particular, for each continuous seminorm	$p : \SX \to \R$
	and each weakly integrable continuous curve $\gamma : [a,b] \to \SX$
	\[
		\left\norm{\Rint{a}{b}{\gamma(t)}{t}\right}_{p}
		\leq
		\Rint{a}{b}{\norm{\gamma(t)}_{p}}{t}
		,
	\]
	where we define $\norm{\cdot}_{p} \ndef p$.
\end{lem}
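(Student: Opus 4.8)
The plan is to prove the seminorm estimate first, since the asserted continuity is an immediate consequence of it. I would begin with the routine observations. The set of weakly integrable continuous curves $[a,b]\to\SX$ is a vector subspace of $\ConDiff{[a,b]}{\SX}{0}$, and the weak integral is linear on it: if $\gamma_1,\gamma_2$ are weakly integrable with weak integrals $x_1,x_2$ and $c_1,c_2\in\K$, then for every $\lambda\in\SX'$ the linearity of $\lambda$ and of the scalar Riemann integral give $\lambda(c_1 x_1 + c_2 x_2) = \Rint{a}{b}{\lambda((c_1\gamma_1 + c_2\gamma_2)(t))}{t}$, so $c_1\gamma_1 + c_2\gamma_2$ is weakly integrable with weak integral $c_1 x_1 + c_2 x_2$. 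This settles both the vector space structure of the domain and the linearity of the integral map.

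The heart of the matter is the estimate $\norm{\Rint{a}{b}{\gamma(t)}{t}}_p \leq \Rint{a}{b}{\norm{\gamma(t)}_p}{t}$, and here I would invoke the Hahn--Banach theorem to manufacture a norming functional. For the continuous seminorm $p$ and the element $x \ndef \Rint{a}{b}{\gamma(t)}{t}$, there exists $\lambda \in \SX'$ with $\abs{\lambda(y)} \leq p(y)$ for all $y \in \SX$ and $\lambda(x) = p(x)$: in the real case one extends the functional $tx \mapsto t\,p(x)$ defined on $\K x$, which is dominated by $p$, and the complex case follows by the standard reduction to real functionals. Since $\lambda$ and $\gamma$ are continuous, $\lambda \circ \gamma$ is a continuous, hence Riemann integrable, scalar function, and the defining property of the weak integral gives $p(x) = \lambda(x) = \Rint{a}{b}{\lambda(\gamma(t))}{t}$. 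Applying the elementary estimate $\abs{\Rint{a}{b}{f(t)}{t}} \leq \Rint{a}{b}{\abs{f(t)}}{t}$ for continuous scalar functions together with the domination $\abs{\lambda(\gamma(t))} \leq p(\gamma(t))$ yields the claim; note that $p\circ\gamma$ is continuous on $[a,b]$ and therefore Riemann integrable, so the right-hand side is meaningful.

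Finally, I would read off continuity from the estimate. The topology of uniform convergence on the domain is generated by the seminorms $\gamma \mapsto \sup_{t\in[a,b]} p(\gamma(t))$, one for each continuous seminorm $p$ on $\SX$ (each is finite since $p\circ\gamma$ is continuous on the compact interval). The estimate just proved gives, for every such $p$, the bound $p\bigl(\Rint{a}{b}{\gamma(t)}{t}\bigr) \leq \Rint{a}{b}{p(\gamma(t))}{t} \leq (b-a)\sup_{t\in[a,b]} p(\gamma(t))$, so the composition of $p$ with the integral map is dominated by a continuous seminorm on the domain; this is exactly the criterion for a linear map between locally convex spaces to be continuous. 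The only genuine obstacle is the Hahn--Banach step producing $\lambda$ with the correct domination and norming property; the remainder is bookkeeping with the definition of the weak integral and the scalar integral inequality.
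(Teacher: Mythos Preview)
Your proof is correct and follows the standard Hahn--Banach approach. The paper does not supply its own proof of this lemma; it is stated in an appendix of preliminary facts with the blanket remark that proofs may be found in the references \cite{MR830252}, \cite{MR656198}, \cite{MR583436}. Your argument is precisely the textbook one those references would contain.
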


\begin{prop}[Continuity of parameter-dependent integrals]
\label{prop:Stetigkeit_parameterab_Int}
	Let $P$ be a topological space, $I\subseteq\R$ a proper interval
	and $a, b \in I$. Further, let $f:P\times I \to \SX$ be a continuous map
	such that the weak integral
	\[
		\Rint{a}{b}{f(p,t)}{t} \defn g(p)
	\]
	exists for all $p\in P$. Then the map $g:P\to\SX$ is continuous.
\end{prop}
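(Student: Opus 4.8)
The plan is to reduce the continuity of $g$ to the seminorm estimate for the weak integral provided by \refer{lem:Integral_stetiges_Funktional}, and then to control the resulting ordinary integral by a compactness argument on $[a,b]$. Throughout I may assume w.l.o.g.\ that $a \leq b$, since otherwise one flips the limits of integration and the overall sign.

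First I would fix an arbitrary $p_0 \in P$ and establish continuity of $g$ at $p_0$. As the topology of $\SX$ is generated by its continuous seminorms, it suffices to show that for each continuous seminorm $\norm{\cdot}_q$ on $\SX$ and each $\eps > 0$ there exists a neighborhood $U$ of $p_0$ with $\norm{g(p) - g(p_0)}_q < \eps$ for all $p \in U$. Using the linearity of the weak integral (immediate from its defining property, since the continuous functionals separate the points of $\SX$), I would write
\[
	g(p) - g(p_0) = \Rint{a}{b}{\bigl(f(p,t) - f(p_0,t)\bigr)}{t},
\]
where the integrand $t \mapsto f(p,t) - f(p_0,t)$ is a continuous, hence weakly integrable, difference of the weakly integrable curves $f(p,\cdot)$ and $f(p_0,\cdot)$. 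Applying the estimate from \refer{lem:Integral_stetiges_Funktional} then yields
\[
	\norm{g(p) - g(p_0)}_q \leq \Rint{a}{b}{\norm{f(p,t) - f(p_0,t)}_q}{t}.
\]

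Next I would bound the right-hand side uniformly in $t$. The map $(p,t) \mapsto \norm{f(p,t) - f(p_0,t)}_q$ is continuous, being built from $f$, the subtraction and the continuous seminorm $\norm{\cdot}_q$, and it vanishes identically on $\{p_0\} \times [a,b]$. The central step is the compactness argument: for each $t_0 \in [a,b]$, continuity at $(p_0, t_0)$ furnishes a basic neighborhood $U_{t_0} \times J_{t_0}$ on which this map stays below $\tfrac{\eps}{b-a+1}$. The relatively open sets $J_{t_0}$ cover the compact interval $[a,b]$, so finitely many $J_{t_1}, \dotsc, J_{t_n}$ suffice; setting $U \ndef \bigcap_{i=1}^{n} U_{t_i}$ gives a neighborhood of $p_0$ with $\norm{f(p,t) - f(p_0,t)}_q < \tfrac{\eps}{b-a+1}$ for all $p \in U$ and \emph{all} $t \in [a,b]$. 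Consequently the integral above is at most $\tfrac{\eps (b-a)}{b-a+1} < \eps$, which is the required estimate; since $p_0$ was arbitrary, $g$ is continuous.

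I expect the seminorm estimate of \refer{lem:Integral_stetiges_Funktional} to do most of the heavy lifting, so that the only genuinely delicate point is the uniformity of the bound over $[a,b]$. The compactness of $[a,b]$ renders this routine, but one must take care that the neighborhood $U$ is chosen independently of $t$ — which is exactly what passing to a finite subcover guarantees. Note that no completeness hypothesis on $\SX$ is needed here, since the existence of all the relevant weak integrals is part of the assumption.
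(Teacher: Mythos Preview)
The paper does not supply its own proof of this proposition; it is stated in the appendix as a known fact, with the blanket reference at the start of the section to \cite{MR830252}, \cite{MR656198}, and \cite{MR583436}. Your argument is correct and is the standard one: reduce to a single seminorm, pass the difference under the integral via linearity of the weak integral, invoke the estimate of \refer{lem:Integral_stetiges_Funktional}, and then use compactness of $[a,b]$ (a tube-lemma argument) to obtain a neighborhood of $p_0$ on which the integrand is uniformly small.
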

\paragraph{Evaluation of curves}
We prove that the (simultaneous) evaluation of smooth curves is smooth.
\begin{lem}\label{lem:Auswertung_von_Kurven_glatt}
	Let $\SY$ be a locally convex topological vector space and
	$m \in \cl{\N}$. Then the \emph{evaluation function}
	\[
		\evTwo :
		\ConDiff{[0,1]}{ \SY }{m}
		\times
		[0,1]
		\to
		\SY
		:
		(\Gamma, t)\mapsto \Gamma(t)
	\]
	is a $\ConDiff{}{}{m}$-map. For $m \geq 1$, we have
	\begin{equation*}\label{id:Differential_Auswertung_glatte_Kurven}
		\tag{\ensuremath{\dagger}}
		\dA{\evTwo}{(\Gamma, t) }{ (\Gamma_{1}, s) }
		= s \cdot \evTwo(\Gamma', t) + \evTwo(\Gamma_{1}, t)
	\end{equation*}
	(using the same symbol, $\evTwo$, for the evaluation of $\ConDiff{}{}{m-1}$-curves).
\end{lem}
\begin{proof}
	The proof is by induction:
	
	$m = 0$: Let $\Gamma \in \ConDiff{[0,1]}{ \SY }{0}$ and $t \in [0,1]$.
	For a continuous seminorm $\norm{\cdot}$ on $\SY$ and $\eps > 0$ let $U$ be a
	neighborhood of $\Gamma$ in $\ConDiff{[0,1]}{ \SY }{0}$ such that for all $\Phi \in U$
	\[
		\norm{\Phi - \Gamma}_{\infty} < \frac{\eps}{2},
	\]
	where $\norm{\cdot }_\infty$  is defined by
	\[
		\ConDiff{[0,1]}{ \SY }{0} \to \R : \Phi \mapsto \sup_{t\in[0,1]} \norm{\Phi(t)}.
	\]
	By the continuity of $\Gamma$, there exists $\delta > 0$ such that
	for all $s \in [0,1]$ with $\abs{s - t} < \delta$ the estimate
	\[
		\norm{\Gamma(s) - \Gamma(t)} < \frac{\eps}{2}
	\]
	holds. Then
	\[
		\norm{\evTwo(\Gamma,t) - \evTwo(\Phi,s)}
		\leq
		\norm{\Gamma(t) - \Gamma(s)} + \norm{\Gamma(s) - \Phi(s)}
		<
		\eps ,
	\]
	whence $\evTwo$ is continuous in $(\Gamma, t)$.
	
	$m = 1$: Let $\Gamma, \Gamma_{1} \in \ConDiff{[0,1]}{\SY}{1}$,
	$t \in ]0,1[$, $h \in \R^\ast$ and $s \in \R$ such that $t + h s \in [0,1]$.
	Then
	\[
		\frac{\evTwo( (\Gamma, t) + h (\Gamma_{1}, s) ) - \evTwo(\Gamma, t)}{h}
		= \frac{\Gamma(t + h s) - \Gamma(t)}{h} + \evTwo(\Gamma_{1}, t + h s),
	\]
	and because $\Gamma$ is differentiable and $\evTwo$ is continuous,
	this term converges to
	\begin{equation*}
		s \cdot \evTwo(\Gamma', t) + \evTwo(\Gamma_{1}, t)
	\end{equation*}
	for $h \to 0$. Since this term has an obvious continuous extension to
	$\ConDiff{[0,1]}{ \SY }{1} \times [0,1] \times \ConDiff{[0,1]}{ \SY }{1}  \times \R$,
	$\evTwo$ is differentiable with the directional derivative
	\eqref{id:Differential_Auswertung_glatte_Kurven},
	which is continuous.
	
	$m \to m + 1$: The map
	\[
		\ConDiff{[0,1]}{ \SY }{m + 1}
		\to
		\ConDiff{[0,1]}{ \SY }{m}
		:
		\Gamma \mapsto \Gamma'
	\]
	is continuous linear and thus smooth.
	Using the inductive hypothesis, we therefore deduce from \eqref{id:Differential_Auswertung_glatte_Kurven}
	that $\dA{\evTwo}{}{}$ is $\ConDiff{}{}{m}$. Hence $\evTwo$ is $\ConDiff{}{}{m + 1}$.
\end{proof}

\subsection{Differentiable maps}
We give a short introduction on a differential calculus for maps
between locally convex spaces. It was first developed by
A.~Bastiani in the work \cite{MR0177277} and is also known as
Keller's $C^k_c$-theory.

Recall the definitions given in \refer{sec:Helge_diffbar--vorne}.
In the following, let $\SX$ and $\SY$ be locally convex spaces and
$\UF \subseteq \SX$ an open nonempty set.

\begin{prop}[Mean value theorem]\label{prop:MWS_C1_Abb}
	Let $f \in \ConDiff{\UF}{\SY}{1}$ and $v, u \in \UF$ such that the
	line segment $\{ t u + (1-t) v : t\in [0,1]\}$ is contained in $\UF$. Then
	\[
		f(v) - f(u) = \Rint{0}{1}{\dA{f}{u + t (v-u)}{v-u} }{t} .
	\]
\end{prop}

\begin{prop}[Chain rule]\label{prop:Kettenregel_Helge}
	Let $k\in\cl{\N}$, $f \in \ConDiff{\UF}{\SY}{k}$ and
	$g \in \ConDiff{\VF}{\SZ}{k}$ such that	$f(\UF)\subseteq\VF$.
	Then the composition $g\circ f:\UF\to\SZ$ is a $\ConDiff{}{}{k}$-map
	with
	\[
		\dA{(g\circ f)}{u}{x} = \dA{g}{f(u)}{ \dA{f}{u}{x} }
		\text{\qquad for all $(u,x)\in \UF\times\SX$.}
	\]
\end{prop}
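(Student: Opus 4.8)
The plan is to treat the crucial case $k=1$ by hand and then to obtain every finite $k$ by induction, with $k=\infty$ following automatically and $k=0$ being the triviality that a composition of continuous maps is continuous. Throughout I would use the mean value theorem (\refer{prop:MWS_f"ur_C1_Abb}) to convert difference quotients into parameter-dependent integrals, and the continuity theorem for such integrals (\refer{prop:Stetigkeit_parameterab_Int}) to pass to the limit.

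For the base case, fix $f\in\ConDiff{\UF}{\SY}{1}$, $g\in\ConDiff{\VF}{\SZ}{1}$ with $f(\UF)\subseteq\VF$, and $(u,x)\in\UF\times\SX$. For $t\in\K^\ast$ small enough that the segment from $f(u)$ to $f(u+tx)$ lies in the open set $\VF$, I would apply \refer{prop:MWS_f"ur_C1_Abb} to $g$ along this segment and, using the standard fact that the differential of a $\ConDiff{}{}{1}$-map is linear in its directional slot, move the factor $\tfrac1t$ inside to obtain
\[
\frac{g(f(u+tx))-g(f(u))}{t}=\Rint{0}{1}{\dA{g}{f(u)+s\bigl(f(u+tx)-f(u)\bigr)}{h_t}}{s},
\qquad h_t:=\tfrac1t\bigl(f(u+tx)-f(u)\bigr).
\]
Since $f$ is $\ConDiff{}{}{1}$, the assignment $t\mapsto h_t$ extends continuously to $t=0$ with $h_0=\dA{f}{u}{x}$, and $f(u+tx)-f(u)\to 0$; hence the integrand is a jointly continuous $\SZ$-valued function of $(t,s)$ near $t=0$, by continuity of $f$ and of $\dA{g}{}{}$. \refer{prop:Stetigkeit_parameterab_Int} then yields that the right-hand side tends to $\int_0^1\dA{g}{f(u)}{\dA{f}{u}{x}}\,ds=\dA{g}{f(u)}{\dA{f}{u}{x}}$ as $t\to 0$. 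This simultaneously produces the directional derivative $\dA{(g\circ f)}{u}{x}$ and the claimed formula; continuity of $(u,x)\mapsto\dA{g}{f(u)}{\dA{f}{u}{x}}$ is immediate from continuity of $f$, $\dA{f}{}{}$, and $\dA{g}{}{}$, so $g\circ f\in\ConDiff{\UF}{\SZ}{1}$.

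For the induction I would assume the chain rule at level $k$ and take $f,g$ of class $\ConDiff{}{}{k+1}$. By the base case $\dA{(g\circ f)}{}{}=\dA{g}{}{}\circ\Psi$, where $\Psi\colon\UF\times\SX\to\VF\times\SY$, $\Psi(u,x)=\bigl(f(u),\dA{f}{u}{x}\bigr)$. Here $\dA{g}{}{}$ is $\ConDiff{}{}{k}$ because $g$ is $\ConDiff{}{}{k+1}$, and $\Psi$ is $\ConDiff{}{}{k}$: its second component $\dA{f}{}{}$ is $\ConDiff{}{}{k}$ by the inductive definition of $\ConDiff{}{}{k+1}$, its first component is $f$ composed with the smooth projection and hence $\ConDiff{}{}{k}$ by the inductive hypothesis, and a map into a product space is $\ConDiff{}{}{k}$ precisely when its components are. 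Thus $\dA{(g\circ f)}{}{}$ is a composition of $\ConDiff{}{}{k}$-maps, so $\ConDiff{}{}{k}$ by the inductive hypothesis, whence $g\circ f\in\ConDiff{}{}{k+1}$; and if $f,g$ are smooth they are $\ConDiff{}{}{k}$ for all $k\in\N$, so $g\circ f$ is too. The main obstacle is the base case, specifically the careful justification that the integrand is jointly continuous up to $t=0$ so that limit and integral may be interchanged via \refer{prop:Stetigkeit_parameterab_Int}; everything after that is bookkeeping with the inductive definition of $\ConDiff{}{}{k}$.
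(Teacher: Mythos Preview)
The paper does not supply a proof of this proposition; it is stated in the appendix with proofs deferred to the references \cite{MR830252}, \cite{MR656198}, \cite{MR583436}. Your argument is correct and is precisely the standard one found in those sources: the mean value theorem together with \refer{prop:Stetigkeit_parameterab_Int} handles the $\ConDiff{}{}{1}$ base case, and the factorization $\dA{(g\circ f)}{}{}=\dA{g}{}{}\circ(f\circ\pi_1,\dA{f}{}{})$ combined with \refer{prop:Differenzierbarkeit_Abb_in_projektiven_Limes} carries the induction.
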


\begin{prop}\label{prop:Differenzierbarkeit_Abb_in_projektiven_Limes}
	Let $\SX$ and $\SY$ be locally convex spaces, $\UF\subseteq\SX$ be open and nonempty
	and $k\in\cl{\N}$.
	\begin{enumerate}
		\item
		A map
		\[
			f = (f_i)_{i\in I} : \UF \to \prod_{i\in I} \SY_i
		\]
		to a direct product of locally convex spaces ist $\ConDiff{}{}{k}$
		iff each component $f_i$ is $\ConDiff{}{}{k}$.

		\item
		A map $f : \UF \to \SY$ with values in a closed vector subspace $\SZ$
		is $\ConDiff{}{}{k}$ iff $\rest[\SZ]{f}{} : \UF \to \SZ$ is $\ConDiff{}{}{k}$.

		\item
		If $\SY$ is the projective limit of locally convex spaces $\{\SY_i : i \in I\}$
		with limit maps $\pi_i : \SY\to \SY_i$,
		then a map $f:\UF \to \SY$ is $\ConDiff{}{}{k}$ iff
		$\pi_i \circ f : \UF \to \SY_i$ is $\ConDiff{}{}{k}$ for all $i\in I$.
	\end{enumerate}
\end{prop}

\paragraph{Characterization of differentiability of higher order}
In \refer{prop:hohe_Ableitungen_d}, we stated that a map is $\ConDiff{}{}{k}$ iff
all iterated directional derivatives up to order $k$ exist and depend continuously
on the directions.
Here, we present some facts about the iterated directional derivatives.

\begin{bem}
	We give a more explicit formula for the $k$-th derivative.
	Obviously, $\dA[1]{f}{u}{x_1} = \dA{f}{u}{x_1}$ and
	\[
		\dA[k]{f}{u}{x_1,\dotsc,x_k}
		= \lim_{t\to 0}
		\frac{\dA[k - 1]{f}{u + t x_k}{x_1,\dotsc,x_{k-1}}
			- \dA[k - 1]{f}{u}{x_1,\dotsc,x_{k-1})}
			}
			{t} .
	\]
\end{bem}

The Schwarz theorem extends to the present situation:
\begin{prop}[Schwarz' theorem]
	Let $r\in \cl{\N}$, $f \in \ConDiff[\K]{\UF}{\SY}{r}$,
	$k\in\N$ with $k\leq r$ and $u\in\UF$. The map
	\[
		\dA[k]{f}{u}{\cdot}:\SX^k \to \SY
		: (x_1,\dotsc,x_k) \mapsto \dA[k]{f}{u}{x_1,\dotsc,x_k}
	\]
	is continuous, symmetric and $k$-linear (over the field $\K$).
\end{prop}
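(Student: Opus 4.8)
The statement has three parts --- continuity, $k$-linearity and symmetry of $\dA[k]{f}{u}{\cdot}$ --- and I would establish them in that order, the last being the real content. Continuity is immediate: \refer{prop:hohe_Ableitungen_d} asserts that the full map $(u,x_1,\dots,x_k)\mapsto \dA[k]{f}{u}{x_1,\dots,x_k}$ is continuous on $\UF\times\SX^k$, so fixing $u$ already gives a continuous map $\SX^k\to\SY$. For the remaining two properties I would argue by short reductions, always using that $f\in\ConDiff[\K]{\UF}{\SY}{r}$ with $k\le r$ guarantees, via \refer{prop:hohe_Ableitungen_d}, that every intermediate map $u\mapsto \dA[j]{f}{u}{x_1,\dots,x_j}$ (with $x_1,\dots,x_j$ held fixed, $j\le r$) is continuous and still admits continuous iterated directional derivatives up to order $r-j$.

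For $k$-linearity I would first treat $k=1$, i.e.\ show that $x\mapsto\dA{f}{u}{x}$ is $\K$-linear. Homogeneity $\dA{f}{u}{\lambda x}=\lambda\,\dA{f}{u}{x}$ is the substitution $t\mapsto t/\lambda$ in the difference quotient (trivial for $\lambda=0$). Additivity is where the calculus enters: writing
\[
  \frac{f(u+t(x+y))-f(u)}{t}
  = \frac{f(u+tx+ty)-f(u+tx)}{t} + \frac{f(u+tx)-f(u)}{t},
\]
I would apply the mean value theorem \refer{prop:MWS_f"ur_C1_Abb} together with homogeneity to rewrite the summands as $\Rint{0}{1}{\dA{f}{u+tx+sty}{y}}{s}$ and $\Rint{0}{1}{\dA{f}{u+stx}{x}}{s}$, and then let $t\to0$. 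By continuity of $\dA{f}{}{}$ and \refer{prop:Stetigkeit_parameterab_Int} these integrals converge to $\dA{f}{u}{y}$ and $\dA{f}{u}{x}$, while the left-hand side tends to $\dA{f}{u}{x+y}$ by definition; hence additivity. For general $k$, linearity in the last slot $x_k$ is the $k=1$ case applied to $g\ndef\dA[k-1]{f}{\cdot}{x_1,\dots,x_{k-1}}$: by the first paragraph $g$ is $\ConDiff[\K]{}{}{1}$, and the recursion defining $\dA[k]{}{}{}$ says precisely that $\dA[k]{f}{u}{x_1,\dots,x_k}=\dA{g}{u}{x_k}$. Linearity in the remaining slots will then follow from symmetry.

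The main work is symmetry, and the cleanest route is to reduce it to the classical Schwarz theorem. For the second derivative, fix $u,x_1,x_2$ and consider $h(s,t)\ndef f(u+sx_1+tx_2)$, a $\ConDiff[\K]{}{}{2}$-map on a neighbourhood of $0$ in $\K^2$ by the chain rule \refer{prop:Kettenregel_Helge}. For any continuous linear functional $\lambda\in\SY'$, the composite $\lambda\circ h$ is classically $\ConDiff{}{}{2}$, and since a continuous linear map commutes with directional differentiation (cf.\ \refer{prop:Kettenregel_Helge}), one computes $\partial_t\partial_s(\lambda\circ h)(0,0)=\lambda(\dA[2]{f}{u}{x_1,x_2})$ and $\partial_s\partial_t(\lambda\circ h)(0,0)=\lambda(\dA[2]{f}{u}{x_2,x_1})$. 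Classical equality of mixed partials gives $\lambda(\dA[2]{f}{u}{x_1,x_2})=\lambda(\dA[2]{f}{u}{x_2,x_1})$ for all $\lambda$, and since $\SY$ is Hausdorff its dual separates points, whence $\dA[2]{f}{u}{x_1,x_2}=\dA[2]{f}{u}{x_2,x_1}$. For general $k$ I would then use that the symmetric group is generated by adjacent transpositions: swapping $x_i,x_{i+1}$ amounts to applying the second-order result to $F\ndef\dA[i-1]{f}{\cdot}{x_1,\dots,x_{i-1}}$ (again $\ConDiff[\K]{}{}{2}$ by the first paragraph, as $i+1\le r$), giving $\dA[2]{F}{u}{x_i,x_{i+1}}=\dA[2]{F}{u}{x_{i+1},x_i}$ as functions of $u$; applying the outer derivatives $D_{x_k}\dotsm D_{x_{i+2}}$ to these equal functions leaves $\dA[k]{f}{u}{x_1,\dots,x_k}$ unchanged under the swap. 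Combining symmetry with last-slot linearity yields linearity in every slot, finishing the proof.

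The step I expect to be delicate is the second-order symmetry: one must verify that $\lambda\circ h$ is genuinely classically $\ConDiff{}{}{2}$ with the mixed partials as claimed, and use the Hausdorff hypothesis to invoke point-separation by $\SY'$. The field case $\K=\C$ I would reduce to $\K=\R$ by restriction of scalars, noting that a $\ConDiff[\C]{}{}{2}$-map is in particular $\ConDiff[\R]{}{}{2}$ and that complex directional derivatives agree with the corresponding real ones. Everything else is routine bookkeeping on top of \refer{prop:hohe_Ableitungen_d}, the mean value theorem and \refer{prop:Stetigkeit_parameterab_Int}.
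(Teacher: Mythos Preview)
The paper does not give its own proof of this proposition; it sits in the appendix on differential calculus, where the results are stated without proof and the reader is referred to \cite{MR830252}, \cite{MR656198}, and \cite{MR583436}. Your argument is correct and is essentially the standard one found in those sources: continuity is immediate from \refer{prop:hohe_Ableitungen_d}; linearity in the last slot via the mean value theorem and \refer{prop:Stetigkeit_parameterab_Int}; symmetry of $\dA[2]{f}{u}{\cdot}$ by pulling back along $(s,t)\mapsto u+sx_1+tx_2$, composing with $\lambda\in\SY'$, invoking the classical Schwarz theorem, and separating points via Hahn-Banach; and finally the bootstrap to general $k$ via adjacent transpositions applied to $\dA[i-1]{f}{\cdot}{x_1,\dots,x_{i-1}}$. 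There is nothing to compare against in the paper itself.
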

\paragraph{Examples}
We give some examples of $\ConDiff{}{}{k}$-maps and calculate the higher-order
differentials of some maps.
\begin{beisp}\label{beisp:Helge_diffbare_Abb}
	\begin{enumerate}
	\item
		A map $\gamma : I \to \SX$ is a $\ConDiff{}{}{k}$-curve
		iff it is a $\ConDiff[\R]{}{}{k}$-map, and
		$\dA{\gamma}{x}{h} = h \cdot \gamma^{(1)}(x)$.
	\item
		A continuous linear map $A:\SX \to \SY$ is smooth with
		$\dA{A}{x}{h} = A\eval h$.
	\item
		More general, a $k$-linear continuous map
		$b:\SX_1\times\dotsb\times\SX_k \to \SY$
		is smooth with
		\[
			\dA{b}{x_1,\dotsc,x_k}{h_1,\dotsc,h_k}
			= \sum_{i=1}^k b(x_1,\dotsc,x_{i-1},h_i,x_{i+1},\dotsc,x_k).
		\]
	\end{enumerate}
\end{beisp}
We can calculate higher differentials of $f \circ g$ if one of the maps is linear.
\begin{lem}\label{lem:Hoehere_Ableitungen_kovariante_komposition_linearer_Abb}
	Let $\SX$, $\SY$ and $\SZ$ be locally convex topological vector spaces,
	$\UF \subseteq \SX$ an open nonempty set,
	$k \in \cl{\N}$ and $A:\SY \to \SZ$ a continuous linear map.
	Then for $\gamma \in \ConDiff{\UF}{\SY}{k}$
	\[
		A \circ \gamma \in \ConDiff{\UF}{\SZ}{k}.
	\]
	Moreover, for each $\ell \in \N$ with $\ell \leq k$
	\[
		\dA[\ell]{(A \circ \gamma)}{}{} = A \circ \dA[\ell]{\gamma}{}{}.
		\tag{\ensuremath{\dagger}}
		\label{id:hoeheres_Differential_Komposition_linearer_Abb}
	\]
\end{lem}
\begin{proof}
	This is proved by induction on $\ell$:
	\\
	The chain rule (\refer{prop:Kettenregel_Helge}) assures
	$A\circ\gamma \in \ConDiff{\UF}{\SZ}{k}$ and
	\[
		\dA{(A \circ \gamma)}{x}{h} = \dA{A}{\gamma(x)}{\dA{\gamma}{x}{h}}
		= A (\dA{\gamma}{x}{h})
	\]
	for $x \in \UF$ and $h \in \SX$,
	hence \eqref{id:hoeheres_Differential_Komposition_linearer_Abb}
	is satisfied for $\ell = 1$.
	\\
	If we assume that \eqref{id:hoeheres_Differential_Komposition_linearer_Abb}
	holds for an $\ell \in \N$, we conclude for
	$x\in\UF$ and $h_{1}, \dotsc, h_{\ell},h_{\ell + 1} \in \SX$
	\begin{align*}
		&\dA[\ell + 1]{(A \circ \gamma)}{x}{h_{1}, \dotsc, h_{\ell}, h_{\ell + 1}}
		\\
		=&\lim_{t \to 0}
		\frac{\dA[\ell]{(A \circ \gamma)}{x + t h_{\ell + 1}}{h_{1}, \dotsc, h_{\ell}}
			- \dA[\ell]{(A \circ \gamma)}{x}{h_{1}, \dotsc, h_{\ell}}}{t}
		\\
		=& \lim_{t \to 0}
		\frac{A (\dA[\ell]{\gamma}{x + t h_{\ell + 1}}{h_{1}, \dotsc, h_{\ell}})
			- A (\dA[\ell]{\gamma}{x}{h_{1}, \dotsc, h_{\ell}})}{t}
		\\
		=& A\left(\lim_{t \to 0}
		\frac{\dA[\ell]{\gamma}{x + t h_{\ell + 1}}{h_{1}, \dotsc, h_{\ell}}
			- \dA[\ell]{\gamma}{x}{h_{1}, \dotsc, h_{\ell}}}{t}\right)
		\\
		=& (A\circ\dA[\ell + 1]{\gamma)}{x}{h_{1}, \dotsc, h_{\ell}, h_{\ell + 1}},
	\end{align*}
	so \eqref{id:hoeheres_Differential_Komposition_linearer_Abb}
	holds for $\ell + 1$ as well.
\end{proof}

\begin{lem}\label{lem:Hoehere_Ableitungen_kontravariante_komposition_linearer_Abb}
	Let $\SX$, $\SY$ and $\SZ$ be locally convex topological vector spaces,
	$k \in \cl{\N}$ and $A:\SX \to \SY$ a continuous linear map.
	Then for $\gamma \in \ConDiff{\SY}{\SZ}{k}$
	\[
		\gamma \circ A \in \ConDiff{\SX}{\SZ}{k}.
	\]
	Moreover, for each $\ell \in \N$ with $\ell \leq k$
	\[
		\dA[\ell]{(\gamma \circ A)}{}{} = \dA[\ell]{\gamma}{}{} \circ \mathop{\Pi}_{j=1}^{\ell + 1} A.
		\tag{\ensuremath{\dagger}}
		\label{id:hoeheres_Differential_kontravariante_Komposition_linearer_Abb}
	\]
\end{lem}
\begin{proof}
	This is proved by induction on $\ell$:
	\\
	The chain rule (\refer{prop:Kettenregel_Helge}) assures
	$\gamma \circ A \in \ConDiff{\UF}{\SZ}{k}$ and
	\[
		\dA{(\gamma \circ A)}{x}{h} = \dA{\gamma}{A(x)}{\dA{A}{x}{h}}
		= \dA{\gamma}{A(x)}{A(h)}
	\]
	for $x \in \SX$ and $h \in \SX$,
	hence \eqref{id:hoeheres_Differential_kontravariante_Komposition_linearer_Abb}
	is satisfied for $\ell = 1$.
	\\
	If we assume that \eqref{id:hoeheres_Differential_Komposition_linearer_Abb}
	holds for an arbitrary $\ell \in \N$, we conclude that for
	$x\in\SX$ and $h_{1}, \dotsc, h_{\ell},h_{\ell + 1} \in \SX$
	\begin{align*}
		&\dA[\ell + 1]{(\gamma \circ A)}{x}{h_{1}, \dotsc, h_{\ell}, h_{\ell + 1}}
		\\
		=&\lim_{t \to 0}
		\frac{\dA[\ell]{(\gamma \circ A)}{x + t h_{\ell + 1}}{h_{1}, \dotsc, h_{\ell}}
			- \dA[\ell]{(\gamma \circ A)}{x}{h_{1}, \dotsc, h_{\ell}}}{t}
		\\
		=& \lim_{t \to 0}
		\frac{\dA[\ell]{\gamma}{A(x + t h_{\ell + 1})}{A\eval h_{1}, \dotsc, A\eval h_{\ell}}
			- \dA[\ell]{\gamma}{A(x)}{A\eval h_{1}, \dotsc, A\eval h_{\ell}}}{t}
		\\
		= &\lim_{t \to 0}
		\frac{1}{t} \Rint{0}{1}{\dA[\ell + 1]{\gamma}{A(x) + s t A(h_{\ell + 1})}%
		{A\eval h_{1}, \dotsc, A\eval h_{\ell}, t A \eval h_{\ell + 1}}}{s}
		\\
		=&\dA[\ell + 1]{\gamma}{A(x)}{A\eval h_{1}, \dotsc, A\eval h_{\ell}, A \eval h_{\ell + 1}}
	\end{align*}
	so \eqref{id:hoeheres_Differential_kontravariante_Komposition_linearer_Abb}
	holds for $\ell + 1$ as well.
\end{proof}
Another example for the computation of directional derivatives follows.
\begin{lem}\label{lem:DifferenzierbarkeitseigenschaftenKomposition_gut,hinten_linear}
	Let $\SX$, $\SY$ and $\SZ$ be locally convex spaces,
	$\VF \subseteq \SY$ an open nonempty set, $k \in \cl{\N}$,
	$\gamma : \VF \to \SZ$ a map and $A \in \Lin{\SX}{\SY}$ surjective such that
	\[
		\gamma \circ A \in \ConDiff{\UF}{\SZ}{k},
	\]
	where $\UF \ndef A^{-1}(\VF)$. Then all directional derivatives of $\gamma$ up to order
	$k$ exist and satisfy the identity
	\[
		\dA[\ell]{\gamma}{}{} \circ \mathop{\Pi}_{i = 1}^{\ell + 1} A = \dA[\ell]{(\gamma \circ A)}{}{}
	\]
	for all $\ell \in \N$ with $\ell \leq k$.
\end{lem}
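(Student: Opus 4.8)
The plan is to prove both assertions simultaneously by induction on $\ell$, exploiting the surjectivity of $A$ to realize every base point of $\VF$ and every direction in $\SY$ as images under $A$. Observe first that $\UF = A^{-1}(\VF)$ is open (since $A$ is continuous) and nonempty (since $A$ is surjective and $\VF \neq \emptyset$), so the hypothesis $\gamma\circ A \in \ConDiff{\UF}{\SZ}{k}$ is meaningful, and there is nothing to prove when $k = 0$.

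For the base case $\ell = 1$, I would fix $v\in\VF$ and $y\in\SY$ and use surjectivity to choose $u\in\UF$ with $Au = v$ and $x\in\SX$ with $Ax = y$. Because $\VF$ is open, $v + ty \in \VF$ for all sufficiently small $t$, and then $u + tx \in \UF$ as $A(u+tx) = v + ty$. The key computation is
\[
	\frac{\gamma(v + ty) - \gamma(v)}{t} = \frac{(\gamma\circ A)(u + tx) - (\gamma\circ A)(u)}{t},
\]
whose right-hand side converges as $t\to 0$ since $\gamma\circ A$ is a $\ConDiff{}{}{1}$-map. Hence $\dA{\gamma}{v}{y}$ exists and equals $\dA{(\gamma\circ A)}{u}{x}$; as the limit defining $\dA{\gamma}{v}{y}$ depends only on $v$ and $y$, this value is independent of the chosen preimages, and the identity for $\ell = 1$ follows.

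For the inductive step $\ell\to\ell+1$, I would invoke the recursive description of the iterated directional derivative, namely that $\dA[\ell+1]{\gamma}{v}{y_1,\dotsc,y_{\ell+1}}$ is the limit as $t\to0$ of $t^{-1}\bigl(\dA[\ell]{\gamma}{v + ty_{\ell+1}}{y_1,\dotsc,y_\ell} - \dA[\ell]{\gamma}{v}{y_1,\dotsc,y_\ell}\bigr)$. Choosing preimages $u, x_1,\dotsc,x_{\ell+1}$ under $A$ as before and using $v + ty_{\ell+1} = A(u + tx_{\ell+1})$, the inductive hypothesis lets me rewrite $\dA[\ell]{\gamma}{v + ty_{\ell+1}}{y_1,\dotsc,y_\ell} = \dA[\ell]{(\gamma\circ A)}{u + tx_{\ell+1}}{x_1,\dotsc,x_\ell}$ and $\dA[\ell]{\gamma}{v}{y_1,\dotsc,y_\ell} = \dA[\ell]{(\gamma\circ A)}{u}{x_1,\dotsc,x_\ell}$. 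The difference quotient then becomes exactly the one converging to $\dA[\ell+1]{(\gamma\circ A)}{u}{x_1,\dotsc,x_{\ell+1}}$, which exists because $\gamma\circ A\in\ConDiff{\UF}{\SZ}{\ell+1}$. This simultaneously establishes the existence of $\dA[\ell+1]{\gamma}{}{}$ and the asserted identity for $\ell+1$.

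The argument is essentially bookkeeping; the only points needing care are ensuring that the relevant points remain inside $\VF$ and $\UF$ as $t\to 0$ (guaranteed by openness) and the systematic use of surjectivity to cover \emph{all} base points and directions — without surjectivity the conclusion would hold only on the image of $A$. I do not expect any genuine analytic obstacle beyond these verifications.
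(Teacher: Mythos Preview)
Your proposal is correct and follows essentially the same approach as the paper: induction on $\ell$, lifting base points and directions through the surjection $A$, and identifying the difference quotient for $\dA[\ell+1]{\gamma}{}{}$ with the one for $\dA[\ell+1]{(\gamma\circ A)}{}{}$ via the inductive hypothesis. The only cosmetic difference is that the paper starts the induction at $\ell=0$ (trivially), whereas you treat $\ell=1$ explicitly as a base case; your added remark that the limit is independent of the chosen preimages is a welcome clarification the paper leaves implicit.
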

\begin{proof}
	This is proved by induction on $\ell$:

	$\ell = 0$: This is obvious.

	$\ell \to \ell + 1$:
	Let $y \in \VF$ and $h_1, \dotsc, h_{\ell}, h_{\ell + 1} \in \SY$. By the surjectivity of $A$
	there exist $x \in \UF$ and $v_1, \dotsc, v_{\ell}, v_{\ell + 1} \in \SX$ with $A\eval x = y$
	and $A\eval v_i = h_i$ for $i = 1,\dotsc,\ell,\ell+1$. Then for all suitable $t \neq 0$
		\begin{align*}
		&\lim_{t\to 0}
			\frac{\dA[\ell]{\gamma}{y + t h_{\ell + 1}}{h_1,\dotsc, h_{\ell}}
			- \dA[\ell]{\gamma}{y}{h_1,\dotsc, h_{\ell}} }{t}
		\\
		=&
		\lim_{t\to 0}
			\frac{\dA[\ell]{\gamma}{ A (x + t v_{\ell + 1}) }{A\eval v_1,\dotsc, A\eval v_{\ell}}
			- \dA[\ell]{\gamma}{A\eval x}{A\eval v_1,\dotsc, A\eval v_{\ell}} }{t}
		\\
		=&
		\lim_{t\to 0}
			\frac{(\dA[\ell]{\gamma}{}{} \circ \mathop{\Pi}_{i = 1}^{\ell + 1} A)(x + t v_{\ell + 1}, v_1,\dotsc,  v_{\ell})
			- (\dA[\ell]{\gamma}{}{} \circ \mathop{\Pi}_{i = 1}^{\ell + 1} A)(x, v_1,\dotsc,  v_{\ell}) }{t}
		\\
		=&
		\dA[\ell + 1]{(\gamma \circ A)}{x}{v_1,\dotsc, v_{\ell}, v_{\ell + 1}},
	\end{align*}
	and this completes the proof.
\end{proof}

We give a specialization of \refer{prop:Stetigkeit_parameterab_Int}.
\begin{prop}[Differentiability of parameter-dependent integrals]
\label{prop:Glattheit_parameterab_Int}
	Let $P$ be an open subset of a locally convex space, $I\subseteq\R$ a proper interval,
	$a, b \in I$ and $k \in \cl{\N}$. Further, let $f:P\times I \to \SX$ be a $\ConDiff{}{}{k}$-map
	such that the weak integral
	\[
		\Rint{a}{b}{f(p,t)}{t} \defn g(p)
	\]
	exists for all $p\in P$. Then the map $g:P\to\SX$ is $\ConDiff{}{}{k}$.
\end{prop}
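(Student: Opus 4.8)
Proposition (Differentiability of parameter-dependent integrals). *Let $P$ be an open subset of a locally convex space, $I\subseteq\R$ a proper interval, $a, b \in I$ and $k \in \cl{\N}$. Further, let $f:P\times I \to \SX$ be a $\ConDiff{}{}{k}$-map such that the weak integral $g(p) := \Rint{a}{b}{f(p,t)}{t}$ exists for all $p\in P$. Then the map $g:P\to\SX$ is $\ConDiff{}{}{k}$.*

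Let me sketch a proof.

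The plan is to induct on $k$, using \refer{prop:Stetigkeit_parameterab_Int} for the base case and a differentiation-under-the-integral argument for the step. For $k=0$ the assertion is precisely \refer{prop:Stetigkeit_parameterab_Int}, so I assume $k\geq 1$ and that the claim holds for $k-1$ (for every locally convex target and every parameter space). The heart of the step is to show that $g$ is $\ConDiff{}{}{1}$ with
\[
	\dA{g}{p}{x} = \Rint{a}{b}{\dA{f}{(p,t)}{(x,0)}}{t} =: \Rint{a}{b}{d_1 f(p,t;x)}{t},
\]
where $d_1 f$ denotes the partial differential of $f$ in the first argument (a continuous map $(P\times E)\times I\to\SX$, with $E$ the modelling space of $P$). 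Once this formula and the existence of the right-hand integral for all $(p,x)$ are in hand, I set $\hat f\colon (P\times E)\times I\to\SX$, $\hat f((p,x),t):=d_1 f(p,t;x)$. Since $f$ is $\ConDiff{}{}{k}$, its differential is $\ConDiff{}{}{k-1}$ and hence so is $\hat f$; moreover $\dA{g}{}{}=\hat g$, where $\hat g(p,x):=\Rint{a}{b}{\hat f((p,x),t)}{t}$. Applying the inductive hypothesis to $\hat f$ (with parameter space $P\times E$) yields that $\hat g=\dA{g}{}{}$ is $\ConDiff{}{}{k-1}$, whence $g$ is $\ConDiff{}{}{k}$ by definition. (Alternatively one may phrase the conclusion through the iterated directional derivatives of \refer{prop:hohe_Ableitungen_d}.)

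To obtain the $\ConDiff{}{}{1}$-statement I would fix $p,x$ and rewrite the difference quotient with the mean value theorem \refer{prop:MWS_f"ur_C1_Abb}: for small $s\neq 0$,
\[
	\frac{g(p+sx)-g(p)}{s} = \Rint{a}{b}{\psi(s,t)}{t}, \qquad \psi(s,t):=\Rint{0}{1}{d_1 f(p+\tau s x,t;x)}{\tau},
\]
using \refer{lem:stet_lin_Abb_wirken_auf_int_Kurven} to pull the (scalar) integration in $s$ inside the weak integral. The inner integral defining $\psi(s,t)$ exists for every $s$ — for $s\neq 0$ it equals $\tfrac1s(f(p+sx,t)-f(p,t))$, and for $s=0$ the integrand is constant, equal to $d_1 f(p,t;x)$ — and \refer{prop:Stetigkeit_parameterab_Int} shows that $\psi$ is continuous on a neighbourhood of $\{0\}\times[a,b]$ with $\psi(0,t)=d_1 f(p,t;x)$. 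The remaining point is that $\Rint{a}{b}{\psi(s,t)}{t}$ converges, as $s\to0$, to $\Rint{a}{b}{d_1 f(p,t;x)}{t}$ in the topology of $\SX$; this is where I expect the real work to lie.

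Indeed, the main obstacle is precisely the \emph{existence} of the weak integral $\Rint{a}{b}{d_1 f(p,t;x)}{t}$ together with \emph{strong} (not merely weak) convergence of the difference quotients to it: only $\Rint{a}{b}{f(p,\cdot)}{\cdot}$ is assumed to exist, and weak integrals of continuous curves need not exist in the absence of completeness. I would handle this by noting that $\psi(s,\cdot)\to\psi(0,\cdot)$ uniformly on $[a,b]$ (joint continuity of $\psi$ on a compact slab $[-\delta,\delta]\times[a,b]$), so that \refer{lem:Integral_stetiges_Funktional} gives, for every continuous seminorm, the estimate $\norm{\Rint{a}{b}{\psi(s,\cdot)-\psi(s',\cdot)}{t}} \leq (b-a)\sup_{t}\norm{\psi(s,t)-\psi(s',t)}$, exhibiting the difference quotients as a Cauchy net as $s\to0$. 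In the (sequentially complete) spaces to which the proposition is applied this net converges, while \refer{lem:vollstaendig_stetig_integrierbar} simultaneously secures the weak integrability of $d_1 f(p,\cdot;x)$, so that the limit is exactly the asserted integral and equals $\hat g(p,x)$. The directional derivative of $g$ thus exists, satisfies the stated formula, and depends continuously on $(p,x)$ by \refer{prop:Stetigkeit_parameterab_Int} applied to $\hat f$; this closes the $\ConDiff{}{}{1}$-step and feeds the induction.
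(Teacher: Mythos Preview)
The paper does not provide a proof of this proposition; it is one of several standard facts from Keller's $C^k_c$-calculus that the appendix cites without proof, referring the reader to the literature. Your argument---induction on $k$, with the $\ConDiff{}{}{1}$-step obtained by rewriting the difference quotient via the mean value theorem as $\Rint{a}{b}{\psi(s,t)}{t}$ for a jointly continuous $\psi$ and then letting $s\to 0$---is precisely the proof one finds in those sources, so there is nothing substantive to compare against.

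You have, moreover, correctly isolated the one genuine subtlety: nothing in the stated hypotheses guarantees that the weak integral $\Rint{a}{b}{d_1 f(p,t;x)}{t}$ exists in $\SX$, and your Cauchy-net estimate via \refer{lem:Integral_stetiges_Funktional} only produces a limit under a completeness assumption on $\SX$. This is not a defect in your reasoning so much as a tacit hypothesis in the proposition as recorded here: the cited references prove the result for (sequentially) complete targets, and the single place the paper invokes this proposition (the proof of \refer{lem:Glatte_Wirkung_Lie-Gruppe_auf_DiffWz}) explicitly verifies completeness of the target space before applying it. Your fallback to sequential completeness is therefore exactly what the literature does, and with that standing assumption your sketch is complete and correct.
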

\subsubsection{Analytic maps}
Complex analytic maps will be defined as maps which can locally be approximated by polynomials.
Real analytic maps are maps that have a \emph{complexification}.
\index{analytic maps}
\paragraph{Polynomials and symmetric multilinear maps}
For the definition of complex analytic maps we need to define polynomials.
\begin{defi}
	Let $k\in\N$. A \emph{homogenous polynomial of degree $k$} from
	$\SX$ to $\SY$ is a map for which there exists a $k$-linear map
	$\beta:\SX^k\to\SY$ such that
	\[
		p(x) = \beta(\underbrace{x,\dotsc,x}_{k})
	\]
	for all $x \in \SX$.
	In particular, a homogenous polynomial of degree $0$ is a constant map.
	
	A \emph{polynomial of degree $\leq k$} is a sum of homogenous polynomials
	of degree $\leq k$.
\end{defi}
There is a bijection between the set of homogenous polynomials and that
of symmetric multilinear maps. In this article, we just need that one
can reconstruct a symmetric multilinear map from its homogenous polynomial.
\begin{prop}[Polarization formula]\label{prop:Polarisierungsformel}
	Let $\beta:\SX^k\to\SY$ be a symmetric $k$-linear map,
	$p:\SX\to\SY:x \mapsto \beta(x,\dotsc,x)$ its homogenous polynomial
	and $x_0\in\SX$. Then
	\[
		\beta(x_1,\dotsc,x_k) = \frac{1}{k!}
		\sum^1_{\eps_1,\dotsc,\eps_k = 0} (-1)^{k - (\eps_1 + \dotsb + \eps_k)}
		p(x_0 + \eps_1 x_1 + \dotsb + \eps_k x_k)
	\]
	for all $x_1,\dotsc,x_k\in\SX$.
\end{prop}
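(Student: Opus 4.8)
The plan is to expand the right-hand side by $k$-linearity and to show that, after the alternating summation over $\eps\in\{0,1\}^k$, the only surviving contributions come from the $k!$ terms in which $\beta$ is evaluated at a permutation of $(x_1,\dots,x_k)$; by symmetry each of these equals $\beta(x_1,\dots,x_k)$, so dividing by $k!$ yields the asserted formula. The variable $x_0$ will turn out to be a harmless dummy, absorbed by the alternating sum.

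First I would abbreviate $y_\eps \ndef x_0 + \eps_1 x_1 + \dots + \eps_k x_k$ and expand $p(y_\eps) = \beta(y_\eps,\dots,y_\eps)$ by multilinearity into a sum over multi-indices $(i_1,\dots,i_k)\in\{0,1,\dots,k\}^k$, where the index $0$ selects the summand $x_0$ and an index $m\geq 1$ selects $\eps_m x_m$. Each term factors as a scalar $\prod_{\ell\,:\,i_\ell\geq 1}\eps_{i_\ell}$ times $\beta(z_{i_1},\dots,z_{i_k})$, with $z_0 \ndef x_0$ and $z_m \ndef x_m$ for $m\geq 1$. Since $\eps_m\in\{0,1\}$, the scalar equals $\prod_{m\in S}\eps_m$, where $S\subseteq\{1,\dots,k\}$ is the set of distinct nonzero indices occurring in $(i_1,\dots,i_k)$.

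Next I would interchange the finite summations and, for each fixed multi-index, compute the coefficient $\sum_{\eps\in\{0,1\}^k}(-1)^{k-(\eps_1+\dots+\eps_k)}\prod_{m\in S}\eps_m$. Writing $(-1)^{k-(\eps_1+\dots+\eps_k)} = (-1)^k\prod_{m=1}^k(-1)^{\eps_m}$, this sum factors over the independent coordinates $\eps_1,\dots,\eps_k$; the factor belonging to an index $m\notin S$ is $\sum_{\eps_m=0}^1(-1)^{\eps_m} = 0$. Hence the coefficient vanishes unless $S = \{1,\dots,k\}$. For a $k$-tuple this forces each of $1,\dots,k$ to occur exactly once and the index $0$ to be absent, so $(i_1,\dots,i_k)$ is a permutation of $(1,\dots,k)$; in that case the factor for each $m\in S$ is $\sum_{\eps_m=0}^1(-1)^{\eps_m}\eps_m = -1$, so the coefficient equals $(-1)^k(-1)^k = 1$.

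Finally, for each such permutation the symmetry of $\beta$ gives $\beta(z_{i_1},\dots,z_{i_k}) = \beta(x_1,\dots,x_k)$, and there are exactly $k!$ permutations; thus the whole double sum collapses to $k!\,\beta(x_1,\dots,x_k)$, and division by $k!$ completes the proof. The only delicate point is the sign bookkeeping in the combinatorial coefficient — in particular noticing that any term genuinely involving $x_0$ must omit some $m\in\{1,\dots,k\}$ and is therefore annihilated by the alternating sum — but this becomes routine once the factorization over the coordinates $\eps_m$ is in place.
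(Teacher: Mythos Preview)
Your argument is correct and is the standard combinatorial proof of the polarization formula: expand $p(y_\eps)$ by multilinearity, factor the alternating sum over the coordinates $\eps_m$, observe that any term whose index set $S$ misses some $m\in\{1,\dots,k\}$ is killed by the factor $\sum_{\eps_m}(-1)^{\eps_m}=0$, and conclude that only the $k!$ permutation terms survive, each contributing $\beta(x_1,\dots,x_k)$ with coefficient $1$. The sign bookkeeping and the observation that $\eps_m^j=\eps_m$ for $\eps_m\in\{0,1\}$ (so only the \emph{set} $S$ of nonzero indices matters) are handled correctly.

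Note, however, that the paper does not actually supply a proof of this proposition: it is stated in the appendix as a known result, with the reader referred to standard sources (Milnor, Hamilton, Michor) for proofs of the assertions collected there. Your write-up is precisely the argument one finds in those references, so there is nothing to compare against beyond saying that you have reproduced the classical proof.
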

\paragraph{Complex analytic maps}
Now we can define complex analytic maps.
\begin{defi}[Complex analytic maps]
	Let $\SX$, $\SY$ be complex locally convex topological vector spaces and
	$\UF \subseteq \SX$ an open nonempty set.
	A map $f:\UF\to\SY$ is called \emph{complex analytic}
	if it is continuous and, for each $x\in\UF$ there exists
	a sequence $(p_k)_{k\in\N}$ of continuous homogenous polynomials
	$p_{k} : \SX \to \SY$ of degree $k$ such that
	\[
		f(x + v) = \sum_{k=0}^\infty p_k(v)
	\]
	for all $v$ in some zero neighborhood $\VF$ such that $x + \VF \subseteq\UF$.
\end{defi}

\begin{defi}
	Let $\SX$, $\SY$ be complex locally convex topological vector spaces and
	$\UF \subseteq \SX$ an open nonempty set.
	A map $f:\UF\to\SY$ is called \emph{Gateaux analytic}
	if its restriction on each affine line is complex analytic; that is,
	for each $x\in\UF$ and $v\in\SX$ the map
	\[
		\set{z \in \C}{x + z v \in \UF} \to \SY : z\mapsto f(x + z v)
	\]
	is complex analytic.
\end{defi}

\begin{satz}\label{satz:Char_komplex_analytischer_Abb}
	Let $\SX$, $\SY$ be complex locally convex topological vector spaces and
	$\UF \subseteq \SX$ an open nonempty set.
	Then for a map $f:\UF\to\SY$ the following assertions are equivalent:
	\begin{enumerate}
		\item
			$f$ is $\ConDiff[\C]{}{}{\infty}$,
			
		\item
			$f$ is complex analytic,
			
		\item
			$f$ is continuous and Gateaux analytic.
	\end{enumerate}
\end{satz}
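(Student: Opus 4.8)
The plan is to prove the equivalence by closing the cycle $(1)\Rightarrow(3)\Rightarrow(2)\Rightarrow(1)$, routing essentially all the work through $(3)\Rightarrow(2)$ and keeping the other two implications light. The one indispensable tool is the one-variable Cauchy theory for $\SY$-valued curves, namely the Cauchy estimates (\refer{cor:Cauchy_Abschaetzungen}) together with the identification of curve derivatives with directional derivatives (\refer{lem:Komplexe_Kurven_Ableitung}), supplemented by the polarization formula (\refer{prop:Polarisierungsformel}). Since $\SY$ is not assumed complete, I would carry out every integral- and limit-construction in the completion $\widetilde{\SY}$ and transport the results back to $\SY$ using that $\SY'$ (which coincides with $\widetilde{\SY}'$) separates points; this is the same completion-plus-Hahn--Banach detour used in the proof of \refer{lem:Kriterium_f"ur_Integrierbarkeit_in_CW}, and it is what lets the continuous curves be weakly integrated (cf.\ \refer{lem:vollstaendig_stetig_integrierbar} applied in $\widetilde{\SY}$).

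For $(1)\Rightarrow(3)$: a $\ConDiff[\C]{}{}{\infty}$-map is continuous by definition, so only Gateaux analyticity must be checked. Fixing $x\in\UF$ and $v\in\SX$, the chain rule (\refer{prop:Kettenregel_Helge}) shows that $g_v:z\mapsto f(x+zv)$, defined on the open set $\{z\in\C:x+zv\in\UF\}$, is $\ConDiff[\C]{}{}{\infty}$, so I must verify that a complex-differentiable curve of one variable is complex analytic. Each scalar function $\lambda\circ g_v$, $\lambda\in\SY'$, is holomorphic, hence recovers $\lambda\bigl(g_v^{(k)}(0)/k!\bigr)$ as its Taylor coefficients; working in $\widetilde{\SY}$ the series $\sum_k \bigl(g_v^{(k)}(0)/k!\bigr)\,z^k$ converges, and since its $\lambda$-images agree with $\lambda(g_v(z))$ for all $\lambda$, point separation forces the sum to equal $g_v(z)\in\SY$. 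Thus $g_v$ is complex analytic and $f$ is Gateaux analytic.

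For $(3)\Rightarrow(2)$, the crux: fix $x\in\UF$, pick a balanced open zero-neighborhood $\VF$ with $x+\VF\subseteq\UF$, and use continuity to obtain, for each continuous seminorm $p$ on $\SY$, a bound $p(f(x+w))\le M_p$ for $w\in\VF$. For $v\in\VF$ the curve $z\mapsto f(x+zv)$ is holomorphic on a neighborhood of the closed unit disc, so its coefficients $a_k(v)\ndef g_v^{(k)}(0)/k!$ are defined and, by the Cauchy estimates applied in $\widetilde{\SY}$, satisfy $p(a_k(v))\le M_p$ uniformly on $\VF$, hence geometric decay $p(a_k(v))\le M_p\,2^{-k}$ after rescaling $v$ into a smaller balanced neighborhood. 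To see that $a_k$ is a \emph{continuous homogeneous polynomial} I would restrict the scalar reductions $\lambda\circ f$ to the finite-dimensional affine slices $x+\operatorname{span}(v_1,\dots,v_k)$: these are continuous and separately holomorphic, hence jointly holomorphic, so comparing homogeneous components shows $\lambda\circ a_k$ is a homogeneous polynomial; via polarization (\refer{prop:Polarisierungsformel}) this exhibits $a_k$ as the diagonal of a symmetric $k$-linear map, whose continuity follows from the uniform Cauchy bound by the boundedness-implies-continuity principle used in \refer{prop:multilineare_Abb_und_CF}. Finally the geometric bounds make $\sum_k a_k(v)$ converge in $\widetilde{\SY}$, and since along each line the scalar Taylor series of $\lambda\circ f$ converges to $\lambda(f(x+v))$, point separation identifies the sum with $f(x+v)\in\SY$, yielding a genuine local expansion in $\SY$ and hence complex analyticity. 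This step is the main obstacle: upgrading the pointwise, line-by-line Taylor data of a Gateaux-analytic map to honestly continuous homogeneous polynomials with a convergent global expansion, the separate-to-joint holomorphy and the non-completeness of $\SY$ being the two technical hurdles.

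For $(2)\Rightarrow(1)$: writing $f(x+v)=\sum_k p_k(v)$ with $p_k$ the diagonal of a symmetric $k$-linear $\beta_k$ on a balanced neighborhood, I would differentiate term by term. The Cauchy estimates give geometric bounds on the $\beta_k$, so the formally differentiated series $\sum_{k\ge 1} k\,\beta_k(v,\dots,v,h)$ converges locally uniformly; this shows the directional derivative $\dA{f}{x+v}{h}$ exists, depends continuously on $(v,h)$, and is itself a locally uniformly convergent series of continuous polynomials in $(v,h)$. Hence $f$ is $\ConDiff[\C]{}{}{1}$ and $df:\UF\times\SX\to\SY$ is again complex analytic, so the implication $\text{(2)}\Rightarrow\ConDiff[\C]{}{}{1}$ applied inductively to $df$ gives $f\in\ConDiff[\C]{}{}{\infty}$, closing the cycle.
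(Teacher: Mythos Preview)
The paper does not prove this theorem: it is stated in the appendix on differential calculus, whose preamble explicitly refers the reader to \cite{MR830252}, \cite{MR656198} and \cite{MR583436} for proofs. So there is no in-paper argument to compare against.

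Your outline is essentially the standard proof one finds in those references (the Bochnak--Siciak approach), and the logic is sound. A few remarks on streamlining: for $(1)\Rightarrow(3)$ you are re-deriving what the paper already records in \refer{lem:Komplexe_Kurven_Ableitung}, namely that a $\ConDiff[\C]{}{}{\infty}$-curve is complex analytic, so you could simply cite that. In $(3)\Rightarrow(2)$, since you have continuity of $f$ from the outset, the separate-to-joint holomorphy on finite-dimensional slices needs only Osgood's lemma rather than full Hartogs; and the ``boundedness implies continuity'' step for the symmetric $k$-linear map is really just the elementary fact that a multilinear map bounded on a product of balls is continuous, not anything as heavy as \refer{prop:multilineare_Abb_und_CF}. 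The completion detour through $\widetilde{\SY}$ is the right way to handle non-complete targets and matches how the paper treats the analogous issue elsewhere. Overall the plan would go through with routine work.
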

We state a few results concerning analytic curves. These share many properties
with holomorphic functions. Using \refer{satz:Char_komplex_analytischer_Abb}, we see that
some of these properties carry over to general analytic functions.
\begin{defi}
	Let $\SY$ be a complex locally convex topological vector space and
	$\UF \subseteq \C$ an open nonempty set.
	A continuous map $f:\UF\to\SY$ is called a \emph{$\ConDiff[\C]{}{}{0}$-curve}.
	A $\ConDiff[\C]{}{}{0}$-curve $f:\UF\to\SY$ is called a
	$\ConDiff[\C]{}{}{1}$-curve if for all $z \in \UF$ the limit
	\[
		f^{(1)}(z)\ndef \lim_{w\to 0}\frac{f(z+w)-f(z)}{w}
	\]
	exists and the curve $f^{(1)} : \UF \to \SX$ is a $\ConDiff[\C]{}{}{0}$-curve.
	\\
	Inductively, for $k\in\N$ a curve $f$ is called a $\ConDiff[\C]{}{}{k}$-curve
	if it is a $\ConDiff[\C]{}{}{1}$-curve and $f^{(1)}$
	is a $\ConDiff[\C]{}{}{k - 1}$-curve.
	In this case, we define $f^{(k)}\ndef (f^{(1)})^{(k-1)}$.
	\\
	If $f$ is a $\ConDiff[\C]{}{}{k}$-curve for all $k\in\N$,
	$f$ is called a $\ConDiff[\C]{}{}{\infty}$-curve.
\end{defi}

\begin{lem}[Cauchy integral formula]\label{lem:Komplexe_Kurven_Ableitung}
	Let $\SY$ be a complex locally convex topological vector space,
	$\UF \subseteq \C$ an open nonempty set and $f:\UF\to\SY$ a map.
	Then
	\[
		\text{$f$ is a $\ConDiff[\C]{}{}{k}$-curve}
		\iff
		f \in \ConDiff[\C]{\UF}{\SY}{k}
	\]
	and furthermore
	\[
		\dA[k]{f}{x}{h_1,\dotsc,h_k}
		= h_1 \cdot\dotsm\cdot h_k \cdot f^{(k)}(x).
	\]
	A $\ConDiff[\C]{}{}{\infty}$-curve is complex analytic, and for each
	$x\in\UF$, $k \in \N_0$ and $r>0$ with $\clBall{x}{r} \subseteq \UF$
	the \emph{Cauchy integral formula}
	\[
		f^{(k)}(z) = \frac{k !}{2\pi i}
			\Lint{\abs{\zeta - x} = r}{\frac{f(\zeta)}{(\zeta - z)^{k + 1}} }{\zeta}
	\]
	holds, where $z \in \Ball{x}{r}$.
\end{lem}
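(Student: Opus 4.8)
The plan is to prove the four assertions in the order stated, handling the equivalence and the derivative formula together by a single induction on $k$. The crucial elementary observation is that, since $\UF$ is open in the one-complex-dimensional space $\C$, every directional derivative collapses to a scalar multiple of the complex derivative: for $u\in\UF$ and $x\in\C$ the substitution $w=tx$ gives $\dA{f}{u}{x}=\lim_{t\to 0}\frac{f(u+tx)-f(u)}{t}=x\cdot f^{(1)}(u)$ whenever either side exists. This settles the base case $k=1$: if $f$ is a $\ConDiff[\C]{}{}{1}$-curve then $(u,x)\mapsto x\cdot f^{(1)}(u)$ is continuous (a product of the continuous $f^{(1)}$ with the scalar $x$), so $f\in\ConDiff[\C]{\UF}{\SY}{1}$; conversely $f^{(1)}(u)=\dA{f}{u}{1}$ depends continuously on $u$ as soon as $\dA{f}{}{}$ is continuous.

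For the inductive step I would exploit the Bastiani recursion, whereby $f$ is $\ConDiff[\C]{}{}{k+1}$ iff $f$ is $\ConDiff[\C]{}{}{1}$ and $\dA{f}{}{}$ is $\ConDiff[\C]{}{}{k}$. Since $\dA{f}{}{}(u,x)=x\cdot f^{(1)}(u)$ is the composition of the scalar multiplication $\C\times\SY\to\SY$ (bilinear, hence smooth by \refer{beisp:Helge_diffbare_Abb}) with the map $(u,x)\mapsto(x,f^{(1)}(u))$, the chain rule \refer{prop:Kettenregel_Helge} together with \refer{prop:Differenzierbarkeit_Abb_in_projektiven_Limes} shows that $\dA{f}{}{}\in\ConDiff[\C]{}{}{k}$ is equivalent to $f^{(1)}\in\ConDiff[\C]{}{}{k}$ (for the converse, evaluate at $x=1$ via the smooth affine map $u\mapsto(u,1)$). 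By the inductive hypothesis this is the same as $f^{(1)}$ being a $\ConDiff[\C]{}{}{k}$-curve, i.e.\ as $f$ being a $\ConDiff[\C]{}{}{k+1}$-curve. The formula $\dA[k]{f}{u}{x_1,\dotsc,x_k}=x_1\dotsm x_k\,f^{(k)}(u)$ then drops out of feeding the level-$k$ formula into the recursion defining $\dA[k+1]{f}{}{}$ (\refer{prop:hohe_Ableitungen_d}) and applying the base case to the curve $f^{(k)}$; the case $k=\infty$ is immediate from the finite ones.

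For the Cauchy integral formula I would reduce to the classical scalar statement through the dual space, which simultaneously produces the existence of the weak line integral. Fix $x$, $r$ with $\clBall{x}{r}\subseteq\UF$, and $z\in\Ball{x}{r}$, and set $y_k\ndef f^{(k)}(z)$. For each $\lambda\in\SY'$ the map $\lambda\circ f$ is a classical holomorphic function with $(\lambda\circ f)^{(k)}=\lambda\circ f^{(k)}$ (by \refer{lem:H"ohere_Ableitungen_kovariante_komposition_linearer_Abb} combined with the derivative formula), so the scalar Cauchy formula gives $\lambda(y_k)=\frac{k!}{2\pi i}\Lint{\abs{\zeta - x} = r}{\frac{\lambda(f(\zeta))}{(\zeta - z)^{k + 1}}}{\zeta}$. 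Since $\SY'$ separates points (Hahn--Banach) and continuous functionals commute with weak integrals (\refer{lem:stet_lin_Abb_wirken_auf_int_Kurven}), this identifies $y_k$ as the asserted weak line integral, proving both its existence and the formula.

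Finally, analyticity. With the case $k=0$ of the Cauchy formula available, I would expand $\frac{1}{\zeta-z}$ geometrically and take as candidate polynomials $p_k(v)\ndef\frac{f^{(k)}(x)}{k!}v^k$. Rather than invoke completeness of $\SY$ (which is not assumed), I would control the remainder $f(x+v)-\sum_{k=0}^{N}p_k(v)$, which is an \emph{a priori} existing element of $\SY$: testing against $\lambda\in\SY'$ and using the scalar Taylor remainder identifies it as the weak line integral of $\zeta\mapsto\frac{f(\zeta)}{\zeta-z}\bigl(\frac{z-x}{\zeta-x}\bigr)^{N+1}$, whence \refer{lem:Integral_stetiges_Funktional} yields, for each continuous seminorm $p$ and $M_p\ndef\sup_{\abs{\zeta-x}=r}p(f(\zeta))<\infty$, the bound $p\bigl(f(x+v)-\sum_{k=0}^{N}p_k(v)\bigr)\le\frac{r\,M_p}{r-\abs{v}}\bigl(\frac{\abs{v}}{r}\bigr)^{N+1}$. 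As $\abs{v}<r$ this tends to $0$, so the partial sums converge to $f(x+v)$ in $\SY$, which is exactly complex analyticity in the sense of \refer{satz:Char_komplex_analytischer_Abb}. The main obstacle throughout is precisely this avoidance of completeness: each vector-valued integral and each limit must be anchored to an element of $\SY$ known to exist and then verified weakly, the reduction via $\SY'$ being the device that makes this work.
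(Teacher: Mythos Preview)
The paper does not supply its own proof of this lemma: it sits in the differential-calculus appendix, whose opening sentence explicitly defers proofs to the standard references (Milnor, Hamilton, Michor). So there is nothing to compare against; I can only assess your argument on its merits.

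Your proof is correct and carefully organised. The induction for the equivalence and the derivative formula is the natural route, and your handling of the inductive step via the factorisation $\dA{f}{}{}(u,x)=x\cdot f^{(1)}(u)$ through the bilinear scalar multiplication is clean. The reduction of the Cauchy formula to the scalar case via $\SY'$ is exactly right, and you are explicit about the key point that the candidate value $f^{(k)}(z)$ already lives in $\SY$, so the weak integral exists by verification rather than by any completeness hypothesis. The same device in the analyticity step---anchoring the remainder as an existing element of $\SY$ and then identifying it weakly with the line integral of the tail kernel---is the correct way to avoid assuming $\SY$ sequentially complete, and your explicit remainder bound via \refer{lem:Integral_stetiges_Funktional} is accurate. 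One cosmetic remark: in the base case you might note separately that for $x=0$ both $\dA{f}{u}{0}$ and $0\cdot f^{(1)}(u)$ vanish, since the substitution $w=tx$ only works for $x\neq 0$; this is trivial but worth a word.
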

The Cauchy integral formula implies the Cauchy estimates.
\begin{cor}\label{cor:Cauchy_Abschaetzungen}
	Let $\SY$ be a complex locally convex topological vector space,
	$\UF \subseteq \C$ an open nonempty set, $f:\UF\to\SY$ a complex analytic map,
	$x\in\UF$, $r>0$ such that $\clBall{x}{r} \subseteq \UF$, $\sigma \in ]0, 1[$
	and $p$ a continuous seminorm on $\SY$.
	Then for each $k\in\N$ and $z \in \Ball{x}{ r }$ with $\abs{z - x} = \sigma r$,
	we get the estimate
	\[
		\norm{f^{(k)}(z)}_{p} \leq \frac{k!}{(1 - \sigma )^{k + 1} r^k}
		\sup_{\abs{\zeta - x} = r}  \norm{f(\zeta)}_{p}.
	\]
\end{cor}

\paragraph{Real analytic maps}
\begin{defi}[Real analytic maps]
	\label{def:reell_analytische_Abb}
	Let $\SX$, $\SY$ be real locally convex topological vector spaces and
	$\UF \subseteq \SX$ an open nonempty set. Let $\SX_\C$ resp.
	$\SY_\C$ denote the complexifications of $\SX$ resp. $\SY$.
	A map $f:\UF\to\SY$ is called \emph{real analytic}
	if there is an extension $\widetilde{f} : \VF \to \SY_\C$ of $f$
	to an open neighborhood $\VF$ of $\UF$ in $\SX_\C$ that is complex analytic.
	Such a map $\widetilde{f}$ will be refered to as a \emph{complexification} of $f$.
	\index{complexification!of maps}
\end{defi}
\subsubsection{Lipschitz continuous maps between locally convex spaces and induced maps on normed spaces}
\label{sususec:Lipschitz-stetige_Abb}
We define and discuss Lipschitz continuous maps between locally convex spaces.
To this end, we define some terms concerning seminorms and the quotient maps they induce.
\begin{defi}\label{def:lokalkonvexe_Faktorraeume}
	Let $\SX$ be a locally convex space and $p : \SX \to \R$ a continuous seminorm.
	We denote the Hausdorff space $\SX /p^{-1}(0)$ with $\glstext{normierter_Faktorraum}$
	and the quotient map with $\glstext{Projektion_normierter_Faktorraum} : \SX \to \SX_p$.
	More general, for any subset $A \subseteq \SX$ we set
	$A_p \ndef \morQuot{p}(A)$.
	\\
	Further, we let $\glstext{Menge_der_stetigen_Halbnormen}$
	denote the set of continuous seminorms on $\SX$.
	\\
	Let $p \in \normsOn{\SX}$. We call $\UF \subseteq \SX$ \emph{open with respect to $p$}
	if for each $x \in \UF$ there exists $r > 0$ such that
	$\{y \in \SX : \norm{y - x}_{p} < r\} \subseteq \UF$.
\end{defi}

\begin{bem}
	For any locally convex space $\SX$ and each $p \in \normsOn{\SX}$,
	the norm induced by $p$ on $\SX_p$ will also be denoted by $p$.
	Note that this leads to the identity $p = \HomQuot{p}{p}$, in particular
	$p$ is a norm and generates the topology on $\SX_p$.
	No confusion will arise.
\end{bem}

\begin{defi}[Lipschitz continuous maps]
	Let $\SX$ and $\SY$ be locally convex spaces, $\UF \subseteq \SX$ an open nonempty set,
	$k \in \cl{\N}$, $p\in\normsOn{\SY}$ and $q\in\normsOn{\SX}$.
	We call $\gamma: \UF\to\SY$
	\emph{Lipschitz up to order $k$ with respect to $p$ and $q$}
	if $\gamma \in \ConDiff{\UF}{\SY}{k}$,
	\begin{equation}
		\label{est:Lipschitz_Stetigkeit_der_l-ten_Ableitung_bzgl_Halbnormen}
		\norm{ \dA[\ell]{\gamma}{y}{h_1,\dotsc,h_\ell} - \dA[\ell]{\gamma}{x}{h_1,\dotsc,h_\ell} }_{p}
		\leq \norm{y - x}_{q} \prod_{i=1}^\ell\norm{ h_i}_{q}
	\end{equation}
	and
	\begin{equation}
		\label{est:Stetigkeit_der_l-ten_Ableitung_bzgl_Halbnormen}
		\norm{ \dA[\ell]{\gamma}{x}{h_1,\dotsc,h_\ell} }_{p}
		\leq \prod_{i=1}^\ell\norm{ h_i}_{q}.
	\end{equation}
	for all $\ell \in \N$ with $\ell \leq k$, $x, y \in \UF$ and $h_1,\dotsc,h_\ell \in\SX$.
	We write $\LipCon{q}{p}{\UF}{\SY}{k}$ for the set of maps that are
	Lipschitz up to order $k$ with respect to $p$ and $q$.
\end{defi}
As for differentiable maps between normed spaces, differentiable maps always are at least locally Lipschitz.
\begin{lem}\label{lem:stetig_diffbar_impliziert_lokal_Lipschitz}
	Let $\SX, \SY$ be locally convex spaces, $\UF \subseteq \SX$ an open nonempty set,
	$k \in \cl{\N}$, $\gamma \in \ConDiff{\UF}{\SY}{k + 1}$ and $\ell \in \N$ with $\ell \leq k$.
	Then for each $p \in \normsOn{\SY}$ and $x_0 \in \UF$ there exist $q \in \normsOn{\SX}$
	and a convex neighborhood $U_{x_0} \subseteq \UF$ of $x$ with respect to $q$
	such that $\rest{\gamma}{U_{x_0}} \in \LipCon{q}{p}{U_{x_0}}{\SY}{k}$.
\end{lem}
\begin{proof}
	Since $\dA[\ell]{\gamma}{}{}$ and  $\dA[\ell + 1]{\gamma}{}{}$ are continuous in $(x_0, 0, \dotsc, 0)$
	and multilinear in their last $\ell$ resp. $\ell + 1$ arguments, for each $p \in \normsOn{\SY}$
	there exist $q \in \normsOn{\SX}$
	and an open ball $U_{x_0} \ndef\Ball[q]{x_0}{r} \subseteq \UF$ such that
	\[
		1 \geq
		\sup \{\norm{ \dA[\ell + 1]{\gamma}{y}{h_1,\dotsc, h_{\ell + 1}}}_{p}
		: y \in \Ball[q]{x_0}{r}, \norm{h_1}_{q}, \dotsc, \norm{h_{\ell + 1}}_{q} \leq 1 \}
	\]
	and
	\[
		1 \geq
		\sup \{\norm{ \dA[\ell]{\gamma}{y}{h_1,\dotsc, h_{\ell}}}_{p}
		: y \in \Ball[q]{x_0}{r}, \norm{h_1}_{q}, \dotsc, \norm{h_{\ell}}_{q} \leq 1 \}.
	\]
	This implies that for each $y \in \Ball[q]{x_0}{r}$ and $h_1,\dotsc, h_{n} \in \SX$
	\[
		\tag{\ensuremath{\dagger}}
		\label{est:beschraenktheit_n-tes_Differential_geeignete_Halbnormen}
		\norm{ \dA[n]{\gamma}{y}{h_1,\dotsc, h_{n}}}_{p} \leq 1\cdot \prod_{i=1}^n\norm{ h_i}_{q},
	\]
	where $n \in \{\ell, \ell + 1\}$; this proves \refer{est:Stetigkeit_der_l-ten_Ableitung_bzgl_Halbnormen}.
	\\
	To prove \refer{est:Lipschitz_Stetigkeit_der_l-ten_Ableitung_bzgl_Halbnormen}, we see that
	for $x, y \in \Ball[q]{x_0}{r}$ and $h_1,\dotsc, h_{\ell + 1} \in \SX$
	\[
		\dA[\ell]{\gamma}{y}{h_1,\dotsc, h_\ell} - \dA[\ell]{\gamma}{x}{h_1,\dotsc, h_\ell}
		=
		\Rint{0}{1}{
		\dA[\ell + 1]{\gamma}{t y + (1 - t) x}{h_1,\dotsc, h_\ell, y - x}
		}%
		{t}.
	\]
	We apply \refer{lem:Integral_stetiges_Funktional} to the right hand side and get using
	\eqref{est:beschraenktheit_n-tes_Differential_geeignete_Halbnormen} with $n = \ell + 1$.
	\[
		\norm{ \dA[\ell]{\gamma}{y}{h_1,\dotsc, h_\ell} - \dA[\ell]{\gamma}{x}{h_1,\dotsc, h_\ell} }_{p}
		\leq
		\norm{h_1}_{q} \dotsm \norm{h_{\ell}}_{q} \cdot \norm{y - x}_{q}
	\]
	which finishes the proof.
\end{proof}
We show that each Lipschitz map induces another Lipschitz map between the respective (normed) quotient spaces.
\begin{lem}\label{lem:Faktorisieren_Lipschitz-stetiger_Abb_lokalkonvexer_Raeume}
	Let $\SX$ and $\SY$ be locally convex spaces, $\UF \subseteq \SX$ an open nonempty set,
	$k \in \cl{\N}$, $p\in\normsOn{\SY}$, $q\in\normsOn{\SX}$
	and $\gamma \in \LipCon{q}{p}{\UF}{\SY}{k}$.
	Then there exists a map $\FakLC{q}{p}{\gamma} \in \LipCon{q}{p}{\UF_q}{\SY_p}{k}$ that makes the diagram
	\begin{equation*}
		\xymatrix{
			{\UF }
			\ar[rr]^{\gamma}
			\ar[d]|{\morQuot{q} }
			&&
			{\SY}
			\ar[d]|{\morQuot{p}}
			\\
			{\UF_q}
			\ar[rr]^{\FakLC{q}{p}{\gamma}}
			&&
			{\SY_p}
		}
	\end{equation*}
	commutative (using notation as in \refer{def:lokalkonvexe_Faktorraeume}).
\end{lem}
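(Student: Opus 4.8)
The plan is to obtain $\FakLC{q}{p}{\gamma}$ by descending $\morQuot{p} \circ \gamma$ through the quotient map $\morQuot{q}$, and then to transport every required property across this factorization. First I would check well-definedness: if $x, x' \in \UF$ satisfy $\morQuot{q}(x) = \morQuot{q}(x')$, then $\norm{x - x'}_q = q(x - x') = 0$, so the order-zero case of \refer{est:Lipschitz_Stetigkeit_der_l-ten_Ableitung_bzgl_Halbnormen} gives $\norm{\gamma(x) - \gamma(x')}_p \leq \norm{x - x'}_q = 0$ and hence $\morQuot{p}(\gamma(x)) = \morQuot{p}(\gamma(x'))$. Thus $\morQuot{p} \circ \gamma$ is constant on the fibres of $\rest{\morQuot{q}}{\UF}$ and descends to a unique map $\FakLC{q}{p}{\gamma} : \UF_q \to \SY_p$ with $\FakLC{q}{p}{\gamma} \circ \morQuot{q} = \morQuot{p} \circ \gamma$, which is precisely the asserted commutativity. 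Here I use that $\UF$ is open with respect to $q$ (as it is in the applications), so that it is $q$-saturated, $\UF_q = \morQuot{q}(\UF)$ is open in $\SX_q$, and $\morQuot{q}^{-1}(\UF_q) = \UF$.

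Next I would produce the derivatives. Since $\morQuot{p}$ is continuous linear, \refer{lem:H"ohere_Ableitungen_kovariante_komposition_linearer_Abb} yields $\morQuot{p} \circ \gamma \in \ConDiff{\UF}{\SY_p}{k}$ with $\dA[\ell]{(\morQuot{p} \circ \gamma)}{}{} = \morQuot{p} \circ \dA[\ell]{\gamma}{}{}$ for all $\ell \leq k$. As $\morQuot{q}$ is a surjective continuous linear map and $\FakLC{q}{p}{\gamma} \circ \morQuot{q} = \morQuot{p} \circ \gamma$, I may apply \refer{lem:DifferenzierbarkeitseigenschaftenKomposition_gut,hinten_linear} with $A = \morQuot{q}$ to conclude that all directional derivatives of $\FakLC{q}{p}{\gamma}$ up to order $k$ exist and satisfy
\[
\dA[\ell]{(\FakLC{q}{p}{\gamma})}{}{} \circ \mathop{\Pi}_{i=1}^{\ell+1} \morQuot{q} = \dA[\ell]{(\morQuot{p} \circ \gamma)}{}{} = \morQuot{p} \circ \dA[\ell]{\gamma}{}{}.
\]
Writing $\xi = \morQuot{q}(y)$ and $\eta_i = \morQuot{q}(h_i)$, this reads $\dA[\ell]{(\FakLC{q}{p}{\gamma})}{\xi}{\eta_1,\dotsc,\eta_\ell} = \morQuot{p}(\dA[\ell]{\gamma}{y}{h_1,\dotsc,h_\ell})$.

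The estimates now transport directly. Using the norm identities $\norm{\morQuot{p}(v)}_p = p(v) = \norm{v}_p$, $\norm{\eta_i}_q = q(h_i) = \norm{h_i}_q$ and $\norm{\xi - \xi'}_q = \norm{y - y'}_q$, the identity of the previous paragraph turns \eqref{est:Stetigkeit_der_l-ten_Ableitung_bzgl_Halbnormen} and \eqref{est:Lipschitz_Stetigkeit_der_l-ten_Ableitung_bzgl_Halbnormen} for $\gamma$ verbatim into the corresponding estimates for $\FakLC{q}{p}{\gamma}$ on $\UF_q$, where $\SX_q$ carries the norm $q$ and $\SY_p$ the norm $p$.

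Finally I must upgrade ``all directional derivatives exist'' to $\FakLC{q}{p}{\gamma} \in \ConDiff{\UF_q}{\SY_p}{k}$, which by \refer{prop:hohe_Ableitungen_d} amounts to the continuity of the maps $(\xi, \eta_1, \dotsc, \eta_\ell) \mapsto \dA[\ell]{(\FakLC{q}{p}{\gamma})}{\xi}{\eta_1,\dotsc,\eta_\ell}$ on $\UF_q \times \SX_q^\ell$. This is the main obstacle: I would combine the transported Lipschitz estimate (which controls the dependence on the base point $\xi$) with the transported bound and multilinearity in the directions (which control the dependence on the $\eta_i$ by a telescoping argument) to obtain joint continuity in the $q$- and $p$-norms. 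Once this is established, \refer{prop:hohe_Ableitungen_d} gives $\FakLC{q}{p}{\gamma} \in \ConDiff{\UF_q}{\SY_p}{k}$, and together with the estimates of the third paragraph this yields $\FakLC{q}{p}{\gamma} \in \LipCon{q}{p}{\UF_q}{\SY_p}{k}$, completing the proof. The only other technical point, needed already for $\UF_q$ to be an admissible domain, is its openness, which I noted rests on $\UF$ being $q$-saturated.
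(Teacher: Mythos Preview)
Your proof is correct and follows essentially the same route as the paper. The only organizational difference is that the paper factors each map $\morQuot{p}\circ\dA[\ell]{\gamma}{}{}$ through the quotient $\mathop{\Pi}_{i=1}^{\ell+1}\morQuot{q}$ separately, obtaining continuous maps $\tilde\gamma_\ell$ directly from the universal property of the separation, and then identifies $\tilde\gamma_\ell$ with $\dA[\ell]{\tilde\gamma}{}{}$ via \refer{lem:DifferenzierbarkeitseigenschaftenKomposition_gut,hinten_linear}; you instead factor only $\gamma$ itself and recover continuity of the derivatives afterward from the transported Lipschitz and boundedness estimates. Both packagings are valid. One minor remark: you do not need $\UF$ to be open with respect to $q$ for $\UF_q$ to be open in $\SX_q$, since quotient maps of topological vector spaces are open; your aside about $q$-saturation is harmless but unnecessary.
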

\begin{proof}
	Let $\ell \in \N$ with $\ell \leq k$.
	Since $\gamma \in \LipCon{q}{p}{\UF}{\SY}{k}$, the map
	\[
		\HomQuot{\dA[\ell]{\gamma}{}{}}{p} : (\UF,q)\times(\SX,q)^{\ell} \to \SY_p
	\]
	is continuous.
	Hence by the universal property of the separation there exists a continuous map
	$\FakLC{q}{p}{\gamma}_\ell$ such that the diagram
	\begin{equation*}
		\xymatrix{
			{\UF \times \SX^\ell}
			\ar[rr]^{\dA[\ell]{\gamma}{}{}}
			\ar[dd]|{\mathop{\Pi}\limits_{i = 1}^{\ell + 1} \morQuot{q}}
			\ar[rd]
			&&
			{\SY}
			\ar[dd]|{\morQuot{p}}
			\\
			&
			(\UF,q)\times(\SX,q)^{\ell}
			\ar[rd]
			\ar@{>>}[ld]
			&
			\\
			{\UF_q\times \SX_q^{\ell}}
			\ar[rr]^{\FakLC{q}{p}{\gamma}_\ell}
			&
			&
			{\SY_p}
		}
	\end{equation*}
	commutes, where we denote $\rest{\morQuot{q}}{\UF}$ with $\morQuot{q}$.
	The diagram for $\ell = 0$ implies that
	$\FakLC{q}{p}{\gamma} \circ \morQuot{q} = \HomQuot{\gamma}{p} \in \ConDiff{\UF}{\SY_p}{k}$,
	where $\FakLC{q}{p}{\gamma} \ndef \FakLC{q}{p}{\gamma}_0$.
	We proved in \refer{lem:DifferenzierbarkeitseigenschaftenKomposition_gut,hinten_linear}
	that the $\ell$-th directional derivative of $\FakLC{q}{p}{\gamma}$ exists and satisfies the identity
	\[
		\dA[\ell]{\FakLC{q}{p}{\gamma}}{}{} \circ \mathop{\Pi}_{i = 1}^{\ell + 1} \morQuot{q}
		= \dA[\ell]{(\FakLC{q}{p}{\gamma} \circ \morQuot{q})}{}{}
		= \dA[\ell]{(\morQuot{p} \circ \gamma)}{}{}
		= \morQuot{p} \circ \dA[\ell]{\gamma}{}{}
		= \FakLC{q}{p}{\gamma}_\ell \circ \mathop{\Pi}_{i = 1}^{\ell + 1}\morQuot{q}.
	\]
	Since $\mathop{\Pi}_{i = 1}^{\ell + 1}\morQuot{q}$ is surjective, this implies that
	$\dA[\ell]{\FakLC{q}{p}{\gamma}}{}{} = \FakLC{q}{p}{\gamma}_\ell$, so the former is continuous.
	From this we conclude that $\FakLC{q}{p}{\gamma} \in \ConDiff{\UF_q}{\SY_p}{k}$
	and that the estimates \eqref{est:Lipschitz_Stetigkeit_der_l-ten_Ableitung_bzgl_Halbnormen}
	and \eqref{est:Stetigkeit_der_l-ten_Ableitung_bzgl_Halbnormen}
	are satisfied by $\FakLC{q}{p}{\gamma}$.
\end{proof}
Finally, we prove that for each compact set, each $\ConDiff{}{}{k + 1}$-map
defined on it and each seminorm on the image there exists a seminorm on the domain
such that the quotient map, and its differentials, are bounded.
For that, we need to use a lemma about the relationship between differentiability and Fréchet differentiability
that is proved later in \refer{sec:Frechetbarkeit}.
\begin{lem}\label{lem:stetig_diffbar_impliziert_lokal_Lipschitz_und_beschraenkt}
	Let $\SX$ and $\SY$ be locally convex spaces, $\UF \subseteq \SX$ an open nonempty set,
	$k \in \N$, $\gamma \in \ConDiff{\UF}{\SY}{k + 1}$, $p \in \normsOn{\SY}$ and $K$ a compact subset of $\UF$.
	Then there exists a seminorm $q \in \normsOn{\SX}$ and an open set $\VF$ w.r.t. $q$ such that
	$K \subseteq \VF \subseteq \UF$ and $\FakLC{q}{p}{\gamma} \in \BC{\VF_q}{\SY_p}{k}$
	(For the definition of $\FakLC{q}{p}{\gamma}$ see \refer{lem:Faktorisieren_Lipschitz-stetiger_Abb_lokalkonvexer_Raeume}).
\end{lem}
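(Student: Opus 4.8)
The plan is to combine the local Lipschitz estimate \refer{lem:stetig_diffbar_impliziert_lokal_Lipschitz} with a compactness argument and the factorization \refer{lem:Faktorisieren_Lipschitz-stetiger_Abb_lokalkonvexer_Raeume}. The guiding observation is the inclusion $\LipCon{q}{p}{\WF}{\SZ}{k} \sub \BC{\WF}{\SZ}{k}$ for normed domain and codomain: a map in $\LipCon{q}{p}{\WF}{\SZ}{k}$ is $\ConDiff{}{}{k}$, and the Lipschitz estimate \refer{est:Lipschitz_Stetigkeit_der_l-ten_Ableitung_bzgl_Halbnormen} makes its iterated derivatives up to order $k$ depend continuously on the base point in operator norm, so \refer{prop:Char_Frechet_Diff} makes it $\FC{}{}{k}$, while \refer{est:Stetigkeit_der_l-ten_Ableitung_bzgl_Halbnormen} gives $\hn{\cdot}{1_\WF}{\ell} \leq 1$ for $\ell \leq k$. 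Since $\BC$-membership — unlike that in $\LipCon$ — is a local-plus-supremum condition, I will produce $q$ and a $q$-open $\VF$ with $K \sub \VF \sub \UF$ and obtain $\FakLC{q}{p}{\gamma} \in \BC{\VF_q}{\SY_p}{k}$ by assembling pieces on which $\gamma$ factors into $\LipCon{q}{p}{\cdot}{\SY_p}{k}$, rather than by establishing a global Lipschitz estimate on $\VF$.

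First I would fix $x_0 \in \UF$ and eliminate the dependence on the order: applying \refer{lem:stetig_diffbar_impliziert_lokal_Lipschitz} to each of the finitely many $\ell \in \{0,\dots,k\}$ gives seminorms $q_{x_0,\ell}$ and convex $q_{x_0,\ell}$-ball neighborhoods of $x_0$. Putting $q_{x_0} \ndef \max_{\ell \leq k} q_{x_0,\ell}$ and choosing a small $q_{x_0}$-ball $U_{x_0} \ndef \Ball[q_{x_0}]{x_0}{r_{x_0}}$ contained in all of them, both estimates survive for every $\ell \leq k$ on $U_{x_0}$, since enlarging the seminorm only enlarges their right-hand sides and a $q_{x_0}$-ball sits inside each $q_{x_0,\ell}$-ball. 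Thus $\rest{\gamma}{U_{x_0}} \in \LipCon{q_{x_0}}{p}{U_{x_0}}{\SY}{k}$. The sets $U_{x_0}$ form an open cover of the compact $K$; I pick a finite subcover $U_{x_1},\dots,U_{x_n}$, set $q \ndef \max_{j} q_{x_j}$ and $\VF \ndef \bigcup_{j} U_{x_j}$. Because $q \geq q_{x_j}$, each $q_{x_j}$-ball is also $q$-open, so $\VF$ is $q$-open with $K \sub \VF \sub \UF$, and $\rest{\gamma}{U_{x_j}} \in \LipCon{q}{p}{U_{x_j}}{\SY}{k}$ for every $j$.

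Next I would factor each piece: \refer{lem:Faktorisieren_Lipschitz-stetiger_Abb_lokalkonvexer_Raeume} yields maps in $\LipCon{q}{p}{\morQuot{q}(U_{x_j})}{\SY_p}{k} \sub \BC{\morQuot{q}(U_{x_j})}{\SY_p}{k}$, and I would glue them to a single $\FakLC{q}{p}{\gamma} : \VF_q \to \SY_p$. The device that makes this work is that a $q$-open set is $q$-saturated: if $x, y \in \VF$ lie in one $q$-fibre then $\norm{x-y}_q = 0$, so $y$ lies in any $q$-ball around $x$, hence in a common $U_{x_j}$, where the order-$0$ Lipschitz estimate forces $\morQuot{p}(\gamma(x)) = \morQuot{p}(\gamma(y))$. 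This simultaneously shows that $\morQuot{p} \circ \rest{\gamma}{\VF}$ factors through $\morQuot{q}$ and that the local factorizations agree on overlaps, so they assemble into a well-defined $\FakLC{q}{p}{\gamma}$ whose restriction to each $\morQuot{q}(U_{x_j})$ is the $j$-th local factorization.

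Finally, since Fréchet differentiability up to order $k$ is a local property and the finiteness of the $\BC$-seminorms reduces to a supremum, the bounds $\hn{\FakLC{q}{p}{\gamma}}{1_{\VF_q}}{\ell} \leq 1$ ($\ell \leq k$) obtained pointwise from the local pieces give $\FakLC{q}{p}{\gamma} \in \BC{\VF_q}{\SY_p}{k}$. I expect the main obstacle to be exactly this gluing: a convex $q$-open neighbourhood of $K$ inside $\UF$ need not exist in a general locally convex space, so one cannot hope for a global Lipschitz estimate on $\VF$ and must instead exploit that both $\FC{}{}{k}$-differentiability and the $\BC$-bounds are local-plus-supremum conditions. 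The $q$-saturation of $q$-open sets is the key technical point reconciling the locally chosen, $\ell$-dependent seminorms with the single global seminorm $q$.
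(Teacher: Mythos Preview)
Your proof is correct and follows the same route as the paper---local Lipschitz estimates from \refer{lem:stetig_diffbar_impliziert_lokal_Lipschitz}, a finite-subcover compactness argument yielding a single $q$, then the factorization \refer{lem:Faktorisieren_Lipschitz-stetiger_Abb_lokalkonvexer_Raeume} together with \refer{prop:Char_Frechet_Diff}---but you are more explicit about the gluing and the $q$-saturation of $q$-open sets, a point the paper hides behind ``standard compactness arguments''. The only substantive difference is how boundedness is obtained: you read off the $\BC$-bound directly from \refer{est:Stetigkeit_der_l-ten_Ableitung_bzgl_Halbnormen} on each piece of the cover, whereas the paper first establishes $\FakLC{q}{p}{\gamma} \in \FC{\widetilde{\VF}_q}{\SY_p}{k}$ and then uses continuity of the derivatives on the compact set $K_q$ to shrink to a neighborhood $\VF_q$ on which they are bounded.
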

\begin{proof}
	Using \refer{lem:stetig_diffbar_impliziert_lokal_Lipschitz} and standard compactness arguments,
	we find $q \in \normsOn{\SX}$ and a neighborhood $\widetilde{\VF}$ w.r.t. $q$ of $K$ in $\UF$
	such that
	\refer{est:Lipschitz_Stetigkeit_der_l-ten_Ableitung_bzgl_Halbnormen}
	and \refer{est:Stetigkeit_der_l-ten_Ableitung_bzgl_Halbnormen}
	hold for $\gamma$ on $\widetilde{\VF}$ and all $\ell\in\N$ with $\ell \leq k$.
	We proved in \refer{lem:Faktorisieren_Lipschitz-stetiger_Abb_lokalkonvexer_Raeume}
	that this implies that $\FakLC{q}{p}{\gamma} \in \LipCon{q}{p}{\widetilde{\VF}_q}{\SY_p}{k}$,
	and with \refer{prop:Char_Frechet_Diff} we can conclude that
	$\FakLC{q}{p}{\gamma} \in \FC{\widetilde{\VF}_q}{\SY_p}{k}$.
	Further, since $D^{(\ell)}\FakLC{q}{p}{\gamma}(K_q)$ is compact for all $\ell \leq k$,
	there exists a neighborhood $\VF_q$ of $K_q$ such that
	$\FakLC{q}{p}{\gamma}$ and all its derivatives up to degree $k$ are bounded on $\VF_q$.
\end{proof}
\section{Fr\'echet differentiability}
\label{sec:Frechetbarkeit}
For maps between normed spaces, there is the classical notion of \emph{Fr\'echet differentiability}.
This concept relies on the existence of a well-behaved topology on the space of
($k$-)linear maps between normed spaces.
\paragraph{Spaces of multilinear maps between normed spaces}
We provide the details about the norm topology of multilinear operators.
\begin{defi}
	Let $\SX$, $\SY$ be normed spaces. For each $k \in \N^\ast$ we define
	\[
		\glstext{space_of_k-linear-maps-normed}
		\ndef
		\{\Xi:\SX^k \to \SY :
			\Xi\text{ is $k$-linear and continuous}\} .
	\]
	For $k=1$ we define
	\[
		\Lin{\SX}{\SY} \ndef \Lin[1]{\SX}{\SY}
		\text{ and }
		\Lin{\SX}{\SX} \ndef \Lin[1]{\SX}{\SX},
	\]
	and furthermore
	\[
		\Lin[0]{\SX}{\SY} \ndef \SY .
	\]
\end{defi}
The set of multilinear continuous maps can be turned into a normed vector space:
\begin{prop}
	Let $\SX$, $\SY$ be normed spaces and $k \in \N^\ast$.
	A $k$-linear map $\Xi : \SX^{k} \to \SY$ is continuous iff
	\[
		\glsdisp{Operatornorm_einer_k-linear-maps-normed}{\Opnorm{\Xi}} \ndef
		\sup\{ \norm{\Xi(v_1,\ldots,v_k)} :
			\norm{v_1},\ldots,\norm{v_k}\leq 1 \} < \infty .
	\]
	$\Opnorm{\Xi}$ is called the \emph{operator norm} of $\Xi$.	
	$\Opnorm{\cdot}$ is a norm on $\Lin[k]{\SX}{\SY}$. The space $\Lin[k]{\SX}{\SY}$,
	endowed with this norm, is complete if $\SY$ is so.
\end{prop}
\begin{proof}
	The (elementary) proof can be found in
	\cite[Chapter \RN{5}, \S 7]{MR0120319}.
\end{proof}

\begin{lem}\label{lem:Auswertung_linearer_Abb_ist_stetig}
	Let $\SX$, $\SY$ be normed spaces and $k \in \N^\ast$.
	Then the evaluation map
	\[
		\Lin[k]{\SX}{\SY}\times \SX^k
		: (\Xi, v_1,\ldots,v_k)\mapsto \Xi(v_1,\ldots,v_k)
	\]
	is $(k+1)$-linear and continuous.
\end{lem}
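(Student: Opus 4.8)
The plan is to verify $(k+1)$-linearity directly and then to reduce continuity to the operator-norm bound already recorded for multilinear maps. First I would check linearity in each of the $k+1$ slots: linearity in the argument $\Xi$ is immediate from the pointwise vector space structure on $\Lin[k]{\SX}{\SY}$, while linearity in each of the arguments $v_i$ is nothing but the $k$-linearity of $\Xi$ itself. This step is purely formal and requires no estimates.

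The heart of the matter is the fundamental estimate
\[
	\norm{\Xi(v_1,\dotsc,v_k)} \leq \Opnorm{\Xi}\,\norm{v_1}\dotsm\norm{v_k},
\]
valid for all $\Xi \in \Lin[k]{\SX}{\SY}$ and all $v_1,\dotsc,v_k \in \SX$. I would derive this from the definition of $\Opnorm{\Xi}$ as a supremum over the unit polydisc. If all $v_i$ are nonzero, homogeneity of $\Xi$ in each argument yields
\[
	\Xi(v_1,\dotsc,v_k) = \norm{v_1}\dotsm\norm{v_k}\,\Xi\Bigl(\tfrac{v_1}{\norm{v_1}},\dotsc,\tfrac{v_k}{\norm{v_k}}\Bigr),
\]
and since the argument on the right lies in the closed unit polydisc, its image has norm at most $\Opnorm{\Xi}$; the degenerate case in which some $v_i = 0$ makes both sides vanish by multilinearity, so the estimate holds in all cases.

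Finally I would invoke the characterization of continuity of multilinear maps by operator-norm boundedness (the Proposition immediately preceding this lemma). The product $\Lin[k]{\SX}{\SY} \times \SX^k$ carries its natural norm, and the estimate above shows that the operator norm of the $(k+1)$-linear evaluation map is bounded by $1$, hence finite; therefore the evaluation map is continuous. I do not expect any genuine obstacle here: the only point requiring a little care is the passage from the definition of $\Opnorm{\cdot}$ as a supremum over the unit polydisc to the homogeneous estimate above, together with the separate treatment of vanishing arguments.
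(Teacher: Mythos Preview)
Your proof is correct and is precisely the standard argument the paper has in mind; the paper's own proof reads simply ``This is trivial.'' Your careful verification of $(k+1)$-linearity and the operator-norm estimate $\norm{\Xi(v_1,\dotsc,v_k)} \leq \Opnorm{\Xi}\,\norm{v_1}\dotsm\norm{v_k}$ is exactly what underlies that one-word dismissal.
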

\begin{proof}
	This is trivial.
\end{proof}

\begin{lem}
	Let $\SX$ and $\SY$ be normed spaces, $k \in \N^\ast$, $\Xi \in \Lin[k]{\SX}{\SY}$
	and $h_1,\dotsc,h_k$, $v_1,\dotsc, v_k \in \SX$.
	Then
	\[
		\norm{\Xi(h_1,\dotsc,h_n) - \Xi(v_1,\dotsc,v_k)}
		\leq \sum_{i=1}^k \norm{\Xi(v_1,\dotsc,v_{i-1}, h_i - v_i, h_{i+1},\dotsc,h_k)}.
	\]
\end{lem}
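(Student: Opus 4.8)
The plan is to write the difference $\Xi(h_1,\dotsc,h_k) - \Xi(v_1,\dotsc,v_k)$ as a telescoping sum in which the arguments $v_i$ are replaced by $h_i$ one slot at a time, and then to pass to norms via the triangle inequality. Concretely, I would introduce for each $i \in \{1,\dotsc,k+1\}$ the intermediate value
\[
	a_i \ndef \Xi(v_1,\dotsc,v_{i-1}, h_i,\dotsc,h_k),
\]
so that the first $i-1$ slots carry the vectors $v_1,\dotsc,v_{i-1}$ and the remaining slots carry $h_i,\dotsc,h_k$. In particular $a_1 = \Xi(h_1,\dotsc,h_k)$ and $a_{k+1} = \Xi(v_1,\dotsc,v_k)$ are precisely the two terms whose difference we wish to estimate.

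First I would observe that, by the linearity of $\Xi$ in its $i$-th argument, the consecutive intermediate values differ only in that one slot, which carries $h_i$ in $a_i$ and $v_i$ in $a_{i+1}$; hence
\[
	a_i - a_{i+1} = \Xi(v_1,\dotsc,v_{i-1}, h_i - v_i, h_{i+1},\dotsc,h_k)
\]
for each $i \in \{1,\dotsc,k\}$. Summing these identities over $i$, the left-hand sides telescope, which yields
\[
	\Xi(h_1,\dotsc,h_k) - \Xi(v_1,\dotsc,v_k)
	= a_1 - a_{k+1}
	= \sum_{i=1}^k \Xi(v_1,\dotsc,v_{i-1}, h_i - v_i, h_{i+1},\dotsc,h_k).
\]

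Then I would take $\norm{\cdot}$ on both sides and apply the triangle inequality to the sum, which gives the asserted estimate directly. There is no genuine obstacle here: the argument is purely formal, resting only on the multilinearity of $\Xi$ (used to split off the $i$-th slot) together with the triangle inequality, so that neither the continuity of $\Xi$ nor its operator norm enters. The only point demanding a little care is the bookkeeping of the boundary terms $i=1$ and $i=k$ in the intermediate values $a_i$, which I would handle by the usual convention that an empty sublist of arguments is simply omitted; with that convention the telescoping identity holds verbatim and the proof is complete.
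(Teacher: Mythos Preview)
Your proposal is correct and is exactly the argument the paper has in mind: the paper's proof says only that the estimate ``is derived by an iterated application of the triangle inequality,'' and your telescoping decomposition is precisely how one makes that iteration explicit. There is nothing to add.
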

\begin{proof}
	This estimate is derived by an iterated application of the triangle inequality.
\end{proof}

The following lemma helps to deal with higher derivatives of Fr\'echet-differentiable maps.

\begin{lem}\label{lem:Identifizierung_seltsamer_Raeume}
	Let $\SX$, $\SY$ be normed spaces and $n,k \in \N^\ast$.
	Then the map
	\begin{multline*}
		\EmbA{k}{n}
		:\Lin[k]{\SX}{ \Lin[n]{\SX}{\SY} }
		\to \Lin[k + n]{\SX}{\SY}
		\\
		\EmbA{k}{n}(\Xi)(h_1,\dotsc,h_n, v_1,\dotsc,v_k)
		\ndef \Xi(v_1,\dotsc,v_k) (h_1,\dotsc,h_n)
	\end{multline*}
	is an isometric isomorphism.
	In some cases, we will denote $\EmbA{k}{n}$ by $\EmbA[\SY]{k}{n}$.
\end{lem}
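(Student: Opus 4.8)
The plan is to verify that $\EmbA{k}{n}$ is a well-defined linear map, that it preserves the operator norm (which in particular yields injectivity), and finally that it is surjective by writing down its inverse explicitly.

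First I would check that $\EmbA{k}{n}(\Xi)$ genuinely lies in $\Lin[k+n]{\SX}{\SY}$ for each $\Xi$. Multilinearity in the arguments $v_1,\dotsc,v_k$ follows from the $k$-linearity of $\Xi$, while multilinearity in $h_1,\dotsc,h_n$ follows from the fact that each value $\Xi(v_1,\dotsc,v_k)$ lies in $\Lin[n]{\SX}{\SY}$ and hence is itself $n$-linear; together these make $\EmbA{k}{n}(\Xi)$ a $(k+n)$-linear map. Boundedness (hence continuity) will drop out of the norm computation in the next step. Linearity of the assignment $\Xi \mapsto \EmbA{k}{n}(\Xi)$ is immediate, since the defining formula is evaluated pointwise in $\Xi$.

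The heart of the argument is the isometry. For fixed $v_1,\dotsc,v_k$ with $\norm{v_i}\leq 1$, taking the supremum over $h_1,\dotsc,h_n$ in the closed unit ball gives exactly $\Opnorm{\Xi(v_1,\dotsc,v_k)}$ by the definition of the operator norm on $\Lin[n]{\SX}{\SY}$; a further supremum over the $v_i$ in the closed unit ball then yields $\Opnorm{\Xi}$. Interchanging the two nested suprema — harmless, since both range over the same nonnegative quantity on a product of unit balls — shows
\[
\Opnorm{\EmbA{k}{n}(\Xi)} = \Opnorm{\Xi}.
\]
This simultaneously establishes that $\EmbA{k}{n}(\Xi)$ is bounded, that $\EmbA{k}{n}$ is an isometry, and, since an isometric linear map has trivial kernel, that it is injective.

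Finally, for surjectivity I would exhibit the inverse. Given $\Psi \in \Lin[k+n]{\SX}{\SY}$, define $\Xi(v_1,\dotsc,v_k)$ to be the map $(h_1,\dotsc,h_n)\mapsto \Psi(h_1,\dotsc,h_n,v_1,\dotsc,v_k)$. The estimate $\norm{\Psi(h_1,\dotsc,h_n,v_1,\dotsc,v_k)} \leq \Opnorm{\Psi}\,\norm{h_1}\dotsm\norm{h_n}\,\norm{v_1}\dotsm\norm{v_k}$ shows that each $\Xi(v_1,\dotsc,v_k)$ lies in $\Lin[n]{\SX}{\SY}$ and that $\Xi$ is a bounded $k$-linear map into $\Lin[n]{\SX}{\SY}$; by construction $\EmbA{k}{n}(\Xi) = \Psi$. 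I expect no serious obstacle here: the statement is really careful bookkeeping of multilinearity and of the order in which suprema over unit balls are taken, and the only point demanding genuine attention is confirming that passing from $\Psi$ to $\Xi$ produces continuous maps at each stage, which the displayed norm bound handles.
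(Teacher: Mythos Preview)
Your proposal is correct and follows essentially the same approach as the paper: establish linearity, prove the isometry (yielding injectivity), and exhibit the explicit inverse for surjectivity. The only cosmetic difference is that the paper verifies $\Opnorm{\EmbA{k}{n}(\Xi)} \leq \Opnorm{\Xi}$ and $\Opnorm{\Xi} \leq \Opnorm{\EmbA{k}{n}(\Xi)}$ as two separate chains of inequalities, whereas you phrase the same computation as an interchange of nested suprema over unit balls.
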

\begin{proof}
	Obviously $\EmbA{k}{n}$ is linear and injective. Furthermore
	\begin{multline*}
		\norm{\EmbA{k}{n}(\Xi)(h_1,\dotsc,h_n, v_1,\dotsc,v_k)}
		= \norm{\Xi(v_1,\dotsc,v_k) (h_1,\dotsc,h_n)}
		\\
		\leq \Opnorm{\Xi(v_1,\dotsc,v_k)} \prod_{i=1}^n\norm{h_i}
		\leq \Opnorm{\Xi}\prod_{i=1}^k\norm{v_i} \prod_{i=1}^n\norm{h_i},
	\end{multline*}
	and hence
	\[
		\Opnorm{\EmbA{k}{n}(\Xi)} \leq \Opnorm{\Xi}.
	\]	
	On the other hand, for
	$\norm{v_1},\dotsc,\norm{v_k}, \norm{h_1},\dotsc,\norm{h_n}\leq 1$
	we have
	\[
		\norm{\Xi(v_1,\dotsc,v_k) (h_1,\dotsc,h_n)}
		\leq \Opnorm{\EmbA{k}{n}(\Xi)}.
	\]
	Hence
	\[
		\Opnorm{\Xi(v_1,\dotsc,v_k)}\leq \Opnorm{\EmbA{k}{n}(\Xi)} ,
	\]
	which leads to
	\[
		\Opnorm{\Xi}\leq \Opnorm{\EmbA{k}{n}(\Xi)},
	\]
	so $\EmbA{k}{n}$ is an isometry. It remains to show that $\EmbA{k}{n}$ is surjective.
	To this end, for a $M\in\Lin[k + n]{\SX}{\SY}$ we define the map
	$\overline{M} \in \Lin[k]{\SX}{ \Lin[n]{\SX}{\SY} }$ by
	\[
		\overline{M}(v_1,\dotsc,v_k)(h_1,\dotsc,h_n)
		\ndef M(h_1,\dotsc,h_n, v_1,\dotsc,v_k) .
	\]
	Clearly, $\EmbA{k}{n}(\overline{M}) = M$.
	Since $M$ was arbitrary, $\EmbA{k}{n}$ is surjective.
\end{proof}

\begin{lem}\label{lem:Produkte_und_Lin^k}
	Let $\SX$, $\SY$ and $\SZ$ be normed spaces and $k \in \N$. Then the map
	\begin{equation}\label{map:Isomorphie_multilineare_Abb_Produkt}
		\Lin[k]{\SX}{\SY \times \SZ} \to \Lin[k]{\SX}{\SY} \times \Lin[k]{\SX}{\SZ}
		:
		\Xi \mapsto (\pi_{\SY} \circ \Xi, \pi_{\SZ} \circ \Xi) ,
	\end{equation}
	where $\pi_{\SY}$ respective $\pi_{\SZ}$ denotes the canonical projection
	from $\SY \times \SZ$ to $\SY$ respective $\SZ$, is an
	isomorphism of topological vector spaces.
\end{lem}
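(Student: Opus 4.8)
The claim is that for normed spaces $\SX$, $\SY$, $\SZ$ and $k \in \N$, the map
\[
	\Lin[k]{\SX}{\SY \times \SZ} \to \Lin[k]{\SX}{\SY} \times \Lin[k]{\SX}{\SZ}
	: \Xi \mapsto (\pi_{\SY} \circ \Xi, \pi_{\SZ} \circ \Xi)
\]
is an isomorphism of topological vector spaces. The plan is to exhibit an explicit inverse and to control all four relevant operator norms by elementary estimates, so that both the map and its inverse are continuous linear bijections.

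First I would check that the map, call it $\Theta$, is well-defined and linear. Well-definedness is immediate: for a continuous $k$-linear $\Xi$, each component $\pi_{\SY}\circ\Xi$ and $\pi_{\SZ}\circ\Xi$ is $k$-linear (composition with a linear projection) and continuous (composition with a continuous projection); one may invoke \refer{lem:H"ohere_Ableitungen_kovariante_komposition_linearer_Abb} or simply argue directly. Linearity of $\Theta$ in $\Xi$ is obvious since both $\pi_\SY$ and $\pi_\SZ$ act linearly. Next I would construct the inverse. Given a pair $(\Xi_\SY, \Xi_\SZ) \in \Lin[k]{\SX}{\SY} \times \Lin[k]{\SX}{\SZ}$, define
\[
	\Psi(\Xi_\SY, \Xi_\SZ) : \SX^k \to \SY \times \SZ
	: (v_1,\dotsc,v_k) \mapsto \bigl(\Xi_\SY(v_1,\dotsc,v_k), \Xi_\SZ(v_1,\dotsc,v_k)\bigr).
\]
This is clearly $k$-linear, and it is continuous because $\norm{\Psi(\Xi_\SY, \Xi_\SZ)(v_1,\dotsc,v_k)} = \max\{\norm{\Xi_\SY(v_1,\dotsc,v_k)}, \norm{\Xi_\SZ(v_1,\dotsc,v_k)}\}$ (or the equivalent norm on $\SY\times\SZ$) is bounded by $\max\{\Opnorm{\Xi_\SY}, \Opnorm{\Xi_\SZ}\}\prod_{i}\norm{v_i}$. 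A direct computation shows $\Theta\circ\Psi = \id$ and $\Psi\circ\Theta = \id$, since $\pi_\SY\circ\Psi(\Xi_\SY,\Xi_\SZ) = \Xi_\SY$, and conversely any $\Xi$ is recovered from its two projections because a map into a product is determined by its components.

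It then remains to verify that both $\Theta$ and $\Psi$ are continuous, which I would do by comparing operator norms. Using the product norm $\norm{(y,z)} = \max\{\norm y, \norm z\}$ on $\SY\times\SZ$ one gets, for any $\Xi$, the estimates $\Opnorm{\pi_\SY\circ\Xi} \leq \Opnorm{\Xi}$ and $\Opnorm{\pi_\SZ\circ\Xi}\leq\Opnorm{\Xi}$ (the projections are norm-nonincreasing), hence $\Theta$ is bounded; conversely $\Opnorm{\Psi(\Xi_\SY,\Xi_\SZ)} = \max\{\Opnorm{\Xi_\SY},\Opnorm{\Xi_\SZ}\}$, giving boundedness of $\Psi$ as well. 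These two bounds together in fact show that $\Theta$ is an isometry for the max-norm conventions, which is stronger than continuity of the inverse. Since the topologies on $\Lin[k]{\cdot}{\cdot}$ are the operator topologies (as fixed in the paper's definition), boundedness of a linear map is equivalent to continuity, so $\Theta$ is a continuous linear bijection with continuous inverse, i.e.\ an isomorphism of topological vector spaces.

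I do not expect any genuine obstacle here: the statement is a routine "a map into a product is its pair of components'' fact, and the only mild care needed is to fix a norm on $\SY\times\SZ$ and track that the projection bounds and the reconstruction bound are consistent with it. The one point worth stating explicitly is that the identification of $\Opnorm{\Psi(\Xi_\SY,\Xi_\SZ)}$ with $\max\{\Opnorm{\Xi_\SY},\Opnorm{\Xi_\SZ}\}$ depends on the choice of product norm; with any equivalent product norm the equality becomes a two-sided estimate with constants, which still suffices for the topological isomorphism. Thus the whole argument is a short direct computation with no hard step.
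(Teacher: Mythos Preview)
Your proposal is correct and follows essentially the same approach as the paper: construct the obvious componentwise inverse, note that the map is a linear bijection, and observe that with a suitable product norm (the paper says ``depending on the norm we chose on the products'') it is in fact an isometry, hence a topological isomorphism. Your write-up is if anything more explicit than the paper's, which leaves the surjectivity and the norm comparison as brief remarks.
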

\begin{proof}
	The map in \eqref{map:Isomorphie_multilineare_Abb_Produkt}
	is linear since its component maps $\Xi \mapsto \pi_{\SY} \circ \Xi$
	and $\Xi \mapsto \pi_{\SZ} \circ \Xi$ are so.
	The injectivity of \eqref{map:Isomorphie_multilineare_Abb_Produkt} is clear,
	and the surjectivity can also be shown by an easy computation.
	\\
	To see that \eqref{map:Isomorphie_multilineare_Abb_Produkt}
	is an isomorphism we denote it by $\mathfrak{i}$ and compute
	for $x_{1},\dotsc,x_{k} \in \SX$
	\begin{multline*}
		\bigl(
			(\pi_{ \Lin[k]{\SX}{\SY} } \circ \mathfrak{i})(\Xi)(x_{1},\dotsc,x_{k}),
			(\pi_{ \Lin[k]{\SX}{\SZ} } \circ \mathfrak{i})(\Xi)(x_{1},\dotsc,x_{k})
		\bigr)
		\\
		=
		\bigl(
			(\pi_{\SY} \circ \Xi)(x_{1},\dotsc,x_{k}),
			(\pi_{\SZ} \circ \Xi)(x_{1},\dotsc,x_{k})
		\bigr)
		=
		\Xi(x_{1},\dotsc,x_{k}) .
	\end{multline*}
	From this one can easily derive that $\mathfrak{i}$ and its inverse are
	continuous since depending on the norm we chose on the products, $\mathfrak{i}$ is an isometry.
\end{proof}

\paragraph{The calculus}
In the following, let $\SX$, $\SY$ and $\SZ$ denote normed spaces and
$\UF$ be an open nonempty subset of $\SX$.
Recall the definition of Fr\'echet differentiability given in \refer{def:Frechet_Diffbarkeit}.

We give some examples of Fr\'echet differentiable maps.
\begin{beisp}\label{beisp:Frechet_diffbare_Abb}
	\begin{enumerate}
	\item
		A continuous linear map $A:\SX \to \SY$ is smooth with $D A(x) = A$.
	\item
		More generally, a continuous $k$-linear map
		$b:\SX_1\times\dotsb\times\SX_k \to \SY$
		is smooth with
		\[
			D b(x_1,\dotsc,x_k) (h_1,\dotsc,h_k)
			= \sum_{i=1}^k b(x_1,\dotsc,x_{i-1},h_i,x_{i+1},\dotsc,x_k).
		\]
	\end{enumerate}
\end{beisp}

We prove the Chain Rule and the Mean Value Theorem for Fr\'echet differentiable maps.
Beforehand, we need the following
\begin{lem}\label{lem:Kettenregel_linearer_Abb}
	Let $\SX$, $\SY$ and $\SZ$ be normed spaces,
	$\UF \subseteq \SX$ an open nonempty set,
	$k \in \cl{\N}$ and $A:\SY \to \SZ$ a continuous linear map.
	Then for $\gamma \in \FC{\UF}{\SY}{k}$
	\[
		A \circ \gamma \in \FC{\UF}{\SZ}{k}.
	\]
\end{lem}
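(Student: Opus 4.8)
The plan is to induct on $k \in \cl{\N}$, reducing the Fr\'echet differentiability of $A \circ \gamma$ to that of $\gamma$ by means of the \emph{push-forward operator}
\[
	A_{\ast} : \Lin{\SX}{\SY} \to \Lin{\SX}{\SZ} : T \mapsto A \circ T .
\]
Since $\Opnorm{A \circ T} \leq \Opnorm{A}\,\Opnorm{T}$, this $A_{\ast}$ is a well-defined continuous linear map, and it is the one structural fact that drives the whole argument. Two boundary cases are disposed of immediately. For $k = 0$ we have $\FC{\UF}{\SZ}{0} = \ConDiff{\UF}{\SZ}{0}$, and $A \circ \gamma$ is continuous as a composite of continuous maps. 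For $k = \infty$, since $\FC{\UF}{\SZ}{\infty} = \bigcap_{k \in \N^\ast} \FC{\UF}{\SZ}{k}$, the assertion follows once it is known for each finite $k$. So the substance is the case $k \in \N^\ast$.

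For the base case $k = 1$ I would invoke the already-established Michal--Bastiani analogue \refer{lem:H"ohere_Ableitungen_kovariante_komposition_linearer_Abb}, which gives $A \circ \gamma \in \ConDiff{\UF}{\SZ}{1}$ together with $\dA{(A \circ \gamma)}{x}{h} = A(\dA{\gamma}{x}{h})$ for all $(x,h) \in \UF \times \SX$. Reading this off as an identity of operators yields $\FAbl{(A \circ \gamma)}(x) = A_{\ast}(\FAbl{\gamma}(x))$, that is, $\FAbl{(A \circ \gamma)} = A_{\ast} \circ \FAbl{\gamma}$. As $A_{\ast}$ is continuous and $\FAbl{\gamma} : \UF \to \Lin{\SX}{\SY}$ is continuous (because $\gamma$ is Fr\'echet differentiable, \refer{def:Frechet_Diffbarkeit}), the composite $\FAbl{(A \circ \gamma)}$ is continuous, so $A \circ \gamma \in \FC{\UF}{\SZ}{1}$. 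This identity $\FAbl{(A \circ \gamma)} = A_{\ast} \circ \FAbl{\gamma}$ is the engine of the induction.

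For the step $k \to k+1$, assume the claim for $k$ and let $\gamma \in \FC{\UF}{\SY}{k+1}$; by definition $\gamma$ is Fr\'echet differentiable with $\FAbl{\gamma} \in \FC{\UF}{\Lin{\SX}{\SY}}{k}$. The base case gives $A \circ \gamma \in \FC{\UF}{\SZ}{1}$ with $\FAbl{(A \circ \gamma)} = A_{\ast} \circ \FAbl{\gamma}$, and then I would apply the inductive hypothesis \emph{to the continuous linear map $A_{\ast}$} and to $\FAbl{\gamma}$, obtaining $A_{\ast} \circ \FAbl{\gamma} \in \FC{\UF}{\Lin{\SX}{\SZ}}{k}$, i.e.\ $\FAbl{(A \circ \gamma)} \in \FC{\UF}{\Lin{\SX}{\SZ}}{k}$. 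By the definition of $\FC{}{}{k+1}$ this is precisely $A \circ \gamma \in \FC{\UF}{\SZ}{k+1}$. The only point demanding care — and the nearest thing to an obstacle — is that the induction is not applied to $A$ but to the induced operator $A_{\ast}$ on spaces of continuous linear maps, so one must confirm that $A_{\ast}$ is again a continuous linear map between the appropriate normed spaces, which is exactly the boundedness estimate recorded at the outset. Everything else is routine bookkeeping, and the proof is a clean bootstrap off \refer{lem:H"ohere_Ableitungen_kovariante_komposition_linearer_Abb}.
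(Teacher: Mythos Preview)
Your proof is correct and follows essentially the same inductive scheme as the paper: establish $\FAbl{(A \circ \gamma)} = A_\ast \circ \FAbl{\gamma}$ in the base case and then apply the inductive hypothesis to the push-forward $A_\ast$. If anything, you are more careful than the paper, which writes the key identity as ``$\FAbl{(A \circ \gamma)} = A \circ \FAbl{\gamma}$'' and applies the induction hypothesis without explicitly noting that the linear map has changed from $A$ to $A_\ast$.
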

\begin{proof}
	We prove this by induction over $k$.
	The assertion is obviously true for $k = 0$.
	If $k = 1$, then $A \circ \gamma$ is $\ConDiff{}{}{1}$ by \refer{prop:Kettenregel_Helge} with
	\[
		\dA{(A \circ \gamma)}{x}{\cdot}
		= \dA{A}{\gamma(x)}{\cdot} \MaMu \dA{\gamma}{x}{\cdot}
		= A \circ \dA{\gamma}{x}{\cdot}.
	\]
	Since the composition of linear maps is continuous, we conclude that $A \circ \gamma$ is $\FC{}{}{1}$
	with $\FAbl{(A \circ \gamma)} = A \circ \FAbl{\gamma}$.

	$k \to k + 1$:
	The map $\FAbl{\gamma}$ is $\FC{}{}{k}$, hence by the induction hypothesis, so is
	$A \circ \FAbl{\gamma} = \FAbl{(A \circ \gamma)}$.
	Hence $A \circ \gamma$ is $\FC{}{}{k + 1}$, which finishes the induction.
\end{proof}

\begin{lem}\label{lem:Zusammensetzen_Frechet-diffbarer_Abb-Produkt}
	Let $k \in \cl{\N}$, $\eta \in \FC{\UF}{\SY}{k}$ and $\gamma \in \FC{\UF}{\SZ}{k}$.
	Then the map
	\[
		(\gamma, \eta) : \UF \to \SY \times \SZ : x \mapsto (\gamma(x), \eta(x))
	\]
	is contained in $\FC{\UF}{\SY \times \SZ}{k}$.
\end{lem}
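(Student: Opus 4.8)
The plan is to argue by induction on $k$, establishing the claim for all $k\in\N$ first and then deducing the case $k=\infty$ from the identity $\FC{\UF}{\SY\times\SZ}{\infty}=\bigcap_{k\in\N^\ast}\FC{\UF}{\SY\times\SZ}{k}$. The two workhorses will be the isomorphism of topological vector spaces
$\mathfrak{i}:\Lin{\SX}{\SY\times\SZ}\to\Lin{\SX}{\SY}\times\Lin{\SX}{\SZ}$, $\Xi\mapsto(\pi_{\SY}\circ\Xi,\pi_{\SZ}\circ\Xi)$, furnished by \refer{lem:Produkte_und_Lin^k} with $k=1$, together with the fact that post-composition with a continuous linear map preserves $\FC{}{}{k}$ (\refer{lem:Kettenregel_linearer_Abb}). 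For the base case $k=0$ one has $\FC{\UF}{\SY\times\SZ}{0}=\ConDiff{\UF}{\SY\times\SZ}{0}$, and $(\gamma,\eta)$ is continuous precisely because a map into a product is continuous iff its components are, which is the $k=0$ instance of \refer{prop:Differenzierbarkeit_Abb_in_projektiven_Limes}~(a).

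For the inductive step, assume the statement for some $k\in\N$ (for all normed spaces in the two slots) and let $\gamma,\eta\in\FC{}{}{k+1}$. First I would note that $\gamma,\eta$ are in particular $\ConDiff{}{}{1}$, so by \refer{prop:Differenzierbarkeit_Abb_in_projektiven_Limes}~(a) the map $(\gamma,\eta)$ is $\ConDiff{}{}{1}$ with $\dA{(\gamma,\eta)}{x}{h}=(\dA{\gamma}{x}{h},\dA{\eta}{x}{h})$; hence $\FAbl{(\gamma,\eta)}(x)=\dA{(\gamma,\eta)}{x}{\cdot}\in\Lin{\SX}{\SY\times\SZ}$ exists, and a direct computation of $\pi_{\SY}\circ\FAbl{(\gamma,\eta)}(x)$ and $\pi_{\SZ}\circ\FAbl{(\gamma,\eta)}(x)$ yields the intertwining relation $\mathfrak{i}\circ\FAbl{(\gamma,\eta)}=(\FAbl{\gamma},\FAbl{\eta})$, equivalently $\FAbl{(\gamma,\eta)}=\mathfrak{i}^{-1}\circ(\FAbl{\gamma},\FAbl{\eta})$. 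Since $\gamma,\eta$ are $\FC{}{}{1}$, both $\FAbl{\gamma}$ and $\FAbl{\eta}$ are continuous, so $(\FAbl{\gamma},\FAbl{\eta})$ is continuous and therefore so is $\FAbl{(\gamma,\eta)}$; this already shows $(\gamma,\eta)\in\FC{\UF}{\SY\times\SZ}{1}$.

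It then remains to upgrade this to $\FC{}{}{k+1}$, i.e. to show that $\FAbl{(\gamma,\eta)}$ is a $\FC{}{}{k}$-map. Because $\gamma,\eta\in\FC{}{}{k+1}$, their derivative operators give $\FAbl{\gamma}\in\FC{\UF}{\Lin{\SX}{\SY}}{k}$ and $\FAbl{\eta}\in\FC{\UF}{\Lin{\SX}{\SZ}}{k}$, so the inductive hypothesis (applied with $\Lin{\SX}{\SY}$ and $\Lin{\SX}{\SZ}$ in the two slots) yields $(\FAbl{\gamma},\FAbl{\eta})\in\FC{\UF}{\Lin{\SX}{\SY}\times\Lin{\SX}{\SZ}}{k}$. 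As $\mathfrak{i}^{-1}$ is continuous linear, \refer{lem:Kettenregel_linearer_Abb} gives $\FAbl{(\gamma,\eta)}=\mathfrak{i}^{-1}\circ(\FAbl{\gamma},\FAbl{\eta})\in\FC{}{}{k}$, completing the induction. The only mildly delicate point is the bookkeeping around $\mathfrak{i}$ — verifying that it intertwines $\FAbl{(\gamma,\eta)}$ with $(\FAbl{\gamma},\FAbl{\eta})$ and that it is a genuine isomorphism (hence both $\mathfrak{i}$ and $\mathfrak{i}^{-1}$ are admissible in \refer{lem:Kettenregel_linearer_Abb}); everything else is a formal application of the cited results.
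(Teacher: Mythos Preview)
Your proof is correct and follows essentially the same approach as the paper: induction on $k$, using the isomorphism $\mathfrak{i}$ from \refer{lem:Produkte_und_Lin^k} to identify $\FAbl{(\gamma,\eta)}$ with $\mathfrak{i}^{-1}\circ(\FAbl{\gamma},\FAbl{\eta})$, and then \refer{lem:Kettenregel_linearer_Abb} to propagate the $\FC{}{}{k}$ property through the inductive step. The paper's proof is simply terser, saying only that for $k>1$ ``the assertion is proved with an easy induction using \refer{lem:Kettenregel_linearer_Abb}'', whereas you have spelled out exactly that induction.
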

\begin{proof}
	For $k = 0$ the assertion is obviously true.
	If $k = 1$, we easily calculate that $(\gamma, \eta)$ is $\ConDiff{}{}{1}$ with
	\[
		\dA{(\gamma, \eta)}{x}{h} = (\dA{\gamma}{x}{h}, \dA{\eta}{x}{h}).
	\]
	Hence
	\[
		\dA{(\gamma, \eta)}{x}{\cdot} = \mathfrak{i}^{-1}(\dA{\gamma}{x}{\cdot}, \dA{\eta}{x}{\cdot}),
	\]
	where $\mathfrak{i}$ denotes the isomorphism \eqref{map:Isomorphie_multilineare_Abb_Produkt}
	from \refer{lem:Produkte_und_Lin^k}. We conclude that $(\gamma, \eta)$ is $\FC{}{}{1}$.
	
	For $k > 1$, the assertion is proved with an easy induction using \refer{lem:Kettenregel_linearer_Abb}.
\end{proof}

\begin{prop}[Chain Rule]\label{prop:Kettenregel_Frechet}
	Let $k \in \cl{\N}$, $\eta \in \FC{\UF}{\SY}{k}$ and
	$\gamma \in \FC{\VF}{\SZ}{k}$ such that $\eta(\UF)\subseteq\VF$.
	Then
	$\gamma\circ \eta \in \FC{\UF}{\SZ}{k}$ and
	\begin{equation*}\label{id:Differential_komponierter_Abb}
		\tag{\ensuremath{\ast}}
		\FAbl{(\gamma\circ \eta)}(u) = ( \FAbl{\gamma} \circ \eta)(u) \MaMu  \FAbl{\eta}(u)
	\end{equation*}
	for all $u \in \UF$.
\end{prop}
\begin{proof}
	The proof is by induction on $k$:\\
	$k=1:$
	We apply the chain rule for $\ConDiff{}{}{1}$-maps (\refer{prop:Kettenregel_Helge})
	to see that $\gamma \circ \eta$ is $\ConDiff{}{}{1}$,
	and for $(u,x)\in \UF\times\SX$ we have
	\[
		\dA{(\gamma\circ \eta)}{u}{x}
		= \dA{\gamma}{\eta(u)}{\dA{\eta}{u}{x}}.
	\]
	From this identity we conclude
	that \eqref{id:Differential_komponierter_Abb} holds.
	Finally we obtain the continuity of $\FAbl{(\gamma\circ \eta)}$
	from the one of $\MaMu$, $\FAbl{\gamma}$, $\FAbl{\eta}$ and $\eta$.
	
	$k\to k+1:$
	By the inductive hypothesis, the maps $\FAbl{\gamma}$ and $\FAbl{\eta}$ are $\FC{}{}{k}$.
	We already proved in the case $k = 1$ that \eqref{id:Differential_komponierter_Abb} holds.
	By the inductive hypothesis, $\FAbl{\gamma} \circ \eta \in \FC{}{}{k}$.
	Since $\MaMu$ is smooth (see \refer{beisp:Frechet_diffbare_Abb}),
	we conclude using \refer{lem:Zusammensetzen_Frechet-diffbarer_Abb-Produkt}
	and the inductive hypothesis that $\FAbl{(\gamma\circ \eta)}$ is $\FC{}{}{k}$.
	Hence $\gamma\circ \eta$ is $\FC{}{}{k + 1}$.
\end{proof}

\begin{prop}[Mean Value Theorem]\label{prop:MWS_FC1_Abb}
	Let $f \in \FC{\UF}{\SY}{1}$. Then
	\[
		f(v) - f(u) = \Rint{0}{1}{\FAbl{f}(u + t (v-u)) \eval (v-u)}{t}
	\]
	for all $v, u \in \UF$ such that the line segment
	$\{ t u + (1-t) v : t\in [0,1]\}$ is contained in $\UF$.
	In particular
	\[
		\norm{f(v) - f(u)}
		\leq \sup_{t\in[0,1]} \Opnorm{\FAbl{f}(u + t (v-u))} \norm{v - u}.
	\]
\end{prop}
\begin{proof}
	The identity is a reformulation of \refer{prop:MWS_C1_Abb},
	hence the estimate is a direct consequence of
	\refer{lem:Integral_stetiges_Funktional}.
\end{proof}
The isomorphisms provided by \refer{lem:Identifizierung_seltsamer_Raeume}
can be used to characterize Fr\'echet differentiability of higher order.
\begin{bem}\label{bem:Notation_Frechet_Identifizierung}
	We define inductively
	\[
		L^{0}_{\SX,\SY} \ndef \SY
		\text{ and }
		L^{k + 1}_{\SX,\SY} \ndef \Lin{\SX}{L^{k}_{\SX,\SY}}.
	\]
\end{bem}

\begin{defi}[Higher derivatives]\label{def:Frechet_hohe_Abl}
	Let $n\in\N$. For each $k\in\N$ with $k\leq n$
	we define a linear map
	\[
		\FAbl[k]{} : \FC{\UF}{\SY}{n}
		\to \FC{\UF}{\Lin[k]{\SX}{\SY}}{n-k}
	\]
	by $\FAbl[0]{} \ndef \id{\FC{\UF}{\SY}{n}}$ for $k=0$,
	$\FAbl[1]{}\ndef \FAbl{}$ for $k = 1$ and for $1 < k \leq n$ by
	\[
		\FAbl[k]{} \gamma \ndef
		\EmbA[\SY]{k-1}{1} \circ \dotsb \circ
		\EmbA[L^{k-3}_{\SX,\SY}]{2}{1} \circ \EmbA[L^{k-2}_{\SX,\SY}]{1}{1}
		\circ (\underbrace{\FAbl{} \circ \dotsb \circ \FAbl{}}_{k \text{ times}}) (\gamma).
	\]
	Here we used the notations introduced in
	\refer{bem:Notation_Frechet_Identifizierung}.
	Note that the image of $\FAbl[k]{}$ is contained in
	$\FC{\UF}{\Lin[k]{\SX}{\SY}}{n-k}$
	because the maps
	$\EmbA[\SY]{k-1}{1}$, \ldots, $\EmbA[L^{k-3}_{\SX,\SY}]{2}{1}$, $\EmbA[L^{k-2}_{\SX,\SY}]{1}{1}$
	are continuous linear maps and hence smooth
	(see \refer{beisp:Frechet_diffbare_Abb});
	so the chain rule (\refer{prop:Kettenregel_Frechet}) gives the result.
	\\
	We call $\FAbl[k]{}$ the \emph{$k$-th derivative operator}.
\end{defi}
The $(k + 1)$-st derivative of a map $\gamma$ is closely related to
the $k$-th derivative of $\FAbl{\gamma}$:
\begin{lem}\label{lem:Ableitungen_von_D}
	Let $n\in \cl{\N}^\ast$, $\gamma \in \FC{\UF}{\SY}{n}$ and
	$k \in \N$ with $k < n$. Then
	\[
		\FAbl[k + 1]{\gamma}
		= \EmbA[\SY]{k}{1}\circ (\FAbl[k]{ (\FAbl{\gamma}) }) .
	\]
\end{lem}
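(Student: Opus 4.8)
The final statement is in essence a bookkeeping identity relating the two equivalent ways of iterating the Fr\'echet derivative, so the plan is to unwind \refer{def:Frechet_hohe_Abl} on both sides and match the resulting towers of identification maps $\EmbA{}{}$. First I would record that since $\gamma\in\FC{\UF}{\SY}{n}$ with $n\geq 1$, its derivative $\delta\ndef\FAbl{\gamma}$ lies in $\FC{\UF}{\Lin{\SX}{\SY}}{n-1}$, and because $k<n$ forces $k\leq n-1$, the operator $\FAbl[k]{}$ is indeed defined on this space; hence the right-hand side $\EmbA[\SY]{k}{1}\circ\FAbl[k]{(\FAbl{\gamma})}$ is meaningful, the composition being postcomposition with the fixed continuous linear isomorphism $\EmbA[\SY]{k}{1}\colon\Lin[k]{\SX}{\Lin{\SX}{\SY}}\to\Lin[k+1]{\SX}{\SY}$ of \refer{lem:Identifizierung_seltsamer_Raeume}.

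The crux is that the tower of spaces governing \refer{def:Frechet_hohe_Abl} shifts by one index when the target is changed from $\SY$ to $\Lin{\SX}{\SY}=L^1_{\SX,\SY}$. From the inductive definition in \refer{bem:Notation_Frechet_Identifizierung} one gets the literal equality $L^{m}_{\SX,\Lin{\SX}{\SY}}=L^{m+1}_{\SX,\SY}$ for every $m\in\N$. Instantiating \refer{def:Frechet_hohe_Abl} with target $\Lin{\SX}{\SY}$ and applying this shift therefore yields
\[
	\FAbl[k]{}\delta
	= \EmbA[L^1_{\SX,\SY}]{k-1}{1}\circ\dotsb\circ\EmbA[L^{k-1}_{\SX,\SY}]{1}{1}\circ(\underbrace{\FAbl{}\circ\dotsb\circ\FAbl{}}_{k})(\delta),
\]
that is, the $j$-th factor (counting from the right, $j=1,\dotsc,k-1$) is $\EmbA[L^{k-j}_{\SX,\SY}]{j}{1}$.

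It then remains to prepend $\EmbA[\SY]{k}{1}=\EmbA[L^0_{\SX,\SY}]{k}{1}$ and to use $(\FAbl{})^{k}(\FAbl{\gamma})=(\FAbl{})^{k+1}(\gamma)$. The resulting composite, applied to $(\FAbl{})^{k+1}(\gamma)$, is the chain $\EmbA[L^0_{\SX,\SY}]{k}{1}\circ\EmbA[L^1_{\SX,\SY}]{k-1}{1}\circ\dotsb\circ\EmbA[L^{k-1}_{\SX,\SY}]{1}{1}$, which is exactly the expansion of $\FAbl[k+1]{}\gamma$ read directly from \refer{def:Frechet_hohe_Abl} (whose $j$-th factor, $j=1,\dotsc,k$, is $\EmbA[L^{k-j}_{\SX,\SY}]{j}{1}$). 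Hence the two operators coincide verbatim, and no analytic input beyond the definitions is required.

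The degenerate case $k=0$ I would dispatch separately: there the claim reads $\FAbl{\gamma}=\EmbA[\SY]{0}{1}(\FAbl{\gamma})$, and since \refer{lem:Identifizierung_seltsamer_Raeume} only provides $\EmbA{k}{n}$ for $k,n\geq1$, one reads $\EmbA[\SY]{0}{1}$ as the canonical identification $\Lin[0]{\SX}{\Lin{\SX}{\SY}}=\Lin{\SX}{\SY}$. The main obstacle is purely notational, namely tracking the index shift $L^m_{\SX,\Lin{\SX}{\SY}}=L^{m+1}_{\SX,\SY}$ carefully enough to confirm that the embedding tower produced by $\FAbl[k]{}$ on the shifted target, once augmented by the outermost $\EmbA[\SY]{k}{1}$, reproduces precisely the tower defining $\FAbl[k+1]{}$.
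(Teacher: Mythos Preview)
Your proposal is correct and is exactly what the paper does: its proof reads in full ``This follows directly from the definition of $\FAbl[k+1]{\gamma}$,'' and you have simply written out the index-shift $L^{m}_{\SX,\Lin{\SX}{\SY}}=L^{m+1}_{\SX,\SY}$ that makes this immediate. Your handling of the boundary case $k=0$ (reading $\EmbA[\SY]{0}{1}$ as the identity on $\Lin{\SX}{\SY}$) is the natural convention and matches how the lemma is used elsewhere in the paper.
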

\begin{proof}
	This follows directly from the definition of $\FAbl[k + 1]{\gamma}$.
\end{proof}
\subsection{The Lipschitz inverse function theorem}
\label{susec:InverseFuncThm-Lipschitz}
We discuss an inverse function theorem for functions of the sort $T + \eta$,
where $T$ is a linear Operator and $\eta$ is a \enquote{small} perturbation map.
We derive an estimate for the Lipschitz constant of $(T + \eta)^{-1}$,
and consequently another for the size of the image of $T + \eta$.
Further we discuss families of such functions to derive a parametrized inverse function theorem.
The next lemma discusses a special case.
The main tool for proving it is a parameterized version of the Banach fixed point theorem
which can be found in \cite[Appendix C]{MR586942}
\begin{lem}\label{lem:Lipschitz_inv_auf_Kugeln}
	Let $\SX$ be a normed space, $T \in \Lin{\SX}{\SX}$ a linear homeomorphism,
	$D \sub \SX$ a nonempty set, and $\eta : D \to \SX$
	Lipschitz with a constant $L$ such that $L \Opnorm{T^{-1}} < 1$.
	\begin{enumerate}
		\item\label{enum1:Lip_Stoerung_injective}
		The map $T + \eta$ is injective.
	\end{enumerate}	
	Suppose that $D = \clBall{0}{r}$ with $r > 0$ and $\eta(0) = 0$.
	Further, we set $r' \ndef r(\frac{1 - L \Opnorm{T^{-1}}}{\Opnorm{T^{-1}}})$.
	\begin{enumerate}[resume]
		\item\label{enum1:LipStoerung_Kontraktionen}
		The map
		\[
			H : \clBall{0}{r} \times \clBall{0}{r'} \to \clBall{0}{r}
			: (x, y) \mapsto  T^{-1} (y - \eta(x))
		\]
		is defined and a contraction in the first argument. For $y \in \clBall{0}{r'}$
		and $x \in D$, we have
		\[
			y = (T + \eta)(x)
			\iff
			x = H(x, y).
		\]
	\end{enumerate}
	Suppose that $\SX$ is a Banach space.
	\begin{enumerate}[resume]
		\item\label{enum1:Lip_Inv_Kugel_hard_work}
		Then $\image{(T + \eta)} = \clBall{0}{r'}$,
		and $(T + \eta)^{-1}$ is Lipschitz with constant $\frac{\Opnorm{T^{-1}}}{1 - L \Opnorm{T^{-1}}}$.
		In particular, $\Ball{0}{r'} \sub (T + \eta)(\Ball{0}{r})$.
		
		\item\label{enum1:faserweise_Invertierung_T+Lip_FCk}
		Additionally, let $\SY$ be a normed space, $\UF \sub \SY$ an open nonempty set and $k \in \cl{\N}$.
		Further, let $\Xi \in \FC{\UF \times D}{\SX}{k}$ such that $\Xi(\UF \times \sset{0}) = \sset{0}$
		and for each $p \in \UF$, the map $\Xi_p \ndef \Xi(p, \cdot) : D \to \SX$ is $L$-Lipschitz.
		Then
		\[
			\tag{\ensuremath{\dagger}}
			\label{map:faserweise_Invertierung_T+Lip}
			\UF \times \Ball{0}{r'} \to \Ball{0}{r} : (p, y) \mapsto (T + \Xi_p)^{-1}(y)
		\]
		is $\FC{}{}{k}$.
	\end{enumerate}
\end{lem}
\begin{proof}
	\refer{enum1:Lip_Stoerung_injective}
	Let $x, y \in D$ such that $(T + \eta)(x)  = (T + \eta)(y)$. Then
	\begin{equation*}
		\norm{x - y} = \norm{T^{-1}(\eta(y) - \eta(x))} \leq \Opnorm{T^{-1}} L \norm{y - x}.
	\end{equation*}
	Since $\Opnorm{T^{-1}} L < 1$, we conclude that $\norm{x - y} \leq 0$, and hence $x = y$.

	\refer{enum1:LipStoerung_Kontraktionen}
	Let $x \in \clBall{0}{r}$ and $y \in \clBall{0}{r'}$. We calculate
	\begin{multline*}
		\norm{T^{-1} (y - \eta(x))} \leq \Opnorm{T^{-1}} \norm{y - \eta(x)} \leq \Opnorm{T^{-1}} (\norm{y} + \norm{\eta(x) - \eta(0)})
		\\
		\leq \Opnorm{T^{-1}} (\norm{y} + L \norm{x})
		\leq r \Opnorm{T^{-1}} (\tfrac{1 - L \Opnorm{T^{-1}}}{\Opnorm{T^{-1}}} + L)
		= r.
	\end{multline*}
	Thus $H$ is defined. We show that $H$ is a contraction in the first argument. To this end, let 
	$x, z \in \clBall{0}{r}$ and $y \in \clBall{0}{r'}$. Then
	\[
		\norm{H(x, y) - H(z, y)} = \norm{T^{-1}(\eta(z) - \eta(x) } \leq \Opnorm{T^{-1}} L \norm{z - x}.
	\]
	Hence the map $H(\cdot, y) : \clBall{0}{r} \to \clBall{0}{r}$ is a contraction.
	The stated characterization is proved by an easy calculation.
	
	\refer{enum1:Lip_Inv_Kugel_hard_work}
	Since $\clBall{0}{r}$ is complete, by the Banach Fixed Point Theorem $H(\cdot, y)$ has a fixed point $g(y)$.
	We can use \cite[Theorem (C.7), p.~241-242]{MR586942} to see that $g$ is continuous, and moreover,
	we see that $g$ is Lipschitz with a constant not greater then $\frac{\Opnorm{T^{-1}}}{1 - L \Opnorm{T^{-1}}}$
	since for $y, z \in \clBall{0}{r'}$ and $x \in \clBall{0}{r}$,
	\[
		\norm{H(x, y) - H(x, z)} = \norm{T^{-1}(y - z)} \leq \Opnorm{T^{-1}} \norm{y - z}.
	\]
	(Notice that the Lipschitz constant of the fixed point map is implicitly calculated in the proof of
	\cite[Theorem (C.7)]{MR586942}).
	Furthermore, we calculate for $y \in \clBall{0}{r'}$ that
	\[
		g(y) = H(g(y), y) = T^{-1}(y - \eta(g(y))),
	\]
	and hence $y = (T + \eta)(g(y))$. This shows that $\rest{(T + \eta)}{\clBall{0}{r}}^{-1} = g$
	since we proved that $T + \eta$ is injective.
	To prove the last assertion, let $y \in \Ball{0}{r'}$. Then
	\[
		\norm{g(y)} = \norm{g(y) - g(0)} \leq \frac{\Opnorm{T^{-1}}}{1 - L \Opnorm{T^{-1}}} r' < r.
	\]
	Hence $\Ball{0}{r'} \sub (T + \eta)(\Ball{0}{r})$.
	
	\refer{enum1:faserweise_Invertierung_T+Lip_FCk}
	The map \eqref{map:faserweise_Invertierung_T+Lip} is defined by \refer{enum1:Lip_Inv_Kugel_hard_work},
	and we see with \refer{enum1:LipStoerung_Kontraktionen} that it arises as the restriction of the fixed point map for
	\[
		\widetilde{H} : \clBall{0}{r} \times \clBall{0}{r'} \times \UF \to \clBall{0}{r}
		: (x, y, p) \mapsto  T^{-1} (y - \Xi(p, x)).
	\]
	Hence we derive the assertion from \cite[Theorem (C.7)]{MR586942}.
\end{proof}
We use this lemma to prove two theorems on inverse functions that are better suited for citation.
\begin{satz}[Parameterized Lipschitz inverse function theorem]\label{satz:parameterized_Lipschitz_inverse-function_theorem}
	Let $\SX$ be a Banach space, $\SY$ a normed space, $T \in \Lin{\SX}{\SX}$ invertible,
	$\UF \sub \SX$ and $\VF \sub \SY$ open nonempty sets, $k \in \cl{\N}$
	and $\Xi \in \FC{ \VF \times \UF }{\SX}{k}$ such that for each $p \in \VF$,
	the map $\Xi_p \ndef \Xi(p, \cdot) : \UF \to \SX$ is $L$-Lipschitz, where $L \Opnorm{T^{-1}} < 1$.
	Then for each $p \in \VF$, $T + \Xi_p$ is a homeomorphism on its image, which is an open subset of $\SX$.
	More precisely, for each $x \in \UF$ and $r > 0$ such that $\Ball{x}{r} \sub \UF$,
	we have that $\Ball{(T + \Xi_p)(x)}{r'} \sub (T + \Xi_p)(\Ball{x}{r})$, where $r' \ndef r(\frac{1 - L \Opnorm{T^{-1}}}{\Opnorm{T^{-1}}})$.
	Further, $\rest{(T + \Xi_p)^{-1}}{\Ball{(T + \Xi_p)(x)}{r'}}$ is Lipschitz
	with constant $\frac{\Opnorm{T^{-1}}}{1 - L \Opnorm{T^{-1}}}$, and the map
	\[
		\bigcup_{p \in \VF} \sset{p} \times \Ball{(T + \Xi_p)(x)}{r'} \to \Ball{x}{r}
		: (p, y) \mapsto (T + \Xi_p)^{-1}(y)
	\]
	is $\FC{}{}{k}$.
\end{satz}
\begin{proof}
	By \refer{lem:Lipschitz_inv_auf_Kugeln}, for each $p \in \VF$ the map $T + \Xi_p$ is injective.
	To prove the other assertions,
	let $x \in \UF$ and $r > 0$ such that $\Ball{x}{r} \sub \UF$.
	Since each $\Xi_p$ is uniformly continuous, it can be extended to $\clBall{0}{r}$;
	and the extension also is $L$-Lipschitz.
	Then we can apply \refer{lem:Lipschitz_inv_auf_Kugeln} to $T$ and the map
	\[
		\Xi_p^x : \clBall{0}{r}\to \SX
		: y \mapsto \Xi_p(x + y) - \Xi_p(x) = (\tau_{- \Xi_p(x)} \circ \eta \circ \tau_x )(y)  .
	\]
	We derive that $T + \Xi_p^x$ is a homeomorphism, $\Ball{0}{r'} \sub (T + \Xi_p^x)(\Ball{0}{r})$,
	and its inverse map is Lipschitz with constant $\frac{\Opnorm{T^{-1}}}{1 - L \Opnorm{T^{-1}}}$.
	Thus using the identity
	\[
		(T + \Xi_p^x)^{-1} = \tau_{-x} \circ (T + \Xi_p)^{-1} \circ \tau_{(\Xi_p + T)(x)},
	\]
	we derive all assertions.
\end{proof}

\begin{cor}[Lipschitz inverse function theorem]\label{cor:Lipschitz_inverse-function_theorem}
	Let $\SX$ be a Banach space, $T \in \Lin{\SX}{\SX}$ invertible,
	$\UF \sub \SX$ an open nonempty set, and $\eta : \UF \to \SX$
	Lipschitz with constant $L$ such that $L \Opnorm{T^{-1}} < 1$.
	Then $T + \eta$ is a homeomorphism on its image, which is an open subset of $\SX$.
	If $\eta$ is $\FC{}{}{k}$, so is $(T + \eta)^{-1}$.
	More precisely, for each $x \in \UF$ and $r > 0$ such that $\Ball{x}{r} \sub \UF$,
	we have that $\Ball{(T + \eta)(x)}{r'} \sub (T + \eta)(\Ball{x}{r})$, where $r' \ndef r(\frac{1 - L \Opnorm{T^{-1}}}{\Opnorm{T^{-1}}})$.
	Further, $\rest{(T + \eta)^{-1}}{\Ball{(T + \eta)(x)}{r'}}$ is Lipschitz
	with constant $\frac{\Opnorm{T^{-1}}}{1 - L \Opnorm{T^{-1}}}$.
\end{cor}
\begin{proof}
	The assertions follow immediately from \refer{satz:parameterized_Lipschitz_inverse-function_theorem}.
\end{proof}

\section{Relation between the differential calculi}
We show that the two calculi presented are closely related.
First we prove that each $\FC{}{}{k}$-map is a $\ConDiff{}{}{k}$-map
and that the higher differentials are in a close relation.
\begin{lem}\label{lem:Frechet_und_Helge}
	Let $k \in \N^\ast$ and $\gamma \in \FC{\UF}{\SY}{k}$.
	Then $\gamma$ is a $\ConDiff{}{}{k}$-map
	(in the sense of \refer{sec:Helge_diffbar}),
	and for each $x\in\UF$ we have
	\[
		D^{(k)}\gamma(x) = \dA[k]{\gamma}{x}{\cdot}.
	\]
\end{lem}
\begin{proof}
	We prove this by induction.
	\\
	$k=1$:
	It follows directly from \refer{def:Frechet_Diffbarkeit} that
	$\gamma$ is a $\ConDiff{}{}{1}$ map and that the identity
	\[
		D^{(1)}\gamma(x) = D\gamma(x)
		= \dA{\gamma}{x}{\cdot} = \dA[1]{\gamma}{x}{\cdot}
	\]
	holds.
	
	$k\to k+1$:
	Let $x\in \UF$ and $h_1,\ldots,h_{k+1}\in\SX$.
	We know from \refer{lem:Ableitungen_von_D} that
	\begin{align*}
		&(D^{(k+1)}\gamma) (x)(h_1,\dotsc,h_{k+1})\\
		=& (\EmbA{k}{1}\circ (D^{(k)} D\gamma))(x)(h_1,\dotsc,h_{k+1})\\
		=& (D^{(k)} D\gamma(x) (h_2,\dotsc,h_{k+1}))\eval h_{1}.\\
		\intertext{
		The inductive hypothesis gives
		}
		=& (\dA[k]{D\gamma}{x}{h_2,\dotsc,h_{k+1}})\eval h_{1}\\
		=& \left(\lim_{t\to 0}
			\frac{\dA[k-1]{(D\gamma)}{x + t h_{k+1}}{h_2,\dotsc, h_{k}}
			- \dA[k-1]{(D\gamma)}{x}{h_2,\dotsc, h_{k}} }{t}\right)
			\eval h_1 .\\
		\intertext{
		Another application of the inductive hypothesis, together with
		the continuity of the evaluation of linear maps
		(\refer{lem:Auswertung_linearer_Abb_ist_stetig})
		and \refer{lem:Ableitungen_von_D},
		gives		
		}
		=& \lim_{t\to 0}
			\frac{D^{(k-1)}(D\gamma)(x + t h_{k+1})(h_2,\dotsc, h_{k})
			\eval h_1
			- D^{(k-1)}(D\gamma)(x)(h_2,\dotsc, h_{k})\eval h_1 }{t}\\
		=& \lim_{t\to 0}
			\frac{ (\EmbA{k-1}{1} \circ D^{(k-1)}(D\gamma))
			(x + t h_{k+1})(h_1, \dotsc, h_{k})
			- (\EmbA{k-1}{1} \circ D^{(k-1)}(D\gamma))
			(x)(h_1, \dotsc, h_{k})}{t}\\
		=& \lim_{t\to 0}
			\frac{ D^{(k)}\gamma(x + t h_{k+1})(h_1,\dotsc, h_{k})
			- D^{(k)}\gamma(x)(h_1, \dotsc, h_{k})}{t}.\\
		\intertext{
		Another application of the inductive hypothesis finally gives
		}
		=& \lim_{t\to 0}
			\frac{ \dA[k]{\gamma}{x + t h_{k+1}}{h_1, \dotsc, h_{k}}
			- \dA[k]{\gamma}{x}{h_1, \dotsc, h_{k}}}{t} .
	\end{align*}
	Hence $\dA[k + 1]{\gamma}{}{}$ exists and satisfies the identity
	\[
		\dA[k + 1]{\gamma}{x}{h_1, \dotsc, h_{k+1}}
		= D^{(k+1)}\gamma (x)(h_1,\dotsc,h_{k+1}) .
	\]
	Since $D^{(k+1)}\gamma$ and the evalution of multilinear maps
	are continuous (see \refer{lem:Auswertung_linearer_Abb_ist_stetig}),
	$\dA[k + 1]{\gamma}{}{}$ is so.
	In \refer{prop:hohe_Ableitungen_d} we stated that this
	(and the inductive hypothesis) assure that
	$\gamma$ is a $\ConDiff{}{}{k + 1}$ map.
\end{proof}
The preceding can be used to give a characterization of Fr\'echet differentiable maps.
\begin{prop}\label{prop:Char_Frechet_Diff}
	Let $\gamma : \UF \to \SY$ be a continuous map.
	Then $\gamma \in \FC{\UF}{\SY}{k}$
	iff
	$\gamma$ is a $\ConDiff{}{}{k}$-map and the map
	\begin{equation*}\label{Hohe_Abl_stetig}
		\UF \to \Lin[\ell]{\SX}{\SY} : x \mapsto \dA[\ell]{\gamma}{x}{\cdot}
		\tag{\ensuremath{*_k}}
	\end{equation*}
	is continuous for each $\ell \in \N$ with $\ell \leq k$.
\end{prop}
\begin{proof}
	For $\gamma \in \FC{\UF}{\SY}{k}$ we stated in \refer{lem:Frechet_und_Helge}
	that $\gamma \in \ConDiff{\UF}{\SY}{k}$ and
	\[
		\dA[\ell]{\gamma}{x}{\cdot}
		= \FAbl[\ell]{\gamma} (x)
	\]
	for each $x \in \UF$ and $\ell \in \N$ with $\ell \leq k$.
	Since $\FAbl[\ell]{\gamma}$ is continuous by its
	definition (\ref{def:Frechet_hohe_Abl}),
	\eqref{Hohe_Abl_stetig} is satisfied.
	
	We have to prove the other direction. This is done by induction on $k$:
	\\
	$k = 1$:
	This follows directly from the definition of $\FC{\UF}{\SY}{1}$.
	\\
	$k \to k + 1$:
	We have to show that $\gamma \in \FC{\UF}{\SY}{k + 1}$,
	and this is clearly the case if $\FAbl{\gamma} \in \FC{\UF}{\Lin{\SX}{\SY}}{k}$.
	By the inductive hypothesis this is the case if
	$
	\FAbl{\gamma} \in \ConDiff{\UF}{\Lin{\SX}{\SY}}{k}
	$
	and it satisfies \eqref{Hohe_Abl_stetig}.
	Since $\gamma \in \FC{\UF}{\SY}{k}$ by the inductive hypothesis
	and hence
	$
	\FAbl{\gamma} \in \FC{\UF}{\Lin{\SX}{\SY}}{k - 1}
	$,
	we just have to show that $\FAbl{\gamma}$ is $\ConDiff{}{}{k}$ and
	\[
		\UF \to \Lin[k]{\SX}{ \Lin{\SX}{\SY} } : x \mapsto \dA[k]{(\FAbl{\gamma})}{x}{\cdot}
	\]
	is continuous. To this end, let $x \in \UF$,
	$h, v_1,\dotsc, v_{k-1}, v_k \in \SX$ and $t \in \K$ such that
	the line segment
	$\{x + s t v_k : s \in [0,1]\} \subseteq \UF$.
	We calculate using \refer{lem:Ableitungen_von_D}, the mean value theorem
	and two applications of \refer{lem:Frechet_und_Helge}:
	\begin{align*}
		&\left(\frac{\dA[k-1]{(\FAbl{\gamma})}{x + t v_k}{v_1,\dotsc, v_{k-1}}
			- \dA[k-1]{(\FAbl{\gamma})}{x}{v_1,\dotsc, v_{k-1}} }{t}\right)
			\eval h
		\\
		=& \frac{\dA[k]{\gamma}{x + t v_k}{h, v_1,\dotsc, v_{k-1}}
			- \dA[k]{\gamma}{x}{h, v_1,\dotsc, v_{k-1}}}{t}
		\\
		=& \Rint{0}{1}{
			\dA[k+1]{\gamma}{x + s t v_k}{h, v_1,\dotsc, v_{k-1}, v_k}}
		{s}.
	\end{align*}
	Since $x \mapsto \dA[k + 1]{\gamma}{x}{\cdot}$ is continuous by hypothesis,
	the left hand side of this identity converges for $t \to 0$
	with respect to the topology of uniform convergence on bounded sets
	to the linear map
	\begin{equation*}
		h \mapsto \dA[k+1]{\gamma}{x}{h, v_1,\dotsc, v_{k-1}, v_k} .
	\end{equation*}
	Hence $\FAbl{\gamma}$ is $\ConDiff{}{}{k}$ with
	\[
		\dA[k]{(\FAbl{\gamma})}{x}{v_1,\dotsc, v_{k-1}, v_k} 
		= \EmbA{k}{1}^{-1}(\dA[k+1]{\gamma}{x}{\cdot}) (v_1,\dotsc, v_{k-1}, v_k), 
	\]
	and since $x \mapsto \dA[k+1]{\gamma}{x}{\cdot}$ and $\EmbA{k}{1}^{-1}$
	are continuous (by hypothesis resp. \refer{lem:Identifizierung_seltsamer_Raeume}), 
	$x \mapsto \dA[k]{(\FAbl{\gamma})}{x}{\cdot}$ is so, too.
\end{proof}
We show that a $\ConDiff{}{}{k + 1}$ map is $\FC{\UF}{\SY}{k}$.
\begin{lem}\label{lem:Helge_diffbar-Frechet_diffbar}
	Let $f: \UF \to \SY$ be a $\ConDiff{}{}{k + 1}$ map.
	Then $f \in \FC{\UF}{\SY}{k}$.
\end{lem}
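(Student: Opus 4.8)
The plan is to deduce the statement directly from the characterization of Fréchet differentiability proved in \refer{prop:Char_Frechet_Diff}. Since $f$ is $\ConDiff{}{}{k+1}$, it is in particular continuous and a $\ConDiff{}{}{k}$-map, so the only remaining hypothesis of \refer{prop:Char_Frechet_Diff} to verify is that for each $\ell \in \N$ with $\ell \le k$ the map
\[
	\UF \to \Lin[\ell]{\SX}{\SY} : x \mapsto \dA[\ell]{f}{x}{\cdot}
\]
is continuous, where $\Lin[\ell]{\SX}{\SY}$ carries the operator topology. Granting this for every such $\ell$, \refer{prop:Char_Frechet_Diff} immediately yields $f \in \FC{\UF}{\SY}{k}$.

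To obtain this operator-norm continuity I would fix $\ell \le k$ and a point $x_0 \in \UF$ and invoke \refer{lem:stetig_diffbar_impliziert_lokal_Lipschitz} with $p$ taken to be the norm of $\SY$ (which lies in $\normsOn{\SY}$). This provides a continuous seminorm $q \in \normsOn{\SX}$ and a convex neighborhood $U_{x_0} \subseteq \UF$ of $x_0$ with
\[
	\norm{\dA[\ell]{f}{y}{h_1,\dotsc,h_\ell} - \dA[\ell]{f}{x}{h_1,\dotsc,h_\ell}}
	\leq \norm{y - x}_q \prod_{i=1}^\ell \norm{h_i}_q
\]
for all $x, y \in U_{x_0}$ and $h_1, \dotsc, h_\ell \in \SX$. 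Because $\SX$ is normed, $q$ is dominated by a multiple of the norm, say $q \le \lambda \norm{\cdot}$ with $\lambda > 0$; substituting this and taking the supremum over $\norm{h_i} \le 1$ converts the estimate into
\[
	\Opnorm{\dA[\ell]{f}{y}{\cdot} - \dA[\ell]{f}{x}{\cdot}}
	\leq \lambda^{\ell+1} \norm{y - x}.
\]
Thus $x \mapsto \dA[\ell]{f}{x}{\cdot}$ is even locally Lipschitz, hence continuous, into $\Lin[\ell]{\SX}{\SY}$. Carrying this out for every $\ell \le k$ supplies all the hypotheses of \refer{prop:Char_Frechet_Diff}.

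The one genuine point is the passage from the mere joint continuity of the iterated directional derivative $\dA[\ell]{f}{}{}$ to the operator-norm continuity of $x \mapsto \dA[\ell]{f}{x}{\cdot}$, and this is precisely where the extra order of differentiability (being $\ConDiff{}{}{k+1}$ rather than only $\ConDiff{}{}{k}$) is consumed; it is already packaged in \refer{lem:stetig_diffbar_impliziert_lokal_Lipschitz}. Were one to avoid citing that lemma, I would reprove the bound by applying the mean value theorem (\refer{prop:MWS_f"ur_C1_Abb}) to the $\ConDiff{}{}{1}$-map $u \mapsto \dA[\ell]{f}{u}{h_1,\dotsc,h_\ell}$ along the segment from $x$ to $y$, obtaining
\[
	\dA[\ell]{f}{y}{h_1,\dotsc,h_\ell} - \dA[\ell]{f}{x}{h_1,\dotsc,h_\ell}
	= \Rint{0}{1}{ \dA[\ell+1]{f}{x + t(y-x)}{h_1,\dotsc,h_\ell, y-x} }{t},
\]
estimating with \refer{lem:Integral_stetiges_Funktional}, and then controlling the integrand uniformly over the unit ball: joint continuity of $\dA[\ell+1]{f}{}{}$ at $(x_0, 0, \dotsc, 0)$ together with its multilinearity forces $\sup_{u \in U_{x_0}} \Opnorm{\dA[\ell+1]{f}{u}{\cdot}} < \infty$ on a small enough neighborhood, which gives the Lipschitz constant.

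Finally, the case $k = \infty$ reduces to the finite one: since $\ConDiff{}{}{\infty} = \bigcap_{m \in \N} \ConDiff{}{}{m+1}$, a $\ConDiff{}{}{\infty}$-map is $\ConDiff{}{}{m+1}$ for every $m$, hence $f \in \FC{\UF}{\SY}{m}$ for all $m \in \N$ by the finite case, and therefore $f \in \FC{\UF}{\SY}{\infty}$ by the definition of the latter as the intersection of the spaces $\FC{\UF}{\SY}{m}$.
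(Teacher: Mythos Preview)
Your proof is correct and follows essentially the same approach as the paper: reduce via \refer{prop:Char_Frechet_Diff} to showing operator-norm continuity of $x \mapsto \dA[\ell]{f}{x}{\cdot}$, and obtain that from \refer{lem:stetig_diffbar_impliziert_lokal_Lipschitz}. The paper's proof is a two-line citation of these same two results; you have spelled out the passage from the seminorm estimate to the operator-norm Lipschitz bound and treated the case $k = \infty$ explicitly, but the core argument is identical.
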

\begin{proof}
	We stated in \refer{prop:Char_Frechet_Diff} that $f$ is in
	$\FC{\UF}{\SY}{k}$ iff for each
	$\ell \in \N$ with $\ell \leq k$ the map
	\[
		\UF \to \Lin[\ell]{\SX}{\SY}
		: x \mapsto \dA[\ell]{f}{x}{\cdot}
	\] 
	is continuous; but this is a direct consequence of
	\refer{lem:stetig_diffbar_impliziert_lokal_Lipschitz}
	since it implies that \refer{est:Lipschitz_Stetigkeit_der_l-ten_Ableitung_bzgl_Halbnormen} holds.
\end{proof}
\paragraph{Differential calculus on finite-dimensional spaces}
We show that the three definitions of differentiability for maps that are defined
on a finite-dimensional space
(Fr\'echet-differentiability, Kellers $C_c^k$ theory and continuous partial differentiability)
are equivalent.

\begin{defi}
	Let $n, k \in \N^*$ and $\alpha \in \N_0^n$ a multiindex with
	$\abs{\alpha} = k$. We set
	\[
		I_\alpha \ndef \{(i_1,\dotsc,i_k) \in \{1,\dotsc,n\}^k : (\forall \ell \in \{1,\dotsc,n\})\:\alpha_\ell = \abs{\{j: i_j = \ell\}} \}
	\]
	and use this set to define the continuous $k$-linear map
	\[
		S_\alpha : (\K^n)^k \to \K :(h_1,\dotsc,h_k) \mapsto \sum_{(i_1,\dotsc,i_k)\in I_\alpha} h_{1,i_1}\dotsm h_{k,i_k} ,
	\]
	where $h_j = (h_{j,1}, \dotsc, h_{j,n})$ for $j = 1,\dotsc, k$.
\end{defi}

\begin{prop}\label{prop:Endlichdimensionale_Diffbarkeit}
	Let $\UF \subseteq \K^n$ be open and nonempty and $\gamma : \UF \to \SY$ a map.
	Then the following conditions are equivalent:
	\begin{enumerate}
		\item\label{enum1:gamma_in_FCk}
		$\gamma \in \FC{\UF}{\SY}{k}$
		
		\item\label{enum1:gamma_in_Ck}
		$\gamma \in \ConDiff{\UF}{\SY}{k}$
		
		\item\label{enum1:gamma_partiell_diffbar}
		$\gamma$ is $k$-times continuously partially differentiable.
	\end{enumerate}
	If one of these conditions is satisfied, then
	\begin{equation}\label{id:D^k_und_partielle_Abl}
		D^{(k)}\gamma(x)(h_1,\dotsc,h_k)
		= \sum_{\substack{\alpha \in \N^n_0\\\abs{\alpha}=k}}
		S_\alpha(h_1,\dotsc,h_k) \cdot \partial^\alpha \gamma(x)
	\end{equation}
	for all $x \in \UF$ and $h_{1},\dotsc,h_{k} \in \K^n$.
\end{prop}
\begin{proof}
	The assertion
	$\refer{enum1:gamma_in_FCk} \implies \refer{enum1:gamma_in_Ck}$
	is a consequence of \refer{lem:Frechet_und_Helge}; and since
	\[
		\frac{\partial^k\gamma}{\partial x_{i_1}\dotsb \partial x_{i_k}}(x)
		= \dA[k]{\gamma}{x}{e_{i_k},\dotsc,e_{i_1}}
	\]
	and $\dA[k]{\gamma}{}{}$ is continuous (\refer{prop:hohe_Ableitungen_d}),
	the implication
	$\refer{enum1:gamma_in_Ck} \implies \refer{enum1:gamma_partiell_diffbar}$
	also holds.
	
	It remains to show that 
	$\refer{enum1:gamma_partiell_diffbar} \implies \refer{enum1:gamma_in_FCk}$.
	It is well known from calculus that
	$
		D_h \gamma = \sum_{i=1}^n h_i \frac{\partial \gamma}{\partial x_i}.
	$
	Hence $\dA[\ell]{\gamma}{x}{h_1,\dotsc,h_\ell}$ exists and is given by
	\begin{align*}
		\dA[\ell]{\gamma}{x}{h_1,\dotsc,h_\ell}
		&= \sum_{i_1 = 1,\dotsc,i_\ell = 1}^n h_{1,i_1}\dotsm h_{\ell,i_\ell} \cdot \frac{\partial^k\gamma}{\partial x_{i_1}\dotsb \partial x_{i_\ell}}
		\\
		&= \sum_{\substack{\alpha \in \N^n_0\\\abs{\alpha}=\ell}}
		\Bigl(\sum_{(i_1,\dotsc,i_\ell)\in I_\alpha} h_{1,i_1}\dotsm h_{\ell,i_\ell}\Bigr) \cdot
		\partial^\alpha \gamma(x)
		\\
		&= \sum_{\substack{\alpha \in \N^n_0\\\abs{\alpha}=\ell}}
		S_\alpha(h_1,\dotsc,h_\ell) \cdot \partial^\alpha \gamma(x) .
	\end{align*}
	From this identity we derive the continuity of
	$x \mapsto \dA[\ell]{\gamma}{x}{\cdot}$, and can conclude
	using \refer{prop:Char_Frechet_Diff} that $\gamma \in \FC{\UF}{\SY}{k}$
	and \eqref{id:D^k_und_partielle_Abl} is satisfied.
\end{proof}
\section{Some facts concerning ordinary differential equations}
\label{sec:Fakten_ueber_DGLn}
We state some facts about the global solvabilty of initial value problems
and the dependence of solution on parameters.
\subsection{Maximal solutions of ODEs}
In the following, we let $J \subseteq \R$ be a nondegenerate interval and
$U$ an open subset of the Banach space $X$.
For a continuous function $f : J \times U \to X$, $x_{0} \in U$ and $t_{0} \in J$
we consider the initial value problem
\begin{align}\label{ivp:AWP}
	\begin{split}
		\gamma'(t) &= f(t, \gamma(t))\\
		\gamma(t_{0}) &= x_{0}.
	\end{split}
\end{align}
We state the famous theorem of Picard and Lindelöf:
\begin{satz}
	Let $f$ satisfy a local Lipschitz condition with respect to the second
	argument, that is, for each $(t_0, x_0) \in J \times U$ there exist
	a neighborhood $W$ of $(t_0, x_0)$ in $J\times U$ and an $K \in \R$ such that for all $(t, x), (t,\tilde{x}) \in W$
	\[
		\norm{f(t, x) -f(t,\tilde{x})} \leq K \norm{x - \tilde{x}}.
	\]
	Then, for each $(t_0, x_0) \in J \times U$ there exists a neighborhood $I$ of $t_0$ in $J$
	such that the initial value
	problem \eqref{ivp:AWP} corresponding to $t_0$ and $x_0$ has a unique solution
	that is defined on $I$.
\end{satz}
It is well-known that the local theorem of Picard and Lindelöf can be used to ensure that there exists
a maximal solution.
\begin{prop}
\label{prop:Existenz_maximale_Lsg_ODEs}
	Let $f$ satisfy a local Lipschitz condition with respect to the second
	argument and let $(t_0, x_0) \in J \times U$. Then there exists an interval
	$I \subseteq J$ and a function $\phi : I \to U$ that is a maximal
	solution to \eqref{ivp:AWP}; that is, if $\gamma : D(\gamma) \to U$
	is a solution to \eqref{ivp:AWP} defined an a connected set,
	$D(\gamma) \subseteq I$ and $\gamma = \rest{\phi}{D(\gamma)}$.
\end{prop}

\subsubsection{A criterion on global solvabilty}
\paragraph{Linearly bounded vector fields}
One class of ODEs that can be globally solved is that of linear vector fields.
This solvabilty property can be generalized to linearly bounded vector fields.
\begin{defi}
	We call $f$ \emph{linearly bounded} if there exist continuous functions
	$a,b : J \to \R$ such that
	\[
		\norm{f(t,x)} \leq a(t) \norm{x} + b(t)
	\]
	for all $(t,x) \in J \times U$.
\end{defi}
To prove that this condition on $f$ ensures globally defined solutions,
we first need to prove some lemmas.

\begin{lem}
\label{lem:linear_gebundenes_VF_impliziert_Beschraenkheit_von_Loesung_und_VF_auf_Loesung}
	Let $f$ be a linearly bounded map that satisfies a local Lipschitz condition
	with respect to the second argument.
	Let $\phi : I \to U$ be an integral curve of $f$.
	Then the following assertions hold:
	\begin{enumerate}
		\item\label{enum1:inear_gebundenes_VF_impliziert_Beschraenkheit-a}
		If $\phi$ is bounded, $\cl{I} \subseteq J$ and $\cl{I}$ is compact,
		then $f$ is bounded on the graph of $\phi$.

		\item\label{enum1:inear_gebundenes_VF_impliziert_Beschraenkheit-b}
		If $\beta \ndef \sup I \neq \sup J$,
		then $\phi$ is bounded on $[t_0, \beta[$ for each $t_0 \in J$.
		The analogous result for $\inf I$ also holds.
	\end{enumerate}
\end{lem}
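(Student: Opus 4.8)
The plan is to exploit the linear bound $\norm{f(t,x)} \leq a(t)\norm{x} + b(t)$ together with compactness of the relevant time intervals to control the coefficient functions $a$ and $b$, and then, for the second part, to feed this into the integral form of the differential equation. For \refer{enum1:inear_gebundenes_VF_impliziert_Beschraenkheit-a} I would argue directly: since $\phi$ is bounded, choose $M \in \R$ with $\norm{\phi(t)} \leq M$ for all $t \in I$. The functions $a$ and $b$ are continuous on $J$ and hence on the compact set $\cl{I} \subseteq J$, so $A \ndef \sup_{t \in \cl{I}} \abs{a(t)}$ and $B \ndef \sup_{t \in \cl{I}} \abs{b(t)}$ are finite. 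For a point $(t, \phi(t))$ on the graph of $\phi$ the linear bound then gives
\[
	\norm{f(t, \phi(t))} \leq a(t)\,\norm{\phi(t)} + b(t) \leq A M + B,
\]
which is the desired uniform bound.

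For \refer{enum1:inear_gebundenes_VF_impliziert_Beschraenkheit-b} I would fix $t_0 \in I$ with $t_0 < \beta$ (so that $[t_0, \beta[\, \subseteq I$, since $I$ is an interval with supremum $\beta$). The first key observation is that $\beta \in J$: indeed $\beta = \sup I \leq \sup J$, and the hypothesis $\beta \neq \sup J$ forces $\beta < \sup J$, so $\beta$ lies strictly between a point of $I \subseteq J$ and a point of $J$ above it, whence $\beta \in J$ because $J$ is an interval. Consequently $[t_0, \beta]$ is a compact subinterval of $J$, and as before $A \ndef \sup_{[t_0,\beta]} \abs{a}$ and $B \ndef \sup_{[t_0,\beta]} \abs{b}$ are finite. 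Since $\phi$ is an integral curve, it is a $\ConDiff{}{}{1}$-curve with $\phi' = f(\cdot, \phi)$, so \refer{prop:Hauptsatz} yields $\phi(t) = \phi(t_0) + \Rint{t_0}{t}{f(s, \phi(s))}{s}$ for $t \in [t_0, \beta[$; applying \refer{lem:Integral_stetiges_Funktional} together with the linear bound produces
\[
	\norm{\phi(t)} \leq \norm{\phi(t_0)} + B(\beta - t_0) + A \Rint{t_0}{t}{\norm{\phi(s)}}{s}.
\]

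The engine of the argument is then the integral form of Gr\"onwall's inequality: writing $c \ndef \norm{\phi(t_0)} + B(\beta - t_0)$, the estimate above gives $\norm{\phi(t)} \leq c\, e^{A(t - t_0)} \leq c\, e^{A(\beta - t_0)}$ for all $t \in [t_0, \beta[$, so $\phi$ is bounded there. The statement for $\inf I$ follows by the time-reversal $t \mapsto -t$, which turns the left endpoint into a right endpoint and preserves both continuity and the linear bound. I do not anticipate a genuine obstacle here; the only points requiring care are the preliminary observation $\beta \in J$ — so that the coefficient functions are controlled up to and including $\beta$ — and the invocation of Gr\"onwall's lemma, which, if not recorded elsewhere in the appendix, is established by a short iteration of the integral inequality.
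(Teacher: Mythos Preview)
Your proposal is correct and follows essentially the same route as the paper: both parts use the linear bound together with compactness of the relevant time interval to control $a$ and $b$, and part \refer{enum1:inear_gebundenes_VF_impliziert_Beschraenkheit-b} proceeds via the integral representation of $\phi$ and an application of Gr\"onwall's lemma. Your explicit verification that $\beta \in J$ is a nice touch the paper leaves implicit.
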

\begin{proof}
	\refer{enum1:inear_gebundenes_VF_impliziert_Beschraenkheit-a}
	Let $t \in I$. Then
	\[
		\norm{f(t,\phi(t))} \leq a(t) \norm{\phi(t)} + b(t)
	\]
	since $f$ is linearly bounded. Because $a$ and $b$ are continuous and defined
	on $\cl{I}$, they are clearly bounded on $I$.

	\refer{enum1:inear_gebundenes_VF_impliziert_Beschraenkheit-b}
	For each $t \in [t_0, \beta[$ we have
	\[
		\phi(t) = \phi(t_0) + \Rint{t_0}{t}{ f(s,\phi(s)) }{s},
	\]
	and from this we deduce using that $f$ is linearly bounded:
	\begin{align*}
		\norm{\phi(t)}
		&\leq \norm{\phi(t_0)} + \left\norm{\Rint{t_0}{t}{ f(s,\phi(s)) }{s}\right}
		\\
		&\leq \norm{\phi(t_0)} + \left\abs{\Rint{t_0}{t}{ a(s) \norm{\phi(s)} + b(s)}{s}\right}
		\\
		&\leq \norm{\rest{a}{[t_0,\beta]} }_{\infty} \left\abs{\Rint{t_0}{t}{ \norm{\phi(s)} }{s}\right}
		 + \norm{\phi(t_0)} + \norm{b}_{\infty, [t_0,\beta]} \abs{\beta - t_0}.
	\end{align*}
	The assertion is proved with an application of Groenwall's lemma.
\end{proof}

\begin{lem}
\label{lem:VF-global_Lipschitz_impliziert_linear_beschrankt}
	Assume that $f$ satisfies a global Lipschitz condition
	with respect to the second argument. Then $f$ is linearly bounded.
\end{lem}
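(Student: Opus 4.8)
The plan is to reduce the bound on $\norm{f(t,x)}$ to the Lipschitz estimate by comparing $f(t,x)$ with its value at a single fixed reference point. First I would fix an arbitrary $x_0 \in U$, which exists because $U$ is a nonempty open subset of $X$. Writing $L : J \to \R$ for the continuous function furnished by the global Lipschitz hypothesis, so that $\norm{f(t,x) - f(t,\tilde{x})} \leq L(t)\,\norm{x - \tilde{x}}$ for all $t \in J$ and $x, \tilde{x} \in U$, the triangle inequality gives
\[
	\norm{f(t,x)} \leq \norm{f(t,x) - f(t,x_0)} + \norm{f(t,x_0)} \leq L(t)\,\norm{x - x_0} + \norm{f(t,x_0)}
\]
for every $(t,x) \in J \times U$.

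Next I would absorb the reference point into the linear term using $\norm{x - x_0} \leq \norm{x} + \norm{x_0}$, which yields
\[
	\norm{f(t,x)} \leq L(t)\,\norm{x} + \bigl( L(t)\,\norm{x_0} + \norm{f(t,x_0)} \bigr).
\]
Setting $a(t) \ndef L(t)$ and $b(t) \ndef L(t)\,\norm{x_0} + \norm{f(t,x_0)}$ then produces exactly the bound $\norm{f(t,x)} \leq a(t)\,\norm{x} + b(t)$ demanded by the definition of linear boundedness.

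It only remains to verify that $a$ and $b$ are continuous. The function $a = L$ is continuous by hypothesis, and $b$ is continuous because $L$ is continuous, $\norm{x_0}$ is a constant, and $t \mapsto \norm{f(t,x_0)}$ is continuous as the composition of the norm with the partial map $t \mapsto f(t,x_0)$ of the continuous map $f$. I do not expect a genuine obstacle here; the only points deserving a moment of attention are the continuity of $b$ and the legitimacy of choosing a single $x_0$, namely that the Lipschitz inequality may be applied directly to the pair $x, x_0 \in U$ with no convexity assumption on $U$ — which is fine, since the global condition is assumed to hold for all pairs of points in $U$.
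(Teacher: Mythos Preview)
Your proof is correct and follows essentially the same route as the paper: fix a reference point $x_0 \in U$, apply the triangle inequality and the Lipschitz estimate, then absorb $\norm{x_0}$ into the constant term. The only cosmetic differences are that the paper treats the Lipschitz constant $L$ as independent of $t$ and does not spell out the continuity of $b$, whereas you allow $L = L(t)$ and verify continuity explicitly.
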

\begin{proof}
	Let $(t,x) \in J \times U$ and $x_0 \in U$. Then
	\begin{multline*}
		\norm{f(t,x)}
		\leq \norm{f(t,x) - f(t,x_0)} + \norm{f(t,x_0)}
		\\
		\leq L \norm{x - x_0} + \norm{f(t,x_0)}
		\leq L \norm{x} + L \norm{x_0} + \norm{f(t,x_0)}.
	\end{multline*}
	Defining $a(t) \ndef L$ and $b(t) \ndef L \norm{x_0} + \norm{f(t,x_0)}$
	gives the assertion.
\end{proof}

\paragraph{The criterion}

We give a sufficent condition on when an integral curve is uniformly continuous.
This can be used to extend solutions to larger domains of definition.
\begin{lem}
\label{lem:Kriterium_fuer_gleichmaessige_Stetigkeit_von_Integralkurven_fuer_ODEs}
	Let $f$ satisfy a local Lipschitz condition with respect to the second
	argument
	and let $\phi : I \to U$ be an integral curve of $f$ such that $f$ is bounded
	on the graph of $\phi$.
	Then $\phi$ is Lipschitz continuous and hence uniformly continuous.
\end{lem}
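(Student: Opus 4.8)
The plan is to show directly that $\phi$ is Lipschitz continuous with Lipschitz constant equal to the bound on $f$ along the graph of $\phi$; uniform continuity then follows immediately, since every Lipschitz map is uniformly continuous.

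First I would record that $\phi$ is in fact a $\ConDiff{}{}{1}$-curve. By the definition of an integral curve we have $\phi'(t) = f(t, \phi(t))$ for all $t \in I$, and the right hand side is continuous, being the composition of the continuous map $f$ with the continuous map $t \mapsto (t, \phi(t))$. Hence $\phi'$ is continuous and $\phi$ is a $\ConDiff{}{}{1}$-curve, which is exactly what is needed to invoke the fundamental theorem of calculus. The hypothesis that $f$ is bounded on the graph of $\phi$ provides a constant
\[
	M \ndef \sup_{\tau \in I} \norm{f(\tau, \phi(\tau))} < \infty .
\]

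Next, for any $s, t \in I$ with $s \leq t$, I would apply the fundamental theorem of calculus (\refer{prop:Hauptsatz}) to the restriction $\rest{\phi}{[s,t]}$ to obtain
\[
	\phi(t) - \phi(s)
	= \Rint{s}{t}{\phi'(\tau)}{\tau}
	= \Rint{s}{t}{f(\tau, \phi(\tau))}{\tau}.
\]
Then, using the estimate for weak integrals from \refer{lem:Integral_stetiges_Funktional} together with the uniform bound $M$, I would deduce
\[
	\norm{\phi(t) - \phi(s)}
	\leq \Rint{s}{t}{\norm{f(\tau, \phi(\tau))}}{\tau}
	\leq M \, (t - s)
	= M \, \abs{t - s} .
\]
Since $s, t \in I$ were arbitrary (the case $t \leq s$ being symmetric), this shows that $\phi$ is $M$-Lipschitz, and in particular uniformly continuous.

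There is no genuine obstacle here: the argument is a short and standard computation. The only point deserving care is the justification that the fundamental theorem of calculus applies, which rests on the observation that $\phi$ is $\ConDiff{}{}{1}$ with continuous derivative $f(\cdot, \phi(\cdot))$; this is why I would establish that fact explicitly before carrying out the integral estimate. Note also that no compactness or closedness assumption on $I$ is required, since the bound $M$ is uniform over all of $I$ and the estimate is therefore global.
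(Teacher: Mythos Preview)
Your proof is correct and follows essentially the same approach as the paper: both use the fundamental theorem of calculus to write $\phi(t) - \phi(s)$ as an integral of $f(\tau,\phi(\tau))$ and then bound its norm by the supremum of $\norm{f}$ on the graph times $\abs{t-s}$. Your version simply makes explicit the justification that $\phi$ is $\ConDiff{}{}{1}$ and cites the relevant lemmas, whereas the paper presents the computation in a single line.
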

\begin{proof}
	Let $t_1, t_2 \in I$. Then
	\[
		\norm{\phi(t_2) - \phi(t_1)}
		= \left\norm{\Rint{t_1}{t_2}{\phi'(s)}{s} \right}
		= \left\norm{\Rint{t_1}{t_2}{ f(s,\phi(s)) }{s}\right}
		\leq K \abs{t_2 - t_1},
	\]
	where $K \ndef \sup_{s \in I} \norm{f(s,\phi(s))} < \infty$.
\end{proof}

\begin{satz}
\label{satz:globale_Loesbarkeit_von_AWP_kompaktes_Bild}
\label{satz:globale_Loesbarkeit_von_AWP_lineare_Gebundenheit}
	Assume that $f$ satisfies a local Lipschitz condition
	with respect to the second argument.
	Let $\phi : I \to U$ be a maximal integral curve of $f$. Assume further that
	\begin{enumerate}
		\item
		The image of $\phi$ is contained in a compact subset of $U$ or

		\item
		$f$ is linearly bounded.
	\end{enumerate}
	Then $\phi$ is a global solution, that is $I = J$.
\end{satz}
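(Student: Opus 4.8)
The plan is to argue by contradiction: assuming $I \neq J$, I would produce a solution of \eqref{eq:AWP} on a connected domain strictly larger than $I$, contradicting the maximality characterization recorded in \refer{prop:Existenz_maximale_Lsg_ODEs}. Since the two endpoints are handled symmetrically, I would concentrate on the right endpoint and set $\beta \ndef \sup I$; the assumption $I \neq J$ together with $I \subseteq J$ forces $\beta < \sup J$, and as $\beta$ then lies strictly between a point of $I$ and $\sup J$ it is an interior point, so $\beta \in J$. First I would dispose of the trivial case $\beta \in I$: then $\phi(\beta) \in U$ is defined and, since $\beta$ is interior to $J$, the local theorem of Picard and Lindel\"of yields a solution through $(\beta, \phi(\beta))$ prolonging $\phi$ to the right, contradicting maximality. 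Hence the work is in the case $\beta \notin I$.

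Fixing some $t_0 \in I$, the key step is to show that $f$ is bounded on the graph of $\rest{\phi}{[t_0,\beta[}$; this is exactly the point at which the two hypotheses feed in. Under (a), the graph lies in the compact set $[t_0,\beta] \times K$, where $K \subseteq U$ is the compact set containing $\phi(I)$, so $f$ is bounded there by continuity. Under (b), \refer{enum1:inear_gebundenes_VF_impliziert_Beschraenkheit-b} of \refer{lem:linear_gebundenes_VF_impliziert_Beschraenkheit_von_Loesung_und_VF_auf_Loesung} gives boundedness of $\phi$ on $[t_0,\beta[$, whereupon \refer{enum1:inear_gebundenes_VF_impliziert_Beschraenkheit-a} of the same lemma (whose closure hypothesis is met since $[t_0,\beta]\subseteq J$ is compact) yields boundedness of $f$ on the graph. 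In either case \refer{lem:Kriterium_fuer_gleichmaessige_Stetigkeit_von_Integralkurven_fuer_ODEs} then shows that $\rest{\phi}{[t_0,\beta[}$ is Lipschitz, hence uniformly continuous, and since $X$ is a Banach space the limit $x_\beta \ndef \lim_{t\to\beta^-}\phi(t)$ exists.

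It remains to check that $x_\beta \in U$ and that extending $\phi$ by $\tilde\phi(\beta) \ndef x_\beta$ produces an integral curve. In case (a) one has $x_\beta \in K \subseteq U$ because $K$ is closed; this is where case (b) is the more delicate one, and I expect it to be the main obstacle: for a proper open $U$ the limit $x_\beta$ could a priori sit on $\partial U$, so under (b) I would invoke that the statement is used with $U = X$ (as in the application \refer{lem:loesung_regularitaetsDGL_k=0_F_endl}), where $x_\beta \in U$ is automatic. Passing to the limit $t\to\beta^-$ in the integral equation $\phi(t) = \phi(t_0) + \Rint{t_0}{t}{f(s,\phi(s))}{s}$ (the integrand being continuous and bounded, the integral converges) shows that $\tilde\phi$ satisfies the same integral equation on all of $[t_0,\beta]$; as $s\mapsto f(s,\tilde\phi(s))$ is continuous, \refer{prop:Hauptsatz} makes $\tilde\phi$ a $\ConDiff{}{}{1}$-solution of \eqref{eq:AWP} on $I \cup \{\beta\}$. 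Since $\beta\notin I$, this is a solution on a connected domain strictly larger than $I$, contradicting the maximality of $\phi$. The symmetric argument at the left endpoint completes the proof that $I = J$.
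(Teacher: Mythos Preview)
Your argument follows the paper's proof essentially step for step: assume w.l.o.g.\ $\beta=\sup I\neq\sup J$, show $f$ is bounded on the graph of $\rest{\phi}{[t_0,\beta[}$ (via compactness in case (a), via \refer{lem:linear_gebundenes_VF_impliziert_Beschraenkheit_von_Loesung_und_VF_auf_Loesung} in case (b)), apply \refer{lem:Kriterium_fuer_gleichmaessige_Stetigkeit_von_Integralkurven_fuer_ODEs} to get a continuous extension $\tilde\phi$ to $[t_0,\beta]$, and verify via the integral equation that $\tilde\phi$ solves \eqref{eq:AWP}, contradicting maximality. You are in fact more careful than the paper on one point: the paper simply asserts that $\tilde\phi$ is a solution without checking $\tilde\phi(\beta)\in U$, whereas you correctly note that in case (a) this follows from $K$ being closed, while in case (b) it genuinely requires $U=X$ (which is indeed how the result is used in \refer{lem:loesung_regularitaetsDGL_k=0_F_endl}).
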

\begin{proof}
	We prove this by contradiction.
	To this end, we may assume w.l.o.g. that $\beta \ndef \sup I \neq \sup J$.
	We choose $t_0 \in I$.
	In both cases, $f$ is bounded on the graph of $\rest{\phi}{[t_0,\beta[}$:
	If the image of $\phi$ is contained in a compact set,
	we easily see that the graph of $\rest{\phi}{[t_0,\beta[}$
	is contained in a compact subset.
	If $f$ is linearly bounded, we use
	\refer{lem:linear_gebundenes_VF_impliziert_Beschraenkheit_von_Loesung_und_VF_auf_Loesung}.
	
	We apply \refer{lem:Kriterium_fuer_gleichmaessige_Stetigkeit_von_Integralkurven_fuer_ODEs}
	to see that $\rest{\phi}{[t_0,\beta[}$ is uniformly continuous,
	and thus has a continuous extension $\widetilde{\phi}$ to $[t_0,\beta]$.
	We easily calculate that $\widetilde{\phi}$ is a solution
	to \eqref{ivp:AWP} using the integral represention of an ODE.
	Since $\widetilde{\phi}$ extends $\phi$,
	we get a contradiction to the maximality of $\phi$.
\end{proof}

\subsection{Flows and dependence on parameters and initial values}
For the purpose of full generality, we need a definition.
\begin{defi}
	Let $\SX$ be a locally convex space.
	We call $P \sub \SX$ a \emph{locally convex subset with dense interior}
	if for each $x \in P$, there exists a convex neighborhood $\UF \sub P$ of $x$
	and if $P \sub \cl{\interior{P}}$.
\end{defi}
In the following, we let $J \subseteq \R$ be a nondegenerate interval,
$U$ an open subset of the Banach space $X$,
$P$ be a locally convex subset with dense interior of a locally convex space
and $k \in \cl{\N}$ with $k \geq 1$.
Further, let $f$ be in $\ConDiff{J \times U \times P }{ X}{k}$.
We consider the initial value problem
\begin{align}\label{ivp:Allgemeines_AWP}
	\begin{split}
		\gamma'(t) &= f(t, \gamma(t), p)\\
		\gamma(t_{0}) &= x_{0}
	\end{split}
\end{align}
for $t_{0} \in J$, $x_{0} \in U$ and $p \in P$.
\begin{defi}
	Let $\Omega \subseteq J \times J \times U \times P$.
	We call a map
	\[
		\phi : \Omega \to U
	\]
	a \emph{flow} for $f$
	if for all $t_0 \in J$, $x_0 \in U$ and $p \in P$ the set
	\[
		\Omega_{t_0, x_0, p} \ndef \{t \in J: (t_0, t, x_0, p) \in \Omega \}
	\]
	is connected and the partial map
	\[
		\phi(t_0, \cdot, x_0, p) : \Omega_{t_0, x_0, p} \to U
	\]
	is a solution to \eqref{ivp:Allgemeines_AWP} corresponding
	to the initial values $t_0$, $x_0$ and $p$.
	
	A flow is called \emph{maximal} if each other flow is a restriction of it.
\end{defi}
\begin{bem}
	In \cite[Theorem 10.3]{glockner-0612673}
	it was stated that for each $t_{0} \in J$, $x_{0} \in U$
	and $p_{0} \in P$
	there exist neighborhoods $J_{0}$ of $t_{0}$, $U_{0}$ of $x_{0}$
	and $P_{0}$ of $p_{0}$ such that
	for every $s \in J_{0}$, $x \in U_{0}$ and $p \in P_{0}$
	the corresponding initial value problem \eqref{ivp:Allgemeines_AWP}
	has a unique solution
	$\Gamma_{s, x, p} : J_{0} \to U$
	and the map
	\[
		\Gamma
		:
		J_{0} \times J_{0} \times U_{0} \times P_{0}
		\to U
		:
		(s, t, x, p) \mapsto \Gamma_{s, x, p}(t)
	\]
	is $\ConDiff{}{}{k}$. Therefore $\ConDiff{}{}{k}$-flows exist.
\end{bem}
The following lemma shows that two related flows can be glued together:
\begin{lem}\label{lem:Erweitern_von_Fluessen}
	Let $I \subseteq J$ be a connected set with nonempty interior and
	$\gamma : I \to U$ a solution to \eqref{ivp:Allgemeines_AWP}
	corresponding to $t_{\gamma} \in J$, $x_{\gamma} \in U$ and $p_{\gamma} \in P$.
	Further let
	\[
		\phi_{0} : J_0 \times I_0 \times U_0 \times P_0 \to U
		\text{ and }
		\phi_{1} : I_1 \times I_1 \times U_1 \times P_1 \to U
	\]
	be $\ConDiff{}{}{k}$-flows for $f$ such that
	$U_1$ is open in $X$ and
	\[
		I = I_0 \cup I_1,\,
		I_0 \cap I_1 \neq \emptyset,\,
		p_{\gamma} \in P_0 \cap P_1,\,
		(t_{\gamma}, x_{\gamma}) \in J_0 \times U_0
		\text{ and }
		\gamma(I_1) \subseteq U_1.
	\]
	Then there exist neighborhoods $J_{\gamma}$ of $t_{\gamma}$,
	$U_{\gamma}$ of $x_{\gamma}$, $P_{\gamma}$ of $p_{\gamma}$
	and a $\ConDiff{}{}{k}$-flow
	\[
		\phi : J_{\gamma} \times I \times U_{\gamma} \times P_{\gamma} \to U
	\]
	for $f$.
\end{lem}
\begin{proof}
	We choose $t_1 \in I_0 \cap I_1$. Since $\phi_{0}$ is continuous in
	$(t_{\gamma}, t_1, x_{\gamma}, p_{\gamma})$ and
	\[
		\phi_{0}(t_{\gamma}, t_1, x_{\gamma}, p_{\gamma}) = \gamma(t_1) \in U_1,
	\]
	there exist neighborhoods
	$J_{\gamma}$ of $t_{\gamma}$ in $J_0$, $U_{\gamma}$ of $x_{\gamma}$ in $U_0$ and
	$P_{\gamma} \subseteq P_0 \cap P_1$ of $p_{\gamma}$ such that
	\[
		\phi_{0}(J_{\gamma} \times \{t_1\} \times U_{\gamma} \times P_{\gamma})
			\subseteq U_1.
	\]
	Then the map
	\[
		\phi : J_{\gamma} \times I \times U_{\gamma} \times P_{\gamma} \to U
		: (t_0, x_0, p, t) \mapsto
		\begin{cases}
			\phi_{0}(t_0, t, x_0, p) & \text{if $t \in I_0$}\\
			\phi_{1}(t_1, t, \phi_{0}(t_0, t_1, x_0, p), p) & \text{if $t \in I_1$}
		\end{cases}
	\]
	is well defined since the curves $\phi_{0}(t_0, \cdot , x_0, p)$ and
	$\phi_{1}(t_1, \cdot, \phi_{0}(t_0, t_1, x_0, p), p)$
	are both solutions to the ODE \eqref{ivp:Allgemeines_AWP} that coincide in $t_1$
	and hence on $I_0 \cap I_1$.
	Since both $\phi_{0}$ and $\phi_{1}$ are $\ConDiff{}{}{k}$-flows for $f$, so is $\phi$.
\end{proof}

\begin{lem}\label{lem:Fluesse_bei_pertubierten_Parametern}
	Let $I \subseteq J$ be a connected set with nonempty interior, $t_{1} \in I$ and
	$\gamma : I \to U$ a solution to \eqref{ivp:Allgemeines_AWP}
	corresponding to $t_{\gamma} \in J$, $x_{\gamma} \in U$ and $p_{\gamma} \in P$.
	Then there exist neighborhoods $J_{\gamma}$ of $t_{\gamma}$,
	$U_{\gamma}$ of $x_{\gamma}$, $P_{\gamma}$ of $p_{\gamma}$,
	an interval $\widetilde{I} \subseteq I$ with $t_{\gamma}, t_{1} \in \widetilde{I}$
	such that $\widetilde{I}$ is a neighborhood of $t_{1}$ in $I$,
	and a $\ConDiff{}{}{k}$-flow
	\[
		\phi : J_{\gamma} \times \widetilde{I} \times U_{\gamma} \times P_{\gamma} \to U
	\]
	for $f$.
\end{lem}
\begin{proof}
	We use \cite[Theorem 10.3]{glockner-0612673} to see that for each $s \in I$
	there exist neighborhoods $J_{s}$ of $s$ in $J$, $U_{s}$ of $\gamma(s)$ in $U$,
	$P_{s}$ of $p_{0}$ in $P$ and a $\ConDiff{}{}{k}$-flow
	\[
		\phi_{s} : J_{s} \times J_{s} \times U_{s} \times P_{s}
		\to U
	\]
	for $f$; we may assume w.l.o.g.
	that $\gamma(J_{s}) \subseteq U_{s}$ since $\gamma$ is continuous and
	that $J_{s}$ is open in $I$.
	Since $I$ is connected and $\{J_{s}\}_{s \in I}$ is
	an open cover of $I$, there exist finitely many sets
	$J_{s_{1}}, \dotsc, J_{s_{n}}$
	such that $t_{\gamma} \in J_{s_{1}}$, $t_{1} \in J_{s_{n}}$ and
	$
		J_{s_{m}} \cap J_{s_{\ell}} \neq \emptyset
		\iff
		\abs{m - \ell} \leq 1.
	$
	Applying \refer{lem:Erweitern_von_Fluessen} to $\phi_{s_{1}}$ and $\phi_{s_{2}}$
	we find neighborhoods $I_{1}$ of $t_{\gamma}$, $V_{1}$ of $x_{\gamma}$, $P_{1}$ of $p_{\gamma}$
	and a $\ConDiff{}{}{k}$-flow
	\[
		\phi_{1} : I_{1} \times (J_{s_{1}} \cup J_{s_{2}}) \times V_{1} \times P_{1} \to U
	\]
	for $f$. Likewise, $\phi_1$ and $\phi_{s_3}$ lead to $\phi_2$, and iterating the argument,
	we find a $\ConDiff{}{}{k}$-flow
	\[
		\phi_{n-1} : I_{n-1} \times \bigcup_{k = 1}^{n} J_{s_{k}} \times V_{n-1} \times P_{n-1} \to U
	\]
	for $f$.
\end{proof}

Concerning maximal flows, we can state the following
\begin{satz}
	For each ODE \eqref{ivp:Allgemeines_AWP} there exists a maximal flow
	\[
		\phi : J \times J \times U \times P \supseteq \Omega \to U.
	\]
	$\Omega$ is an open subset of $J \times J \times U \times P$
	and $\phi$ is a $\ConDiff{}{}{k}$-map.
\end{satz}
\begin{proof}
	The existence of a maximal flow is a direct consequence of
	the existence of maximal solutions to ODEs without parameters,
	see \refer{prop:Existenz_maximale_Lsg_ODEs}.
	Now let $(t_{0}, t, x_{0}, p) \in \Omega$ and
	$\gamma : I \subseteq J \to U$ the maximal solution corresponding to $t_{0}$, $x_{0}$ and $p$.
	Then $t_{0}, t \in I$, and according to \refer{lem:Fluesse_bei_pertubierten_Parametern},
	there exists a $\ConDiff{}{}{k}$-flow
	\[
		\Gamma : J_{\gamma} \times \widetilde{I} \times U_{\gamma} \times P_{\gamma} \to U
	\]
	for $f$ that is defined on a neighborhood of $(t_{0}, t, x_{0}, p)$.
	Since $\phi$ is maximal,
	\[
		J_{\gamma} \times \widetilde{I} \times U_{\gamma} \times P_{\gamma} \subseteq \Omega
	\]
	and 
	\[
		\rest{\phi}{J_{\gamma} \times \widetilde{I} \times U_{\gamma} \times P_{\gamma}}
		= \Gamma .
	\]
	This gives the assertion.
\end{proof}

We examine the situation that an initial time is fixed and the initial
values depend on the parameters.
\begin{cor}\label{cor:Ck_Abhaengigkeit_LsgDGL_von_Parameter}
	Let $\alpha : P \to U$ be a $\ConDiff{}{}{k}$-map.
	Further, let $I \subseteq J$ be a nonempty interval and $t_0 \in I$ such that
	for every $p \in P$ there exists a solution
	\[
		\gamma_{p} : I \to U
	\]
	to the initial value problem \eqref{ivp:Allgemeines_AWP} corresponding to
	$p$, $t_{0}$ and the initial value $\alpha(p)$.
	Then the map
	\[
		\Gamma : I \times P \to U : (t, p) \mapsto \gamma_{p}(t)
	\]
	is $\ConDiff{}{}{k}$.
\end{cor}
\begin{proof}
	We consider a maximal flow $\phi : \Omega \to U$ for $f$.
	Since $\phi$ is maximal,
	\[
		\{t_{0}\} \times I \times \{(\alpha(p), p) : p \in P\}
		\subseteq \Omega,
	\]
	and for each $p \in P$
	\[
		\phi(t_{0}, \cdot, \alpha(p), p) = \gamma_{p}.
	\]
	Hence $\Gamma$ is the composition of $\phi$ and the $\ConDiff{}{}{k}$-map
	\[
		I \times P \to J \times I \times U \times P
		:
		(t, p) \mapsto (t_{0}, t, \alpha(p), p),
	\]
	and this gives the assertion.
\end{proof}
\chapter{Locally convex Lie groups}
The goal of this appendix mainly is to fix our conventions and notation
concerning manifolds and Lie groups modelled on locally convex spaces.
For further information see the articles
\cite{MR830252}, \cite{MR2261066} and \cite{MR2069671}.

\section{Locally convex manifolds}
Locally convex manifolds are essentially like finite-dimensional ones,
replacing the finite-dimensional modelling space by a locally convex space.
\begin{defi}[Locally convex manifolds]
	Let $M$ be a Hausdorff topological space, $k \in \cl{\N}$
	and $\SX$ a locally convex space. A \emph{$\ConDiff{}{}{k}$-atlas}
	for $M$ is a set $\mathcal{A}$ of homeomorphisms
	$\phi : U \to V$ from an open subset $U \subseteq M$ onto
	an open set $V \subseteq \SX$ whose domains cover $M$ and which are
	$\ConDiff{}{}{k}$-compatible in the sense that $\phi \circ \psi^{-1}$
	is $\ConDiff{}{}{k}$ for all $\phi, \psi \in \mathcal{A}$.
	A maximal $\ConDiff{}{}{k}$-atlas $\mathcal{A}$ on $M$ is called
	a \emph{differentiable structure} of class $\ConDiff{}{}{k}$.
	In this case, the pair $(M, \mathcal{A})$ is called (locally convex)
	$\ConDiff{}{}{k}$-manifold modelled on $\SX$.
\end{defi}

Direct products of locally convex $\ConDiff{}{}{k}$-manifolds are defined
as expected.

\begin{defi}[Tangent space and tangent bundle]
	Let $(M, \mathcal{A})$ be a $\ConDiff{}{}{k}$-manifold modelled on $\SX$,
	where $k \geq 1$.
	Given $x \in M$, let $\mathcal{A}_x$ be the set of all charts
	around $x$ (i.e. whose domain contains $x$).
	A \emph{tangent vector} of $M$ at $x$ is a family
	$y = (y_\phi)_{\phi \in \mathcal{A}_x}$ of vectors $y_\phi \in \SX$
	such that
	$y_\psi = \dA{(\psi \circ \phi^{-1}) }{\phi(x)}{y_\phi}$
	for all $\phi, \psi \in \mathcal{A}_x$.
	
	The \emph{tangent space} of $M$ at $x$ is the set $\glstext{Tangent_space_basepoint}$
	of all tangent vectors of $M$ at $x$. It has a unique structure of locally convex space
	such that the map
	$\rest{ \dA{\psi}{}{} }{ \Tang[x]{M} }
		: \Tang[x]{M} \to \SX : (y_\phi)_{\phi \in \mathcal{A}_x} \mapsto y_\psi$
	is an isomorphism of topological vector spaces
	for any $\psi \in \mathcal{A}_x$.
	
	The \emph{tangent bundle} $\glstext{Tangent_bundle}$
	of $M$ is the union of the (disjoint) tangent spaces
	$\Tang[x]{M}$ for all $x \in M$. It admits a unique structure as a
	$\ConDiff{}{}{k - 1}$-manifold modelled on $\SX \times \SX$ such that
	$\Tang{\phi} \ndef (\phi, \dA{\phi}{}{})$ is chart for each $\phi \in \mathcal{A}$.
	We let $\pi_M :\Tang{M} \to M$ be the map taking tangent vectors at $x$ to $x$
	for any $x \in M$.
\end{defi}

\begin{defi}
	A continuous map $f : M \to N$ between $\ConDiff{}{}{k}$-manifolds
	is called $\ConDiff{}{}{k}$ if the map $\psi \circ f \circ \phi^{-1}$
	is so for all charts $\psi$ of $N$ and $\phi$ of $M$.
	
	If $k \geq 1$, then we define the \emph{tangent map} of $f$ as the $\ConDiff{}{}{k - 1}$-map
	$\glstext{Tangent_map} : \Tang{M} \to \Tang{N}$
	determined by
	$\dA{\psi}{}{} \circ \Tang{f} \circ (\Tang{\phi})^{-1} = \dA{(\psi \circ f \circ \phi^{-1})}{}{}$
	for all charts $\psi$ of $N$ and $\phi$ of $M$.

	Given $x \in M$, we define
	$\glstext{Tangent_map_basepoint} \ndef \rest{\Tang{f}}[\Tang[f(x)]{N}]{\Tang[x]{M}} : \Tang[x]{M} \to \Tang[f(x)]{N}$.
\end{defi}

\begin{defi}
	Let $k > 0$, $M$, $N$ and $P$ be $\ConDiff{}{}{k}$-manifolds,
	and $f : M \times N \to P$ a $\ConDiff{}{}{k}$-map.
	We define
	\[
		\glsdisp{partial_Tangent_map}{\Tang[1]{ f}} : \Tang{M} \times N \to \Tang{P}
		: (v, n) \mapsto \Tang{\Gamma}(v, 0_n)
	\]
	and
	\[
		\Tang[2]{ f} : M \times \Tang{N} \to  \Tang{P}
		: (m, v) \mapsto \Tang{\Gamma}(0_{m}, v).
	 \]
\end{defi}

\begin{defi}[Submanifolds]
	Let $M$ be a $\ConDiff{}{}{k}$-manifold modelled on the locally convex space $\SX$
	and $\SY \sub \SX$ be a sequentially closed vector subspace.
	A \emph{submanifold of $M$ modelled on $\SY$} is a subset $N \sub M$
	such that for each $x \in N$, there exists a chart $\phi : \UF \to \VF$
	around $x$ such that $\phi(\UF \cap N) = \VF \cap \SY$.
	It is easy to see that a submanifold is also a $\ConDiff{}{}{k}$-manifold.
\end{defi}
The following lemma states that submanifolds are initial:
\begin{lem}\label{lem:SUbmanifols_sind_initial}
	Let $M$ be a $\ConDiff{}{}{k}$-manifold and $N$ a submanifold of $M$.
	Then the inclusion $\iota : N \to M$ is $\ConDiff{}{}{k}$.
	Moreover, a map $f : P \to N$ from a $\ConDiff{}{}{k}$-manifold is $\ConDiff{}{}{k}$
	iff the map $\iota \circ f : P \to M$ is so.
\end{lem}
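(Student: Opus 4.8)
The plan is to reduce everything to two facts established earlier: that continuous linear maps are smooth (\refer{beisp:Helge_diffbare_Abb}), and that differentiability of a map taking values in a (sequentially) closed vector subspace is insensitive to whether the target is taken to be the subspace or the ambient space (part (b) of \refer{prop:Differenzierbarkeit_Abb_in_projektiven_Limes}). Throughout I use that, by the very definition of a submanifold, the charts of $N$ are precisely the restrictions $\rest{\phi}{\UF \cap N} : \UF \cap N \to \VF \cap \SY$ of submanifold charts $\phi : \UF \to \VF$ of $M$; in particular $N$ carries the subspace topology and is modelled on $\SY$.

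First I would prove that $\iota$ is $\ConDiff{}{}{k}$. Fix $x \in N$ and a submanifold chart $\phi : \UF \to \VF$ of $M$ around $x$ with $\phi(\UF \cap N) = \VF \cap \SY$, and write $\phi_N \ndef \rest{\phi}{\UF \cap N}$ for the induced chart of $N$. The local representative $\phi \circ \iota \circ \phi_N^{-1}$ is then simply the inclusion $\VF \cap \SY \hookrightarrow \VF$, i.e. the restriction to $\VF \cap \SY$ of the continuous linear inclusion $\SY \to \SX$. Since continuous linear maps are smooth, this representative is $\ConDiff{}{}{k}$, and as $x$ was arbitrary, $\iota$ is $\ConDiff{}{}{k}$.

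The forward implication of the equivalence is then immediate: if $f : P \to N$ is $\ConDiff{}{}{k}$, so is $\iota \circ f$ by the chain rule (\refer{prop:Kettenregel_Helge}). The substance lies in the converse. Assuming $\iota \circ f$ is $\ConDiff{}{}{k}$, I would first note that $f$ is continuous into $N$: since $N$ carries the subspace topology and $\iota \circ f$ is continuous, continuity of $f$ follows. Then I localize. Given $p \in P$, set $x \ndef f(p)$, pick a submanifold chart $(\phi, \UF)$ of $M$ around $x$ with induced chart $\phi_N$ of $N$, and (using continuity of $f$) a chart $(\chi, \UF_P)$ of $P$ around $p$ with $f(\UF_P) \sub \UF \cap N$. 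The local representative $g \ndef \phi_N \circ f \circ \chi^{-1}$ is a map into $\SY$, and composing it with the continuous linear inclusion $\SY \to \SX$ yields $\phi \circ (\iota \circ f) \circ \chi^{-1}$, the local representative of $\iota \circ f$, which is $\ConDiff{}{}{k}$ into $\SX$ by hypothesis. Hence $g$ is $\ConDiff{}{}{k}$ as a map into $\SX$, and \refer{prop:Differenzierbarkeit_Abb_in_projektiven_Limes} lets me conclude that it is $\ConDiff{}{}{k}$ as a map into $\SY$. Since $p$ was arbitrary, $f$ is $\ConDiff{}{}{k}$.

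The only real obstacle is this last appeal to \refer{prop:Differenzierbarkeit_Abb_in_projektiven_Limes}: I must check that its hypothesis matches the situation here, where $\SY$ is only assumed \emph{sequentially} closed in $\SX$ (as in the definition of submanifold), while the proposition is phrased for closed subspaces. This is harmless, since the relevant argument only uses that difference quotients and their iterated limits — which may be taken along sequences $t = 1/n \to 0$ — remain in $\SY$, for which sequential closedness suffices; I would remark on this point rather than reprove the proposition.
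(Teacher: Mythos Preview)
Your argument is correct and is the standard proof of this fact. The paper itself states this lemma without proof (it sits in an appendix whose declared purpose is to fix conventions, with references to the literature for details), so there is no paper proof to compare against. Your handling of the sequential-closedness point is also right: the directional-derivative limit is taken over $t \in \K^\ast$, a first-countable space, so membership of the limit in $\SY$ is detected by sequences, and continuity of the iterated differentials into $\SY$ then follows automatically from continuity into $\SX$ because $\SY$ carries the subspace topology.
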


\begin{defi}[Vector fields]
	A \emph{smooth vector field} on a manifold $M$ is a smooth map
	$\xi : M \to \Tang{M}$ such that $\pi_M \circ \xi = \id{M}$.
	We denote the set of vector fields on $M$ by $\glstext{SetOf_Vectorfields}$.
	
	A vector field $\xi$ is determined by its local representations
	$\xi_\phi \ndef \dA{\phi}{}{} \circ \xi \circ \phi^{-1} : V \to \SX$
	for each chart $\phi : U \to V$ of $M$.
	Given vector fields $\xi$ and $\eta$ on $M$, there is a unique vector field
	$[\xi, \eta]$ on $M$ such that
	$[\xi, \eta]_\phi = \dA{\eta_\phi}{}{} \circ (\id{V}, \xi_\phi) - \dA{\xi_\phi}{}{} \circ (\id{V}, \eta_\phi)$
	for all charts $\phi : U \to V$ of $M$.
\end{defi}

\begin{bem}[Analytic manifolds]
	The definition of analytic manifolds and analytic maps between them
	is literally the same as above, except that the term $\ConDiff{}{}{k}$-map
	has to be replaced by analytic map.
\end{bem}

\section{Lie groups}

\begin{defi}[Lie groups]
	A (locally convex) \emph{Lie group} is a group $G$ equipped
	with a smooth manifold structure turning the group operations into smooth maps.
	
	An analytic Lie group is a group $G$ equipped
	with an analytic manifold structure turning the group operations into analytic maps.
\end{defi}

\begin{lem}[Tangent group, action of group on TG]
	Let $G$ be a Lie group with the group multiplication $m$ and the inversion $i$.
	Then $\Tang{G}$ is a Lie group with the group multiplication
	\[
		\Tang{m} : \Tang{(G \times G)} \iso \Tang{G} \times \Tang{G} \to \Tang{G}
	\]
	and the inversion $\Tang{i}$.
	Identifying $G$ with the zero section of $\Tang{G}$,
	we obtain a smooth right action
	\[
		\Tang{G} \times G \to \Tang{G} : (v, g) \mapsto v.g \ndef \Tang{m}(v, 0_g)
	\]
	and a smooth left action
	\[
		G \times \Tang{G} \to \Tang{G} : (g, v) \mapsto g.v \ndef \Tang{m}(0_g, v).
	\]
\end{lem}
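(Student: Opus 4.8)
The plan is to derive everything from the functorial properties of the tangent functor $\Tang{}$, which preserves finite products (via the natural identification $\Tang{(G \times G)} \iso \Tang{G} \times \Tang{G}$ recorded above) and composition of smooth maps, and which sends the zero section $z_M : M \to \Tang{M}$, $x \mapsto 0_x$, of $M$ to that of $N$ under any smooth $f : M \to N$, i.e. $\Tang{f} \circ z_M = z_N \circ f$ (this holds because $\Tang[x]{f}$ is linear and hence kills the zero vector). Since $\Tang{m}$ and $\Tang{i}$ are tangent maps of smooth maps, they are smooth; so once the abstract group axioms are verified, $\Tang{G}$ is a Lie group.

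To verify the axioms I would express those of $G$ as commutative diagrams of smooth maps and apply $\Tang{}$. Associativity $m \circ (m \times \id_G) = m \circ (\id_G \times m)$ yields $\Tang{m} \circ (\Tang{m} \times \id_{\Tang{G}}) = \Tang{m} \circ (\id_{\Tang{G}} \times \Tang{m})$ after inserting the product identification. Writing $\eta$ for the constant map onto the neutral element $\one$ and noting that $\Tang{}$ of a one-point manifold is again a point, the image of $\Tang{\eta}$ is the single vector $0_{\one}$, which I claim is the neutral element of $\Tang{G}$: applying $\Tang{}$ to $m \circ (\id_G, \eta) = \id_G = m \circ (\eta, \id_G)$ gives $\Tang{m}(v, 0_{\one}) = v = \Tang{m}(0_{\one}, v)$. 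Likewise, applying $\Tang{}$ to $m \circ (\id_G, i) = \eta = m \circ (i, \id_G)$ yields the inversion axioms for $\Tang{i}$, using naturality of the zero section to identify the right-hand side with $0_{\one}$. This completes the Lie group structure on $\Tang{G}$.

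For the two actions I would first observe that $z_G : G \to \Tang{G}$ is a smooth group homomorphism: it is smooth as a section of the tangent projection, and its homomorphism property $0_{gh} = \Tang{m}(0_g, 0_h)$ is precisely naturality of the zero section applied to $m$ (under the product identification $0_{(g,h)}$ corresponds to $(0_g, 0_h)$). The right action $v.g = \Tang{m}(v, z_G(g))$ is then smooth as a composition of the smooth maps $\id_{\Tang{G}} \times z_G$ and $\Tang{m}$, and the action axioms follow formally: $(v.g).h = \Tang{m}(\Tang{m}(v, 0_g), 0_h) = \Tang{m}(v, \Tang{m}(0_g, 0_h)) = \Tang{m}(v, 0_{gh}) = v.(gh)$ by associativity of $\Tang{m}$ and the homomorphism property, while $v.\one = \Tang{m}(v, 0_{\one}) = v$ is the unit axiom; the left action is symmetric. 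I expect no genuine obstacle, since the proof is a diagram chase; the only point needing care is the neutral element, handled through the one-point manifold and the naturality of the zero section, which guarantees $\Tang{m}$ behaves correctly on the image of $z_G$.
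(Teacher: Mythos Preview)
Your argument is correct and is the standard functorial proof: apply $\Tang{}$ to the group-axiom diagrams of $G$, use that $\Tang{}$ preserves products and composition, identify the neutral element via the one-point manifold and naturality of the zero section, and then observe that $z_G$ is a smooth homomorphism to get the actions. The paper states this lemma without proof (it is recorded in the appendix as a standard fact, with references to \cite{MR830252}, \cite{MR2261066}, \cite{MR2069671} for background), so there is nothing to compare against; your write-up would serve perfectly well as the omitted proof.
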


\begin{defi}[Left invariant vector fields]
	A vector field $V$ on a Lie group $G$ is called \emph{left invariant}
	if $g.V(h) = V(g h)$ for all $g, h \in G$.
	The set $\VecFieldsLinv{G}$ of left invariant vector fields is a Lie algebra
	under the bracket of vector fields defined above.
\end{defi}

\begin{defi}[Lie algebra functor]
	Let $G$ and $H$ be Lie groups.
	Using the isomorphism
	$\VecFieldsLinv{G} \to \Tang[\one]{G} : V \mapsto V(\one)$
	we transport the Lie algebra structure on $\VecFieldsLinv{G}$
	to $\LieAlg{G} \ndef \Tang[\one]{G}$.
	If $\phi : G \to H$ is a smooth homomorphism, then the map
	$\glsdisp{LieAlgebra_functor}{\LieAlg{\phi}} : \LieAlg{G} \to \LieAlg{H}$
	defined as $\rest{ \Tang{\phi} }{ \LieAlg{G} }$ is a Lie algebra homomorphism.
\end{defi}

\subsection{Generation of Lie groups}
We need the following result concerning the construction of Lie groups from local data
(compare \cite[Chapter \RN{3}, \S 1.9, Proposition 18]{MR979493}
for the case of Banach Lie groups; the general proof follows the same pattern).

\begin{lem}[Local description of Lie groups]
\label{lem:Erzeugung_von_Liegruppen_aus_lokalen}
	Let $G$ be a group, $U \subseteq G$ a subset which is equipped with
	a smooth manifold structure, and $V \subseteq U$ an open symmetric subset
	such that $\one \in V$ and $V \cdot V \subseteq U$. Consider the conditions
	\begin{enumerate}
		\item
		The group inversion restricts to a smooth self map of $V$.
		
		\item
		The group multiplication restricts to a smooth map
		$
			V \times V \to U
		$.
		
		\item
		For each $g \in G$, there exists an open $\one$-neighborhood $W \subseteq U$
		such that $g\cdot W \cdot g^{-1} \subseteq U$, and the map
		\[
			W \to U : w \mapsto g\cdot w \cdot g^{-1}
		\]
		is smooth.
	\end{enumerate}
	If (a)--(c) hold, then there exists a unique smooth manifold structure on $G$
	which makes $G$ a Lie group such that $V$ is an open submanifold of $G$.
	If (a) and (b) hold, then there exists a unique smooth manifold structure
	on the subgroup $\langle V\rangle$ generated by $V$
	which makes $\langle V\rangle$ a Lie group
	such that $V$ is an open submanifold of $\langle V\rangle$.
\end{lem}

\subsection{Regularity}
\label{susec:LieGroups-Regularity}
We recall the notion of regularity
(see \cite{MR830252} for further information).
To this end, we define left evolutions of smooth curves.
As a tool, we use the group multiplication on the tangent bundle $\Tang{G}$
of a Lie group $G$.

\begin{defi}[Left logarithmic derivative]
	Let $G$ be a Lie group, $k \in \N$ and $\eta : [0,1] \to G$ a  $\ConDiff{}{}{k + 1}$-curve.
	We define the \emph{left logarithmic derivative} of $\eta$ as
	\[
		\glsdisp{LeftLogDeriv}{\LLA{\eta}} : [0,1] \to \LieAlg{G} : t \mapsto \eta(t)^{-1} \cdot \eta'(t).
	\]
	The curve $\LLA{\eta}$ is obviously $\ConDiff{}{}{k}$.
\end{defi}

\begin{defi}[Left evolutions]
	Let $G$ be a Lie group and $\gamma : [0,1] \to \LieAlg{G}$ a smooth curve.
	A smooth curve $\eta : [0,1] \to G$ is called a \emph{left evolution} of $\gamma$ and
	denoted by $\glsdisp{Left_Evolution}{\lEvol[G]{\gamma}}$ if $\LLA{\eta} = \gamma$ and $\eta(0) = \one$.
	One can show that in case of its existence, a left evolution is uniquely determined.
\end{defi}

The existence of a left evolution is equivalent to the existence of a solution
to a certain initial value problem:
\begin{lem}
Let $G$ be a Lie group and $\gamma : [0,1] \to \LieAlg{G}$ a smooth curve.
Then there exists a left evolution $\lEvol{G}{\gamma} : [0,1] \to G$ iff
the initial value problem
\begin{align}
	\begin{split}\label{ivp:diffeq_Links-regularitaet_allgemein}
		\eta'(t) &= \eta(t) \cdot \gamma(t)\\
		\eta(0) &= \one
	\end{split}
\end{align}
has a solution $\eta$. In this case, $\eta =\lEvol[G]{\gamma}$.
\end{lem}
Now we give the definition of regularity:
\begin{defi}[Regularity]
	\index{regularity}
	A Lie group $G$ is called \emph{regular} if for each smooth curve
	$\gamma : [0,1] \to \LieAlg{G}$ there exists a left evolution
	and the map
	\[
		\glstext{Left_Evolution_endpoint} : \ConDiff{[0,1]}{\LieAlg{G}}{\infty} \to G : \gamma \mapsto\lEvol[G]{\gamma}(1)
	\]
	is smooth.
\end{defi}

\begin{lem} \label{lem:Lie-Gruppe_lokal_reg-->global_reg}
	Let $G$ be a Lie group. Suppose there exists a zero neighborhood
	$\Omega \subseteq \ConDiff{[0,1]}{\LieAlg{G}}{\infty}$
	such that for each $\gamma \in \Omega$ the left evolution
	$\lEvol[G]{\gamma}$ exists and the map
	\[
		\Omega \to G : \gamma \mapsto\lEvol[G]{\gamma}(1)
	\]
	is smooth. Then $G$ is regular.
\end{lem}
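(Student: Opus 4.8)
The plan is to deduce global regularity from the local hypothesis by exploiting the multiplicative (cocycle) behaviour of left evolutions under concatenation of curves, combined with a compactness argument. Everything rests on one preliminary computation: for a smooth curve $\gamma : [0,1] \to \LieAlg{G}$ and $0 \le a \le b \le 1$, the reparametrized piece
\[
	\gamma_{[a,b]} : [0,1] \to \LieAlg{G} : t \mapsto (b-a)\,\gamma(a + t(b-a))
\]
is again smooth, and whenever $\eta \ndef \lEvol[G]{\gamma}$ exists one has $\lEvol[G]{\gamma_{[a,b]}}(t) = \eta(a)^{-1}\cdot\eta(a + t(b-a))$. I would verify this by observing that the curve $t \mapsto \eta(a)^{-1}\cdot\eta(a+t(b-a))$ starts at $\one$ and has left logarithmic derivative $\gamma_{[a,b]}$ (using invariance of $\LLA{\cdot}$ under left translation together with the chain rule); uniqueness of evolutions then identifies it with $\lEvol[G]{\gamma_{[a,b]}}$. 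Evaluating at $t=1$ gives the cocycle relation $\eta(b) = \eta(a)\cdot\lEvol[G]{\gamma_{[a,b]}}(1)$, which, iterated over a subdivision $0 = t_0 < \dots < t_n = 1$, yields $\eta(1) = \lEvol[G]{\gamma_{[t_0,t_1]}}(1)\cdots\lEvol[G]{\gamma_{[t_{n-1},t_n]}}(1)$. I also record that for fixed $a,b$ the map $\gamma \mapsto \gamma_{[a,b]}$ is continuous linear (precomposition with an affine map, rescaled), hence smooth, on $\ConDiff{[0,1]}{\LieAlg{G}}{\infty}$.

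Next I would establish existence of the left evolution for an arbitrary smooth curve $\gamma$. Since $\gamma_{[a,a]} = 0 \in \Omega$ and $(a,b) \mapsto \gamma_{[a,b]}$ is continuous on the compact set $\{(a,b) : 0 \le a \le b \le 1\}$, openness of $\Omega$ provides $\delta > 0$ with $\gamma_{[a,b]} \in \Omega$ whenever $b - a < \delta$. Choosing a subdivision finer than $\delta$, each piece $\gamma_{[t_{i-1},t_i]}$ lies in $\Omega$, so each $\lEvol[G]{\gamma_{[t_{i-1},t_i]}}$ exists; patching these (reparametrized back onto $[t_{i-1},t_i]$ and left-translated by the running product of endpoints) produces a continuous curve $\eta$ on $[0,1]$ with $\eta(0)=\one$. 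One then checks that the one-sided derivatives of adjacent pieces agree at each node, both equalling $\eta(t_i)\cdot\gamma(t_i)$, so that $\eta$ is $\ConDiff{}{}{1}$ and satisfies $\eta'(t) = \eta(t)\cdot\gamma(t)$; a routine bootstrapping on this equation (its right-hand side being a composition of smooth maps of $\eta$ and $\gamma$) upgrades $\eta$ to a smooth curve, which is therefore $\lEvol[G]{\gamma}$.

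Finally I would prove smoothness of $\levol[G]{}$. As smoothness is a local property, I fix $\gamma_0$ and a subdivision $0 = t_0 < \dots < t_n = 1$ fine enough that $(\gamma_0)_{[t_{i-1},t_i]} \in \Omega$ for all $i$. By continuity of the linear reparametrization maps, $U \ndef \bigcap_{i}\{\gamma : \gamma_{[t_{i-1},t_i]} \in \Omega\}$ is an open neighborhood of $\gamma_0$, and on $U$ the cocycle formula gives
\[
	\levol[G]{}(\gamma) = R_1(\gamma)\cdots R_n(\gamma),
	\qquad
	R_i \ndef \bigl(\rest{\levol[G]{}}{\Omega}\bigr) \circ \bigl(\gamma \mapsto \gamma_{[t_{i-1},t_i]}\bigr).
\]
Each $R_i$ is smooth, being the composition of the smooth reparametrization map $U \to \Omega$ with $\rest{\levol[G]{}}{\Omega}$, which is smooth by hypothesis; and the iterated group multiplication is smooth since $G$ is a Lie group. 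Hence $\rest{\levol[G]{}}{U}$ is smooth, and as $\gamma_0$ was arbitrary, $\levol[G]{}$ is smooth, so $G$ is regular. The main obstacle is the existence step: getting the concatenation formula exactly right (carefully tracking the left translations and the reparametrization factors) and verifying that the patched curve is genuinely smooth across the subdivision nodes rather than merely continuous.
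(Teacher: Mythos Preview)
The paper states this lemma in the appendix without proof; it is a standard result (essentially going back to Milnor's treatment of regularity), so there is no proof in the paper to compare against. Your argument is the standard one and is correct: the cocycle identity $\eta(b) = \eta(a)\cdot\lEvol[G]{\gamma_{[a,b]}}(1)$ together with the compactness argument on the triangle $\{0\le a\le b\le 1\}$ to produce a uniform subdivision width, patching the pieces, and then expressing $\levol[G]{}$ locally as a finite product of smooth maps. The points you flag as needing care---matching of one-sided derivatives at the nodes and the bootstrap from $\ConDiff{}{}{1}$ to smooth via the equation $\eta'=\eta\cdot\gamma$---are exactly the right ones, and your treatment of them is sound.
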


\begin{bem}
	We can define \emph{right logarithmic derivatives} and \emph{right evolutions}
	in the analogous way as we did above for the left ones.
	We denote the right logarithmic derivative by $\glsdisp{RightLogDeriv}{\RLA{}}$,
	the right evolution map by $\glsdisp{Right_Evolution}{\rEvol{}}$
	and the endpoint of the right evolution by $\glsdisp{Right_Evolution_endpoint}{\revol{}}$.
	One can show that a Lie group is left-regular
	iff it is right-regular.
	Also the equivalent of \refer{lem:Lie-Gruppe_lokal_reg-->global_reg} holds.
	In particular, \refer{ivp:diffeq_Links-regularitaet_allgemein} becomes
	\begin{align}
	\begin{split}\label{ivp:diffeq_Rechts-regularitaet_allgemein}
		\eta'(t) &= \gamma(t) \cdot \eta(t)\\
		\eta(0) &= \one
	\end{split}.
	\end{align}
\end{bem}

\begin{defi}
	Let $G$ be a Lie group. A smooth map $\glstext{Exponentialfunktion} : \LieAlg{G} \to G$
	is called an \emph{exponential map} for $G$ if $\Tang[0]{\ex[\G]} = \id{\LieAlg{G}}$
	and $\ex[G]( (s + t) v) = \ex[G](s v) \cdot \ex[G](t v)$ for all $s, t \in \R$ and $v \in \LieAlg{G}$.
\end{defi}

\subsection{Group actions}

\begin{lem}\label{lem:Kriterium_fuer_Wirkung_auf_Untergruppe}
	Let $\G$ and $H$ be groups and $\alpha : \G \times H \to H$ a group action
	that is a group morphism in its second argument.
	Further, let $\widetilde{H}$ be a subgroup of $H$ that is generated by $\UF$.
	Then
	\[
		\alpha(G \times \widetilde{H}) \subseteq \widetilde{H}
		\iff
		\alpha(G \times \UF) \subseteq \widetilde{H}.
	\]
\end{lem}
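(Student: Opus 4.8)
The statement is a purely algebraic fact about group actions, so I would set aside the analytic context entirely and argue with elementary group theory. The forward implication is immediate: since $\UF \subseteq \widetilde{H}$, the inclusion $\alpha(\G \times \widetilde{H}) \subseteq \widetilde{H}$ restricts trivially to $\alpha(\G \times \UF) \subseteq \widetilde{H}$. Hence all the content lies in the reverse implication, and that is where I would focus.

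For the reverse implication, the plan is to fix $g \in \G$ and exploit that the partial map $\alpha_g \ndef \alpha(g, \cdot) : H \to H$ is a group homomorphism by hypothesis. I would introduce the set
\[
	S_g \ndef \alpha_g^{-1}(\widetilde{H}) = \{h \in H : \alpha(g, h) \in \widetilde{H}\}.
\]
The key observation is that $S_g$ is a subgroup of $H$, being the preimage of the subgroup $\widetilde{H}$ under the homomorphism $\alpha_g$; this uses only the defining relations $\alpha_g(h_1 h_2) = \alpha_g(h_1)\alpha_g(h_2)$ and $\alpha_g(h^{-1}) = \alpha_g(h)^{-1}$ together with the closure of $\widetilde{H}$ under the group operations.

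The hypothesis $\alpha(\G \times \UF) \subseteq \widetilde{H}$ says precisely that $\UF \subseteq S_g$ for every $g \in \G$. Since $\widetilde{H} = \langle \UF \rangle$ is by definition the smallest subgroup of $H$ containing $\UF$, and $S_g$ is a subgroup containing $\UF$, I obtain $\widetilde{H} \subseteq S_g$, i.e. $\alpha(g, \widetilde{H}) \subseteq \widetilde{H}$. As $g$ was arbitrary, this yields $\alpha(\G \times \widetilde{H}) \subseteq \widetilde{H}$, completing the argument.

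There is no genuine obstacle here; the only point to get right is the description of $\widetilde{H}$ as the subgroup generated by $\UF$ (equivalently, the set of finite words in the elements of $\UF$ and their inverses). Should one prefer to avoid the ``smallest subgroup'' formulation, one can instead induct on word length and apply the homomorphism property of $\alpha_g$ factor by factor. I would nonetheless use the $S_g$-subgroup formulation, since it avoids any induction and makes transparent that nothing beyond the defining properties of a group action and of a generated subgroup is required.
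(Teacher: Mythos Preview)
Your argument is correct. Both directions are handled properly, and the preimage-subgroup trick for the reverse implication is a clean way to package the essential point that each $\alpha_g$ is a homomorphism.

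The paper's proof takes a slightly different presentation: it writes $\widetilde{H} = \bigcup_{n\in\N} (\UF \cup \UF^{-1})^n$ explicitly and computes
\[
\alpha(\G \times \widetilde{H})
= \bigcup_{n}\alpha(\G \times (\UF \cup \UF^{-1})^n)
\subseteq \bigcup_{n}\bigl(\alpha(\G \times \UF) \cup \alpha(\G \times \UF)^{-1}\bigr)^n
\subseteq \widetilde{H},
\]
pushing the homomorphism property through word by word. This is exactly the ``induction on word length'' variant you mention at the end, carried out in one line of set calculus. Your formulation via $S_g = \alpha_g^{-1}(\widetilde{H})$ is more conceptual and avoids writing out words; the paper's is more hands-on. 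The mathematical content is identical.
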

\begin{proof}
	By our assumption, $\widetilde{H} = \bigcup_{n\in\N} (\UF \cup \UF^{-1})^n$.
	So we calculate
	\begin{multline*}
		\alpha(G \times \widetilde{H})
		= \alpha(G \times \bigcup_{n\in\N} (\UF \cup \UF^{-1})^n)
		= \bigcup_{n\in\N} \alpha(G \times (\UF \cup \UF^{-1})^n)
		\\
		= \bigcup_{n\in\N} \alpha(G \times (\UF \cup \UF^{-1}))^n
		= \bigcup_{n\in\N} ( \alpha(G \times \UF) \cup \alpha(G \times \UF)^{-1} )^n
		\subseteq \widetilde{H}.
	\end{multline*}
	That's it.
\end{proof}

\begin{lem}\label{lem:Kriterium-fuer-glatte-Gruppenwirkung}
	Let $\G$ and $H$ be Lie groups and $\alpha : \G \times H \to H$ a group action
	that is a group morphism in its second argument.
	Then $\alpha$ is smooth iff the following assertions hold:
	\begin{enumerate}
		\item\label{enum1:glatte-Gruppenwirkung_simultane-lokale-Glattheit}
		it is smooth on $\UF \times \VF$,
		where $\UF$ and $\VF$ are open unit neighborhoods, respectively.

		\item\label{enum1:glatte-Gruppenwirkung_lokale-Glattheit-im-ersten-Argument}
		for each $h\in H$, there exists an open unit neighborhood $\WF$ such that
		the map $\alpha(\cdot, h) : \WF \to H$ is smooth.

		\item\label{enum1:glatte-Gruppenwirkung_Glattheit-im-zweiten-Argument}
		for each $g\in\G$ the map $\alpha(g, \cdot) : H \to H$ is smooth.
	\end{enumerate}
	If $\UF$ generates $\G$, \refer{enum1:glatte-Gruppenwirkung_lokale-Glattheit-im-ersten-Argument}
	follows from \refer{enum1:glatte-Gruppenwirkung_simultane-lokale-Glattheit}.
	If $\VF$ generates $H$, \refer{enum1:glatte-Gruppenwirkung_Glattheit-im-zweiten-Argument}
	follows from \refer{enum1:glatte-Gruppenwirkung_simultane-lokale-Glattheit}.
\end{lem}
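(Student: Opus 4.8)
The plan is to prove the equivalence first and then the two supplementary claims. The forward implication is immediate: restricting a smooth $\alpha$ to $\UF\times\VF$ gives \refer{enum1:glatte-Gruppenwirkung_simultane-lokale-Glattheit}, and the partial maps $\alpha(\cdot,h)$ and $\alpha(g,\cdot)$ of a smooth map are smooth, which yields \refer{enum1:glatte-Gruppenwirkung_lokale-Glattheit-im-ersten-Argument} (with $\WF$ any identity neighborhood) and \refer{enum1:glatte-Gruppenwirkung_Glattheit-im-zweiten-Argument}. For the converse I would use that smoothness is local and check it near an arbitrary point $(g_0,h_0)\in\G\times H$.

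First I would absorb $g_0$. Writing $g=g_0u$ with $u=g_0^{-1}g$ near $\one$, the action property gives
\[
	\alpha(g,h)=\alpha(g_0u,h)=\alpha\bigl(g_0,\alpha(u,h)\bigr),
\]
so that, after composing with the smooth map $g\mapsto g_0^{-1}g$ and with $\alpha(g_0,\cdot)$ (smooth by \refer{enum1:glatte-Gruppenwirkung_Glattheit-im-zweiten-Argument}), it suffices to prove smoothness near $(\one,h_0)$ for every $h_0$. There I would use that $\alpha$ is a homomorphism in the second slot: factoring $h=h_0\cdot(h_0^{-1}h)$,
\[
	\alpha(g,h)=\alpha(g,h_0)\cdot\alpha(g,h_0^{-1}h).
\]
For $(g,h)$ in a sufficiently small neighborhood of $(\one,h_0)$ we have $g\in\UF\cap\WF$ and $h_0^{-1}h\in\VF$, where $\WF$ is the neighborhood from \refer{enum1:glatte-Gruppenwirkung_lokale-Glattheit-im-ersten-Argument} for $h_0$. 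Hence $g\mapsto\alpha(g,h_0)$ is smooth by \refer{enum1:glatte-Gruppenwirkung_lokale-Glattheit-im-ersten-Argument}, the map $(g,h)\mapsto\alpha(g,h_0^{-1}h)$ is smooth by \refer{enum1:glatte-Gruppenwirkung_simultane-lokale-Glattheit}, and smoothness of the multiplication of $H$ makes their product smooth, finishing the converse.

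Each of the two supplementary assertions I would obtain by a subgroup argument. For the statement quantifying over all $h$, consider $T=\set{h\in H}{\alpha(\cdot,h) \text{ is smooth on some identity neighborhood of }\G}$; the morphism property gives $\alpha(\cdot,h_1h_2)=\alpha(\cdot,h_1)\cdot\alpha(\cdot,h_2)$ and $\alpha(\cdot,h^{-1})=\alpha(\cdot,h)^{-1}$, so smoothness of the operations of $H$ shows that $T$ is a subgroup; it contains $\VF$ by \refer{enum1:glatte-Gruppenwirkung_simultane-lokale-Glattheit}, hence equals $H$ as soon as $\VF$ generates it. For the statement quantifying over all $g$, consider $S=\set{g\in\G}{\alpha(g,\cdot) \text{ is smooth on } H}$, noting that smoothness near $\one\in H$ already forces smoothness on all of $H$ by the factorization used above; the action property gives $\alpha(g_1g_2,\cdot)=\alpha(g_1,\cdot)\circ\alpha(g_2,\cdot)$, so $S$ is a submonoid containing the generating neighborhood, and once it is a subgroup it must be all of $\G$.

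I expect the main obstacle to be closure of $S$ under inversion. In contrast to $T$, where inversion is taken pointwise in $H$, proving $\alpha(g^{-1},\cdot)$ smooth amounts to smoothness of the compositional inverse of the smooth bijection $\alpha(g,\cdot)$, and no inverse function theorem is available in the locally convex category. I would handle this by choosing the generating identity neighborhood symmetric, so that $\alpha(u^{-1},\cdot)$ is directly covered by \refer{enum1:glatte-Gruppenwirkung_simultane-lokale-Glattheit} for $u$ in it; closure under products then suffices to exhaust $\G$.
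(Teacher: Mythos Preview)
Your proof is correct and follows the paper's route: the same factorisation $\alpha(g_0u,vh_0)=\alpha(g_0,\cdot)\bigl(\alpha(u,v)\cdot\alpha(u,h_0)\bigr)$ for the converse, and the same subgroup arguments for the two supplementary claims. Two observations worth recording: the hypotheses in the last two sentences of the lemma appear to be swapped --- both your argument and the paper's own proof actually use ``$\VF$ generates $H$'' to obtain \refer{enum1:glatte-Gruppenwirkung_lokale-Glattheit-im-ersten-Argument} and ``$\UF$ generates $\G$'' to obtain \refer{enum1:glatte-Gruppenwirkung_Glattheit-im-zweiten-Argument} --- and your caution about the inverse $\alpha(g^{-1},\cdot)$ is justified, since the paper's identity $\alpha_{g'^{-1}}(h)=\alpha_{g'}(h)^{-1}$ is false for a general action and passing to a symmetric generating neighborhood, as you do, is the clean way to close that gap.
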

\begin{proof}
	We first show that by our assumptions, $\alpha$ is smooth.
	To this end, let $(g, h) \in \G \times H$. Choose $\WF$ as in \refer{enum1:glatte-Gruppenwirkung_lokale-Glattheit-im-ersten-Argument}.
	Then $\UF' \ndef \UF \cap \WF \in \neigh[\G]{\one}$.
	We show that $\rest{\alpha}{g\UF' \times \VF h}$ is smooth.
	Since the map $\UF' \times \VF \to g\UF' \times \VF h : (u, v) \mapsto (g u, v h)$
	is a smooth diffeomorphism, we only need to show that the map
	\[
		\UF' \times \VF \to \H
		:
		(u, v) \mapsto \alpha(g u, h v)
	\]
	is smooth. But
	\[
		\alpha(g u, h v) = \alpha_g (\alpha(u, v h))
		= \alpha_g ( \alpha(u, v) \alpha(u, h))
		= \alpha_g(\alpha(u, v) \alpha^h(u) ),
	\]
	where we denote $\alpha(\cdot, h)$ by $\alpha^h$ and $\alpha(g, \cdot)$ by $\alpha_g$.
	Since the right hand side is obviously smooth, we are home.
	
	Now we prove the other two assertions.
	We suppose that \refer{enum1:glatte-Gruppenwirkung_simultane-lokale-Glattheit} holds.
	We let $S \sub H$ be the set of all $h \in H$ such that \refer{enum1:glatte-Gruppenwirkung_lokale-Glattheit-im-ersten-Argument}
	holds. Then $\VF \sub S$; and since $\alpha^{h^{-1}}(g) = \alpha^h(g)^{-1}$
	and $\alpha^{h h'}(g) = \alpha^{h}(g) \alpha^{h'}(g)$
	for all $g \in \G$ and $h, h' \in H$, we easily see that $S$ is a subgroup of $H$.
	Since $\VF$ is a generator, $S = H$.
	\\
	Since $\UF$ generates $\G$, for each $g \in \G$ we find $g_1, \dotsc, g_n \in \UF \cup \UF^{-1}$
	such that
	\[
		\alpha_g = \alpha_{g_n} \circ \dotsb \circ \alpha_{g_1}.
	\]
	Further, for $g' \in \G$ and $h \in H$, $\alpha_{g'^{-1}}(h) = \alpha_{g'}(h)^{-1}$,
	so each $\alpha_{g_k}$ is smooth by our assumption.
	Hence $\alpha_g$ is smooth.
\end{proof}

\begin{lem}\label{lem:Semidirektes_Produkt_Liegruppen-glatteWirkung-ist_Liegruppe}
	Let $\G$ and $H$ be Lie groups and $\omega : \G \times H \to H$ a smooth group action
	that is a group morphism in its second argument.
	Then the semidirect product $H \rtimes_\omega \G$
	can be turned into a Lie group that is modelled on
	$\LieAlg{H} \times \LieAlg{\G}$.
	\index{semidirect product}
\end{lem}
\begin{proof}
	The semidirect product $H \rtimes_\omega \G$ is endowed
	with the multiplication
	\[
		(H \times \G) \times (H \times \G) \to H \times \G
		:
		((h_1, g_1), (h_2, g_2)) \mapsto (h_1 \cdot \omega(g_1, h_2), g_1\cdot g_2)
	\]
	and the inversion
	\[
		H \times \G \to H \times \G : (h, g) \mapsto ( \omega(g^{-1}, h^{-1}), g^{-1} ),
	\]
	so the smoothness of the group operations follows from the one of $\omega$.
\end{proof}
\chapter{Quasi-inversion in algebras}
\label{app:Quasi-Inversion}
\index{quasi-inversion}
We give a short introduction to the concept of \emph{quasi-inversion}.
It is a useful tool for the treatment of algebras without a unit,
where it serves as a replacement for the ordinary inversion.
Many of the algebras we treat are without a unit.
Unless the contrary is stated, all algebras are assumed associative.

\section{Definition}
\begin{defi}[Quasi-Inversion]
	Let $A$ denote a $\K$-algebra with the multiplication $\mult$.
	An $x\in A$ is called \emph{quasi-invertible}
	if there exists a $y\in A$ such that
	\[
		x + y - x \mult y = y + x - y \mult x = 0 .
	\]
	In this case, we call $\glstext{Quasi-Inversion}(x) \ndef y$ the \emph{quasi-inverse} of $x$.
	The set that consists of all quasi-invertible elements of $A$
	is denoted by $\glsdisp{Quasi-Invertierbare}{\QInvertiblesOf{A}}$.
	The map
	$\QInvertiblesOf{A} \to \QInvertiblesOf{A} : x \mapsto \QuasiInv_A(x)$
	is called the \emph{quasi-inversion} of $A$.
	Often we will denote $\QuasiInv_A$ just by $\QuasiInv$.
\end{defi}
An interesting characterization of quasi-inversion is
\begin{lem}\label{lem:Adversion_Monoid}
	Let $A$ be a $\K$-Algebra with the multiplication $\mult$.
	Then $A$, endowed with the operation
	\[
		A \times A \to A : (x,y) \mapsto x \diamond y \ndef x + y - x \mult y ,
	\]
	is a monoid with the unit $0$ and the unit group $\QInvertiblesOf{A}$.
	The inversion map is given by $\QuasiInv_A$.
\end{lem}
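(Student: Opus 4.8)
The plan is to verify the monoid axioms for $\diamond$ by direct computation from the ring axioms of $A$, and then to identify the unit group by matching the monoid inversion with the defining condition of quasi-inversion. There is no genuine conceptual obstacle; the whole statement unwinds from distributivity and associativity of $\mult$, so the only care required is sign bookkeeping.

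First I would check that $0$ is a two-sided identity, which is immediate: since $0 \mult a = a \mult 0 = 0$ in any algebra, we have $0 \diamond x = 0 + x - 0 \mult x = x$ and likewise $x \diamond 0 = x$. Next I would verify associativity. Expanding $(x \diamond y) \diamond z$ and $x \diamond (y \diamond z)$ by distributivity, both reduce to
\[
	x + y + z - x \mult y - x \mult z - y \mult z + x \mult y \mult z,
\]
where the unambiguous triple product $x \mult y \mult z$ is exactly what associativity of $\mult$ provides. Hence $(A, \diamond, 0)$ is a monoid.

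For the claim about the unit group, I would argue directly from the definition of the monoid structure: an element $x$ is a unit with inverse $y$ precisely when $x \diamond y = y \diamond x = 0$, that is, when $x + y - x \mult y = 0$ and $y + x - y \mult x = 0$. This is verbatim the condition defining $x$ to be quasi-invertible with quasi-inverse $\QuasiInv_A(x) = y$. Therefore the set of units of $(A, \diamond, 0)$ is precisely $\InvertiblesOf{A}$, and the monoid inversion coincides with $\QuasiInv_A$, which completes the proof.

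The only mildly delicate point is ensuring that the \emph{two-sided} conditions in the definition of quasi-invertibility are correctly matched with both $x \diamond y = 0$ and $y \diamond x = 0$; this is immediate, so the ``hardest'' part is really just the associativity bookkeeping, which is entirely routine.
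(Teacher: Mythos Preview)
Your proof is correct and is exactly the ``easy computation'' the paper alludes to; the paper itself gives no further detail beyond that phrase, so your verification of the identity, associativity, and the matching of units with quasi-invertible elements is precisely what is intended.
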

\begin{proof}
	This is shown by an easy computation.
\end{proof}
In unital algebras there is a close relationship between inversion and quasi-inversion.
\begin{lem}\label{lem:Adversion_und_Inversion}
	Let $A$ be an algebra with multiplication $\mult$ and unit $e$.
	Then $x\in A$ is quasi-invertible iff
	$x-e$ is invertible. In this case
	\[
		\QuasiInv_A(x) = (x-e)^{-1} + e .
	\]
\end{lem}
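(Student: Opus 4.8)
The plan is to deduce everything from the monoid structure established in \refer{lem:Adversion_Monoid} by means of a single change of variables. The key observation I would record first is that the affine involution $\phi : A \to A$, $a \mapsto e - a$, intertwines the quasi-product with ordinary multiplication: for all $x, y \in A$ one has
\[
	\phi(x) \mult \phi(y) = (e - x)\mult(e - y) = e - (x + y - x\mult y) = \phi(x \diamond y),
\]
where the middle equality uses only that $e$ is a two-sided unit. Since $\phi(\phi(a)) = e - (e - a) = a$, the map $\phi$ is a bijection, and $\phi(0) = e$. Together with \refer{lem:Adversion_Monoid}, which states that $(A, \diamond, 0)$ is a monoid, this exhibits $\phi$ as an isomorphism of monoids from $(A, \diamond, 0)$ onto the multiplicative monoid $(A, \mult, e)$.

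The first assertion then falls out of this isomorphism: $x$ is quasi-invertible precisely when it is a unit of $(A, \diamond)$, which under $\phi$ is equivalent to $\phi(x) = e - x$ being a unit of $(A, \mult)$, i.e.\ invertible. To pass from $e - x$ to $x - e$ I would note that $e - x = -(x - e)$ and that multiplication by the unit $-e$ is a bijection of $A$ preserving invertibility, so $e - x$ is invertible if and only if $x - e$ is; this gives the stated equivalence.

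For the explicit formula I would transport the inverse through $\phi$. Because $\phi$ is a monoid isomorphism, it carries the $\diamond$-inverse $\QuasiInv_A(x)$ of $x$ to the multiplicative inverse of $\phi(x)$, so that $\phi(\QuasiInv_A(x)) = (e - x)^{-1}$. Applying the involution $\phi$ once more and rewriting $(e-x)^{-1} = -(x-e)^{-1}$ yields
\[
	\QuasiInv_A(x) = e - (e - x)^{-1} = e + (x - e)^{-1} = (x-e)^{-1} + e,
\]
as claimed. I expect no genuine obstacle here; the only point demanding a little care is the sign bookkeeping that matches $e - (e-x)^{-1}$ with the stated $(x-e)^{-1} + e$. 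Should the homomorphism language be thought too heavy for such an elementary lemma, one could instead verify directly that $y \ndef (x-e)^{-1} + e$ satisfies $x + y - x \mult y = y + x - y \mult x = 0$ using $(x-e)(x-e)^{-1} = (x-e)^{-1}(x-e) = e$, which is a short computation.
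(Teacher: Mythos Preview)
Your argument is correct and is precisely the paper's approach: the paper's proof also consists of observing that $x \mapsto e - x$ is a monoid isomorphism from $(A,\diamond)$ to $(A,\mult)$ and deducing both claims from this. You have simply supplied the computational details (including the sign bookkeeping between $e-x$ and $x-e$) that the paper leaves implicit.
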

\begin{proof}
	One easily computes that
	\[
		(A,\diamond) \to (A,\mult) : x \mapsto e - x
	\]
	is an isomorphism of monoids
	($\diamond$ was introduced in \refer{lem:Adversion_Monoid}),
	and from this we easily deduce the assertion.
\end{proof}

\section{Topological monoids and algebras with continuous quasi-inversion}
\label{sec:topologische_Monoide}
\label{sec:topologische_Quasi-Inversions-Algebren}
In this section, we examine algebras that are endowed with a topology.
For technical reasons we also examine monoids.
\begin{defi}\label{def:topologische_Quasi-Inversions-Algebren}
	An algebra $A$ is called a \emph{topological algebra} if
	it is a topological vector space and the multiplication is continuous.
	
	A topological algebra $A$ is called \emph{algebra with continuous quasi-inversion}
	if the set $\QInvertiblesOf{A}$ is open and the quasi-inversion $\QuasiInv$ is continuous.
	
	A monoid, endowed with a topology, is called a \emph{topological monoid} if
	the monoid multiplication is continuous.
	
	A monoid, endowed with a differential structure, is called a \emph{smooth monoid} if
	the monoid multiplication is smooth.
	\index{smooth monoid}
\end{defi}
\begin{bem}
	If $A$ is an algebra with continuous quasi-inversion,
	then $\QuasiInv$ is not only continuous, but automatically analytic, see \cite{MR1948922}.
\end{bem}

In topological monoids the unit group is open and the inversion continuous
if they are so near the unit element:
\begin{lem}\label{lem:topologische_Monoide}
	Let $M$ be a topological monoid with unit $e$ and the multiplication $\mult$.
	Then the unit group $M^\times$ is open iff there exists a neighborhood
	of $e$ that consists of invertible elements.
	The inversion map
	\[
		I : M^\times \to M^\times : x \mapsto x^{-1}
	\]
	is continuous iff it is so in $e$.
\end{lem}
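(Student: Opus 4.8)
The plan is to handle both equivalences in the same spirit. In each, one implication is immediate: if $M^\times$ is open then (as $e\in M^\times$) it is itself a neighbourhood of $e$ contained in $M^\times$; and if $I$ is continuous on all of $M^\times$ then it is in particular continuous at $e$. So the content lies in the two converse directions, where a property known at the unit $e$ is transported to an arbitrary invertible element by translation. The only tools I would need are that, for each $g\in M$, the left and right translations $\lambda_g : x\mapsto g\mult x$ and $\rho_g : x\mapsto x\mult g$ are continuous, being partial maps of the continuous multiplication $\mult$, together with the elementary fact that $M^\times$ is a group, so in particular a product of two invertible elements is invertible.

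For the converse of the first equivalence, I would assume there is a neighbourhood of $e$ contained in $M^\times$ and, passing to its interior, fix an open set $U$ with $e\in U\subseteq M^\times$. Given any $g\in M^\times$, since $\lambda_{g^{-1}}$ is continuous and $\lambda_{g^{-1}}(g)=e\in U$, the set $W\ndef\lambda_{g^{-1}}^{-1}(U)$ is an open neighbourhood of $g$. For $x\in W$ one has $g^{-1}\mult x\in U\subseteq M^\times$, hence $x=g\mult(g^{-1}\mult x)$ is a product of invertible elements and so invertible; thus $W\subseteq M^\times$. As $g$ was arbitrary, $M^\times$ is open.

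For the converse of the second equivalence, I would assume $I$ is continuous at $e$ and fix $g\in M^\times$. For $x\in M^\times$ one has $g^{-1}\mult x\in M^\times$ and the factorisation
\[
	I(x)=x^{-1}=(g^{-1}\mult x)^{-1}\mult g^{-1}=\rho_{g^{-1}}\bigl(I(\lambda_{g^{-1}}(x))\bigr),
\]
so that on $M^\times$ one has $I=\rho_{g^{-1}}\circ I\circ\lambda_{g^{-1}}$. Here $\lambda_{g^{-1}}$ maps $M^\times$ into $M^\times$ and sends $g$ to $e$; since it is continuous, $I$ is continuous at $e$ by hypothesis, and $\rho_{g^{-1}}$ is continuous, the composite is continuous at $g$. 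Hence $I$ is continuous at every $g\in M^\times$, i.e.\ on all of $M^\times$.

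I expect no serious obstacle here: this is the standard homogeneity argument for topological groups, and the single point that needs care is the legitimacy of the two factorisations, namely that $g^{-1}\mult x$ is again invertible so that $(g^{-1}\mult x)^{-1}$ exists and the displayed identity holds. This is precisely where the group structure of $M^\times$ enters, and it is entirely elementary.
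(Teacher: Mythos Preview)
Your proof is correct and follows essentially the same approach as the paper: both arguments transport the local property at $e$ to an arbitrary $g\in M^\times$ via translation by $g^{-1}$, and both establish continuity of $I$ at $g$ through the same factorisation $I=\rho_{g^{-1}}\circ I\circ\lambda_{g^{-1}}$. The only cosmetic difference is that for openness the paper pushes $U$ forward via the homeomorphism $\ell_g$ to get an open neighbourhood $\ell_g(U)$ of $g$, whereas you pull $U$ back via continuity of $\lambda_{g^{-1}}$; since $\ell_g(U)=\lambda_{g^{-1}}^{-1}(U)$, these are the same set.
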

\begin{proof}
	Let $U$ be a neighborhood of $e$ that consists of invertible elements
	and $m \in M^\times$. Since the map
	\begin{equation*}		\ell_{m} : M \to M : x \mapsto m \mult x
	\end{equation*}
	is a homeomorphism, $\ell_m(U)$ is open;
	and it is clear that $\ell_m(U) \subseteq M^\times$.
	Hence $M^\times = \cup_{m \in M^\times} \ell_m(U)$ is open.
	
	Let $I$ be continuous in $e$. We show it is so in $x \in M^\times$.
	For $m \in M^\times$, we have
	\[
		\label{id:Inversion_und_Inversion_in_0}\tag{\ensuremath{\dagger}}
		I(m) = m^{-1} = m^{-1} \mult x \mult x^{-1} = (x^{-1}\mult m)^{-1} \mult x^{-1}
		= (\rho_{x^{-1}} \circ I \circ \ell_{x^{-1}})(m) ,
	\]
	where $\rho_{x^{-1}}$ denotes the right multiplication by $x^{-1}$.
	Since $I$ is continuous in $e$ and $\ell_{x^{-1}}(x) = e$, we can derive
	the continuity of $I$ in $x$ from \eqref{id:Inversion_und_Inversion_in_0}.
\end{proof}
For algebras with a continuous multiplication we can deduce
\begin{lem}\label{lem:Kriterium_fuer_Stetigkeit_der_Adversion}
	Let $A$ be an algebra with the continuous multiplication $\mult$.
	Then $\QInvertiblesOf{A}$ is open if there exists a neighborhood
	of $0$ that consists of invertible elements.
	The quasi-inversion $\QuasiInv_A$ is continuous if it is so in $0$.
\end{lem}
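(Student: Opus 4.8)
The plan is to reduce the statement to the monoid version already established in \refer{lem:topologische_Monoide}, by passing to the adjoint monoid $(A, \diamond)$ introduced in \refer{lem:Adversion_Monoid}. The point is that quasi-inversion in $A$ is literally the group inversion of this monoid, so once we know the monoid is topological, the algebra statement becomes a special case of the monoid statement.

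First I would recall from \refer{lem:Adversion_Monoid} that the operation $x \diamond y \ndef x + y - x \mult y$ turns $A$ into a monoid with unit $0$, whose group of units is precisely $\InvertiblesOf{A}$ and whose inversion map is $\QuasiInv_A$. Thus the two objects appearing in the statement, namely the set $\InvertiblesOf{A}$ and the map $\QuasiInv_A$, are nothing but the unit group and the inversion of $(A, \diamond)$.

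Next I would check that $(A, \diamond)$ is in fact a \emph{topological} monoid. Since $A$ is a topological vector space, the addition and subtraction maps are continuous; and $\mult$ is continuous by hypothesis, so $(x,y) \mapsto x \mult y$ is continuous. Hence $(x,y) \mapsto x + y - x \mult y$ is continuous as a combination of continuous maps, i.e.\ $\diamond$ is continuous, and $(A, \diamond)$ is a topological monoid with unit $e = 0$.

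Finally, I would apply \refer{lem:topologische_Monoide} to this topological monoid. Its conclusion says that the unit group is open as soon as some neighborhood of the unit consists of invertible elements, and that the inversion is continuous as soon as it is continuous at the unit. Translating "unit group'', "invertible'', and "inversion'' back through \refer{lem:Adversion_Monoid}, together with $e = 0$, yields exactly the two assertions about $\InvertiblesOf{A}$ and $\QuasiInv_A$ (in fact \refer{lem:topologische_Monoide} even gives the equivalences, which is slightly stronger than the claimed implications). The argument involves no genuine obstacle: the only thing to verify is the continuity of $\diamond$, which is immediate from the topological vector space structure of $A$ and the continuity of $\mult$.
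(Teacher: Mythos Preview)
Your proposal is correct and follows exactly the paper's approach: verify that $\diamond$ is continuous (so $(A,\diamond)$ is a topological monoid), then invoke \refer{lem:Adversion_Monoid} and \refer{lem:topologische_Monoide}. The paper's proof is just a terser statement of the same argument.
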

\begin{proof}
	Since the map
	\[
		A \times A \to A : (x,y) \mapsto x + y - x \mult y
	\]
	is continuous, we derive the assertions from
	\refer{lem:Adversion_Monoid} and \refer{lem:topologische_Monoide}.
\end{proof}
\paragraph{A criterion for quasi-invertibility}
We give an criterion that ensures that an element of an algebra is quasi-invertible.
It turns out that it is quite useful in certain algebras, namely Banach algebras.
\begin{lem}\label{lem:Topologisches_Kriterium_fuer_Quasi-Invertierbarkeit}
	Let $A$ be a topological algebra and $x \in A$. If $\sum_{i=1}^\infty x^i$ exists,
	then $x$ is quasi-invertible with
	\[
		\QuasiInv_{A}(x) = - \sum_{i=1}^\infty x^i .
	\]
\end{lem}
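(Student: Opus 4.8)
The plan is to guess the quasi-inverse from the displayed formula and verify the defining identities directly. I set $s \ndef \sum_{i=1}^{\infty} x^i$ (the limit whose existence is assumed) and write $s_n \ndef \sum_{i=1}^{n} x^i$ for its partial sums, so that $s_n \to s$ in the topological algebra $A$. It then suffices to show that $y \ndef -s$ satisfies both quasi-inverse equations from the definition, namely $x + y - x \mult y = 0$ and $y + x - y \mult x = 0$.

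The key step is to evaluate $x \mult s$ and $s \mult x$ using the continuity of the multiplication. For the partial sums one has the telescoping identities $x \mult s_n = \sum_{i=1}^{n} x^{i+1} = s_{n+1} - x$ and, symmetrically, $s_n \mult x = s_{n+1} - x$. Letting $n \to \infty$ and using that $s_{n+1} \to s$ together with the continuity of the maps $z \mapsto x \mult z$ and $z \mapsto z \mult x$ (which follows from the continuity of the multiplication in each argument), I obtain $x \mult s = s - x = s \mult x$.

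With these two identities in hand the verification is immediate: $x + y - x \mult y = x - s + x \mult s = x - s + (s - x) = 0$, and likewise $y + x - y \mult x = x - s + s \mult x = 0$. Hence $x$ is quasi-invertible with $\QuasiInv_A(x) = y = -s$, as claimed. I do not expect any real obstacle beyond careful bookkeeping with the series; the only place the topological hypothesis enters is the passage to the limit in $x \mult s_n$ and $s_n \mult x$, which is exactly where separate continuity of the multiplication is used.
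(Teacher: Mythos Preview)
Your proof is correct and follows essentially the same direct verification as the paper: both check the quasi-inverse identities by computing $x \mult \sum_{i=1}^\infty x^i = \sum_{i=2}^\infty x^i = s - x$ (and symmetrically), with the paper writing this in one line while you make the passage to the limit via partial sums and continuity of multiplication explicit.
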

\begin{proof}
	We just compute that $x$ is quasi-invertible:
	\[
		x + \left(- \sum_{i=1}^\infty x^i \right)
		- x \mult \left(- \sum_{i=1}^\infty x^i \right)
		= - \sum_{i=2}^\infty x^i + \sum_{i=2}^\infty x^i
		= 0 .
	\]
	The identity
	$(- \sum_{i=1}^\infty x^i) + x - (- \sum_{i=1}^\infty x^i) \mult x = 0$
	is computed in the same way.
	So the quasi-invertibility of $x$ follows direct from the definition.
\end{proof}
\paragraph{Quasi-inversion in Banach algebras}
\begin{lem}\label{lem:Einheitskugel_quasiinvertierbar}
	Let $A$ be a Banach algebra.
	Then $\Ball{0}{1} \subseteq \QInvertiblesOf{A}$. Moreover, for
	$x \in \Ball{0}{1}$
	\begin{equation*}
		\QuasiInv_{A}(x) = - \sum_{i=1}^\infty x^i  .
	\end{equation*}
\end{lem}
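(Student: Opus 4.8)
The plan is to reduce the statement entirely to a convergence question and then invoke \refer{lem:Topologisches_Kriterium_fuer_Quasi-Invertierbarkeit}. That lemma already tells us that whenever the series $\sum_{i=1}^\infty x^i$ exists in a topological algebra, the element $x$ is quasi-invertible with $\QuasiInv_A(x) = -\sum_{i=1}^\infty x^i$. Since a Banach algebra is in particular a topological algebra, it therefore suffices to show that for each $x \in \Ball{0}{1}$, i.e.\ each $x \in A$ with $\norm{x} < 1$, the series $\sum_{i=1}^\infty x^i$ converges in $A$.

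To establish this convergence, I would first invoke the submultiplicativity of the Banach algebra norm, which gives $\norm{x^i} \leq \norm{x}^i$ for each $i \in \N$ by an immediate induction (using $\norm{x \MaMu x^{i-1}} \leq \norm{x}\,\norm{x^{i-1}}$). Because $\norm{x} < 1$, the scalar geometric series $\sum_{i=1}^\infty \norm{x}^i$ converges, so $\sum_{i=1}^\infty x^i$ is absolutely convergent. As $A$ is complete by assumption, absolute convergence entails convergence of the series in $A$; concretely, the partial sums form a Cauchy sequence since $\norm{\sum_{i=m}^{n} x^i} \leq \sum_{i=m}^{n}\norm{x}^i$ can be made arbitrarily small for $m,n$ large.

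Having shown that $\sum_{i=1}^\infty x^i$ exists, I would then simply apply \refer{lem:Topologisches_Kriterium_fuer_Quasi-Invertierbarkeit} to conclude both that $x \in \InvertiblesOf{A}$ and that $\QuasiInv_A(x) = -\sum_{i=1}^\infty x^i$, which is exactly the assertion. Since this holds for every $x \in \Ball{0}{1}$, the inclusion $\Ball{0}{1} \subseteq \InvertiblesOf{A}$ follows at once.

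I do not expect any genuine obstacle here: the argument is a routine combination of the geometric-series estimate, submultiplicativity, and completeness, with all the algebraic verification of the quasi-inverse identity already outsourced to \refer{lem:Topologisches_Kriterium_fuer_Quasi-Invertierbarkeit}. The only point meriting a word of care is the use of submultiplicativity of $\norm{\cdot}$, which is part of the standing definition of a Banach algebra, so no separate justification is needed.
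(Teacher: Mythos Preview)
Your proposal is correct and follows exactly the same route as the paper: show that $\sum_{i=1}^\infty x^i$ converges by absolute convergence (via $\norm{x^i}\le\norm{x}^i$ and completeness of $A$), then invoke \refer{lem:Topologisches_Kriterium_fuer_Quasi-Invertierbarkeit}. The paper's proof is just a two-sentence version of what you wrote.
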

\begin{proof}
	For $x \in \Ball{0}{1}$ the series $\sum_{i=1}^\infty x^i$ exists
	since it is absolutely convergent and $A$ is complete.
	So the assertion follows from
	\refer{lem:Topologisches_Kriterium_fuer_Quasi-Invertierbarkeit}.
\end{proof}

\begin{lem}
	Let $A$ be a Banach algebra. Then $\QInvertiblesOf{A}$ is open in $A$
	and the quasi-inversion $\QuasiInv_{A}$ is continuous.
\end{lem}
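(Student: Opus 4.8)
The plan is to reduce everything to behaviour near $0$ and then invoke the two immediately preceding lemmas, \refer{lem:Einheitskugel_quasiinvertierbar} and \refer{lem:Kriterium_fuer_Stetigkeit_der_Adversion}. First I would observe that a Banach algebra is in particular a topological algebra, since the multiplication of a Banach algebra is continuous; thus \refer{lem:Kriterium_fuer_Stetigkeit_der_Adversion} is applicable. Both of its conclusions are conditional on a local statement around $0$, and each of these local statements is exactly what \refer{lem:Einheitskugel_quasiinvertierbar} provides.

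For the openness of $\InvertiblesOf{A}$, I would use that by \refer{lem:Einheitskugel_quasiinvertierbar} the open unit ball satisfies $\Ball{0}{1} \subseteq \InvertiblesOf{A}$. Since $\Ball{0}{1}$ is a neighborhood of $0$ consisting of quasi-invertible elements, the first assertion of \refer{lem:Kriterium_fuer_Stetigkeit_der_Adversion} immediately yields that $\InvertiblesOf{A}$ is open. For the continuity of $\QuasiInv_{A}$, the second assertion of \refer{lem:Kriterium_fuer_Stetigkeit_der_Adversion} reduces the task to checking continuity of $\QuasiInv_{A}$ at $0$. Here I would employ the explicit formula $\QuasiInv_{A}(x) = - \sum_{i=1}^\infty x^i$ for $x \in \Ball{0}{1}$ from \refer{lem:Einheitskugel_quasiinvertierbar}, together with the fact that $\QuasiInv_{A}(0) = 0$, which holds because $0$ is the unit of the monoid $(A, \diamond)$ introduced in \refer{lem:Adversion_Monoid}. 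The elementary geometric estimate
\[
	\norm{\QuasiInv_{A}(x)}
	= \left\norm{ \sum_{i=1}^\infty x^i }
	\leq \sum_{i=1}^\infty \norm{x}^i
	= \frac{\norm{x}}{1 - \norm{x}},
\]
valid for $\norm{x} < 1$, then shows $\QuasiInv_{A}(x) \to 0 = \QuasiInv_{A}(0)$ as $x \to 0$, which is the required continuity at the origin.

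I do not expect a genuine obstacle here: the real work has already been carried out in \refer{lem:Einheitskugel_quasiinvertierbar} (the series representation and convergence in a complete algebra) and in \refer{lem:Kriterium_fuer_Stetigkeit_der_Adversion} (the passage from local to global behaviour via the monoid structure of \refer{lem:Adversion_Monoid}). The only point deserving explicit care is to record that $\QuasiInv_{A}(0) = 0$, so that the limit computed from the geometric bound actually matches the value of $\QuasiInv_{A}$ at $0$; with that noted, the displayed estimate finishes the argument.
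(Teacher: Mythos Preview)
Your proposal is correct and follows essentially the same route as the paper: both reduce to local behaviour at $0$ via \refer{lem:Kriterium_fuer_Stetigkeit_der_Adversion} and then invoke \refer{lem:Einheitskugel_quasiinvertierbar}. The only difference is in verifying continuity of $\QuasiInv_A$ at $0$: you use the direct geometric estimate $\norm{\QuasiInv_A(x)} \leq \norm{x}/(1-\norm{x})$, whereas the paper observes that $x \mapsto \sum_{i=1}^\infty x^i$ is analytic (citing \cite[\S 3.2.9]{MR0219078}) and hence continuous. Your argument is slightly more self-contained; the paper's yields a bit more (analyticity) at the cost of an external reference, but neither difference is material here.
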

\begin{proof}
	This is an immediate consequence of \refer{lem:Einheitskugel_quasiinvertierbar}
	and \refer{lem:Kriterium_fuer_Stetigkeit_der_Adversion} since
	\[
		x \mapsto \sum_{i=1}^\infty x^i
	\]
	is analytic (see \cite[\S 3.2.9]{MR0219078}) and hence continuous.
\end{proof}
\nocite{MR2150374}
\clearpage\phantomsection
\markboth{References}{References}
\printbibliography
\renewcommand{\glossarypreamble}{
The following list contains the symbols that are used on several occasions,
together with a short explanation of their meaning and
the page number where the respective symbol is defined.
For better overview, the entries are arranged into the categories
basic notation, spaces of weighted functions, Lie groups and manifolds,
groups and monoids of functions and further notation.
}
\setlength\glsdescwidth{0.72\linewidth}
\printglossary[type=main, title=Notation, toctitle=Notation, style=mylong3col]
\clearpage\phantomsection
\addcontentsline{toc}{chapter}{Index}
\printindex
\end{document}